\font \smallrm=cmr10 at 10truept
\font \smallsl=cmsl10 at 10truept
\font \smallbf=cmbx10 at 9truept
\font \ssmallrm=cmr10 at 9truept
\font \ssmallsl=cmsl10 at 9truept
\numberwithin{equation}{section}
\newcommand{\subu}[2]{{#1}_{\raise-2pt\hbox{$ \scriptstyle #2 $}}}
\newcommand{\subd}[3]{{#1}_{\raise-2pt\hbox{$ \scriptstyle #2 #3 $}}}
\newtheorem{lema}{Lemma}[subsection]
\newtheorem{theorem}[lema]{Theorem}
\newtheorem{cor}[lema]{Corollary}
\newtheorem{prop}[lema]{Proposition}
\theoremstyle{definition}
\newtheorem{definition}[lema]{Definition}
\newtheorem{exa}[lema]{Example}
\newtheorem{exas}[lema]{Examples}
\newtheorem{rmk}[lema]{Remark}
\newtheorem{rmks}[lema]{Remarks}
\newtheorem{free text}[lema]{}
\newtheorem{obs}[lema]{Observation}
\newtheorem{obs's}[lema]{Observations}
\theoremstyle{remark}
\newcommand \id{\operatorname{id}}
\newcommand \ad{\operatorname{ad}}
\newcommand \co{\operatorname{co}}
\newcommand \rk{\operatorname{rk}}
\newcommand \cop{\operatorname{cop}}
\newcommand \Hom{\operatorname{Hom}}
\newcommand \Alg{\operatorname{Alg}}
\newcommand \Ker{\operatorname{Ker}}
\newcommand \Prim{\operatorname{Prim}}
\newcommand \op{\operatorname{op}}
\newcommand \smallast {{\operatorname{\raise1,5pt\hbox{$ \scriptscriptstyle \ast $}}}}
\newcommand \smallp {{\operatorname{\raise1pt\hbox{$ \scriptscriptstyle + $}}}}
\newcommand \smallm {{\operatorname{\raise1pt\hbox{$ \scriptscriptstyle - $}}}}
\newcommand \smallpm {{\operatorname{\raise1pt\hbox{$ \scriptscriptstyle \pm $}}}}
\newcommand \smallmp {{\operatorname{\raise1pt\hbox{$ \scriptscriptstyle \mp $}}}}
\newcommand{\eps}{\epsilon}
\newcommand{\ot}{\otimes}
\newcommand{\com}{\Delta}
\newcommand{\R}{{\mathcal R}}
\newcommand{\E}{{\mathcal E}}
\newcommand{\F}{{\mathcal F}}
\newcommand{\D}{{\mathcal D}}
\newcommand{\G}{{\mathcal G}}
\newcommand{\Z}{{\mathcal Z}}
\newcommand{\uhat}{\widehat{U}}
\newcommand{\Lc}{{\mathcal L}}
\def \bq {\mathbf{q}}
\def \CC{\mathbb{C}}
\def \FF{\mathbb{F}}
\def \NN{\mathbb{N}}
\def \UU{\mathbb{U}}
\def \ZZ {\mathbb{Z}}
\def \k {\Bbbk}
\def \kq{{\Bbbk(q)}}
\def \kh{{\Bbbk[[\hbar]]}}
\def \khp{{\Bbbk(\hskip-1,7pt(\hbar)\hskip-1,7pt)}}
\def \Zqqm {{\mathbb{Z}\big[\hskip1pt q,q^{-1}\big]}}
\def\FF{\mathbb{F}}
\def \D {\mathcal{D}}
\def \G {\mathcal{G}}
\def \I {\mathcal{I}}
\def \K {\mathcal{K}}
\def \L {\mathcal{L}}
\def \SS {\mathcal{S}}
\def \JJ {\mathfrak{J}}
\def \Dpicc {{\scriptscriptstyle D}}
\def \Ppicc {{\scriptscriptstyle P}}
\def \Rpicc {{\scriptscriptstyle \R}}
\def \Thetapicc {{\scriptscriptstyle \Theta}}
\def \Psipicc {{\scriptscriptstyle \Psi}}
\def \erm {\mathrm{e}}
\def \frm {\mathrm{f}}
\def \hrm {\mathrm{h}}
\def \lieg{\mathfrak{g}}
\def \liem{\mathfrak{m}}
\def \liegd{\mathfrak{g}^{\raise-1pt\hbox{$ \Dpicc $}}}
\def \liegdP{\mathfrak{g}^{\raise-1pt\hbox{$ \Dpicc $}}_\Ppicc}
\def \liegRP{\mathfrak{g}^{\raise-1pt\hbox{$ \scriptscriptstyle \mathcal{R} $}}_\Ppicc}
\def \liegPsi{\mathfrak{g}^{\raise-1pt\hbox{$ \scriptscriptstyle \Psi $}}}
\def \liegdPsi{\mathfrak{g}^{\raise-1pt\hbox{$ {\scriptscriptstyle D,\Psi} $}}}
\def \liegdotdq{\dot{\lieg}_{\raise-2pt\hbox{$ \Dpicc , \hskip-0,9pt{\scriptstyle \bq} $}}}
\def \liegtildq{\tilde{\lieg}_{\raise-2pt\hbox{$ \Dpicc , \hskip-0,9pt{\scriptstyle \bq} $}}}
\def \liegdq{\lieg_{\raise-2pt\hbox{$ \Dpicc , \hskip-0,9pt{\scriptstyle \bq} $}}}
\def \liegdqcheck{\lieg_{\raise-2pt\hbox{$ \Dpicc , \hskip-0,9pt{\scriptstyle \check{\bq}} $}}}
\def \Gtildeqstar {\widetilde{G}^{\,\raise2pt\hbox{$ \scriptstyle * $}}_{\!\Dpicc , \hskip+0,9pt{\scriptstyle \bq}}}
\def \lieb{\mathfrak{b}}
\def \liebPhat{{\widehat{\mathfrak{b}}}^\Ppicc}
\def \liebP{\mathfrak{b}^\Ppicc}
\def \liebd{\mathfrak{b}^{\raise-1pt\hbox{$ \Dpicc $}}}
\def \lieh{\mathfrak{h}}
\def \liehd{\mathfrak{h}^{\raise-1pt\hbox{$ \Dpicc $}}}
\def \liek{\mathfrak{k}}
\def \lier{\mathfrak{r}}
\def \lies{\mathfrak{s}}
\def \lien{\mathfrak{n}}
\def \lieso{\mathfrak{so}}
\def \liez{\mathfrak{z}}
\def \uhg{U_\hbar(\hskip0,5pt\lieg)}
\def \uRPhg{U^{\,\R}_{\!P,\hskip0,7pt\hbar}(\hskip0,5pt\lieg)}
\def \URPhg{\mathbb{U}^{\,\R}_{\!P,\hskip0,7pt\hbar}(\hskip0,5pt\lieg)}
\def \calDRPhgd{\overrightarrow{\mathcal{D}}^{\,\R}_{\!P,\hskip0,7pt\hbar}(\hskip0,5pt\lieg)}
\def \calDRPhgs{\overleftarrow{\mathcal{D}}^{\,\R}_{\!P,\hskip0,7pt\hbar}(\hskip0,5pt\lieg)}
\def \DRPhgd{\overrightarrow{D}^{\,\R}_{\!P,\hskip0,7pt\hbar}(\hskip0,5pt\lieg)}
\def \DRPhgs{\overleftarrow{D}^{\,\R}_{\!P,\hskip0,7pt\hbar}(\hskip0,5pt\lieg)}
\def \caluRPhbp{\mathcal{U}^{\,\R}_{P,\hskip0,7pt\hbar}(\hskip0,5pt\lieb_+)}
\def \utildeRPhbp{\widetilde{U}^{\,\R}_{\!P,\hskip0,7pt\hbar}(\hskip0,5pt\lieb_+)}
\def \calutildeRPhbp{\widetilde{\mathcal{U}}^{\,\R}_{P,\hskip0,7pt\hbar}(\hskip0,5pt\lieb_+)}
\def \uRPhbp{U^{\,\R}_{\!P,\hskip0,7pt\hbar}(\hskip0,5pt\lieb_+)}
\def \caluRPhbm{\mathcal{U}^{\,\R}_{P,\hskip0,7pt\hbar}(\hskip0,5pt\lieb_-)}
\def \utildeRPhbm{\widetilde{U}^{\,\R}_{\!P,\hskip0,7pt\hbar}(\hskip0,5pt\lieb_-)}
\def \calutildeRPhbm{\widetilde{\mathcal{U}}^{\,\R}_{P,\hskip0,7pt\hbar}(\hskip0,5pt\lieb_-)}
\def \uRPhbm{U^{\,\R}_{\!P,\hskip0,7pt\hbar}(\hskip0,5pt\lieb_-)}
\def \caluRPhbpm{\mathcal{U}^{\,\R}_{P,\hskip0,7pt\hbar}(\hskip0,5pt\lieb_\pm)}
\def \utildeRPhbpm{\widetilde{U}^{\,\R}_{\!P,\hskip0,7pt\hbar}(\hskip0,5pt\lieb_\pm)}
\def \calutildeRPhbpm{\widetilde{\mathcal{U}}^{\,\R}_{P,\hskip0,7pt\hbar}(\hskip0,5pt\lieb_\pm)}
\def \uRPhbpm{U^{\,\R}_{\!P,\hskip0,7pt\hbar}(\hskip0,5pt\lieb_\pm)}
\def \uRPhbmp{U^{\,\R}_{\!P,\hskip0,7pt\hbar}(\hskip0,5pt\lieb_\mp)}
\def \uRPhbmp{U^{\,\R}_{\!P,\hskip0,7pt\hbar}(\hskip0,5pt\lieb_\mp)}
\def \caluRPhbmp{\mathcal{U}^{\,\R}_{P,\hskip0,7pt\hbar}(\hskip0,5pt\lieb_\mp)}
\def \uRPhnp{U^{\,\R}_{\!P,\hskip0,7pt\hbar}(\hskip0,5pt\lien_+)}
\def \uRPhnm{U^{\,\R}_{\!P,\hskip0,7pt\hbar}(\hskip0,5pt\lien_-)}
\def \uRPhnpm{U^{\,\R}_{\!P,\hskip0,7pt\hbar}(\hskip0,5pt\lien_\pm)}
\def \uRPhnmp{U^{\,\R}_{\!P,\hskip0,7pt\hbar}(\hskip0,5pt\lien_\mp)}
\def \uhh{U_\hbar(\hskip0,0pt\lieh)}
\def \fqgd{F_q[G_{\raise-2pt\hbox{$ \Dpicc $}}]}
\def \pf{\begin{proof}}
\def \epf{\end{proof}}
\newcommand\fT{\mathsf{T}}
\newcommand\fH{\mathsf{H}}
\theoremstyle{plain}
\begin{document}





\title[Formal multiparameter quantum groups]
 {Formal multiparameter quantum groups,  \\
deformations and specializations}

\author[g.~a.~garc{\'\i}a \ , \ \  f.~gavarini]
{Gast{\'o}n Andr{\'e}s Garc{\'\i}a${}^\flat$ \ ,  \ \ Fabio Gavarini$\,{}^\sharp$}

\address{\newline\noindent Departamento de Matem\'atica, Facultad de Ciencias Exactas
   \newline
 Universidad Nacional de La Plata   --- CMaLP-CIC-CONICET
   \newline
 1900 La Plata, ARGENTINA   \ --- \   {\tt ggarcia@mate.unlp.edu.ar}
 \vspace*{0.5cm}
   \newline
 Dipartimento di Matematica,
   \newline
 Universit\`a degli Studi di Roma ``Tor Vergata''
   \newline
 Via della ricerca scientifica 1   ---   I\,-00133 Roma, ITALY   \ --- \   {\tt gavarini@mat.uniroma2.it}}

\thanks{\noindent 2020 \emph{MSC:}\,  17B37, 17B62   ---
\emph{Keywords:} Quantum Groups, Quantum Enveloping Algebras.}


\begin{abstract}
 \medskip
   We introduce the notion of  {\sl formal multiparameter QUEA}   --- in short  {\sl FoMpQUEA}  ---
   as a straightforward generalization of Drinfeld's quantum group  $ \uhg \, $.
   Then we show that the class of FoMpQUEAs is closed under deformations by (``toral'')
   twists and deformations by (``toral'')  $ 2 $--cocycles:
   as a consequence, all ``multiparameter formal QUEAs'' considered so far are recovered, as falling within this class.
   In particular, we prove that any FoMpQUEA is isomorphic to a suitable deformation, by twist or by 2--cocycle, of Drinfeld's standard QUEA.
                                                                     \par
   We introduce also multiparameter Lie bialgebras (in short, MpLbA's), and we consider their deformations, by twist and by 2--cocycle.
   The semiclassical limit of every FoMpQUEA is a suitable MpLbA, and conversely each MpLbA can be quantized to a suitable FoMpQUEA.
   In the end, we prove that, roughly speaking, the two processes of ``specialization''   --- of a FoMpQUEA to a MpLbA ---
   and of ``deformation (by toral twist or toral 2--cocycle)''   --- at the level of FoMpQUEAs or of MpLbA's ---   do commute with each other.
\end{abstract}

%

{\ }

\vskip-91pt

   \centerline{ \smallrm  {\smallsl Annales de l'Institut Fourier\/}  {\smallbf 76}  (2026), no.~1, 51--167 }
                                                 \par
   \centerline{\smallrm {\smallbf DOI:}  10.5802/aif.3675   \ --- \   preprint  {\smallsl arXiv:2203.11023 [math.QA] (2022)}}
%
 \vskip3pt
   \centerline{\ssmallrm {\ssmallsl The original publication is available at\/}
\  https://aif.centre-mersenne.org/item/10.5802/aif.3675.pdf}

\vskip9pt   {\ }

\maketitle

\tableofcontents



\section{Introduction}  \label{sec: intro}

   Quantum groups can be thought of, roughly speaking,
   as Hopf algebras depending on one ``parameter'' such that, for a ``special value'' of this parameter,
   they turn isomorphic either to the universal enveloping algebra of some Lie algebra  $ \lieg $
   or to the function algebra of some algebraic group $ G \, $.  In the first case the quantum group is called
   ``quantized universal enveloping algebra'' (or QUEA in short) and in the second ``quantized function algebra''
   (or QFA in short).
                                           \par
   Quite soon, people also began to introduce new quantum groups depending on two or more parameters, whence the terminology ``multiparameter quantum groups'' came in use: see, e.g.,  \cite{BGH},  \cite{BW1},  \cite{BW2},  \cite{CM},  \cite{CV1},  \cite{CV2},  \cite{DPW},  \cite{GG1},  \cite{HLT},  \cite{HPR},  \cite{Jn},  \cite{Kh},  \cite{KT},  \cite{Ma},  \cite{OY},  \cite{Re},  \cite{Su},  \cite{Tk}   --- and the list might be longer.
   Nevertheless, one can typically describe a multiparameter quantum group so that one single parameter stands
   ``distinguished'', as the  \textsl{continuous\/}  one that can be specialized.
   The other parameters instead (seen as  \textsl{discrete\/})  parametrize different structures on a
   common ``socle'' underlying the semiclassical limit of the quantum group, that is
   achieved when the continuous parameter is specialized.  Indeed, this already occurs with one-parameter quantum groups:
   for example, the celebrated Drinfeld's QUEA  $ \uhg $  associated with a complex,
   finite-dimensional, semisimple Lie algebra  $ \lieg $  has a description where the  \textsl{continuous\/}  parameter
   $ \hbar $  bears the quantization nature of  $ \uhg \, $,  while other  \textsl{discrete\/}  parameters,
   namely the entries of the Cartan matrix of  $ \lieg \, $,  describe the Lie algebra structure on  $ \lieg $  itself.
 \vskip7pt
   In this paper we focus onto the study of multiparameter QUEAs; then it will be possible to
   realize a parallel study and to achieve the corresponding results for multiparameter QFA's by
   suitably applying duality.  Recall that QUEAs (and QFA's alike) are usually considered in two versions:
   the so-called ``formal'' one   --- dealing with topological Hopf algebras over  $ \kh $  ---
   and the ``polynomial'' one   --- dealing with Hopf algebras over a field  $ \mathbb{K} $  with some
   $ \, q \in \mathbb{K} \, $  entering the game as parameter.
%
%
 \vskip7pt
   One of the first general examples of multiparameter QUEA, hereafter mentioned as MpQUEA, was provided by Reshetikhin in  \cite{Re}.
   This extends Drinfeld's definition of  $ \uhg $  to a new object  $ U_\hbar^\Psipicc(\lieg) $
   that shares the same algebra structure of  $ \uhg $  but bears a new  \textsl{coalgebra\/}  structure,
   depending on a matrix  $ \Psi $  that collects the new,  \textsl{discrete\/}  parameters of  $ U_\hbar^\Psipicc(\lieg) \, $.
   At the semiclassical limit, these new parameters (hence  $ \Psi $)
   describe the new Lie coalgebra structure inherited by  $ \lieg $  from  $ U_\hbar^\Psipicc(\lieg) $  itself.
   Note that  $ U_\hbar^\Psipicc(\lieg) $  is defined from scratch as being the outcome of a  \textsl{deformation by twist\/}
   of Drinfeld's  $ \uhg \, $,  using a twist of a specific type (that we shall call ``toral'') defined via  $ \Psi \, $.
   It follows that the class of all Reshetikhin's MpQUEAs is stable under deformation by toral twists,
   i.e.\ any such deformation of an object of this kind is again an object of the same kind.
   Even more, this class is ``homogeneous'', in that each  $ U_\hbar^\Psipicc(\lieg) $  is nothing but a twist deformation of Drinfeld's  $ \uhg \, $.
                                                              \par
   With a parallel approach, a  \textsl{polynomial\/}  version of Reshetikhin's MpQUEAs was introduced and studied by Costantini-Varagnolo: see  \cite{CV1},  \cite{CV2},  and also  \cite{Ga1};  on the other hand, these works do not consider deformations.  Alternatively, using the duality with quantum coordinate algebras, two-parameters quantum envelopling algebras of polynomial type are considered in Dobrev-Parashar  \cite{DoP}  and in Dobrev-Tahri  \cite{DoT}.  The effect of the twist can be seen in the description of the coproduct after a change the presentation \`{a} la Drinfeld-Jimbo type.
                                                               \par
   In another direction, a different version of  \textsl{polynomial\/}  MpQUEA (still working over  $ \lieg $  as above),
   call it  $ U_{\mathbf{q}}(\lieg) \, $,  has been developed in the works of Andruskiewitsch-Schneider, Rosso,
   and many others   --- see for instance  \cite{AS1},  \cite{AS2},  \cite{HPR},  \cite{Ro}.  In this case,
   the ``multiparameter'' is cast into a matrix  $ \, \mathbf{q} = {\big(\, q_{ij} \big)}_{i, j \in I} \, $
   whose entries take part in the description of the  \textsl{algebra\/}  structure of  $ U_{\mathbf{q}}(\lieg) \, $.
   Under mild, additional conditions, this yields a very general family of MpQUEAs which
   is very well-behaved: in particular, it is stable under deformations by 2--cocycles of ``toral'' type.
   Even better, this family is ``homogeneous'', in that each  $ U_{\mathbf{q}}(\lieg) $
   is a 2--cocycle deformation of Jimbo-Lusztig's polynomial version  $ U_q(\lieg) $  of Drinfeld's  $ \uhg \, $.
                                                              \par
   Note that, in Hopf theory,  \textsl{twist\/}  and  \textsl{2--cocycle\/}  are notions dual to each other.
   Thus the constructions of MpQUEAs by Reshetikhin
   and by Andruskiewitsch-\break{}Schneider
 (besides the difference in being ``formal'' or ``polynomial'') are somehow  \textsl{dual\/}  to each other
 --- and, as such, seem definitely different from each other.
 \vskip7pt
   The purpose of this paper is to introduce a new notion of MpQUEA that encompass both Reshtikhin's
   one and Andruskiewitsch-Schneider's one.  Indeed, we achieve this goal introducing a new family
   of MpQUEAs which incorporates An\-druskie\-witsch-Schneider's one, hence in particular it
   includes Drinfeld's standard example (see  Definition \ref{def: Mp-Uhgd},
   Theorem \ref{thm: form-MpQUEAs_are_Hopf}  and  \S \ref{sec: constr-FoMpQUEAs}).
   We show that this new family is stable by toral 2--cocycle deformations
   (Theorem \ref{thm: 2cocdef-uPhgd=new-uPhgd}),  just as Andruskiewitsch-Schnei\-der's,  \textsl{and\/}
   it is also stable by toral twist deformations  (Theorem \ref{thm: twist-uRPhg=new-uRPhg}),
   hence it incorporates Reshetikhin's family as well.  In particular, we show that every MpQUEA of
   the Reshetikhin's family is actually isomorphic to one of the Andruskie\-witsch-Schneider's family,
   and viceversa: the isomorphism is especially meaningful in itself, in that it amounts to a suitable
   change of presentation via a well-focused change of generators  (see  Theorem \ref{thm: twist-uRPhg=new-uRPhg}).
   In this sense, we really end up with a single,  \textsl{homogeneous\/}  family
   --- not just a collage of two distinct families; this can be seen as a byproduct of the intrinsic
   ``self-duality'' of Drinfeld's standard  $ \uhg \, $.
                                                              \par
   For each one of these MpQUEAs, then, one can decide to focus the dependence on the  \textsl{discrete\/}
   multiparameters either on the coalgebra structure (which amounts to adopt Reshetikhin's point of view) or
   on the algebra structure (thus following Andruskiewitsch-Schneider's approach).
   In our definition we choose to adopt the latter point of view, as it is definitely closer to the classical
   Serre's presentation of  $ U(\lieg) $   --- or even to the presentation of Drinfeld's standard  $ \uhg $
   ---   where the  \textsl{discrete\/}  multiparameters given by the Cartan matrix entries rule the algebra structure.
 \vskip7pt
   Technically speaking, we adopt the setting and language of  \textsl{formal\/}  quantum groups,
   thus our newly minted objects are ``formal MpQUEAs'', in short ``FoMpQUEAs''.
   This is indeed a necessary option: in fact, the setup of polynomial MpQUEAs is well-suited
   when one deals with (toral) 2--cocycle deformations, but behaves quite poorly under deformations by (toral) twists.
   Roughly speaking, the toral part in a polynomial MpQUEA (in the sense of Andruskiewitsch-Schneider, say)
   happens to be too rigid, in general, under twist deformations; this is shown in our previous paper  \cite{GG2},
   where we pursued the same goal by means of ``polynomial MpQUEAs'', which eventually prove to be a somewhat less suitable tool.
                                                              \par
   Thus, one needs to allow a more flexible notion of ``toral part'' in our would-be MpQUEA
   in order to get a notion that is stable under deformation by (toral) twists.
   We obtain this by choosing to define our \textsl{formal\/}  MpQUEA as having a toral part with two distinguished sets of
   ``coroots'' and ``roots'', whose mutual interaction is encrypted in a ``multiparameter matrix''  $ P $
   whose role generalizes that of the Cartan matrix.  We formalize all this via the notion of  \textsl{realization\/}
   of the matrix  $ P $,  which is a natural extension of Kac' notion of realization of a generalized Cartan matrix
   (cf.\ Definition \ref{def: realization of P});  our FoMpQUEA then is defined much like Drinfeld's standard one,
   with the entries of  $ P $  playing the role of  \textsl{discrete\/}  multiparameters.
 \vskip7pt
   By looking at semiclassical limits, we find that our new class of FoMpQUEAs gives rise to a new
   family of multiparameter Lie bialgebras (in short MpLbA's) that come equipped with a
   presentation ``\`a la Serre'' in which the parameters   --- i.e., the entries of  $ P $,  again ---
   rule the Lie algebra structure (cf.\  \S \ref{MpLieBialgebras-double}).
   Again, we prove that this family is stable by deformations   --- in Lie bialgebra theoretical sense ---
   both via ``toral'' 2--cocycles and via ``toral'' twists (see Theorem \ref{thm: 2-cocycle-def-MpLbA}  and
   Theorem \ref{thm: twist-liegRP=new-liegR'P'}).  In particular, every such MpLbA admits an alternative
   presentation in which the Lie algebra structure stands fixed (always being ruled by a fixed generalized Cartan matrix)
   while the Lie  \textsl{coalgebra\/}  structure does vary according to the multiparameter matrix  $ P $.
   Like in the quantum setup, the isomorphism between the two presentations
%
%
 is quite meaningful, as it boils down to a well-chosen change of generators  (cf.\  Theorem \ref{thm: twist-liegRP=new-liegR'P'}).
 The very definition of these MpLbA's, as well as the just mentioned results about them,
 can be deduced as byproducts of those for FoMpQUEAs (via the process of specialization);
 otherwise, they can be introduced and proved directly; in short, we do both  (cf.\ \S \ref{sec: Mp Lie bialgebras}
 and  Theorem \ref{thm: semicl-limit FoMpQUEA}).  These MpLbA's were possibly known in literature,
 at least in part: yet our construction yields a new, systematic presentation of their whole family in its full extent,
 also proving its stability under deformations by both (toral) 2--cocycles and (toral) twists.
 \vskip7pt
   As a final, overall comment, we recall that a close relation between multiparameters and deformations is ubiquitous in several applications, e.g.\ in the classification of complex finite-dimensional pointed Hopf algebras over abelian groups  \cite{AS2},  \cite{AGI}   --- where deformations by 2--cocycle play a central role.  Moreover, MpQUEAs may also serve as interpolating objects in the study of the representation theory of quantum groups associated with Langlands dual semi-simple Hopf algebras  \cite{FH}   --- where deformations by twist instead are a key tool.
 \vskip7pt
   A last word about the organization of the paper.
                                                              \par
   In section  \ref{sec: Cartan-data_realiz's},  we introduce the ``combinatorial data''
   underlying our constructions of MpLbA's and FoMpQUEAs alike: the notion of  \textsl{realization\/}  of
   a multiparameter matrix, and the process of deforming realizations either by twists or by 2--cocycles.
                                                              \par
   In section  \ref{sec: Mp Lie bialgebras}  we introduce our MpLbA's and study their deformations by
   (toral) twists and by (toral) 2--cocycles.
                                                              \par
   Section  \ref{sec: form-MpQUEAs}  is dedicated to introduce our newly minted FoMpQUEAs,
   in particular using different, independent approaches, and to prove their basic properties.
                                                              \par
   With section  \ref{sec: deform's_FoMpQUEAs}  we discuss deformations of FoMpQUEAs by
   (toral) twists and by (toral) 2--cocycles: we prove that these deformations turn FoMpQUEAs into
   new FoMpQUEAs again, the case by twist being possibly the more surprising.
                                                              \par
   Finally, in section  \ref{sec: special-&-quantiz}  we perform specializations of FoMpQUEAs and
   look at their resulting semiclassical limit: we find that this limit is always a MpLbA (in short,
   by the very definition of MpLbA's), with the same multiparameter matrix  $ P $  as the FoMpQUEA
   it comes from.  Conversely, any possible MpLbA does arise as such a limit   --- in other words,
   any MpLbA has a FoMpQUEA which is quantization of it.  Then   --- more important ---
   we compare deformations (by toral twists or 2--cocycle) before and after specialization: the outcome is,
   in a nutshell, that  \textsl{``specialization and deformation (of either type) commute with each other''}
   (cf.\ Theorem \ref{thm: specializ twisted FoMpQUEA}  and  Theorem \ref{thm: specializ 2-cocycle FoMpQUEA}).
   In fact, this last result can be deduced also as a special instance of a more general one, which in turn is an outcome of a
   larger study about deformations (of either type) of formal quantum groups
   --- i.e., Drinfeld's-like QUEAs and their dual, the so-called QFSHA's ---   and of their semiclassical limits.
   This is a more general chapter in quantum group theory, with its own reasons of interest,
   thus we shall treat it in a separate publication --- cf.\ \cite{GG3}.

\vskip17pt

   \centerline{\ssmallrm ACKNOWLEDGEMENTS}
 \vskip3pt
%
   \indent   {\smallrm The authors thank Marco Farinati for several fruitful conversations.
   They also especially thank the Referee, whose shrewd remarks and comments helped a lot to improve the paper.}
                                                        \par
   {\smallrm This work was supported by CONICET, ANPCyT, Secyt (Argentina) and by INdAM/GNSAGA,
   ICTP and University of Padova (Italy), as well as by the MIUR
  {\smallsl Excellence Department Project MatMod@TOV (CUP E83C23000330006)}
 awarded to the Department of Mathematics, University of Rome ``Tor Vergata''}.

\bigskip
 \medskip

\section{Multiparameters and their realizations}  \label{sec: Cartan-data_realiz's}
 \vskip7pt
   In this section we fix the basic combinatorial data that we need later on.
   The definition of our multiparameter Lie bialgebras and formal multiparameter
   quantum groups requires a full lot of related material that we now present.
   In particular,  $ \, \NN = \{0, 1,\ldots\} \, $  and  $ \, \NN_+ := \NN \setminus \{0\} \, $,
   \,while  $ \k $  will be a field of characteristic zero.

\medskip

\subsection{Multiparameter matrices, Cartan data, and realizations}
  \label{MpMatrices, Cartan & realiz.'s}  {\ }
 \vskip7pt
   We introduce hereafter the ``multiparameters'',
   which we will use to construct (semi)classical and quantum objects as well.
   The theory can be developed more in general,
   but we stick to the case of ``Cartan type'' as more relevant to us; accordingly,
   this will keep us close to the common setup of Lie algebras of Kac--Moody type,
   in particular those whose Cartan matrix is symmetrisable.

\vskip9pt

\begin{free text}  \label{root-data_Lie-algs}
 {\bf Cartan data and associated Lie algebras.}  Hereafter we fix  $ \, n \in \NN_+ \, $  and  $ \, I := \{1,\dots,n\} \, $.
 Let  $ \, A := {\big(\, a_{ij} \big)}_{i, j \in I} \, $  be a generalized, symmetrisable Cartan matrix;
 then there exists a unique diagonal matrix
 $ \, D := {\big(\hskip0,7pt d_i \, \delta_{ij} \big)}_{i, j \in I} \, $  with positive integral,
 pairwise coprime entries such that  $ \, D A \, $  is symmetric.
 Let  $ \, \lieg =  \lieg_A \, $  be the Kac-Moody algebra over  $ \CC $
 associated with  $ A $  (cf.\ \cite{Ka});  we consider a split integral  $ \ZZ $--form  of  $ \lieg \, $,
 and for the latter the scalar extension from  $ \ZZ $  to any field  $ \k \, $:  by abuse of notation,
 {\it the resulting Lie algebra over  $ \k $  will be denoted by  $ \lieg $  again}.
                                                                  \par
   Let  $ \Phi $  be the root system  of  $ \lieg \, $,  with
   $ \, \Pi = \big\{\, \alpha_i \,\vert\, i\in I \,\big\} \, $  as a set of simple roots,
   $ \, Q = \bigoplus_{i \in I} \ZZ \, \alpha_i \, $  the associated root lattice,
   $ \Phi^+ $  the set of positive roots with respect to
   $ \Pi \, $,  $ \, Q^+ = \bigoplus_{i \in I} \NN \, \alpha_i \, $  the positive root (semi)lattice.
                                                                  \par
   Fix a Cartan subalgebra  $ \lieh $  of  $ \lieg \, $,  whose associated set of roots identifies with  $ \Phi \, $  (so  $ \, \k{}Q \subseteq \lieh^* \, $);
   then for all  $ \, \alpha \in \Phi \, $  we call  $ \lieg_\alpha $  the corresponding root space.
   Now set  $ \, \lieh' := \lieg' \cap \lieh \, $  where  $ \, \lieg' := [\lieg\,,\lieg] \, $  is the derived Lie subalgebra of  $ \lieg \, $:
   then  $ \, {\big( \lieh'\big)}^* = \k{}Q \subseteq \lieh^* \, $.  We fix a  $ \k $--basis
   $ \, \Pi^\vee := {\big\{\, \hrm_i := \alpha_i^\vee \,\big\}}_{i \in I} \, $  of  $ \lieh' $  so that  $ \, \big(\, \lieh \, , \Pi \, , \Pi^\vee \,\big) \, $
   is a  {\it realization\/}  of  $ A \, $,  as in  \cite[Chapter 1]{Ka};  in particular,  $ \, \alpha_i(\hrm_j) = a_{ji} \, $  for all  $ \, i , j \in I \, $.
                                                                \par
   Let  $ \lieh'' $  be any vector space complement of  $ \lieh' $  inside  $ \lieh \, $.
   Then there exists a unique symmetric  $ \k $--bilinear  pairing on  $ \lieh \, $,  denoted  $ (\,\ ,\ ) \, $,
   such that  $ \, (\hrm_i\,,\hrm_j) = a_{ij}\,d_j^{-1} \, $,  $ \, (\hrm_i\,,h''_2) = \alpha_i\big(h''_2\big) \, $
   and  $ \, (h''_1\,,h''_2) = 0 \, $,  for all  $ \, i, j \in I \, $,  $ \, h''_1, h''_2 \in \lieh'' \, $;
   in addition, this pairing is invariant and non-degenerate (cf.\ \cite[Chapter 2]{Ka}).
   By non-degeneracy, this pairing induces a  $ \k $--linear  isomorphism  $ \; t : \lieh^* \,{\buildrel \cong \over {\relbar\joinrel\longrightarrow}}\, \lieh \; $,
   \,and this in turn defines a similar pairing on  $ \, \lieh^* \, $,  again denoted  $ (\,\ ,\ ) \, $,
   via pull-back, namely  $ \, \big( t^{-1}(h_1) , t^{-1}(h_2) \big) := (h_1\,,h_2) \, $;  in particular, on simple roots this gives
   $ \, (\alpha_i \, , \alpha_j) := d_i\,a_{ij} \, $  for all  $ \, i, j \in I \, $.
   In fact, this pairing on  $ \lieh^* $  restricts to a (symmetric,
   $ \ZZ $--valued,  $ \ZZ $--bilinear)  pairing on  $ Q \, $;  note that, in terms of the latter pairing on  $ Q \, $,  one has
   $ \, d_i = (\alpha_i\,,\alpha_i) \big/ 2 \, $  and  $ \, a_{ij} = \frac{\,2\,(\alpha_i , \,\alpha_j)\,}{\,(\alpha_i , \,\alpha_i)\,} \, $  ($ \, i, j \in I \, $).
   Moreover  $ \; t : \lieh^* \,{\buildrel \cong \over {\relbar\joinrel\longrightarrow}}\, \lieh \; $  restricts to another isomorphism
   $ \; t' : {\big( \lieh' \big)}^* \,{\buildrel \cong \over {\relbar\joinrel\longrightarrow}}\, \lieh' \; $  for which we use notation
   $ \, t_\alpha := t'(\alpha) = t(\alpha) \, $.
 \vskip7pt
   Let  $ \lien_+ \, $,  resp.\  $ \lien_- \, $,  be the nilpotent subalgebra in  $ \lieg $
   containing all positive, resp.\  negative, root spaces, and set
   $ \, \lieb_\pm := \lieh \oplus \lien_\pm \, $  be the corresponding Borel subalgebras.
   There is a canonical, non-degenerate pairing between
   $ \lieb_+ $  and  $ \lieb_- \, $,  using which one can construct a
   {\sl Manin double\/}  $ \, \liegd = \mathfrak{b}_+ \oplus \mathfrak{b}_- \, $,
   automatically endowed with a structure of Lie bialgebra   --- roughly,  $ \liegd $  is like  $ \lieg $ {\sl but\/}
   with  {\sl two copies of\/}  $ \lieh $
   inside it (cf.\  \cite{CP}, \S 1.4),  namely  $ \, \lieh_+ := \lieh \oplus 0 \, $  inside  $ \lieb_+ $  and
   $ \, \lieh_- := 0 \oplus \lieh \, $  inside
   $ \lieb_- \, $;  accordingly, we set  $ \, \lieh'_+ := \lieh' \oplus 0 \, $  and  $ \, \lieh'_- := 0 \oplus \lieh' \, $.
   By construction both  $ \lieb_+ $  and
   $ \lieb_- $  lie in  $ \liegd $  as Lie sub-bialgebras.  Moreover, there exists a Lie bialgebra epimorphism
   $ \, \pi_{\liegd} \! : \liegd \!\relbar\joinrel\relbar\joinrel\twoheadrightarrow \lieg \; $
   which maps the copy of  $ \lieb_\pm $  inside
   $ \liegd $  identically onto its copy in  $ \lieg \, $.
 \vskip7pt
   For later use we fix generators  $ \, \erm_i , \hrm_i , \frm_i \, (\, i \in I \,) \, $  in  $ \lieg \, $  as in the usual Serre's presen\-tation of  $ \lieg \, $.  Moreover, for the corresponding elements inside  $ \, \liegd = \lieb_+ \oplus \lieb_- \, $  we adopt notation  $ \, \erm_i := (\erm_i , 0) \, $,  $ \, \hrm^+_i := (\hrm_i , 0) \, $,  $ \, \hrm^-_i := (0 , \hrm_i) \, $  and  $ \, \frm_i := (0 , \frm_i) \, $,  for all  $ \, i \in I \, $.
 {\sl Notice that\/}  we have by construction
  $$  \erm_i \in \lieg_{+\alpha_i}  \quad ,
   \qquad  \hrm_i = d_i^{-1} t_{\alpha_i} \in \lieh  \quad ,  \qquad  \frm_i \in \lieg_{-\alpha_i}
   \qquad \qquad  \forall \;\; i \in I  $$
\end{free text}

\smallskip

   In sight of applications to Lie theory, we introduce, mimicking  \cite[Ch.\ 1]{Ka},
   the notion of  \textsl{realization\/}  of a multiparameter matrix:

\smallskip

\begin{definition}  \label{def: realization of P}
 Let  $ \hbar $  be a formal variable, and  $ \kh $  the ring of formal power series in  $ \hbar $
 with coefficients in  $ \k \, $.  Let  $ \lieh $  be a free  $ \kh $--module  of finite rank, and pick subsets
 $ \; \Pi^\vee := {\big\{ T^+_i , T^-_i \big\}}_{i \in I} \! \subseteq \lieh \; $,
 and
 $ \; \Pi := {\big\{ \alpha_i \big\}}_{i \in I} \subseteq \lieh^* := \Hom_\kh\!\big(\, \lieh \, , \kh \big) \; $.
 For later use, we also introduce the elements  $ \, S_i := 2^{-1} \big(\, T^+_i + T^-_i \big) \, $  and
 $ \,  \varLambda_i := 2^{-1} \big(\, T^+_i - T^-_i \big) \, $  (for  $ \, i \in I \, $)  and the sets
 $ \; \Sigma := {\big\{ S_i \big\}}_{i \in I} \! \subseteq \lieh \; $  and
 $ \; \Lambda := {\big\{ \varLambda_i \big\}}_{i \in I} \! \subseteq \lieh \; $.
 \vskip3pt
   Let  $ \, P \in M_n\big(\kh\big) \, $  be any  $ (n \times n) $--matrix  with entries in  $ \kh \, $.
 \vskip5pt
   \textit{(a)}\;  We call the triple  $ \; \R \, := \, \big(\, \lieh \, , \Pi \, , \Pi^\vee \,\big) \; $
   a  \textsl{realization\/}  of  $ P \, $ over $\kh$,
   with  \textsl{rank\/}  defined as  $ \, \rk(\R) := \rk_\kh(\lieh) \, $,  \,if:
 \vskip3pt
   \quad \textit{(a.1)} \;  $ \; \alpha_j\big(\,T^+_i\big) \, = \, p_{\,ij} \; $,
$ \;\;\; \alpha_j\big(\,T^-_i\big) \, = \, p_{j\,i} \; $,  \quad  for all  $ \, i, j \in I \, $;
 \vskip3pt
   \quad \textit{(a.2)} \;  the set  $ \, \overline{\Sigma} := {\big\{\, \overline{S}_i := S_i \; (\text{\,mod\ } \hbar\,\lieh \,) \big\}}_{i \in I} \, $
   is  $ \Bbbk $--linearly  independent in  $ \, \overline{\lieh} := \lieh \Big/ \hbar\,\lieh \, $   ---
   \textsl{N.B.:}\, this is equivalent to saying that  $ \Sigma $  itself can be completed to a  $ \kh $--basis  of  $ \lieh \, $,
   hence in particular  $ \Sigma $  is  $ \kh $--linearly  independent in  $ \, \lieh \, $.
 \vskip5pt
   \textit{(b)}\;  We call a realization  $ \; \R \, := \, \big(\, \lieh \, , \Pi \, , \Pi^\vee \,\big) \; $
   of the matrix  $ P $,  respectively,
 \vskip3pt
   \quad \textit{(b.1)} \;  \textsl{straight\/}  if the set  $ \, \overline{\Pi} := {\big\{\, \overline{\alpha}_i := \alpha_i \; (\text{\,mod\ } \hbar\,\lieh^* ) \big\}}_{i \in I}  \, $
   is  $ \Bbbk $--linearly  independent in  $ \, \overline{\lieh^*} := \lieh^* \Big/ \hbar\,\lieh^* \, $
   ---  \textsl{N.B.:}\, this is equivalent to saying that  $ \Pi $ can be completed to a  $ \kh $--basis  of  $ \lieh^* \, $,
   thus in particular  $ \Pi $  is  $ \kh $--linearly  independent in  $ \, \lieh^* \, $;
 \vskip3pt
   \quad \textit{(b.2)} \;  \textsl{small\/}  if
   $ \,\; \textsl{Span}_\kh\big( {\{ S_i \}}_{i \in I} \big) \; = \; \textsl{Span}_\kh\big( {\big\{ T_i^+ , T_i^- \big\}}_{i \in I} \big) \;\, $;
 \vskip3pt
   \quad \textit{(b.3)} \;  \textsl{split\/}  if the set
   $ \, \overline{\Pi^\vee} := {\big\{\, \overline{T^\pm}_i := T^\pm_i \; (\text{\,mod\ } \hbar\,\lieh \,) \big\}}_{i \in I} \, $
   is  $ \Bbbk $--linearly  independent in  $ \, \overline{\lieh} := \lieh \Big/ \hbar\,\lieh \, $   ---   \textsl{N.B.:}\,
   this is equivalent to saying that  $ \Pi^\vee $  can be completed to a  $ \kh $--basis  of  $ \lieh \, $,
   hence in particular it is  $ \kh $--linearly  independent in  $ \, \lieh \, $;
 \vskip3pt
   \quad \textit{(b.4)} \;  \textsl{minimal\/}  if
   $ \,\; \textsl{Span}_\kh\big( {\big\{ T_i^+ , T_i^- \big\}}_{i \in I} \big) \, = \; \lieh \;\, $
   ---   \textsl{N.B.:}\,  in particular,  $ \R $  is  \textsl{split\/  {\rm and}  minimal\/}  if and only if
   $ \, {\big\{\, T_i^+ , T_i^- \big\}}_{i \in I} \, $  is a  $ \kh $--basis  of  $ \lieh \, $.
 \vskip5pt
   \textit{(c)}\;  For any pair of realizations  $ \; \R \, := \, \big(\, \lieh \, , \Pi \, , \Pi^\vee \,\big) \; $  and
   $ \; \dot{\R} \, := \, \big(\, \dot{\lieh} \, , \dot{\Pi} \, , {\dot{\Pi}}^\vee \,\big) \; $  of the same matrix  $ P $,
   a \textsl{(homo)morphism\/}  $ \, \underline{\phi} : \R \longrightarrow \dot{\R} \, $  is the datum of any
   $ \kh $--module morphism  $ \, \phi : \lieh \longrightarrow \dot{\lieh} \, $  such that
   $ \, \phi\big(T_i^\pm\big) = {\dot{T}}_{\sigma(i)}^\pm \, $
   (for all  $ \, i \in I \, $)  for some permutation  $ \, \sigma \in \mathbb{S}_{I} \, $   --- the symmetric group over  $ I $
   ---   hence, in particular,  $ \, \phi\big(\Pi^\vee\big) = {\dot{\Pi}}^\vee \, $,
   and also that  $ \, \phi^*\big(\dot{\Pi}\big) = \Pi \, $
   ---  \textsl{N.B.:}  realizations along with their morphisms form a category,
   in which the iso--/epi--/mono--morphisms are those morphisms
   $ \phi $  as above that actually are  $ \kh $--module  iso--/epi--/mono--morphisms.
 \vskip5pt
   \textit{(d)}\;  Let  $ \, A := {\big(\, a_{ij} \big)}_{i, j \in I} \in M_{n}(\Bbbk) \, $  be any symmetrisable generalized
   Cartan matrix, and  $ \, D := {\big(\hskip0,7pt d_i \, \delta_{ij} \big)}_{i, j \in I} \, $  the associated diagonal matrix,
   as in  \S \ref{root-data_Lie-algs}.  We say that a matrix  $ \, P \in M_n(\kh) \, $  is  \textsl{of Cartan type\/}
   with corresponding Cartan matrix  $ A $  if  $ \; P_s := 2^{-1} \big( P + P^{\,\scriptscriptstyle T} \big) = DA \; $.
 \vskip5pt
   \textit{N.B.:}\;  condition  \textit{(b.3)\/}  is equivalent  to requiring that
   $ \, \overline{\Sigma} \cup \overline{\Lambda} \, $  be  $ \Bbbk $--linearly  independent in
   $ \, \overline{\lieh} := \lieh \Big/ \hbar\,\lieh \, $;  \,in turn, this is equivalent to saying that
   $ \, \Sigma \cup \Lambda \, $  itself can be completed to a  $ \kh $--basis  of
   $ \lieh \, $,  hence in particular it is  $ \kh $--linearly  independent.  Similarly, the condition
   $ \, \phi\big(T_i^\pm\big) = {\dot{T}}_{\sigma(i)}^\pm \, $   ---  $ \, i \in I \, $,
   for some permutation  $ \, \sigma \in \mathbb{S}(I) \, $  ---   in  \textit{(c)\/}  can be replaced by
 $ \, \phi(S_i) = {\dot{S}}_{\sigma(i)} \, $  and  $ \, \phi(\varLambda_i) = {\dot{\varLambda}}_{\sigma(i)} \, $.
 \vskip5pt
   \textit{(e)}\;  In an entirely similar way, one may define realizations of a matrix
   $ \, P := {\big(\, p_{i,j} \big)}_{i, j \in I} \in M_n(\Bbbk) \, $  over a ground field  $ \Bbbk \, $.
   Such a realization  $ \, \R := \big(\, \lieh \, , \Pi \, , \Pi^\vee \,\big) \, $ consists of a  $ \Bbbk $--vector  space
   $ \lieh $  and distinguished subsets  $ \; \Pi^\vee := {\big\{ T^+_i , T^-_i \big\}}_{i \in I} \! \subseteq \lieh \; $
   and  $ \; \Pi := {\big\{ \alpha_i \big\}}_{i \in I} \subseteq \lieh^* := \Hom_\Bbbk\!\big(\, \lieh \, , \Bbbk \big) \; $:
   \,then condition  \textit{(a.1)\/}  reads the same, while  \textit{(a.2)\/}  instead says that the set
   $ \, {\big\{\, S_i = 2^{-1}\big(T^+_i + T^-_i\big) \,\big\}}_{i\in I} \, $  is linearly independent, and the
   \textsl{rank\/}  of the realization is  $ \, \rk(\R) := \dim_\Bbbk(\lieh) \, $.  Also,  $ \R $  is  \textsl{straight},
   resp.\  \textsl{split},  if  $ \Pi \, $,  resp.\  $ \Pi^\vee $,  is linearly independent.
                                                             \par
   Basing on the context, we shall possibly stress the ring we are working over, namely
   $ \kh $  for  $ \, P \in M_n\big(\kh\big) \, $  and the field  $ \Bbbk $  for  $ \, P\in M_n(\Bbbk) \, $.
\hfill  $ \diamondsuit $
\end{definition}

\vskip5pt

\begin{rmk}  \label{rmk: Kac'-realiz}
 In the present language, if  $ \, P = P^{\,\scriptscriptstyle T} \, $  is symmetric a  \textsl{realization\/}  of it in the sense of  \cite[Ch.\ 1, \S 1.1]{Ka},
 is also a realization, in the sense of  Definition \ref{def: realization of P},  of  $ P $  which has rank  $ \, 2n-r \, $,
 \,is  \textsl{straight\/}  and  \textsl{small\/}  with  $ \; \varLambda_i = 0 \, $  for all  $ \, i \in I \, $.
\end{rmk}

\vskip5pt

   The following consequence of the definitions yields another link with Kac' notion:

\vskip11pt

\begin{lema}  \label{lem: realiz_P => realiz_A}  {\ }
 Let  $ \, P \in M_n(\kh) \, $  be a matrix as above.  If  $ \; \R := \big(\, \lieh \, , \Pi \, , \Pi^\vee \,\big) \, $
 is a straight realization of  $ \, P $,  then the triple  $ \, \big(\, \lieh \, , \Pi \, , \Pi^\vee_S \big) \, $
 --- with  $ \, \Pi^\vee_S \! := {\{S_i\}}_{i \in I} \, $  ---   is a realization of  $ \, P_s := 2^{-1} \big( P + P^{\,\scriptscriptstyle T} \big) \, $
 --- over the ring  $ \kh $  ---   in the sense of  \cite{Ka}, Ch.\ 1, \S 1.1, but for condition (1.1.3).
\end{lema}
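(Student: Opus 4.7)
The plan is to check, axiom by axiom, that $(\lieh, \Pi, \Pi^\vee_S)$ satisfies each requirement of a Kac-style realization of $P_s$ in the sense of \cite{Ka}, Ch.~1, \S 1.1, with the sole exception of the rank/dimension condition (1.1.3), which the statement explicitly sets aside.

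First I would verify the pairing identity $\alpha_j(S_i) = (P_s)_{ij}$ for all $i,j \in I$. This is a one-line consequence of $\kh$-linearity of $\alpha_j$, the definition $S_i := 2^{-1}\big(T_i^+ + T_i^-\big)$, and condition \textit{(a.1)} of Definition~\ref{def: realization of P}, which supplies the values $\alpha_j\big(T_i^+\big) = p_{ij}$ and $\alpha_j\big(T_i^-\big) = p_{ji}$; averaging produces exactly the $(i,j)$-entry of $P_s$, which is the analogue for $P_s$ of Kac's pairing axiom.

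Next I would record the two required linear-independence statements. The $\kh$-linear independence of $\Pi$ in $\lieh^*$ is precisely what straightness of $\R$ supplies: condition \textit{(b.1)} in Definition~\ref{def: realization of P} asserts the $\Bbbk$-linear independence of $\overline{\Pi}$ in $\overline{\lieh^*}$ and, as noted there, lifts automatically to $\kh$-linear independence of $\Pi$. The $\kh$-linear independence of $\Pi^\vee_S = \{S_i\}_{i \in I}$ is built into condition \textit{(a.2)} of what it means for $\R$ to be a realization of $P$ in the first place, so it is available for free.

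No step above presents any genuine obstacle: each is a direct rewriting of an existing hypothesis in the symmetrised language of $P_s$. The only Kac-axiom that cannot be recovered in general is the dimension identity (1.1.3), and this is precisely why the statement excludes it: here $\lieh$ is merely a free $\kh$-module of finite but otherwise unconstrained rank, while Kac's axiom would rigidly prescribe $\rk_{\kh}(\lieh)$ in terms of $n$ and $\rk(P_s)$, a condition that need not be met by an arbitrary realization in the sense of Definition~\ref{def: realization of P}.
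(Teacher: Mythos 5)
Your proof is correct and is exactly the routine verification the paper has in mind: the lemma is stated there without proof as an immediate ``consequence of the definitions'', and the intended argument is precisely your three checks --- $\alpha_j(S_i)=2^{-1}(p_{ij}+p_{ji})=(P_s)_{ij}$ from condition \textit{(a.1)}, linear independence of $\Pi$ from straightness \textit{(b.1)}, and linear independence of $\{S_i\}_{i\in I}$ from \textit{(a.2)} --- with (1.1.3) set aside (the paper's remark after the lemma notes it holds exactly when $\rk(\lieh)=2\,n-\rk(P_s)$).
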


\vskip7pt

   Note that condition (1.1.3) in  \cite[Ch.\ 1, \S 1.1]{Ka},  is fulfilled whenever  $ \, \rk(\lieh) = 2\,n - \rk(P_s) \, $;
   \,in particular, we can always achieve that condition up to suitably enlarging or restricting  $ \lieh \, $.
   In any case, from now on with any straight realization of a matrix  $ P $  of Cartan type, for some Cartan matrix  $ A \, $,
   we shall always associate the realization of  $ \, P_s = DA \, $  given by  Lemma \ref{lem: realiz_P => realiz_A},
   hence also the corresponding realization of  $ A $  and then all the related data and machinery mentioned in  \S \ref{MpMatrices, Cartan & realiz.'s}.

\vskip9pt

   We need now a few technical results:

\vskip9pt

\begin{prop}  \label{prop: exist-realiz's}  {\ }
 \vskip3pt
   (a)\;  For every  $ \, P \in M_n\big(\kh\big) \, $  and every  $ \, \ell \geq 3\,n-\rk\big(P+P^{\,\scriptscriptstyle T}\big) \, $,
   \,there exists a straight split realization of  $ P $  with  $ \, \rk(\lieh) = \ell \, $,
 which
 is unique up to isomorphisms.
 \vskip3pt
   (b)\;  Claim (a) still holds true if we drop the condition ``straight'' and pick  $ \, \ell \geq 2\,n \, $.
\end{prop}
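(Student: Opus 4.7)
The plan is to establish both existence and uniqueness in two stages: first produce realizations explicitly over the free $\kh$-module $\lieh = \kh^\ell$, and then prove uniqueness by reducing modulo $\hbar$ to a $\k$-linear question and lifting back via Nakayama's lemma.

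For existence in part (a), take $\lieh := \kh^\ell$ with standard basis $e_1,\ldots,e_\ell$, set $T_i^+ := e_i$ and $T_i^- := e_{n+i}$ for $i \in I$ (automatically making $\{\overline{T_i^\pm}\}$ linearly independent, so the split condition and in particular (a.2) both hold), and define each $\alpha_j \in \lieh^*$ by the forced values $\alpha_j(e_i) := p_{ij}$ and $\alpha_j(e_{n+i}) := p_{ji}$ from (a.1), leaving $\alpha_j(e_k)$ free for $k>2n$. In this basis, the matrix of $(\overline{\alpha}_j)$ modulo $\hbar$ has block form $\bigl(\,\overline{P^T}\,\bigm|\,\overline{P}\,\bigm|\,\ast\,\bigr)$ with an $n\times(\ell-2n)$ free block $\ast$. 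For straightness this matrix must have rank $n$. The rows of $\overline{P+P^T}$ arise from those of $\bigl(\overline{P^T}\,\bigm|\,\overline{P}\bigr)$ by adding the two halves, so $\rk\bigl(\overline{P^T}\,\bigm|\,\overline{P}\bigr) \geq \rk\bigl(P+P^T\bigr)$, and the hypothesis $\ell \geq 3n - \rk(P+P^T)$ yields $\ell - 2n \geq n - \rk\bigl(\overline{P^T}\,\bigm|\,\overline{P}\bigr)$, so a suitable choice of the free block $\ast$ achieves total rank $n$. For part (b) the same construction works without the straightness check, so only $\ell \geq 2n$ is needed.

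For uniqueness in (a), I would work mod $\hbar$ first. Each straight split realization $\R_i$ produces a triple $(H_i,f_i,g_i)$ over $\k$ with $\dim H_i = \ell$, an injective $f_i\colon \k^{2n} \to H_i$ (encoding $T_{k,i}^\pm$), a surjective $g_i\colon H_i \to \k^n$ (encoding $\alpha_{j,i}$, with surjectivity supplied by straightness), and $g_i \circ f_i = \overline{M}$ where $\overline{M} := \overline{(P^T \mid P)}$. I build a canonical model and show each such $(H,f,g)$ isomorphic to it via the direct-sum decomposition $H = f(\k^{2n}) \oplus s(C) \oplus Z$, where $s$ is a $\k$-linear section of $g$, $C$ is a fixed complement of $\Img(\overline{M})$ in $\k^n$, and $Z \subseteq \ker g$ is a complement of $f(\ker \overline{M})$ in $\ker g$. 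A direct check using $\ker g \cap \bigl(f(\k^{2n}) + s(C)\bigr) = f(\ker \overline{M})$ gives $\dim Z = \ell - 3n + \rk \overline{M} \geq 0$, exactly matching the hypothesis. To lift to $\kh$, I choose a $\kh$-basis of $\lieh_1$ extending $\{T_{k,1}^\pm\}$ (possible by the split condition) and define $\phi \colon \lieh_1 \to \lieh_2$ by $\phi(T_{k,1}^\pm) := T_{k,2}^\pm$ and by lifting the $\k$-isomorphism on the remaining basis vectors; $\phi$ is then a $\kh$-module isomorphism by Nakayama. A final adjustment of $\phi$ on the free basis elements by terms in $\hbar\,\lieh_2$ enforces $\phi^*(\alpha_{j,2}) = \alpha_{j,1}$ exactly; this is solvable because straightness of $\R_2$ makes the evaluation map $v \mapsto (\alpha_{j,2}(v))_{j \in I}$ from $\lieh_2$ to $\kh^n$ surjective (it is surjective mod $\hbar$, hence surjective by Nakayama applied to the cokernel). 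For part (b) an analogous but simpler argument applies.

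The main obstacle I expect is the uniqueness part: the mod-$\hbar$ canonical-form argument, though ultimately routine linear algebra, requires a careful dimension count jointly using the split and straight conditions, and the subsequent lifting from $\k$ to $\kh$ needs two applications of Nakayama — first to promote the $\k$-iso to a $\kh$-iso of modules, and second to solve exactly for the correction term that preserves the functional data.
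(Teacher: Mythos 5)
Your treatment of part \textit{(a)} is correct, but it follows a genuinely different route from the paper's. For existence, the paper first trades the $T_i^{\pm}$ for the $S_i,\varLambda_i$, builds the explicit $\ell\times\ell$ block matrix $G_P$, and takes $\lieh$ to be the $\kh$--span of its rows, whereas you simply take $\lieh=\kh^{\ell}$, put the $T_i^{\pm}$ on standard basis vectors and adjust a free block of values of the $\alpha_j$; both constructions succeed for the same reason (enough free columns to reach rank $n$ modulo $\hbar$), and both silently identify $\rk(P+P^{\,\scriptscriptstyle T})$ over $\kh$ with the rank of its reduction mod $\hbar$ (your inequality $\rk\big(\overline{P^{\scriptscriptstyle T}}\mid\overline{P}\big)\geq\rk(P+P^{\,\scriptscriptstyle T})$ needs this, and the paper's nonsingular block $P_s^{\,\ulcorner}$ plays the same implicit role), so this is a shared convention rather than a gap, and it is harmless in the Cartan-type case. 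For uniqueness the divergence is real: the paper completes $\Pi^\vee$ to a basis, extends scalars to $\Bbbk(\!(\hbar)\!)$, reduces the resulting matrix $N_P$ to the shape of $G_P$ by Gaussian elimination and then descends back to $\kh$; you instead classify the reduced data $(H,f,g)$ over $\Bbbk$ by a canonical direct-sum decomposition, lift the resulting isomorphism by Nakayama, and then correct it by $\hbar$--small terms, using that straightness makes the evaluation map $\lieh_2\to\kh^{n}$, $v\mapsto(\alpha_{j,2}(v))_j$, surjective. Your route avoids the somewhat delicate passage through $\Bbbk(\!(\hbar)\!)$ and the descent step, at the price of a more explicit bookkeeping over the residue field; the dimension counts and the two Nakayama applications are all correct as you set them up.

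The genuine gap is part \textit{(b)}. Your closing sentence, that ``an analogous but simpler argument applies,'' does not hold: straightness entered your uniqueness proof twice in an essential way --- it made $g_i$ surjective, so that the canonical model is determined by $(\ell,n,\overline{M})$ alone, and it made the evaluation map surjective, which is exactly what lets you solve $\alpha_{j,2}(w_k)=\gamma_j(Y_k)$ in the final correction. Once straightness is dropped, the two reductions $g_1,g_2$ may have different images (so the mod-$\hbar$ data carries extra invariants and the two triples need not be isomorphic at all in your framework), and even when the reductions agree there is no surjectivity available to force $\phi^{*}(\alpha_{j,2})=\alpha_{j,1}$ exactly: the equation you must solve becomes a condition of the form $C^{+}P+C^{-}P^{\,\scriptscriptstyle T}+E\,D'=D$ with $E$ invertible, which is not solvable by a Nakayama argument. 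The paper's proof of \textit{(b)} is correspondingly different from its proof of \textit{(a)}: it works with the modified matrix $N'_{P}$ and combines row operations through the nonsingular block $P_s^{\,\ulcorner}$ with \emph{column} operations through the identity block $I_{\ell-2n}$ (i.e.\ a readjustment of the auxiliary functionals), and none of this is ``analogous'' to your straightness-based correction. So part \textit{(b)} needs its own argument in your setup; as written, it is unproved.
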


\pf
   \textit{(a)}\;  Let  $ \; r := \rk\big(P+P^{\,\scriptscriptstyle T}\,\big) \; $  and  $ \; \ell \geq 3\,n-r \; $  be fixed.  We set
  $$  S_i := 2^{-1} \big(\, T_i^+ + T_i^- \big) \;\; ,   \qquad   \varLambda_i :=
  2^{-1} \big(\, T_i^+ - T_i^- \big) \qquad \quad  \forall \;\; i \in I  $$
 for any choice of elements  $ \, T_i^\pm \, (i \in I) \, $  in any  $ \kh $--module  $ \lieh \, $;  \,then
 $ \; T_i^\pm = S_i \pm \varLambda_i \; $  for all  $ \, i \in I \, $,  \,so we have
 $ \, \textsl{Span}_\kh\Big( {\big\{ T_i^+ , T_i^- \big\}}_{i \in I}\Big) = \textsl{Span}_\kh\Big( {\big\{ S_i \, , \varLambda_i \big\}}_{i \in I}\Big) \, $.
 Therefore,
  \textsl{the existence of a split realization of  $ P $  amounts to the
  same as the existence of the slightly modified notion where:
 \vskip3pt
   --- instead of the  $ T_i^+ $'s  and the  $ T_i^- $'s  one considers the  $ S_i $'s and the  $ \varLambda_i $'s,
 \vskip3pt
   --- condition  \textit{(a.1)}  in  Definition 1\  is replaced by condition
 \vskip1pt
   \;\; \textit{(a.1+)}\;  $ \; \alpha_j(S_i) \, = \, 2^{-1} (p_{\,ij} + p_{\,ji}) \; $,
$ \;\; \alpha_j(\varLambda_i) \, = \, 2^{-1} (p_{\,ij} - p_{j\,i})  \qquad  \forall \;\; i, j \in I \, $;
 }
 \vskip5pt
   Therefore, we look now for such a ``realization'' in this alternative sense.
   We consider the matrices (respectively symmetric and antisymmetric)
  $$  P_s \, := \, 2^{-1} \big( P + P^{\,\scriptscriptstyle T} \big) \;\; ,   \qquad   P_a \, := \, 2^{-1} \big( P - P^{\,\scriptscriptstyle T} \big)  $$
%
%
 and then, reordering the indices in  $ I $  if necessary, we re-write the matrix  $ P_s $  in the block form
 $ \; P_s \, = \begin{pmatrix}
     P_s^{\,{}_{\scriptstyle \ulcorner}}  &  P_s^{\,{}_{\scriptstyle \urcorner}}   \\
     P_s^{\,\llcorner}  &  P_s^{\,\lrcorner}
            \end{pmatrix} \; $
 where  $ P_s^{\,{}_{\scriptstyle \ulcorner}} $ has size $r\times r$,  $ \, \rk\big( P_s^{\,{}_{\scriptstyle \ulcorner}} \big) = r \, $,
 \,and the other blocks have the corresponding sizes;  \textsl{according to the same reordering of the indices\/}
 (if any), we also re-write  $ P_a $  as
 $P_a \, = \begin{pmatrix}
     P_a^{\,{}_{\scriptstyle \ulcorner}}  &  P_a^{\,{}_{\scriptstyle \urcorner}} \,   \\
     P_a^{\,\llcorner}  &  P_a^{\,\lrcorner} \,
            \end{pmatrix} \; $
 with  $ P_a^{\,{}_{\scriptstyle \ulcorner}} $  of size  $ \, r \times r \, $,  \,and so on.
                                                               \par
   Now we consider the  $ \, \ell \times \ell \, $  matrix
 \begin{equation}  \label{eq: matrix G_P - Z}
    G_P  \; = \;  \begin{pmatrix}
  \; P_s^{\,{}_{\scriptstyle \ulcorner}}  &  P_s^{\,{}_{\scriptstyle \urcorner}}  &  0  &  0  &  0  &  0 \;   \\
  \; P_s^{\,\llcorner}  &  P_s^{\,\lrcorner}  &  \;\; I_{n-r}  &  0  &  0  &  0 \;   \\
  \; 0  &  \;\; I_{n-r}  &  0  &  0  &  0  &  0 \;   \\
  \; P_a^{\,{}_{\scriptstyle \ulcorner}}  &  P_a^{\,{}_{\scriptstyle \urcorner}}  &  0  &  \, I_r  &  0  &  0 \;   \\
  \; P_a^{\,\llcorner}  &  P_a^{\,\lrcorner}  &  0  &  0  &  \;\; I_{n-r}  &  0 \;   \\
  \; 0  &  0  &  0  &  0  &  0  &  \;\; I_{\ell-(3\,n-r)} \;
                  \end{pmatrix}
 \end{equation}
 that is non-degenerate, as  $ \; \det \big(G_P\big) = \pm \det \big(P_s^{\,{}_{\scriptstyle \ulcorner}}\big) \not= 0 \; $.
 Now, set  $ \; \lieh_\bullet := \kh^{3\,n-r} \, $,  \,fix as  $ S_i $'s,  respectively  $ \varLambda_i $'s  ($ \, i \in I \, $),
 the rows of  $ G_P $  (as vectors in  $ \lieh_\bullet $)  from  $ 1 $  to  $ n \, $, respectively from  $ \, 2\,n-r+1 \, $  to
 $ \, 3\,n-r \, $,  \,and fix as  $ \alpha_j $'s  ($ \, j \in I \, $)  the first  $ n $  linear coordinate functions on  $ \lieh_\bullet $
 (as vectors in  $ \lieh_\bullet^* \, $).  Now set  $ \, \Pi^\vee_{\scriptscriptstyle S, \varLambda} := {\big\{ S_i \, , \varLambda_i \big\}}_{i \in I} \, $
 and let  $ \lieh $  be the  $ \kh $--span  (inside  $ \lieh_\bullet $)  of the rows of  $ G_P \, $;  then the  $ \alpha_j $'s
 restrict to elements of  $ \lieh^* $  (that we still denote by  $ \alpha_j \, $)  hence we consider  $ \, \Pi := {\{\alpha_j\}}_{j \in I} \, $
 as a subset in  $ \lieh^* \, $.  Now the triple
 $ \; \R_{\scriptscriptstyle S, \varLambda} := \big(\, \lieh \, , \Pi \, , \Pi^\vee_{\scriptscriptstyle S, \varLambda} \,\big) \; $
 is a ``realization'' (in the present, modified sense) of  $ P $  which is straight split, thus proving the existence part of claim  \textit{(a)}.
 \vskip5pt
   As to uniqueness, we reverse the previous line of arguing.  Indeed, given a split ``realization'', in modified sense,
   $ \, \R_{\scriptscriptstyle S, \varLambda} := \big(\, \lieh \, , \Pi \, , \Pi^\vee_{\scriptscriptstyle S, \varLambda} \big) \, $  of  $ P $,
   we complete  $ \Pi^\vee_{\scriptscriptstyle S, \varLambda} $  to a  $ \kh $--basis  of  $ \lieh $  adding extra elements
   $ Y_1 \, , \dots , Y_{\ell-2n} \, $;  \,moreover, we define additional  $ \, \alpha_{n+1} , \dots , \alpha_\ell \in \lieh^* \, $
   such that the matrix of all values of the  $ \alpha_j $'s
 on the elements of the ordered basis
%
%
  $ \; \big\{\, S_1 , \dots , S_n \, , Y_1 \, , \dots , Y_{n-r} , \varLambda_1 , \dots , \varLambda_n \, , Y_{n-r+1} \, , \dots , Y_{\ell-2n} \,\big\} \; $
 is given by
 \begin{equation}  \label{eq: matrix isom-real's - Z}
    N_P  \; = \;  \begin{pmatrix}
  \; P_s^{\,{}_{\scriptstyle \ulcorner}}  &  P_s^{\,{}_{\scriptstyle \urcorner}}  &  0  &  0  &  0  &  0 \;   \\
  \; P_s^{\,\llcorner}  &  P_s^{\,\lrcorner}  &  \;\; I_{n-r}  &  0  &  0  &  0 \;   \\
  \; B_{\scriptscriptstyle <}  &  B_{\,\scriptscriptstyle >}  &  0  &  0  &  0  &  0 \;   \\
  \; P_a^{\,{}_{\scriptstyle \ulcorner}}  &  P_a^{\,{}_{\scriptstyle \urcorner}}  &  0  &  \, I_r  &  0  &  0 \;   \\
  \; P_a^{\,\llcorner}  &  P_a^{\,\lrcorner}  &  0  &  0  &  \;\; I_{n-r}  &  0 \;   \\
  \; D_{\scriptscriptstyle <}  &  D_{\,\scriptscriptstyle >}  &  0  &  0  &  0  &  \;\; I_{\ell-(3\,n-r)} \;
                  \end{pmatrix}
 \end{equation}
 which by construction is non-degenerate.
%
%
 Now, let us extend scalars for a while from  $ \kh $  to  $ \Bbbk(\!(\hbar)\!) \, $:  \,then by Gauss' elimination
 algorithm on the rows (involving in particular the first  $ r $  rows) we can modify the matrix  $ N_P $  in
 \eqref{eq: matrix isom-real's - Z}  till it gets a new form where  $ \, B_{\scriptscriptstyle <} = 0 \; $  and
 $ \, D_{\scriptscriptstyle <} = 0 \; $;  \,moreover, the ``new'' submatrix  $ B_{\,\scriptscriptstyle >} $  fulfills
  $$  \det\big(P_s^{\,{}_{\scriptstyle \ulcorner}}\big) \, \det (B_{\,\scriptscriptstyle >}) = \pm \det\big(N_P\big) \not= 0 \;\; ,
  \quad \text{hence} \quad  B_{\,\scriptscriptstyle >} \in \textsl{GL}_{n-r}\big(\,\Bbbk(\!(\hbar)\!)\big)  $$
 hence we can choose another basis  in  $ \, \textsl{Span}_{\Bbbk(\!(\hbar)\!)}\big(\, Y_1 \, , \dots , Y_{n-r} \big) \, $
 so to get  $ \, B_{\,\scriptscriptstyle >} = I_{n-r} \, $.  Then another Gauss' elimination process involving the rows from
 $ n+1 $  to  $ 2n-r $  allows us to modify the last  $ \ell-(3n-r) $  rows so as to get  $ \, D_{\scriptscriptstyle >} = 0 \, $.
                                                                      \par
   All this gives us a new split realization (in modified sense) of  $ P $  \textsl{over\/}  $ \Bbbk(\!(\hbar)\!) $
   and a specific basis, including the  $ S_i $'s  and the  $ \varLambda_i $'s,  of the  $ \Bbbk(\!(\hbar)\!) $--vector space
   $ \lieh_{\Bbbk(\!(\hbar)\!)} $  considered in it: eventually, taking as  $ \lieh $  the  $ \kh $--span
   of that basis we can easily read off that that ``\,realization over  $ \Bbbk(\!(\hbar)\!) \, $''  is a genuine realization \textsl{over\/}  $ \kh \, $,
   which is isomorphic to the original one, by construction (indeed, we only modified a direct sum complement of
   $ \, \textsl{Span}_{\Bbbk(\!(\hbar)\!)}\big( {\{ S_i \, , \varLambda_i \}}_{i \in I} \big) \, $  by a sheer rescaling, at most).
   But now, for this final realization the matrix  $ N_P $  in  \eqref{eq: matrix isom-real's - Z}
   takes the same form as  $ G_P $  in  \eqref{eq: matrix G_P - Z}:  so taking as
   $ \, \phi : \lieh \relbar\joinrel\longrightarrow \lieh \, $
   the isomorphism given by change of bases, we are done.
 \vskip7pt
   \textit{(b)}\;  As claim  \textit{(a)\/}  already guarantees the existence of straight realizations,
   the relevant part of claim  \textit{(b)\/}
   concerns the uniqueness, that is proved again like for  \textit{(a)},  up to minimal changes.
   Namely, instead of the matrix in
   \eqref{eq: matrix isom-real's - Z}  we deal with
  $$  N^{\,\prime}_{\!P}  \; = \;  \begin{pmatrix}
   \; P_s^{\,{}_{\scriptstyle \ulcorner}}  &  P_s^{\,{}_{\scriptstyle \urcorner}}  &  0  &  0  &  0  &  0 \;   \\
   \; P_s^{\,\llcorner}  &  P_s^{\,\lrcorner}  &  \;\; I_{n-r}  &  0  &  0  &  0 \;   \\
   \; P_a^{\,{}_{\scriptstyle \ulcorner}}  &  P_a^{\,{}_{\scriptstyle \urcorner}}  &  0  &  \, I_r  &  0  &  0 \;   \\
   \; P_a^{\,\llcorner}  &  P_a^{\,\lrcorner}  &  0  &  0  &  \;\; I_{n-r}  &  0 \;   \\
   \; D_{\scriptscriptstyle <}  &  D_{\,\scriptscriptstyle >}  &  0  &  0  &  0  &  \;\; I_{\ell-2\,n} \;
                  \end{pmatrix}  $$
 and then we observe that we can again reduce it to a similar matrix where  $ \, D_< = 0 \, $
 --- acting by Gauss' elimination on the rows, exploiting the
 nonsingular square submatrix  $ P_s^{\,{}_{\scriptstyle \ulcorner}} $  ---
 and  $ \, D_> = 0 \, $ --- where we perform another Gauss' elimination  \textsl{on the columns\/}
 (which in the end amounts to modifying the  $ \alpha_j $'s),
 exploiting the nonsingular square submatrix  $ I_{\ell - 2\,n} \, $.
\epf

\vskip5pt

\begin{rmk}
 It follows from definitions that a  \textsl{necessary\/}  condition for a small realization of any  $ \, P \in M_n\big(\kh\big) \, $
 to exist is  $ \, \rk\!\big( P_s \,\big| P_a \big) = \rk(P_s) \, $.
 Conversely, with much the same arguments used in the proof of
 Proposition \ref{prop: exist-realiz's},  we can prove that such a condition is also
 \textsl{sufficient},  as the following holds true, indeed:
\end{rmk}

\vskip5pt

\begin{prop}  \label{prop: exist-realiz's_small}
 If  $ \, P \in M_n\big(\kh\big) \, $  is such that  $ \, \rk\!\big( P_s \,\big| P_a \big) = \rk(P_s) \, $,
 then, for all  $ \, \ell \geq 2\,n-\rk(P_s) \, $,
 there exists a straight small realization of  $ \, P $  with  $ \, \rk(\lieh) = \ell \, $,
\,and such a realization is unique up to isomorphisms.
\end{prop}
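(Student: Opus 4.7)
The plan is to adapt the proof of Proposition~\ref{prop: exist-realiz's} to the small setting.  The key new twist is that, since smallness forces the $\varLambda_i$'s to lie in $\textsl{Span}_\kh\big({\{S_k\}}_{k \in I}\big)\, $, they can no longer be chosen as extra basis rows (as in the split case) but must be expressed as $\kh$-linear combinations of the $S_k$'s.  The decisive algebraic observation is that the hypothesis $\, \rk\!\big(\, P_s \,\big|\, P_a \,\big) = \rk(P_s) \, $, combined with $P_s^{\,\scriptscriptstyle T} = P_s$ and $P_a^{\,\scriptscriptstyle T} = -P_a\, $, is equivalent to the existence of a matrix $\, C \in M_n(\kh) \, $ with $\, P_a = C \cdot P_s \, $.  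Indeed, in any small realization, writing $\, \varLambda_i = \sum_k c_{ik}\, S_k \, $ and applying $\alpha_j$ yields $\, (P_a)_{ij} = (c\, P_s)_{ij} \, $, so both the necessity of the rank condition and the existence of $C$ are built into smallness itself.

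For the existence part, I would set $\, r := \rk(P_s) \, $, reorder indices so that the top-left $r \times r$ submatrix $P_s^{\,{}_{\scriptstyle \ulcorner}}$ of $P_s$ is non-singular, take $\, \lieh := \kh^{\ell} \, $ for an arbitrary $\, \ell \geq 2n - r \, $, and --- in the spirit of \eqref{eq: matrix G_P - Z} --- introduce the $\ell \times \ell$ matrix
$$  G'_P  \; := \;  \begin{pmatrix}
  \; P_s^{\,{}_{\scriptstyle \ulcorner}}  &  P_s^{\,{}_{\scriptstyle \urcorner}}  &  0  &  0 \;   \\
  \; P_s^{\,\llcorner}  &  P_s^{\,\lrcorner}  &  \;\; I_{n-r}  &  0 \;   \\
  \; 0  &  \;\; I_{n-r}  &  0  &  0 \;   \\
  \; 0  &  0  &  0  &  \;\; I_{\ell - (2n-r)} \;
  \end{pmatrix}  $$
whose determinant equals $\, \pm \det\big( P_s^{\,{}_{\scriptstyle \ulcorner}} \big) \neq 0 \, $.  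I would then declare the $i$-th row of $G'_P$ to be $S_i\, $, the $j$-th coordinate functional on $\lieh$ to be $\alpha_j\, $, and finally set $\, \varLambda_i := \sum_k C_{ik}\, S_k \, $ and $\, T_i^{\pm} := S_i \pm \varLambda_i \, $.  Conditions (a.1), (a.2) and straightness follow by direct inspection --- exactly as in the proof of Proposition~\ref{prop: exist-realiz's} --- while smallness holds by the very definition of the $\varLambda_i$'s.

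For uniqueness, I would mimic the Gauss-elimination argument of Proposition~\ref{prop: exist-realiz's}(a).  Starting from any straight small realization of $P$ of rank $\ell\, $, I would complete $\, {\{S_i\}}_{i \in I} \, $ to a $\kh$-basis of $\lieh$ by adjoining $\ell - n$ auxiliary vectors $Y_k\, $, extend $\, {\{\alpha_j\}}_{j \in I} \, $ to a basis of $\lieh^*\, $, and record the resulting matrix of values $N'_P$ in a form analogous to \eqref{eq: matrix isom-real's - Z}, but with no separate block of rows for the $\varLambda_i$'s (these being encoded in the matrix $C$).  Extending scalars to $\Bbbk(\!(\hbar)\!)\, $, Gauss elimination on the rows --- exploiting the non-singular pivot block $P_s^{\,{}_{\scriptstyle \ulcorner}}$ --- followed by column operations that modify the extended $\alpha_j$'s for $j > n\, $, would reduce $N'_P$ to the same shape as $G'_P\, $, yielding the desired isomorphism.

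The main obstacle, exactly as in Proposition~\ref{prop: exist-realiz's}, will be to verify that the basis change constructed over $\Bbbk(\!(\hbar)\!)$ descends to a genuine $\kh$-module isomorphism; this rests on the fact that the only non-integral operations involved are rescalings by units, guaranteed by the invertibility of $P_s^{\,{}_{\scriptstyle \ulcorner}}$ over $\kh\, $.  A minor additional check specific to the small setting is that the defining relation $\, \varLambda_i = \sum_k C_{ik}\, S_k \, $ be preserved by the reduction --- but this is automatic, since once the $S_i$'s and the first $n$ coordinate functionals have been brought into standard form, the $\varLambda_i$'s are entirely determined by the matrix $C\, $.
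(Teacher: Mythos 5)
Your overall route is the one the paper itself intends (the paper offers no separate proof, only the remark that "much the same arguments" as in Proposition~\ref{prop: exist-realiz's} apply), and your observation that in any small realization the relations $\,\varLambda_i = \sum_k c_{ik} S_k\,$ force $\,P_a = C\,P_s\,$ with $\,C = (c_{ik}) \in M_n\big(\kh\big)\,$ is correct and is indeed the key new ingredient. The problem is that your "decisive algebraic observation" is only justified in the direction you do \emph{not} need. What you prove is: (small realization exists) $\Rightarrow$ ($P_a = C P_s$ is solvable over $\kh$) $\Rightarrow$ (rank condition). The existence part of the Proposition needs the converse: from $\,\rk\big(P_s \,\big|\, P_a\big) = \rk(P_s)\,$ you must produce $C$ \emph{with entries in} $\kh$, since $C$ is fed directly into the definition $\,\varLambda_i := \sum_k C_{ik} S_k\,$. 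Over the fraction field $\Bbbk(\!(\hbar)\!)$ the rank condition does give a solution, but integrality is a genuine issue over $\kh\,$: for $\,P_s = \hbar\, I_2\,$ and $\,P_a = \big(\begin{smallmatrix} 0 & 1 \\ -1 & 0 \end{smallmatrix}\big)\,$ the rank condition holds, yet the only solution is $\,C = \hbar^{-1} P_a \notin M_2\big(\kh\big)\,$ (and, consistently with your necessity remark, no small realization exists). So either the hypothesis has to be read so that integral solvability is guaranteed --- e.g.\ $\,P_s \in M_n(\Bbbk)\,$, as in the Cartan-type situation, where a coefficient-by-coefficient argument does produce $\,C \in M_n\big(\kh\big)\,$ --- or you must supply the missing integrality argument; as written, the claimed equivalence is unproved, and false for general $\,P \in M_n\big(\kh\big)\,$.

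There is a second gap, in the uniqueness argument: the matrix $C$ is an invariant of the given realization, not of $P$. When $\,\rk(P_s) < n\,$ the equation $\,P_a = C P_s\,$ has many integral solutions, differing by any $Z$ with $\,Z P_s = 0\,$, and each choice yields a straight small realization of the same rank; your closing sentence "the $\varLambda_i$'s are entirely determined by the matrix $C$" therefore assumes exactly what must be shown, namely that the two realizations being compared share the same $C$. Note that a morphism of realizations must send $\,S_i \mapsto S_{\sigma(i)}\,$ and $\,\varLambda_i \mapsto \varLambda'_{\sigma(i)}\,$; since $\,\varLambda_i = \sum_k C_{ik} S_k\,$, its image is already forced once the $S$'s are matched, so realizations built from genuinely different solutions $C$, $C'$ cannot be identified by the row/column reduction (for instance, for $\,P = P_s = DA\,$ with $A$ of affine type one may take $\,\varLambda_i = 0\,$ or $\,\varLambda_i = S_1 + S_2\,$, giving straight small realizations of equal rank with $\,T_i^+ = T_i^-\,$ in one and $\,T_i^+ \neq T_i^-\,$ in the other). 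When $P_s$ is nonsingular over the fraction field the solution $C$ is unique and your scheme can be completed, but in the degenerate case $\,\rk(P_s) < n\,$ the uniqueness claim requires an additional argument (or a sharpening of what "uniqueness" is asserted), which the sketched Gauss elimination does not provide.
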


\vskip5pt

   After this existence results concerning realizations of special type, we can achieve a
   more general result with two additional steps.
   The first one tells in short that every realization can be ``lifted'' to a \textsl{split\/}  one:

\vskip13pt

\begin{lema}  \label{lemma: split-lifting}
 Let  $ \; \R := \big(\, \lieh \, , \Pi \, , \Pi^\vee \,\big) \; $  be a realization of  $ \, P \in M_n\big(\kh\big) \, $.
 Then there exists a  \textsl{split}  realization  $ \; \dot{\R} := \big(\, \dot{\lieh} \, , \dot{\Pi} \, , {\dot{\Pi}}^\vee \,\big) \, $
 of the same matrix  $ P $  and an epimorphism of realizations  $ \; \underline{\pi} : \dot{\R} \relbar\joinrel\twoheadrightarrow \R \; $
 such that, if  $ \, \lieh_{{}_T} := \textsl{Span}\Big( {\big\{ T_i^\pm \big\}}_{i \in I} \Big) \, $  and
 $ \, \dot{\lieh}_{{}_T} := \textsl{Span}\Big( {\big\{ \dot{T}_i^\pm \big\}}_{i \in I} \Big) \, $,  then  $ \, \underline{\pi} \, $
 induces an isomorphism  $ \; \pi_* : \dot{\lieh} \Big/ \dot{\lieh}_{{}_T} \cong \lieh \Big/ \lieh_{{}_T} \; $.
                                                                               \par
   If in addition  $ \, \R $  is  \textsl{straight},  resp.\ \textsl{minimal},  then a  \textsl{split}  realization  $ \, \dot{\R} $
   as above can be found that is  \textsl{straight},  resp.\ \textsl{minimal},  as well.
\end{lema}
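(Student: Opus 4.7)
The plan is to construct $\dot{\R}$ by replacing the possibly-dependent generators $T_i^\pm$ with genuinely free symbols, then glue them back to $\lieh$ along the original presentation. Let $F$ be the free $\kh$-module of rank $2n$ with basis $\{v_i^+, v_i^-\}_{i \in I}$, let $\phi: F \twoheadrightarrow \lieh_T$ be the surjection $v_i^\pm \mapsto T_i^\pm$, and set $K := \ker(\phi)$. Pick a $\kh$-linear complement $C$ of $\lieh_T$ in $\lieh$, so that $\lieh = \lieh_T \oplus C$; this is available because $\lieh/\lieh_T$, being finitely generated and torsion-free over the DVR $\kh$, is free, so the short exact sequence $0 \to \lieh_T \to \lieh \to \lieh/\lieh_T \to 0$ splits. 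Now set
\[
 \dot{\lieh} \, := \, F \oplus C \;,\quad  \dot{T}_i^\pm \, := \, v_i^\pm \;,\quad  \pi\big|_F \, := \, \phi \;,\;\;  \pi\big|_C \, := \, \textsl{incl}_C \;,\quad  \dot{\alpha}_j \, := \, \alpha_j \circ \pi \;.
\]

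Next I verify that $\dot{\R} := \big(\dot{\lieh}, \dot{\Pi}, \dot{\Pi}^\vee\big)$ is a split realization of $P$ and that $\underline{\pi}$ is a morphism of realizations. Condition \textit{(a.1)} is immediate: $\dot{\alpha}_j(\dot{T}_i^\pm) = \alpha_j(T_i^\pm) = p_{ij}$ (resp.\ $p_{ji}$). For \textit{(a.2)}, the reduction $\overline{\pi}: \overline{\dot{\lieh}} \twoheadrightarrow \overline{\lieh}$ is surjective and carries $\overline{\dot{S}}_i$ to $\overline{S}_i$, so the $\k$-linear independence of $\{\overline{S}_i\}_i$ in $\overline{\lieh}$ forces the same for $\{\overline{\dot{S}}_i\}_i$ in $\overline{\dot{\lieh}}$. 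The split condition \textit{(b.3)} is built in, since $\{v_i^+, v_i^-\}_{i \in I}$ is part of a $\kh$-basis of $F \oplus C$. Finally, $\pi^*(\alpha_j) = \dot{\alpha}_j$, so $\pi^*(\Pi) = \dot{\Pi}$ and $\pi(\dot{T}_i^\pm) = T_i^\pm$, confirming that $\underline{\pi}$ is a morphism of realizations (with identity permutation).

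The isomorphism $\pi_*$ is the crucial step. Clearly $\pi(\dot{\lieh}_T) = \phi(F) = \lieh_T$; and for any $(f, c) \in \ker(\pi) \subseteq F \oplus C$, the equation $\phi(f) + c = 0$ inside $\lieh = \lieh_T \oplus C$ forces $\phi(f) = 0$ and $c = 0$ separately. Hence $\ker(\pi) = K \oplus 0 \, \subseteq \, F = \dot{\lieh}_T$, whence the induced map $\pi_*: \dot{\lieh}/\dot{\lieh}_T \to \lieh/\lieh_T$ is an isomorphism. The hard point in the whole plan is securing the decomposition $\lieh = \lieh_T \oplus C$; everything else is essentially formal, and this splitting is exactly what reduces the lemma to a bookkeeping exercise.

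For the two addenda: if $\R$ is \emph{straight}, then so is $\dot{\R}$, since $\overline{\dot{\alpha}}_j = \overline{\pi}^*(\overline{\alpha}_j)$ and $\overline{\pi}^*$ is injective (being dual to a surjection of $\k$-vector spaces), so any $\k$-linear relation among the $\overline{\dot{\alpha}}_j$'s pulls back to one among the $\overline{\alpha}_j$'s. If $\R$ is \emph{minimal}, then $\lieh = \lieh_T$ gives $\lieh/\lieh_T = 0$, and via the isomorphism $\pi_*$ one also gets $\dot{\lieh} = \dot{\lieh}_T$, so $\dot{\R}$ is minimal.
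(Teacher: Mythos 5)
Your construction hinges on the claim that $\lieh\big/\lieh_{{}_T}$ is torsion-free over $\kh$, hence free, hence the sequence $0 \to \lieh_{{}_T} \to \lieh \to \lieh\big/\lieh_{{}_T} \to 0$ splits and a complement $C$ exists. That claim is false in general, and it is exactly the point you yourself flag as "the hard point". The definition of realization only constrains the $S_i = 2^{-1}\big(T_i^+ + T_i^-\big)$ modulo $\hbar$; it says nothing about the $\varLambda_i = 2^{-1}\big(T_i^+ - T_i^-\big)$, which may well be divisible by $\hbar$. Concretely, take $n=1$, $P = (2)$, $\lieh = \kh\,H_1 \oplus \kh\,H_2$, $T_1^\pm := H_1 \pm \hbar\,H_2$, and $\alpha_1$ with $\alpha_1(H_1)=2$, $\alpha_1(H_2)=0$: conditions \textit{(a.1)} and \textit{(a.2)} hold, but $\lieh_{{}_T} = \kh\,H_1 \oplus \kh\,\hbar H_2$, so $\lieh\big/\lieh_{{}_T} \cong \kh\big/\hbar\,\kh \cong \Bbbk$ is pure torsion. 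Since $\lieh$ is free (hence torsion-free), no $\kh$-submodule $C$ with $\lieh = \lieh_{{}_T} \oplus C$ can exist, and your definition of $\dot{\lieh} := F \oplus C$ never gets off the ground. (Only in the minimal case, where the quotient is $0$, does your splitting trivially exist.)

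This is precisely the difficulty the paper's proof is designed to circumvent: it first extends scalars to the field $\Bbbk(\!(\hbar)\!)$, where a complement $\lieh'_\circ$ of $\lieh_{{}_T}^{(\hbar)}$ inside $\lieh^{(\hbar)}$ always exists, introduces the free module on formal symbols $\dot{T}_i^\pm$ together with the epimorphism $\pi'$ onto $\lieh^{(\hbar)}$, and then defines $\dot{\lieh} := {(\pi')}^{-1}(\lieh)$ as a $\kh$-lattice inside the Laurent-series module; the preimage construction is what absorbs the torsion of $\lieh\big/\lieh_{{}_T}$ and still yields $\pi^{-1}(\lieh_{{}_T}) = \dot{\lieh}_{{}_T}$, whence the required isomorphism $\pi_*$. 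Your bookkeeping of conditions \textit{(a.1)}, \textit{(a.2)}, splitness, straightness and minimality would be fine \emph{granted} the direct-sum decomposition, but without it the argument has a genuine gap; to repair it you must either work over $\Bbbk(\!(\hbar)\!)$ and pull back as the paper does, or find some other device that does not presuppose $\lieh_{{}_T}$ being a direct summand of $\lieh$.
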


\pf
 We proceed in two steps, first working over scalar extensions from  $ \kh $  to  $ \Bbbk(\!(\hbar)\!) $  and then
 ``pulling back'' our result to the original setup.  To this end, hereafter, for any  $ \kh $--module  $ \mathfrak{m} $
 we write  $ \, \mathfrak{m}^{(\hbar)} := \Bbbk(\!(\hbar)\!) \otimes_\kh \mathfrak{m} \, $.
                                                                  \par
   Let  $ \, \lieh_{{}_T} := \textsl{Span}\Big( {\big\{ T_i^+ , T_i^- \big\}}_{i \in I} \Big) \, $.  Then  $ \lieh_{{}_T}^{(\hbar)} $
   embeds into  $ \lieh^{(\hbar)} $  and the latter splits into  $ \; \lieh^{(\hbar)} = \lieh_{{}_T}^{(\hbar)} \oplus \lieh'_\circ \; $
   for some  $ \Bbbk(\!(\hbar)\!) $--submodule  $ \lieh'_\circ $  in  $ \lieh^{(\hbar)} \, $.  Now fix formal vectors
   $ \, \dot{T}_i^\pm \, (\, i \in I \,) \, $,  the free  $ \khp $--module  $ \lieh'_{{}_T} $  with  $ \Bbbk(\!(\hbar)\!) $--basis
   $ \, {\Pi'}^\vee \! := {\big\{ \dot{T}_i^+ , \dot{T}_i^- \big\}}_{i \in I} \, $, and  the
   $ \Bbbk(\!(\hbar)\!) $--module  epimorphism
   $ \; \pi' : \lieh'_\oplus := \lieh'_{{}_T} \oplus \lieh'_\circ \!\relbar\joinrel\relbar\joinrel\twoheadrightarrow \lieh^{(\hbar)} \; $
   given by  $ \, \pi'\big( \dot{T}_i^\pm \big) := T_i^\pm \; $  ($ \, i \in I \, $)  and
   $ \, \pi{\big|}_{\lieh_\circ} \! := \textsl{id}_{\lieh_\circ} \, $.
   If we let  $ \alpha^{(\hbar)}_j $  be the natural scalar extension of  $ \alpha_j $  ($ \, j \in I \, $),
   then every such  $ \alpha^{(\hbar)}_j $
   is a  $ \Bbbk(\!(\hbar)\!) $--linear  function from  $ \lieh^{(\hbar)} $  to  $ \khp $,
   and altogether the  $ \alpha^{(\hbar)}_j $'s
   are linearly independent over  $ \khp $  if the  $ \alpha_j $'s  are;  \,therefore, the set
   $ \, \Pi' := {\big\{\, \alpha'_j := \alpha_j^{(\hbar)} \circ \pi' \,\big\}}_{j \in I} \, $  lies inside the  $ \khp $--dual
   module of  $ \lieh'_\oplus \, $,  and it is also  $ \khp $--linearly  independent if  $ \Pi $  is   --- i.e., if we are in the  \textsl{straight\/}  case.
                                                                                 \par
   Now look at  $ \lieh $  embedded into  $ \lieh^{(\hbar)} $  and set
 $ \, \dot{\lieh} := {(\pi')}^{-1}(\lieh) \, $,
 $ \, {\dot{\Pi}}^\vee := {\big(\pi'{\big|}_{\dot{\lieh}}\big)}^{-1}\big(\Pi^\vee\big) = {\Pi'}^\vee \, $,
 $ \, \dot{\Pi} := {\big(\pi'{\big|}_{\dot{\lieh}}\big)}^{\!*}(\Pi) =
 {\Big\{ {\big(\pi'{\big|}_{\dot{\lieh}}\big)}^{\!*}\Big(\alpha_j^{(\hbar)}\Big) :=
 \alpha_j^{(\hbar)} \circ \pi'{\big|}_{\dot{\lieh}} = \alpha'_j{\big|}_{\dot{\lieh}} \,\Big\}}_{j \in I} \, $;
 then  $ \dot{\lieh} $  is a free  $ \kh $--module  containing  $ \dot{\Pi}^{\vee} $
 and such that  $ \, \dot{\lieh}^{(\hbar)} = \lieh'_\oplus \, $,
 while  $ \dot{\Pi} $  is a subset in the  $ \kh $--dual  of  $ \dot{\lieh} \, $,
 that is even  $ \kh $--linearly  independent if  $ \Pi $
 is   --- i.e., if we are in the  \textsl{straight\/}  case.  Even more, looking in depth we find that
 $ \; \dot{\R} \, := \, \big(\, \dot{\lieh} \, , \dot{\Pi} \, , {\dot{\Pi}}^\vee \,\big) \; $  is indeed a
 \textsl{split\/}  realization of the matrix  $ P $   --- which is also  \textsl{straight},  resp.\ \textsl{minimal}
 if the original  $ \R $  is straight,  resp.\ minimal ---   that together with the epimorphism
 $ \, \pi := \pi'{\big|}_{\dot{\lieh}} : \dot{\lieh} \relbar\joinrel\relbar\joinrel\twoheadrightarrow \lieh \, $
 yields all that is prescribed in the claim.  Indeed, we only have to point out the last step,
 noting that  $ \pi $  induces an isomorphism
 $ \; \pi_* : \dot{\lieh} \Big/ \pi^{-1}(\lieh_{{}_T}) \cong \lieh \Big/ \lieh_{{}_T} \; $
 and then observing that, by construction, we have
 $ \, \pi^{-1}(\lieh_{{}_T}) = \dot{\lieh}_{{}_T} \, $.
\epf

\vskip5pt

   A last result concerns morphisms between realizations.

\vskip13pt

\begin{lema}  \label{lemma: ker-morph's_realiz's}
 Let  $ \; \hat{\R} \, := \, \big(\, \hat{\lieh} \, , \hat{\Pi} \, , \hat{\Pi}^\vee \,\big) \; $  and
 $ \; \check{\R} \, := \, \big(\, \check{\lieh} \, , \check{\Pi} \, , \check{\Pi}^\vee \,\big) \; $
 be two realizations of a same  $ \, P \in M_n\big(\kh\big) \, $,  and let
 $ \; \underline{\phi} : \hat{\R} \longrightarrow \check{\R} \; $
 be a morphism between them.  Then
 $ \; \Ker\big(\, \phi : \hat{\lieh} \longrightarrow \check{\lieh} \,\big) \, \subseteq \,
 \bigcap\limits_{j \in I} \Ker(\hat{\alpha}_j) \;\, $.
\end{lema}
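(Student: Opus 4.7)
The plan is to unwind the definition of a morphism of realizations (Definition \ref{def: realization of P}(c)) and observe that the conclusion then drops out in one line. Indeed, the defining condition $\phi^*(\check{\Pi}) = \hat{\Pi}$ means exactly that precomposition with $\phi$ sends the set $\check{\Pi} = \{\check{\alpha}_j\}_{j \in I}$ bijectively onto the set $\hat{\Pi} = \{\hat{\alpha}_k\}_{k \in I}$; equivalently, there exists a permutation $\tau \in \mathbb{S}_I$ such that
\[
\check{\alpha}_j \circ \phi \; = \; \hat{\alpha}_{\tau(j)} \qquad \forall \ j \in I \, .
\]
(In fact one may check, using $\phi(\hat{T}_i^\pm) = \check{T}_{\sigma(i)}^\pm$ together with axiom (a.1) for both realizations, that $\tau$ is forced by $p_{\sigma(i),j} = p_{i,\tau(j)}$, but this precise relationship is not actually needed for the lemma at hand.)

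Now pick any $x \in \Ker\bigl(\phi : \hat{\lieh} \to \check{\lieh}\bigr)$. Then for every $j \in I$ one has
\[
\hat{\alpha}_{\tau(j)}(x) \; = \; \bigl(\check{\alpha}_j \circ \phi\bigr)(x) \; = \; \check{\alpha}_j\bigl(\phi(x)\bigr) \; = \; \check{\alpha}_j(0) \; = \; 0 \, .
\]
Since $\tau$ is a bijection of $I$, as $j$ ranges over $I$ the index $\tau(j)$ ranges over all of $I$ as well, so we conclude that $\hat{\alpha}_k(x) = 0$ for every $k \in I$, i.e.\ $x \in \bigcap_{k \in I} \Ker(\hat{\alpha}_k)$, as required.

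There is no real obstacle here: the lemma is essentially a tautological consequence of the requirement that $\phi$ pulls $\check{\Pi}$ back onto $\hat{\Pi}$. The only subtle point is to be careful that ``$\phi^*(\check{\Pi}) = \hat{\Pi}$'' is a statement about \emph{sets}, and hence guarantees surjectivity of $j \mapsto \hat{\alpha}_{\tau(j)}$ onto $\hat{\Pi}$ — without this surjectivity one would only obtain that $x$ annihilates \emph{some} of the $\hat{\alpha}_k$'s rather than all of them. Once this is noted, the argument reduces to the single computation displayed above.
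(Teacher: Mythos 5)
Your proof is correct and follows essentially the same route as the paper's: compose each $\check{\alpha}_j$ with $\phi$ and evaluate on a kernel element to get zero. The only difference is that you make the permutation $\tau$ explicit, whereas the paper simply writes $\hat{\alpha}_j = \check{\alpha}_j \circ \phi$ without tracking the relabelling — a harmless shortcut, since the conclusion only concerns the set $\hat{\Pi}$, exactly as you observe.
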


\pf
 Since  $ \, \hat{\alpha}_j = \phi^*(\check{\alpha}_j) = \check{\alpha}_j \circ \phi \, $  ($ \, j \in I \, $)
 by assumption, for all  $ \, k \in \Ker(\phi) \, $
 we have  $ \; \hat{\alpha}_j(k) = (\check{\alpha}_j \circ \phi)(k) = \check{\alpha}_j \big(\phi(k)\big) =
 \check{\alpha}_j(0) = 0 \; $  ($ \, j \in I \, $),  whence the claim.
\epf

\vskip9pt

\begin{rmk}
 Working with matrices in  $ \, M_n(\Bbbk) \, $  and realizations of them over
 $ \Bbbk $  (for some field  $ \Bbbk \, $),
 all the previous constructions still make sense; some results
 (e.g.,  Proposition \ref{prop: exist-realiz's}  and  Lemma \ref{lemma: split-lifting})
 even get stronger and/or have simpler proofs.
\end{rmk}

\medskip

\subsection{Twist deformations of multiparameters and realizations}
  \label{subsec: twisted-realiz}  {\ }
 \vskip7pt

   In this subsection we introduce the notion of  \textit{deformation by twist\/}  of realizations,
   which will be needed later when dealing with deformations of multiparameter
   Lie bialgebras and formal multiparameter quantum universal enveloping algebras.

\vskip9pt

\begin{free text}  \label{twist-deform.'s x mpmatr.'s & realiz.'s}
 {\bf Deforming realizations (and matrices) by twist.}  Fix  $ \, P := {\big(\, p_{i,j} \big)}_{i, j \in I} \in M_n\big(\kh\big) \, $
 and a realization  $ \; \R \, := \, \big(\, \lieh \, , \Pi \, , \Pi^\vee \,\big) \, $,
 \,possibly (up to changing minimal details in what follows) over  $ \Bbbk $  if  $ \, P \in M_n(\Bbbk) \, $;
 in particular  $ \, d_i := p_{ii}/2 \, $  for all  $ \, i \in I \, $.
                                                                       \par
   Recall that  $ \lieh $  is, by assumption, a free  $ \kh $--module  of finite rank  $ \, t := \rk(\lieh) \, $.
   We fix in  $ \lieh $  any  $ \kh $--basis  $ \, {\big\{ H_g \big\}}_{g \in \G} \, $,
   where  $ \G $  is an index set with  $ \, |\G| = \rk(\lieh) = t \, $.
                                                                       \par
   Fix an antisymmetric square matrix  $ \; \Phi = \big( \phi_{gk} \big)_{g, k \in \G} \in \lieso_t\big(\kh\big) \; $
   --- indeed, we might work with any  $ \, \Phi \in M_t\big(\kh\big) \, $,
   \,but at some point we should single out its antisymmetric part
   $ \, \Phi_a := 2^{-1} \big( \Phi - \Phi^{\scriptscriptstyle T} \,\big) \, $
   which would be all that matters.  We define the  \textsl{twisted ``distinguished toral elements'' (or ``coroots'')}
\begin{equation}  \label{eq: T-phi}
  T^\pm_{{\scriptscriptstyle \Phi},\ell} \; := \;
  T^\pm_\ell \, \pm {\textstyle \sum\limits_{g,k=1}^t} \alpha_\ell(H_g) \, \phi_{kg} \, H_k
\end{equation}
 As a matter of notation, let
 $ \, \fT := \bigg(\, {\displaystyle {\underline{T}^+ \atop \underline{T}^-}} \bigg) \, $
 be the  $ (2n \times 1) $--matrix  given by the column vectors
 $ \, \underline{T}^\pm = {\big(\, T_i^\pm \big)}_{i \in I} \, $.  Similarly, let
 $ \; \fT_{{\scriptscriptstyle \Phi}} :=
 \bigg(\, {\displaystyle {\underline{T}^+_{\,\scriptscriptstyle \Phi} \atop \underline{T}^-_{\,\scriptscriptstyle \Phi}} }\bigg) \; $
 be the  $ (2n \times 1) $--matrix  given by the (superposed) two column vectors
 $ \, \underline{T}^\pm_{\,\scriptscriptstyle \Phi} = {\big(\, T_{{\scriptscriptstyle \Phi}, i}^\pm \big)}_{i \in I} \, $,
 \,and  $ \fH $  the column vector  $ \, \fH := {\big(\, H_g \big)}_{g \in \G} \, $.
 Moreover, denote by  $ \mathfrak{A} $  the  $ (n \times t) $--matrix  with entries in  $ \kh $  given by
 $ \; \mathfrak{A} := {\big(\, \alpha_\ell(H_g) \big)}_{\ell \in I}^{g \in \G} \; $,  \,and set
 $ \; \mathfrak{A}_\bullet  :=  \begin{pmatrix}
        +\mathfrak{A} \,  \\
        -\mathfrak{A} \,
                        \end{pmatrix} \; $
 --- a matrix of size  $ (2n \times t) \, $.
%
%
%
%
 \vskip3pt
   Now, using matrix notation we have
  $ \; \fT_{\scriptscriptstyle \Phi} \, = \, \fT - \, \mathfrak{A}_\bullet \, \Phi \, \fH \; $.
 Eventually, define also
\begin{equation}  \label{def-P_Phi}
  P_{\scriptscriptstyle \Phi} \, =
   \, {\big(\, p^{\scriptscriptstyle \Phi}_{i,j} \big)}_{i, j \in I} \, :=
   \, P - \mathfrak{A} \, \Phi \, \mathfrak{A}^{\,\scriptscriptstyle T}
\end{equation}

\vskip5pt

   Now, using the above notation, a direct computation yields
  $$  \displaylines{
   S_{{\scriptscriptstyle \Phi},i} := \, 2^{-1} \big(\, T^+_{{\scriptscriptstyle \Phi},i} + T^-_{{\scriptscriptstyle \Phi},i} \big) \,
   = \, 2^{-1} \big(\, T^+_i + T^-_i \big) = S_i   \qquad \qquad  \forall \;\; i \in I  \cr
   \qquad   \alpha_j\big(\,T^+_{{\scriptscriptstyle \Phi},i}\,\big) \, = \, p^{\scriptscriptstyle \Phi}_{i,j}  \quad ,
   \qquad  \alpha_j\big(\,T^-_{{\scriptscriptstyle \Phi},i}\,\big) \, = \, p^{\scriptscriptstyle \Phi}_{j,i}
   \qquad \qquad \qquad  \forall \;\; i, j \in I  }  $$
 so that the triple  $ \; \R_\Phi := \big(\, \lieh_\Phi \, , \Pi_\Phi \, , \Pi^\vee_{\scriptscriptstyle \Phi} \,\big) \; $
 with  $ \, \lieh_\Phi := \lieh \, $,  $ \, \Pi_\Phi := \Pi \, $  and
 $ \, \Pi^\vee_{\scriptscriptstyle \Phi} := \big\{\, T^+_{{\scriptscriptstyle \Phi},i} \, ,
 T^-_{{\scriptscriptstyle \Phi},i} \,\big|\, i \in I \,\big\} \, $,  \,is a realization of the matrix
 $ \, P_{\scriptscriptstyle \Phi} = {\big(\, p^{\scriptscriptstyle \Phi}_{i,j} \big)}_{i, j \in I} \, $
 --- as in  Definition \ref{def: realization of P};  also, by construction
 $ \R_{\scriptscriptstyle \Phi} $  is also \textsl{straight},  resp.\  \textsl{small},
 if and only if such is  $ \R \, $.  Moreover,  $ P_{\scriptscriptstyle \Phi} $
 is the sum of  $ P $  plus an antisymmetric matrix, so
 \textsl{the symmetric part of  $ P_{\scriptscriptstyle \Phi} $  is the same as  $ P $},
 i.e.\  $ \, {\big( P_{\scriptscriptstyle \Phi} \big)}_s = P_s \, $.
 In particular, if  $ P $  is of Cartan type, then so is  $ P_{\scriptscriptstyle \Phi} \, $,
 and they are associated with the same Cartan matrix. In short, we get:
\end{free text}

\vskip7pt

\begin{prop}  \label{prop: twist-realizations}
 With notation as above, the following holds true:
 \vskip3pt
   (a)\,  the matrix  $ \; P_{\scriptscriptstyle \Phi} \, := \, P -  \, \mathfrak{A} \, \Phi \, \mathfrak{A}^{\,\scriptscriptstyle T} \; $
obeys  $ \, {\big( P_{\scriptscriptstyle \Phi} \big)}_s = P_s \, $;  \,in particular, if
$ P $  is of Cartan type, then so is  $ P_{\scriptscriptstyle \Phi} \, $,
and they are associated with the same Cartan matrix.
 \vskip3pt
   (b)\,  the triple  $ \, \R_{\scriptscriptstyle \Phi} := \big(\, \lieh_\Phi \! := \lieh \, ,
   \, \Pi_\Phi \! := \Pi \, , \, \Pi^\vee_{\scriptscriptstyle \Phi} \! :=
   {\big\{ T^+_{{\scriptscriptstyle \Phi},i} \, , T^-_{{\scriptscriptstyle \Phi},i} \big\}}_{i \in I} \,\big) \, $
   is a realization of  $ \, P_{\scriptscriptstyle \Phi} \, $;  \,moreover,  $ \R_\Phi $  is  \textsl{straight},
   resp.\  \textsl{small},  if and only if such is  $ \R \, $.
\end{prop}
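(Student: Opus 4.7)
My plan is to verify both parts of the proposition by direct computation, most of which has in fact been foreshadowed in the paragraph leading up to the statement; the core identity is the antisymmetry of the matrix $\mathfrak{A}\,\Phi\,\mathfrak{A}^{\scriptscriptstyle T}$, which I would establish at the outset and then use throughout.

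For part (a), I would start from the observation that $\Phi \in \lieso_t\big(\kh\big)$ is antisymmetric, whence
$$\big(\mathfrak{A}\,\Phi\,\mathfrak{A}^{\scriptscriptstyle T}\big)^{\scriptscriptstyle T} \,=\, \mathfrak{A}\,\Phi^{\scriptscriptstyle T}\,\mathfrak{A}^{\scriptscriptstyle T} \,=\, -\,\mathfrak{A}\,\Phi\,\mathfrak{A}^{\scriptscriptstyle T}.$$
Thus $P_\Phi := P - \mathfrak{A}\,\Phi\,\mathfrak{A}^{\scriptscriptstyle T}$ differs from $P$ only in its antisymmetric part, so $(P_\Phi)_s = P_s$. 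Since the property of being of Cartan type and the corresponding Cartan matrix depend only on the symmetric part of the multiparameter matrix, the remaining statements of (a) follow at once.

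For part (b), I would check directly the two defining conditions of Definition \ref{def: realization of P} for the triple $\R_\Phi$. For condition (a.1), I would expand
$$\alpha_j\big(T^+_{{\scriptscriptstyle \Phi},\ell}\big) \,=\, \alpha_j\big(T^+_\ell\big) \,+ \sum_{g,k} \alpha_\ell(H_g)\,\phi_{kg}\,\alpha_j(H_k) \,=\, p_{\ell j} \,-\, \big(\mathfrak{A}\,\Phi\,\mathfrak{A}^{\scriptscriptstyle T}\big)_{\ell j} \,=\, p^{\scriptscriptstyle \Phi}_{\ell j},$$
where the middle equality uses the antisymmetry $\phi_{kg} = -\phi_{gk}$ to convert the sum into $(\mathfrak{A}\,\Phi\,\mathfrak{A}^{\scriptscriptstyle T})_{\ell j}$ up to sign. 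The analogous computation for $T^-_{{\scriptscriptstyle \Phi},\ell}$, combined with one further index swap via the antisymmetry of $\mathfrak{A}\,\Phi\,\mathfrak{A}^{\scriptscriptstyle T}$ established above, then yields $\alpha_j\big(T^-_{{\scriptscriptstyle \Phi},\ell}\big) = p^{\scriptscriptstyle \Phi}_{j\ell}$, as required. Condition (a.2) is immediate from the identity $S_{{\scriptscriptstyle \Phi},i} = S_i$: the perturbation terms in $T^+_{{\scriptscriptstyle \Phi},i}$ and $T^-_{{\scriptscriptstyle \Phi},i}$ are opposite in sign and cancel in the symmetric combination $2^{-1}\big(T^+_{{\scriptscriptstyle \Phi},i}+T^-_{{\scriptscriptstyle \Phi},i}\big)$, so the reduced set $\overline{\Sigma}_{\scriptscriptstyle \Phi}$ coincides with $\overline{\Sigma}$ and is linearly independent in $\overline{\lieh}$ by the hypothesis on $\R$.

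The final equivalences ``straight iff straight'' and ``small iff small'' can then be read off from the construction: since $\lieh_\Phi = \lieh$ and $\Pi_\Phi = \Pi$, condition (b.1) for $\R_\Phi$ is literally the same as for $\R$, and for (b.2) the identity $S_{{\scriptscriptstyle \Phi},i} = S_i$ together with an inspection of the span of the $T^\pm_{{\scriptscriptstyle \Phi},i}$'s gives the equivalence. The only genuine obstacle in the whole proof is bookkeeping: one must carefully track which subscript of $\Phi$ is contracted with which factor of $\mathfrak{A}$ in order to correctly exploit the antisymmetry and not lose a sign; beyond that, every step is routine and essentially already contained in the matrix manipulations preceding the proposition.
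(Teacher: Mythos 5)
Your proposal takes exactly the route of the paper: Proposition~\ref{prop: twist-realizations} is proved there by nothing more than the direct computations displayed just before its statement (the identities $ S_{{\scriptscriptstyle \Phi},i} = S_i $ and $ \alpha_j\big(T^+_{{\scriptscriptstyle \Phi},i}\big) = p^{\scriptscriptstyle \Phi}_{i,j} \, $, $ \alpha_j\big(T^-_{{\scriptscriptstyle \Phi},i}\big) = p^{\scriptscriptstyle \Phi}_{j,i} \, $, together with the antisymmetry of $ \mathfrak{A}\,\Phi\,\mathfrak{A}^{\scriptscriptstyle T} $), and your treatment of part \textit{(a)}, of conditions \textit{(a.1)}--\textit{(a.2)}, and of the ``straight'' equivalence (which indeed is literal, since $ \lieh_\Phi = \lieh $ and $ \Pi_\Phi = \Pi \, $) is correct and matches the paper's argument, with the sign bookkeeping done properly.

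The genuine gap is your last claim, that the equivalence ``$ \R_\Phi $ small iff $ \R $ small'' follows from ``an inspection of the span of the $ T^\pm_{{\scriptscriptstyle \Phi},i} $'s''. That inspection does not go through: while $ S_{{\scriptscriptstyle \Phi},i} = S_i \, $, one has $ \varLambda_{{\scriptscriptstyle \Phi},i} = \varLambda_i + \sum_{g,k} \alpha_i(H_g)\,\phi_{kg}\,H_k \, $, and the correction term is an essentially arbitrary element of $ \lieh $ which need not lie in $ \textsl{Span}_\kh\big( {\{S_j\}}_{j \in I} \big) \, $; so $ \textsl{Span}_\kh\big( {\{ T^\pm_{{\scriptscriptstyle \Phi},i} \}}_{i \in I} \big) $ differs in general from $ \textsl{Span}_\kh\big( {\{ T^\pm_i \}}_{i \in I} \big) $ --- this change of spans is precisely the phenomenon analysed in \S\,\ref{further_stability} and Examples~\ref{examples: special-cases}. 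In fact, a small realization can twist to a non-small one: take $ n = 1 \, $, $ P = (2) \, $, $ \lieh $ free of rank $ 3 $ with basis $ H_1, H_2, H_3 \, $, $ T_1^+ = T_1^- = H_1 \, $, $ \alpha_1(H_1) = 2 \, $, $ \alpha_1(H_2) = 1 \, $, $ \alpha_1(H_3) = 0 \, $, and $ \Phi $ with $ \phi_{12} = 1 = -\phi_{21} \, $: then $ \R $ is small, but $ T^+_{{\scriptscriptstyle \Phi},1} = 2 H_1 - 2 H_2 \, $, $ T^-_{{\scriptscriptstyle \Phi},1} = 2 H_2 \, $, so $ \R_\Phi $ is not (while $ P_\Phi = P $ and $ \R_\Phi $ is still a realization, consistently with the rest of the statement). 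So the step you call routine would fail if carried out, and the ``small'' part of the equivalence cannot be obtained by span bookkeeping alone; to be fair, the paper itself offers nothing beyond ``by construction'' at this point, so this is a soft spot of the printed statement as much as of your write-up --- but you should not present it as a routine verification (note, by contrast, that the combined property ``small \emph{and} minimal'', i.e.\ $ \textsl{Span}_\kh\big( {\{S_i\}}_{i \in I} \big) = \lieh \, $, \emph{is} trivially preserved, since the $ S_i $'s do not move).
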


\vskip7pt

\begin{definition}  \label{def:twisted-realization}
 The realization  $ \, \R_\Phi := \big(\, \lieh \, , \Pi \, , \Pi^\vee_{\scriptscriptstyle \Phi} \,\big) \, $
 of the matrix  $ \, P_{\scriptscriptstyle \Phi} = {\big(\, p^{\scriptscriptstyle \Phi}_{i,j} \big)}_{i, j \in I} $  \,
 is called a  \textit{twist deformation\/}  (via  $ \Phi \, $)  of the realization
 $ \; \R = \big(\, \lieh \, , \Pi \, , \Pi^\vee \,\big) \, $  of  $ P \, $.
                                                                    \par
   Similarly, the matrix  $ P_{\scriptscriptstyle \Phi} $  is called a  \textit{twist deformation\/}
   of the matrix  $ P $.   \hfill  $ \diamondsuit $
\end{definition}

\vskip7pt

\begin{rmks}  \label{rmks: transitivity-twist}
 \textit{(a)}\,  Observe that, by the very definition of twisting one has that
  $$  \big(P_{\scriptscriptstyle \Phi}\big)_{\scriptscriptstyle \Phi'} = P_{\scriptscriptstyle \Phi + \Phi'}  \qquad \text{ and }
  \qquad  \big(\R_{\scriptscriptstyle \Phi}\big)_{\scriptscriptstyle \Phi'} = \R_{\scriptscriptstyle \Phi + \Phi'}
  \qquad  \text{ for all }  \quad  \Phi \, , \, \Phi' \in \lieso_t\big(\kh\big)  $$
 Therefore, the additive group  $ \, \lieso_t\big(\kh\big) \, $  acts on the set of
 (multiparameter) matrices of size  $ \, n := |I| \, $  with fixed symmetric part,
 as well as on the set of their realizations of (any) fixed rank.  \textsl{When two matrices
 $ \, P , P' \in M_n\big(\kh\big) \, $  belong to the same orbit of this  $ \lieso_t\big(\kh\big) $--action  we say that
 \textit{$ P $  and  $ P' $  are twist equivalent}}.
 \vskip5pt
   \textit{(b)}\,  It follows from  Proposition \ref{prop: twist-realizations}\textit{(a)\/}
   that if two multiparameter matrices  $ P $  and  $ P' $  are twist equivalent,
   then their symmetric part is the same, that is  $ \, P_s = P'_s \, $.
   Next result shows the converse holds true as well,  \textit{up to taking the group
   $ \lieso_t\big(\kh\big) $  big enough, namely with  $ \, t \geq 3\,n-\rk(P_s) \, $}.
\end{rmks}

\vskip3pt

\begin{lema}  \label{lemma: twist=sym}
 With notation as above, let  $ \, P , P' \in M_n\big(\kh\big) \, $,
 \,and consider the aforementioned action by twist on  $ M_n\big(\kh\big) $
 by any additive group  $ \lieso_t\big(\kh\big) $  with  $ \, t \geq 3\,n-\rk(P_s) \, $.
 Then  $ P $  and  $ P' \, $  are twist equivalent if and only if  $ \, P_s = P'_s \, $.
\end{lema}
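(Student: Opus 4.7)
The forward implication is immediate from  Proposition \ref{prop: twist-realizations}\textit{(a)},  which says precisely that the twist operation does not affect the symmetric part of the multiparameter matrix.

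For the converse, set  $ \, A := P' - P \, $:  \,by the hypothesis  $ P_s = P'_s $  the matrix  $ A $  is antisymmetric.  My plan is to produce a realization  $ \R $  of  $ P $  with  $ \, \rk(\lieh) = t \, $  together with some  $ \, \Phi \in \lieso_t\big(\kh\big) \, $  such that  $ \, \mathfrak{A}\,\Phi\,\mathfrak{A}^{\,\scriptscriptstyle T} = -A \, $,  \,for then  $ \, P_\Phi = P + A = P' \, $  by  \eqref{def-P_Phi}.  Since  $ \, t \geq 3\,n - \rk(P_s) \, $,  \,Proposition \ref{prop: exist-realiz's}\textit{(a)}  guarantees the existence of a straight split realization of  $ P $  of rank exactly  $ t \, $:  \,I would fix such an  $ \R $  and pick a  $ \kh $--basis  $ \, {\{H_g\}}_{g \in \G} \, $  of  $ \lieh \, $,  which yields the  $ (n \times t) $--matrix  $ \, \mathfrak{A} = \big(\alpha_\ell(H_g)\big) \, $.

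The core step, which is really the heart of the argument, is that  $ \mathfrak{A} $  admits an invertible  $ n \times n $  submatrix over  $ \kh \, $.  By straightness, the reductions  $ \, \overline{\alpha}_1, \ldots, \overline{\alpha}_n \, $  remain  $ \Bbbk $--linearly  independent in  $ \, \overline{\lieh^*} \, $,  \,so the rows of  $ \, \overline{\mathfrak{A}} \in M_{n \times t}(\Bbbk) \, $  are  $ \Bbbk $--linearly  independent; hence some  $ n \times n $  minor  $ \overline{B} $  of  $ \overline{\mathfrak{A}} $  has nonzero determinant.  Because  $ \kh $  is a local ring with residue field  $ \Bbbk \, $,  the corresponding lifted submatrix  $ \, B \in M_n(\kh) \, $  has unit determinant, so  $ \, B \in \textsl{GL}_n(\kh) \, $.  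Permuting the basis  $ \{H_g\} $  (a harmless isomorphism of realizations) I may assume  $ \, \mathfrak{A} = ( B \mid C ) \, $.

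Finally I would set
$$  \Phi  \; := \;  \begin{pmatrix}  -\,B^{-1}\,A\,B^{-\scriptscriptstyle T}  &  0  \\  0  &  0  \end{pmatrix}  \;\; \in \;\; M_t\big(\kh\big)  $$
which lies in  $ \lieso_t\big(\kh\big) $  because  $ \, B^{-1} A\, B^{-\scriptscriptstyle T} \, $  is antisymmetric (as  $ A $  is).  A direct check gives  $ \;\; \mathfrak{A}\,\Phi\,\mathfrak{A}^{\,\scriptscriptstyle T} \, = \, B \cdot \big(-B^{-1} A B^{-\scriptscriptstyle T}\big) \cdot B^{\,\scriptscriptstyle T} \, = \, -A \;\; $,  \,so  $ \, P_\Phi = P' \, $.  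The only subtlety to keep an eye on is that the rank-$ n $  property of  $ \mathfrak{A} $  really survives the reduction modulo  $ \hbar \, $,  \,which is exactly what the definition of straightness provides; everything else reduces to an elementary block-matrix identity.
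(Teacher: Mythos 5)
Your proposal is correct and follows essentially the same route as the paper: the "only if" part from Proposition \ref{prop: twist-realizations}, and for the converse one fixes a straight split realization of rank $t$ via Proposition \ref{prop: exist-realiz's}\textit{(a)}, extracts (after a permutation of the basis, handled in the paper by conjugating $\Phi$ with a permutation matrix $M_\sigma$) an invertible $n \times n$ block $G$ of $\mathfrak{A}$, and solves $\Lambda = -\mathfrak{A}\,\Phi\,\mathfrak{A}^{\scriptscriptstyle T}$ with an antisymmetric $\Phi$ supported on the top-left $n \times n$ block, exactly your $-B^{-1} \Lambda\, B^{-\scriptscriptstyle T}$. Your explicit reduction-mod-$\hbar$ argument for the invertibility of $B$ over $\kh$ just spells out what the paper's appeal to straightness is implicitly using, so no essential difference.
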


\pf
 We have to prove the ``if'' part of the statement, so we assume that  $ \, P_s = P'_s \, $
 and we look for $ \, \Phi \in \lieso_t\big(\kh\big) \, $  such that  $ \, P' = P_\Phi \, $,  \,that is
 $ \; P' = P - \, \mathfrak{A} \, \Phi \, \mathfrak{A}^{\,\scriptscriptstyle T} \, $.
                                                                  \par
   By assumption  $ \, P' = P + \Lambda \, $  with  $ \, \Lambda := P' - P \, $  antisymmetric, and we want
\begin{equation}  \label{eq: Lambda = APhiAt}
 \Lambda  \; = \;  -  \, \mathfrak{A} \, \Phi \, \mathfrak{A}^{\,\scriptscriptstyle T}
\end{equation}
 for some  $ \; \Phi \in \lieso_t\big(\kh\big) \, $   --- in other words, we have to show that the equation
 \eqref{eq: Lambda = APhiAt}  in the indeterminate  $ \Phi $  has a solution.
                                                                         \par
   By  Proposition \ref{prop: exist-realiz's}\textit{(a)},  there exists a straight split realization of  $ P \, $,  say
   $ \, \R = \big(\, \lieh \, , \Pi \, , \Pi^\vee \,\big)  \, $,  of rank  $ t \, $.  By the straightness assumption, the
   $ \alpha_\ell $'s  are linearly independent in  $ \lieh^* $,  while the  $ H_g $'s  form a basis of  $ \lieh \, $,
   \,so the matrix  $ \; \mathfrak{A} := {\big(\, \alpha_\ell(H_g) \big)}_{\ell \in I}^{g \in \G} \; $
   has rank  $ \, |I| = n \, $;  \,therefore, we can write it as a block matrix
   $ \; \mathfrak{A} = \big(\, G \,\big|\, Q \,\big) \, M_\sigma \; $  where  $ G $,  $ Q $  and  $ M_\sigma $
   are matrices of size  $ \, n \times n \, $,  $ n \times (t-n) \, $  and  $ \, t \times t \, $,
   respectively, and moreover  $ G $  is  \textsl{invertible\/}  and  $ M_\sigma $  is a  \textsl{permutation matrix}.
   Then  \eqref{eq: Lambda = APhiAt}  reads
\begin{equation}  \label{eq: Lambda = APhiAt (2)}
   \Lambda  \; = \;  - \,  \, \big(\, G \,\big|\, Q \,\big) \, M_\sigma \, \Phi \, M_\sigma^{\,\scriptscriptstyle T} \,
   \bigg(\! {{\,G^{\,\scriptscriptstyle T}\,} \over {\,Q^{\,\scriptscriptstyle T}\,}} \!\bigg)
\end{equation}
 Now let us replace the indeterminate matrix  $ \Phi $  with
 $ \; \Psi \, := \, M_\sigma \, \Phi  \, M_\sigma^{\,\scriptscriptstyle T} \; $
 and accordingly let us read  \eqref{eq: Lambda = APhiAt (2)}  as an equation in  $ \Psi \, $,  namely
\begin{equation}  \label{eq: Lambda = APsiAt}
   \Lambda  \; = \;  -  \, \big(\, G \,\big|\, Q \,\big) \, \Psi \,
   \bigg(\! {{\,G^{\,\scriptscriptstyle T}\,} \over {\,Q^{\,\scriptscriptstyle T}\,}} \!\bigg)
\end{equation}
 then writing the latter in block form as
 $ \; \displaystyle{ \Psi \, = \, \bigg(\! {{\; A \,|\, B \;} \over {{\; C \hskip2,8pt|\, D \;}}} \!\bigg) } \; $
 where  $ A \, $,  $ B \, $,  $ C $  and  $ D $  has size  $ \, n \times n \, $,  $ \, n \times (t-n) \, $,  $ \, (t-n) \times n \, $
 and  $ \, (t-n) \times (t-n) \, $,  respectively, we see at once that a possible solution for  \eqref{eq: Lambda = APsiAt}  is
 $ \; \displaystyle{ \Psi \, = \bigg(\! {{\; A \,|\hskip2,8pt 0 \;} \over {{\;\hskip2,1pt 0 \hskip3,5pt|\hskip2,8pt 0 \;}}} \!\bigg) } \; $
 with  $ \; A := G^{-1} \Lambda \, G^{-t} \; $.  Thus  \eqref{eq: Lambda = APsiAt}
 has a solution, hence \eqref{eq: Lambda = APhiAt}  has one too, q.e.d.
\epf

\vskip7pt

\begin{rmk}
  A similar notion of twist-equivalence of matrices can be found in  \cite{AS1}  and  \cite{Ro}
  in relation with matrices corresponding to diagonal braidings.
\end{rmk}

\vskip7pt

   Next proposition ``upgrades'' the previous result to the level of realizations.

\vskip11pt

\begin{prop}  \label{prop: realiz=twist-standard}
 Let  $ P $  and  $ P' $  be two matrices in  $ M_n\big(\kh\big) $  with the same symmetric part, i.e.\ such that  $ \, P_s = P'_s \; $.
 \vskip5pt
   (a)\,  Let  $ \, \R $  be a  \textsl{straight}  realization of  $ P $ of rank  $ t \, $.
  Then there exists a matrix  $ \, \Phi \in \lieso_t\big(\kh\big) \, $  such that  $ \, P' = P_{\scriptscriptstyle \Phi} \, $
  and the corresponding realization  $ \, \R_{\scriptscriptstyle \Phi} \, $  is straight.
                                                            \par
   In short, if  $ \, P'_s = P_s \, $  then from any  \textsl{straight}  realization of  $ P $  we can obtain by
   twist deformation a straight realization (of the same rank) for  $ P' $,  and viceversa.
 \vskip5pt
   (b)\,  Let  $ \, \R $  and  $ \R' $  be  \textsl{straight small}  realizations of  $ \, P $  and  $ P' \, $,  such that  $ \, \rk(\R) = \rk(\R') =: t \, $.
   Then there exists a matrix
 $ \, \Phi \in \lieso_t\big(\kh\big)
 \, $  such that  $ \, \R' \cong \R_{\scriptscriptstyle \Phi} \, $.
                                                            \par
   In short, if  $ \, P'_s = P_s \, $  then any straight small realization of  $ P $  is isomorphic to a twist deformation of a
   straight small realization of  $ P' $  of same rank, and viceversa.
 \vskip5pt
   (c)\,  Every  \textsl{straight small}  realization  $ \, \R $  of  $ P $  is isomorphic to some
   twist deformation of the standard realization of  $ P_s $  of the same rank as  $ \, \R \, $,
   as in  Lemma \ref{lem: realiz_P => realiz_A}.
\end{prop}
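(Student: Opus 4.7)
I would prove the three parts in the natural order, deriving (b) from (a) together with the uniqueness of straight small realizations, and then obtaining (c) as an immediate specialization of (b).

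For (a), I would reuse verbatim the argument from Lemma~\ref{lemma: twist=sym}, starting now from the given straight realization $\R$ instead of the auxiliary one constructed in Proposition~\ref{prop: exist-realiz's}(a). Fix a $\kh$--basis $\{H_g\}_{g\in\G}$ of $\lieh$: since $\R$ is straight, the $\alpha_j$'s are $\kh$--linearly independent in $\lieh^*$, hence $\mathfrak{A}:=\big(\alpha_\ell(H_g)\big)$ has full row rank $n$, and the decomposition $\mathfrak{A}=(G\,|\,Q)M_\sigma$ with $G$ invertible and $M_\sigma$ a permutation matrix is available. Since $P_s=P'_s$, the matrix $\Lambda:=P'-P$ is antisymmetric, so the same explicit block computation of Lemma~\ref{lemma: twist=sym} produces $\Phi\in\lieso_t(\kh)$ with $-\mathfrak{A}\Phi\mathfrak{A}^{\,\scriptscriptstyle T}=\Lambda$, i.e.\ $P_\Phi=P'$. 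By construction $\R_\Phi$ has the same $\lieh$ and $\Pi$ as $\R$, so it is straight.

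For (b), I first apply (a) to produce $\Phi\in\lieso_t(\kh)$ with $P_\Phi=P'$; then $\R_\Phi$ is a straight realization of $P'$ of rank $t$, and the task reduces to showing $\R_\Phi\cong\R'$ by Proposition~\ref{prop: exist-realiz's_small}. Since that uniqueness result applies only to straight \emph{small} realizations, the real content is to arrange for $\R_\Phi$ to be small. To this end I would first use uniqueness of straight small realizations of $P_s$ (which is applicable because, $P_s$ being symmetric, $\rk\!\big(P_s\,\big|\,(P_s)_a\big)=\rk(P_s)$ trivially) to produce an isomorphism $\psi\colon\lieh\to\lieh'$ between the $S$--realizations $(\lieh,\Pi,\{S_i\})$ and $(\lieh',\Pi',\{S'_i\})$ of $P_s$, with $\psi(S_i)=S'_i$ and $\psi^*(\alpha'_j)=\alpha_j$. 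Transporting along $\psi$, I may assume $\R$ and $\R'$ share the same $\lieh$, $\Pi$ and $\{S_i\}$, and differ only through $\varLambda_i,\varLambda'_i\in\textsl{Span}_\kh\!\big(\{S_j\}\big)$, the containment being forced by smallness of both. The problem then reduces to finding antisymmetric $\Phi$ satisfying both $\mathfrak{A}\Phi\mathfrak{A}^{\,\scriptscriptstyle T}=P-P'$ and $\big(\mathfrak{A}\Phi\fH\big)_i=\varLambda_i-\varLambda'_i$ for all $i$. In the basis $\{S_1,\dots,S_n,Y_1,\dots,Y_{t-n}\}$, writing $\mathfrak{A}=(P_s\,|\,B)$ and decomposing $\Phi$ in block form compatible with antisymmetry, these two conditions become a pair of explicit block matrix equations whose compatibility hinges on the relation $\Delta\,P_s=P_a-P'_a$ (where $\Delta$ encodes the coefficients of $\varLambda_i-\varLambda'_i$ in the $S_j$'s), a relation itself forced by $P_s=P'_s$ together with smallness.

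Part (c) then follows at once: apply (b) to the pair consisting of $\R$ and the standard realization of $P_s$ of rank $\rk(\R)$, the existence of the latter as a straight small realization being ensured once more by Proposition~\ref{prop: exist-realiz's_small}; since $P$ and $P_s$ obviously share the same symmetric part, (b) yields $\Phi\in\lieso_t(\kh)$ with $\R\cong(\R_{P_s})_\Phi$, as claimed. The main obstacle throughout is the linear-algebraic step inside (b): producing an \emph{antisymmetric} $\Phi$ that simultaneously solves the matrix equation inherited from (a) and realizes the prescribed shift of the $\varLambda_i$'s. It is precisely at this point that the smallness hypothesis plays its essential role, by constraining both $\varLambda_i$ and $\varLambda'_i$ to $\textsl{Span}_\kh\!\big(\{S_j\}\big)$ and thereby supplying the compatibility $\Delta\,P_s=P_a-P'_a$ that makes the relevant block system solvable within $\lieso_t(\kh)$.
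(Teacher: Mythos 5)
Your proposal is correct in substance, and on parts (a) and (c) it coincides with the paper's own argument: for (a) the paper likewise re-runs the proof of Lemma \ref{lemma: twist=sym} with the given straight realization $\R$ as starting point and then notes via Proposition \ref{prop: twist-realizations} that $\R_\Phi$ is straight (since $\Pi_\Phi=\Pi$), and (c) is the same specialization of (b) to $P':=P_s$ with the standard straight small realization of $P_s$. Where you genuinely diverge is in (b). The paper's proof there is a two-line deduction: the $\Phi$ produced in (a) yields a realization $\R_\Phi$ of $P'$ that is straight \emph{and small} by Proposition \ref{prop: twist-realizations}(b), and then the uniqueness of straight small realizations (Proposition \ref{prop: exist-realiz's_small}) gives $\R'\cong\R_\Phi$ outright. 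You instead decline to take the smallness of $\R_\Phi$ for granted and construct the isomorphism by hand: after identifying the two underlying $S$--realizations of $P_s$ via uniqueness, you look for an antisymmetric $\Phi$ satisfying both $\mathfrak{A}\,\Phi\,\mathfrak{A}^{\scriptscriptstyle T}=P-P'$ and the prescribed shift $\big(\mathfrak{A}\,\Phi\,\fH\big)_i=\varLambda_i-\varLambda'_i$. This is more work than the paper's route, but it is sound: the second condition implies the first (apply $\alpha_j$ to it), so the whole problem is the single equation $\mathfrak{A}\,\Phi=\Delta\,\Sigma$, where $\Sigma$ is the $n\times t$ matrix expressing the $S_k$ in the chosen basis; since $\Sigma\,\mathfrak{A}^{\scriptscriptstyle T}=P_s$, your compatibility relation $\Delta\,P_s=P_a-P'_a$ is precisely the antisymmetry of $(\Delta\Sigma)\,\mathfrak{A}^{\scriptscriptstyle T}$, and straightness lets one complete $\mathfrak{A}$ to an invertible $t\times t$ matrix (as in Lemma \ref{lemma: twist=sym}) and solve for $\Phi\in\lieso_t(\kh)$ --- a short linear-algebra step you should still write out, since in your sketch it is only asserted. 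What your route buys is an explicit $\Phi$ for which the twisted realization matches $\R'$ on the nose, with no appeal to the blanket assertion that smallness is preserved by arbitrary toral twists; what the paper's route buys is brevity, at the price of leaning on Proposition \ref{prop: twist-realizations}(b) for that smallness.
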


\pf
 \textit{(a)}\,
 We can resume the same argument used in the proof of  Lemma \ref{lemma: twist=sym}
 to show that there exists a suitable  $ \, \Phi \in
%
 \lieso_t\big(\kh\big)  \, $  such that  $ \, P' = P_{\scriptscriptstyle \Phi} \, $,  \,the difference being only
 that now the starting point is the given realization  $ \R $  of  $ P \, $.
 But then  Proposition \ref{prop: twist-realizations}  ensures also that
 $ \, \R_\Phi := \big(\, \lieh \, , \Pi \, , \Pi^\vee_{\scriptscriptstyle \Phi} \,\big) \, $  is a realization of
 $ \, P' = P_{\scriptscriptstyle \Phi} \, $,  \,which is straight because  $ \R $  is.
 \vskip5pt
   \textit{(b)}\,  This follows from claim  \textit{(a)},  along with the uniqueness
   (up to isomorphisms) of straight small realizations   --- cf.\ Proposition \ref{prop: exist-realiz's}\textit{(a)}.
 \vskip5pt
   \textit{(c)}\,  This follows as an application of claim  \textit{(b)}, taking  $ \, P' := P_s \, $  and  $ \, \R' := \big(\, \lieh \, , \, \Pi \, , \, \Pi^\vee_S \,\big) \, $
   --- assuming  $ \, \R = \big(\, \lieh \, , \, \Pi \, , \, \Pi^\vee \,\big) \, $  ---
   as the standard Kac' realization over  $ \kh $  which is straight and small, see
   Remark \ref{rmk: Kac'-realiz}  and  Lemma \ref{lem: realiz_P => realiz_A}.
\epf

\vskip11pt

\begin{free text}  \label{further_stability}
 {\bf Stability issues for twisted realizations.}  Keep notation as above; in particular, from
 \S \ref{subsec: twisted-realiz}  we consider
 $ \, \underline{T}^\pm = {\big(\, T_i^\pm \big)}_{i \in I} \, $
 and  $ \; \fT := \bigg(\, {\displaystyle {\underline{T}^+ \atop \underline{T}^-}} \bigg) \, $,
 \,we fix a vector of  $ \kh $--basis  elements
 $ \, \fH := {\big(\, H_g \big)}_{g \in \G} \, $
 for  $ \lieh $  and accordingly we set
 $ \; \mathfrak{A} := {\big(\, \alpha_\ell(H_g) \big)}_{\ell \in I}^{g \in \G} \; $
 and  $ \; \mathfrak{A}_\bullet  :=  \begin{pmatrix}
       +\mathfrak{A} \,  \\
       -\mathfrak{A} \,
                        \end{pmatrix} \; $.
 Finally, given  $ \, \Phi \in
%
\lieso_t\big(\kh\big)  \, $
%
%
 we consider the new strings of ``coroot vectors''
 $ \, \underline{T}^\pm_{\,\scriptscriptstyle \Phi} = {\big(\, T_{{\scriptscriptstyle \Phi}, i}^\pm \big)}_{i \in I} \, $
 and  $ \; \fT_{{\scriptscriptstyle \Phi}} :=
 \bigg(\, {\displaystyle {\underline{T}^+_{\,\scriptscriptstyle \Phi} \atop \underline{T}^-_{\,\scriptscriptstyle \Phi}} }\bigg) \; $
 that are linked to the old coroot vectors   --- as in  \S \ref{twist-deform.'s x mpmatr.'s & realiz.'s}  ---   by the formulas
 \begin{equation}  \label{eq: T->T_Phi}
   \underline{T}^\pm_{\,\Phi} \, := \,
%
\underline{T}^\pm \mp \mathfrak{A} \, \Phi \, \fH   \quad ,
   \qquad  \fT_{\scriptscriptstyle \Phi} \, = \,
%
\fT - \, \mathfrak{A}_\bullet \, \Phi \, \fH
 \end{equation}
   \indent   Eventually, recall also the notation
   $ \,\; P_{\scriptscriptstyle \Phi} \, = \,  {\big(\, p^{\scriptscriptstyle \Phi}_{i,j} \big)}_{i, j \in I} \, := \,
%
P - \, \mathfrak{A} \, \Phi \, \mathfrak{A}^{\,\scriptscriptstyle T}  \;\, $.
 \vskip7pt
   From  Proposition \ref{prop: twist-realizations}  we have that the class of (small) minimal realizations is stable under twist deformations.
   We look now instead at the  \textsl{split\/}  case.
 \vskip3pt
   Assume that the realization  $ \; \R := \big(\, \lieh \, , \Pi \, , \Pi^\vee \,\big) \; $  of  $ P $  is  \textsl{split},
   i.e.\ the  $ T^\pm_i $'s  are part of a  $ \kh $--basis  of  $ \lieh \, $.  From  \eqref{eq: T->T_Phi}
   we see that we cannot give for granted the same property for the  $ T^\pm_{{\scriptscriptstyle \Phi}, i} $'s,
   hence we cannot say either that the realization  $ \; \R_\Phi := \big(\, \lieh \, , \Pi \, , \Pi^\vee_{\scriptscriptstyle \Phi} \,\big) \; $  of
   $ P_\Phi $  be split as well   --- in fact, all that depends on the matrix  $ \, \mathfrak{A}_\bullet \Phi \, $.
   We shall now discuss this issue in detail in a more restricted setting.
 \vskip7pt
   We assume now that  $ \R $  is  \textsl{split minimal},  so  $ \,  {\big\{ T_i^+ , T_i^- \big\}}_{i \in I} \, $  is a $ \kh $--basis  of  $ \lieh $
   --- cf.\ Definition \ref{def: realization of P}\textit{(d)}.  Again from  Definition \ref{def: realization of P},  let us consider the elements
 $ \; S_i := \, 2^{-1} \big(\, T^+_i + T^-_i \,\big) \; $  and  $ \; \varLambda_i := \, 2^{-1} \big(\, T^+_i - T^-_i \,\big) \; $
 --- for all  $ \, i \in I \, $  ---   and similarly
 $ \; S_{{\scriptscriptstyle \Phi},i} := \, 2^{-1} \big(\, T^+_{{\scriptscriptstyle \Phi},i} + T^-_{{\scriptscriptstyle \Phi},i} \,\big) \; $
 and
 $ \; \varLambda_{{\scriptscriptstyle \Phi},i} :=
 \, 2^{-1} \big(\, T^+_{{\scriptscriptstyle \Phi},i} - T^-_{{\scriptscriptstyle \Phi},i} \,\big) \; $
 --- for all  $ \, i \in I \, $;  \,set also
 $ \, \underline{S} := 2^{-1} \, \big(\, \underline{T}^+ \! + \underline{T}^- \,\big) \, $  and
 $ \, \underline{\varLambda} := 2^{-1} \, \big(\, \underline{T}^+ \! - \underline{T}^- \,\big) \, $,
 \,and similarly
 $ \, \underline{S}_{\,\scriptscriptstyle \Phi} := 2^{-1} \, \big(\, \underline{T}^+_{\,\scriptscriptstyle \Phi} \! +
 \underline{T}^-_{\,\scriptscriptstyle \Phi} \,\big) \, $  and
 $ \, \underline{\varLambda}_{\,\scriptscriptstyle \Phi} := 2^{-1} \,
 \big(\, \underline{T}^+_{\,\scriptscriptstyle \Phi} \! - \underline{T}^-_{\,\scriptscriptstyle \Phi} \,\big) \, $.
 In matrix terms, we have
  $$  \bigg(\, {\displaystyle {\underline{S} \atop \underline{\varLambda}}} \,\bigg) \, = \,
   \begin{pmatrix}
      +2^{-1} I_n  &  +2^{-1} I_n  \cr
      +2^{-1} I_n  &  -2^{-1} I_n
   \end{pmatrix} \!
 \bigg(\, {\displaystyle {\underline{T}^+ \atop \underline{T}^-}} \bigg)  \;\; ,
 \quad  \bigg(\, {\displaystyle {\underline{S}_{\,\scriptscriptstyle \Phi}
 \atop \underline{\varLambda}_{\,\scriptscriptstyle \Phi}}} \bigg) \, = \,
   \begin{pmatrix}
      +2^{-1} I_n  &  +2^{-1} I_n  \cr
      +2^{-1} I_n  &  -2^{-1} I_n
   \end{pmatrix} \!
 \bigg(\, {\displaystyle {\underline{T}^+_{\,\scriptscriptstyle \Phi} \atop \underline{T}^-_{\,\scriptscriptstyle \Phi}}} \bigg)  $$
 and conversely
  $$  \bigg(\, {\displaystyle {\underline{T}^+ \atop \underline{T}^-}} \bigg) \, = \,
   \begin{pmatrix}
      +I_n  &  +I_n  \cr
      +I_n  &  -I_n
   \end{pmatrix}
 \bigg(\, {\displaystyle {\underline{S} \atop \underline{\varLambda}}} \,\bigg)  \quad ,
 \qquad  \bigg(\, {\displaystyle {\underline{T}^+_{\,\scriptscriptstyle \Phi}
 \atop \underline{T}^-_{\,\scriptscriptstyle \Phi}}} \bigg) \, = \,
   \begin{pmatrix}
      +I_n  &  +I_n  \cr
      +I_n  &  -I_n
   \end{pmatrix}
 \bigg(\, {\displaystyle {\underline{S}_{\,\scriptscriptstyle \Phi}
 \atop \underline{\varLambda}_{\,\scriptscriptstyle \Phi}}} \bigg)  $$
 In particular, we have  $ \; \textsl{Span}_{\,\kh}\Big( {\big\{ S_i \, ,
 \varLambda_i \big\}}_{i \in I} \Big) = \textsl{Span}_{\,\kh}\Big( {\big\{\, T^+_i , T^-_i \,\big\}}_{i \in I} \Big) \; $
 and similarly
 $ \; \textsl{Span}_{\,\kh}\Big( {\big\{ S_{\scriptscriptstyle \Phi,i} \, ,
 \varLambda_{\scriptscriptstyle \Phi,i} \big\}}_{i \in I} \Big) = \textsl{Span}_{\,\kh}\Big( {\big\{\, T^+_{\scriptscriptstyle \Phi,i} \, ,
 T^-_{\scriptscriptstyle \Phi,i} \,\big\}}_{i \in I} \Big) \; $.
 \vskip3pt
   Now, with respect to the previous analysis we pick our fixed  $ \kh $--basis  of  $ \lieh $  to be
   $ \, {\{H_g\}}_{g \in \G} := {\big\{\, T^+_i , T^-_i \,\big\}}_{i \in I} \, $:  \,then  $ \, \mathfrak{A} \, $  reads as a block
   $ (n \times 2\,n) $--matrix  $ \, \mathfrak{A} = \big(\, P^{\,\scriptscriptstyle T} \; P \,\big) \, $,
   \,hence the first identity in  \eqref{eq: T->T_Phi}  yields, via straightforward computations,
  $$  \underline{S}_{\,\scriptscriptstyle \Phi} = \, \underline{S}  \quad ,
\qquad  \underline{\varLambda}_{\,\scriptscriptstyle \Phi} = \, \underline{\varLambda} -
%
  \big(\, P^{\,\scriptscriptstyle T} \; P \,\big) \, \Phi
  \ \begin{pmatrix}
      +I_n  &  +I_n  \cr
      +I_n  &  -I_n
   \end{pmatrix}
 \bigg(\, {\displaystyle {\underline{S} \atop \underline{\varLambda}}} \,\bigg)  $$
 which in matrix terms reads
\begin{equation}  \label{eq: S-varLambda ---> SPhi-varLambdaPhi}
  \bigg(\, {\displaystyle {\underline{S}_{\,\scriptscriptstyle \Phi} \atop \underline{\varLambda}_{\,\scriptscriptstyle \Phi}}} \bigg)  \; = \;
   \begin{pmatrix}
      I_n  &  0_n  \cr
%
 -B'  &  \big(\, I_n -
%
 B''  \,\big)
   \end{pmatrix}
 \bigg(\, {\displaystyle {\underline{S} \atop \underline{\varLambda}}} \,\bigg)
\end{equation}
 where  $ B' $  and  $ B'' $  are blocks in the matrix
 $ \; \big(\, P^{\,\scriptscriptstyle T} \; P \,\big) \,
%
\Phi
\begin{pmatrix}
      +I_n  &  +I_n  \cr
      +I_n  &  -I_n
   \end{pmatrix}  = \big(\, B' \; B'' \,\big) \; $,
%
%
 \,i.e.\ they are the  $ (n \times n) $--matrices
  $ \,\; B' \, := \, P^{\,\scriptscriptstyle T} \big(\, \Phi^{+,+} \! + \Phi^{+,-}  \,\big) + P \, \big(\, \Phi^{-,+} \! + \Phi^{-,-}  \,\big) \; $
  and  $ \; B'' \, := \, P^{\,\scriptscriptstyle T} \big(\, \Phi^{+,+} \! - \Phi^{+,-}  \,\big) + P \, \big(\, \Phi^{-,+} \! - \Phi^{-,-}  \,\big) \; $,
 with notation as follows: we write  $ \Phi $  in block form
 $ \; \Phi := \left( \begin{smallmatrix}
     \Phi^{++}  &  \Phi^{+-}  \\
     \Phi^{-+}  &  \Phi^{--}
                       \end{smallmatrix} \right) \; $
 with  $ \, \Phi^{\varepsilon_1,\varepsilon_2} = {\big( \phi_{ij}^{\varepsilon_1,\,\varepsilon_2} \big)}_{i,j \in I} \, $
 for all  $ \, \varepsilon_i, \varepsilon_j \in \{+,-\} \, $.
%
%
 \vskip3pt
   Now, it is clear that the set  $ {\big\{\, T^+_{\scriptscriptstyle \Phi,i} \, , T^-_{\scriptscriptstyle \Phi,i} \,\big\}}_{i \in I} $
   is  $ \kh $--linearly  independent if and only if
 $ \; \textsl{Span}_{\,\kh}\Big( {\big\{\, T^+_{\scriptscriptstyle \Phi,i} \, , T^-_{\scriptscriptstyle \Phi,i} \,\big\}}_{i \in I} \Big) =
 \textsl{Span}_{\,\kh}\Big( {\big\{\, T^+_i , T^-_i \,\big\}}_{i \in I} \Big) \; $,
 and the latter is true if and only if
 $ \; \textsl{Span}_{\,\kh}\Big( {\big\{ S_{\scriptscriptstyle \Phi,i} \, ,
 \varLambda_{\scriptscriptstyle \Phi,i} \big\}}_{i \in I} \Big) =
 \textsl{Span}_{\,\kh}\Big( {\big\{ S_i \, , \varLambda_i \big\}}_{i \in I} \Big) \; $.
 But the latter holds true, by  \eqref{eq: S-varLambda ---> SPhi-varLambdaPhi},
 if and only if the matrix  $ \, \big(\, I_n - \,B'' \,\big) \, $  is invertible in  $ M_n\big(\kh\big) \, $.
 Finally, from the explicit form of  $ B'' $,  we find the following criterion:
 \vskip5pt
%
   \textit{Assume that the realization  $ \, \R := \big(\, \lieh \, , \Pi \, , \Pi^\vee \,\big) \, $  of  $ P $  be split minimal.
   If the matrix
 $ \,\; M^{\,\scriptscriptstyle \Phi}_{\scriptscriptstyle P}  := \,
 I_n -  \, P^{\,\scriptscriptstyle T} \big(\,\Phi^{+,+} - \Phi^{+,-} \big) \, - \, P \, \big(\, \Phi^{-,+} - \Phi^{-,-}  \big) \;\, $
 is invertible in  $ M_n\big( \kh \big) \, $,  \,then the realization
 $ \; \R_\Phi := \big(\, \lieh \, , \Pi \, , \Pi^\vee_{\scriptscriptstyle \Phi} \,\big) \, $  of  $ \, P_\Phi $
 is split minimal too (and viceversa). }
 \vskip5pt
   As an outcome, this proves that the twist deformation of a split realization
   may be not split (counterexamples do exist, see below), hence the subclass
   of all split realizations is  \textsl{not\/}  stable under twist deformations.
\end{free text}

\smallskip

\begin{exas}  \label{examples: special-cases}
%
%
 Note that  $ \, M^{\,\scriptscriptstyle \Phi}_{\scriptscriptstyle P} \, $  has the form
  $ \,\; M^{\,\scriptscriptstyle \Phi}_{\scriptscriptstyle P} \, = \, I_n -  \, \big(\, P^{\,\scriptscriptstyle T} \; P \,\big) \, \Phi \bigg( {\displaystyle{+I_n \atop -I_n}} \bigg) \; $.
 Using this, we may find examples where  $ M^{\,\scriptscriptstyle \Phi}_{\scriptscriptstyle P} $  is invertible or
not.  For instance:
 \vskip3pt
   \textit{(a)}\;  if  $ \, \Phi = 0_{\,2n}  \, $,  \,then  $ \, M^{\,\scriptscriptstyle \Phi}_{\scriptscriptstyle P} = I_n \, $
   is invertible;
 \vskip3pt
   \textit{(b)}\;  if  $ \; \big( P^{\,\scriptscriptstyle T} \; P \,\big) \, \Phi \bigg( {\displaystyle{+I_n \atop -I_n}} \bigg) \; $
   is nilpotent, then  $ \, M^{\,\scriptscriptstyle \Phi}_{\scriptscriptstyle P} \, $  is clearly invertible;
 \vskip3pt
   \textit{(c)}\;  Let
 $ \, \Phi =
     \begin{pmatrix}
        \, 0  &  \varPhi \,  \\
        \, -\varPhi^{\,\scriptscriptstyle T}  &  0 \,
     \end{pmatrix} \, $
 with  $ \, \varPhi \in M_n\big(\kh\big) \, $.  Then
 $ \; M^{\,\scriptscriptstyle \Phi}_{\scriptscriptstyle P} \, =
 \, I_n + P^{\,\scriptscriptstyle T} \, \varPhi + P \, \varPhi^{\,\scriptscriptstyle T} \; $.
 If we take  $ \, \varPhi \, $  antisymmetric, we find
 $ \; M^{\,\scriptscriptstyle \Phi}_{\scriptscriptstyle P} = I_n - 2 \, P_a \, \varPhi \; $  where
 $ \, P_a := 2^{-1} \big( P - P^{\,\scriptscriptstyle T} \big) \, $  is the antisymmetric part of  $ P \, $.
 Hence, by taking the canonical multiparameter  \, $ P = DA \, $,  we get
 $ \, M^{\,\scriptscriptstyle \Phi}_{\scriptscriptstyle P} = I_n \, $.
 On the other hand, there are plenty of examples such that  $ \, \big( I_n - 2 \, P_a \, \varPhi \big) \, $
 is non-invertible; for example, take  $ n $  even,  $ \varPhi $  invertible and antisymmetric, and
 $ \, P_a = 2^{-1} \varPhi^{-1} \, $.  In addition, in this last case is
 $ \, M^{\,\scriptscriptstyle \Phi}_{\scriptscriptstyle P} = 0 \, $,  \,hence
  $$  \textsl{Span}_{\,\kh}\Big( {\big\{ S_{\scriptscriptstyle \Phi,i} \, ,
  \varLambda_{\scriptscriptstyle \Phi,i} \big\}}_{i \in I} \Big)  \; = \;
  \textsl{Span}_{\,\kh}\Big( {\big\{ S_i \big\}}_{i \in I} \Big) \; \subsetneqq \;  \textsl{Span}_{\,\kh}\Big( {\big\{ S_i \, ,
  \varLambda_i \big\}}_{i \in I} \Big)  $$
 By our analysis, this proves that  \textit{in this case the realization  $ \R_\Phi $
 is definitely \textsl{not}  split}   --- on the contrary it is (small) minimal.
\end{exas}

\medskip

\subsection{2-cocycle deformations of multiparameters and realizations}  \label{subsec: 2-cocycle-realiz}  {\ }
 \vskip7pt
   In this subsection we introduce the notion of  \textit{deformation by 2--cocycles\/}
   of realizations (as well as of multiparameters),
   which is dual to that of deformation by twist.

\vskip7pt

\begin{free text}  \label{2-cocycle-deform.'s x mpmatr.'s & realiz.'s}
 {\bf Deforming realizations (and matrices) by  $ 2 $--cocycles.}  Fix again
 $ \, P := {\big(\, p_{i,j} \big)}_{i, j \in I} \in M_n\big( \kh \big) \, $  and a realization
 $ \; \R \, := \, \big(\, \lieh \, , \Pi \, , \Pi^\vee \,\big) \; $  of it, setting  $ \, d_i := p_{ii}/2 \, $  for all
 $ \, i \in I \, $ and  $ \, D_{\scriptscriptstyle P} := \text{\sl diag}\big(d_1\,,\dots,d_n\big) \, $.
 We consider special deformations of realizations, called ``2-cocycle deformations''.
 To this end, like in  \S \ref{subsec: twisted-realiz},  we fix in  $ \lieh $  a  $ \kh $--basis
 $ \, {\big\{ H_g \big\}}_{g \in \G} \, $,  where  $ \G $  is an index set with  $ \, |\G| = \rk(\lieh) = t \, $.
 \vskip5pt
   Let  $ \, \chi : \lieh \times \lieh \relbar\joinrel\longrightarrow \kh \, $  be any  $ \kh $--bilinear  map:
   note that it bijectively corresponds to some
   $ \, X = {\big(\, \chi_{g{}\gamma} \big)}_{g , \gamma \in \G} \in M_t\big(\kh\big) \, $  via
   $ \, \chi_{g{}\gamma} = \chi(H_g\,,H_\gamma) \, $.  We assume that  $ \chi $ is antisymmetric, which means
   $ \, \chi^{\scriptscriptstyle T}(x,y) = -\chi(x,y) \, $  where
   $ \, \chi^{\scriptscriptstyle T}(x,y) := \chi(y,x) \, $,  \,for all  $ \, x, y \in \lieh \, $;
   \,this is equivalent to saying that  $ X $  is antisymmetric, i.e.\  $ \, X \in \lieso_t\big(\kh\big) \, $.
   We denote by  $ \, \text{\sl Alt}_{\,\kh}\big(\, \lieh \times \lieh \, , \, \kh \,\big) \, $  the set of all antisymmetric,
   $ \kh $--bilinear  maps from  $\, \lieh \times \lieh \, $  to  $ \, \kh \, $.
   We assume also that  $ \chi $  obeys
\begin{equation}  \label{eq: condition-chi}
  \chi(S_i \, ,\,-\,)  \, = \,  0  \, = \,  \chi(\,-\,,S_i)   \qquad \qquad \forall \;\; i \in I
\end{equation}
 where  $ \, S_i := 2^{-1} \big(\, T^+_i + T^-_i \big) \, $  for all  $ \, i \in I \, $.  In particular,
 this implies  (for  $ \, i \in I \, $,  $ \, T \in \lieh \, $)  that
  $ \; \chi\big( +T_i^+ , \, T \,\big)  \, = \,  \chi\big( -T_i^- , \, T \,\big) \; $,  $ \; \chi\big(\, T \, , +T_i^+ \big)  \, = \,  \chi\big(\, T \, , -T_i^- \big) \; $,
 \,hence
\begin{equation}  \label{eq: condition-chi-chit}
 +\chi\big(\, \text{--} \, , T_i^+ \big) = -\chi\big(\, \text{--} \, , T_i^- \big)   \qquad \qquad  \forall \;\; i \in I
\end{equation}
 For later use,  \textit{we introduce also the notation}
\begin{equation}  \label{eq: def-Alt_S}
 \text{\sl Alt}_{\,\kh}^{\,S}(\lieh)  \, := \,
 \Big\{\, \chi \in \text{\sl Alt}_{\,\kh}\big(\, \lieh \times \lieh \, , \, \kh \,\big) \,\Big|\; \chi \text{\ obeys\ }  (\ref{eq: condition-chi}) \,\Big\}
\end{equation}
 and to each  $ \, \chi \in  \text{\sl Alt}_{\,\kh}^{\,S}(\lieh)  \, $  we associate
  $ \, \mathring{X} := {\Big( \mathring{\chi}_{i{}j} = \chi\big(\,T_i^+,T_j^+\big) \!\Big)}_{i, j \in I} \! \in \lieso_n\big(\kh\big) \, $.
 \vskip5pt
  Basing on the above, we define
  $$
  P_{(\chi)}  := \,  P \, + \, \mathring{X}  \, = \,  {\Big(\, p^{(\chi)}_{i{}j} := \,  p_{ij} + \mathring{\chi}_{i{}j} \Big)}_{\! i, j \in I}  \;\, ,  \!\quad
   \Pi_{(\chi)}  := \,  {\Big\{\, \alpha_i^{(\chi)}  := \,
   \alpha_i \pm \chi\big(\, \text{--} \, , T_i^\pm \big)  \Big\}}_{i \in I}
   $$
\end{free text}

\vskip7pt

   We are now ready for our key result on  $ 2 $--cocycle  deformations.

\vskip13pt

\begin{prop}  \label{prop: 2cocdef-realiz}
 Keep notation as above. Then:
 \vskip5pt
   (a)\;  $ \, P_{(\chi)} \, := \, P \, + \, \mathring{X} \; $  obeys  $ \, {\big( P_{(\chi)} \big)}_s = P_s \, $;
   \,in particular, if  $ P $  is of Cartan type, then so is  $ P_{(\chi)} \, $,
   and they are associated with the same Cartan matrix.
 \vskip5pt
   (b)\;  the triple  $ \, \R_{(\chi)} \, = \, \big(\, \lieh \, , \Pi_{(\chi)} \, , \Pi^\vee \,\big) \, $
   is a realization of the matrix  $ \, P_{(\chi)} \, $,  \,which is minimal, resp.\ split, if so is  $ \, \R \, $.
\end{prop}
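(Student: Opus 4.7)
Part (a) is essentially combinatorial: since $\chi$ is antisymmetric, the matrix $\mathring{X} = \bigl(\chi(T_i^+,T_j^+)\bigr)_{i,j\in I}$ lies in $\lieso_n(\kh)$, so $P_{(\chi)} = P + \mathring{X}$ differs from $P$ by an antisymmetric matrix, whence $\bigl(P_{(\chi)}\bigr)_s = P_s$. The Cartan-type statement then follows at once from Definition \ref{def: realization of P}(d), since being of Cartan type with Cartan matrix $A$ depends only on the symmetric part.

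For part (b), the first task is to verify that the definition $\alpha_i^{(\chi)} := \alpha_i \pm \chi(\,\text{--}\,,T_i^\pm)$ is unambiguous, i.e.\ that the $+$ and $-$ choices yield the same element of $\lieh^*$. This is precisely the content of \eqref{eq: condition-chi-chit}, which in turn follows from \eqref{eq: condition-chi}. Next I would check condition \textit{(a.1)} of Definition \ref{def: realization of P} for the triple $\R_{(\chi)}$. The positive case is immediate:
\[
\alpha_j^{(\chi)}(T_i^+) \;=\; \alpha_j(T_i^+) + \chi(T_i^+,T_j^+) \;=\; p_{ij} + \mathring{\chi}_{ij} \;=\; p^{(\chi)}_{ij}\,.
\]
The negative case is the delicate one and is the main (small) obstacle: using antisymmetry of $\chi$ and then \eqref{eq: condition-chi-chit} applied in the second slot,
\[
\chi(T_i^-,T_j^+) \;=\; -\chi(T_j^+,T_i^-) \;=\; \chi(T_j^+,T_i^+) \;=\; \mathring{\chi}_{ji}\,,
\]
whence
\[
\alpha_j^{(\chi)}(T_i^-) \;=\; \alpha_j(T_i^-) + \chi(T_i^-,T_j^+) \;=\; p_{ji} + \mathring{\chi}_{ji} \;=\; p^{(\chi)}_{ji}\,,
\]
as required. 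One must only be careful to use the ``$+$'' form of $\alpha_j^{(\chi)}$ when pairing with $T_i^-$, which is legitimate by the well-definedness just observed.

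Condition \textit{(a.2)} is automatic, since it only concerns the set $\overline{\Sigma} = \{\overline{S}_i\}_{i\in I} \subseteq \overline{\lieh}$, and neither $\lieh$ nor the elements $S_i = 2^{-1}(T_i^+ + T_i^-)$ are modified when passing from $\R$ to $\R_{(\chi)}$. Finally, the additional properties claimed in (b) are immediate: the notions of \emph{minimal} and \emph{split} depend on $\lieh$ together with $\Pi^\vee = \{T_i^+,T_i^-\}_{i\in I}$ (via the span and the quotient $\overline{\Pi^\vee}$), and these data are identical for $\R$ and $\R_{(\chi)}$. Hence $\R_{(\chi)}$ is minimal, resp.\ split, if and only if $\R$ is.
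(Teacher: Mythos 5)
Your proposal is correct and follows essentially the same route as the paper: part (a) from antisymmetry of $\mathring{X}$, part (b) by checking \textit{(a.1)} on $T_i^\pm$ via antisymmetry of $\chi$ together with \eqref{eq: condition-chi-chit}, with \textit{(a.2)} and the minimal/split claims trivial since $\Pi^\vee$ and $\lieh$ are unchanged. Your extra remark that the two sign choices in $\alpha_i^{(\chi)} := \alpha_i \pm \chi(\,\text{--}\,,T_i^\pm)$ agree is a welcome clarification that the paper leaves implicit.
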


\pf
   \textit{(a)}\,  This is obvious, as  $ \mathring{X} $ is antisymmetric.
 \vskip7pt
   \textit{(b)}\,  Since the set  $ \Pi^\vee $  does not change, condition  \textit{(a.2)\/}  is trivially satisfied.
   In particular,  $ \R_{(\chi)} $  is minimal, resp.\ split, if so is  $ \R \, $.
   The conditions on  \textit{(a.1)\/}  follows easily by definition and  \eqref{eq: condition-chi-chit}:  namely,
\begin{align*}
  \alpha_j^{(\chi)} \big(\,T^+_i\big)  \;  &  = \;  \alpha_j\big(\,T^+_i\big) + \chi\big(\,T_i^+,T_j^+\big)  \; = \;  p_{\,ij} + \mathring{\chi}_{ij}  \; = \;  p^{(\chi)}_{i{}j}  \\
  \alpha_j^{(\chi)} \big(\,T^-_i\big)  \;  &  = \;  \alpha_j\big(\,T^-_i\big) + \chi\big(\,T_i^-,T_j^+\big)  \; = \;  p_{\,ji} - \chi\big(\,T_j^+,T_i^-\big)  \; =  \\
  &  \qquad \qquad \qquad   = \;  p_{\,ji} + \chi\big(\,T_j^+,T_i^+\big)  \; = \;  p_{\,ji} + \mathring{\chi}_{ji}  \; = \;  p^{(\chi)}_{j{}i}
\end{align*}
 for all  $ \, i , j \in I \, $.  This shows that  $ \R_{(\chi)} $  is a realization of  $ \, P_{(\chi)} \, $,  \,q.e.d.
\epf

\vskip7pt

\begin{definition}\label{def:2-cocycle-realization}
 The realization  $ \, \R_{(\chi)} = \big(\, \lieh \, , \Pi_{(\chi)} \, , \Pi^\vee \,\big) \, $  of
 $ \, P_{(\chi)} = {\big(\, p^{(\chi)}_{i{}j} \big)}_{\!i, j \in I} \, $  is called a
 \textit{2--cocycle deformation\/}  of the realization
 $ \; \R = \big(\, \lieh \, , \Pi \, , \Pi^\vee \,\big) \, $  of  $ P \, $.
                                                                    \par
   Similarly, the matrix  $ P_{(\chi)} $  is called a  \textit{2--cocycle deformation\/}  of the matrix  $ P $.   \hfill  $ \diamondsuit $
\end{definition}

\vskip7pt

\begin{rmks}  \label{rmks: cocycle-deform-realiz}
 \textit{(a)}\,  The very definitions give
  $$  \big(P_{(\chi)}\big)_{(\chi)'} = P_{(\chi + \chi')}  \quad \text{ and }  \quad
  \big(\R_{(\chi)}\big)_{(\chi')} = \R_{(\chi + \chi')}  \qquad  \text{ for all }
  \;\;  \chi \, , \, \chi' \in \text{\sl Alt}_{\,\kh}^{\,S}(\lieh)  $$
 Thus, the additive group  $ \, \text{\sl Alt}_{\,\kh}^{\,S}(\lieh) \, $  acts on the set of
 (multiparameter) matrices of size  $ \, n := |I| \, $
 with fixed symmetric part, as well as on the set of their realizations of (any) fixed rank.
 \textsl{When two matrices  $ P $  and  $ P' $  in  $ \, M_n\big(\kh\big) \, $
 belong to the same orbit of this  $ \text{\sl Alt}_{\,\kh}^{\,S}(\lieh) $--action,
 we say that  \textit{$ P $  and  $ P' $  are 2--cocycle equivalent}}.
 \vskip5pt
   \textit{(b)}\,  It follows from  Proposition \ref{prop: 2cocdef-realiz}\textit{(a)\/}
   that if two multiparameter matrices  $ P $  and  $ P' $  are  $ 2 $--cocycle  equivalent,
   then their symmetric part is the same, i.e.\  $ \, P_s = P'_s \, $.
   As a consequence of the next result, the converse holds true as well
   (cf.\  Lemma \ref{lemma: 2-cocycle=sym}  below),  \textit{under mild, additional assumptions}.
\end{rmks}

\vskip7pt

   Next result concerns the aforementioned  $ \text{\sl Alt}_{\,\kh}^{\,S}(\lieh) $--action
   on realizations; indeed, up to minor details it can be seen as the
   ``$ 2 $--cocycle  analogue'' of  Proposition \ref{prop: realiz=twist-standard}:

\vskip13pt

\begin{prop}  \label{prop: mutual-2-cocycle-def}
 Let  $ \, P, P' \in M_n\big(\kh\big) \, $  be two matrices  with the same symmetric part,
 i.e.\ such that  $ \, P_s = P'_s \, $.  Moreover, let  $ \, \R $  be a  \textsl{split}  realization of  $ P $.
 \vskip5pt
   (a)\,  There exists a map $ \, \chi \in \text{\sl Alt}_{\,\kh}^{\,S}(\lieh) \, $
   such that  $ \, P' = P_{(\chi)} \, $  and the realization
   $ \, \R_{(\chi)}\,= \, \big(\, \lieh \, , \Pi_{(\chi)} \, , \Pi^\vee \,\big) \, $  of  $ \, P' = P_{(\chi)} \, $
   is  \textsl{split}.  In a nutshell, if  $ \, P'_s = P_s \, $  then from any  \textsl{split}  realization of  $ P $
   we can obtain a split realization (of the same rank) of  $ P' $  by 2--cocycle deformation, and viceversa.
 \vskip5pt
 (b)\,  Assume in addition that  $ \, \R $  be  \textsl{minimal}.
 Then  $ \, \R $  is isomorphic to a 2--cocycle deformation of the split minimal realization of  $ \, P_s \, $.
\end{prop}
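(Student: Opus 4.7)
For part \textit{(a)}, my strategy is to reduce the problem to constructing an antisymmetric bilinear form on $\lieh$ whose "evaluation matrix" on the $T^+_i$'s recovers the difference $P'-P$. More precisely, set $\Lambda := P'-P$; since $P_s = P'_s$ by hypothesis, $\Lambda$ is antisymmetric with entries $\lambda_{ij}$. Because $\R$ is split, by the equivalent reformulation noted in Definition~\ref{def: realization of P}(c)N.B., the set $\{S_i\,,\varLambda_i\}_{i\in I}$ extends to a $\kh$--basis of $\lieh$, say $\{S_i\,,\varLambda_i\}_{i\in I}\cup\{Y_k\}_{k\in K}$. I would then define $\chi$ on basis pairs by the prescription $\chi(S_i,-)=\chi(-,S_i)=0$ for every basis vector, $\chi(\varLambda_i,\varLambda_j):=\lambda_{ij}$, and $\chi(\varLambda_i,Y_k)=\chi(Y_k,\varLambda_i)=\chi(Y_k,Y_\ell)=0$, extending $\kh$--bilinearly.

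With that $\chi$ in hand, it remains to verify three things: antisymmetry of $\chi$ (immediate from antisymmetry of $\Lambda$ and the vanishing prescriptions); membership in $\text{\sl Alt}^{\,S}_{\,\kh}(\lieh)$ (immediate from the definition); and the identity $\mathring{X}=\Lambda$. The last is a short computation using $T^+_i = S_i + \varLambda_i$ together with $\chi(S_i,-)=\chi(-,S_i)=0$, which yields
\[
\chi\!\left(T^+_i,T^+_j\right) \;=\; \chi(\varLambda_i,\varLambda_j)\;=\;\lambda_{ij}\;,
\]
so that $P_{(\chi)}=P+\mathring X = P+\Lambda = P'$. The splitness (and the rank) of $\R_{(\chi)}$ is inherited automatically, since $\R_{(\chi)}$ has the same underlying module $\lieh$ and the same $\Pi^\vee$ as $\R$; only the roots in $\Pi$ get deformed. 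The converse direction ("and viceversa") follows by swapping the roles of $P$ and $P'$ and applying the construction again (one may also invoke Remark~\ref{rmks: cocycle-deform-realiz}(a), which shows that $2$--cocycle deformation defines a group action, so it is reversible).

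For part \textit{(b)}, I will bootstrap from part \textit{(a)} together with the uniqueness statement in Proposition~\ref{prop: exist-realiz's}(b). Let $\R^{\circ}$ denote a split minimal realization of $P_s$: such a realization exists and is unique up to isomorphism by Proposition~\ref{prop: exist-realiz's}(b), taking $\ell=2n$ (so that the $T_i^{\pm}$'s automatically form a basis, i.e., the realization is minimal). Since $(P_s)_s = P_s$ coincides with $P_s$, hypothesis of part \textit{(a)} applies to the pair $(P_s,P)$ with $\R^{\circ}$ as the starting split realization. Hence there exists $\chi\in\text{\sl Alt}^{\,S}_{\,\kh}(\lieh^{\circ})$ such that $(P_s)_{(\chi)}=P$ and the realization $\R^{\circ}_{(\chi)}$ of $P$ is split; as $\Pi^\vee$ is untouched by the $2$--cocycle deformation and $\R^{\circ}$ is minimal of rank $2n$, the same holds for $\R^{\circ}_{(\chi)}$. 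Now both $\R$ and $\R^{\circ}_{(\chi)}$ are split minimal realizations of the same matrix $P$ of rank $2n$, so the uniqueness part of Proposition~\ref{prop: exist-realiz's}(b) yields an isomorphism $\R\cong\R^{\circ}_{(\chi)}$, which is exactly the claim.

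The only potentially subtle point is the well-definedness of $\chi$ in part \textit{(a)}: one might worry that imposing simultaneously $\chi(S_i,-)=0$ and $\chi(T^+_i,T^+_j)=\lambda_{ij}$ could be inconsistent. This is exactly where \emph{splitness} of $\R$ is crucial: it guarantees that $\{\varLambda_i\}_{i\in I}$ sits inside a $\kh$--basis of $\lieh$ disjoint from $\{S_i\}_{i\in I}$, so the prescription on $\chi$ can be imposed freely and independently on the two groups of basis vectors. Absent splitness the $\varLambda_i$'s could entertain nontrivial $\kh$--linear relations with the $S_i$'s, and the construction would break down; conceptually, this is why the statement restricts to split realizations.
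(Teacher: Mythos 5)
Your proof is correct and follows essentially the same route as the paper's: in (a) you construct $\chi$ so that $\chi\big(T_i^+,T_j^+\big)=(P'-P)_{ij}$ while annihilating the $S_i$'s (the paper defines it on $\textsl{Span}_\kh\big({\{T_i^+\}}_{i\in I}\big)$ and extends; your basis $\{S_i,\varLambda_i\}_{i\in I}\cup\{Y_k\}_k$ merely makes that extension explicit) and then invoke Proposition~\ref{prop: 2cocdef-realiz}. In (b) the paper verifies directly, using minimality, that the deformed roots of the standard realization agree with those of $\R$ on the basis ${\{T_i^\pm\}}_{i\in I}$, whereas you reach the same conclusion by applying (a) to the pair $(P_s,P)$ and then the uniqueness statement of Proposition~\ref{prop: exist-realiz's}(b) — a purely organizational difference, since both arguments rest on the same uniqueness result.
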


\pf
   \textit{(a)}\,  Since  $ P $  and  $ P' $  share the same symmetric part, we have
   $ \, \varLambda := P' - P \, \in \, \lieso_n\big(\kh\big) \, $,  \,so
 $ \; P' = P + \mathring{X} \; $  with  $ \; \mathring{X} = {\big(\, \mathring{\chi}_{i{}j} \big)}_{i, j \in I} := \varLambda \; $.
 Let  $ \, \lieh'' := \textsl{Span}_\kh \big( {\big\{ T_i^+ \big\}}_{i \in I} \big) \, $.
 Then  $ \mathring{X} $  defines a unique antisymmetric,  $ \kh $--bilinear  map
  $$  \chi'' \in \text{\sl Alt}_{\,\kh}\big(\, \lieh'' \times \lieh'' \, , \, \kh \,\big)  \qquad \text{with}
  \qquad  \chi''\big(\,T_i^+,T_j^+\big) := \mathring{\chi}_{i{}j}   \quad  \forall \; i, j \in I  $$
 Imposing  \eqref{eq: condition-chi-chit},  this  $ \chi'' $  extends to a map    --- non-unique, in general ---
 $ \; \chi \in \text{\sl Alt}^{\,S}_{\,\kh}\big(\, \lieh \times \lieh \, , \, \kh \,\big) \, $,
 \,obeying  \eqref{eq: condition-chi-chit}  and such that  $ \, \chi{\Big|}_{\lieh'' \times \lieh''} \! = \chi'' \, $.  Now choosing
 \;  $ \Pi_{(\chi)} := \, {\Big\{\, \alpha_i^{(\chi)} := \, \alpha_i \pm \chi\big(\, \text{--} \, , T_i^\pm \big) \Big\}}_{i \in I} \, \subseteq \; \lieh^* \, $,
 \,we get, thanks to  Proposition \ref{prop: 2cocdef-realiz},  that
 $ \, \R_{(\chi)} \, := \, \big(\, \lieh \, , \Pi_{(\chi)} \, , \Pi^\vee \,\big) \, $
 is a split realization of  $ \, P' = P_{(\chi)} \, $,  \,q.e.d.
 \vskip5pt
   \textit{(b)}\, Let us write the split minimal realization of  $ P_s \, $  as
   $ \; \R_{st} \,= \, \big(\, \lieh \, , \Pi_{st} \, , \Pi_{st}^\vee \,\big) \; $,  \,with
   $ \, \Pi_{st}^\vee\, = \, {\big\{\, T_i^\pm \,\big\}}_{i \in I} \, $  and
   $ \, \Pi_{st} = {\big\{\, \alpha_i^{(st)} \,\big\}}_{i \in I} \, $.  Since
   $ \, P = P_s + P_a \, $   --- with  $ \, P_a := 2^{-1} \big( P - P^{\scriptscriptstyle T} \big) \, $
   --- applying the arguments in \textit{(a)\/}  above we fix the matrix
   $ \, \mathring{X} := P_a =  {\big(\, \mathring{\chi}_{ij} \,\big)}_{i , j \in I} \in \lieso_n\big(\kh\big)  \, $
   and  $ \, \chi \in \text{\sl Alt}^{\,S}_{\,\kh}\big(\, \lieh \times \lieh \, , \, \kh \,\big) \; $
   obeying  \eqref{eq: condition-chi-chit};  moreover, for all  $ \, i \in I \, $  we set
 $ \; \alpha_i^{(\chi)} := \, \alpha_i^{(st)} \pm \chi\big(\, \text{--} \, , T_i^\pm \big)  \; $.
 As  $ \R $  is split minimal and
  $$  \alpha_j^{(\chi)\!}\big(T_i^\pm\big)  =  \alpha_j^{(st)\!}\big(T_i^\pm\big) + \chi \big(\, T_i^\pm , T_j^+ \big) =
  {\big(P_s\big)}_{ij} \! + \mathring{\chi}_{ij}  =  {\big(P_s\big)}_{ij} \! + \! {\big(P_a\big)}_{ij}  =  p_{ij}  $$
 for all  $ \, i , j \in I \, $,  \,we get  $ \, \alpha_j^{(\chi)} = \alpha_j \, $  for all  $ \, j \in I \, $.
 Thus the realization  $ \big(\R_{st}\big)_{\!(\chi)} $  obtained from the
 $ 2 $--cocycle  deformation of  $ \R $  afforded by  $ \chi $  does coincide with  $ \R \, $.
 Finally, the assumption ``split minimal'' implies  $ \, \rk(\lieh) = 2\,n \, $,
 \,hence the uniqueness property in  Proposition \ref{prop: exist-realiz's}\textit{(b)\/}  gives
 $ \; \R \, \cong \,  \R_{st}\, $  as desired.
\epf

\vskip9pt

   As a byproduct, we find the following ``2-cocycle counterpart'' of  Lemma \ref{lemma: twist=sym}:

\vskip11pt

\begin{lema}  \label{lemma: 2-cocycle=sym}
 With notation as above, let  $ \, P , P' \in M_n\big(\kh\big) \, $.  Then  $ P $  and  $ P' \, $  are 2-cocycle equivalent
 --- for the aforementioned 2-cocycle action on  $ M_n\big(\kh\big) $  of some additive group
%
 $ \lieso_t\big(\kh\big) \, $   ---   if and only if  $ \, P_s = P'_s \, $.
\end{lema}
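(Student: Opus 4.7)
The strategy is to reduce this lemma to the combination of Proposition \ref{prop: 2cocdef-realiz}(a) and Proposition \ref{prop: mutual-2-cocycle-def}(a), paralleling the way Lemma \ref{lemma: twist=sym} in the twist setting follows from Proposition \ref{prop: twist-realizations}(a) and Proposition \ref{prop: realiz=twist-standard}. In other words, the real content has already been produced in the earlier results, and what remains is merely to harvest and repackage it. The main task is to keep track of which realization is being used, since the group of 2--cocycles $\text{\sl Alt}_{\,\kh}^{\,S}(\lieh)$ depends on a choice of underlying $\lieh$.

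The ``only if'' direction is essentially a tautology. If $P' = P_{(\chi)}$ for some $\chi \in \text{\sl Alt}_{\,\kh}^{\,S}(\lieh)$, then by Proposition \ref{prop: 2cocdef-realiz}(a) the symmetric parts coincide, $P'_s = P_s$; indeed, 2--cocycle deformation alters $P$ by the antisymmetric matrix $\mathring{X}$ only, so the symmetric part is preserved.

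For the ``if'' direction, assume $P_s = P'_s$. First I would invoke Proposition \ref{prop: exist-realiz's}(a) to produce a straight split realization $\R = (\lieh\, ,\Pi\, ,\Pi^\vee)$ of $P$ whose rank $t := \rk(\lieh)$ is as large as required by the ambient group in the statement (concretely, $t \geq 3n - \rk(P_s)$ suffices, by the same quantitative lower bound used in Lemma \ref{lemma: twist=sym}). Then I would apply Proposition \ref{prop: mutual-2-cocycle-def}(a) to the pair $(P,P')$ relative to this realization $\R$: its proof constructs $\chi \in \text{\sl Alt}_{\,\kh}^{\,S}(\lieh)$ explicitly by taking the antisymmetric matrix $\mathring{X} := P' - P$, interpreting it as an antisymmetric bilinear form $\chi''$ on $\textsl{Span}_\kh\big({\{T_i^+\}}_{i\in I}\big)$, and extending it to all of $\lieh \times \lieh$ while enforcing the vanishing condition \eqref{eq: condition-chi}. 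By that proposition, this $\chi$ satisfies $P_{(\chi)} = P'$, which is exactly the statement that $P$ and $P'$ lie in one orbit of the $\text{\sl Alt}_{\,\kh}^{\,S}(\lieh)$--action.

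The only point that demands care is the pure bookkeeping of matching the specific ambient group. Concretely, one has to verify that the extension of $\chi''$ to the full $\lieh$ can be arranged within the ambient group named in the statement, which reduces to ensuring $\lieh$ is taken sufficiently large at the outset, as provided by Proposition \ref{prop: exist-realiz's}(a). I do not foresee any genuine obstacle: all nontrivial work has been done in the preceding two propositions, and this lemma is essentially their direct corollary, mirroring the twist analogue.
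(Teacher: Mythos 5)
Your proposal is correct and follows essentially the same route as the paper: one direction from Proposition \ref{prop: 2cocdef-realiz} (the antisymmetric perturbation $\mathring{X}$ preserves the symmetric part), and the other by producing a realization of $P$ of the right rank via Proposition \ref{prop: exist-realiz's} and then invoking Proposition \ref{prop: mutual-2-cocycle-def}(a). The only cosmetic difference is that you ask for a straight split realization, while split (which is all Proposition \ref{prop: mutual-2-cocycle-def}(a) needs) already suffices.
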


\pf
 The ``if'' part is  Proposition \ref{prop: 2cocdef-realiz},  so we are left to prove the ``only if''.
 By the existence result for realizations (cf.\  Proposition \ref{prop: exist-realiz's}),
 we can pick a realization  $ \R $  of  $ P $  of rank  $ \, \rk(\R) = t \, $:  \,then  Proposition \ref{prop: mutual-2-cocycle-def}\textit{(a)\/}
 applies, and we are done.
\epf

\vskip13pt

\begin{obs}  \label{obs: two deform's x P & R}
 To sum up, we wish to stress the following, remarkable fact.  Consider two matrices
 $ \, P , P' \in M_n\big(\kh\big) \, $
 with the same symmetric part  $ \, P_s = P'_s \, $,  and a realization
 $ \, \R = \big(\, \lieh \, , \Pi \, , \Pi^\vee \,\big) \, $  of  $ P $
 that is  \textsl{split\/}  and  \textsl{straight}.  Then, by
 Proposition \ref{prop: realiz=twist-standard}  and  Proposition \ref{prop: mutual-2-cocycle-def},
 one can construct  \textsl{two\/}  realizations  $ \R_{\scriptscriptstyle \Phi} $  and
 $ \R_{(\chi)} $  of  $ P' $  by a twist deformation,
 respectively a 2-cocycle deformation, of  $ \R $  that affects only the coroot set  $ \Pi^\vee $
 or the root set  $ \Pi \, $,  respectively; in particular,
 $ \R_{\scriptscriptstyle \Phi} $  is still straight (yet possibly not split) and  $ \R_{(\chi)} $
 is still split (yet possibly not straight),
 while both have the same rank as  $ \R \, $.
\end{obs}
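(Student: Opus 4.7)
The plan is to assemble this observation directly from the previously established Propositions \ref{prop: realiz=twist-standard} and \ref{prop: mutual-2-cocycle-def}, together with the stability statements in Propositions \ref{prop: twist-realizations} and \ref{prop: 2cocdef-realiz}. Since $P$ and $P'$ share the same symmetric part, the antisymmetric discrepancy $\varLambda := P' - P$ is the only obstruction to be absorbed, and the two deformation processes provide dual mechanisms to absorb it---one on the coroot side, one on the root side.

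First I would build $\R_{\scriptscriptstyle \Phi}$. Since $\R$ is straight, Proposition \ref{prop: realiz=twist-standard}(a) directly produces a matrix $\Phi \in \lieso_t(\kh)$, with $\, t = \rk(\R) \,$, such that $P_{\scriptscriptstyle \Phi} = P'$ and such that $\R_{\scriptscriptstyle \Phi} = \big(\,\lieh\,,\Pi\,,\Pi^\vee_{\scriptscriptstyle \Phi}\,\big)$ is straight. By the very construction in \S \ref{twist-deform.'s x mpmatr.'s & realiz.'s}, neither $\lieh$ nor $\Pi$ is altered---only $\Pi^\vee$ is replaced by $\Pi^\vee_{\scriptscriptstyle \Phi}$---so in particular $\rk(\R_{\scriptscriptstyle \Phi}) = \rk(\R)$. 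I would then remark, referencing the analysis in \S \ref{further_stability} and Examples \ref{examples: special-cases}, that splitness is not automatically preserved, which accounts for the parenthetical ``yet possibly not split'' in the statement.

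Next I would build $\R_{(\chi)}$. Since $\R$ is split, Proposition \ref{prop: mutual-2-cocycle-def}(a) provides a bilinear map $\chi \in \text{\sl Alt}^{\,S}_{\,\kh}(\lieh)$ with $P_{(\chi)} = P'$ and with $\R_{(\chi)} = \big(\,\lieh\,,\Pi_{(\chi)}\,,\Pi^\vee\,\big)$ split; by the construction in \S \ref{2-cocycle-deform.'s x mpmatr.'s & realiz.'s}, this deformation fixes $\lieh$ and $\Pi^\vee$ and modifies only the root set $\Pi$, so again $\rk(\R_{(\chi)}) = \rk(\R)$. That $\R_{(\chi)}$ need not be straight is a consequence of the fact that the perturbations $\alpha_i \mapsto \alpha_i \pm \chi(\,-\,,T_i^\pm)$ may destroy the $\kh$-linear independence of the simple roots modulo $\hbar$.

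No step presents a substantial obstacle: the observation is genuinely a bookkeeping synthesis, and the only thing to emphasize is the \emph{complementary} character of the two deformations---one acts on $\Pi^\vee$ while preserving $\Pi$ and straightness, the other acts on $\Pi$ while preserving $\Pi^\vee$ and splitness---which reflects, at the combinatorial level of realizations, the duality between twist and $2$-cocycle deformation that will later appear at the Hopf-algebraic level.
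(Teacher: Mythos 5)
Your argument is correct and is exactly the synthesis the paper intends: the observation has no separate proof beyond invoking Proposition \ref{prop: realiz=twist-standard}(a) for the twist side and Proposition \ref{prop: mutual-2-cocycle-def}(a) for the 2-cocycle side, together with the fact that by construction each deformation fixes $\lieh$ and one of the two sets $\Pi$, $\Pi^\vee$, so the rank is unchanged. Your added remarks on why splitness (resp.\ straightness) may fail, via \S \ref{further_stability} and Examples \ref{examples: special-cases}, match the paper's caveats.
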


\bigskip

\section{Multiparameter Lie bialgebras and their deformations}  \label{sec: Mp Lie bialgebras}

\vskip13pt

   In this section we introduce multiparameter Lie bialgebras, i.e.\ Lie bialgebra
   structures on a given vector space that depend on a multiparameter, and their deformations.
   Indeed, these will be the semiclassical objects corresponding to the specialization of
   our formal multiparameter quantum enveloping algebras at  $ \, \hbar = 0 \, $.

\medskip

 \subsection{Lie bialgebras and their deformations}  \label{subsec: Lie-bialg's & deform's}  {\ }
 \vskip7pt
   We recall hereafter a few notions concerning Lie bialgebras and their deformations;
   all this is classic, so we rely on references for a more detailed treatment.

\vskip9pt

\begin{free text}  \label{gen's on LbA's}
 {\bf Generalities on Lie bialgebras.}  A  \textsl{Lie bialgebra\/}  is any triple
 $ \, \big(\, \lieg \, ; \, [\,\ ,\,\ ] \, , \, \delta \,\big) \, $  such that  $ \lieg $  is a  $ \Bbbk $--module
 --- for some ground field  $ \Bbbk $  ---   $ \, [\,\ ,\,\ ] \, $  is a Lie bracket on  $ \lieg $
 (making the latter into a Lie algebra), the map  $ \, \delta : \lieg \longrightarrow \lieg \wedge \lieg \, $
 is a  \textit{Lie cobracket\/}  on  $ \lieg $  (making it into a  \textit{Lie coalgebra},
 i.e.\  $ \, \delta^* : \lieg^* \wedge \lieg^* \longrightarrow \lieg^* \, $  is a Lie algebra bracket on  $ \lieg^* \, $),
 and the two structures are linked by the constraint that  $ \delta $  is a  $ 1 $--cocycle
 --- for the Chevalley-Eilenberg cohomology of the Lie algebra  $ \, \big(\, \lieg \, ; \, [\,\ ,\,\ ] \,\big) \, $
 with coefficients in  $ \, \lieg \wedge \lieg \, $.  As a matter of notation, we set
 $ \, x \wedge y := 2^{-1} (x \otimes y - y \otimes x) \, $  and thus we identify  $ \, \lieg \wedge \lieg \, $
 with the subspace of antisymmetric tensors in
 $ \, \lieg \otimes \lieg \, $. Moreover, we loosely use a Sweedler's-like notation
 $ \, \delta(x) = x_{[1]} \otimes x_{[2]} \, $  for any  $ \, x \in \lieg \, $.

   For example, the compatibility condition between both structures reads
\begin{equation}  \label{eq:compat-bracket-cobracket}
   \begin{aligned}
      \delta([x,y])  &  \; = \;  \ad_x\big(\delta(y)\big) - \ad_y\big(\delta(x)\big)  \; =  \\
                     &  \; = \;  \big[x,y_{[1]}\big] \otimes y_{[2]} + y_{[1]} \otimes \big[x,y_{[2]}\big] -
                     \big[y,x_{[1]}\big] \otimes x_{[2]} - x_{[1]} \otimes \big[y,x_{[2]}\big]
   \end{aligned}
\end{equation}
   \indent   When  $ \, \big(\, \lieg \, ; \, [\,\ ,\,\ ] \, , \, \delta \,\big) \, $  is a Lie bialgebra,
   the same holds for  $ \, \big(\, \lieg^* \, ; \, \delta^* , \, {[\,\ ,\,\ ]}^* \,\big) \, $
   --- up to topological technicalities, if  $ \lieg $  is infinite-dimensional ---   which is called the  \textit{dual\/}
   Lie bialgebra to  $ \, \big(\, \lieg \, ; \, [\,\ ,\,\ ] \, , \, \delta \,\big) \, $.
                                                             \par
   We shall usually denote a Lie bialgebra simply by  $ \lieg \, $,  \,hence its dual by  $ \lieg^* $.
                                                             \par
   We need some more notation.  Given  $ \, r = r_{1} \otimes r_{2} \, $  and
   $ \, s = s_{1} \otimes s_{2} \, $  in  $ \, \lieg \otimes \lieg \, $
   --- and similarly in  $ \, \lieg \wedge \lieg \, $  ---   we write $ \; r_{2,1} := r_{2} \otimes r_{1} \; $  and
  $$  [\![ r , s ]\!] \, := \,  \big[r_{1},s_{1}\big] \otimes r_{2} \otimes s_{2} + r_{1} \otimes
  \big[r_{2},s_{1}\big] \otimes s_{2} +
  r_{1} \otimes s_{1} \otimes \big[r_{2},s_{2}\big]  $$
%
 which in compact form reads
  $$  [\![ r , s ]\!] \, := \,  [r_{1,2},s_{1,3}] + [r_{1,2},s_{2,3}] + [r_{1,3},s_{2,3}]  $$
   \indent   Further details can be found in  \cite{CP},  \cite{Mj},  and references therein.
\end{free text}

\vskip9pt

\begin{free text}  \label{deformations of LbA's}
 {\bf Deformations of Lie bialgebras.}  A general theory of  \textit{deformations\/}  for Lie bialgebras exists, which clearly springs up as a sub-theory of that of Lie algebras: see, for instance,  \cite{CG},  \cite{MW},  and references therein.  In the present work, we are mainly interested in two special kinds of deformations, which we now define, where either the Lie cobracket or the Lie bracket alone is deformed, leaving the ``other side'' of the overall structure untouched.
 \vskip3pt
   We begin by deforming the Lie cobracket.  Let  $ \big(\, \lieg \, ; \, [\,\ ,\,\ ] \, , \, \delta \,\big) \, $
   be a Lie bialgebra.  Let then  $ \, c \in \lieg \otimes \lieg \, $   --- identified with a  $ 0 $--cochain  ---   be such that
\begin{equation}  \label{eq: twist-cond_Lie-bialg}
   \ad_x\!\big( (\id \otimes \delta)(c) + \text{c.p.} + [\![\, c \, , c \,]\!] \,\big)  \; = \; 0  \;\; ,
   \quad  \ad_x\!\big( c + c_{{}_{\,2,1}} \big) \, = \, 0   \qquad   \forall \;\; x \in \lieg
\end{equation}
 where  $ \, \ad_x \, $  denotes the standard adjoint action of  $ x $  and ``\,c.p.''
 means ``cyclic permutations (on the tensor factors of the previous summand)''.
                                                             \par
   Then  \textit{the map  $ \, \delta^{\,c} : \lieg \relbar\joinrel\longrightarrow \lieg \wedge \lieg \, $  defined by
\begin{equation}  \label{eq: def_twist-delta}
  \delta^{\,c}  \, := \;  \delta - \partial(c) \;\; ,  \qquad  \text{i.e.\ \ \ }
  \delta^{\,c}(x) \, := \, \delta(x) - \ad_x(c)   \qquad \forall \;\; x \in \lieg
\end{equation}
 is a new Lie cobracket on the Lie algebra  $ \big(\, \lieg \, ; \, [\,\ ,\,\ ] \,\big) \, $  making
 $ \, \big(\, \lieg \, ; \, [\,\ ,\ ] \, , \, \delta^{c} \,\big) \, $  into a new Lie bialgebra}  (cf.\ \cite[Theorem 8.1.7]{Mj}).
\end{free text}

\vskip9pt

\begin{definition}  \label{def: twist-deform_Lie-bialg's}
 Every  $ \, c \in \lieg \otimes \lieg \, $  that obeys  \eqref{eq: twist-cond_Lie-bialg}  is called a  \textit{twist\/}
 of the Lie bialgebra  $ \lieg \, $,  \,and the associated Lie bialgebra
 $ \; \lieg^{\,c} \, := \, \big(\, \lieg \, ; \, [\,\ ,\,\ ] \, , \, \delta^{\,c} \,\big) \; $  is called a \textit{deformation by twist}
 (or  ``\textit{twist deformation\/}'')  of the original Lie bialgebra  $ \lieg \, $.   \hfill  $ \diamondsuit $
\end{definition}

\vskip7pt

   Now we go and deform the Lie bracket.  Let again  $ \big(\, \lieg \, ; \, [\,\ ,\,\ ] \, , \, \delta \,\big) \, $
   be a Lie bialgebra.  Let now  $ \, \chi \in \Hom_\Bbbk\!\big(\, \lieg \otimes \lieg \, , \Bbbk \,\big) \, $
   and identify  $ \; \Hom_\Bbbk\!\big(\, \lieg \otimes \lieg \, , \Bbbk \,\big) \, = \, {( \lieg \otimes \lieg )}^* \, = \,
   \lieg^*\otimes \lieg^* \; $   --- up to technicalities in the infinite-dimensional case (yet the outcome is always the same).
   Then condition  \eqref{eq: twist-cond_Lie-bialg}  with  $ \lieg^* $  replacing  $ \lieg $  and  $ \chi $  in the role of  $ c $
   reads
\begin{equation}  \label{eq: cocyc-cond_Lie-bialg}
 \begin{aligned}
   \ad_\psi\!\big(\, \partial_*(\chi) + {[\![\, \chi \, , \chi \,]\!]}_* \,\big)  \; = \; 0  \;\; ,
   \quad  \ad_\psi\!\big(\, \chi + \chi_{{}_{2,1}} \big) \, = \, 0
   \quad \qquad   \forall \;\; \psi \in \lieg^*
 \end{aligned}
\end{equation}
 where  $ \, \chi_{{}_{2,1}} := \chi^{\scriptscriptstyle T} \, $,  $ \, \partial_* \, $
 is the coboundary map for the Lie algebra  $ \lieg^* $  and similarly the symbol  $ \, {[\![\,\ ,\,\ ]\!]}_* \, $
 has the same meaning as above but with respect to  $ \lieg^* $.
 \vskip5pt
   For example, the condition  $ \; \ad_\psi\!\big( \chi + \chi_{{}_{2,1}} \big) \, = \, 0 \; $  for all
   $ \, \psi \in \lieg^* \, $  reads
  $$  \psi(x_{[1]}) \big(\, \chi(x_{[2]}, y) + \chi(y,x_{[2]}) \big) +
  \psi(y_{[1]}) \big(\, \chi(x,y_{[2]}) + \chi(y_{[2]},x) \big) = 0   \qquad  \forall\ x , y \in \lieg  $$
 This is clearly satisfied, for instance, whenever  $ \chi $  is antisymmetric, i.e.\ it is a  $ 2 $--cochain
 for the usual Lie algebra cohomology.
 \vskip5pt
   Then  \textit{the map  $ \; {[\,\ ,\ ]}_\chi : \lieg \wedge \lieg \relbar\joinrel\longrightarrow \lieg \, $  defined by
\begin{equation}  \label{eq: def_cocyc-bracket}
  {[x,y]}_\chi  \, := \;  [x,y] \, + \, \chi\big(x_{[1]},y\big) \, x_{[2]} \, - \, \chi\big(y_{[1]},x\big) \, y_{[2]}
  \qquad \qquad \forall \;\; x, y \in \lieg
\end{equation}
 is a new Lie bracket on the Lie coalgebra  $ \big(\, \lieg \, ; \, \delta \,\big) \, $  making
 $ \, \big(\, \lieg \, ; \, {[\,\ ,\ ]}_\chi \, , \, \delta \,\big) \, $  into a new Lie bialgebra}
 (cf.\ \cite[Exercise 8.1.8]{Mj}).

\vskip11pt

\begin{definition}  \label{def: cocyc-deform_Lie-bialg's}
 Every  $ \, \chi \in \Hom_\Bbbk\!\big(\, \lieg \wedge \lieg \, , \Bbbk \,\big) \, $  that obeys
 \eqref{eq: cocyc-cond_Lie-bialg}  is called a  \textit{2--cocy\-cle\/}  of the Lie bialgebra  $ \lieg \, $,
 \,and the Lie bialgebra  $ \, \lieg_\chi \, := \, \big(\, \lieg \, ; \, {[\,\ ,\ ]}_\chi \, , \, \delta \,\big) \, $
 is called a  \textit{deformation by 2--cocycle}  (or  ``\textit{2--cocycle deformation\/}'')  of the Lie bialgebra
 $ \lieg \, $.   \hfill  $ \diamondsuit $
\end{definition}

\vskip9pt

   At last, we point out that the two notions of ``twist'' and of  ``$ 2 $--cocycle''  for Lie bialgebras,
   as well as the associated deformations, are so devised as to be dual to each other.
   The following result then holds, whose proof is left to the reader:

\vskip11pt

\begin{prop}  \label{prop: duality-deforms x LbA's}
 Let  $ \lieg $  be a Lie bialgebra, and  $ \lieg^* $  the dual Lie bialgebra.
 \vskip3pt
   {\it (a)}\,  Let  $ c $  be a twist for  $ \lieg \, $,  and  $ \chi_c $  the image of  $ c $  in  $ {\big( \lieg^* \otimes \lieg^* \big)}^* $
   for the natural composed embedding
   $ \, \lieg \otimes \lieg \lhook\joinrel\relbar\joinrel\longrightarrow \lieg^{**} \otimes \lieg^{**} \lhook\joinrel\relbar\joinrel\longrightarrow {\big( \lieg^* \otimes \lieg^* \big)}^* \, $.
   Then  $ \chi_c $  is a 2--cocycle  for  $ \lieg^* \, $,  and there exists a canonical isomorphism
   $ \, {\big( \lieg^* \big)}_{\chi_c} \cong {\big( \lieg^c \big)}^* \, $.
 \vskip3pt
   {\it (b)}\,  Let  $ \chi $  be a 2--cocycle  for  $ \lieg \, $;  assume that  $ \lieg $  is finite-dimensional, and let  $ c_{\,\chi} $  be the image of  $ \, \chi $
   in the natural identification  $ \, {(\lieg \otimes \lieg)}^* = \lieg^* \otimes \lieg^* \, $.  Then  $ c_{\,\chi} $  is a twist for  $ \lieg^* \, $,
   and there exists a canonical isomorphism  $ \, {\big( \lieg^* \big)}^{c_{\,\chi}} \cong {\big( \lieg_\chi \big)}^* \, $.
\qed
\end{prop}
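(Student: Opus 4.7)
The plan is to establish a precise dictionary between the twist conditions for $\lieg$ and the 2-cocycle conditions for $\lieg^*$ via the natural pairing, and then to check that the deformed structures correspond under dualization. First, I would set up the basic identifications. For part (a), the natural embedding $\lieg \otimes \lieg \hookrightarrow (\lieg^* \otimes \lieg^*)^*$ identifies $c = c_1 \otimes c_2$ with the bilinear form $\chi_c(\varphi,\psi) := \varphi(c_1)\,\psi(c_2)$. Recall that the bracket on $\lieg^*$ is given by $[\varphi,\psi]_{\lieg^*}(x) = (\varphi\otimes\psi)(\delta(x))$ and the cobracket $\delta_{\lieg^*}$ is dual to $[\,,\,]_\lieg$. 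Under this dictionary the coboundary $\partial$ on cochains of $\lieg$ with coefficients in $\lieg\wedge\lieg$ and the coboundary $\partial_*$ on cochains of $\lieg^*$ with coefficients in $\Bbbk$ are exchanged, and $[\![\,,\,]\!]$ pairs with its counterpart $[\![\,,\,]\!]_*$.

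Next, I would translate the twist conditions \eqref{eq: twist-cond_Lie-bialg} for $c$ into the 2-cocycle conditions \eqref{eq: cocyc-cond_Lie-bialg} for $\chi_c$. The antisymmetry-type condition $\ad_x(c + c_{2,1}) = 0$ for all $x\in\lieg$ dualizes directly to $\ad_\psi(\chi_c + (\chi_c)_{2,1}) = 0$ for all $\psi\in\lieg^*$, since pairing intertwines the adjoint actions via the embedding above. For the more substantial ``CYBE-like'' condition, the summand $(\id\otimes\delta)(c)$ + cyclic permutations dualizes to the Chevalley--Eilenberg coboundary $\partial_*(\chi_c)$ in the complex of $\lieg^*$, while the quadratic term $[\![c,c]\!]$ dualizes precisely to $[\![\chi_c,\chi_c]\!]_*$. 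This last identification is essentially a bookkeeping check, once one unpacks the Sweedler-like notation on basis elements.

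Then I would verify the isomorphism. The new Lie cobracket $\delta^c = \delta - \partial(c)$ on $\lieg$ induces, by transposition, a Lie bracket on $(\lieg^c)^*$; computing this bracket on $\varphi,\psi\in\lieg^*$ and pairing with $x\in\lieg$ yields
\[
\langle [\varphi,\psi]_{(\lieg^c)^*},\,x\rangle \;=\; (\varphi\otimes\psi)(\delta(x)) \;-\; (\varphi\otimes\psi)\bigl(\ad_x(c)\bigr),
\]
and a direct expansion of $\ad_x(c) = [x,c_1]\otimes c_2 + c_1\otimes[x,c_2]$ shows that the right-hand side coincides with the value $\langle [\varphi,\psi]_{\chi_c},\,x\rangle$ obtained from formula \eqref{eq: def_cocyc-bracket} applied with the 2-cocycle $\chi_c$ on $\lieg^*$. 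Since the cobracket of $\lieg^*$ is unchanged under a 2-cocycle deformation (and it is the cobracket of $\lieg$ that is being deformed on the primal side), the identity map $\lieg^*\to\lieg^*$ supplies the desired canonical isomorphism $(\lieg^*)_{\chi_c} \cong (\lieg^c)^*$.

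For part (b) the argument is formally the same with the roles of $\lieg$ and $\lieg^*$ swapped; the finite-dimensionality hypothesis is used precisely to ensure that the natural map $\lieg\otimes\lieg \to (\lieg^*\otimes\lieg^*)^*$ is an isomorphism, so that $c_{\,\chi}$ lives genuinely in $\lieg^*\otimes\lieg^*$ rather than in a completion. The main potential obstacle throughout is the careful matching of the cubic expressions $[\![c,c]\!]$ and $[\![\chi,\chi]\!]_*$, which requires unpacking the cyclic-permutation notation and tracking signs in the Chevalley--Eilenberg complexes; once these are handled, the rest of the proof reduces to straightforward linear-algebraic dualization.
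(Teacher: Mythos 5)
The paper offers no proof of this Proposition (it is explicitly left to the reader), and your overall strategy --- identify the twist conditions for $c$ on $\lieg$ with the $2$--cocycle conditions for $\chi_c$ on $\lieg^*$ via the canonical pairing, then check that the transpose of $\delta^{\,c}$ is the $\chi_c$--deformed bracket --- is precisely the intended one. The trouble is that the step you dismiss as ``a direct expansion'' is where the only real content sits, and as stated it does not close. With the paper's conventions ($[\varphi,\psi]_{\lieg^*}=(\varphi\otimes\psi)\circ\delta$ and $\langle\delta_{\lieg^*}(\varphi),x\otimes y\rangle=\varphi([x,y])$), writing $c=c_1\otimes c_2$ and comparing \eqref{eq: def_cocyc-bracket} for $\chi_c$ with the transpose of $\delta^{\,c}=\delta-\partial(c)$ gives
\begin{equation*}
\big\langle\,[\varphi,\psi]_{\chi_c}\,,\,x\,\big\rangle \;-\; \big\langle\,\varphi\otimes\psi\,,\,\delta^{\,c}(x)\,\big\rangle
\;=\;(\varphi\otimes\psi)\big(\,c_1\otimes[x,c_2]+c_2\otimes[x,c_1]\,\big)
\;=\;(\varphi\otimes\psi)\big(\,(\id\otimes\ad_x)(c+c_{2,1})\,\big)\,,
\end{equation*}
so the first cross terms match but the second ones differ by a term governed by the symmetric part of $c$. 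The twist axiom \eqref{eq: twist-cond_Lie-bialg} only yields $\ad_x(c+c_{2,1})=0$, i.e.\ invariance under the derivation action on \emph{both} legs, which does not imply $(\id\otimes\ad_x)(c+c_{2,1})=0$: for $\lieg$ semisimple and $c+c_{2,1}$ a multiple of the Casimir tensor $t$ one has $\ad_x(t)=0$ but $t_1\otimes[x,t_2]\neq0$. Note also that $\delta^{\,c}$ sees only the antisymmetric part of $c$, while $[\,\ ,\ ]_{\chi_c}$ does not, so this is a genuine discrepancy, not a sign-bookkeeping artifact. Your argument is correct as soon as $c$ is antisymmetric (or $c+c_{2,1}\in Z(\lieg)\otimes Z(\lieg)$), which covers all the toral twists $j_\Theta$ and $\F_\Phi$ actually used in the paper; but for an arbitrary twist you must either reduce to the antisymmetric case (and then take $\chi_c$ to be the image of the antisymmetrization of $c$, since otherwise the cocycle deformation genuinely changes) or explain how the symmetric part is disposed of. The same caveat applies verbatim to part (b).

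Two smaller points. In part (a) the $2$--cocycle condition \eqref{eq: cocyc-cond_Lie-bialg} for the Lie bialgebra $\lieg^*$ quantifies $\ad_\psi$ over $\psi\in(\lieg^*)^*$, not over $\psi\in\lieg^*$ as you wrote; in finite dimensions this is harmless, but part (a) does not assume finite-dimensionality, so passing from invariance under $x\in\lieg$ to invariance under all of $\lieg^{**}$ needs the topological caveat the paper itself invokes when defining $\lieg^*$. Moreover, the cleanest formulation of your first step is not that $\partial$ and $\partial_*$ ``are exchanged'', but rather the paper's own observation that the $2$--cocycle conditions for $\lieg^*$ \emph{are} the twist conditions written for its predual data, so under $c\mapsto\chi_c$ the two sets of conditions are literally the same expressions; the finite-dimensionality in (b) is used exactly as you say, to land $c_{\,\chi}$ in $\lieg^*\otimes\lieg^*$.
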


\medskip

\subsection{Multiparameter Lie bialgebras (=MpLbA's)}  \label{subsec: MpLbA's}  {\ }
 \vskip7pt
   Let  $ \, A := {\big(\, a_{i,j} \big)}_{i, j \in I} \, $  be some fixed generalized symmetrizable Cartan matrix, and let
   $ \, P := {\big(\, p_{i,j} \big)}_{i, j \in I} \in M_n(\Bbbk) \, $  be a matrix of Cartan type with associated Cartan matrix
   $ A \, $:  about the latter, hereafter we refer to the notions in  Definition \ref{def: realization of P}
   and all what follows in  \S \ref{sec: Cartan-data_realiz's},  \textsl{but\/}  working now with  $ \Bbbk $
   as ground ring instead of  $ \kh \, $.  Thus  $ \, P + P^{\,\scriptscriptstyle T} = 2\,D\,A \, $,  \,i.e.\
   $ \, p_{ij} + p_{ji} = 2\,d_i\,a_{ij} \, $  for all  $ \, i, j \in I \, $,  which implies  $ \, p_{ii} = 2\,d_i \not= 0 \, $  for all
   $ \, i \in I \, $.  Let  $ \, \R := \big(\, \lieh \, , \Pi \, , \Pi^\vee \,\big) \, $  be a split minimal realization of  $ P $,
   as in  Definition \ref{def: realization of P}\textit{(b.4)}  ---   so  $ \lieh $  is free over  $ \Bbbk $  with
   $ {\big\{ T_i^+ , T_i^- \big\}}_{i \in I} \, $  as  $ \Bbbk $--basis.
                                                                 \par
   Out of these data, we introduce the so-called ``multiparameter Borel Lie bialgebras''  $ \liebP_\pm $
   and a suitable Lie bialgebra pairing among them; then out of this pairing we construct the associated
   \textsl{Manin double},  that is a suitable, canonical structure of Lie bialgebra onto
   $ \, \liebP_+ \oplus \liebP_- \, $  depending on that of  $ \liebP_\pm $  and on the pairing itself.
   Our recipe follows in the footsteps of Halbout's construction in  \cite{Hal},
   that we are just slightly generalizing: indeed, all proofs in  \cite{Hal}  easily adapt to the present situation,
   the only assumptions which really are relevant in the calculations being that  $ \; (\alpha_i\,,\alpha_i) = 2\,d_i = p_{ii} \; $
   and  $ \; (\alpha_i\,,\alpha_j) + (\alpha_j\,,\alpha_i) = d_i a_{ij} + d_j a_{ji} = p_{ij} + p_{ji} \; $  ($ \, i , j \in I \, $).
 \vskip3pt
   \textsl{N.B.:}  as a matter of notation, as we are dealing with  $ \Bbbk $  rather than  $ \kh \, $,
   \,comparing with  \S \ref{def: realization of P}  we identify the space  $ \lieh $  with
   $ \overline{\lieh} \, $,  the roots  $ \alpha_j $  with  $ \overline{\alpha}_j \, $,  \,etc.

\vskip11pt

\begin{free text}  \label{pre-Borel MpLieBial}
 {\bf Pre-Borel multiparameter Lie bialgebras.}  We define the
 {\sl positive, resp.\ negative, pre-Borel multiparameter Lie bialgebra\/}  with multiparameter  $ P $
 as being the free Lie algebra over  $ \Bbbk \, $,  denoted by  $ \liebPhat_+ \, $,  resp.\ by  $ \liebPhat_- \, $,
 with generators  $ \, T^+_i $,  $ E_i \, $,  resp.\  $ T^-_i $,  $ F_i \, $  ($ \, i \in I \, $).
 Moreover, we give  $ \liebPhat_+ \, $,  resp.\   $ \liebPhat_- \, $,
 the unique structure of Lie bialgebra over  $ \k $  whose Lie cobracket is uniquely defined
 --- still using shorthand notation  $ \; x \wedge y \, := \, 2^{-1} (x \otimes y - y \otimes x) \; $ ---   by
  $$  \displaylines{
   \phantom{\text{resp.\ by}}  \quad  \delta\big(\,T\big) \, = \, 0  \; ,   \;\;
   \delta\big(E_i\big) \, = \, -2\,T^+_i \!\wedge E_i \, = \, -\big(\, T^+_i \otimes E_i - E_i \otimes T^+_i \,\big)
   \quad \hskip5pt  \forall \;\;  i \in I  \;  \cr
   \text{resp.\ by}  \quad  \delta\big(\,T\big) \, = \, 0  \; ,   \;\;
   \delta\big(F_i\big) \, = \, +2\,T^-_i \!\wedge F_i \, = \, +\big(\, T^-_i \otimes F_i - F_i \otimes T^-_i \,\big)
   \quad \hskip7pt  \forall \;\;  i \in I  \;  }  $$
 \vskip5pt
   One can prove   --- like in  \cite{Hal}  ---   that there exists a Lie bialgebra pairing
  $$  \langle\;\ ,\,\ \rangle  \, : \, \liebPhat_+ \times \liebPhat_- \relbar\joinrel\relbar\joinrel\longrightarrow \k  $$
 uniquely given   --- for all  and  $ \, i \, , j \in I \, $  ---   by
  $$  \displaylines{
   \big\langle\, T_i^+ , T_j^- \,\big\rangle  \, = \,  p_{ij} \, = \, \alpha_i(\,T_j^-\,) \, = \, \alpha_j(\,T_i^+\,)  \quad ,   \qquad
   \big\langle\, T_i^+ , F_j \,\big\rangle  \, = \,  0  \, = \,  \big\langle\, E_i \, , T_j^- \,\big\rangle  \cr
   \big\langle\, E_i \, , F_j \,\big\rangle  \, = \,  \delta_{i{}j} \, p_{ii}^{-1}  \, = \,  \delta_{i{}j} \, {2\,d_i}^{-1} }  $$
\end{free text}

\vskip11pt

\begin{free text}  \label{Borel MpLieBial}
 {\bf Borel multiparameter Lie bialgebras.}
 We introduce a Lie ideal  $ \mathfrak{l}_\pm $  of  $ \liebPhat_\pm $  as follows.
 On the one hand,  $ \mathfrak{l}_+ $  is the Lie ideal generated by the elements
   $$  \displaylines{
   T^+_{i,j} \, := \, \big[\, T_i^+ , T_j^+ \big]\; ,
 \quad  E^{(T)}_{i,j} \, := \,\big[\,  T_i^+ , E_j \, \big]\,  - \, \alpha_j(T_i^+) \, E_j   \hskip21pt  \forall \; i \, , \, j \in I  \cr
  \qquad   E_{i,j}  \; := \,  {\big(\textsl{ad}\,(E_i)\big)}^{1-a_{ij}}(E_j)   \qquad \qquad  \forall \; i \not= j  }  $$
 on the other hand,  $ \mathfrak{l}_- $  is the Lie ideal generated by the elements
   $$  \displaylines{
   T^-_{i,j} \, := \, \big[\, T_i^- , T_j^- \big]\; ,
 \quad  F^{(T)}_{i,j} \, := \,\big[\,  T_i^- , F_j \, \big]\, + \, \alpha_j(T_i^-) \, F_j   \hskip21pt  \forall \; i \, , \, j \in I  \cr
  \qquad   F_{i,j}  \; := \,  {\big(\textsl{ad}\,(F_i)\big)}^{1-a_{ij}}(F_j)   \qquad \qquad  \forall \; i \not= j  }  $$
   \indent   Now, acting once again like in  \cite{Hal},  one sees that
 \textit{$ \mathfrak{l}_+ $  is contained in the left radical and  $ \mathfrak{l}_- $  is contained
 in the right radical of the pairing
 $ \; \langle\;\ ,\,\ \rangle  \, : \, \liebPhat_+ \times \liebPhat_- \relbar\joinrel\relbar\joinrel\longrightarrow \k \; $
 introduced above}.  This has two consequences:
 \vskip3pt
   \textsl{(1)}\;  first,  $ \mathfrak{l}_\pm $  is in fact a Lie bi-ideal in the Lie bialgebra  $ \liebPhat_\pm \, $,
   \,hence either quotient  $ \; \liebP_\pm := \liebPhat_\pm \big/ \mathfrak{l}_\pm \; $
   is a quotient  \textsl{Lie bialgebra\/}  indeed: we call  $ \liebP_+ \, $,  resp.\  $ \liebP_- \, $
   the  \textsl{positive},  resp.\ the  \textsl{negative},  \textsl{Borel multiparameter Lie bialgebra\/}
   with multiparameter  $ P \, $;
 \vskip3pt
   \textsl{(2)}\;  second, the (non-degenerate) Lie bialgebra pairing
 $ \; \langle\;\ ,\,\ \rangle  \, : \, \liebPhat_+ \times \liebPhat_- \relbar\joinrel\relbar\joinrel\longrightarrow \k \; $
 boils down to a (possibly degenerate) Lie bialgebra pairing
 $ \; \langle\;\ ,\,\ \rangle  \, : \, \liebP_+ \times \liebP_- \relbar\joinrel\relbar\joinrel\longrightarrow \k \; $
 of (multiparameter) Borel Lie bialgebras.
\end{free text}

\vskip9pt

\begin{free text}  \label{MpLieBialgebras-double}
 {\bf Multiparameter Lie bialgebras as doubles.}  Still following  \cite{Hal},
 we proceed now to introduce our multiparameter Lie bialgebras, in two steps.
 \vskip3pt
   \textsl{$ \underline{\text{The split minimal case}} $:}
   By general theory we can use the two Lie bialgebras  $ \liebP_+ $  and  $ \liebP_- $
   and the pairing among them to define their  \textsl{generalized double\/}  (as in  \cite[\S 8.3]{Mj}).
   Namely, we endow  $ \, \liegdP := \liebP_+ \oplus \liebP_- \, $  with a Lie algebra structure described
   in the same way as in  \cite[Th{\'e}or{\`e}me 1.5]{Hal},
   and the unique Lie coalgebra structure such that
   $ \, {\big( \liebP_+ \big)}^{\text{cop}} \!\lhook\joinrel\longrightarrow \liegdP \!\longleftarrow\joinrel\rhook \liebP_- \, $
   are Lie coalgebra embeddings, where a superscript ``cop''
   means that we are taking the opposite Lie coalgebra structure   --- cf.\  \cite[Ch.\ 1, \S 1.4]{CP},
   for further details, or even  \cite[\S 2.3]{ApS}  (and references therein),
   for a quick recap of this topic and its generalizations.
   This makes  $ \liegdP $  into a Lie bialgebra; in addition, when the pairing
 $ \; \langle\,\ ,\ \rangle  \, : \, \lieb_+ \times \lieb_- \relbar\joinrel\longrightarrow \k \; $
 is non-degenerate, the Lie bialgebra  $ \liegdP $  is even  \textsl{quasitriangular}.
                                                         \par
   Finally, from the previous description of  $ \liebP_+ $  and  $ \liebP_- $  one
   also deduces an explicit presentation for  $ \liegdP \, $.  Namely,  $ \liegdP $
   is the Lie bialgebra generated (as Lie algebra) by the  $ \Bbbk $--subspace  $ \lieh $
   together with elements  $ E_i $  and  $ F_i $  ($ \, i \in I \, $),  with relations
  $$  \displaylines{
   \big[\, T' , T'' \,\big] \, = \, 0  \;\; ,   \quad   \big[\, T , E_j \,\big] \, - \, \alpha_j(\,T) \, E_j \, = \, 0  \;\; ,
   \quad   \big[\, T , F_j \,\big] \, + \, \alpha_j(\,T) \, F_j \, = \, 0  \cr
   {\big(\textsl{ad}\,(E_i)\big)}^{1-a_{ij}}(E_j) \, = \, 0  \;\; ,   \;\;\;
   {\big(\textsl{ad}\,(F_i)\big)}^{1-a_{ij}}(F_j) \, = \, 0  \;\; ,   \;\;\;  \big[\, E_i \, , F_j \,\big] \, = \,
   \delta_{ij} \, {{\;T_i^+ \! + T_i^-\;} \over {\;2\,d_i\;}}  }  $$
 --- for all  $ \; T' , T'' \, , T \in \lieh \, $,  $ \; i , j \in I \, $,  $ \; i \neq j \, $,  \;with
 Lie bialgebra structure given on generators   --- for all  $ \, T \in \lieh \, $  and all  $ \, i \in I \, $  ---   by
  $$  \delta\big(\,T\big) \, = \, 0  \quad ,   \qquad  \delta\big(E_i\big) \, = \, 2 \; T^+_i \!\wedge E_i
   \quad ,   \qquad  \delta\big(F_i\big) \, = \,  2 \; T^-_i \!\wedge F_i  $$
   \indent   As a last remark, we stress that in  $ \liegdP $  the Lie algebra structure does
   depend on the multiparameter  $ P $,  while the Lie coalgebra structure
   \hbox{is actually independent of it.}
%
 \vskip3pt
   \textsl{$ \underline{\text{The general case}} $:}  Let now  $ P $
   be again a multiparameter (of Cartan type) and  $ \; \R \, := \, \big(\, \lieh \, , \Pi \, , \Pi^\vee \,\big) \; $
   be  \textsl{any\/}  realization of it   --- not necessarily split nor minimal.
   Then we define a Lie bialgebra  $ \liegRP $  generalizing the notion of  $ \liegdP $  introduced above.
                                                                      \par
   Namely, as a Lie  \textsl{algebra\/}  we define  $ \liegRP $  by generators and relations with
   (formally) the very same presentation as for  $ \liegdP $  right above.  The Lie  \textsl{coalgebra\/}
   also has the same form, but we introduce it indirectly.
                                                                      \par
   First of all, using  Lemma \ref{lemma: split-lifting}  we fix a realization
   $ \; \dot{\R} \, := \, \big(\, \dot{\lieh} \, , \dot{\Pi} \, , {\dot{\Pi}}^\vee \,\big) \; $  of  $ P $  that is
   \textsl{split}, and we also fix  $ \, \dot{\lieh}_{{}_T} := \textsl{Span}\Big( {\big\{ T_i^\pm \big\}}_{i \in I} \Big) \, $
   inside  $ \dot{\lieh} \, $.  Then we consider also
   $ \; \mathring{\R} \, := \, \big(\, \dot{\lieh}_{{}_T} , \mathring{\Pi} \, , \Pi^\vee \,\big) \; $   --- where
   $ \, \mathring{\Pi} := \Big\{\, \mathring{\alpha}_i := \alpha_i{\big|}_{\dot{\lieh}_{{}_T}} \Big\}_{i \in I} \, $
   ---   that is again a realization of  $ P $,  which is now  \textsl{split and minimal}.  Out of  $ \dot{\R} $  and
   $ \mathring{\R} $  we define two Lie  \textsl{algebras}   --- denoted
   $ \, {\mathfrak{g}^{\raise-1pt\hbox{$ \scriptscriptstyle \dot{\mathcal{R}} $}}_\Ppicc} \, $,
   resp.\  $ \, {\mathfrak{g}^{\raise-1pt\hbox{$ \scriptscriptstyle \mathring{\mathcal{R}} $}}_\Ppicc} \, $
   ---   via an explicit presentation, which is again (formally) like the one given above for
   $ \liegdP \, $,  \,up to the obvious changes.  Clearly, the inclusion  $ \, \dot{\lieh}_{{}_T} \subseteq \dot{\lieh} \, $
   induces an embedding of Lie algebras
 $ \, {\mathfrak{g}^{\raise-1pt\hbox{$ \scriptscriptstyle \mathring{\mathcal{R}} $}}_\Ppicc}
 \lhook\joinrel\relbar\joinrel\longrightarrow
 {\mathfrak{g}^{\raise-1pt\hbox{$ \scriptscriptstyle \dot{\mathcal{R}} $}}_\Ppicc} \, $.
                                                                      \par
   Now, as  $ \mathring{\R} $  is  \textsl{split minimal},  the Lie algebra
   $ \, {\mathfrak{g}^{\raise-1pt\hbox{$ \scriptscriptstyle \mathring{\mathcal{R}} $}}_\Ppicc} \, $
   actually is one of the form  $ \liegdP \, $,  \,and as such it bears a structure of Lie  \textsl{bialgebra\/}  as given above.
   But then, it follows by construction that there is a unique way to extend the Lie cobracket of
   $ \, {\mathfrak{g}^{\raise-1pt\hbox{$ \scriptscriptstyle \mathring{\mathcal{R}} $}}_\Ppicc} \, $  to
   $ \, {\mathfrak{g}^{\raise-1pt\hbox{$ \scriptscriptstyle \dot{\mathcal{R}} $}}_\Ppicc} \, $
   in such a way that the embedding
 $ \, {\mathfrak{g}^{\raise-1pt\hbox{$ \scriptscriptstyle \mathring{\mathcal{R}} $}}_\Ppicc}
 \lhook\joinrel\relbar\joinrel\longrightarrow {\mathfrak{g}^{\raise-1pt\hbox{$ \scriptscriptstyle
 \dot{\mathcal{R}} $}}_\Ppicc} \, $
 mentioned above is actually one of Lie  \textsl{bialgebras}.  In short,
 $ \, {\mathfrak{g}^{\raise-1pt\hbox{$ \scriptscriptstyle \dot{\mathcal{R}} $}}_\Ppicc} \, $
 bears a Lie bialgebra structure that is again described by the very same formulas as for  $ \liegdP \, $,
 up to replacing everywhere  $ \lieh $  with  $ \dot{\lieh} \, $.
                                                                      \par
   Finally, again by Lemma \ref{lemma: split-lifting}  there exists also an epimorphism of realizations
   $ \; \underline{\pi} : \dot{\R} \relbar\joinrel\twoheadrightarrow \R \; $.  Then, from the presentation of both
   $ \, {\mathfrak{g}^{\raise-1pt\hbox{$ \scriptscriptstyle \dot{\mathcal{R}} $}}_\Ppicc} \, $  and  $ \, \liegRP \, $,
   \,this  $ \underline{\pi} $  induces an epimorphism of Lie algebras
   $ \; \L_{\underline{\pi}} : {\mathfrak{g}^{\raise-1pt\hbox{$ \scriptscriptstyle \dot{\mathcal{R}} $}}_\Ppicc} \relbar\joinrel\twoheadrightarrow \liegRP \; $,
   such that
   $ \Ker\big(\L_{\underline{\pi}}\,\big) $
is generated by
   $ \, \Ker\big(\, \pi : \dot{\lieh} \relbar\joinrel\relbar\joinrel\twoheadrightarrow \lieh \,\big) \, $,
and the latter lies in the center of  $ {\mathfrak{g}^{\raise-1pt\hbox{$ \scriptscriptstyle \dot{\mathcal{R}} $}}_\Ppicc} \, $,
\,by definitions and by  Lemma \ref{lemma: ker-morph's_realiz's};  moreover, the Lie cobracket of
$ {\mathfrak{g}^{\raise-1pt\hbox{$ \scriptscriptstyle \dot{\mathcal{R}} $}}_\Ppicc} $  kills  $ \Ker(\pi) \, $,
so the latter is a Lie  \textsl{bi-ideal}  in the Lie bialgebra  $ {\mathfrak{g}^{\raise-1pt\hbox{$ \scriptscriptstyle \dot{\mathcal{R}} $}}_\Ppicc} \, $.
Thus  $ \liegRP $  inherits via  $ \L_{\underline{\pi}} $  a  \textsl{quotient Lie bialgebra structure\/}  from
$ {\mathfrak{g}^{\raise-1pt\hbox{$ \scriptscriptstyle \dot{\mathcal{R}} $}}_\Ppicc} \, $,
again described by the same formulas given above for  $ \liegdP \, $.
 \vskip3pt
   Every such Lie bialgebra  $ \liegRP $  will be called  \textit{multiparameter Lie bialgebra},  in short  \textit{MpLbA}.
   In addition, we say that the MpLbA  $ \liegRP $  is  \textsl{straight},  or \textsl{small},  or \textsl{minimal},  or  \textsl{split},
   if such is  $ \R \, $,  \,and we define  \textsl{rank\/}  of  $ \liegRP $  as  $ \; \rk\!\big( \liegRP \big) := \rk(\R) = \rk_\Bbbk(\lieh) \, $.
 \vskip3pt
   For later use, we remark that every  $ \liegRP $  has two obvious  \textsl{triangular decompositions}
\begin{equation}  \label{eq: triang-decomp's_Lie-bialg's}
  \liegRP  \; = \;  \lien_+ \oplus \lieh \oplus \lien_-  \quad ,  \qquad \qquad  \liegRP  \; = \;  \lien_- \oplus \lieh \oplus \lien_+
\end{equation}
 as a direct sum of Lie algebras, where  $ \lien_+ \, $,  resp.\  $ \lien_- \, $,  is the Lie subalgebra of  $ \liegRP $
 generated by all the  $ E_i $'s,  resp.\ all the  $ F_i $'s.
\end{free text}

\vskip3pt

   The following result points out the fact that the dependence of MpLbA's on
   realizations (for a common, fixed multiparameter matrix) is functorial:

\vskip13pt

\begin{prop}  \label{prop: functor_R->liegRP}
 Let  $ \, P \in M_n(\Bbbk) \, $.  If both  $ \R' $  and  $ \, \R'' $  are realizations of  $ P $
 and  $ \, \underline{\phi} : \R' \relbar\joinrel\relbar\joinrel\longrightarrow \R'' \, $
 is a morphism between them, then there exists a unique morphism of Lie bialgebras
 $ \; \Lc_{\underline{\phi}} : \lieg^{\scriptscriptstyle \R'}_\Ppicc \!\relbar\joinrel\longrightarrow
 \lieg^{\scriptscriptstyle \R''}_\Ppicc \; $  that extends the morphism
 $ \, \phi : \lieh' \!\relbar\joinrel\longrightarrow \lieh'' \, $  given by  $ \underline{\phi} \, $;  \,moreover,
 $ \, \Lc_{\underline{\id}_\R} = \id_{\liegRP} \, $  and
 $ \; \Lc_{\underline{\phi}' \circ\, \underline{\phi}} = \Lc_{\underline{\phi}'} \circ \Lc_{\underline{\phi}} \; $
 (whenever  $ \, \underline{\phi}' \circ\, \underline{\phi} \, $  is defined).
                                                                                \par
   Thus, the construction  $ \, \R \mapsto \liegRP \, $   --- for any fixed  $ P $  ---   is functorial in  $ \R \, $.
                                                                              \par
   Moreover, if  $ \underline{\phi} $  is an epimorphism, resp.\ a monomorphism, then
   $ \Lc_{\underline{\phi}} $  is an epimorphism, resp.\ a monomorphism, as well.
   Finally, for any morphism  $ \, \underline{\phi} : \R' \relbar\joinrel\longrightarrow \R'' \, $,  \,the kernel
   $ \, \Ker\big(\Lc_{\underline{\phi}}\big) $  of  $ \, \Lc_{\underline{\phi}} $  coincides with  $ \, \Ker(\phi) \, $,
   and the latter is central in  $ \, \lieg^{\scriptscriptstyle \R'}_\Ppicc \, $.
                                                                                \par
   In particular, when  $ \underline{\phi} \, $,  and hence also  $ \Lc_{\underline{\phi}} \, $,
   is an epimorphism, we have   --- setting  $ \; \liek := \Ker(\phi) \, $  ---
   a central exact sequence of Lie bialgebras
  $$  0 \,\relbar\joinrel\relbar\joinrel\longrightarrow\, \liek
  \,\relbar\joinrel\relbar\joinrel\longrightarrow\, \lieg^{\scriptscriptstyle \R'}_\Ppicc
  \,\;{\buildrel {\Lc_{\underline{\phi}}} \over {\relbar\joinrel\relbar\joinrel\longrightarrow}}\;\,
  \lieg^{\scriptscriptstyle \R''}_\Ppicc \,\relbar\joinrel\relbar\joinrel\longrightarrow\, 0  $$
\end{prop}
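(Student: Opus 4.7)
The plan is to exploit the explicit presentation of $\liegRP$ by generators and relations given in \S\ref{MpLieBialgebras-double}: as a Lie algebra, $\liegRP$ is generated by $\lieh$ together with the $E_i$'s and $F_i$'s, and its defining relations involve only the $\alpha_j$'s, the Cartan matrix $A$ and the diagonal $D$, i.e.\ data determined solely by $P$ and hence common to $\R'$ and $\R''$.  Given $\underline{\phi}:\R'\to\R''$ with associated permutation $\sigma\in\mathbb{S}_I$, I define $\Lc_{\underline{\phi}}$ on generators by $T\mapsto\phi(T)$ for $T\in\lieh'$, $E'_i\mapsto E''_{\sigma(i)}$ and $F'_i\mapsto F''_{\sigma(i)}$, and then check that each defining relation is carried to a valid relation in $\lieg^{\scriptscriptstyle\R''}_P$.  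The relation $[T',T'']=0$ on $\lieh'$ transfers because $\lieh''$ is abelian; the cross relation $[T,E'_j]=\alpha'_j(T)\,E'_j$ becomes $[\phi(T),E''_{\sigma(j)}]=\alpha'_j(T)\,E''_{\sigma(j)}$, which matches the correct relation in $\lieg^{\scriptscriptstyle\R''}_P$ since the compatibility $\phi^*(\Pi'')=\Pi'$ combined with $\phi(T^{\prime\,\pm}_i)=T^{\prime\prime\,\pm}_{\sigma(i)}$ forces $\alpha''_{\sigma(j)}\circ\phi=\alpha'_j$; the Serre relations and $[E_i,F_j]=\delta_{ij}(T^+_i+T^-_i)/(2d_i)$ transfer verbatim under $\sigma$ as they involve only the shared data $A$ and $D$.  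This produces $\Lc_{\underline{\phi}}$ as a Lie algebra morphism, uniquely determined by its values on the generating set.

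To promote $\Lc_{\underline{\phi}}$ to a Lie \emph{bialgebra} morphism I check compatibility with $\delta$ on generators: $\delta$ vanishes on $\lieh$, and on $E'_i$ one computes $\Lc_{\underline{\phi}}(\delta E'_i)=2\,\phi(T^{\prime\,+}_i)\wedge E''_{\sigma(i)}=2\,T^{\prime\prime\,+}_{\sigma(i)}\wedge E''_{\sigma(i)}=\delta(\Lc_{\underline{\phi}}E'_i)$, and symmetrically for $F'_i$.  The identity rule $\Lc_{\underline{\id}_\R}=\id_{\liegRP}$ and the composition rule $\Lc_{\underline{\phi}'\circ\underline{\phi}}=\Lc_{\underline{\phi}'}\circ\Lc_{\underline{\phi}}$ then follow from the same uniqueness principle, both sides of each identity being determined identically on a generating set.

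For the final statements I use the triangular decomposition \eqref{eq: triang-decomp's_Lie-bialg's}.  By construction $\Lc_{\underline{\phi}}$ preserves this splitting, sending $\lien'_\pm$ into $\lien''_\pm$ via $\sigma$ and $\lieh'$ into $\lieh''$ via $\phi$.  The main technical step, and the chief obstacle, is the claim that each nilpotent subalgebra $\lien_\pm$ of $\liegRP$ is presented (as an abstract Lie algebra) as the free Lie algebra on the $E_i$'s, resp.\ the $F_i$'s, modulo only the Serre relations determined by the common Cartan matrix $A$ --- a multiparameter analogue of the classical Kac--Moody fact that I expect to follow from the explicit construction of $\liegRP$.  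Granting this, $\sigma$ induces isomorphisms $\lien'_\pm\cong\lien''_\pm$ via $\Lc_{\underline{\phi}}$, so $\Lc_{\underline{\phi}}$ is surjective (resp.\ injective) precisely when $\phi$ is, while $\Ker(\Lc_{\underline{\phi}})\subseteq\lieh'$ coincides with $\Ker(\phi)$.  Finally, Lemma~\ref{lemma: ker-morph's_realiz's} gives $\alpha'_j(k)=0$ for every $k\in\Ker(\phi)$ and every $j\in I$, whence $[k,E'_j]=\alpha'_j(k)E'_j=0$ and $[k,F'_j]=-\alpha'_j(k)F'_j=0$; together with abelianness of $\lieh'$ this proves $\Ker(\phi)$ is central in $\lieg^{\scriptscriptstyle\R'}_P$, and the exact sequence in the epimorphism case follows at once.
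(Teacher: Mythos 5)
Your proposal is correct and, in substance, follows the same route as the paper: the paper's own proof simply declares existence, functoriality and the mono/epi/kernel statements to be obvious consequences of the presentation of $\liegRP$ from \S\ref{MpLieBialgebras-double}, and only writes out the centrality of $\Ker(\phi)$, which it does exactly as you do (Lemma~\ref{lemma: ker-morph's_realiz's} gives $\Ker(\phi)\subseteq\bigcap_{j\in I}\Ker(\alpha'_j)$, and the defining relations then show that every element of $\Ker(\phi)$ commutes with all generators). The one step you leave as an expectation --- that the nilpotent subalgebras $\lien_\pm$ are presented by the $E_i$'s (resp.\ the $F_i$'s) subject only to the Serre relations attached to $A$, hence do not depend on the realization --- is not something you need to prove afresh: it is built into the construction of $\liegRP$ itself. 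Indeed, for a general $\R$ the Lie bialgebra $\liegRP$ is obtained (via Lemma~\ref{lemma: split-lifting}) as a quotient of $\lieg^{\scriptscriptstyle \dot{\mathcal{R}}}_\Ppicc$, for a split lifting $\dot{\R}$, by a central ideal contained in the Cartan part, while in the split minimal case the triangular decomposition \eqref{eq: triang-decomp's_Lie-bialg's} and the identification of the nilpotent parts come from the double-of-Borels construction following Halbout (ultimately the Gabber--Kac theorem for the symmetrizable matrix $A$); so the nilpotent parts of $\lieg^{\scriptscriptstyle \R'}_\Ppicc$ and $\lieg^{\scriptscriptstyle \R''}_\Ppicc$ are canonically the same algebra, and your map restricts to the isomorphism between them induced by $\sigma$. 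One small point you assert without computation but which does hold and is worth recording: the compatibility $\alpha''_{\sigma(j)}\circ\phi=\alpha'_j$ together with $\phi\big(T^{\prime\,\pm}_i\big)=T^{\prime\prime\,\pm}_{\sigma(i)}$ yields $p_{\sigma(i)\sigma(j)}=\alpha''_{\sigma(j)}\big(T''^{\,+}_{\sigma(i)}\big)=\alpha'_j\big(T'^{\,+}_i\big)=p_{ij}$, so $\sigma$ preserves $P$, hence $A$ and $D$, and the Serre and $[E_i,F_j]$ relations indeed transfer verbatim; with that in place your triangular-decomposition argument for the kernel and the mono/epi statements goes through.
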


\pf
 The existence of  $ \Lc_{\underline{\phi}} $  is obvious, as well as all the other claims;
 we only spend a moment on the centrality of  $ \Ker(\phi) \, $.
 Lemma \ref{lemma: ker-morph's_realiz's}  imply
 $ \; \Ker(\phi) \, \subseteq \, \bigcap\limits_{j \in I} \Ker(\alpha'_j) \; $;
 \,then from the relations among the generators of
 $ \lieg^{\scriptscriptstyle \R'}_\Ppicc $  (cf.\  \S \ref{MpLieBialgebras-double})
 we get that each element in  $ \Ker(\phi) $  commutes with all generators of
 $ \lieg^{\scriptscriptstyle \R'}_\Ppicc \, $,  thus  $ \Ker(\phi) $  is central.
\epf

\vskip9pt

\begin{cor}  \label{cor: isom_R -> isom_liegRP}
 With notation as above, if  $ \; \R' \cong \R'' \, $  then
 $ \; \lieg^{\scriptscriptstyle \R'}_\Ppicc \cong \lieg^{\scriptscriptstyle \R''}_\Ppicc \; $.
 \vskip3pt
   In particular, all MpLbA's built upon split realizations, respectively small realizations,
   of the same matrix  $ P $  and sharing the same rank of  $ \, \lieh \, $,  are isomorphic to each other,
   hence they are independent (up to isomorphisms) of the specific realization,
   but only depend on  $ P $  and on the rank of  $ \, \lieh \, $.
\end{cor}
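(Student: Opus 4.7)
The plan is to derive both assertions as essentially immediate consequences of the preceding Proposition \ref{prop: functor_R->liegRP}, combined with the uniqueness statements for realizations established earlier in Section \ref{sec: Cartan-data_realiz's}.

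For the first assertion, I would begin by observing that if $\underline{\phi} : \R' \relbar\joinrel\longrightarrow \R''$ is an isomorphism of realizations, then by the last part of Proposition \ref{prop: functor_R->liegRP} the associated Lie bialgebra morphism $\Lc_{\underline{\phi}} : \lieg^{\scriptscriptstyle \R'}_\Ppicc \relbar\joinrel\longrightarrow \lieg^{\scriptscriptstyle \R''}_\Ppicc$ is both an epimorphism and a monomorphism, since $\underline{\phi}$ is both. To promote this to an isomorphism of Lie bialgebras with a two-sided inverse, I would invoke the functoriality clause of the same proposition applied to $\underline{\phi}^{-1} : \R'' \relbar\joinrel\longrightarrow \R'$: this yields $\Lc_{\underline{\phi}^{-1}} \circ \Lc_{\underline{\phi}} = \Lc_{\underline{\phi}^{-1} \circ \underline{\phi}} = \Lc_{\underline{\id}_{\R'}} = \id_{\lieg^{\scriptscriptstyle \R'}_\Ppicc}$, and symmetrically on the other side, so $\Lc_{\underline{\phi}}$ is the desired Lie bialgebra isomorphism.

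For the second assertion (the ``in particular'' part), I would reduce to the first by appealing to the uniqueness (up to isomorphism of realizations) of split realizations, respectively straight small realizations, of a prescribed rank. Concretely, Proposition \ref{prop: exist-realiz's}(b) asserts that any two split realizations of the same $P$ of the same rank $\ell$ are isomorphic (within the admissible range of $\ell$), and Proposition \ref{prop: exist-realiz's_small} gives the analogous uniqueness in the small case (when $\rk\!\big(P_s \,|\, P_a\big) = \rk(P_s)$). Combining these isomorphisms of realizations with the first assertion yields at once that the corresponding MpLbA's are pairwise isomorphic, and therefore $\lieg^{\scriptscriptstyle \R}_\Ppicc$ depends only on $P$ and on $\rk_\Bbbk(\lieh)$ up to isomorphism.

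No serious obstacle is expected here: the only subtle point is that the uniqueness results for realizations are stated for ranks above a certain explicit threshold, but this is precisely the setting in which the MpLbA's of the stated form arise, so the reduction is direct. Everything else is a transparent application of the functoriality machinery established in Proposition \ref{prop: functor_R->liegRP}.
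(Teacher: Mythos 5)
Your proposal is correct and follows essentially the same route as the paper: the paper's proof is exactly "apply the functoriality in Proposition \ref{prop: functor_R->liegRP} (so an isomorphism of realizations induces an isomorphism of MpLbA's) together with the uniqueness of realizations from Proposition \ref{prop: exist-realiz's} and Proposition \ref{prop: exist-realiz's_small}". Your spelling-out of the two-sided inverse $\Lc_{\underline{\phi}^{-1}}$ via $\Lc_{\underline{\phi}'\circ\underline{\phi}}=\Lc_{\underline{\phi}'}\circ\Lc_{\underline{\phi}}$ is just an explicit version of what the paper leaves implicit.
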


\pf
 This follows at once from  Proposition \ref{prop: functor_R->liegRP}  together with the uniqueness result in
 Proposition \ref{prop: exist-realiz's}  and  Proposition \ref{prop: exist-realiz's_small}.
\epf

\vskip9pt

\begin{rmk}
 We expect that our definition (and construction) of MpLbA's, as well as the related results presented below,
 can be extended to the case where the symmetrizable generalized Cartan matrix  $ A $
 is replaced by a more general symmetrizable Borcherds-Cartan matrix, as in  \cite{Bor}.
 However, due to additional technical difficulties, we do not pursue such a goal in the present paper.
\end{rmk}

\medskip

\begin{free text}  \label{constr.-db-cr-sums}
 {\bf Construction via double cross sums.}  In this subsection we give another construction of our MpLbA's,
 as suitable  \textit{double cross sums\/};  the latter can be seen as a semiclassical version of the double cross products
 of FoMpQUEAs given in  \S \ref{constr.-double-crossproduct}.  We follow  \cite[Section 8.3]{Mj}  for the exposition.
\end{free text}

\vskip7pt

\begin{definition}  \label{def:matched-pair-LieA}
 \cite[Definition 8.3.1]{Mj}
 Two Lie algebras  $ \, (\lieg,\liem) \, $  form a  \textsl{right-left matched pair\/}  if there is a right action of
 $ \lieg $  on  $ \liem $  and a left action of  $ \liem $  on  $ \lieg \, $,  denoted
  $$  \triangleleft \, : \liem \otimes \lieg \longrightarrow \liem   \qquad \text{and} \qquad
  \triangleright \, : \liem \otimes \lieg \longrightarrow \lieg  $$
 obeying the following conditions (for all  $ \, m , n \in \liem \, $  and  $ \, x , y \in \lieg \, $):
\begin{align*}
   [m,n] \triangleleft x  &  \; = \;  [m \triangleleft x , n] \, + \, [m \, , n \triangleleft x] \, + \, n \triangleleft (m \triangleright x) \, - \, m \triangleleft (n \triangleright x)  \\
   m \triangleright [x,y]  &  \; = \;  [m \triangleright x , y] \, + \, [x , m \triangleright y] \, + \, (m \triangleleft y) \triangleright x \, - \, (m \triangleleft x) \triangleright y
\end{align*}
\end{definition}

\vskip7pt

   After the previous definition, the key fact is the following result:

\vskip7pt

\begin{prop}  \cite[Proposition 8.3.2]{Mj}
%
%
 \vskip3pt
   (a)\,  Given a matched pair of Lie algebras  $ \, (\lieg,\liem) \, $,  there exists a Lie algebra,
   called  \textsl{double cross sum}  Lie algebra and denoted by  $ \, \lieg \bowtie \liem \, $,
   \,whose socle is the vector space  $ \, \lieg \oplus \liem $  and whose Lie bracket is
   (for all  $ \, x , y \in \lieg \, $  and  $ \, m , n \in \liem \, $)
  $$  [x \oplus m , y \oplus n]  \; = \;  \big( [x,y] + m \triangleright y - n \triangleright x \big) \, \oplus \, \big( [m,n] + m \triangleleft y - n \triangleleft x \big)  $$
 \vskip3pt
   (b)\,  Conversely, if the direct sum space  $ \, \lieh := \lieg \oplus \liem \, $
   bears a structure of Lie bialgebra such that the copies of\/  $ \lieg $  and\/  $ \liem $
   inside it are Lie subalgebras, then  $ \, (\lieg , \liem) \, $  is a matched pair of Lie algebras whose
   associated double cross sum obeys  $ \, \lieg \bowtie \liem \cong \lieh \, $,  \,i.e.\ it is isomorphic to  $ \lieh \, $.
   The required actions are recovered from the identities
  $$  \big[ j(m) , i(x) \big]  \; = \;  i(m \triangleright x) + j(m \triangleleft x)   \eqno \forall \;\; x \in \lieg \, , m \in \liem  \qquad  $$
 where  $ \; i : \lieg \longrightarrow \lieg \oplus \liem =: \lieh \; \big(\, y \mapsto i(y) := (y,0_\liem) \big) \; $
 and likewise  $ \; j : \liem \longrightarrow \lieg \oplus \liem =: \lieh $  $ \; \big(\, n \mapsto j(n) := (0_\lieg,n) \big) \; $  are the natural Lie algebra monomorphisms.
\qed
\end{prop}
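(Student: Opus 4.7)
The proof of part (a) proceeds by a direct verification of the Lie algebra axioms for the proposed bracket, and part (b) is obtained by running the same computation backwards.

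For (a), antisymmetry of the proposed bracket is immediate, since $[x,y]$ and $[m,n]$ are antisymmetric and the mixed terms ($m \triangleright y - n \triangleright x$ and $m \triangleleft y - n \triangleleft x$) are arranged antisymmetrically in the two arguments. The substantive check is the Jacobi identity. I would take three generic elements $x_i \oplus m_i$ ($i = 1, 2, 3$), expand the cyclic sum
\[
\sum_{\mathrm{cyc}} \big[\,[x_1 \oplus m_1,\, x_2 \oplus m_2],\; x_3 \oplus m_3 \,\big],
\]
and split the result into its $\lieg$- and $\liem$-components via the direct sum $\lieg \oplus \liem$. The purely-$\lieg$ terms vanish by Jacobi in $\lieg$, and symmetrically for $\liem$. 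The remaining mixed terms in the $\lieg$-component regroup, after cyclic symmetrization, into instances of the second matched-pair condition combined with the Lie action axiom for $\triangleright$; dually, the mixed terms in the $\liem$-component give instances of the first matched-pair condition combined with the Lie action axiom for $\triangleleft$. This is the one genuinely calculational step.

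For (b), the Lie algebra structure on $\lieh$ together with the decomposition $\lieh = \lieg \oplus \liem$ lets me \emph{define} the cross actions from the prescribed identity: the element $[j(m), i(x)] \in \lieh$ decomposes uniquely as $i(u) + j(w)$ with $u \in \lieg$ and $w \in \liem$, so I set $m \triangleright x := u$ and $m \triangleleft x := w$. Bilinearity is inherited from the bracket. The matched-pair axioms and the action axioms are then extracted by applying the Jacobi identity in $\lieh$ to the triples $\big(j(m), j(n), i(x)\big)$ and $\big(j(m), i(x), i(y)\big)$ and projecting each resulting identity separately onto $\lieg$ and $\liem$: the four projections yield, respectively, the Lie action property for $\triangleright$, the first matched-pair condition, the second matched-pair condition, and the Lie action property for $\triangleleft$. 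The isomorphism $\lieg \bowtie \liem \cong \lieh$ is then tautological, since both sides agree as vector spaces $\lieg \oplus \liem$ and their brackets coincide by the very definition of the actions just extracted.

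The only real obstacle is sign and term bookkeeping in the Jacobi expansion, which is routine but error-prone; no ideas beyond the matched-pair axioms themselves are needed.
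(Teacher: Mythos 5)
Your proposal is correct and is, in substance, the standard proof of this result, which the paper itself does not reprove but simply quotes from Majid: antisymmetry is immediate, the Jacobi identity for $\,\lieg\bowtie\liem\,$ splits (by trilinearity, on pure triples) into Jacobi in $\lieg$ and $\liem$, the two module axioms for $\triangleright$ and $\triangleleft$, and the two compatibilities of Definition \ref{def:matched-pair-LieA}, while part (b) reverses the computation by defining the actions as the components of $\big[\,j(m),i(x)\,\big]$ and reading the same four identities off the projections of the Jacobi identity in $\lieh$ (the ``Lie bialgebra'' hypothesis is used only through its Lie algebra structure, exactly as you do). The one point to watch when you actually carry out the bookkeeping is that the signs of the cross terms in the matched-pair conditions must be the ones forced by the displayed bracket, so fix a single internally consistent convention (Majid's) before expanding.
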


\vskip9pt

   In order to extend the notion of matched pair to Lie bialgebras, it is necessary to have a
   compatibility of the left-right actions with the Lie coalgebra structures.
   Thus assume now that  $ \lieg $  and  $ \liem $  are Lie bialgebras: we say that  \textsl{$ \liem $
   is a right  $ \lieg $--module  Lie coalgebra\/}  if it is a right  $ \lieg $--module  and in addition
   (for  $ \, m \in \liem \, $  and  $ \, x \in \lieg \, $)  one has
  $$
  \delta_\liem(m \triangleleft x)  \; = \;  \big( m_{[1]} \triangleleft x \big) \otimes m_{[2]} \, +
  \, m_{[1]} \otimes \big( m_{[2]} \triangleleft x \big)  \; =: \;  \delta_\liem(m) \triangleleft x
  $$
the notion of left  $ \liem $--module  Lie coalgebra is defined analogously.

\vskip9pt

\begin{prop}  \cite[Proposition 8.3.4]{Mj}
 \vskip3pt
   Let  $ \, (\lieg , \liem) \, $  be a matched pair of Lie algebras, with both\/  $ \lieg $  and\/  $ \liem $
   being Lie bialgebras and with  $ \triangleleft \, $,  resp.\  $ \triangleright \, $,  making  $ \lieg $  into a left
   $ \liem $--module Lie coalgebra, resp.\  $ \liem $  into a right  $ \lieg $--module Lie coalgebra,
   such that, for all  $ \, m \in \liem \, $  and  $ \, x \in \lieg \, $,  we have
%
%
  $$
  \displaylines{
   \quad   0  \; = \;  m \triangleleft \delta_\lieg(x) \, + \, \delta_\liem(m) \triangleright x  \; =   \hfill  \cr
   \hfill   = \;  \big( m \triangleleft x_{[1]} \big) \otimes x_{[2]} \, + \, x_{[1]} \otimes \big( m \triangleleft x_{[2]} \big) \, + \,
   \big( m_{[1]} \triangleright x \big) \otimes m_{[2]} \, + \, m_{[1]} \otimes \big( m_{[2]} \triangleright x \big)   \quad  }
   $$
 Then the direct sum Lie coalgebra structure makes  $ \, \lieg \bowtie \liem \, $  into a  \textsl{Lie bialgebra},
 which is called the  \textsl{double cross sum Lie bialgebra}.   \qed
\end{prop}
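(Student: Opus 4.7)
The plan is to verify the two Lie bialgebra axioms for $\lieg \bowtie \liem$ equipped with $\delta := \delta_\lieg \oplus \delta_\liem$, namely that $\delta$ is a Lie cobracket and satisfies the $1$-cocycle condition with respect to the double-cross-sum bracket. The first part is immediate: antisymmetry and the co-Jacobi identity for $\delta$ reduce to the corresponding properties of $\delta_\lieg$ and $\delta_\liem$, since the two summands land respectively in $\lieg \otimes \lieg$ and $\liem \otimes \liem$, with no cross terms. The core of the argument is therefore the $1$-cocycle identity
\[
  \delta\big([X,Y]\big) \; = \; \ad_X \, \delta(Y) \, - \, \ad_Y \, \delta(X)
\]
for $X, Y$ in $\lieg \bowtie \liem$, where $\ad$ is computed inside the matched-pair Lie algebra. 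By bilinearity, it suffices to check this on pairs of generators each lying in $\lieg$ or in $\liem$, yielding three cases.

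In the case $X, Y \in \lieg$, the bracket and the restriction of $\delta$ agree with those already present on the Lie bialgebra $\lieg$, and the adjoint action of elements of $\lieg$ on $\lieg \otimes \lieg \subseteq (\lieg \oplus \liem)^{\otimes 2}$ coincides with the intrinsic adjoint action in $\lieg$ (the extra summands kill off since the matched-pair formula reduces to the internal bracket when both arguments lie in $\lieg$). So the identity is just the $1$-cocycle condition already known in $\lieg$. The case $X, Y \in \liem$ is entirely analogous.

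The essential case is $X = x \in \lieg$ and $Y = m \in \liem$. The matched-pair bracket gives $[x,m] = (-\,m \triangleright x) \oplus (-\,m \triangleleft x)$, so the left-hand side expands as $-\delta_\lieg(m \triangleright x) - \delta_\liem(m \triangleleft x)$. On the right-hand side, one expands $\ad_x\,\delta_\liem(m)$ and $-\ad_m\,\delta_\lieg(x)$ by applying the matched-pair bracket through each tensor factor. This produces four groups of terms: a pure $\liem \otimes \liem$ group, a pure $\lieg \otimes \lieg$ group, and four mixed groups lying in $\lieg \otimes \liem$ and $\liem \otimes \lieg$. The pure $\liem \otimes \liem$ terms assemble into $-\delta_\liem(m \triangleleft x)$ via the right $\lieg$-module Lie coalgebra axiom for $\liem$; the pure $\lieg \otimes \lieg$ terms assemble into $-\delta_\lieg(m \triangleright x)$ via the left $\liem$-module Lie coalgebra axiom for $\lieg$. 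Together these match the left-hand side exactly, and the remaining four mixed terms are precisely the left-hand side of the displayed compatibility hypothesis, hence sum to zero.

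The main obstacle is the bookkeeping in this last case: one must track the eight tensor contributions produced by the two adjoint expansions, partition them by tensor type, and recognise that the module coalgebra axioms handle the pure contributions while the displayed hypothesis is exactly the identity needed to kill the mixed residue. Once this partitioning is performed, each ingredient plays a distinct structural role and no further computation intervenes.
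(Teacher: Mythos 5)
Your verification is correct: the only nontrivial case is the mixed one, where the pure $\lieg\otimes\lieg$ and $\liem\otimes\liem$ contributions are absorbed by the module Lie coalgebra axioms and the residual mixed terms vanish exactly by the displayed compatibility hypothesis, and your sign bookkeeping is consistent with the bracket convention $[x\oplus m,y\oplus n]$ quoted in the paper. The paper itself gives no proof (the result is cited from Majid's book with a \qed), and your argument is precisely the standard direct check, so there is nothing further to compare.
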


\vskip9pt

\begin{free text}  \label{MpLieBialgebras-db-cr-sums}
 {\bf Multiparameter Lie bialgebras as double cross sums.}  Let  $ \, A := {\big(\, a_{i,j} \big)}_{i, j \in I} \, $
 be some fixed generalized symmetrizable Cartan matrix, and let  $ \, P := {\big(\, p_{i,j} \big)}_{i, j \in I} \in M_n(\Bbbk) \, $
 be a matrix of Cartan type with associated Cartan matrix  $ A \, $.  Then one defines, as in Subsection
 \ref{Borel MpLieBial},  the Borel multiparameter Lie bialgebras  $ \liebP_+ $  and  $ \liebP_- \, $,
 which are dually paired by a Lie bialgebra pairing, that we denote hereafter by
 $ \; \langle\;\ ,\,\ \rangle  \, : \, \liebP_+ \times \liebP_- \relbar\joinrel\relbar\joinrel\longrightarrow \k \; $.
 Using this pairing one may define left-right actions
\begin{align*}
   \triangleleft \,  &  : \liebP_+ \otimes {\big( \liebP_- \big)}^{\op} \relbar\joinrel\longrightarrow \liebP_+  &
\triangleright \,  &  : \liebP_+ \otimes {\big( \liebP_- \big)}^{\op} \relbar\joinrel\longrightarrow {\big( \liebP_- \big)}^{\op}  \\
                     &  m \triangleleft \, x  \, := \,  m_{[1]} \, \big\langle m_{[2]} , x \big\rangle  &
                   &  m \triangleright \, x  \, := \,  x_{[1]} \, \big\langle m , x_{[2]} \big\rangle
\end{align*}
 for all  $ \, m \in \liebP_+ \, $  and  $ \, x \in {\big( \liebP_- \big)}^{\op} \, $,  cf.\  \cite[Example 8.3.7]{Mj}.
 Then these Borel multiparameter Lie bialgebras form a matched pair  $ \, \Big( {\big(\liebP_- \big)}^{\op} , \liebP_+ \Big) \, $,
 \,whence the double cross sum Lie bialgebra  $ {\big( \liebP_- \big)}^{\op} \bowtie \liebP_+ \, $  is defined.
 By the very construction, there exist also Lie bialgebra monomorphisms
  $$
  {\big( \liebP_- \big)}^{\op} \lhook\joinrel\relbar\joinrel\relbar\joinrel\relbar\joinrel\longrightarrow {\big( \liebP_- \big)}^{\op}
  \bowtie \liebP_+ \!\longleftarrow\joinrel\relbar\joinrel\relbar\joinrel\relbar\joinrel\rhook \liebP_+
  $$
   \indent   An entirely similar construction can be made transposing the opposite Lie algebra structure on
   $ \liebP_- $  to the co-opposite Lie coalgebra structure on  $ \liebP_+ \, $:  in other words, the pairing
   $ \, \langle \;\ ,\,\ \rangle \, $  induces skew-parings both on  $ {\big( \liebP_- \big)}^{\op} \otimes \liebP_+ \, $
   and on  $ \, \liebP_- \otimes {\big( \liebP_+ \big)}^{\cop} \, $.  In this case, we get the matched pair of Lie bialgebras
   $ \, \Big( \liebP_- , {\big( \liebP_+ \big)}^{\cop} \Big) \, $  and the double cross sum  $ \, \liebP_- \bowtie {\big( \liebP_+ \big)}^{\cop} \, $;
   \,the latter also admits the Lie bialgebra monomorphisms
  $$
  \liebP_- \lhook\joinrel\relbar\joinrel\relbar\joinrel\relbar\joinrel\longrightarrow \liebP_- \bowtie {\big( \liebP_+ \big)}^{\cop}
  \!\longleftarrow\joinrel\relbar\joinrel\relbar\joinrel\relbar\joinrel\rhook {\big( \liebP_+ \big)}^{\cop}
  $$
 Morevoer, by the very definition, this double cross sum is isomorphic to the Drinfeld double  $ \liegdP $
 as defined in \S \ref{MpLieBialgebras-double},  that is
  $ \,\; \liebP_- \bowtie {\big( \liebP_+ \big)}^{\cop}  \, \cong \,  \liegdP \;\, $.
\end{free text}

\medskip

\subsection{Deformations of MpLbA's by toral twists}  \label{subsec: tor-twist def's_mp-Lie_bialg's}
 Let  $ \liegRP $  be a MpLbA as in  \S \ref{MpLieBialgebras-double}  above;
 then  $ \lieh $  is a free  $ \k $--module  of finite rank  $ \, t := \rk(\lieh) \, $:
 \,we fix in it a  $ \k $--basis  $ \, {\big\{ H_g \big\}}_{g \in \G} \, $,  where
 $ \G $  is an index set with  $ \, |\G| = \rk(\lieh) =: t \, $.
                                                       \par
   We begin introducing the so-called ``toral'' twists for  $ \liegRP \, $.

\vskip9pt

\begin{definition}  \label{def: toral twists x MpLbA's}
 For any  \textsl{antisymmetric\/}  matrix  $ \; \Theta = \big( \theta_{gk} \big)_{g, k \in \G} \in \lieso_t(\Bbbk) \; $,  \;we set
\begin{equation}  \label{eq: def_twist_Lie-bialg}
  j_\Thetapicc  \; := \;  {\textstyle \sum_{g,k=1}^t} \theta_{gk} \, H_g \otimes H_k  \; \in \;
  \lieh \otimes \lieh  \; \subseteq \;  \lieg \otimes \lieg
\end{equation}
 and we call this  $ \, j_\Thetapicc \, $  \textit{the toral twist\/}  (or  \textit{``twist of toral type''\/})
 associated with  $ \, \Theta\, $.   \hfill  $ \diamondsuit $
\end{definition}

\vskip7pt

   \textsl{$ \underline{\text{N.B.}} $:}\,  When  $ \lieg $  is a simple Lie algebra, in the classification of  \cite{ESS}  via Belavin-Drinfeld triples the above twist is associated with the empty datum; moreover, it turns out to be a semiclassical limit of a twist for  $ \uhg \, $,  \,see  \S \ref{sec: special-&-quantiz}.

\vskip7pt

   Next result   --- which explains our use of terminology ---   follows by construction;
   in particular, it makes use of the antisymmetry condition on  $ \Theta \, $.

\vskip11pt

\begin{lema}  \label{lemma: toral twist x MpLbA's}
 For any matrix  $ \; \Theta = \big( \theta_{gk} \big)_{g, k \in \G} \in \lieso_t(\Bbbk) \; $,  \,the element
 $ \, j_\Thetapicc \, $  given in  Definition \ref{def: toral twists x MpLbA's}  is a  \textsl{twist\/}
 element for the Lie bialgebra  $ \liegRP \, $,  \,in the sense of  \eqref{eq: twist-cond_Lie-bialg}.
\end{lema}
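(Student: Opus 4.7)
The plan is to verify the two conditions in (2.10) directly, exploiting the fact that $j_\Thetapicc$ lives in $\lieh \otimes \lieh$ together with (i) the antisymmetry of $\Theta$ and (ii) two structural features of $\liegRP$ that follow from its explicit presentation in \S\ref{MpLieBialgebras-double}: namely, $\lieh$ is an \emph{abelian} Lie subalgebra, and the Lie cobracket $\delta$ vanishes identically on $\lieh$ (both facts are built into the defining relations of $\liegRP$).

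First, I would handle the second condition $\ad_x\bigl(j_\Thetapicc + j_{\Thetapicc,\,2,1}\bigr) = 0$ for all $x \in \liegRP$. Writing $j_{\Thetapicc,\,2,1} = \sum_{g,k} \theta_{gk}\, H_k \otimes H_g$ and reindexing (swap $g \leftrightarrow k$), the antisymmetry $\theta_{kg} = -\theta_{gk}$ yields $j_{\Thetapicc,\,2,1} = -\, j_\Thetapicc$. Hence $j_\Thetapicc + j_{\Thetapicc,\,2,1} = 0$, which trivializes the condition.

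Next, for the first condition, my plan is to prove the stronger statement that the element inside the parentheses is already zero in $\liegRP^{\otimes 3}$, so that applying $\ad_x$ is automatic. The term $(\id \otimes \delta)(j_\Thetapicc) = \sum_{g,k} \theta_{gk}\, H_g \otimes \delta(H_k)$ vanishes because $\delta(H) = 0$ for every $H \in \lieh$ (cf.\ the cocycle formulas in \S\ref{MpLieBialgebras-double}); the same reasoning kills the two cyclic permutations. For the Schouten-type term $[\![j_\Thetapicc, j_\Thetapicc]\!] = [j_\Thetapicc^{1,2}, j_\Thetapicc^{1,3}] + [j_\Thetapicc^{1,2}, j_\Thetapicc^{2,3}] + [j_\Thetapicc^{1,3}, j_\Thetapicc^{2,3}]$, each summand is a sum of tensors involving commutators of the form $[H_g, H_\gamma]$ in some tensor slot, all of which vanish since $\lieh$ is abelian in $\liegRP$ (the relation $[T', T''] = 0$ for $T', T'' \in \lieh$ is part of the presentation). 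Thus $[\![j_\Thetapicc, j_\Thetapicc]\!] = 0$, and the first condition of (2.10) holds identically.

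There is essentially no obstacle: the argument is a short direct computation. The only thing worth double-checking is that the general case of the construction of $\liegRP$ from an arbitrary (not necessarily split or minimal) realization still enforces both $\lieh$ abelian and $\delta|_{\lieh} = 0$. This follows from the construction of $\liegRP$ as a quotient of $\mathfrak{g}^{\dot{\R}}_\Ppicc$ by a central bi-ideal lying in $\dot{\lieh}$, so both structural features descend from the split-minimal model $\mathfrak{g}^{\mathring{\R}}_\Ppicc = \liegdP$ to $\liegRP$ without loss.
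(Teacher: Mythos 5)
Your verification is correct and is exactly the argument the paper has in mind: the paper gives no written proof, merely remarking that the lemma "follows by construction" and uses the antisymmetry of $\Theta$, which is precisely your computation (antisymmetry kills $j_\Thetapicc + j_{\Thetapicc,\,2,1}$, while $\delta|_{\lieh}=0$ and the commutativity of $\lieh$ in $\liegRP$ kill the cocycle/Schouten term outright). Your closing remark that these two structural features persist for arbitrary realizations, since $\liegRP$ is a quotient of the split model by a central bi-ideal in the Cartan part, is also consistent with the construction in \S\ref{MpLieBialgebras-double}.
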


\vskip9pt

   Concerning deformations of MpLbA's by toral twists, our main result is the next one.
   To settle its content, let  $ \, P\in M_t(\Bbbk) \, $  be a multiparameter matrix of Cartan type with
   associated Cartan matrix  $ A \, $,  let  $ \R $  be a realization of it, and let  $ \liegRP $
   be the associated multiparameter Lie bialgebra; then, for any given antisymmetric matrix
   $ \; \Theta = \big( \theta_{gk} \big)_{g, k \in \G} \in \lieso_t(\Bbbk) \; $,  \,let
 $ \; j_\Thetapicc \, := \, {\textstyle \sum\limits_{g,k=1}^t} \theta_{gk} \, H_g \otimes H_k \; $
be  the associated twist as in  \eqref{eq: def_twist_Lie-bialg}.
Moreover, we consider the ``deformed'' multiparameter matrix
 $ \; P_\Thetapicc = \, {\big(\, p^\Thetapicc_{i,j} \big)}_{i, j \in I} \, := \,
 P - \mathfrak{A} \, \Theta \, \mathfrak{A}^{\,\scriptscriptstyle T} \; $
 as in  \eqref{def-P_Phi}   --- again of Cartan type, the same as  $ P $  ---
 and its corresponding ``deformed'' realization
 $ \, \R_\Thetapicc := \big(\, \lieh \, , \Pi \, , \Pi^\vee_\Thetapicc \! :=
 {\big\{ T^+_{\Thetapicc,i} \, , T^-_{\Thetapicc,i} \big\}}_{i \in I} \,\big) \, $.

\vskip11pt

\begin{theorem}  \label{thm: twist-liegRP=new-liegR'P'}
 There exists a Lie bialgebra isomorphism
  $ \; f^\Thetapicc_{\scriptscriptstyle P} : \lieg_{P_\Theta}^{\R_\Thetapicc} \, {\buildrel \cong \over
 {\lhook\joinrel\relbar\joinrel\relbar\joinrel\relbar\joinrel\twoheadrightarrow}} \,
{\big( \liegRP \big)}^{j_\Thetapicc} \; $
 (notation as above, with in right-hand side the twist deformation
 $ \, {\big( \liegRP \big)}^{j_\Thetapicc} $  of  $ \, \liegRP $  by  $ j_\Thetapicc $  occurs) given by
  $ \; E_i \, \mapsto \, E_i \, , \; T \, \mapsto \, T \, $  and  $ \; F_i \, \mapsto \, F_i \; $
 for all  $ \, i \in I \, $,  $ \, T \in \lieh \, $.
 \vskip3pt
   In particular, the class of all MpLbA's of any fixed Cartan type and of fixed rank is stable by toral twist deformations.
   Moreover, inside it the subclass of all such MpLbA's associated with  \textsl{straight},  resp.\  \textsl{small},
   realizations is stable as well.
\end{theorem}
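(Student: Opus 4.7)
My plan is to show that the three families of assignments $E_i \mapsto E_i$, $T \mapsto T$ (for $T \in \lieh$), $F_i \mapsto F_i$ extend to a well-defined Lie bialgebra morphism $f^\Thetapicc_P$, and then to invert it using the same recipe with $-\Theta$ in place of $\Theta$. The underlying idea is that twisting $\liegRP$ by the toral element $j_\Thetapicc$ can be ``absorbed'' into the deformed realization $\R_\Thetapicc$: the vector $V_i := \sum_{g,k} \theta_{gk}\,\alpha_i(H_g)\,H_k$ that emerges when bracketing $j_\Thetapicc$ against $E_i$ (or $F_i$) is, up to a sign governed by the antisymmetry of $\Theta$, exactly the correction that distinguishes $T^\pm_{\Thetapicc,i}$ from $T^\pm_i$ in \eqref{eq: T-phi}.

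\textbf{Step 1 (Lie-algebra check).} Since twisting does not alter brackets, the target $\big(\liegRP\big)^{j_\Thetapicc}$ carries the same Lie algebra as $\liegRP$. I would check the defining relations of $\lieg_{P_\Thetapicc}^{\R_\Thetapicc}$ (taken from \S \ref{MpLieBialgebras-double}) one family at a time. The abelianness of $\lieh$, the adjoint action $[T,E_j]=\alpha_j(T)E_j$ and $[T,F_j]=-\alpha_j(T)F_j$, and the Serre relations on the $E_i$'s and $F_i$'s all coincide verbatim with the relations of $\liegRP$, because the root set $\Pi$ is preserved under the twist of realizations (only $\Pi^\vee$ is deformed). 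The only non-formal check is the Chevalley relation $[E_i,F_j] = \delta_{ij}\,(T^+_{\Thetapicc,i}+T^-_{\Thetapicc,i})/(2d_i)$, and this collapses to its $\R$-counterpart in $\liegRP$ via the key identity $T^+_{\Thetapicc,i}+T^-_{\Thetapicc,i} = T^+_i+T^-_i$ (i.e.\ $S_i$ is invariant under toral twisting), recorded in Proposition \ref{prop: twist-realizations}.

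\textbf{Step 2 (Lie-coalgebra check).} Next I would compute $\delta^{j_\Thetapicc}(x) = \delta(x) - \ad_x(j_\Thetapicc)$ on generators via \eqref{eq: def_twist-delta}. For $T\in\lieh$ abelianness gives $\ad_T(j_\Thetapicc)=0$, whence $\delta^{j_\Thetapicc}(T)=0$, as desired. Expanding $\ad_{E_i}(j_\Thetapicc)$ through $[E_i,H_g]=-\alpha_i(H_g)E_i$ and using $\theta_{kg}=-\theta_{gk}$ to reindex collapses the four-fold sum to $-E_i\otimes V_i + V_i\otimes E_i = -2\,E_i\wedge V_i$; combining with $\delta(E_i)=2\,T^+_i\wedge E_i$ and comparing with \eqref{eq: T-phi} (which, via the same antisymmetry, gives $T^+_{\Thetapicc,i}=T^+_i - V_i$) yields exactly $\delta^{j_\Thetapicc}(E_i) = 2\,T^+_{\Thetapicc,i}\wedge E_i$. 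The mirror computation for $F_i$ — now with $[F_i,H_g]=+\alpha_i(H_g)F_i$ — delivers $\delta^{j_\Thetapicc}(F_i) = 2\,T^-_{\Thetapicc,i}\wedge F_i$. These are precisely the cobracket values prescribed for $\lieg_{P_\Thetapicc}^{\R_\Thetapicc}$, so $f^\Thetapicc_P$ is a Lie bialgebra morphism.

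\textbf{Step 3 (Invertibility and stability).} The assignments $\Theta\mapsto P_\Thetapicc$, $\Theta\mapsto\R_\Thetapicc$ and $\Theta\mapsto j_\Thetapicc$ are all additive in $\Theta$ (cf.\ Remarks \ref{rmks: transitivity-twist}) and reduce to the identity at $\Theta=0$. Consequently, applying the same construction with $-\Theta$ in place of $\Theta$ and starting from $\big(P_\Thetapicc,\R_\Thetapicc\big)$ produces a morphism $\liegRP = \lieg_{(P_\Thetapicc)_{-\Theta}}^{(\R_\Thetapicc)_{-\Theta}} \longrightarrow \big(\lieg_{P_\Thetapicc}^{\R_\Thetapicc}\big)^{j_{-\Thetapicc}}$ which, on the distinguished generators, is two-sided inverse to $f^\Thetapicc_P$; hence $f^\Thetapicc_P$ is an isomorphism. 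The final assertions on stability are then immediate from Proposition \ref{prop: twist-realizations}: $P_\Thetapicc$ is again of Cartan type with the same Cartan matrix $A$, and $\R_\Thetapicc$ inherits straightness and smallness from $\R$. The only real obstacle I foresee sits in Step 2, where the sign and index bookkeeping with $\Theta$ antisymmetric must be done with care to match $\ad_{E_i}(j_\Thetapicc)$ and $\ad_{F_i}(j_\Thetapicc)$ to the twisted coroots of \eqref{eq: T-phi}; once that dictionary is in place, the rest of the proof is formal.
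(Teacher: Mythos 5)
Your proposal is correct and follows essentially the same route as the paper: the Lie algebra structure is untouched because it only depends on the $\alpha_j$'s and the twist-invariant sums $S_i=2^{-1}(T_i^++T_i^-)$, while the generator-by-generator computation of $\delta^{j_\Thetapicc}=\delta-\ad_{(-)}(j_\Thetapicc)$ (with exactly your sign/reindexing bookkeeping) matches the cobracket of $\lieg_{P_\Thetapicc}^{\R_\Thetapicc}$, and the stability claims come from Proposition \ref{prop: twist-realizations}. Your Step 3, inverting via $-\Theta$ and additivity of twists, is a harmless minor variation on the paper's observation that the two presentations literally coincide, so the generator map is an isomorphism.
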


\pf
 By  \S \ref{MpLieBialgebras-double},  the Lie algebra structure in  $ {\big( \liegRP \big)}^{j_\Thetapicc} $
 is the same as in  $ \liegRP \, $,  and the latter only depends on the $ \alpha_j $'s  and the sums
 $ \, S_j := 2^{-1}\big(T_i^+ \! + T_i^-\big) \, $  ($ \, j \in I \, $).  Now, both the  $ \alpha_j $'s  and the  $ S_j $'s
 \textsl{do not change\/}  (see above)  when we pass from  $ \liegRP $  to  $ \lieg_{P_\Theta}^{\R_\Thetapicc} $
 or viceversa; therefore, the formulas in the claim  (mapping each generator of  $ \lieg_{P_\Theta}^{\R_\Thetapicc} $
 onto the same name generator of  $ \, \liegRP = {\big( \liegRP \big)}^{j_\Thetapicc} \, $)  provide an isomorphism of Lie algebras.
                                                                  \par
   Now consider the toral twist
  $ \; j_\Thetapicc := {\textstyle \sum_{g,k=1}^t} \theta_{gk} \, H_g \otimes H_k \; $
 given in  \eqref{eq: def_twist_Lie-bialg}.  By  \eqref{eq: def_twist-delta}
  $$  \delta^{j_\Thetapicc}(x)  \; := \;  \delta(x) - \ad_x\!\big(j_\Thetapicc\big)  \; = \;
  \delta(x) - {\textstyle \sum_{g,k=1}^t} \theta_{gk} \,
  \big( \big[x,H_g\big] \otimes H_k + H_g \otimes \big[x,H_k\big] \big)  $$
 for all  $ \, x \in \lieg \, $.  Now take  $ \, x := E_\ell \, $  ($ \, \ell \in I \, $):  \,then the previous formula reads
  \eject

  $$  \displaylines{
   \delta^{j_\Thetapicc}(E_\ell)  \; := \;  \delta(E_\ell) - {\textstyle \sum_{g,k=1}^t} \, \theta_{gk} \,
   \big( \big[E_\ell \, , H_g\big] \otimes H_k + H_g \otimes \big[E_\ell \, , H_k\big] \big)  \; =   \hfill  \cr
   \quad   = \;  \delta(E_\ell) - {\textstyle \sum_{g,k=1}^t} \, \theta_{gk} \,
   \big(\! -\alpha_\ell(H_g) E_\ell \otimes H_k - H_g \otimes \alpha_\ell(H_k) E_\ell \,\big)  \; =   \hfill  \cr
   = \;  T^+_\ell \otimes E_\ell - E_\ell \otimes T^+_\ell + {\textstyle \sum_{g,k=1}^t} \, \theta_{gk} \,
   \big(\, \alpha_\ell(H_g) E_\ell \otimes H_k + \alpha_\ell(H_k) H_g \otimes E_\ell \,\big)  \; =   \quad  \cr
   \hfill   = \;  \Big(\, T^+_\ell + {\textstyle \sum_{g,k=1}^t} \, \theta_{kg} \, \alpha_\ell(H_g) H_k \Big) \otimes E_\ell \, -
   \, E_\ell \otimes \Big(\, T^+_\ell + {\textstyle \sum_{g,k=1}^t} \, \theta_{kg} \, \alpha_\ell(H_g) H_k \Big)  \; =  \cr
   \hfill   = \;  T^+_{\Thetapicc,\ell} \otimes E_\ell \, - \, E_\ell \otimes T^+_{\Thetapicc,\ell}  \; = \; 2 \;
   T^+_{\Thetapicc,\ell} \wedge \! E_\ell  }  $$
 hence in short we get  $ \; \delta^{j_\Thetapicc}(E_\ell) \, = \, 2 \; T^+_{\Thetapicc,\ell} \wedge \! E_\ell  \; $.
 Similar computations give
$$
   \delta^{j_\Thetapicc}(E_i) = 2 \, T^+_{\Thetapicc,i} \wedge \! E_i  \; ,   \!\quad
 \delta^{j_\Thetapicc}(T) = 0  \; ,   \!\quad
 \delta^{j_\Thetapicc}(F_i) = 2 \, T^-_{\Thetapicc,i} \wedge \! F_i  \; ,   \quad  \forall \; i \in I \, , \, T \!\in \lieh
$$
 This means that, through the Lie algebra isomorphism  $ f_\Ppicc^\Thetapicc \, $,
 the Lie coalgebra structure of  $ {\big( \liegRP \big)}^{j_\Thetapicc} $  is described on
 generators exactly like that of  $ \lieg_{P_\Theta}^{\R_\Thetapicc} \, $,  with the new ``coroots''
 $ T^\pm_{\Thetapicc,i} $  ($ \, i \in I \, $)  in  $ {\big( \liegRP \big)}^{j_\Thetapicc} $  playing the role of the coroots
 $ T^\pm_i $  ($ \, i \in I \, $)  in  $ \lieg_{P_\Theta}^{\R_\Thetapicc} \, $.  Therefore
 $ \; f^\Thetapicc_{\scriptscriptstyle P} : \lieg_{P_\Theta}^{\R_\Thetapicc}
 \relbar\joinrel\relbar\joinrel\longrightarrow {\big( \liegRP \big)}^{j_\Thetapicc} \; $
 is an isomorphism of Lie bialgebras, q.e.d.
\epf

\vskip7pt

   In fact,  \textsl{the previous result can be reversed},  somehow.
   Namely, our next result shows, in particular, that \textit{every straight small MpLbA
   can be realized as a toral twist deformation of the ``standard'' MpLbA  $ \lieg^\Dpicc_{\scriptscriptstyle DA} $}
   (as in  \S \ref{MpLieBialgebras-double})   ---  cf.\ claim  \textit{(c)\/}  below.

\vskip9pt

\begin{theorem}  \label{thm: MpLbA=twist-gD}
 With assumptions as above, let  $ P $  and  $ P' $  be two matrices of Cartan type with the
 same associated Cartan matrix $ A \, $,  \,i.e.\ such that  $ \, P_s = P'_s \, $.
 \vskip3pt
   (a)\,  Let  $ \, \R $  be a  \textsl{straight}  realization of  $ P $  and let  $ \lieg_\Ppicc^{\scriptscriptstyle \R} $
   be the associated MpLbA.  Then there exists a matrix  $ \, \Theta \in \lieso_t(\Bbbk) \, $
   such that  $ \, P' = P_\Thetapicc \, $,  the corresponding $ \R_\Thetapicc $
   is a  \textsl{straight}  realization of  $ \, P' = P_\Thetapicc \, $,  and for the twist element
   $ j_\Thetapicc $  as in  \eqref{eq: def_twist_Lie-bialg}  we have
  $$  \lieg_{\scriptscriptstyle P'}^{\scriptscriptstyle \R_\Thetapicc}  \; \cong \;
  {\big(\, \lieg_\Ppicc^{\scriptscriptstyle \R} \big)}^{j_\Theta}  $$
   \indent   In a nutshell, if  $ \, P'_s = P_s \, $  then from any straight MpLbA over  $ P $
   we can obtain by toral twist deformation a straight MpLbA (of the same rank) over  $ P' \, $.
 \vskip3pt
   (b)\,  Let  $ \R $  and  $ \R' $  be  \textsl{straight small}  realizations of  $ P $  and  $ P' $
respectively, with  $ \, \rk(\R) = \rk(\R') =: t \, $,  and let  $ \liegRP $  and
$ \lieg_{\scriptscriptstyle P'}^{\scriptscriptstyle \R'} $  be the associated MpLbA's.
Then there exists a matrix  $ \, \Theta \in \lieso_t(\Bbbk) \, $  such that for the twist element
$ j_\Thetapicc $  as in  \eqref{eq: def_twist_Lie-bialg}  we have
  $$  \lieg_{\scriptscriptstyle P'}^{\scriptscriptstyle \R'}  \; \cong \;  {\big(\, \liegRP \big)}^{j_\Theta}  $$
   \indent   In a nutshell, if  $ \, P'_s = P_s \, $  then any straight small MpLbA over
   $ P' $  is isomorphic to a toral twist deformation of any straight small MpLbA over  $ P $  of the same rank.
 \vskip3pt
   (c)\,  Every straight small MpLbA is isomorphic to some toral twist deformation of the
   \textsl{standard}  MpLbA  $ \lieg^\Dpicc_{\scriptscriptstyle DA} $  (over  $ \, DA = P_s \, $,  cf.\
   \S \ref{MpLieBialgebras-double})  of the same rank.
\end{theorem}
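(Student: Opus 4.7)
The overall plan is to reduce all three parts to the transport principle of Theorem \ref{thm: twist-liegRP=new-liegR'P'}, which identifies the twist deformation $(\liegRP)^{j_\Theta}$ with the MpLbA $\lieg_{P_\Theta}^{\R_\Theta}$ built on the twisted realization of the twisted multiparameter. What remains is, in each situation, to exhibit the right matrix $\Theta \in \lieso_t(\Bbbk)$, and in the ``small'' setting to match a given realization with a twisted one via the uniqueness theorem for straight small realizations.

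For part (a) I would invoke Proposition \ref{prop: realiz=twist-standard}(a): since $P_s = P'_s$ by assumption, that proposition yields $\Theta \in \lieso_t(\Bbbk)$ such that $P' = P_\Theta$ and the twisted realization $\R_\Theta$ is again straight. Applying Theorem \ref{thm: twist-liegRP=new-liegR'P'} then provides the Lie bialgebra isomorphism $\lieg_{P'}^{\R_\Theta} = \lieg_{P_\Theta}^{\R_\Theta} \cong (\liegRP)^{j_\Theta}$, which is exactly the claim.

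For part (b), I would first apply (a) to $(P, \R, P')$, obtaining $\Theta$ and a straight realization $\R_\Theta$ of $P'$ with $\lieg_{P'}^{\R_\Theta} \cong (\liegRP)^{j_\Theta}$. By Proposition \ref{prop: twist-realizations}(b), $\R_\Theta$ is also small, hence straight small; moreover, it has the same rank $t$ as $\R$, which by hypothesis equals $\rk(\R')$. The uniqueness statement in Proposition \ref{prop: exist-realiz's_small} then yields $\R' \cong \R_\Theta$ as realizations of $P'$, and Corollary \ref{cor: isom_R -> isom_liegRP} upgrades this to a Lie bialgebra isomorphism $\lieg_{P'}^{\R'} \cong \lieg_{P'}^{\R_\Theta}$. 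Composition gives the desired $\lieg_{P'}^{\R'} \cong (\liegRP)^{j_\Theta}$.

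Part (c) is a direct specialization of (b): take as starting datum the matrix $P_s = DA$ together with the standard Kac-style realization of the appropriate rank (see Remark \ref{rmk: Kac'-realiz} and Lemma \ref{lem: realiz_P => realiz_A}, in their $\Bbbk$--valued version), which is straight and small and yields exactly the standard MpLbA $\lieg^\Dpicc_{\scriptscriptstyle DA}$; take the given matrix and its given straight small realization as $(P',\R')$. Then (b) produces the required $\Theta$ and the isomorphism $\lieg_{P'}^{\R'} \cong \big(\lieg^\Dpicc_{\scriptscriptstyle DA}\big)^{j_\Theta}$. The only mildly delicate point in the entire argument is bookkeeping, namely verifying that all relevant ranks match and that the property ``small'' is preserved under toral twisting; but both facts are already contained in the cited statements, so no serious obstacle is expected.
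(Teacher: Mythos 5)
Your proof is correct and follows essentially the same route as the paper's: part (a) reduces to solving $P' = P - \mathfrak{A}\,\Theta\,\mathfrak{A}^{\,T}$ for $\Theta$ (your appeal to Proposition \ref{prop: realiz=twist-standard}(a) is just the packaged form of the paper's citation of Lemma \ref{lemma: twist=sym}) followed by Theorem \ref{thm: twist-liegRP=new-liegR'P'}, part (b) adds uniqueness of straight small realizations together with functoriality of $\R \mapsto \liegRP$, and part (c) specializes (b) to the standard Kac-type realization of $DA$. If anything, your citations are slightly more precise than the paper's (Proposition \ref{prop: exist-realiz's_small} for uniqueness in the small case, and Proposition \ref{prop: twist-realizations}(b) to record that smallness is preserved under twisting).
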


\pf
 \textit{(a)}\,  By  Theorem \ref{thm: twist-liegRP=new-liegR'P'}  it is enough to find
 $ \, \Theta \in \lieso_t(\Bbbk)  \, $  such that  $ \, P' = P_\Thetapicc \, $,  \,that is
 $ \; P' = P - \mathfrak{A} \, \Theta \, \mathfrak{A}^{\,\scriptscriptstyle T} \, $;
 \,but this is guaranteed by  Lemma \ref{lemma: twist=sym},  so we are done.
 \vskip3pt
   \textit{(b)}\,  This follows from claim  \textit{(a)},  along with the uniqueness of straight small realizations
   --- cf.\ Proposition \ref{prop: exist-realiz's}\textit{(b)}  ---   and  Proposition \ref{prop: functor_R->liegRP}.
 \vskip3pt
   \textit{(c)}\,  This follows from  \textit{(b)},  taking as  $ \lieg_{\scriptscriptstyle P'}^{\scriptscriptstyle \R'} $
   the given straight small MpLbA and as  $ \liegRP $  the ``standard'' MpLbA  $ \lieg^\Dpicc_\Ppicc $  over  $ \, P := DA = P'_s \, $.
\epf

\vskip7pt

\begin{obs's}  \label{obs: parameter_(P,Phi) x MpLbA's}  {\ }
 \vskip2pt
   \textit{(a)}\,  Theorems \ref{thm: twist-liegRP=new-liegR'P'}  and  \ref{thm: MpLbA=twist-gD}  have the following interpretation.
   Our MpLbA's  $ \, \liegRP \, $  depend on the multiparameter  $ P \, $;  \,at a further level, once we perform onto
   $ \, \liegRP \, $  a deformation by toral twist the outcome
   $ \; \lieg^{\scriptscriptstyle \R}_{\Ppicc,\Thetapicc} := {\big(\, \liegRP \big)}^{j_\Theta} \; $
   depends on  \textsl{two\/}  multipara\-meters, namely  $ P $  \textsl{and\/}  $ \Theta \, $.
   Thus all these  $ \lieg^{\scriptscriptstyle \R}_{\Ppicc,\Thetapicc} $'s form a seemingly richer family of
   ``twice-multiparametric'' MpLbA's.  Nonetheless,  Theorem \ref{thm: twist-liegRP=new-liegR'P'}
   above proves that this  \textsl{coincides\/}  with the family of all MpLbA's, though the latter seems smaller.
                                                              \par
%
%
   In short,  Theorems \ref{thm: twist-liegRP=new-liegR'P'}  and  \ref{thm: MpLbA=twist-gD}  show the following.  The dependence of the Lie bialgebra structure of  $ \lieg^{\scriptscriptstyle \R}_{\Ppicc,\Thetapicc} $  on the ``double parameter''  $ (P\,,\Theta) $  is ``split'' in the algebraic structure   --- ruled by  $ P $  ---   and in the coalgebraic structure   --- ruled by  $ \Theta \, $.  Now,  Theorems \ref{thm: twist-liegRP=new-liegR'P'}
   and  \ref{thm: MpLbA=twist-gD}  enable us to ``polarize'' this dependence so to codify it either entirely within the algebraic structure (while the coalgebraic one is reduced to a ``canonical form'')   --- so the single multiparameter  $ P_\Theta $  is enough to describe it ---   or entirely within the coalgebraic structure (with the algebraic one being reduced to the ``standard'' one)   --- so the one multiparameter  $ \Theta_P $  is enough to describe it.
 \vskip2pt
   \textit{(b)}\,  As we saw at the end of  \S \ref{further_stability},  the (sub)class of  \textsl{split\/}
   realizations is  \textsl{not closed\/}  under twist deformation; as a consequence,
   the subclass of all MpLbA's that are ``split'' is not closed either under twist deformation.
\end{obs's}

\medskip

\subsection{Deformations of MpLbA's by toral  $ 2 $--cocycles}  \label{subsec: tor-2cocyc def's_mp-Lie_bialg's}
 Let again  $ \liegRP $  be a MpLbA as in  \S \ref{MpLieBialgebras-double},  and keep notation as above.
 Dually to what we did before, we consider now the so-called ``toral'' 2--cocycles for  $ \liegRP \, $.

\vskip9pt

\begin{definition}  \label{def: toral_2-cocyc x MpLbA's}
 Fix an antisymmetric  $ \Bbbk $--linear  map  $ \, \chi : \lieh \otimes \lieh \relbar\joinrel\longrightarrow \Bbbk \, $
 such that
\begin{equation}  \label{eq: condition-eta}
  \chi(S_i \, ,\,-\,)  \, = \,  0  \, = \,  \chi(\,-\,,S_i)   \qquad \qquad \forall \;\; i \in I
\end{equation}
 where  $ \, S_i := 2^{-1} \big(\, T^+_i + T^-_i \big) \, $  for all  $ \, i \in I \, $  (cf.\ Definition \ref{def: realization of P});
 in other words, we have  $ \, \chi \in \text{\sl Alt}_{\,\kh}^{\,S}(\lieh) \, $,  cf.\ \eqref{eq: def-Alt_S}.
 Moreover, let
 $ \,\; \pi^{\liegRP}_\lieh : \liegRP \relbar\joinrel\twoheadrightarrow \lieh \;\, $
 be the canonical projection induced by any one of the triangular decompositions in
 \eqref{eq: triang-decomp's_Lie-bialg's}.  We define
$$
  \chi_\lieg := \chi  \circ \Big(\, \pi^{\liegRP}_\lieh \otimes \pi^{\liegRP}_\lieh \Big) :
  \liegRP \otimes \liegRP \relbar\joinrel\relbar\joinrel\twoheadrightarrow \lieh \otimes \lieh
  \relbar\joinrel\longrightarrow \Bbbk   \qquad
$$
 and we call it  \textit{the toral 2--cocycle\/}  (or  \textit{``2--cocycle  of toral type''\/})
 \hbox{associated with  $ \, \chi \, $.   \hfill  $ \diamondsuit $}
\end{definition}

\vskip9pt

   Next result follows at once by construction, and explains our use of terminology:

\vskip11pt

\begin{lema}  \label{lemma: toral 2-cocyc x MpLbA's}
 For any  antisymmetric
 $ \Bbbk $--linear  map  $ \, \chi : \lieh \otimes \lieh \longrightarrow \Bbbk \, $  obeying  \eqref{eq: condition-eta},
  $$  \chi_\lieg := \chi \circ \Big( \pi^{\liegRP}_\lieh \otimes \pi^{\liegRP}_\lieh \,\Big) : \liegRP
  \otimes \liegRP \relbar\joinrel\relbar\joinrel\twoheadrightarrow \lieh \otimes \lieh \relbar\joinrel\longrightarrow \Bbbk  $$
 is a  \textsl{2--cocycle\/}  map for the Lie bialgebra  $ \liegRP \, $,  \,in the sense of  \eqref{eq: cocyc-cond_Lie-bialg}.
\end{lema}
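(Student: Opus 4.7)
The plan is to verify the two conditions of (\ref{eq: cocyc-cond_Lie-bialg}) for $\chi_\lieg$. The first, namely $\ad_\psi\big(\chi_\lieg + \chi_{\lieg,2,1}\big) = 0$ for all $\psi \in \liegRP^*$, is immediate: since $\chi$ is antisymmetric on $\lieh \otimes \lieh$, the induced form $\chi_\lieg$ is antisymmetric on $\liegRP \otimes \liegRP$, hence $\chi_\lieg + \chi_{\lieg,2,1} = 0$ identically.

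For the genuine cocycle condition, I will exploit the standard equivalent reformulation (dual to that used in the proof of Lemma \ref{lemma: toral twist x MpLbA's}): an antisymmetric form $\chi_\lieg$ is a Lie bialgebra $2$--cocycle if and only if the deformed bracket $[\cdot,\cdot]_{\chi_\lieg}$ defined by (\ref{eq: def_cocyc-bracket}), together with the unchanged cobracket $\delta$, endows $\liegRP$ with a new Lie bialgebra structure. Computing $[\cdot,\cdot]_{\chi_\lieg}$ on the Serre-type generators of $\liegRP$ from \S\ref{MpLieBialgebras-double}, and using that $\delta$ vanishes on $\lieh$, that $\pi^{\liegRP}_\lieh$ kills the root vectors $E_i, F_j$, and that condition (\ref{eq: condition-eta}) forces $\chi_\lieg$ to vanish whenever $S_i$ enters as an argument, one finds that $[\cdot,\cdot]_{\chi_\lieg}$ agrees with $[\cdot,\cdot]$ on all pairs of generators except those of mixed Cartan-versus-root type, where
$$[T, E_i]_{\chi_\lieg} \,=\, \big(\alpha_i(T) - \chi(T_i^+, T)\big)\,E_i \,, \qquad [T, F_i]_{\chi_\lieg} \,=\, -\big(\alpha_i(T) + \chi(T_i^-, T)\big)\,F_i \,,$$
and the modified weights coincide with the values $\alpha_i^{(\chi)}(T)$ and $-\alpha_i^{(\chi)}(T)$ of the deformed roots of $\R_{(\chi)}$.

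This is precisely the defining presentation of the MpLbA $\,\lieg^{\R_{(\chi)}}_{P_{(\chi)}}\,$ built in \S\ref{MpLieBialgebras-double} from the $2$--cocycle-deformed realization $\R_{(\chi)}$ of the matrix $P_{(\chi)}$ furnished by Proposition \ref{prop: 2cocdef-realiz}. Since that MpLbA is a bona fide Lie bialgebra by construction, both the Jacobi identity and the bialgebra compatibility hold for the pair $\big([\cdot,\cdot]_{\chi_\lieg}, \delta\big)$, which is exactly the content of the $2$--cocycle condition for $\chi_\lieg$.

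The main obstacle I foresee lies in making the identification with $\,\lieg^{\R_{(\chi)}}_{P_{(\chi)}}\,$ fully rigorous: one must check that the bracket $[\cdot,\cdot]_{\chi_\lieg}$ preserves the triangular decomposition underlying $\pi^{\liegRP}_\lieh$ (which holds because root vectors lie in $\ker\chi_\lieg$, so all brackets not involving $\lieh$ are undeformed), and that the Serre relations survive untouched (transparent, as they involve only root vectors). A more elementary but considerably more tedious alternative would be to verify (\ref{eq: cocyc-cond_Lie-bialg}) directly by evaluating $\partial_*(\chi_\lieg) + [\![\chi_\lieg, \chi_\lieg]\!]_*$ on triples of generators, exploiting the vanishing of $\chi_\lieg$ on root vectors together with (\ref{eq: condition-eta}).
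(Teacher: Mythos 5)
Your route inverts the paper's logical order and, as written, is circular. The identification of $\big(\liegRP,\,{[\,\ ,\ ]}_{\chi_\lieg},\,\delta\big)$ with $\lieg_{P_{(\chi)}}^{\R_{(\chi)}}$ is precisely the content of Theorem \ref{thm: 2-cocycle-def-MpLbA}, whose proof presupposes the present Lemma: one must already know that $\chi_\lieg$ obeys \eqref{eq: cocyc-cond_Lie-bialg} for $(\liegRP)_{\chi_\lieg}$ to be a Lie bialgebra at all (Definition \ref{def: cocyc-deform_Lie-bialg's}, via \cite{Mj}). Your independent justification of that identification only computes ${[\,\ ,\ ]}_{\chi_\lieg}$ on pairs of generators; but a bilinear map is not determined by its values on generators, and you cannot invoke the presentation of $\lieg_{P_{(\chi)}}^{\R_{(\chi)}}$ to extend the comparison unless you already know that ${[\,\ ,\ ]}_{\chi_\lieg}$ satisfies the Jacobi identity --- which is part of what is being proved. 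To make the strategy non-circular you would have to show that the two brackets agree as bilinear maps on all of $\,\lien_-\oplus\lieh\oplus\lien_+$; in the mixed Cartan/root case this requires knowing $\big(\pi^{\liegRP}_\lieh\otimes\id\big)\big(\delta(y)\big)$ for \emph{arbitrary} $y\in\lien_\pm$, not just for the generators $E_i\,,F_i\,$, an induction your ``obstacle'' paragraph does not address.

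There is a second unjustified step: your ``standard equivalent reformulation'' uses the converse implication, namely that if $\big({[\,\ ,\ ]}_{\chi_\lieg},\delta\big)$ is a Lie bialgebra then $\chi_\lieg$ satisfies \eqref{eq: cocyc-cond_Lie-bialg}. The paper and \cite[Exercise 8.1.8]{Mj} provide only the forward direction, and Lemma \ref{lemma: toral twist x MpLbA's} carries no displayed proof to dualize; the converse is plausible but needs an argument, all the more since \eqref{eq: cocyc-cond_Lie-bialg} asks for $\ad_\psi$--invariance of $\,\partial_*(\chi_\lieg)+{[\![\,\chi_\lieg\,,\chi_\lieg\,]\!]}_*\,$ itself, not merely of the part obstructing Jacobi. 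Note finally that the ``considerably more tedious alternative'' you set aside is exactly the paper's proof, and it is in fact short: since $\chi_\lieg$ vanishes whenever an argument lies in $\lien_\pm\,$, since $\pi^{\liegRP}_\lieh\big([\lien_+,\lien_-]\big)$ lies in the span of the $S_i$'s --- killed by $\chi$ through \eqref{eq: condition-eta} --- and since $\delta(\lieh)=0$ and $\delta(\lien_\pm)\subseteq\liebP_\pm\otimes\lien_\pm\,$, one gets directly $\,\partial_*(\chi_\lieg)=0\,$ and $\,{[\![\,\chi_\lieg\,,\chi_\lieg\,]\!]}_*=0\,$, which is stronger than \eqref{eq: cocyc-cond_Lie-bialg}.
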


\pf

 We have to check that  $ \, \chi_\lieg \in \Hom_\Bbbk\big(\, \liegRP \otimes \liegRP \, , \Bbbk \,\big) \, $
 satisfies  \eqref{eq: cocyc-cond_Lie-bialg},  that is
  $$  \begin{aligned}
   \ad_\psi\!\big(\, \partial_*(\chi_\lieg) - {[\![\, \chi_\lieg \, , \chi_\lieg \,]\!]}_* \,\big)  \; = \; 0  \;\; ,
   \;\quad  \ad_\psi\!\big(\, \chi_\lieg + {(\chi_\lieg)}_{\!{}_{2,1}} \big) \, = \, 0
   \quad \qquad   \forall \;\; \psi \in (\liegRP)^*
      \end{aligned}  $$
 Since  $ \, \chi \, $ is antisymmetric, we have that  $ \, \chi_\lieg + {(\chi_\lieg)}_{\!{}_{2,1}} = 0 \, $,
 \,hence the second condition is trivially satisfied.  On the other hand, the first equality follows from the
 fact that actually one has  $ \; \partial_*(\chi_\lieg) = 0 \; $  and  $ \; {[\![\, \chi_\lieg \, , \chi_\lieg \,]\!]}_* = 0 \; $.
 To see it, let us describe  $ \, \partial_*(\chi_\lieg) \, $,
 $ \, {[\![\, \chi_\lieg \, , \chi_\lieg \,]\!]}_* \in {\big(\liegRP \otimes \liegRP \otimes \liegRP)}^* \, $
 explicitly.  For  $ \, x, y, z \in \liegRP \, $  we have
  \eject

\begin{align*}
  \partial_*(\chi_\lieg)(x,y,z)  &  \; = \;  \big((\id \otimes \delta_*)(\chi_g) + \text{c.p.} \big)(x,y,z)  \; = \;
  \chi_\lieg\big( x \otimes [y,z] \big) + \text{c.p.}  \; =  \\
  &  \; = \;  \chi_\lieg\big( x , [y,z] \big) + \chi_\lieg\big( z , [x,y] \big) + \chi_\lieg\big( y , [z,x] \big)  \\
  {[\![\, \chi_\lieg \, , \chi_\lieg \,]\!]}_*(x,y,z)  &  \, = \,
  \chi_\lieg(x_{[1]},y) \chi_\lieg(x_{[2]},z) + \chi_\lieg(y_{[1]},z) \chi_\lieg(y_{[2]},x) + \chi_\lieg(z_{[1]},x) \chi_\lieg(z_{[2]},y)
\end{align*}
Using  \eqref{eq: triang-decomp's_Lie-bialg's},
 for  $ \, g \in \liegRP \, $  write  $ \, g = g_+ + g_0 + g_- \, $  with  $ \, g_\pm \in \lien_\pm \, $
 and  $ \, g_0 \in \lieh \, $.  As  $ \chi_g $  is defined through the  map
  $ \pi^{\liegRP}_\lieh \, $,  \,the map  $ \partial_*(\chi_\lieg) $
 vanishes when evaluated at elements in  $ \lien_+ $  or  $ \lien_- \, $.  Moreover, since the bracket on  $ \lieh $
 is trivial and  $ \, [\, \lieh , \lien_\pm \,] \subseteq \lien_\pm \, $, we get
  $$  \partial_*(\chi_{\lieg})(x,y,z)  \; = \;  \chi\big( x_0 \, , \, [\,y_+,z_-] + [\,y_-,z_+] \big) \, + \, \text{c.p.}  $$
 But  $ \, [\lien_+,\lien_-] \, $  is contained in the Lie subalgebra spanned by the  $ S_i $'s  ($ \, i \in I \, $),
 so we eventually get  $ \; \partial_*(\chi_\lieg)(x,y,z) = 0 \; $   --- for all  $ \, x, y, z \in \liegRP \, $  ---
 by condition  \eqref{eq: condition-eta}.
                                                    \par
   To prove that  $ \, {[\![\, \chi_\lieg \, , \chi_\lieg \,]\!]}_*(x,y,z) = 0 \, $,
   \,we use the Lie coalgebra structure.
   From  \eqref{eq:compat-bracket-cobracket}  it follows that  $ \, \delta(\lien_\pm) \subseteq \liebP_\pm \otimes \lien_\pm \, $.
   Since  $ \, \delta(\lieh) = 0 \, $  by definition and  $ \, \partial_*(\chi_\lieg) \, $  vanishes on  $ \lien_\pm \, $,
   \,we get that each summand of  $ \, {[\![\, \chi_\lieg \, , \chi_\lieg \,]\!]}_*(x,y,z) \, $  is zero,  \,q.e.d.
\epf

\vskip7pt

   The second result is the dual analog of  Theorem \ref{thm: twist-liegRP=new-liegR'P'}
   and of  Theorem \ref{thm: MpLbA=twist-gD}.

\vskip7pt

   We start with some preliminaries.  Let  $ \, P\in M_t(\Bbbk) \, $
   be a multiparameter matrix of Cartan type with associated Cartan matrix  $ A \, $,
   let  $ \R $  be a realization of it, and let  $ \liegRP $  be the associated multiparameter Lie bialgebra;
   then, given any  $ \, \chi \in \text{\sl Alt}_{\,\kh}^{\,S}(\lieh) \, $  as in  \eqref{eq: def-Alt_S},  let
   $ \, \chi_\lieg : \liegRP \!\otimes \liegRP \longrightarrow \Bbbk \, $  be the  $ 2 $--cocycle  map for
   $ \liegRP $  as in Lemma \ref{lemma: toral 2-cocyc x MpLbA's}.

\vskip5pt

   Consider the antisymmetric matrix
   $ \, \mathring{X} := {\Big(\, \mathring{\chi}_{i{}j} = \chi\big(\,T_i^+\,,T_j^+\big) \Big)}_{i, j \in I} \! \in \lieso_n(\Bbbk) \, $.
   By  Proposition \ref{prop: 2cocdef-realiz}  we have a matrix  $ P_{(\chi)} $  and a realization  $ \R_{(\chi)} $  of it,
   given by
  $$
  P_{(\chi)}  := \,  P \, + \, \mathring{X}  \, = \,  {\Big(\, p^{(\chi)}_{i{}j} := \,  p_{ij} + \mathring{\chi}_{i{}j} \Big)}_{\! i, j \in I}  \;\; ,  \quad
   \Pi_{(\chi)}  := \,  {\Big\{\, \alpha_i^{(\chi)}  := \,
   \alpha_i \pm \chi\big(\, \text{--} \, , T_i^\pm \big)  \Big\}}_{i \in I}
   $$
 In particular, if  $ P $  is of Cartan type, then so is  $ P_{(\chi)} $  as well, and they are associated with the
 same Cartan matrix.

\vskip13pt

\begin{theorem}  \label{thm: 2-cocycle-def-MpLbA}
 Keep notation as above.
 \vskip2pt
   (a)\,  There exists a Lie bialgebra isomorphism
  $ \; f^{(\chi)}_{\scriptscriptstyle P} : \lieg_{P_{(\chi)}}^{\R_{(\chi)}} {\buildrel \cong \over
{\lhook\joinrel\relbar\joinrel\relbar\joinrel\relbar\joinrel\twoheadrightarrow}} {\big( \liegRP \big)}_{\chi_\lieg} \; $
 --- where  $ \, {\big( \liegRP \big)}_{\chi_\lieg} = \big(\, \liegRP \, ; \, {[\,\ ,\ ]}_{\chi_\lieg} \, , \, \delta \,\big) \, $
 is the 2--cocycle deformation of  $ \, \liegRP $  as in Definition \ref{def: cocyc-deform_Lie-bialg's}.
 \vskip3pt
   In particular, the class of all MpLbA's of any fixed Cartan type and of fixed rank is stable by toral
   2--cocycle deformations.  Moreover, inside it the subclass of all such MpLbA's associated with
   \textsl{split},  resp.\  \textsl{minimal},  realizations is stable as well.
 \vskip4pt
   (b)\,  Let  $ P $  and  $ P' $  be two matrices of Cartan type with the same associated Cartan matrix $ A \, $,
   \,i.e.\ such that  $ \, P_s = P'_s \, $.  Then the following holds:
 \vskip3pt
   \, (b.1)\,  Let  $ \, \R $  be a  \textsl{split}  realization of  $ P $  and let  $ \lieg_\Ppicc^{\scriptscriptstyle \R} $
   be the associated MpLbA.  Then there is  $ \, \chi \in \text{\sl Alt}_{\,\Bbbk}^{\,S}(\lieh) \, $ such that
   $ \, P' = P_{(\chi)} \, $,  the corresponding $ \R_{(\chi)} $  is a  \textsl{split}  realization of  $ \, P' = P_{(\chi)} \, $,
   and for the 2--cocycle  $ \chi_\lieg $  as in Definition \ref{def: toral_2-cocyc x MpLbA's}  we have
  $$  \lieg_{\scriptscriptstyle P'}^{\scriptscriptstyle \R_{(\chi)}}  \; \cong \;
  {\big(\, \lieg_\Ppicc^{\scriptscriptstyle \R} \big)}_{\chi_{{}_\lieg}}  $$
   \indent   In a nutshell, if  $ \, P'_s = P_s \, $  then from any split MpLbA over  $ P $
   we can obtain by toral 2--cocycle deformation a split MpLbA (of the same rank) over  $ P' \, $.
 \vskip3pt
   \, (b.2)\,  Let  $ \R $  and  $ \R' $  be  \textsl{split minimal}  realizations of  $ P $  and  $ P' $
respectively, and let  $ \liegRP $  and  $ \lieg_{\scriptscriptstyle P'}^{\scriptscriptstyle \R'} $
be the associated MpLbA's.  Then there exists  $ \, \chi \in \text{\sl Alt}_{\,\Bbbk}^{\,S}(\lieh) \, $
such that for the 2--cocycle  $ \chi_\lieg $  as in Definition \ref{def: toral_2-cocyc x MpLbA's}  we have
  $$  \lieg_{\scriptscriptstyle P'}^{\scriptscriptstyle \R'}  \; \cong \;  {\big(\, \liegRP \big)}_{\chi_{\lieg}}  $$
   \indent   In a nutshell, if  $ \, P'_s = P_s \, $  then any split minimal MpLbA over  $ P' $
   is isomorphic to a toral 2--cocycle deformation of any split minimal MpLbA over  $ P $.
 \vskip3pt
   (b.3)\,  Every split minimal MpLbA is isomorphic to some toral 2--cocycle deformation
   of the Manin double  $ \, \lieg^\Dpicc_{\scriptscriptstyle DA} := \lieb_+ \oplus \lieb_- \, $
   associated with $ \, DA \, $  endowed with the canonical Lie bialgebra structure given in  \S \ref{MpLieBialgebras-double}.
%
%
\end{theorem}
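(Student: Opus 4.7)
The overall strategy is to mirror, in the dual direction, the argument used for Theorem \ref{thm: twist-liegRP=new-liegR'P'} and Theorem \ref{thm: MpLbA=twist-gD}, exploiting the duality between twist and 2--cocycle deformations (Proposition \ref{prop: duality-deforms x LbA's}). Part (a) amounts to verifying that the identity on generators intertwines the two presentations; parts (b.1)--(b.3) then follow from (a) by combining it with the already established 2--cocycle deformation theory of realizations (Proposition \ref{prop: 2cocdef-realiz}, Proposition \ref{prop: mutual-2-cocycle-def}) and the functoriality/uniqueness statements (Proposition \ref{prop: functor_R->liegRP} and Proposition \ref{prop: exist-realiz's}).

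For part (a), I would define $f^{(\chi)}_{\scriptscriptstyle P}$ on generators by $E_i\mapsto E_i$, $T\mapsto T$, $F_i\mapsto F_i$, and then verify that it respects both structures. On the Lie coalgebra side this is immediate: 2--cocycle deformation leaves the cobracket $\delta$ unchanged, and the generators' cobrackets in $\lieg_{P_{(\chi)}}^{\R_{(\chi)}}$ (namely $\delta(E_i)=2T_i^+\wedge E_i$, $\delta(T)=0$, $\delta(F_i)=2T_i^-\wedge F_i$) coincide with those in $(\liegRP)_{\chi_\lieg}$ because the underlying vector space $\lieh$ and the coroots $T_i^\pm$ are left untouched by 2--cocycle deformation of realizations. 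On the Lie algebra side, I would check the defining relations of $\lieg_{P_{(\chi)}}^{\R_{(\chi)}}$ one by one inside $(\liegRP)_{\chi_\lieg}$, using the explicit formula \eqref{eq: def_cocyc-bracket}. The relation $[T',T'']_{\chi_\lieg}=0$ holds because $\delta$ kills $\lieh$. The Serre relations and the normalization $[E_i,F_j]_{\chi_\lieg}=\delta_{ij}(T_i^++T_i^-)/(2d_i)$ reduce to the undeformed ones, since $\chi_\lieg$ factors through $\pi^{\liegRP}_\lieh\otimes\pi^{\liegRP}_\lieh$ and thus vanishes whenever a nilpotent generator appears in position producing a purely $\lien_\pm$--component after $\delta$. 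The crucial verification is the toral--nilpotent relation: using $\delta(E_j)=T_j^+\otimes E_j-E_j\otimes T_j^+$, a short computation yields
\[
[T,E_j]_{\chi_\lieg}\;=\;\alpha_j(T)E_j+\chi(T,T_j^+)E_j\;=\;\alpha_j^{(\chi)}(T)E_j,
\]
exactly the defining relation involving $\alpha_j^{(\chi)}=\alpha_j+\chi(-,T_j^+)$; the analogous computation for $F_j$ uses the identity $\chi(-,T_j^-)=-\chi(-,T_j^+)$ from \eqref{eq: condition-chi-chit}. This shows $f^{(\chi)}_{\scriptscriptstyle P}$ is a well-defined Lie bialgebra morphism, and bijectivity follows because the inverse is given by the same formula using $-\chi$ (cf.\ Remarks \ref{rmks: cocycle-deform-realiz}(a)).

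For part (b.1), given $P$ and $P'$ with $P_s=P'_s$ and a split realization $\R$ of $P$, Proposition \ref{prop: mutual-2-cocycle-def}(a) produces $\chi\in\text{\sl Alt}_\Bbbk^{\,S}(\lieh)$ with $P'=P_{(\chi)}$ and $\R_{(\chi)}$ a split realization of $P'$; then part (a) gives the desired isomorphism $\lieg_{\scriptscriptstyle P'}^{\scriptscriptstyle \R_{(\chi)}}\cong(\lieg_\Ppicc^{\scriptscriptstyle \R})_{\chi_\lieg}$. For (b.2), apply (b.1) starting from $\R$: we obtain a split realization $\R_{(\chi)}$ of $P'$ with $(\liegRP)_{\chi_\lieg}\cong\lieg_{\scriptscriptstyle P'}^{\scriptscriptstyle \R_{(\chi)}}$; since $\R$ is also minimal, so is $\R_{(\chi)}$ by Proposition \ref{prop: 2cocdef-realiz}(b), and then the uniqueness of split minimal realizations (Proposition \ref{prop: exist-realiz's}(b)) together with functoriality (Proposition \ref{prop: functor_R->liegRP} and Corollary \ref{cor: isom_R -> isom_liegRP}) gives $\lieg_{\scriptscriptstyle P'}^{\scriptscriptstyle \R_{(\chi)}}\cong\lieg_{\scriptscriptstyle P'}^{\scriptscriptstyle \R'}$, hence the claim. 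Part (b.3) is then the special case of (b.2) where $\R'$ is the given split minimal realization and $\R$ is the canonical split minimal realization underlying the Manin double $\lieg^\Dpicc_{\scriptscriptstyle DA}$, with $P=DA=P'_s$.

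The main obstacle is the case-by-case verification in (a) that all defining relations of the deformed presentation are satisfied by $(\liegRP)_{\chi_\lieg}$; in particular, one must carefully track how the condition $\chi\in\text{\sl Alt}_\Bbbk^{\,S}(\lieh)$ (specifically the vanishing on $S_i$'s, equivalently \eqref{eq: condition-chi-chit}) is used both in the toral--nilpotent relations (to match $\alpha_j^{(\chi)}$) and in the normalization of $[E_i,F_j]_{\chi_\lieg}$ (to ensure no spurious terms of toral type appear). Once this verification is carried out on generators, everything else follows formally by the dual of the arguments already developed for toral twist deformations.
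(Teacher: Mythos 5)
Your proposal is correct and follows essentially the same route as the paper: identity on generators, unchanged cobracket, verification that the $\chi_\lieg$-deformed relations reproduce exactly the presentation of $\lieg_{P_{(\chi)}}^{\R_{(\chi)}}$ (with the key computation $[T,E_j]_{\chi_\lieg}=\alpha_j^{(\chi)}(T)E_j$ and the radical property killing the deformation terms in the $[E_i,F_j]$ and Serre relations), followed by Proposition \ref{prop: mutual-2-cocycle-def}, uniqueness of (split minimal) realizations and functoriality for parts (b.1)--(b.3). The only cosmetic difference is your explicit inverse via $-\chi$, where the paper simply concludes from the matching presentations.
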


\smallskip

\pf
 \textit{(a)}\,  By  \S \ref{deformations of LbA's},  the Lie coalgebra structure in
 $ {\big( \liegRP \big)}_{\chi_\lieg} $  is the same as in  $ \liegRP \, $,  and the latter coincides with the one in
 $ \lieg_{P_{(\chi)}}^{\R_{(\chi)}} $.  In particular, we have a Lie coalgebra isomorphism among them.
 With respect to the Lie algebra structure, we know that the Lie bracket in  $ {\big( \liegRP \big)}_{\chi_\lieg} $
 is a deformation of that of  $ \liegRP $  according to  \eqref{eq: def_cocyc-bracket}.
 Let us see that the modified defining relations in  $ {\big( \liegRP \big)}_{\chi_\lieg} $  coincide with the ones in
 $ \lieg_{P_{(\chi)}}^{\R_{(\chi)}} \, $,  cf.\  \S \ref{MpLieBialgebras-double};
 this will imply that both objects are isomorphic as Lie bialgebras.
                                                                           \par
 As the Lie cobracket on the Cartan subalgebra  $ \lieh $  of  $ \lieg_{P_{(\chi)}}^{\R_{(\chi)}} $  is trivial, we get that
  $$  {[T',T'']}_{\chi_{{}_\lieg}}  \, = \;  [T',T''] \, - \, T'_{[1]} \, \chi_\lieg\big(T'_{[2]},T''\big) \, +
  \, T''_{[1]} \, \chi_\lieg\big(T''_{[2]},T'\big)  \, = \,  [T',T'']  \, = \,  0  $$
 for all  $ \, T', T'' \in \lieh \, $.  Take now  $ \, i \in I \, $:  since
 $ \, \delta(T) = 0 \, $,  $ \, \delta(E_i) = 2 \, T_i^+ \!\wedge E_i \, $  and
 $ \, \delta(F_i) = 2 \, T_i^- \!\wedge F_i \, $,  \,we get that
\begin{align*}
   {[T,E_i]}_{\chi_\lieg}  &  \, = \,  [T,E_i] - E_i \, \chi_\lieg\big(T_i^+,T\big)  \, = \,
   \alpha_i(T) E_i + \chi\big(T, T_i^+\big) E_i  \, = \, \alpha_i^{(\chi)}(T)  \, E_i  \\
   {[T,F_i]}_{\chi_\lieg}  &  \, = \,  [T,F_i] - F_i \, \chi_\lieg\big(T_i^-,T\big)  \, = \,
   -\alpha_i(T) F_i + \chi_{\lieg}\big(T, T_i^-\big) \, F_i  \, =  \\
   &  = \,  -\Big(\alpha_i(T) F_i - \chi\big(T,T_i^-\big)\Big) \, F_i  \, = \,  -\alpha_i^{(\chi)}(T)
 \, F_i  \\
   {[E_i,F_j]}_{\chi_\lieg}  &  \, = \,  [E_i,F_j] + E_i \, \chi_\lieg\big(T_i^+,F_j\big) \, - \, F_j \, \chi_\lieg\big(T_j^-,E_i\big)  \, = \,  [E_i,F_j]  \, = \,
   \delta_{ij} \, {{\;T_i^+ \! + T_i^-\;} \over {\;2\,d_i\;}}
\end{align*}
 Note that only the relations involving the roots are changed by the cocycle.
 Now, with respect to the Serre relations, as the Lie subalgebras  $ \lien_\pm $
 are contained in the right and left radical of  $ \chi_\lieg \, $,
 analogous calculations as above yield that
\begin{align*}
   {\big(\textsl{ad}_{\chi_\lieg}\,(E_i)\big)}^{1-a_{ij}}(E_j)  &  \, = \,  {\big(\textsl{ad}\,(E_i)\big)}^{1-a_{ij}}(E_j)  \, = \,  0  \\
   {\big(\textsl{ad}_{\chi_\lieg}\,(F_i)\big)}^{1-a_{ij}}(F_j)  &  \, = \,  {\big(\textsl{ad}\,(F_i)\big)}^{1-a_{ij}}(F_j)\,  = \,  0
\end{align*}
Here  $ \textsl{ad}_{\chi_\lieg} $  denotes the adjoint action with respect to the deformed bracket
$ \, {[-,-]}_{\chi_\lieg} \, $.
 \vskip7pt
   \textit{(b.1)}\, By claim  \textit{(a)},  it is enough to find an antisymmetric  $ \Bbbk $--linear
  $ \, \chi : \lieh \otimes \lieh \longrightarrow \Bbbk \, $  obeying  \eqref{eq: condition-eta} such that
  $ \, P' = P_{(\chi)} \, $;
  this is guaranteed by  Proposition \ref{prop: mutual-2-cocycle-def}.
 \vskip5pt
   \textit{(b.2)}\,  This follows from claim  \textit{(b.1)},  along with the uniqueness of split realizations
   --- cf.\ Proposition \ref{prop: exist-realiz's}\textit{(b)}.
 \vskip5pt
   \textit{(b.3)}\,  This follows as an application of claim  \textit{(b.2)},  taking as
   $ \lieg_{\scriptscriptstyle P'}^{\scriptscriptstyle \R'} $
   the given split minimal MpLbA and for  $ \liegRP $  the ``standard''  MpLbA  $ \lieg^\Dpicc_\Ppicc $  over
   $ \, P := DA = P'_s \, $
   (cf.\ \S \ref{MpLieBialgebras-double}),  which by definition is straight and split minimal.
\epf

\bigskip

\section{Formal multiparameter QUEAs (=FoMpQUEAs)}  \label{sec: form-MpQUEAs}

\vskip7pt

%
%
%
%

   This section is devoted to introduce formal multiparameter quantized universal enveloping algebras
   (in short, FoMpQUEAs) and to study their deformations.
 \vskip15pt

\subsection{The Hopf algebra setup}  \label{sec: Hopf-setup}  {\ }
 \vskip7pt
   Our main references for the theory of Hopf algebras are  \cite{Mo} and  \cite{Ra}.
   For topological or  $ \hbar $--adically  complete Hopf algebras see for instance  \cite{Ks},  \cite{CP},  \cite{KS}.

\vskip11pt


\begin{free text}  \label{Hopf notation}

 {\bf Hopf notation.}
 Let us fix our notation for Hopf algebra theory (mainly standard, indeed).
 The comultiplication is denoted  $ \com $, the counit  $\epsilon  \, $ and the antipode
 $ \SS \, $;  for the first, we use the Heyneman-Sweedler notation, namely  $ \, \com(x) = x_{(1)} \otimes x_{(2)} \, $.
                                                              \par
   Hereafter by  $ \k $  we denote the ground ring of our algebras, coalgebras, etc.  In any coalgebra  $ C \, $,
   the set of group-like elements is denoted by  $ G(C) \, $;  also, we denote by  $ \, C^+ := \Ker(\epsilon) \, $
   the augmentation ideal.  If  $ \, g, h \in G(C) \, $,  the set of  $ (g,h) $--primitive elements is defined to be
 $ \; P_{g,h}(C)  \, := \,  \big\{\, x \in C \,\vert\, \com(x) = x \ot g + h \ot x \,\big\} \; $.
In case $C$ is a bialgebra, we write $\Prim(C) = P_{1,1}(C)$ for the space of primitive elements.
\par
   For a Hopf algebra $ H $ (or just bialgebra),  we write  $ H^{\text{op}} \, $,  resp.\  $ H^{\text{cop}} \, $,
   for the Hopf algebra (or bialgebra) given by taking in  $ H $  the opposite product, resp.\ coproduct.
                                                              \par
   Given a Hopf algebra map  $ \, \pi : H \longrightarrow K \, $,  then  $ H $  is a left and right  $ K $--comodule,
 with structure maps
$ \; \lambda := (\pi \otimes \id) \com : H \longrightarrow K \otimes H \; $,
  $ \; \rho := (\id \otimes \pi) \com : H \longrightarrow H \otimes K \; $.
The space of left and right  \textit{coinvariants\/}  then is defined, respectively, by
\begin{align*}
   {}^{\co\!K}\!H  &  \; := \;  {}^{\co \pi}H  \, = \,  \big\{\, h \in H \,\big|\, (\pi \otimes \id) \big(\com(h)\big) = 1 \otimes h \,\big\}  \\
   H^{\,\co\!K}  &  \; := \;  H^{\co \pi}  \, = \,  \big\{\, h \in H \,\big|\, (\id \otimes \pi) \big(\com(h)\big) = h \otimes 1 \,\big\}
\end{align*}

\vskip9pt

   We recall the (essentially standard) notion of  \textit{skew-Hopf pairing\/}  between two Hopf algebras
   and the construction of the Drinfeld's double.
\end{free text}

\vskip9pt

\begin{definition}  \label{def_(skew-)Hopf-pairing}\cite[\S 2.1]{AY}
 Given two Hopf algebras  $ H $  and  $ K $  with bijective antipode over the ring  $ \k \, $,  a  $ \k $--linear  map
 $ \, \eta : H \otimes_\k K \longrightarrow \k \, $  is called a {\sl skew-Hopf pairing\/}  (between  $ H $  and  $ K \, $)
 if, for all
 $ \, h \in H \, $,  $ \, k \in K \, $,  one has
 \begin{eqnarray}
   &  \eta\big( h \, , \, k' \, k'' \big)  \, = \;  \eta\big( h_{(1)} \, , \, k' \big) \, \eta\big( h_{(2)} \, , \, k'' \big)   \label{eqn:skew-Hpair-1}  \\
   &  \eta\big( h' \, h'' \, , \, k \big)  \, = \;  \eta\big( h' , \, k_{(2)} \big) \, \eta\big( h'' , \, k_{(1)} \big)   \label{eqn:skew-Hpair-2}  \\
   &  \eta\big( h \, , 1 \big) \, = \, \epsilon(h)  \;\; ,   \qquad   \eta\big( 1 \, , k \big) \, = \, \epsilon(k)   \label{eqn:skew-Hpair-3}  \\
   &  \eta\big( \SS^{\pm 1}(h) \, , k \,\big) \,  =  \; \eta\big( h \, , \, \SS^{\mp 1}(k) \big)   \label{eqn:skew-Hpair-4}
  \end{eqnarray}
 \vskip1pt
  \textsl{Note\/}  that the map  $ \eta $  turns out to be convolution invertible: its inverse is
given by  $ \; \eta^{-1}(h,k) = \eta(h,\SS(k)) = \eta(\SS^{-1}(h),k) \; $  for all  $ \, h \in H \, $  and  $ \, k \in K \, $.

\vskip5pt

  In this setup, the  {\it Drinfeld double},  or  \textit{``quantum double''},  $ D(H,K,\eta) \, $
  is the quotient algebra  $ \; T(H \oplus K) \Big/ \I \; $,  \,where  $ \I $  is the (two-sided) ideal generated by the relations
  $$  1_{H}  = \,  1  \, = \,  1_K  \,\; ,  \;\quad  a \otimes b  \, = \, a\,b  \,\; ,  \;\quad
   x_{(1)} \otimes y_{(1)} \ \eta(y_{(2)},x_{(2)})  \, = \,  \eta(y_{(1)},x_{(1)})\ y_{(2)} \otimes x_{(2)}  $$
 for all  $ \, a \, , b \in H \, $  or  $ \, a \, , b \in K \, $  and  $ \, x \in K \, , y \in H \, $.
 This is also endowed with a standard Hopf algebra structure, for which  $ H $  and  $ K $  are Hopf  $ \k $--subalgebras.
 \hfill  $ \diamondsuit $
\end{definition}

\vskip9pt

\begin{free text}  \label{Topological issues}
{\bf Topological issues.}  We will often deal
 with  \textit{topological\/}  Hopf algebras, namely Hopf algebras over the ring  $ \kh $
 of formal power series over a field  $ \Bbbk $  in a formal variable  $ \hbar \, $.
 This ring carries a natural topology, called the  \textit{$ \hbar $--adic  topology},
 coming from the so-called  \textit{$ \hbar $--adic  norm\/}  with respect to which it is complete, namely
  $$  \big\|\, a_n \hbar^n + a_{n+1} \hbar^{n+1} + \cdots \big\|  \; := \;  C^{-n}   \qquad  \big(\, a_n \not= 0 \,\big)  $$
 where $ \, C > 1 \, $  is any fixed constant in  $ \mathbb{R} \, $.
 In this sense, we shall consider \textit{topological\/}  $ \kh $--modules  and the
 \textit{completed\/}  tensor products among them, which we denote by
 $ \, \widehat{\otimes}_\kh \, $  or simply by  $ \, \widehat{\otimes} \, $.
 For any  $ \Bbbk $--vector  space  $ V \, $,  set
  $$  V[[\hbar]]  \; := \;  \Big\{\, {\textstyle \sum_{n \geq 0}} v_n \hbar^n \,\Big|\, v_n \in V \, , \; \forall \, n \geq 0 \,\Big\}  $$
 then  $ V[[\hbar]] $  is a complete  $ \kh $--module.  We call a topological  $ \kh $--module  \textit{topologically free\/}
 if it is isomorphic to  $ V[[\hbar]] $  for some  $ \Bbbk $--vector  space  $ V \, $.  For two topologically free modules
 $ V[[\hbar]] $  and  $ W[[\hbar]] $  one has that
 $ \; V[[\hbar]] \,\widehat{\otimes}\, W[[\hbar]] \, \cong \, \big(V \otimes W\big)[[\hbar]] \, $,
 \;see  \cite[Proposition XVI.3.2]{Ks}.  All completed tensor products between  $ \kh $--modules
 then will be denoted simply by  $ \otimes \, $,
 unless we intend to stress the topological aspect.  In particular, we make no distinction on the notation
 between Hopf algebras and topological
 Hopf algebras; we assume it is well-understood from the context.
\end{free text}

\vskip11pt

\begin{free text}  \label{defs_Hopf-algs}
 {\bf Hopf algebra deformations.}
 There exist two standard methods to deform Hopf algebras, leading to so-called
 ``$ 2 $--cocycle  deformations'' and to ``twist deformations'': hereafter we recall both procedures,
 adapting them to the setup of  \textsl{topological\/}  Hopf algebras,
 then later on we apply them to formal quantum groups.

\vskip11pt

 \textsl{$ \underline{\text{Twist deformations}} $:}\,
 Let  $ H $  be a Hopf algebra (over a commutative ring), and let
 $ \, \F \in H \otimes H \, $  be an invertible element in  $ H^{\otimes 2} $
(later called a ``{\it twist\/}'')
 such that
  $$
  \F_{1{}2} \, \big( \Delta \otimes \text{id} \big)(\F)  \, = \,  \F_{2{}3} \, \big( \text{id} \otimes \Delta \big)(\F)  \quad ,
  \qquad  \big( \epsilon \otimes \text{id} \big)(\F)  =  1  =  \big( \text{id} \otimes \epsilon \big)(\F)
  $$
 Then  $ H $  bears a second Hopf algebra structure, denoted  $ H^\F $
 and called  \textit{twist deformation\/}  of the old one, with the old product,
 unit and counit, but with new ``twisted'' coproduct  $ \Delta^\F $ and antipode  $ \SS^\F $  given by
$$
    \qquad   \Delta^{\!\F}(x)  \, := \,  \F \, \Delta(x) \, \F^{-1}  \quad ,  \qquad  \SS^\F(x) \, := \, v\,\SS(x)\,v^{-1}
    \quad \qquad  \forall \;\; x \in H
$$
 where  $ \, v := \sum_\F \SS(f'_1)\,f'_2 \, $   --- with  $ \, \sum_\F f'_1 \otimes f'_2 = \F^{-1} \, $  ---
 is invertible in  $ H $  (see,  \cite[ \S 4.2.{\sl E}]{CP},  for further details).  \textsl{When  $ H $
 is in fact a  \textit{topological}  Hopf algebra}   --- meaning that, in particular, its coproduct  $ \Delta $
 takes values into  $ \, H \otimes H \, $  where now  ``$ \, \otimes \, $''  stands for a suitable topological tensor product ---
 then the same notions still make sense, and the related results apply again, up to properly reading them.

\vskip11pt

 \textsl{$ \underline{\text{Cocycle deformations}} $:}\,
 Let $ \, \big( H, m\,, 1\,, \Delta\,, \epsilon \big) \, $  be a bialgebra over a ring  $ \k \, $.
 A convolution invertible linear map  $ \sigma $  in  $ \, \Hom_{\Bbbk}(H \otimes H, \k \,) \, $
 is called a (normalized) {\it Hopf 2-cocycle\/}
(or just a  ``$ 2 $--cocycle''  if no confusion arises) if
  $$  \sigma(b_{(1)},c_{(1)}) \, \sigma(a,b_{(2)}c_{(2)})  \; = \;  \sigma(a_{(1)},b_{(1)}) \, \sigma(a_{(2)}b_{(2)},c)  $$
and  $ \, \sigma (a,1) = \eps(a) = \sigma(1,a) \, $  for all  $ \, a, b, c \in H \, $,  see  \cite[Sec.\ 7.1]{Mo}.
                                                                     \par
   Using a  $ 2 $--cocycle  $ \sigma $  it is possible to define a new algebra structure on  $ H $
   by deforming the multiplication.
   Indeed, define  $ \, m_{\sigma} = \sigma * m * \sigma^{-1} : H \otimes H \longrightarrow H \, $  by
  $$  m_{\sigma}(a,b)  \, = \,  a \cdot_{\sigma} b  \, =  \,
  \sigma(a_{(1)},b_{(1)}) \, a_{(2)} \, b_{(2)} \, \sigma^{-1}(a_{(3)},b_{(3)})  \eqno \forall \;\, a, b \in H  \quad  $$
 If in addition  $ H $  is a Hopf algebra with antipode  $ \SS \, $,  then define also
$ \, \SS_\sigma : H \longrightarrow H \, $  as  $ \, \SS_{\sigma} : H \longrightarrow H \, $
where
  $$  \SS_{\sigma}(a)  \, = \,  \sigma(a_{(1)},\SS(a_{(2)})) \, \SS(a_{(3)}) \, \sigma^{-1}(\SS(a_{(4)}),a_{(5)})
  \eqno \forall \;\, a \in H  \quad  $$
 It is known  that  $ \, \big( H, m_\sigma, 1, \Delta, \eps \big) \, $  is in turn a bialgebra, and
 $ \, \big( H, m_\sigma, 1, \Delta, \eps, \SS_\sigma \big) \, $  is a Hopf algebra: we shall call such
 a new structure on  $ H $  a  \textit{cocycle deformation\/}  of the old one,
 and we shall graphically denote it by  $ H_\sigma \, $;  see  \cite{Doi}  for more details.
\end{free text}

\vskip11pt

%

\begin{free text}  \label{deforms-vs-duality}
 {\bf Deformations and duality.}\,
 The two notions of  ``$ 2 $--cocycle''  and of ``twist'' are so devised as to be dual to each other
 with respect to Hopf duality.  The proof of the following result (an exercise in Hopf theory) is left to the reader:

\vskip11pt

\begin{prop}  \label{prop: duality-deforms}
 Let  $ H $  be a Hopf algebra over a field, and  $ H^* $  its linear dual.
 \vskip3pt
   {\it (a)}\,  Let  $ \F $  be a twist for  $ H \, $,  and  $ \sigma_{{}_\F} $  the image of  $ \F $  in $ {(H \otimes H)}^* $
   for the natural composed embedding
   $ \, H \otimes H \lhook\joinrel\relbar\joinrel\longrightarrow H^{**}
   \otimes H^{**} \lhook\joinrel\relbar\joinrel\longrightarrow {\big( H^* \otimes H^* \big)}^* \, $.
   Then  $ \sigma_{{}_\F} $  is a 2--cocycle for  $ H^* \, $,  and there exists a canonical isomorphism
   $ \, {\big( H^* \big)}_{\sigma_{{}_\F}} \cong {\big( H^\F \,\big)}^* \, $.
 \vskip3pt
   {\it (b)}\,  Let  $ \sigma $  be a 2--cocycle for  $ H \, $;  assume that
   $ H $  is finite-dimensional, and let  $ \F_\sigma $
   be the image of  $ \sigma $  in the natural identification  $ \, {(H \otimes H)}^* = H^* \otimes H^* \, $.
   Then  $ \F_\sigma $  is a twist for  $ H^* \, $,  and there exists a canonical isomorphism
   $ \, {\big( H^* \big)}^{\F_\sigma} \cong {\big( H_\sigma \big)}^* \, $.
\qed
\end{prop}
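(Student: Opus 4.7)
The plan is to exploit the standard Hopf-theoretic principle that, under linear duality, multiplication and comultiplication swap roles, so that a twist $\F \in H \otimes H$ (which deforms the coproduct of $H$) should correspond to a $2$--cocycle for $H^*$ (which deforms the product of $H^*$), and conversely. I will systematically translate the defining conditions of twists into those of $2$--cocycles, and then match the deformed structures.

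For part (a), first I would unpack the natural map $\iota : H \otimes H \hookrightarrow (H^* \otimes H^*)^*$ given by $\iota(a \otimes b)(\varphi \otimes \psi) = \varphi(a)\,\psi(b)$, and set $\sigma_{{}_\F} := \iota(\F)$. The key verification is that the twist equation
$$\F_{12}\,(\Delta \otimes \id)(\F)  \; = \;  \F_{23}\,(\id \otimes \Delta)(\F)$$
evaluated against an arbitrary $\varphi \otimes \psi \otimes \chi \in H^* \otimes H^* \otimes H^*$ reproduces exactly the $2$--cocycle identity $\sigma_{{}_\F}(\psi_{(1)},\chi_{(1)})\,\sigma_{{}_\F}(\varphi, \psi_{(2)}\chi_{(2)}) = \sigma_{{}_\F}(\varphi_{(1)},\psi_{(1)})\,\sigma_{{}_\F}(\varphi_{(2)}\psi_{(2)}, \chi)$ for $H^*$, once we recall that the multiplication in $H^*$ is the dual of $\Delta$ and the comultiplication in $H^*$ is the dual of the multiplication in $H$. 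The normalization $(\epsilon \otimes \id)(\F) = 1 = (\id \otimes \epsilon)(\F)$ dualizes to $\sigma_{{}_\F}(1, -) = \epsilon_{H^*} = \sigma_{{}_\F}(-, 1)$, while invertibility of $\F$ translates into convolution invertibility of $\sigma_{{}_\F}$.

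For the isomorphism $(H^*)_{\sigma_{{}_\F}} \cong (H^\F)^*$, I would exhibit the identity map on the underlying vector space and check that both sides carry the same algebra and coalgebra structures. The multiplication on $(H^\F)^*$ is by definition the transpose of $\Delta^\F = \F \Delta(-) \F^{-1}$, and evaluating this transpose on pairs of linear functionals gives precisely $\sigma_{{}_\F} * m_{H^*} * \sigma_{{}_\F}^{-1}$, which is the deformed product defining $(H^*)_{\sigma_{{}_\F}}$; the coalgebra structures agree since neither deformation touches the ``other side''. The antipode identification follows from the explicit formulas for $\SS^\F$ and $\SS_{\sigma_{{}_\F}}$, by a short computation using \eqref{eqn:skew-Hpair-4}-type naturality.

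Part (b) is then essentially symmetric: when $H$ is finite-dimensional one has the canonical identification $(H \otimes H)^* = H^* \otimes H^*$ and $H \cong H^{**}$, so the roles of ``twist'' and ``$2$--cocycle'' swap under applying $(-)^*$, and the same verifications as in (a) apply \emph{verbatim} with $H$ and $H^*$ interchanged. The one point needing care here is that the hypothesis of finite-dimensionality is used to guarantee $(H\otimes H)^* = H^*\otimes H^*$ rather than a completion; without it, $\F_\sigma$ would only live in the completion and one would have to work in a topological setting (as in \S\ref{Topological issues}). The main obstacle throughout is purely bookkeeping, namely keeping track of the correct order of tensor factors and of Sweedler indices so that conditions \eqref{eqn:skew-Hpair-1}--\eqref{eqn:skew-Hpair-4} for the evaluation pairing translate correctly; once the dictionary is set up cleanly, each assertion becomes an immediate dualization.
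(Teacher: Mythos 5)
Your plan is correct and is exactly the standard dualization argument that the paper has in mind when it leaves this result as ``an exercise in Hopf theory'' (no written proof is given there): evaluating the twist equation and normalizations of $\F$ against $\varphi\otimes\psi\otimes\chi$ yields the $2$--cocycle conditions for $\sigma_{{}_\F}$, the identity map matches $m_{(H^\F)^*}=(\Delta^\F)^{\,*}$ with $\sigma_{{}_\F}*m_{H^*}*\sigma_{{}_\F}^{-1}$, and finite-dimensionality in (b) is used precisely where you say. The only cosmetic remark is that the antipode identification need not be computed via a skew-pairing-type condition: once the bialgebra structures are identified, the antipodes agree automatically by uniqueness of the antipode as a convolution inverse of the identity.
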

\end{free text}

\medskip

%

\begin{free text}  \label{q-numbers}
 {\bf Some  $ q $--numbers.}\,
%
%
 Let  $ \Zqqm $  be the ring of Laurent polynomials with integral coefficients in the indeterminate  $ q \, $.  For every  $ \, n \in \NN_{+} \, $ we define
  $$  \displaylines{
   {(0)}_q  \, := \;  1 \;\; ,  \quad
    {(n)}_q  \, := \;  \frac{\,q^n -1\,}{\,q-1\,}  \; = \;  1 + q + \cdots + q^{n-1}  \; =
    \; {\textstyle \sum\limits_{s=0}^{n-1}} \, q^s  \qquad  \big(\, \in \, \ZZ[q] \,\big)  \cr
   {(n)}_q!  \, := \;  {(0)}_q {(1)}_q \cdots {(n)}_q  := \,  {\textstyle \prod\limits_{s=0}^n}
{(s)}_q  \;\; ,  \quad
    {\binom{n}{k}}_{\!q}  \, := \;  \frac{\,{(n)}_q!\,}{\;{(k)}_q! {(n-k)}_q! \,}  \qquad
\big(\, \in \, \ZZ[q] \,\big)  \cr
   {[0]}_q  := \,  1  \; ,  \;\;
    {[n]}_q  := \,  \frac{\,q^n -q^{-n}\,}{\,q-q^{-1}\,}  =
\, q^{-(n-1)} + \cdots + q^{n-1}  =  {\textstyle \sum\limits_{s=0}^{n-1}}
\, q^{2\,s - n + 1}  \!\quad  \big( \in \Zqqm \,\big)  \cr
   {[n]}_q!  \, := \;  {[0]}_q {[1]}_q \cdots {[n]}_q  = \,
{\textstyle \prod\limits_{s=0}^n} {[s]}_q  \;\; ,  \quad
    {\bigg[\, {n \atop k} \,\bigg]}_q  \, := \;  \frac{\,{[n]}_q!\,}{\;{[k]}_q! {[n-k]}_q!\,}
\qquad  \big(\, \in \, \Zqqm \,\big)  }  $$
 \vskip5pt
\noindent
   In particular, we have the identities
  $$  {(n)}_{q^2} = q^{n-1} {[n]}_q \;\; ,  \qquad  {(n)}_{q^2}! =
q^{\frac{n(n-1)}{2}} {[n]}_q  \;\; ,
\qquad  {\binom{n}{k}}_{\!\!q^{2}} = q^{k(n-k)} {\bigg[\, {n \atop k} \,\bigg]}_q  \;\; .  $$
 Moreover, for any field  $ \FF $  we can think of Laurent polynomials as functions on  $ \FF^\times $,
 hence for any  $ \, q \in \FF^\times \, $  we shall read every symbol above as a suitable element in  $ \, \FF \, $.
\end{free text}

\medskip

\subsection{Formal multiparameter QUEAs}  \label{subsec: FoMpQUEAs}
 {\ }
 \vskip11pt
We introduce now the notion of  {\sl formal multiparameter quantum universal enveloping algebra}
--- or just ``FoMpQUEA'', in short.
                                                                \par
   Hereafter,  $ \Bbbk $  is a field of characteristic zero,
   $ \kh $  the ring of formal power series in  $ \hbar $  with coefficients in  $ \Bbbk \, $.
   In any topological,  $ \kh $--adically  complete  $ \kh $--algebra  $ \mathcal{A} \, $,
   if  $ \, X \in \mathcal{A} \, $  we use the standard notation
   $ \; e^{\hbar\,X} := \exp(\hbar\,X) \, = \, {\textstyle \sum\limits_{n=0}^{+\infty}} \, \hbar^n X^n \big/ n! \; \in \, \mathcal{A} \; $.

\vskip9pt

\begin{definition}  \label{def: params-x-Uphgd}
 Let  $ \, A := {\big(\, a_{i,j} \big)}_{i, j \in I} \, $  be some fixed generalized symmetrizable Cartan matrix, and let
 $ \, P := {\big(\, p_{i,j} \big)}_{i, j \in I} \in M_{n}\big( \kh \big) \, $  be a matrix of Cartan type associated with
 $ A $  as in the sense of Definition \ref{def: realization of P}\textit{(d)},  that is
 $ \, P + P^{\,\scriptscriptstyle T} = 2\,D\,A \, $,  i.e.\  $ \, p_{ij} + p_{ji} = 2\,d_i\,a_{ij} \, $  for all  $ \, i, j \in I \, $,
 which implies  $ \, p_{ii} = 2\,d_i \not= 0 \, $  for all  $ \, i \in I \, $.
                                                                 \par
   We define in  $ \kh $  the following elements:  $ \, q := e^\hbar = \exp(\hbar) \in \kh \, $,
   $ \, q_i := e^{\hbar\,d_i} \, \big(\! = q^{d_i} \big) \, $,  $ \, q_{ij} := e^{\hbar\,p_{ij}} \, \big(\! = q^{p_{ij}} \big) \; $
   for all  $ \, i, j \in I \, $,  and also  $ \; q_{ij}^{1/2} := e^{\hbar\,p_{ij}/2} \; $  for all  $ \, i, j \in I \, $.
   \textsl{In particular we have  $ \, q_{ii}^{1/2}= e^{\hbar{}d_i} = q_i \, $  and  $ \; q_{ij} \, q_{ji} = q_{ii}^{\,a_{ij}} \; $
   for all  $ \, i, j \in I \, $}.   \hfill  $ \diamondsuit $
\end{definition}

\vskip5pt

   We can now define our FoMpQUEAs, using notation as in  Definition \ref{def: params-x-Uphgd}  above.

\vskip9pt

\begin{definition}  \label{def: Mp-Uhgd}
 Let  $ \, A := {\big(\, a_{i,j} \big)}_{i, j \in I} \, $  be a generalized symmetrizable Cartan matrix, and
 $ \, P := {\big(\, p_{i,j} \big)}_{i, j \in I} \in M_{n}\big( \kh \big) \, $  a matrix of Cartan type associated with  $ A $.
 We fix a realization  $ \, \R := \big(\, \lieh \, , \Pi \, , \Pi^\vee \,\big) \, $  of  $ P $  as in  Definition \ref{def: realization of P}.
 \vskip3pt
   {\it (a)}\,  We define the  {\sl formal multiparameter quantum universal enveloping algebra}
   --- in short  {\sl formal MpQUEA, or simply FoMpQUEA}  ---
   with multiparameter  $ P $  and realization  $ \R $
 as follows.  It is the unital, associative, topological,  $ \hbar $--adically  complete  $ \kh $--algebra  $ \uRPhg $  generated
 by the  $ \kh $--submodule  $ \lieh $  together with elements
 $ \, E_i \, $,  $ F_i \, $  (for all  $ \, i \in I \, $),
 with relations
 (for all  $ \, T , T' , T'' \in \lieh \, $,  $ \, i \, , j \in I \, $)
 \vskip-9pt
\begin{equation}  \label{eq: comm-rel's_x_uPhg}
 \begin{aligned}
   T \, E_j \, - \, E_j \, T  \, = \,  +\alpha_j(T) \, E_j  \;\; ,  \qquad  T \, F_j \, - \, F_j \, T  \; = \;  -\alpha_j(T) \, F_j   \hskip33pt  \\
   T' \, T''  \; = \;  T'' \, T'  \;\; ,  \qquad   E_i \, F_j \, - \, F_j \, E_i  \; = \;
   \delta_{i,j} \, {{\; e^{+\hbar \, T_i^+} - \, e^{-\hbar \, T_i^-} \;} \over {\; q_i^{+1} - \, q_i^{-1} \;}}   \hskip37pt  \\
   \sum\limits_{k = 0}^{1-a_{ij}} (-1)^k {\left[ { 1-a_{ij} \atop k }
\right]}_{\!q_i} q_{ij}^{+k/2\,} q_{ji}^{-k/2} \, E_i^{1-a_{ij}-k} E_j E_i^k  \; = \;  0   \qquad  (\, i \neq j \,)   \hskip25pt  \\
   \sum\limits_{k = 0}^{1-a_{ij}} (-1)^k {\left[ { 1-a_{ij} \atop k }
\right]}_{\!q_i} q_{ij}^{+k/2\,} q_{ji}^{-k/2} \, F_i^k F_j F_i^{1-a_{ij}-k}  \; = \;  0   \qquad  (\, i \neq j \,)   \hskip25pt
 \end{aligned}
\end{equation}
 \vskip-1pt
   We say that the FoMpQUEA  $ \uRPhg $  is  \textsl{straight},  or  \textsl{small},
   or \textsl{minimal},  or  \textsl{split},
   if such is  $ \R \, $;  also, we define the  \textsl{rank\/}  of  $ \uRPhg $  as
   $ \; \rk\!\big( \uRPhg \big) := \rk(\R) = \rk_\kh(\lieh) \, $.
 \vskip3pt
   {\it (b)}\,  We define the  {\sl Cartan subalgebra\/}
   $ U_{P,\hbar}^{\,\R}(\lieh) \, $,  or just  $ U_\hbar(\lieh) \, $,  of a FoMpQUEA  $ \uRPhg $
   as being the unital,  $ \hbar $--adically  complete topological  $ \kh $--subalgebra  of  $ \uRPhg $
   generated by the  $ \kh $--submodule  $ \lieh \, $.
 \vskip3pt
   {\it (c)}\,  We define the  {\sl positive, resp.\ the negative, Borel subalgebra\/}  $ \uRPhbp $,  resp.\  $ \uRPhbm $,
 of  $ \, \uRPhg $  to be
 the unital,  $ \hbar $--adically  complete topological  $ \kh $--subalge\-bra  of
 $ \uRPhg $  generated by  $ \lieh $  and all the  $ E_i $'s,  resp.\  by  $ \lieh $  and all the  $ F_i $'s  ($ \, i \in I \, $).
 \vskip5pt
   {\it (d)}\,  We define the  {\sl positive, resp.\ negative, nilpotent subalgebra\/}  $ \uRPhnp $,  resp.\  $ \uRPhnm $,
   of a FoMpQUEA  $ \uRPhg $  to be the unital,  $ \hbar $--adically  complete topological  $ \kh $--subalgebra
   of  $ \uRPhg $  generated by the  $ E_i $'s,  resp.\ the  $ F_i $'s,  with  $ \, i \in I \, $.   \hfill  $ \diamondsuit $
\end{definition}

\vskip9pt

   The following two results underscore that the dependence of FoMpQUEAs on realizations
   (which includes that on the multiparameter matrix) is functorial:

\vskip11pt

\begin{prop}  \label{prop: functor_R->uRPhg}
 Let  $ \, P \in M_{n}\big( \kh \big) \, $.  If both  $ \R' $  and  $ \, \R'' $  are realizations of  $ P $  and
 $ \, \underline{\phi} : \R' \relbar\joinrel\relbar\joinrel\longrightarrow \R'' \, $  is a morphism between them,
 then there exists a unique morphism of unital topological  $ \, \kh $--algebras
 $ \; U_{\underline{\phi}} : U^{\,\R'}_{\!P,\hbar}(\lieg) \relbar\joinrel\relbar\joinrel\longrightarrow U^{\,\R''}_{\!P,\hbar}(\lieg) \; $
 that extends the morphism  $ \, \phi : \lieh' \relbar\joinrel\relbar\joinrel\longrightarrow \lieh'' \, $
 of  $ \, \kh $--modules  given by
 $ \underline{\phi} \, $;  \,moreover,  $ \, U_{\underline{\id}_\R} = \id_{\uRPhg} \, $  and
 $ \; U_{\underline{\phi}' \circ\, \underline{\phi}} = U_{\underline{\phi}'} \circ U_{\underline{\phi}} \; $
 (whenever  $ \, \underline{\phi}' \circ\, \underline{\phi} \, $  is defined).
                                                                                \par
   Thus, the construction  $ \, \R \mapsto \uRPhg \, $  --- for any fixed  $ P $  ---   is functorial in  $ \R \, $.
                                                                              \par
   Moreover, if  $ \underline{\phi} $  is an epimorphism, resp.\ a monomorphism, then  $ U_{\underline{\phi}} $
   is an epimorphism, resp.\ a monomorphism, as well.
                                                                              \par
   Finally, for any morphism  $ \, \underline{\phi} : \R' \relbar\joinrel\relbar\joinrel\relbar\joinrel\longrightarrow \R'' \, $,
   \,the kernel  $ \, \Ker\big(U_{\underline{\phi}}\big) $  of  $ \, U_{\underline{\phi}} $  is the two-sided ideal in
   $ \, U^{\,\R'}_{\!P,\hbar}(\lieg) $  generated by  $ \, \Ker(\phi) \, $,
   and the latter is central in  $ \, U^{\,\R'}_{\!P,\hbar}(\lieg) \, $.
\end{prop}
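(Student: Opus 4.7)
\smallskip

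The plan is to construct $U_{\underline{\phi}}$ via the presentation of $\uRPhg$ by generators and relations given in Definition \ref{def: Mp-Uhgd}. Write $\underline{\phi}$ via the underlying $\kh$-module morphism $\phi : \lieh' \to \lieh''$ and the permutation $\sigma \in \mathbb{S}_I$ with $\phi(T_i^\pm) = \dot{T}_{\sigma(i)}^\pm$ (so that also $\phi^*(\dot{\alpha}_{\sigma(i)}) = \alpha'_i$). I would set $U_{\underline{\phi}}$ on generators by $T \mapsto \phi(T)$ for $T \in \lieh'$, $E_i \mapsto E_{\sigma(i)}$, $F_i \mapsto F_{\sigma(i)}$. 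The defining relations in \eqref{eq: comm-rel's_x_uPhg} are preserved: commutation of $\lieh'$ is preserved because $\phi$ is $\kh$-linear and $\lieh''$ is commutative; the cross relations $[T, E_j] = \alpha'_j(T) E_j$ become $[\phi(T), E_{\sigma(j)}] = \dot{\alpha}_{\sigma(j)}(\phi(T)) E_{\sigma(j)} = \alpha'_j(T) E_{\sigma(j)}$ by the relation $\phi^*(\dot{\alpha}_{\sigma(j)}) = \alpha'_j$; the $[E_i,F_j]$ relation is preserved because $\phi(T_i^\pm) = \dot{T}_{\sigma(i)}^\pm$ and $d'_i = d''_{\sigma(i)}$ (since morphisms of realizations preserve the matrix $P$, in particular its diagonal); finally, the $q$-Serre relations only involve the scalars $p_{ij}, a_{ij}$ which are the same for both realizations. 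Uniqueness is automatic, since the assignment is forced on the algebra generators.

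Functoriality $U_{\underline{\phi}'\circ\underline{\phi}} = U_{\underline{\phi}'}\circ U_{\underline{\phi}}$ and $U_{\underline{\id}_\R} = \id$ then follows immediately by comparing the two sides on the generating set. If $\underline{\phi}$ is an epimorphism then $\phi$ is surjective and $\sigma$ is a bijection, so the image of $U_{\underline{\phi}}$ contains all algebra generators of $U^{\,\R''}_{\!P,\hbar}(\lieg)$, making it surjective. For the monomorphism case, by choosing a $\kh$-module complement to $\phi(\lieh')$ in $\lieh''$ (available since $\lieh'$ and $\lieh''$ are free $\kh$-modules and the subset $\{\dot{T}^\pm_i\}_{i\in I}$ may be completed in a compatible way), one obtains a $\kh$-linear retraction $r : \lieh'' \to \lieh'$ underlying a morphism of realizations $\underline{r} : \R'' \to \R'$ with $\underline{r}\circ\underline{\phi} = \underline{\id}_{\R'}$; by functoriality $U_{\underline{r}}\circ U_{\underline{\phi}} = \id$, whence $U_{\underline{\phi}}$ is injective.

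For the kernel assertion, let $J$ be the two-sided ideal of $U^{\,\R'}_{\!P,\hbar}(\lieg)$ generated by $\Ker(\phi) \subseteq \lieh'$. Evidently $J \subseteq \Ker(U_{\underline{\phi}})$, since $\phi$ vanishes on $\Ker(\phi)$. For the opposite inclusion I would factor $\underline{\phi}$ as an epimorphism followed by a monomorphism of realizations (using the image realization $\R'_{\mathrm{im}}$ with space $\phi(\lieh')$ and the obvious induced data), apply the epi and mono parts already proven, and identify $\Ker(U_{\underline{\phi}})$ with the ideal generated by $\Ker(\phi)$. Centrality of $\Ker(\phi)$ in $U^{\,\R'}_{\!P,\hbar}(\lieg)$ is exactly the content of Lemma \ref{lemma: ker-morph's_realiz's}: every $k \in \Ker(\phi)$ lies in $\bigcap_{j\in I}\Ker(\alpha'_j)$, so $[k, E_j] = \alpha'_j(k)\, E_j = 0$ and $[k, F_j] = -\alpha'_j(k)\, F_j = 0$, while $[k, T] = 0$ is automatic because $\lieh'$ is commutative; thus $k$ commutes with all algebra generators.

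The main obstacle is the monomorphism part, which requires splitting $\lieh''$ as $\phi(\lieh') \oplus \mathfrak{c}$ in a way compatible with the realization data; more delicately, for the kernel computation one must verify that factoring $\underline{\phi} = \underline{\mu}\circ\underline{\epsilon}$ through its image yields an intermediate realization whose associated FoMpQUEA embeds into $U^{\,\R''}_{\!P,\hbar}(\lieg)$. This would be straightforward were a PBW-type decomposition for $\uRPhg$ already in hand (which, as suggested by Definition \ref{def: Mp-Uhgd}\textit{(d)} and the subsequent sections, should hold via a triangular decomposition $\uRPhnm \otimes U_\hbar(\lieh) \otimes \uRPhnp$); in its absence, one must argue by hand using the centrality of $\Ker(\phi)$ to commute generators past elements of $J$ and the $\hbar$-adic topology to take limits.
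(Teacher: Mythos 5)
Your construction of $U_{\underline{\phi}}$ on generators, the verification that the defining relations \eqref{eq: comm-rel's_x_uPhg} are preserved, the functoriality, the epimorphism case, and the centrality of $\Ker(\phi)$ (via Lemma \ref{lemma: ker-morph's_realiz's} together with the relations) all agree with the paper: its own proof declares everything except centrality to be obvious and then argues centrality exactly as you do, so for those parts you are on the same track, only more explicit.

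The genuine gap is your monomorphism step, and it propagates into the kernel computation. You claim that for injective $\phi$ one can split $\lieh'' = \phi(\lieh') \oplus \mathfrak{c}$ so as to obtain a retraction $\underline{r} : \R'' \to \R'$ of \emph{realizations} with $\underline{r} \circ \underline{\phi} = \underline{\id}$. But a morphism of realizations must also satisfy $r^*(\Pi') = \Pi''$, i.e.\ $\alpha'_j \circ r = \dot{\alpha}_j$ (up to the permutation) on \emph{all} of $\lieh''$; on the complement this forces you to find, for each $c \in \mathfrak{c}$, an element $r(c) \in \lieh'$ with the prescribed values $\alpha'_j\big(r(c)\big) = \dot{\alpha}_j(c)$ for every $j$, and this is impossible in general. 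For instance, if $P_s$ is singular and $\R'$ is minimal (say small, with $T_i^+ = T_i^-$), the image of the map $\lieh' \to \kh^n$, $x \mapsto \big(\alpha'_j(x)\big)_j$, is a proper submodule, whereas a split Kac-type enlargement $\R''$ contains elements whose root-value vector lies outside it --- making the $\dot{\alpha}_j$ independent is precisely why one enlarges $\lieh$. So no such $\underline{r}$ exists, and completing $\{\dot{T}_i^\pm\}_{i\in I}$ to a basis ``compatibly'' does not help, since the constraint lives on the roots, not the coroots. Consequently your argument for $\Ker\big(U_{\underline{\phi}}\big) \subseteq J$ is also incomplete: after factoring through the image realization, the mono part needs an injectivity proof you do not have, and the epi part as you proved it yields only surjectivity, not the identification of $\Ker(U_{\mathrm{epi}})$ with $J$ (that identification is a presentation argument --- the relations involve $\lieh'$ only through the $\alpha'_j$, which kill $\Ker(\phi)$ --- but you never spell it out). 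The clean route to both points is the one you hint at in your last paragraph: the triangular decomposition $\uRPhnm \,\widehat{\otimes}\, U_\hbar(\lieh) \,\widehat{\otimes}\, \uRPhnp$ of Theorem \ref{thm: triang-decomp.'s}, under which $U_{\underline{\phi}}$ acts as the completed symmetric-algebra map of $\phi$ on the Cartan factor and as a relabelling on the nilpotent factors, so that injectivity and the kernel statement reduce to elementary assertions about $\widehat{S}_\hbar(\phi)$ over the discrete valuation ring $\kh$; as written, however, your retraction argument is a step that would fail.
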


\pf

 Everything is obvious, we only spend some words on the centrality of  $ \Ker(\phi) \, $.
 Lemma \ref{lemma: ker-morph's_realiz's}  gives  $ \; \Ker(\phi) \, \subseteq \, \bigcap\limits_{j \in I} \Ker(\alpha'_j) \; $;
 \,then  \eqref{eq: comm-rel's_x_uPhg}  implies that each element in  $ \Ker(\phi) $  commutes with all generators of
 $ \, U^{\,\R'}_{\!P,\hbar}(\lieg) \, $,  so  $ \Ker(\phi) $  is central in  $ \, U^{\,\R'}_{\!P,\hbar}(\lieg) \, $.
\epf

\vskip7pt

\begin{cor}  \label{cor: isom_R -> isom_uRPhg}
 With notation as above, if  $ \; \R' \cong \R'' \, $  then  $ \; U^{\,\R'}_{P,\hbar}(\lieg) \cong U^{\,\R''}_{P,\hbar}(\lieg) \; $.
                                                     \par
   In particular, all FoMpQUEAs built upon split realizations, respectively small
%
%
 realizations, of the same matrix  $ P $  and sharing the same rank of  $ \, \lieh $  are isomorphic to each other,
 hence they are independent (up to isomorphisms)  of the specific realization, but only depend on
 $ P $  and on the rank of  $ \, \lieh \, $.
\end{cor}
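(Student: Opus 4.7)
The plan is to deduce the corollary directly from the functoriality established in Proposition \ref{prop: functor_R->uRPhg} together with the uniqueness-up-to-isomorphism results for realizations obtained in Propositions \ref{prop: exist-realiz's} and \ref{prop: exist-realiz's_small}. No new computations with the defining relations \eqref{eq: comm-rel's_x_uPhg} of $\uRPhg$ will be needed.

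First, I would establish the first assertion. Given an isomorphism of realizations $\underline{\phi}\colon \R'\to\R''$, its inverse $\underline{\phi}^{-1}\colon\R''\to\R'$ is also a morphism of realizations. Applying the functor $\R\mapsto \uRPhg$ from Proposition \ref{prop: functor_R->uRPhg}, I obtain morphisms of topological $\kh$-algebras $U_{\underline{\phi}}$ and $U_{\underline{\phi}^{-1}}$ whose compositions, by functoriality, are $U_{\underline{\phi}\,\circ\,\underline{\phi}^{-1}} = U_{\underline{\id}_{\R''}} = \id_{U^{\R''}_{P,\hbar}(\lieg)}$ and likewise in the other order. Hence $U_{\underline{\phi}}$ is an isomorphism of FoMpQUEAs, proving that $\R'\cong\R''$ implies $U^{\R'}_{P,\hbar}(\lieg)\cong U^{\R''}_{P,\hbar}(\lieg)$.

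Next, I would treat the ``in particular'' statement. For straight split realizations of a given matrix $P$ with a common rank $\ell$, Proposition \ref{prop: exist-realiz's}(a) asserts existence and uniqueness up to isomorphism of realizations; similarly, Proposition \ref{prop: exist-realiz's_small} handles the straight small case. Thus any two realizations of the relevant type with matching data are isomorphic as realizations, and applying the first assertion upgrades this to an isomorphism of the associated FoMpQUEAs. To handle the general ``split'' or ``small'' cases advertised in the statement (without straightness), one can either appeal to Proposition \ref{prop: exist-realiz's}(b) and its small analogue, or invoke Lemma \ref{lemma: split-lifting} to lift to straight split realizations before comparing; in either case, the combination of a uniqueness statement for realizations and functoriality delivers the required isomorphism.

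The only subtle point is bookkeeping: one must check that the uniqueness results cited are being applied under exactly their stated hypotheses (fixed rank, agreement of symmetric parts of $P$, etc.) and that the resulting realization isomorphism genuinely takes the form required by Definition \ref{def: realization of P}(c) so that Proposition \ref{prop: functor_R->uRPhg} is applicable. Once that verification is in place, the argument is essentially formal and the corollary follows with no further work. I do not expect any genuine obstacle here, since the hard technical content has already been absorbed into the two preceding propositions.
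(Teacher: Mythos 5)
Your proposal is correct and follows essentially the same route as the paper, which deduces the corollary in one line from the functoriality of $\R\mapsto\uRPhg$ (Proposition \ref{prop: functor_R->uRPhg}) combined with the uniqueness-up-to-isomorphism results for realizations (Propositions \ref{prop: exist-realiz's} and \ref{prop: exist-realiz's_small}). Your spelled-out verification that $U_{\underline{\phi}}$ and $U_{\underline{\phi}^{-1}}$ are mutually inverse is exactly the intended (implicit) argument, so no further comment is needed.
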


\pf
 This follows at once from  Proposition \ref{prop: functor_R->uRPhg}  together with the uniqueness result in
 Proposition \ref{prop: exist-realiz's}  and  Proposition \ref{prop: exist-realiz's_small}.
\epf

\vskip9pt

   We conclude this subsection with an important structure result, namely the ``triangular decomposition''
for FoMpQUEAs.  We begin with some preliminaries.
\vskip13pt

\begin{definition}  \label{def: Uotimes}
 Let  $ \, A := {\big(\, a_{i,j} \big)}_{i, j \in I} \, $  be a generalized symmetrizable Cartan matrix, and
 $ \, P := {\big(\, p_{i,j} \big)}_{i, j \in I} \in M_{n}\big( \kh \big) \, $  a matrix of Cartan type associated with  $ A $.
 We fix a realization  $ \, \R := \big(\, \lieh \, , \Pi \, , \Pi^\vee \,\big) \, $  of  $ P $  as in  Definition \ref{def: realization of P}.
 \vskip5pt
   {\it (a)}\,  We define  $ \uhat^+ $,  resp.\  $ \uhat^- $,  to be the unital, associative, topological,
   $ \hbar $--adically  complete  $ \kh $--algebra  with generators  $ E_i \, (i \in I\,) \, $,  resp.\  $ F_i \, (i \in I\,) \, $,  \,and relations
%
%
  $$  \displaylines{
   u^E_{ij}  \, := \,  {\textstyle \sum\limits_{k = 0}^{1-a_{ij}}} (-1)^k {\left[ { 1-a_{ij} \atop k } \right]}_{\!q_i} q_{ij}^{+k/2\,}
  q_{ji}^{-k/2} \, E_i^{1-a_{ij}-k} E_j E_i^k  \,\; = \;\,  0   \qquad  (\, \forall \; i \neq j \,)  \cr
   u^F_{ij}  \, := \,  {\textstyle \sum\limits_{k = 0}^{1-a_{ij}}} (-1)^k {\left[ { 1-a_{ij} \atop k } \right]}_{\!q_i} q_{ij}^{+k/2\,}
  q_{ji}^{-k/2} \, F_i^k F_j F_i^{1-a_{ij}-k}  \,\; = \;\,  0   \qquad  (\, \forall \; i \neq j \,)  }  $$
 \vskip5pt
   {\it (b)}\,  We define  $ \, \uhat^0 \, $  to be the unital, associative, commutative, topological,
   $ \hbar $--adically  complete  $ \kh $--algebra  generated by  $ \lieh \, $.  In other words, it is
   $ \, \uhat^0 := \widehat{S}_\hbar(\lieh) \, $,  the  $ \hbar $--adic  completion of the symmetric
   $ \kh $--algebra over the  $ \kh $--module  $ \lieh \, $.
 \vskip5pt
   {\it (c)}\,  We define  $ \; \overrightarrow{U}^{\R,\otimes}_{\!P,\,\hbar}(\lieg) :=
   \uhat^- \,\widehat{\otimes}_\kh\, \uhat^0 \,\widehat{\otimes}_\kh\, \uhat^+ \, $,  \,and we introduce notation
  $$  \displaylines{
   \uhat^-_\otimes := \uhat^- \otimes \kh \otimes \kh  \;\; ,  \!\!\quad
   \uhat^0_\otimes(\lieh) := \kh \otimes \uhat^0 \otimes \kh  \;\; ,  \!\!\quad
   \uhat^+_\otimes := \kh \otimes \kh \otimes \uhat^+  \cr
   F^\otimes := F \otimes 1 \otimes 1 \; ,  \,\; H^\otimes := 1 \otimes H \otimes 1 \; ,  \,\;
E^\otimes := 1 \otimes 1 \otimes E   \quad\;  \forall \; F \in \uhat^- , H \in \uhat^0 , E \in \uhat^+  }  $$
   \indent   Similarly, we set  $ \; \overleftarrow{U}^{\R,\otimes}_{\!P,\,\hbar}(\lieg) :=
   \uhat^+ \,\widehat{\otimes}_\kh\, \uhat^0 \,\widehat{\otimes}_\kh\, \uhat^- \, $.
\end{definition}

\vskip9pt

   The following, key technical result is also interesting in itself:

\vskip11pt

\begin{lema}  \label{lemma: alg-struct-on-Uotimes(g)}
 There exists on  $ \, \overrightarrow{U}^{\R,\otimes}_{\!P,\,\hbar}(\lieg) $
 a unique structure of unital, associative, topological,  $ \hbar $--adically  complete  $ \kh $--algebra  such that
 $ \, \uhat^-_\otimes \, $,  $ \uhat^0_\otimes $  and  $ \, \uhat^+_\otimes $
 are all\/  $ \kh $--subalgebras  in  $ \, U^{\R,\otimes}_{\!P,\,\hbar}(\lieg) \, $,  \,and moreover
  $$  \displaylines{
   F_i^\otimes \cdot T^\otimes \; = \; F_i \otimes T \otimes 1  \;\; ,   \quad
   T^\otimes \cdot E_j^\otimes \; = \; 1 \otimes T \otimes E_j  \;\; ,  \quad
   F_i^\otimes \cdot E_j^\otimes \; = \; F_i \otimes 1 \otimes E_j  \cr
   T^\otimes \cdot F_i^\otimes  \; = \;  F_i \otimes T \otimes 1 \, - \, \alpha_i(T)\,F_i^\otimes  \quad ,
   \qquad   E_j^\otimes \cdot T^\otimes  \; = \;  1 \otimes T \otimes E_j \, -
   \, \alpha_j(T)\,E_j^\otimes  \cr
   E_j^\otimes \cdot F_i^\otimes \; = \; F_i \otimes 1 \otimes E_j \, + \, \delta_{i{}j} \,
   1\ot {{\;\;e^{+\hbar\,T_i^+} - e^{-\hbar\,T_i^-}\;} \over {e^{+\hbar\,d_i} - e^{-\hbar\,d_i}}} \ot 1 }  $$
 \vskip3pt
   An entirely similar claim holds true for  $ \, \overleftarrow{U}^{\R,\otimes}_{\!P,\,\hbar}(\lieg) :=
   \uhat^+ \,\widehat{\otimes}_\kh\, \uhat^0 \,\widehat{\otimes}_\kh\, \uhat^- \, $.
\end{lema}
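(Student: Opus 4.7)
Uniqueness is immediate: the four prescribed cross-multiplication formulas, together with the requirement that $\uhat^-_\otimes$, $\uhat^0_\otimes$, $\uhat^+_\otimes$ all be subalgebras, pin down the product of any two tensor monomials $F \otimes H \otimes E$ and $F' \otimes H' \otimes E'$ by iterated associativity. To construct such a structure, I will realize $\overrightarrow{U}^{\R,\otimes}_{\!P,\,\hbar}(\lieg)$ via a two-step iterated smash product, gluing the ``positive'' and ``negative'' Borel halves along $\uhat^0$ through a Heisenberg-double-type crossing rule, and then recovering the algebra from the resulting left regular representation.

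\textbf{Step 1 (Borel halves as smash products).} The Cartan algebra $\uhat^0 = \widehat{S}_\hbar(\lieh)$ is a topological commutative bialgebra with $\lieh$ as its primitive part. Each $T \in \lieh$ acts on $\uhat^+$ as the unique derivation with $T \cdot E_j := \alpha_j(T) E_j$; since the quantum Serre polynomials $u^E_{ij}$ are weight-homogeneous of weight $(1-a_{ij})\alpha_i + \alpha_j$, they are eigenvectors of every such derivation, so the action descends to $\uhat^+$. Hence $\uhat^+$ is a topological $\uhat^0$-module algebra, and the standard smash-product construction endows $\uhat^0 \,\widehat{\otimes}_\kh\, \uhat^+$ with an associative topological $\kh$-algebra structure implementing $T E_j - E_j T = \alpha_j(T) E_j$. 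The mirror-image construction makes $\uhat^- \,\widehat{\otimes}_\kh\, \uhat^0$ into an associative topological $\kh$-algebra implementing $T F_i - F_i T = -\alpha_i(T) F_i$.

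\textbf{Step 2 (Gluing via the crossing relation).} To fuse the two halves into the full triple tensor product I introduce operators $L_\bullet \in \End_\kh\big(\overrightarrow{U}^{\R,\otimes}_{\!P,\,\hbar}(\lieg)\big)$ mimicking left multiplication by the generators. Here $L_{F_i}$ is left multiplication on the first tensor factor (using Step 1 to land back in the prescribed tensor form); $L_T$ is the unique $\kh$-linear operator extending the derivation structures of Step 1 across the triple tensor product; and $L_{E_j}$ is defined by induction on the $F$-monomial degree through the rule
\[
L_{E_j}\big(F_{i_1} F_{i_2} \cdots F_{i_r} \otimes H \otimes E\big) \, := \, F_{i_1} \cdot L_{E_j}\big(F_{i_2} \cdots F_{i_r} \otimes H \otimes E\big) \, + \, \delta_{j, i_1} \big(F_{i_2} \cdots F_{i_r} \otimes K_{i_1} H \otimes E\big)
\]
with $K_i := \frac{e^{\hbar T_i^+} - e^{-\hbar T_i^-}}{q_i - q_i^{-1}} \in \uhat^0$, well-defined because both numerator and denominator lie in $\hbar\,\uhat^0$ and their ratio equals $S_i/d_i$ plus higher-order $\hbar$-terms. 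This operator then extends by $\hbar$-adic continuity to the completion.

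\textbf{Step 3 (Verifying relations and conclusion).} One checks that the operators $L_\bullet$ satisfy every defining relation in \eqref{eq: comm-rel's_x_uPhg} of $\uRPhg$: the pure-$F$ and pure-$E$ quantum Serre relations descend from Step 1; the Cartan-commutations follow from the derivation property; the mixed relation $[E_j, F_i] = \delta_{ij} K_i$ is built into the definition of $L_{E_j}$. We thus obtain a homomorphism of topological $\kh$-algebras $\rho \colon \uRPhg \to \End_\kh\big(\overrightarrow{U}^{\R,\otimes}_{\!P,\,\hbar}(\lieg)\big)$. We finally define the product on $\overrightarrow{U}^{\R,\otimes}_{\!P,\,\hbar}(\lieg)$ by $X \cdot Y := L_X(Y)$, where for $X = F \otimes H \otimes E$ we set $L_X := L_F \circ L_H \circ L_E$. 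Associativity is inherited from associativity of operator composition, and each of the four prescribed cross-product rules is read off directly from the definitions of $L_{F_i}$, $L_T$, $L_{E_j}$. The case of $\overleftarrow{U}^{\R,\otimes}_{\!P,\,\hbar}(\lieg)$ is identical up to reversing arrows.

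\textbf{Main obstacle.} The technical core lies in Step 2: one must define $L_{E_j}$ coherently on $\hbar$-adic limits of $F$-monomials of arbitrary length and verify both its $\kh$-linearity and its $\hbar$-adic continuity. The subtlest point is that the quantum Serre relations on the $F$-side must be preserved by $L_{E_j}$, which ultimately amounts to verifying that the weight-grading coming from $[T, F_i] = -\alpha_i(T) F_i$ is exactly balanced by the Cartan corrections $K_i$ produced by iterated applications of the crossing formula --- a computation whose consistency is in essence the quantum analogue of the Jacobi identity in the classical Drinfeld double. Once this compatibility is pinned down, the remainder of Step 3 is a direct unwinding of the definitions.
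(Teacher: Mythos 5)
Your overall architecture (smash products for the two Borel halves, then a left-regular/Verma-style gluing via operators $L_\bullet$) is a legitimate alternative to the paper's construction, which instead posits the cross-multiplication formulas directly and verifies by hand that they respect all defining relations. But as written your Step 2 contains an error, and the real content of the lemma is deferred rather than proved. The inductive crossing rule for $L_{E_j}$ is wrong: when $E_j$ crosses $F_{i_1}$ the Cartan correction $K_{i_1}=\frac{e^{\hbar T_{i_1}^+}-e^{-\hbar T_{i_1}^-}}{q_{i_1}-q_{i_1}^{-1}}$ must itself be commuted past the remaining factors $F_{i_2}\cdots F_{i_r}$, and since $T\,F_i-F_i\,T=-\alpha_i(T)\,F_i$ the two exponentials pick up \emph{different} scalar factors $e^{\mp\hbar\sum_{t\ge 2}\alpha_{i_t}(T_{i_1}^\pm)}$; your formula just inserts $K_{i_1}$ into the middle slot. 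Concretely, with the natural weight-corrected $L_T$, your operators give $[L_{E_j},L_{F_j}]\big(F_k\otimes 1\otimes 1\big)=F_k\otimes K_j\otimes 1$, whereas $L_{K_j}\big(F_k\otimes 1\otimes 1\big)=F_k\otimes\frac{e^{-\hbar p_{jk}}e^{\hbar T_j^+}-e^{+\hbar p_{kj}}e^{-\hbar T_j^-}}{q_j-q_j^{-1}}\otimes 1$, so the relation $[E_j,F_i]=\delta_{ij}K_i$ already fails on words of $F$-length two and no homomorphism $\rho:\uRPhg\to\End_\kh\big(\overrightarrow{U}^{\R,\otimes}_{\!P,\,\hbar}(\lieg)\big)$ comes out of these formulas. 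The fix is to define the correction term as $\delta_{j,i_1}\,L_{K_{i_1}}\!\big(F_{i_2}\cdots F_{i_r}\otimes H\otimes E\big)$, i.e.\ genuine left multiplication by the toral series.

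Even after that repair, two essential points remain unproved. First, $L_{E_j}$ is defined by induction on monomials in the \emph{free} algebra on the $F_i$'s, but $\uhat^-$ is the quotient by the quantum Serre ideal, so you must show that $L_{E_j}$ annihilates (the left ideals generated by) the elements $u^F_{ij}$, and dually that the $u^E_{ij}$'s kill everything on the other side; this is exactly the computational heart of the paper's proof (the identities $u^E_{ij}\cdot F_\ell=0$, $T\cdot u^F_{ij}=0$, etc., which rest on the $q$-binomial identity $\sum_{k=0}^{N}(-1)^k\left[{N\atop k}\right]_{q_i}q_i^{\,k(1-N)}=0$ and the vanishing of the coefficients $\varGamma^{\pm}_{h,k}$). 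You name this as the ``main obstacle'' but do not carry it out, so the proposal proves nothing the paper's argument does not already have to prove. Second, your Step 3 claim that associativity of $X\cdot Y:=L_X(Y)$ is ``inherited from associativity of operator composition'' is too quick: it requires $L_{X\cdot Y}=L_X\circ L_Y$, i.e.\ multiplicativity of $X\mapsto L_X$ with respect to the product being defined, which is not automatic and is usually obtained by showing that evaluation at $1\otimes 1\otimes 1$ identifies the operator algebra (or a suitable quotient of $\uRPhg$) with $\overrightarrow{U}^{\R,\otimes}_{\!P,\,\hbar}(\lieg)$ as a $\kh$-module; that step is missing as well.
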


\begin{proof}
   It is enough to prove the statement about  $ \overrightarrow{U}^{\R,\otimes}_{\!P,\,\hbar}(\lieg) \, $.
                                                          \par
   We introduce a structure of an  $ \hbar $--adically  complete, topological  $ \kh $--algebra  $ \overrightarrow{U}^{\R,\otimes}_{\!P,\,\hbar}(\lieg) $  as required by hands, somehow.  First, we assume that in this algebra the submodules
 $ \uhat^-_\otimes \, $,  $ \uhat^0_\otimes \, $  and  $ \uhat^+_\otimes \, $
 sit as  $ \kh $--subalgebras   --- there is no obstruction to such a requirement.  After this, the structure will be uniquely determined once we fix the products among elements in any two (different) of these subalgebras.  Moreover, as the subalgebra
 $ \, \uhat^-_\otimes \, $,  \,resp.\  $ \, \uhat^0_\otimes(\lieh) \, $,  \,resp.\  $ \, \uhat^+_\otimes \, $,
 \,is (topologically) generated by the  $ F_i^\otimes $'s  ($ \, i \in I \, $),  resp.\  the
 $ T^\otimes $'s  ($ \, T \in \lieh \, $),  resp.\ the  $ E_j^\otimes $'s  ($ \, j \in I \, $),
 \,it is enough to fix the products among any two such generators (from different sets).
 Eventually, recall that the  $ F_i $'s,  resp.\ the  $ E_j $'s,  are indeed generators for  $ \uhat^- $,
 resp.\ for  $ \uhat^+ $,  \,which are only subject to the ``quantum Serre relations'' in
 Definition \ref{def: Uotimes}\textit{(b)},  while the  $ T $'s  are \textsl{``commutative free''}
 --- but for the fact that they are related by obvious, built-in relations such as  $ \, T = c'\,T' + c''T'' \, $
 inside  $ \lieh $  implies  $ \, T = c'\,T' + c''T'' \, $  in  $ \uhat^0_\otimes \, $,  as  $ \lieh $  is naturally mapped (linearly) into  $ \uhat^0_\otimes \, $.
 Thus, one can define the values of the product among
 $ F_i^\otimes $,  $ E_j^\otimes $'  and  $ T^\otimes $
  \textsl{in any possible way as soon as all ``quantum Serre relations'' among the  $ F_i^\otimes $'s
  and among the  $ E_j^\otimes $'s,  as well as the ``obvious relations'' among the  $ T $'s  from\/  $ \lieh $}
  --- namely the ``linear relations'' (such as  $ \, T = c'\,T' + c''T'' \, $)
  and the commutation relations (of the form  $ \, T' T'' = T'' T' \, $)
   \textsl{are respected}.
                                                              \par
   By the above discussion, the following choices
  $$  \displaylines{
   F_i^\otimes \cdot T^\otimes \; := \; F_i \otimes T \otimes 1  \;\; ,
   \quad   T^\otimes \cdot E_j^\otimes \; := \; 1 \otimes T \otimes E_j  \;\; ,  \quad
   F_i^\otimes \cdot E_j^\otimes \; := \; F_i \otimes 1 \otimes E_j  \cr
   T^\otimes \cdot F_i^\otimes  := \,  F_i \otimes T \otimes 1 \, - \, \alpha_i(T) \, F_i \otimes 1 \otimes 1  \; ,
   \!\!\!\quad   E_j^\otimes \cdot T^\otimes  := \,  1 \otimes T \otimes E_j \, - \, \alpha_j(T) \, 1 \otimes 1 \otimes E_j  \cr
   E_j^\otimes \cdot F_i^\otimes \; := \; F_i \otimes 1 \otimes E_j \, + \,
   \delta_{i\,j} \cdot 1 \otimes {{\;\; e^{+\hbar\,T_i^+} \! - e^{-\hbar\,T_i^-} \;} \over {e^{+\hbar\,d_i} - e^{-\hbar\,d_i}}} \otimes 1  }  $$
 for the values of the product among two generators
 --- from different subalgebras  $ \, \uhat^-_\otimes \, $,  $ \, \uhat^0_\otimes \, $  or
 $ \, \uhat^+_\otimes \, $  ---
 are enough to determine a unique algebra structure as required:
 we only have still to check that, using these defining formulas
 for the product, the relations mentioned above among generators
 are respected.
 \vskip3pt
   First of all, we consider all linear relations and commutation relations among the  $ T $'s:
   in this case, the check is entirely trivial.
 \vskip3pt
   Second, we consider the case of quantum Serre relations.  Concerning them,
   \textsl{in order to have more readable formulas, we simplify notation\/}  (with a slight abuse) by writing, instead of
   ``$ \, F^\otimes \, $''  ($ \, \forall \; F \in \uhat^- \, $)  just  ``$ F \, $''  again, and similarly  ``$ H \, $''  instead of
   ''$ \, H^\otimes \, $''  ($ \, \forall \; H \in \uhat^0 \, $)  and  ``$ E \, $''  instead of  ''$ \, E^\otimes \, $''  ($ \, \forall \; E \in \uhat^+ \, $).
                                                                              \par
   Our goal now  is to check that the multiplication defined by the previous formulas
   ``respects'' the quantum Serre relations, which boils down to verify the following:
   \textit{all products between a factor chosen in  $ \, \big\{\, F_i \, , T , E_j \,|\, i \, , j \in I , \, T \in \lieh \,\big\} \, $
   and another (in either order) chosen in  $ \, \big\{\, u^E_{ij} \, , u^F_{ij} \,\big|\, i \, , j \in I , \, i \not= j \,\big\} \, $  is  \textsl{zero}.}
 \vskip3pt
   Clearly all products of type  $ \, u^F_{ij} \cdot F_\ell \, $  and  $ \, F_\ell \cdot u^F_{ij} \, $,
   \,resp.\ $ \, u^E_{ij} \cdot E_t \, $  and  $ \, E_t \cdot u^E_{ij} \, $,
   \,are zero because so they are in the subalgebra  $ U^- $, resp.\  $ U^+ $.  The non-trivial cases are
  $$  T \cdot u^E_{ij} \;\; ,  \!\quad  u^E_{ij} \cdot T \; ,  \!\quad  T \cdot u^F_{ij} \;\; ,
  \!\quad  u^F_{ij} \cdot T \; ,  \!\quad  u^E_{ij} \cdot F_\ell \;\; ,  \!\quad  F_\ell \cdot u^E_{ij} \;\; ,
  \!\quad  u^F_{ij} \cdot E_t \;\; ,  \!\quad  E_t \cdot u^F_{ij}  $$
 but among these, four cases are indeed almost trivial, as definitions give
  $$  \displaylines{
   T \cdot u^E_{ij} \, = \, 1 \otimes T \otimes u^E_{ij} \, = \, 1 \otimes T \otimes 0 \, = \, 0  \,\; ,
   \!\!\quad  u^F_{ij} \cdot T \, = \, u^F_{ij} \otimes T \otimes 1 \, = \, 0 \, \otimes T \otimes 1 \, = \, 0  \cr
   F_\ell \cdot u^E_{ij} \, = \, F_\ell \otimes 1 \otimes u^E_{ij} \, = \, F_\ell \otimes 1 \otimes 0 \, = \, 0  \,\; ,
   \!\!\quad  u^F_{ij} \cdot E_t \, = \, u^F_{ij} \otimes 1 \otimes E_t \, = \, 0 \, \otimes 1 \otimes E_t \, = \, 0  }  $$
   \indent   Eventually, the remaining, really non-trivial cases are the following four
  $$  u^E_{ij} \cdot T \;\; ,  \qquad  T \cdot u^F_{ij} \;\; ,  \qquad  u^E_{ij} \cdot F_\ell \;\; ,  \qquad  E_t \cdot u^F_{ij}  $$
 that we now go and analyze in detail.
 \vskip3pt
   Let us consider the product  $ \, T \cdot u^F_{ij} \, $:  \,straightforward calculations give
  $$  \displaylines{
   T \cdot u^F_{ij}  \; = \;
   T \, \bigg(\, {\textstyle \sum\limits_{k = 0}^{1-a_{ij}}} {(-1)}^k
   {\displaystyle {\left[ { 1-a_{ij} \atop k } \right]}_{\!q_i}} q_{ij}^{+k/2\,} q_{ji}^{-k/2} \, F_i^k \, F_j \, F_i^{1-a_{ij}-k} \bigg)  \; =   \hfill  \cr
   = \;  {\textstyle \sum\limits_{k = 0}^{1-a_{ij}}} {(-1)}^k
   {\displaystyle {\left[ { 1-a_{ij} \atop k } \right]}_{\!q_i}} q_{ij}^{+k/2\,} q_{ji}^{-k/2} \, T \, F_i^k \, F_j \, F_i^{1-a_{ij}-k}  \; =   \hfill  \cr
   \hfill   =  {\textstyle \sum\limits_{k = 0}^{1-a_{ij}}} {(-1)}^k
   {\displaystyle {\left[ { 1-a_{ij} \atop k } \right]}_{\!q_i}} \! q_{ij}^{+k/2\,} q_{ji}^{-k/2} \,
   F_i^k \, F_j \, F_i^{1-a_{ij}-k} \otimes \Big( T - \big( \big( 1 - a_{ij} \big) \alpha_i + \alpha_j\big)(T) \Big) \otimes 1  \, =  \cr
   \hfill   = \,  u^F_{ij} \otimes \Big( T - \big( \big( 1 - a_{ij} \big) \alpha_i + \alpha_j\big)(T) \Big) \otimes 1  \,
   = \,  0 \otimes \Big( T - \big( \big( 1 - a_{ij} \big) \alpha_i + \alpha_j\big)(T) \Big) \otimes 1  \, = \,  0  }  $$
 which is good.  The product  $ \, u^E_{ij} \cdot T \, $  is dealt with in a similar way.
 \vskip3pt
   Let us now consider the product  $ \, u^E_{ij} \cdot F_\ell \, $:  \,again,
   direct calculations yield different results, depending on whether  $ \, \ell \in \{i\,,j\} \, $  or not.  The first possible case is
  $$  \displaylines{
   \fbox{$ \, \ell \not\in \{i\,,j\} $}  \;\; \Longrightarrow \;\;
      u^E_{ij} \cdot F_\ell  \; =  \bigg(\, {\textstyle \sum\limits_{k = 0}^{1-a_{ij}}} {(-1)}^k
      {\displaystyle {\left[ { 1-a_{ij} \atop k } \right]}_{\!q_i}} \! q_{ij}^{+k/2\,} q_{ji}^{-k/2} \, E_i^{1-a_{ij}-k} \, E_j \, E_i^k \bigg) F_\ell  \; =   \hfill  \cr
   = \;  F_\ell \otimes 1 \otimes \bigg(\, {\textstyle \sum\limits_{k = 0}^{1-a_{ij}}} {(-1)}^k
   {\displaystyle {\left[ { 1-a_{ij} \atop k } \right]}_{\!q_i}} \! q_{ij}^{+k/2\,} q_{ji}^{-k/2} \, E_i^{1-a_{ij}-k} \, E_j \, E_i^k \bigg) \;  =  \cr
   \hfill   = \;  F_\ell \otimes 1 \otimes u^E_{ij}  \; = \;  F_\ell \otimes 1 \otimes 0  \; = \; 0  }  $$
 which stands good!  The second case is
  $$  \displaylines{
   \fbox{$ \, \ell = j \, $}  \; \Longrightarrow \;
      u^E_{ij} \cdot F_\ell  \; =  \bigg(\, {\textstyle \sum\limits_{k = 0}^{1-a_{ij}}} {(-1)}^k
      {\displaystyle {\left[ { 1-a_{ij} \atop k } \right]}_{\!q_i}} \! q_{ij}^{+k/2\,} q_{ji}^{-k/2} \,
      E_i^{1-a_{ij}-k} \, E_j \, E_i^k \bigg) F_j  \; =   \hfill  \cr
   = \;  {\textstyle \sum\limits_{k = 0}^{1-a_{ij}}} {(-1)}^k
   {\displaystyle {\left[ { 1-a_{ij} \atop k } \right]}_{\!q_i}} \! q_{ij}^{+k/2\,} q_{ji}^{-k/2} \, E_i^{1-a_{ij}-k} \cdot E_j \cdot F_j \cdot E_i^k \;  =   \hfill  \cr
   \hfill   = \;  {\textstyle \sum\limits_{k = 0}^{1-a_{ij}}} {(-1)}^k
   {\displaystyle {\left[ { 1-a_{ij} \atop k } \right]}_{\!q_i}} \! q_{ij}^{+k/2\,} q_{ji}^{-k/2} \, E_i^{1-a_{ij}-k}
   \cdot \bigg( F_j \cdot E_j + {{\; e^{+\hbar \, T_j^+} - e^{-\hbar \, T_j^-} \;}
   \over {e^{+\hbar \, d_j} - e^{-\hbar \, d_j}}} \bigg) \cdot E_i^k \;  =  \cr
   \hfill   = \;  F_j \cdot u^E_{ij} \, + \, {\textstyle \sum\limits_{k = 0}^{1-a_{ij}}} {(-1)}^k
   {\displaystyle {\left[ { 1-a_{ij} \atop k } \right]}_{\!q_i}} \! q_{ij}^{+k/2\,} q_{ji}^{-k/2} \, E_i^{1-a_{ij}-k}
   \cdot {{\; e^{+\hbar \, T_j^+} - e^{-\hbar \, T_j^-} \;} \over {e^{+\hbar \, d_j} - e^{-\hbar \, d_j}}} \cdot E_i^k  \; =  \cr
   \qquad   = \;  F_j \otimes 1 \otimes u^E_{ij} \; +   \hfill  \cr
   \qquad \qquad   + \,  {\textstyle \sum\limits_{k = 0}^{1-a_{ij}}} {(-1)}^k
   {\displaystyle {\left[ { 1-a_{ij} \atop k } \right]}_{\!q_i}} \! q_{ij}^{+k/2\,} q_{ji}^{-k/2} \, q_{ji}^{k-1+a_{ij}}
   \cdot {{e^{+\hbar \, T_j^+}} \over {\; e^{+\hbar \, d_j} - e^{-\hbar \, d_j} \;}} \cdot E_i^{1-a_{ij}}  \; -   \hfill  \cr
   \qquad \qquad \qquad   - \,  {\textstyle \sum\limits_{k = 0}^{1-a_{ij}}} {(-1)}^k
   {\displaystyle {\left[ { 1-a_{ij} \atop k } \right]}_{\!q_i}} \! q_{ij}^{+k/2\,} q_{ji}^{-k/2} \, q_{ij}^{1-a_{ij}-k}
   \cdot {{e^{-\hbar \, T_j^-}} \over {\; e^{+\hbar \, d_j} - e^{-\hbar \, d_j} \;}} \cdot E_i^{1-a_{ij}}  \; =   \hfill  \cr
   \hfill   = \;  F_j \otimes 1 \otimes u^E_{ij} \, + \, 1 \otimes {{\; C^+_{ij}(q) \,
   e^{+\hbar \, T_j^+} \! - C^-_{ij}(q) \, e^{-\hbar \, T_j^-} \;} \over {e^{+\hbar \, d_j} - e^{-\hbar \, d_j}}} \otimes E_i^{1-a_{ij}}  }  $$
 where in the last line we have  $ \, F_j \otimes 1 \otimes u^E_{ij} = F_j \otimes 1 \otimes 0 = 0 \, $,  \,and
  $$  \displaylines{
   C^+_{ij}(q)  \; := \;  {\textstyle \sum\limits_{k = 0}^{1-a_{ij}}} {(-1)}^k
   {\displaystyle {\left[ { 1-a_{ij} \atop k } \right]}_{\!q_i}} \! q_{ij}^{+k/2\,} q_{ji}^{-k/2} \, q_{ji}^{k-1+a_{ij}}  \; =   \hfill  \cr
   \hfill   = \,  {\textstyle \sum\limits_{k = 0}^{1-a_{ij}}} {(-1)}^k
   {\displaystyle {\left[ { 1-a_{ij} \atop k } \right]}_{\!q_i}} \!\! {\big( q_{ij} \, q_{ji} \big)}^{+k/2} \, q_{ji}^{-1+a_{ij}}  \, = \;  q_{ji}^{a_{ij}-1}
   {\textstyle \sum\limits_{k = 0}^{1-a_{ij}}} {(-1)}^k {\displaystyle {\left[ { 1-a_{ij} \atop k } \right]}_{\!q_i}} q_i^{\,k a_{ij}}  \; = \;  0  }  $$
 where the very last identity follows from the general, combinatorial  $ q $--identity  (see for example \cite[Chapter 0]{Ja},  or  \cite[\S 1.3.4]{Lu})
  $$  {\textstyle \sum\limits_{k = 0}^N} {(-1)}^k {\displaystyle {\left[ { N \atop k } \right]}_{\!q_i}} q_i^{\,k (1-N)}  \; = \;  0   \eqno \forall \; N \in \NN_+   \qquad  $$
 In a parallel way we get  $ \, C^-_{ij}(q) = 0 \, $,
 \,hence from the above analysis we conclude that  $ \, u^E_{ij} \cdot F_\ell = u^E_{ij} \cdot F_j = 0 \, $
 whenever  $ \, \ell = j \, $.  The third and last case is when  $ \, \ell = i \, $.
 To deal with that, let us notice that standard computations give us, for all  $ \, n \in \NN \, $,
  $$
  \displaylines{
   E_i^n \cdot F_i  \; = \;  F_i \, E_i^n \, + \, \big[ E_i^n , F_i \big]  \; = \;
   F_i \, E_i^n \, + \, {\textstyle\sum\limits_{\ell=0}^n} \, E_i^\ell \, \big[E_i\,,F_i\big] \, E_i^{n-1-\ell}  \; =   \hfill  \cr
   \qquad   = \;  F_i \, E_i^n \, + \, {\textstyle\sum\limits_{\ell=0}^{n-1}} \, E_i^\ell \, {{\; e^{+\hbar \, T_i^+} -
   e^{-\hbar \, T_i^-} \;} \over {e^{+\hbar \, d_i} - e^{-\hbar \, d_i}}} \, E_i^{n-1-\ell}  \; =   \hfill  \cr
   \qquad \qquad   = \;  F_i \cdot E_i^n \, + \, {\; {\textstyle\sum_{\ell=0}^{n-1}} \, q_{i{}i}^{-\ell} \,
   {e^{+\hbar \, T_i^+} \, - \, {\textstyle\sum_{\ell=0}^{n-1}} \, q_{i{}i}^{+\ell} \,
   {e^{-\hbar \, T_i^-}} \;} \over {q_i^{+1} - \, q_i^{-1}}} \cdot E_i^{n-1}  \; =   \hfill  \cr
   \qquad \qquad \qquad   = \;  F_i \cdot E_i^n \, + \, {[n]}_{q_i} \, {\;\, q_i^{-n+1} \,
   {e^{+\hbar \, T_i^+} - \, q_i^{+n-1} \, {e^{-\hbar \, T_i^-}} \;} \over {q_i^{+1} - \, q_i^{-1}}} \cdot E_i^{n-1}   \hfill  }
   $$
 Using this, we compute still more, and get, for all  $ \, r , s \in \NN \, $,
  $$
  \displaylines{
   E_i^r \, E_j \, E_i^s \cdot F_i  \; = \;  E_i^r \, E_j \, \Big( F_i \cdot E_i^s \, + \,
   {[s]}_{q_i} \, {\;\, q_i^{-s+1} \, {e^{+\hbar \, T_i^+} - \, q_i^{+s-1} \, {e^{-\hbar \, T_i^-}} \;}
   \over {q_i^{+1} - \, q_i^{-1}}} \cdot E_i^{s-1} \,\Big)  \; =   \hfill  \cr
   \hskip37pt   = \;  E_i^r \cdot F_i \cdot E_j \, E_i^s \, + \, {[s]}_{q_i} \, E_i^r \, E_j \cdot {{\;\, q_i^{-s+1} \,
   e^{+\hbar \, T_i^+} - \, q_i^{+s-1} \, e^{-\hbar \, T_i^-} \;} \over {q_i^{+1} - \, q_i^{-1}}} \cdot E_i^{s-1}  \; =   \hfill  \cr
   \hskip74pt   = \;  F_i \cdot E_i^r \, E_j \, E_i^s \, + \, {[r]}_{q_i} \, {{\;\, q_i^{-r+1} \,
   e^{+\hbar \, T_i^+} - \, q_i^{+r-1} \, e^{-\hbar \, T_i^-} \;} \over {q_i^{+1} - \, q_i^{-1}}} \cdot E_i^{r-1} \, E_j \, E_i^s \; +   \hfill  \cr
   \hskip111pt   + \;\, {[s]}_{q_i} \, {{\;\, q_i^{-s+1-2r} q_{ij}^{-1} \,
   e^{+\hbar \, T_i^+} - \, q_i^{+s-1+2r} q_{ji}^{+1} \, e^{-\hbar \, T_i^-} \;} \over {q_i^{+1} - \, q_i^{-1}}} \cdot E_i^r \, E_j \, E_i^{s-1}   \hfill  }
   $$
 Applying this result, we get the following:
  $$  \displaylines{
   u^E_{ij} \cdot F_i  \; =  {\textstyle \sum\limits_{r+s=1-a_{ij}}} \hskip-5pt {(-1)}^s
   {\displaystyle {\left[ { 1-a_{ij} \atop s } \right]}_{\!q_i}} \! q_{ij}^{+s/2\,} q_{ji}^{-s/2} \, E_i^r \, E_j \, E_i^s \cdot F_i  \; =   \hfill  \cr
   \hskip15pt   =  {\textstyle \sum\limits_{r+s=1-a_{ij}}} \hskip-5pt {(-1)}^s {\displaystyle
   {\left[ { 1-a_{ij} \atop s } \right]}_{\!q_i}} \! q_{ij}^{+s/2\,} q_{ji}^{-s/2} \cdot
   \bigg( F_i \cdot E_i^r \, E_j \, E_i^s \;\; +   \hfill  \cr
   \hskip30pt   + \;\; {[r]}_{q_i} \, {{\;\, q_i^{-r+1} \, e^{+\hbar \, T_i^+} - \, q_i^{+r-1} \,
   e^{-\hbar \, T_i^-} \;} \over {q_i^{+1} \! - q_i^{-1}}} \cdot E_i^{r-1} \, E_j \, E_i^s \;\; +   \hfill  \cr
   \hskip45pt   + \;\; {[s]}_{q_i} \, {{\;\, q_i^{-s+1-2r} q_{ij}^{-1} \, e^{+\hbar \, T_i^+} - \,
   q_i^{+s-1+2r} q_{ji}^{+1} \, e^{-\hbar \, T_i^-} \;} \over {q_i^{+1} \! - q_i^{-1}}} \cdot E_i^r \, E_j \, E_i^{s-1} \bigg)  \;\; =   \hfill  \cr
   \hskip5pt   = \;\;  F_i \cdot \hskip-5pt {\textstyle \sum\limits_{r+s=1-a_{ij}}} \hskip-5pt {(-1)}^s
   {\displaystyle {\left[ { 1-a_{ij} \atop s } \right]}_{\!q_i}} \! q_{ij}^{+s/2\,} q_{ji}^{-s/2} \, E_i^r \, E_j \, E_i^s \;\; +   \hfill  \cr
   \hskip10pt   +  {\textstyle \sum\limits_{h+k=-a_{ij}}} \hskip-1pt {{{\; (-1)}^k \, \varGamma_{h,k}^+ \;}
   \over {\; q_i^{+1} \! - q_i^{-1} \;}} \; e^{+\hbar \, T_i^+} \! \cdot E_i^h \, E_j \, E_i^k  \; +
   {\textstyle \sum\limits_{h+k=-a_{ij}}} \hskip-1pt {{{\; (-1)}^k \, \varGamma_{h,k}^- \;}
   \over {\; q_i^{+1} \! - q_i^{-1} \;}} \; e^{-\hbar \, T_i^-} \! \cdot E_i^h \, E_j \, E_i^k  }  $$
 where the coefficients  $ \, \varGamma_{h,k}^+ \, $  and  $ \, \varGamma_{h,k}^- \, $  are given by
\begin{equation*}  \label{eq: Gamma_hk^pm}
  \hskip-7pt   \varGamma_{h,k}^\pm   \; = \;  q_{ij}^{+k/2} \, q_{ji}^{-k/2} \, q_i^{\mp h}
  \left( {\left[ {{h+k+\!1} \atop k} \right]}_{\!q_i} \!\! {\big[h+\!1\big]}_{\!q_i} \, - \, {\left[ {{h+k+\!1}
  \atop {k+\!1}} \right]}_{\!q_i} \!\! {\big[k+\!1\big]}_{\!q_i} \right)  \; = \;\,  0
\end{equation*}
   \indent   Plugging this result in the previous formulas, we get
  $$  \displaylines{
   \fbox{$ \, \ell = i \, $}  \,\; \Longrightarrow \;\,
      u^E_{ij} \cdot F_\ell  \,\; =  \;\,  F_i \, \cdot \hskip-5pt
      {\textstyle \sum\limits_{r+s=1-a_{ij}}} \hskip-7pt {(-1)}^s {\displaystyle {\left[ { 1-a_{ij} \atop s } \right]}_{\!q_i}}
      \! q_{ij}^{+s/2\,} q_{ji}^{-s/2} \, E_i^r \, E_j \, E_i^s \; + \; 0 \; + \; 0  \;\; =   \hfill  \cr
   \hfill   = \;\;  F_i \, u^E_{ij} \; + \; 0 \; + \; 0  \;\; = \;\;  F_i \otimes 1 \otimes u^E_{ij}  \;\; =
   \;\;  F_i \otimes 1 \otimes 0  \;\; = \;\; 0  }  $$
 which makes the job.  The case of  $ \, E_t \cdot u^F_{ij} \, $  is entirely similar, hence we are done.
\end{proof}

\vskip9pt

   The next result shows that
   \textit{$ \overrightarrow{U}^{\R,\otimes}_{\!P,\,\hbar}(\lieg) $  and  $ \overleftarrow{U}^{\R,\otimes}_{\!P,\,\hbar}(\lieg) $
   are nothing but different, explicit realizations of our FoMpQUEA  $ \uRPhg \, $;}
 \,moreover, from this we deduce an explicit description of the nilpotent, Borel and Cartan quantum subalgebras in  $ \uRPhg \, $.

\vskip11pt

\begin{theorem}  \label{thm: Uotimes-cong-uRPhg}  {\ }
 \vskip3pt
   (a)\,  There exist natural isomorphisms of topological\/  $ \kh $--algebras
  $$
  \displaylines{
   \overrightarrow{U}^{\R,\otimes}_{\!P,\,\hbar}(\lieg)  \, := \,  \uhat^- \, \widehat{\otimes}_\kh\, \uhat^0 \, \widehat{\otimes}_\kh\, \uhat^+  \,\; \cong \;\,  \uRPhg  \cr
   \overleftarrow{U}^{\R,\otimes}_{\!P,\,\hbar}(\lieg)  \, := \,  \uhat^+ \, \widehat{\otimes}_\kh\, \uhat^0 \, \widehat{\otimes}_\kh\, \uhat^-  \,\; \cong \;\, \uRPhg  }
   $$
 induced by multiplication in  $ \uRPhg \, $.
 \vskip3pt
   (b)\,  With notation as in  Definition \ref{def: Mp-Uhgd},
   the isomorphisms in claims (a) above induce by restriction similar isomorphisms
  $$  \displaylines{
   \uRPhnm \,\; \cong \;\, \uhat^-  \quad ,  \qquad  U_\hbar(\lieh) \,\; \cong \;\, \uhat^0  \quad ,  \qquad
   \uRPhnp \,\; \cong \;\, \uhat^+  \cr
   \uhat^\pm \,\widehat{\otimes}_\kh\, \uhat^0  \; \cong \;  \uRPhbpm  \quad ,   \qquad
 \uhat^0 \,\widehat{\otimes}_\kh\, \uhat^\pm  \; \cong \;  \uRPhbpm  }  $$
 It follows then that  $ \uRPhnpm \, $,  $ U_\hbar(\lieh) \, $,  and  $ \uRPhbpm $
 admit the obvious descriptions (in particular, by generators and relations)
   inherited from
  Definition \ref{def: Uotimes}.
\end{theorem}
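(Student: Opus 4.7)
The plan is to exhibit the isomorphism of (a) as the multiplication map and then read off (b) by restriction. Define
$$\mu \, : \, \overrightarrow{U}^{\R,\otimes}_{\!P,\,\hbar}(\lieg) \,\longrightarrow\, \uRPhg \, , \qquad F \otimes H \otimes E \,\longmapsto\, F \cdot H \cdot E \, ,$$
which is a continuous $\kh$-linear map (being induced by the bilinear, continuous multiplication on the $\hbar$-adically complete algebra $\uRPhg$ and extending uniquely to the completed tensor product). That $\mu$ is in fact a homomorphism of algebras is exactly the content of Lemma \ref{lemma: alg-struct-on-Uotimes(g)}: the product rules imposed on $\overrightarrow{U}^{\R,\otimes}_{\!P,\,\hbar}(\lieg)$ were designed precisely to match, on generators, the relations \eqref{eq: comm-rel's_x_uPhg} holding in $\uRPhg$, and continuity plus associativity propagate this agreement to the completion.

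For the inverse I would invoke the universal property of $\uRPhg$. The assignments $E_i \mapsto E_i^\otimes$, $T \mapsto T^\otimes$, $F_i \mapsto F_i^\otimes$ extend to a continuous algebra map $\iota : \uRPhg \longrightarrow \overrightarrow{U}^{\R,\otimes}_{\!P,\,\hbar}(\lieg)$ provided these images verify the defining relations of $\uRPhg$. The toral commutativity, the cross-commutation identities of $T^\otimes$ with $E_j^\otimes, F_j^\otimes$, the $[E_i^\otimes,F_j^\otimes]$-identity, and the quantum Serre relations on each side are all exactly the formulas from Lemma \ref{lemma: alg-struct-on-Uotimes(g)} (the Serre relations hold by construction inside $\uhat^\pm$). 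Hence $\iota$ is well-defined.

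Now $\mu \circ \iota$ sends each generator of $\uRPhg$ to itself, so by the universal property it equals $\id_{\uRPhg}$. For $\iota \circ \mu$, I would argue on pure tensors: extending the identities $F_i^\otimes \cdot T^\otimes = F_i \otimes T \otimes 1$ and $T^\otimes \cdot E_j^\otimes = 1 \otimes T \otimes E_j$ from generators to all $F \in \uhat^-,\, H \in \uhat^0,\, E \in \uhat^+$ by $\kh$-bilinearity, continuity and associativity gives $F^\otimes \cdot H^\otimes \cdot E^\otimes = F \otimes H \otimes E$, and therefore $(\iota \circ \mu)(F \otimes H \otimes E) = F \otimes H \otimes E$ on a dense set, hence everywhere. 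The left-handed version $\overleftarrow{U}^{\R,\otimes}_{\!P,\,\hbar}(\lieg) \cong \uRPhg$ is handled by the same argument, applied to the symmetric algebra structure furnished by the parallel half of Lemma \ref{lemma: alg-struct-on-Uotimes(g)}.

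For part (b), restricting $\mu$ along the tensor-factor inclusions $\uhat^\pm \hookrightarrow \overrightarrow{U}^{\R,\otimes}_{\!P,\,\hbar}(\lieg)$ (sending $E \mapsto 1 \otimes 1 \otimes E$ etc.) yields algebra maps $\uhat^\pm \to \uRPhg$ and $\uhat^0 \to \uRPhg$ whose images are, by definition, the topological subalgebras $\uRPhnpm$ and $U_\hbar(\lieh)$. These restricted maps are injective since the embedding of one tensor factor into a completed tensor product with nonzero complementary factors is injective, and since $\mu$ itself is an isomorphism. The analogous statements for $\uRPhbpm$ follow immediately by restricting $\mu$ to the partial tensor products $\uhat^\pm \widehat{\otimes}_\kh \uhat^0$ (or $\uhat^0 \widehat{\otimes}_\kh \uhat^\pm$), whose images are the topological subalgebras generated by $\lieh$ together with the $E_i$'s (resp.\ $F_i$'s), i.e.\ $\uRPhbp$ (resp.\ $\uRPhbm$). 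The hardest single ingredient is Lemma \ref{lemma: alg-struct-on-Uotimes(g)} itself, which is already established; beyond that the remaining obstacles are the topological ones of extending identities from generator-level formulas to the $\hbar$-adic completion, all of which reduce to continuity and completeness of $\uRPhg$ and of $\overrightarrow{U}^{\R,\otimes}_{\!P,\,\hbar}(\lieg)$.
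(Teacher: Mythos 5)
Your proposal is correct and follows essentially the same route as the paper: the paper constructs the algebra morphism $\uRPhg \longrightarrow \overrightarrow{U}^{\R,\otimes}_{\!P,\,\hbar}(\lieg)$ from the generators-and-relations presentation (your $\iota$) and the threefold multiplication map in the opposite direction (your $\mu$), checks they are mutually inverse, and deduces (b) by restriction. The only nuance is that the paper treats the multiplication map merely as a morphism of topological $\kh$--modules and obtains its multiplicativity a posteriori from bijectivity, whereas you assert up front that $\mu$ is an algebra map ``by Lemma \ref{lemma: alg-struct-on-Uotimes(g)}'' — a slight overstatement of what that lemma says, but harmless, since your argument goes through verbatim with the paper's ordering.
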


\begin{proof}
 \textit{(a)}\,  The two cases are similar, so we prove the claim only for  $ \overrightarrow{U}^{\R,\otimes}_{\!P,\,\hbar}(\lieg) \, $.
%
%
 \vskip3pt
   Consider the  $ \kh $--algebra  $ \; \overrightarrow{U}^{\R,\otimes}_{\!P,\,\hbar}(\lieg)
   := \uhat^- \,\widehat{\otimes}\, \uhat^0 \,\widehat{\otimes}\, \uhat^+ \; $  of  Lemma \ref{lemma: alg-struct-on-Uotimes(g)}.
   By construction, it is (topologically) generated by the elements  $ \, F_i^\otimes $,
   $ T^\otimes $,  $ E_j^\otimes $  ($ \, i \, , j \in I \, , T \in \lieh \, $),  \,and
   \textsl{these generators obey the same relations (up to ``inserting/removing'' any super/sub-script  ``$ \, \otimes $'')
   as the analogous generators of\/}  $ \uRPhg \, $.  Therefore, there exists an epimorphism of topological  $ \kh $--algebras
  $$  \pi \, : \, \uRPhg \relbar\joinrel\relbar\joinrel\relbar\joinrel\relbar\joinrel\twoheadrightarrow
U^{\R,\otimes}_{\!P,\,\hbar}(\lieg) \; ,  \qquad  F_i \mapsto F_i^\otimes \; ,  \;\;  T \mapsto T^\otimes \; ,
\;\;  E_j \mapsto E_j^\otimes  \;\quad  \big(\, \forall \; i \, , j \, ,  T \,\big)  $$
   \indent   On the other hand, for each  $ \, \bullet \in \{-\,,0\,,+\} \, $  there is a morphism
   $ \, \uhat^\bullet \,{\buildrel {\eta_\bullet} \over {\relbar\joinrel\relbar\joinrel\longrightarrow}}\, \uRPhg \, $
   mapping every generator of  $ \uhat^\bullet $  onto the corresponding generator in  $ \uRPhg \, $.
   Composing these with ``threefold-multiplication''  $ m_3 $  in  $ \uRPhg $  we obtain a morphism
  $$
  \mu_3 := m_3 \circ \big( \eta_- \otimes \eta_0 \otimes \eta_+ \big) :
  U^{\R,\otimes}_{\!P,\,\hbar}(\lieg) := \uhat^- \,\widehat{\otimes}\, \uhat^0 \,\widehat{\otimes}\, \uhat^+ \!\relbar\joinrel\relbar\joinrel\relbar\joinrel\longrightarrow \uRPhg
  $$
 of topological  $ \kh $--modules.
 Now, by construction, the map  $ \mu_3 $  is inverse to  $ \pi \, $,  so the latter is bijective,
 hence it is a  $ \kh $--algebra  isomorphism.  As on both sides we consider  $ \hbar $--adic  topology,
 this  $ \pi $  is then an homeomorphism of topological spaces too, so it is an isomorphism of topological  $ \kh $--algebras.
 Therefore  $ \, \mu_3 \, $,  being the inverse of  $ \pi \, $,  \,is an isomorphism of topological  $ \kh $--algebras  too, q.e.d.
 \vskip5pt
   \textit{(b)}\,  This follows quite easily from claim  \textit{(a)}.
\end{proof}

\vskip9pt

\begin{obs}  \label{obs: Uotimes = qdouble & Bosonization}
 The proof above relies on an  \textit{ad-hoc\/}  argument which has roots on well-known Hopf theoretic constructions:
 \textit{bosonization\/}  and  \textit{lifting}.  Indeed, the algebras  $ \uhat^\pm $  admit a \textit{braided\/}
 Hopf algebra structure whose comultiplication is defined by setting the generators  $ E_i \, $,  resp.\  $ F_i \, $,
 to be primitive elements, for all  $ \, i \in I \, $.  Hereafter, by braided Hopf algebra we mean a
 Hopf algebra in a braided tensor category; in the present case, the category is the category
 $ {}_{\uhat^0}^{\uhat^0}\mathcal{Y\,D} $  of Yetter-Drinfeld modules over the topological Hopf algebra
 $ \uhat^0 \, $.
%
%
 Given a Hopf algebra  $ B $ in a category of Yetter-Drinfeld modules  $ {}_H^H\mathcal{Y\,D} $
 over a Hopf algebra  $ H $,  there is a process to construct a usual Hopf algebra, called the
 ``Radford biproduct'' or  \textit{bosonization\/}:  it is kind of a semidirect product and coproduct,
 discovered by Radford and interpreted categorically by Majid.  As a vector space, the bosonization
 $ \, B \# H \, $  is just the vector space  $ \, B \otimes H \, $.  In our case, the (completed) tensor product
 $ \, \uhat^- \otimes \uhat^+ \, $  of the braided Hopf algebras is again a braided Hopf algebra, and the bosonization
 $ \, \big( \uhat^- \otimes \uhat^+ \big) \, \# \, \uhat^0 \, $  is a topological, complete Hopf  $ \kh $--algebra.
 By construction, it can be presented by the generators  $ \, T \in \lieh \, $  and  $ E_i \, $,  $ F_i $  with  $ \, i\in I \, $,
 satisfying all the relations in  \eqref{eq: comm-rel's_x_uPhg}  except the commuting relation between  $ E_i $
 and  $ F_j \, $;  in fact, the latter now is replaced simply by  $ \, \big[ E_i \, , F_j \big] = 0 \, $.
 Here enters into the picture the process of  \textit{lifting\/}  or  \textit{deformation\/}:
 \,through this process, one deforms the relations in a specific way, in our case, the element
 $ \, \big[ E_i \, , F_j \big] \in \uhat^- \otimes \uhat^+ \, $  becomes skew-primitive in the bosonization
 $ \, \big( \uhat^- \otimes \uhat^+ \big) \, \# \, \uhat^0 \, $  and one change the relation by setting
 it equal to the difference between the group-like elements appearing in the comultiplication, that is
 $ \; E_i \, F_j \, - \, F_j \, E_i  \; = \;  \delta_{i,j} \, {{\; e^{+\hbar \, T_i^+} - \, e^{-\hbar \, T_i^-} \;} \over {\; q_i^{+1} - \, q_i^{-1} \;}} \; $.
                                                                           \par
   In general, the lifting process can also be described via cocycle deformations.
                                                                           \par
   All these constructions are described explicitly in  \cite{Gar}  in the case of polynomial multiparameter quantum groups.  It is worth noting that through bosonizations and quantum doubles one can generalize triangular decompositions to more general families of Hopf algebras or quantum groups; this implies special features in representation theory, see for example  \cite{PV}.
\end{obs}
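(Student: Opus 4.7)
The plan is to prove Theorem \ref{thm: Uotimes-cong-uRPhg} by recasting $\uRPhg$ as a lifting (2-cocycle deformation) of a Radford biproduct built over the Cartan part $\uhat^0$, so that the triangular decomposition falls out tautologically from the bosonization construction and then is transported along the lifting. First, I would endow $\uhat^0 := \widehat{S}_\hbar(\lieh)$ with its obvious topological Hopf structure in which every $T\in\lieh$ is primitive; note that the elements $K^\pm_i := e^{\pm\hbar T_i^\pm}\in\uhat^0$ are grouplike. Then I would realize $\uhat^+$ and $\uhat^-$ as braided Hopf algebras in the braided category ${}^{\uhat^0}_{\uhat^0}\mathcal{YD}$ of (topological) Yetter--Drinfeld modules over $\uhat^0$: on the generators the YD-structures are prescribed by the coactions $\rho(E_i) = K^+_i\otimes E_i$, $\rho(F_i) = K^-_i\otimes F_i$ and the adjoint $\uhat^0$-actions $T\cdot E_i = \alpha_i(T)E_i$, $T\cdot F_i = -\alpha_i(T)F_i$, with each $E_i$, respectively $F_i$, declared braided primitive; the resulting braiding is of diagonal type $c(E_i\otimes E_j) = q_{ij}\,E_j\otimes E_i$ (and symmetrically for the $F$'s), and one checks that the relations $u^E_{ij}=0$, $u^F_{ij}=0$ of Definition \ref{def: Uotimes} are precisely the braided primitive Serre relations killing the Nichols-type quotients — so $\uhat^\pm$ are well-defined braided Hopf algebras in ${}^{\uhat^0}_{\uhat^0}\mathcal{YD}$.

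Second, I would form the braided tensor product $\uhat^-\,\underline{\otimes}\,\uhat^+$ inside ${}^{\uhat^0}_{\uhat^0}\mathcal{YD}$ and take its Radford bosonization
$$H_0 \; := \; \bigl(\uhat^-\,\underline{\otimes}\,\uhat^+\bigr)\,\#\,\uhat^0.$$
By general bosonization theory (applied in the $\hbar$-adic topological setting), $H_0$ is an ordinary topological Hopf $\kh$-algebra whose underlying topologically free $\kh$-module is canonically $\uhat^-\,\widehat{\otimes}\,\uhat^0\,\widehat{\otimes}\,\uhat^+$. A straightforward reading of its presentation shows that $H_0$ satisfies every relation of \eqref{eq: comm-rel's_x_uPhg} except the $(E,F)$-commutation, which in $H_0$ reads simply $[E_i,F_j]_{H_0}=0$. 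In other words, $H_0$ is the ``undeformed'' cousin of $\uRPhg$ in which the $E$- and $F$-nilpotent halves have been glued to $\uhat^0$ but have not yet been forced to interact with each other.

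Third, I would produce $\uRPhg$ as a lifting of $H_0$. For each pair $(i,j)$ the element
$$x_{ij} \; := \; E_i F_j - F_j E_i \, - \, \delta_{ij}\,\frac{\,K^+_i-K^-_i\,}{q_i - q_i^{-1}}$$
is $(1,1)$-skew-primitive in $H_0$ — the coproducts of the two terms agree, as a direct Heyneman--Sweedler calculation using $\rho(E_i)=K^+_i\otimes E_i$ and $\rho(F_j)=K^-_j\otimes F_j$ shows — and the closed two-sided ideal these generate is a Hopf ideal. Let $H_1$ denote the resulting quotient Hopf algebra; by construction, $H_1$ has the presentation of $\uRPhg$, so there is a Hopf algebra epimorphism $H_1 \twoheadrightarrow \uRPhg$. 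Conversely, the assignment $E_i\mapsto E_i$, $F_i\mapsto F_i$, $T\mapsto T$ extends to an algebra map $\uRPhg \to H_1$ because the defining relations of $\uRPhg$ hold in $H_1$ by design, and a direct check (using the triangular PBW structure of $H_0$, hence of $H_1$) shows these maps are mutually inverse. Thus $\uRPhg\cong H_1$, and the triangular decomposition $\uhat^-\,\widehat{\otimes}\,\uhat^0\,\widehat{\otimes}\,\uhat^+$ of $H_0$ descends through the lifting to give the claimed isomorphism.

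The main obstacle will be the $\hbar$-adic technicalities that do not appear in the classical polynomial Nichols-algebra theory. Two issues require care: (i) proving that bosonization and lifting commute with $\hbar$-adic completion, so that the Radford biproduct and the quotient by the $x_{ij}$ remain \emph{topologically free} of the expected triangular shape; and (ii) verifying that the presentation of $\uhat^\pm$ is exactly the braided Serre one, with no hidden relations creeping in — i.e. that the naive generator-and-relation description coincides with the Nichols-algebra-type object. Both can be handled by reducing modulo $\hbar$, where one recovers the classical enveloping algebra picture, and then lifting via faithful flatness of $\kh$-modules that are topologically free; this bookkeeping is the substantive, rather than conceptual, heart of the proof.
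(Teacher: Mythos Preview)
The statement you are addressing is an \emph{Observation}, not a theorem: the paper does not prove the triangular decomposition via bosonization and lifting, but rather by the direct elementary argument of Lemma~\ref{lemma: alg-struct-on-Uotimes(g)} (building the algebra structure on $\uhat^-\widehat{\otimes}\,\uhat^0\widehat{\otimes}\,\uhat^+$ by hand and checking that the quantum Serre relations are respected). Observation~\ref{obs: Uotimes = qdouble & Bosonization} is only a conceptual commentary explaining that this \textit{ad hoc} computation is secretly an instance of bosonization-then-lifting. Your proposal is therefore an attempt to turn that commentary into an independent proof, which is a different (and more structural) route than the paper takes.

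That said, your step 3 contains a genuine error that would make the argument collapse. Lifting is \emph{not} a quotient of the bosonization $H_0$. In $H_0$ one already has $[E_i,F_j]_{H_0}=0$ (as the Observation itself states), so your element $x_{ii}=E_iF_i-F_iE_i-\dfrac{K_i^+-K_i^-}{q_i-q_i^{-1}}$ equals $-\dfrac{K_i^+-K_i^-}{q_i-q_i^{-1}}$ inside $H_0$; quotienting by the ideal it generates would force $e^{\hbar T_i^+}=e^{-\hbar T_i^-}$, i.e.\ $T_i^++T_i^-=0$, collapsing the Cartan part. The resulting $H_1$ is strictly smaller than $H_0$, so it cannot carry the triangular PBW basis you invoke, and the ``mutually inverse'' check at the end necessarily fails. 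The correct mechanism --- hinted at in the Observation and made explicit in \cite{Gar} --- is that $\uRPhg$ is a \emph{cocycle deformation} $(H_0)_\sigma$ of the bosonization (equivalently, a filtered lift of the graded Hopf algebra $H_0$), which changes the multiplication while preserving the underlying coalgebra and $\kh$--module; it is this that transports the tensor decomposition of $H_0$ to a triangular decomposition of $\uRPhg$. If you want to make this route rigorous you must replace the quotient step by the construction of an explicit Hopf $2$--cocycle on $H_0$ and verify that the deformed product reproduces the relation $[E_i,F_j]=\delta_{ij}\dfrac{e^{\hbar T_i^+}-e^{-\hbar T_i^-}}{q_i-q_i^{-1}}$.
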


\vskip5pt

   Eventually, we can prove the ``triangular decomposition'' for our FoMpQUEAs:

\vskip11pt

\begin{theorem}  \label{thm: triang-decomp.'s}
 (``Triangular Decompositions'' in  $ \uRPhg $)
 \vskip3pt
   There exist natural isomorphisms of topological\/  $ \kh $--algebras
  $$  \displaylines{
   \uRPhnmp \,\widehat{\otimes}_\kh\, U_\hbar(\lieh)  \,\; \cong \;\,  \uRPhbmp  \quad ,   \qquad
   U_\hbar(\lieh) \,\widehat{\otimes}_\kh\, \uRPhnmp  \,\; \cong \;\,  \uRPhbmp  \cr
%
   \uRPhnm \,\widehat{\otimes}_\kh\, U_\hbar(\lieh) \,\widehat{\otimes}_\kh\, \uRPhnp  \,\; \cong \;\,  \uRPhg  \cr
   \uRPhnp \,\widehat{\otimes}_\kh\, U_\hbar(\lieh) \,\widehat{\otimes}_\kh\, \uRPhnm  \,\; \cong \;\,  \uRPhg  }  $$
 (notation as in  Definition \ref{def: Mp-Uhgd})  induced by multiplication in  $ \uRPhg \, $.
\end{theorem}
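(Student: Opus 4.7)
The proof is essentially a direct translation of Theorem \ref{thm: Uotimes-cong-uRPhg} from the language of the abstractly defined algebras $\uhat^\pm$ and $\uhat^0$ to that of the internal subalgebras $\uRPhnpm$ and $U_\hbar(\lieh)$ of $\uRPhg$. The plan is to first verify that the two ``big'' isomorphisms (the last two lines of the statement) follow by composing the isomorphisms already built, and then to extract the ``Borel'' isomorphisms (the first two lines) either as restrictions or as a special case of the same argument.

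First I would observe that Theorem \ref{thm: Uotimes-cong-uRPhg}(a) already provides topological $\kh$-algebra isomorphisms $\mu_3: \uhat^- \,\widehat{\otimes}\, \uhat^0 \,\widehat{\otimes}\, \uhat^+ \xrightarrow{\cong} \uRPhg$ and its mirror $\overleftarrow{\mu}_3: \uhat^+ \,\widehat{\otimes}\, \uhat^0 \,\widehat{\otimes}\, \uhat^- \xrightarrow{\cong} \uRPhg$, both induced by multiplication in $\uRPhg$. Then part (b) of the same theorem gives the identifications $\uhat^-\cong \uRPhnm$, $\uhat^0\cong U_\hbar(\lieh)$, $\uhat^+\cong\uRPhnp$ via the natural maps $\eta_\bullet$ into $\uRPhg$. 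Tensoring these together and composing with $\mu_3$, resp.\ $\overleftarrow{\mu}_3$, yields the desired isomorphisms
\[
\uRPhnm \,\widehat{\otimes}\, U_\hbar(\lieh) \,\widehat{\otimes}\, \uRPhnp \,\xrightarrow{\cong}\, \uRPhg \,, \qquad \uRPhnp \,\widehat{\otimes}\, U_\hbar(\lieh) \,\widehat{\otimes}\, \uRPhnm \,\xrightarrow{\cong}\, \uRPhg \,,
\]
each implemented by the multiplication map. This handles the last two isomorphisms.

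For the ``Borel'' isomorphisms, I would proceed analogously but restricted to two tensor factors. The same proof scheme as in Lemma \ref{lemma: alg-struct-on-Uotimes(g)} endows $\uhat^\pm \,\widehat{\otimes}\, \uhat^0$ (and $\uhat^0 \,\widehat{\otimes}\, \uhat^\pm$) with a well-defined topological $\kh$-algebra structure in which the straightening relations $T\cdot E_j = E_j\otimes 1 + 1\otimes \alpha_j(T)E_j$ (and its analogue for $F_i$) hold; in fact this is already stated inside Theorem \ref{thm: Uotimes-cong-uRPhg}(b). Coupled with the identifications $\uhat^\pm\cong\uRPhnpm$ and $\uhat^0\cong U_\hbar(\lieh)$, multiplication in $\uRPhg$ then induces topological $\kh$-algebra isomorphisms
\[
\uRPhnmp \,\widehat{\otimes}_\kh\, U_\hbar(\lieh) \,\cong\, \uRPhbmp \,,\qquad U_\hbar(\lieh) \,\widehat{\otimes}_\kh\, \uRPhnmp \,\cong\, \uRPhbmp \,,
\]
since surjectivity is clear (the image is a closed subalgebra containing the generators of $\uRPhbpm$) and injectivity follows from the analogous statement for the full triangular decomposition by restricting along the natural inclusion $\uRPhbpm \hookrightarrow \uRPhg$.

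There is essentially no obstacle here beyond clerical bookkeeping: all the real work was already done in Lemma \ref{lemma: alg-struct-on-Uotimes(g)} (to check that the prescribed relations on $\overrightarrow{U}^{\R,\otimes}_{\!P,\,\hbar}(\lieg)$ are consistent, notably the non-trivial verification that quantum Serre relations are preserved under the deformed multiplication) and in Theorem \ref{thm: Uotimes-cong-uRPhg}(a) (where the mutual inverses $\pi$ and $\mu_3$ were constructed and shown to be continuous). The only mild subtlety is the $\hbar$-adic completion: one must check that the multiplication map restricted to the relevant tensor factor is continuous in the $\hbar$-adic topology and that its inverse (the straightening algorithm implicit in $\pi$) is $\hbar$-adically continuous as well — but this follows at once because all relevant maps are already $\kh$-linear homomorphisms between $\hbar$-adically complete topologically free $\kh$-modules, hence automatically continuous.
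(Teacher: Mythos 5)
Your proposal is correct and takes essentially the same route as the paper: the paper's own proof of this theorem is simply the remark that it is a direct consequence of Theorem \ref{thm: Uotimes-cong-uRPhg}, whose parts \textit{(a)} and \textit{(b)} you invoke (with the real work residing in Lemma \ref{lemma: alg-struct-on-Uotimes(g)}). Your extra bookkeeping on composing the identifications, the Borel restrictions, and $\hbar$-adic continuity only makes explicit what the paper leaves implicit.
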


\begin{proof}
 This is a direct consequence of  Theorem \ref{thm: Uotimes-cong-uRPhg}  above.
\end{proof}

\medskip

\subsection{Hopf structure on FoMpQUEAs}  \label{subsec: Hopf struct x FoMpQUEAs}
 {\ }
 \vskip11pt
   We introduce now on our FoMpQUEAs a structure of topological Hopf algebra.
   Another proof of its existence will follow from an alternative construction
   (cf.\  \S \ref{sec: constr-FoMpQUEAs}).  To begin with, we explain the link between our ``formal'' and the ``polynomial'' one.

\vskip11pt

\begin{obs}  \label{obs: form-MpQUEAs_vs_pol-MpQUEAs}
 The usual, formal QUEA  $ U_\hbar(\lieg) $  by Drinfeld has a ``polynomial'' sibling  $ U_q(\lieg) $
 introduced by Jimbo and Lusztig: the latter is a (Hopf) subalgebra of the former, over the subring
 $ \, \Bbbk\big[\, q \, , q^{-1} \,\big] \, $  of  $ \kh \, $,  \,with  $ \, q^{\pm 1} := e^{\,\pm\hbar} \, $.
%
%
                                                            \par
   Our notion of  {\sl formal\/}  multiparameter QUEA is explicitly tailored so that this parallelism
   extend to the multiparameter setting, linking our formal multiparameter  $ \uRPhg $  with a
   ``polynomial'' multiparameter QUEA  $ U_{\mathbf{q}}(\lieg) $  as in  \cite{HPR}  or  \cite{GG1}.
                                                            \par
   To see this, we consider a matrix  $ P $  of Cartan type, whose associated Cartan matrix is  $ A $,
   and a \textsl{split\/}  realization  $ \, \R = \big(\, \lieh \, , \Pi \, , \Pi^\vee \,\big) \, $  of it: for this the algebra  $ \uRPhg $
   is defined and we begin by modifying the presentation of the latter.
   First, replace each  $ F_i $  by  $ \, \dot{F}_i := q_i F_i \, $  for all  $ \, i \in I \, $.
   Then the fourth relation in  \eqref{eq: comm-rel's_x_uPhg}  reads
$$
   E_i \, \dot{F}_j \, - \, \dot{F}_j \, E_i  \,\; = \;\,
   \delta_{i,j} \, q_{ii} \, {{\; e^{+\hbar \, T_i^+} - \, e^{-\hbar \, T_i^-} \;} \over {\; q_{ii} - \, 1 \;}}
   \quad \qquad  \forall \;\; i \, , j \in I  \quad
$$
                                                                \par
   Second, using the relation
 $ \; {\bigg( {\displaystyle {n \atop k}} \bigg)}_{\!\!q_{ii}} \! = \, q_i^{\,k\,(n-k)} {\bigg[\, {\displaystyle {n \atop k}} \,\bigg]}_{q_i} \; $
 (cf.\ \S \ref{q-numbers})  along with the identity  $ \; q_{ij} \, q_{ji} = q_{ii}^{\,a_{ij}} \; $
 --- which follows from the assumption  $ \, P + P^{\scriptscriptstyle t} = 2\,D\,A \, $  and definitions, see
 Definition \ref{def: params-x-Uphgd},  we can re-write the two last relations in  \eqref{eq: comm-rel's_x_uPhg}  as
$$
\begin{aligned}
   \sum\limits_{k = 0}^{1-a_{ij}} (-1)^k {\left( { 1-a_{ij} \atop k }
\right)}_{\!\!q_{ii}} q_{ii}^{k \choose 2} q_{ij}^{\,k} \, E_i^{1-a_{ij}-k} E_j E_i^k  \; = \;  0   \quad \qquad   (\, i \neq j \,)  \\
   \sum\limits_{k = 0}^{1-a_{ij}} (-1)^k {\left( { 1-a_{ij} \atop k }
\right)}_{\!\!q_{ii}} q_{ii}^{k \choose 2} q_{ij}^{\,k} \, \dot{F}_i^k \dot{F}_j \dot{F}_i^{1-a_{ij}-k}  \; = \;  0
\quad \qquad   (\, i \neq j \,)
\end{aligned}
$$
   \indent   With this reformulation,  $ \uRPhg $  has the following, alternative presentation:
   it is the unital, associative, topological,  $ \hbar $--adically  complete  $ \kh $--algebra
   generated by the  $ \kh $--submodule  $ \lieh $  and the elements
 $ \, E_i \, $,  $ \dot{F}_i \, $  (for all  $ \, i \in I \, $),
  with relations
\begin{equation}  \label{eq: multip-comm-rel's_x_uPhgd_ALTERN}
 \begin{aligned}
   T \, E_j \, - \, E_j \, T  \, = \,  +\alpha_j(T) \, E_j  \;\; ,  \qquad  T \, \dot{F}_j \, - \, \dot{F}_j \, T  \; = \;
   -\alpha_j(T) \, \dot{F}_j   \hskip51pt  \\
   \hskip-13pt   T' \, T''  \; = \;  T'' \, T'  \;\; ,  \;\quad   E_i \, \dot{F}_j \, - \, \dot{F}_j \, E_i  \; = \;
   \delta_{i,j} \, q_{ii} \, {{\; e^{+\hbar \, T_i^+} - \, e^{-\hbar \, T_i^-} \;} \over {\; q_{ii} - \, 1 \;}}   \quad  (\, i, j \in I \,)  \\
   \sum\limits_{k = 0}^{1-a_{ij}} (-1)^k {\left[ { 1-a_{ij} \atop k }
\right]}_{\!q_i} q_{ij}^{+k/2\,} q_{ji}^{-k/2} \, E_i^{1-a_{ij}-k} E_j E_i^k  \; = \;  0   \qquad  (\, i \neq j \,)   \hskip25pt  \\
   \sum\limits_{k = 0}^{1-a_{ij}} (-1)^k {\left[ { 1-a_{ij} \atop k }
\right]}_{\!q_i} q_{ij}^{+k/2\,} q_{ji}^{-k/2} \, \dot{F}_i^k \dot{F}_j \dot{F}_i^{1-a_{ij}-k}  \; = \;  0   \qquad  (\, i \neq j \,)   \hskip25pt
 \end{aligned}
\end{equation}
 for all  $ \, T , T' , T'' \in \lieh \, $,  $ \, i, j \in I \, $.
%
%

%
 \vskip3pt

   Now we set $ \, q^{\pm 1} := e^{\,\pm\hbar} \in \kh \, $ and consider   the  $ \Bbbk $--subalgebra
   $ \, \Bbbk\big[\, q \, , q^{-1} \,\big] \, $  of  $ \kh \, $,  \,and the elements
   $ \; K_i^{\pm 1} := e^{\pm\hbar\,T_i^+} \, $,  $ \; L_i^{\pm 1} := e^{\mp\hbar\,T_i^-} \; $  ($ \, i \in I \, $)  in  $ \, \uRPhg \, $.
   Then in the scalar extension
 $ \; \URPhg \, := \, \kq \operatorname{\otimes}\limits_{\Bbbk[q\,,\,q^{\pm 1}]} \uRPhg \, $,
 \,we slightly abuse notation by writing  $ \, X := 1 \otimes X \, $  for any  $ \, X \in \uRPhg \, $,  \,and we define
 $ U_{\mathbf{q}}(\lieg) $
 to be the unital  $ \Bbbk(q) $--subalgebra  of  $ \URPhg $  generated by
 $ \, {\big\{ K_i^{\pm 1} , L_i^{\pm 1} , E_i \, , \dot{F}_i \,\big\}}_{i \in I} \, $.
 By construction and by  \eqref{eq: multip-comm-rel's_x_uPhgd_ALTERN},
 we can describe  $ U_{\mathbf{q}}(\lieg) $  as being the unital, associative algebra over  $ \kq $
 with generators  $ \, K_i^{\pm 1} \, $,  $ L_i^{\pm 1} \, $,  $ \, E_i \, $,  $ \dot{F}_i \, $  (for all  $ \, i \in I \, $)  and relations
\begin{equation}  \label{eq: comm-rel's_x_Uqgd}
 \begin{aligned}
   K_i^{\pm 1} K_j^{\pm 1}  \, = \,  K_j^{\pm 1} K_i^{\pm 1}  \; ,
 \qquad  K_i^{\pm 1} L_j^{\pm 1}  \, = \,  L_j^{\pm 1} K_i^{\pm 1}  \; ,
 \qquad  L_i^{\pm 1} L_j^{\pm 1}  \, = \,  L_j^{\pm 1} L_i^{\pm 1}  \\
   K_i^{+1} K_i^{-1}  = \,  1  \, = \,  K_i^{-1} K_i^{+1}  \quad  ,
 \qquad  L_i^{+1} L_i^{-1}  = \,  1  \, = \,  L_i^{-1} L_i^{+1}    \quad \qquad  \\
   K_i^{\pm 1} \, E_j \, K_i^{\mp 1}  \, = \;  q_{i,j}^{\,\pm 1} \, E_j  \quad  ,
 \qquad  L_i^{\pm 1} \, E_j \, L_i^{\mp 1}  \, = \;  q_{j,i}^{\,\mp 1} \, E_j    \quad \quad \qquad  \\
   K_i^{\pm 1} \, \dot{F}_j \, K_i^{\mp 1}  \, = \;  q_{i,j}^{\,\mp 1} \, \dot{F}_j  \quad  ,
 \qquad  L_i^{\pm 1} \, \dot{F}_j \, L_i^{\mp 1}  \, = \;  q_{j,i}^{\,\pm 1} \, \dot{F}_j    \;\quad \quad \qquad  \\
   E_i \, \dot{F}_j \, - \, \dot{F}_j \, E_i  \; = \;  \delta_{i,j} \, q_{ii} \, {{\; K_i - \, L_i \;} \over {\; q_{ii} - \, 1 \;}}   \qquad \qquad \qquad  \\
   \sum\limits_{k = 0}^{1-a_{ij}} (-1)^k {\left( { 1-a_{ij} \atop k }
\right)}_{\!\!q_{ii}} q_{ii}^{k \choose 2} q_{ij}^k \, E_i^{1-a_{ij}-k} E_j E_i^k  \; = \;  0   \quad \qquad   (\, i \neq j \,)  \\
   \sum\limits_{k = 0}^{1-a_{ij}} (-1)^k {\left( { 1-a_{ij} \atop k }
\right)}_{\!\!q_{ii}} q_{ii}^{k \choose 2} q_{ij}^k \, \dot{F}_i^k \dot{F}_j \dot{F}_i^{1-a_{ij}-k}  \; = \;  0   \quad \qquad   (\, i \neq j \,)
 \end{aligned}
\end{equation}
   \indent   Next we consider the scalar extension  $ \; \UU_\hbar(\lieh) := \kq \operatorname{\otimes}\limits_{\Bbbk[q\,,\,q^{\pm 1}]} \uhh \; $
   of  $ \, \uhh \, $   --- cf.\ Definition \ref{def: Mp-Uhgd}{\it (b)}  ---   which clearly embeds as a ``Cartan subalgebra'' into
   $ \; \URPhg \, := \, \kq \operatorname{\otimes}\limits_{\Bbbk[q\,,\,q^{\pm 1}]} \uRPhg \; $;  \,let also  $ U_{\mathbf{q}}(\lieh) $  be the  $ \kq $--subalgebra
   ---   inside  $ U_{\mathbf{q}}(\lieg) \, $,  $ U_\hbar(\lieh) $  and  $ \UU_\hbar(\lieh) $  ---   generated by all the  $ K_i^{\pm 1} $'s  and all the  $ L_i^{\pm 1} $'s.
   Note that inside  $ \UU_\hbar(\lieh) $  we have  $ \; \pm T_i^+ = \hbar^{-1} \log\big(K_i^{\pm 1}\big) \; $  and  $ \; \pm T_i^- = \hbar^{-1} \log\big(L_i^{\pm 1}\big) \; $.
   By construction, both  $ \UU_\hbar(\lieh) $  and  $ U_{\mathbf{q}}(\lieg) $  are modules (on the right and on the left, respectively) for the commutative algebra
   $ U_{\mathbf{q}}(\lieh) \, $;  therefore, the  $ U_{\mathbf{q}}(\lieh) $--module
   $ \, \UU_\hbar(\lieh) \hskip-3pt \mathop{\otimes}\limits_{U_{\mathbf{q}}(\lieh)} \hskip-3pt U_{\mathbf{q}}(\lieg) \, $  is well-defined.
%
%
 Finally, the  $ \hbar $--adic  completion of the latter
 actually identifies with its closure inside  $ \, \URPhg \, $,  which is exactly all of  $ \, \URPhg \, $:
 \,in a nutshell, we have a (complete) tensor product factorization
 $ \; \URPhg = \UU_\hbar(\lieh) \hskip-2pt \mathop{\widehat{\otimes}}\limits_{U_{\mathbf{q}}(\lieh)} \hskip-2pt U_{\mathbf{q}}(\lieg) \; $.
\end{obs}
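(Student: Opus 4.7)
My plan is to deduce the claimed factorization $\URPhg \cong \UU_\hbar(\lieh) \,\hat{\otimes}_{U_{\mathbf{q}}(\lieh)}\, U_{\mathbf{q}}(\lieg)$ by comparing triangular decompositions on both sides. First, from Theorem~\ref{thm: triang-decomp.'s}, after scalar extension from $\Bbbk[q,q^{-1}]$ to $\kq$, I obtain a topological $\kq$-algebra isomorphism
\[
\URPhg \,\cong\, \mathbb{U}_\hbar(\lien_-) \,\hat\otimes_{\kq}\, \UU_\hbar(\lieh) \,\hat\otimes_{\kq}\, \mathbb{U}_\hbar(\lien_+),
\]
where $\mathbb{U}_\hbar(\lien_\pm) := \kq \otimes_{\Bbbk[q,q^{-1}]} \uRPhnpm$. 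The next step is to identify these nilpotent factors with the polynomial nilpotent subalgebras $U_{\mathbf{q}}(\lien_\pm)$ sitting inside $U_{\mathbf{q}}(\lieg)$. This rests on Theorem~\ref{thm: Uotimes-cong-uRPhg}(b): both are presented over $\kq$ by generators $E_i$ (respectively $\dot{F}_i$) subject exactly to the quantum Serre relations of \eqref{eq: comm-rel's_x_Uqgd}, and since those relations only involve the coefficients $q_{ii}, q_{ij}$, no genuine $\hbar$-adic completion is actually needed on these factors.

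Second, I would invoke the analogous triangular decomposition for the polynomial algebra,
\[
U_{\mathbf{q}}(\lieg) \,\cong\, U_{\mathbf{q}}(\lien_-) \,\otimes_\kq\, U_{\mathbf{q}}(\lieh) \,\otimes_\kq\, U_{\mathbf{q}}(\lien_+),
\]
which follows from the presentation \eqref{eq: comm-rel's_x_Uqgd} by the same bosonization/lifting scheme sketched in Observation~\ref{obs: Uotimes = qdouble & Bosonization}, with $U_{\mathbf{q}}(\lieh)$ playing the role of the toral bosonization algebra.

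Third, I would assemble the proof. The embedding $U_{\mathbf{q}}(\lieh) \hookrightarrow \UU_\hbar(\lieh)$ given by $K_i^{\pm 1} = e^{\pm \hbar T_i^+}$ and $L_i^{\pm 1} = e^{\mp \hbar T_i^-}$ makes the balanced tensor product $\UU_\hbar(\lieh) \,\hat{\otimes}_{U_{\mathbf{q}}(\lieh)}\, U_{\mathbf{q}}(\lieg)$ well defined, and multiplication in $\URPhg$ supplies an algebra map $\mu$ from it to $\URPhg$. Substituting the polynomial triangular decomposition on the right and sliding the outer $U_{\mathbf{q}}(\lien_\pm)$ factors past the balancing (on which they act only as weight vectors) yields
\[
\UU_\hbar(\lieh) \,\hat{\otimes}_{U_{\mathbf{q}}(\lieh)}\, U_{\mathbf{q}}(\lieg) \,\cong\, U_{\mathbf{q}}(\lien_-) \,\hat{\otimes}_{\kq}\, \UU_\hbar(\lieh) \,\hat{\otimes}_{\kq}\, U_{\mathbf{q}}(\lien_+),
\]
which by the first step equals $\URPhg$, proving that $\mu$ is a bijective $\kq$-algebra isomorphism.

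The main technical obstacle I expect is verifying compatibility across the balanced tensor product. Concretely, one must check that the formal relation $[T, E_j] = \alpha_j(T) E_j$ for $T \in \lieh$, once exponentiated, reproduces $K_i E_j K_i^{-1} = q_{ij} E_j$ as in \eqref{eq: comm-rel's_x_Uqgd}. This amounts to the formal identity $e^{\hbar T} X e^{-\hbar T} = e^{\hbar \alpha(T)} X$ for a weight vector $X$, which holds in $\uRPhg$ by construction and is precisely what guarantees both that $\mu$ respects the balancing over $U_{\mathbf{q}}(\lieh)$ and that the two triangular factorizations match after $\hbar$-adic completion.
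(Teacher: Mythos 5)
Your strategy --- deducing the factorization by comparing the formal triangular decomposition of Theorem \ref{thm: triang-decomp.'s} with a polynomial triangular decomposition of $ U_{\mathbf{q}}(\lieg) $ and then sliding the nilpotent factors across the balanced tensor product --- is a sensible and more structured route than the paper's own treatment, which instead just rewrites the presentation (the substitution $ \dot{F}_i := q_i F_i $, the $ q $--binomial identity of \S \ref{q-numbers}, and $ q_{ij}\,q_{ji} = q_{ii}^{\,a_{ij}} $), defines $ U_{\mathbf{q}}(\lieg) $ inside $ \URPhg $, and then observes that the completion of $ \, \UU_\hbar(\lieh) \otimes_{U_{\mathbf{q}}(\lieh)} U_{\mathbf{q}}(\lieg) \, $ identifies with its closure inside $ \URPhg \, $, which contains $ \lieh \, $, all the $ E_i $'s and all the $ F_i = q_i^{-1} \dot{F}_i $'s and hence is everything.

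However, there is a genuine flaw in your argument, located exactly where the content of the statement lies. Your identification of $ \, \kq \otimes_{\Bbbk[q,q^{-1}]} \uRPhnpm \, $ with the polynomial algebras $ U_{\mathbf{q}}(\lien_\pm) $, justified by the claim that ``no genuine $ \hbar $--adic completion is actually needed on these factors'', is false: by Definition \ref{def: Uotimes} and Theorem \ref{thm: Uotimes-cong-uRPhg}\textit{(b)}, $ \uRPhnpm $ is by construction the $ \hbar $--adic \emph{completion} of the $ \kh $--algebra on the $ E_i $'s (resp.\ $ F_i $'s) modulo the quantum Serre relations, and it strictly exceeds the span of finitely many PBW monomials --- e.g.\ $ \sum_{n \geq 0} \hbar^n E_i^{\,n} $ lies in $ \uRPhnp $ but in no polynomial subalgebra, and this persists after base change to $ \kq \, $. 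Consequently your chain of isomorphisms does not close as written: your first step expresses $ \URPhg $ through the \emph{completed} nilpotent factors, your third step produces the \emph{polynomial} ones, and equating the two --- i.e.\ showing that completing (equivalently, taking closure inside $ \URPhg $) the polynomial nilpotent parts recovers $ \kq \otimes \uRPhnpm $ --- is precisely what the factorization asserts and is the one point your proof must, but does not, supply. (Note also that after extending scalars to $ \kq $ the element $ q - 1 $, hence $ \hbar $ up to a unit of $ \kh $, becomes invertible, so ``$ \hbar $--adic completion'' over $ \kq $ must be interpreted, as the paper does, as closure inside $ \URPhg \, $.) The step is repairable along the paper's lines --- density of the image of the multiplication map, since it contains all generators, plus the formal triangular decomposition for injectivity --- but as stated it fails.
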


\vskip7pt

   The previous observation is our bridge to achieve the key point
   about the notion of formal multiparameter QUEA, that is the following:

\vskip13pt

\begin{theorem}  \label{thm: form-MpQUEAs_are_Hopf}
 Every FoMpQUEA  $ \uRPhg $  as in  Definition \ref{def: Mp-Uhgd}
 bears a structure of topological Hopf algebra over\/  $ \kh $
 --- with coproduct taking values into the  $ \hbar $--adically  completed tensor product
 $ \; \uRPhg \!\mathop{\widehat{\otimes}}\limits_\kh\! \uRPhg \; $
 --- given by  ($ \; \forall \; T \in \lieh \, $,  $ \, \ell \in I $)
\begin{equation}  \label{eq: coprod_x_uPhg}
 \begin{aligned}
   \Delta \big(E_\ell\big)  \, = \,  E_\ell \otimes 1 \, + \, e^{\hbar \, T_\ell^+} \otimes E_\ell  \\
   \Delta\big(T\big)  \, = \,  T \otimes 1 \, + \, 1 \otimes T   \hskip25pt  \\
   \Delta\big(F_\ell\big)  \, = \, F_\ell \otimes e^{-\hbar \, T_\ell^-} \, + \, 1 \otimes F_\ell
 \end{aligned}   \qquad
\end{equation}
%
%
 \vskip-9pt
\begin{eqnarray}
    \epsilon\big(E_\ell\big) \, = \, 0  \;\;\; ,  \qquad  &   \;\quad
    \epsilon\big(T\big) \, = \, 0  \;\;\; ,  &
    \;\qquad  \epsilon\big(F_\ell\big) \, = \, 0  \;\;\;\;  \label{eq: counit_x_uPhg}  \\
   \SS\big(E_\ell\big)  \, = \,  - e^{-\hbar \, T_\ell^+} E_\ell  \;\; ,  &   \;\;\;
   \SS\big(T\big)  \, = \,  - T   \;\; ,  &
   \;\;\;
   \SS\big(F_\ell\big)  \, = \,  - F_\ell \, e^{+\hbar \, T_\ell^-}   \qquad
  \label{eq: antipode_x_uPhg}
\end{eqnarray}
\end{theorem}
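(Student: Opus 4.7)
The plan is to exhibit the candidate coproduct, counit and antipode on the algebra generators and then show, in order, that these extend consistently to $\uRPhg$, and that they satisfy the Hopf axioms.  The key technical input is that all the constructions live in the $\hbar$-adically complete setting, so the exponentials $e^{\pm \hbar T_i^\pm}$ in $U_\hbar(\lieh) \subset \uRPhg$ make sense and all algebra morphisms extend by continuity to the completed tensor products.

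First I would declare $\Delta$ and $\epsilon$ to be (continuous) algebra morphisms and $\SS$ to be a continuous anti-algebra morphism, defined on the generators $T \in \lieh$ and $E_i, F_i$ by the formulas \eqref{eq: coprod_x_uPhg}--\eqref{eq: antipode_x_uPhg}.  The well-definedness step amounts to checking that $\Delta$, $\epsilon$ and $\SS$ vanish on (or preserve, for $\SS$) each of the defining relations in \eqref{eq: comm-rel's_x_uPhg}.  The relations purely among Cartan generators and the bracket relations of Cartan elements against $E_j$ or $F_j$ are routine, using that $\Delta(T)$ is primitive, that $e^{\hbar T_i^\pm}$ is grouplike, and that $\alpha_j(T)$ is a central scalar.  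For the Drinfeld-type relation on $[E_i, F_j]$, one expands $\Delta(E_i)\Delta(F_j) - \Delta(F_j)\Delta(E_i)$ using the skew-primitive form of the coproduct and observes that the cross terms cancel by straightforward reasons, leaving exactly $\delta_{ij}$ times the coproduct of $(e^{\hbar T_i^+} - e^{-\hbar T_i^-})/(q_i - q_i^{-1})$.

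The genuinely delicate step, which I expect to be the main obstacle, is to verify that $\Delta$ respects the two families of quantum Serre relations.  This reduces to a calculation of $\Delta(u^E_{ij})$ and $\Delta(u^F_{ij})$ (notation as in Definition \ref{def: Uotimes}) after expanding $\Delta(E_i)^{1-a_{ij}-k}\Delta(E_j)\Delta(E_i)^k$ using the skew-primitive form.  Collecting terms of each fixed bidegree reduces the vanishing to standard combinatorial $q$-binomial identities, of exactly the same type as those invoked at the end of the proof of Lemma \ref{lemma: alg-struct-on-Uotimes(g)} to kill the coefficients $\varGamma_{h,k}^\pm$.  Alternatively, and more cleanly, one can invoke the polynomial bridge of Observation \ref{obs: form-MpQUEAs_vs_pol-MpQUEAs}: the polynomial multiparameter QUEA $U_{\mathbf{q}}(\lieg)$ is known to carry the required Hopf structure (cf.\ \cite{HPR}, \cite{GG1}), and using the identification $\URPhg \,=\, \UU_\hbar(\lieh) \,\widehat{\otimes}_{U_{\mathbf{q}}(\lieh)}\, U_{\mathbf{q}}(\lieg)$, the Hopf structure on $U_{\mathbf{q}}(\lieg)$ extends uniquely by $\hbar$-adic continuity to $\URPhg$.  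The formulas \eqref{eq: coprod_x_uPhg}--\eqref{eq: antipode_x_uPhg} show that $\Delta$ sends each generator of $\uRPhg$ into the completed tensor square $\uRPhg \,\widehat{\otimes}\, \uRPhg$, and similarly for $\SS$ and $\epsilon$, so the structure restricts from $\URPhg$ back to $\uRPhg$.

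Once well-definedness is secured, the Hopf axioms need only be checked on the algebra generators, by multiplicativity of $\Delta, \epsilon$ and anti-multiplicativity of $\SS$.  Coassociativity and the counit axiom on each of $T, E_i, F_i$ are immediate from the prescribed formulas; the antipode axiom $m \circ (\SS \otimes \id) \circ \Delta = \eta \circ \epsilon$ reduces on $E_\ell$ to the identity $-e^{-\hbar T_\ell^+} E_\ell + e^{-\hbar T_\ell^+} E_\ell = 0$, on $F_\ell$ to the symmetric statement, and on $T \in \lieh$ to $-T + T = 0$, using that $\SS\bigl(e^{\pm \hbar T_\ell^\pm}\bigr) = e^{\mp \hbar T_\ell^\pm}$.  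Continuity of all maps in the $\hbar$-adic topology then extends everything to the completed algebra, yielding the stated topological Hopf algebra structure.
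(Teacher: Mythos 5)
Your proposal is correct and follows essentially the same two routes as the paper: your primary, direct verification (defining $\Delta,\epsilon,\SS$ on generators and checking the relations, with the quantum Serre relations handled by showing that the elements $u^E_{ij}$, $u^F_{ij}$ are skew-primitive via the standard $q$-binomial identities) is the paper's First Proof, and your ``cleaner'' alternative through the polynomial bridge of Observation \ref{obs: form-MpQUEAs_vs_pol-MpQUEAs} is its Second Proof. One caveat on that alternative: the factorization $\URPhg \cong \UU_\hbar(\lieh)\,\widehat{\otimes}_{U_{\mathbf{q}}(\lieh)}\,U_{\mathbf{q}}(\lieg)$ is established only for \emph{split} realizations, while the theorem concerns arbitrary ones; the paper closes this by lifting $\R$ to a split realization $\dot{\R}$ via Lemma \ref{lemma: split-lifting}, building the Hopf structure on $U^{\,\dot{\R}}_{\!P,\hbar}(\lieg)$, and then passing to the quotient along $U_{\underline{\pi}}$, whose kernel is a Hopf ideal because $\Ker(\pi)$ consists of central, primitive elements. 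Since your direct route does not require splitness, the proposal as a whole is complete in outline, but if you lean on the polynomial bridge you should add this extension-and-quotient step.
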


\pf
 We provide hereafter two proofs; a third, independent one will follow from another approach, that is detailed in
 \S \ref{sec: constr-FoMpQUEAs}  later on.
 \vskip5pt
   \textit{$ \underline{\text{First Proof}} $:}\,  Via a direct approach, the proof is a matter of computation.
   First, in the free, topological,  $ \hbar $--adically  complete, unital, associative  $ \kh $--algebra
   $ \mathcal{F}_\R $  generated by the  $ \kh $--submodule  $ \lieh $  together with the  $ E_i $'s  and the  $ F_i $'s,
   the formulas
   \eqref{eq: coprod_x_uPhg}  and  \eqref{eq: counit_x_uPhg}
 define a structure of (topological) bialgebra.
   Then one has to check that the two-sided ideal  $ \I_\R $ in  $ \mathcal{F}_\R $
   generated by relations  \eqref{eq: comm-rel's_x_uPhg}  is a bi-ideal for that bialgebra.
   Second, one has to check that the map  $ \; \SS : \uRPhg \longrightarrow \uRPhg^{\op} \; $
   defined on  $ \; \mathcal{F}_\R \Big/ \I_\R  =: \uRPhg \; $  by the second line in  \eqref{eq: antipode_x_uPhg}
   is an algebra anti-morphism with the ``right'' properties for the antipode map on the generators.
   In all cases, computations are the same as for  \cite[Definition-Proposition 6.5.1]{CP}
   which treats Drinfeld's special case with  $ \lieh $  of minimal rank.
   It is worth stressing, though, a key feature of this generalized result: namely, the assumption that  $ P $
   be of Cartan type is what one uses to prove that the quantum Serre's relations
   --- i.e., the last two relations in  \eqref{eq: comm-rel's_x_uPhg}  ---
   are mapped into
   $ \, \mathcal{F}_{\!\R} \,\widehat{\otimes}\, \I_\R + \I_\R \,\widehat{\otimes}\, \mathcal{F}_\R \, $
   --- where  $ \, \widehat{\otimes} \, $  denotes the  $ \hbar $--adic  completion of the algebraic tensor product ---
   by the given coproduct on  $ \mathcal{F}_\R \, $.  Actually, one shows that the elements  $ E_{i,j}$  and  $ F_{i,j} $
   represented by the left hand side of these equalities are skew-primitives, namely
 $ \; \com(E_{i,j}) = E_{i,j} \otimes 1 + e^{+(1-a_{ij}) \, \hbar\,T_i^+ + \hbar\,T_j^+} \otimes E_{i,j} \; $
 and similarly
 $ \; \com(F_{i,j}) = F_{i,j} \otimes e^{-(1-a_{ij}) \, \hbar\,T_i^- - \hbar\,T_j^-} + 1 \otimes F_{i,j} \; $.
 \vskip5pt
   \textit{$ \underline{\text{Second Proof}} $:}\,  This alternative method goes through an indirect approach.
 \vskip3pt
   First of all,  \textit{we assume the realization  $ \R $  to be  \textsl{split}}.  In this case,
   Observation \ref{obs: form-MpQUEAs_vs_pol-MpQUEAs}  provides a factorization
 $ \; \URPhg \, := \, \kq \operatorname{\otimes}\limits_{\Bbbk[q\,,\,q^{\pm 1}]} \uRPhg \, =
 \, \UU_\hbar(\lieh) \hskip-2pt \mathop{\widehat{\otimes}}\limits_{U_{\mathbf{q}}(\lieh)} \hskip-2pt U_{\mathbf{q}}(\lieg) \; $.
                                                             \par
   Due to its presentation in  Observation \ref{obs: form-MpQUEAs_vs_pol-MpQUEAs},  our  $ \, U_{\mathbf{q}}(\lieg) \, $
   is a  ``multiparameter quantum group'' in the sense of  \cite{HPR}   --- cf.\ also  \cite{GG1},
   where such an example of multiparameter quantum group is referred to as being ``rational'',
   in that each  $ q_{ij} $  is a power of a single, common parameter  $ q \, $.
   The key point then is that any such (``polynomial'') multiparameter quantum group  $ \, U_{\mathbf{q}}(\lieg) \, $
   has a specific Hopf algebra structure   --- cf.  \cite{HPR}  and  \cite{Gar}  ---   given (for all  $ \, \ell \in I \, $)  by
\begin{equation}  \label{eq: Hopf-struct_Uqgd}
   \quad
 \begin{matrix}
   \Delta \big(E_\ell\big) \, = \, E_\ell \otimes 1 \, + \, K_\ell \otimes E_\ell  \;\; ,
   &  \quad  \epsilon\big(E_\ell\big) \, = \, 0
   \;\; ,  \quad   &   \SS\big(E_\ell\big) \, = \, - K_\ell^{-1} E_\ell   \\
   \Delta\big(K_\ell^{\pm 1}\big) \, = \, K_\ell^{\pm 1} \otimes K_\ell^{\pm 1}  \;\; ,
   &   \quad  \epsilon\big(K_\ell^{\pm 1}\big) \, = \, 1
   \;\; ,  \quad   &   \SS\big(K_\ell^{\pm 1}\big) \, = \, K_\ell^{\mp 1}   \\
   \Delta\big(L_\ell^{\pm 1}\big) \, = \, L_\ell^{\pm 1} \otimes L_\ell^{\pm 1}  \;\; ,
   &  \quad  \epsilon\big(L_\ell^{\pm 1}\big) \, = \, 1
   \;\; ,  \quad   &   \SS\big(L_\ell^{\pm 1}\big) \, = \, L_\ell^{\mp 1}   \\
   \Delta\big(\dot{F}_\ell\big) \, = \, \dot{F}_\ell \otimes L_\ell \, + \, 1 \otimes \dot{F}_\ell  \;\; ,   &
   \quad  \epsilon\big(\dot{F}_\ell\big) \, = \, 0  \;\; ,  \quad   &   \SS\big(\dot{F}_\ell\big) \, = \, - \dot{F}_\ell \, L_\ell^{-1}
\end{matrix}
\end{equation}
                                                            \par
   Let now  $ \, \widetilde{U}_{\mathbf{q}}(\lieg) \, $  be the  $ \k\big[q\,,q^{-1}\big] $--subalgebra  of  $ \, U_{\mathbf{q}}(\lieg) \, $
   --- hence of  $ \uRPhg $  ---   generated by  $ \, {\big\{ E_i \, , K_i^{\pm 1} , L_i^{\pm 1} , \dot{F}_i \big\}}_{i \in I} \, $:
   it is  $ \hbar $--adic  dense in  $ \, U_{\mathbf{q}}(\lieg) \, $,
   and restricting the coproduct  $ \Delta $  to
   $ \, \widetilde{U}_{\mathbf{q}}(\lieg) \, $  yields
   $ \; \Delta \big( \widetilde{U}_{\mathbf{q}}(\lieg) \big) \, \subseteq \,
   \widetilde{U}_{\mathbf{q}}(\lieg) \hskip-3pt \mathop{\otimes}\limits_{\k[q,q^{-1}]} \hskip-3pt \widetilde{U}_{\mathbf{q}}(\lieg) \; $.
   Therefore, since  $ \, K_i = \exp\big(\!+\hbar\,T_i^+\big) \, $  and  $ \, L_i = \exp\big(\!-\hbar\,T_i^-\big) \, $,
   \,there exists one and only one way to extend (continuously)  $ \Delta{\Big|}_{\widetilde{U}_{\mathbf{q}}(\lieg)} $  to  $ \uRPhg $,  which gives a map
\begin{equation}  \label{eq: Delta_uRPhg}
   \Delta \,:\, \uRPhg \relbar\joinrel\relbar\joinrel\longrightarrow \uRPhg \hskip3pt \widehat{\otimes} \hskip3pt \uRPhg
\end{equation}
 described by  \eqref{eq: Hopf-struct_Uqgd}  plus the additional constraint that all the  $ T_i^\pm $'s  be primitive.
                                                            \par
   Since the original map  $ \Delta $  on  $ U_{\mathbf{q}}(\lieg) $  obey the axioms of a coproduct,
   the same holds true for the map in  \eqref{eq: Delta_uRPhg}  as well   --- though in a topological framework.
   With similar arguments, we deal with counit and antipode map, so that we end up with a (topological)
   Hopf structure for  $ \, \uRPhg \, $,  uniquely induced from the one on  $ U_{\mathbf{q}}(\lieg) \, $.
   Tracking the whole construction, we eventually see that such a structure is described on generators
   by  \eqref{eq: coprod_x_uPhg},  \eqref{eq: counit_x_uPhg}  and  \eqref{eq: antipode_x_uPhg},  q.e.d.
 \vskip5pt
   Now we consider the case of a realization  $ \; \R \, := \, \big(\, \lieh \, , \Pi \, , \Pi^\vee \,\big) \; $  of any kind.
   By  Lemma \ref{lemma: split-lifting},
   we can then pick a  \textsl{split\/}  realization (of the same matrix  $ P $  as  $ \R \, $),  say
   $ \; \dot{\R} \, := \, \big(\, \dot{\lieh} \, , \dot{\Pi} \, , {\dot{\Pi}}^\vee \,\big) \; $  and an epimorphism of realizations
   $ \; \underline{\pi} : \dot{\R} \relbar\joinrel\twoheadrightarrow \R \; $.
   By functoriality  (cf.\ Proposition \ref{prop: functor_R->uRPhg}),
   such a  $ \underline{\pi} $  induces an epimorphism of FoMp\-QUEAs
   $ \; U_{\underline{\pi}} : U^{\,\dot{\R}}_{\!P,\hbar}(\lieg) \relbar\joinrel\twoheadrightarrow \uRPhg \; $,
   \,whose kernel  $ \Ker\big(U_{\underline{\pi}}\,\big) $  is generated by  $ \Ker(\pi) \, $,
   and the latter is central in  $ U^{\,\dot{\R}}_{\!P,\hbar}(\lieg) \, $;
   moreover, every element in  $ \Ker(\pi) $  is primitive.  Therefore  $ \Ker\big(U_{\underline{\pi}}\,\big) $
   is indeed a  \textsl{Hopf ideal\/}  in the Hopf algebra
   $ U^{\,\dot{\R}}_{\!P,\hbar}(\lieg) \, $,  \,hence  $ U^{\,\R}_{\!P,\hbar}(\lieg) $  automatically inherits via
   $ U_{\underline{\pi}} $  a quotient Hopf algebra structure from  $ U^{\,\dot{\R}}_{\!P,\hbar}(\lieg) \, $,
   which is again described by the formulas in the statement, q.e.d.
\epf

\vskip5pt

   The following is now immediate:

\vskip13pt

\begin{cor}  \label{cor: Hopf-struct_Cartan-&-Borel}
 The Cartan subalgebra  $ U_\hbar(\lieh) $  and the Borel subalgebras  $ \uRPhbp $  and  $ \uRPhbm $
 are actually (topological) Hopf subalgebras of  $ \, \uRPhg \, $,  their Hopf structure being described again via formulas
 \eqref{eq: coprod_x_uPhg}  and  \eqref{eq: antipode_x_uPhg}.
                                                                       \par
   In addition, when  $ \R $  is split we have that
   $ \; \uRPhbpm \, \cong \, U_\hbar(\lieh_\pm) \hskip-2pt \mathop{\widehat{\;\ltimes\,}}\limits_{U_{\mathbf{q}}(\lieh_\pm)}
   \hskip-1pt U_{\mathbf{q}}(\lieb_\pm) \; $,
   \,where  $ U_{\mathbf{q}}(\lieb_\pm) \, $
   is the multiparameter quantum group corresponding to the Borel subalgebras.
\end{cor}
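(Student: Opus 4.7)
The plan is to verify closure of $U_\hbar(\lieh)$, $\uRPhbp$ and $\uRPhbm$ under the comultiplication, counit, and antipode formulas of Theorem \ref{thm: form-MpQUEAs_are_Hopf}, then invoke Theorem \ref{thm: Uotimes-cong-uRPhg}(b) to identify these subalgebras with the ones given by their natural presentations, and finally extract the tensor factorization in the split case from Observation \ref{obs: form-MpQUEAs_vs_pol-MpQUEAs} applied to the Borel subalgebras.

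First, for the Cartan subalgebra $U_\hbar(\lieh)$: since $\Delta(T) = T \otimes 1 + 1 \otimes T$, $\epsilon(T) = 0$, $\SS(T) = -T$ for $T \in \lieh$, and since $\lieh$ generates $U_\hbar(\lieh)$ as a topological $\kh$-algebra, the continuity of $\Delta$, $\epsilon$, $\SS$ and the fact that $U_\hbar(\lieh)$ is $\hbar$-adically complete force the Hopf structure maps to preserve $U_\hbar(\lieh)$; thus $U_\hbar(\lieh)$ is a topological Hopf subalgebra, which by Theorem \ref{thm: Uotimes-cong-uRPhg}(b) is isomorphic to the $\hbar$-adic completion $\widehat{S}_\hbar(\lieh)$ of the symmetric algebra over $\lieh$.

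Next, for $\uRPhbp$ (and symmetrically $\uRPhbm$): it is generated by $\lieh$ together with the $E_i$'s. On $T \in \lieh$ the Hopf maps are as above; on $E_i$ we have $\Delta(E_i) = E_i \otimes 1 + e^{\hbar T_i^+} \otimes E_i$, and since $T_i^+ \in \lieh \subseteq \uRPhbp$ the element $e^{\hbar T_i^+}$ lies in the $\hbar$-adically completed $\uRPhbp$; similarly $\SS(E_i) = -e^{-\hbar T_i^+} E_i \in \uRPhbp$ and $\epsilon(E_i) = 0$. By continuity and closure under products, $\Delta$ takes $\uRPhbp$ into $\uRPhbp \,\widehat{\otimes}\, \uRPhbp$, and $\SS$ maps $\uRPhbp$ to itself, giving the desired topological Hopf subalgebra structure. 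The same argument handles $\uRPhbm$ with $\Delta(F_i) = F_i \otimes e^{-\hbar T_i^-} + 1 \otimes F_i$.

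Finally, for the tensor factorization in the split case, I would imitate the argument of Observation \ref{obs: form-MpQUEAs_vs_pol-MpQUEAs}, restricting everything to the positive Borel side. Set $\UU_\hbar(\lieh_\pm)$ to be the scalar extension of $U_\hbar(\lieh)$ to $\kq$, and define $U_{\mathbf{q}}(\lieb_\pm)$ as the $\kq$-subalgebra of $\URPhbpm$ generated by $\{K_i^{\pm 1}, L_i^{\pm 1}, E_i\}_{i\in I}$, resp.\ $\{K_i^{\pm 1}, L_i^{\pm 1}, \dot{F}_i\}_{i\in I}$; the assumption that $\R$ is split guarantees that these generators satisfy a presentation of the expected ``polynomial'' Borel multiparameter type as in \cite{HPR}. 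Arguing as in Observation \ref{obs: form-MpQUEAs_vs_pol-MpQUEAs}, one obtains a factorization $\URPhbpm = \UU_\hbar(\lieh_\pm) \,\widehat{\otimes}_{U_{\mathbf{q}}(\lieh_\pm)}\, U_{\mathbf{q}}(\lieb_\pm)$, and the smash-product structure $\ltimes$ (encoded in the cross relations $T \, E_j - E_j \, T = \alpha_j(T) E_j$ and its $F$-analog) comes from the adjoint action of $\lieh$ on the $E_i$'s and $\dot{F}_i$'s. The main (mild) obstacle is to check that inverting $\hbar$ is compatible with the completed smash product; this is handled by the same argument used to factor $\URPhg$ in Observation \ref{obs: form-MpQUEAs_vs_pol-MpQUEAs}, replacing the full generator set $\{E_i, \dot{F}_i\}$ by just $\{E_i\}$ or just $\{\dot{F}_i\}$ throughout.
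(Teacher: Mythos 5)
Your proposal is correct and takes essentially the same route as the paper, which gives no separate proof but presents the corollary as immediate from Theorem \ref{thm: form-MpQUEAs_are_Hopf}: closure of $U_\hbar(\lieh)$ and $\uRPhbpm$ under $\Delta$, $\epsilon$, $\SS$ is read off the explicit formulas on generators, and the split-case factorization is just the Borel-subalgebra restriction of the argument in Observation \ref{obs: form-MpQUEAs_vs_pol-MpQUEAs} used in the theorem's second proof. No gaps to report.
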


\vskip7pt

   The next result follows at once from the second proof of  Theorem \ref{thm: form-MpQUEAs_are_Hopf}  above.

\vskip9pt

\begin{prop}  \label{prop: central-Hopf-extens_FoMpQUEAs}
 Let  $ \, \underline{\phi} : \R' \!\relbar\joinrel\relbar\joinrel\longrightarrow\! \R'' \, $
 be a morphism between realizations of a same matrix  $ \, P \in M_{n}\big( \kh \big) \, $.
 Then the morphism of unital topological  $ \, \kh $--algebras
 $ \, U_{\underline{\phi}} : U^{\,\R'}_{\!P,\hbar}(\lieg) \relbar\joinrel\relbar\joinrel\longrightarrow U^{\,\R''}_{\!P,\hbar}(\lieg) \, $
 granted by  Proposition \ref{prop: functor_R->uRPhg}  is indeed a morphism of (unital topological)  \textsl{Hopf}  $ \, \kh $--algebras.
 If we set  $ \; \liek := \Ker(\phi) \, $, then  $ \, U_\hbar(\liek) \, $  is a unital,  $ \hbar $--adically  complete  $ \, \kh $--subalgebra  of  $ U^{\,\R'}_{\!P,\hbar}(\lieg) $
 which is a central Hopf subalgebra, isomorphic to a quantum Cartan (in the sense of
 Definition \ref{def: Mp-Uhgd}\textit{(b)\/}), and  $ \; \Ker(U_{\underline{\phi}}) = U^{\,\R'}_{\!P,\hbar}(\lieg) \, {U_\hbar(\liek)}^+ \, $
--- where  $ \, {U_\hbar(\liek)}^+ \, $  is the augmentation ideal of  $ \, U_\hbar(\liek) \, $.
\par
   In particular, if  $ \, U_{\underline{\phi}} $  is an epimorphism, then
   $ \; U^{\,\R''}_{\!P,\hbar}(\lieg) \, \cong \, U^{\,\R'}_{\!P,\hbar}(\lieg) \Big/ U^{\,\R'}_{\!P,\hbar}(\lieg) \, {U_\hbar(\liek)}^+ \; $.
\end{prop}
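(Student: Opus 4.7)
The plan is to address the four assertions of the proposition in turn, exploiting the triangular decomposition of Theorem \ref{thm: triang-decomp.'s} as the main structural tool.

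First, I would verify that $U_{\underline{\phi}}$ is a morphism of topological Hopf algebras. Recall from the definition of morphism of realizations that $\phi(T_i^\pm) = T_{\sigma(i)}^\pm$ for some permutation $\sigma \in \mathbb{S}_I$, and that by Proposition \ref{prop: functor_R->uRPhg} the map $U_{\underline{\phi}}$ sends $E_i \mapsto E_{\sigma(i)}$, $F_i \mapsto F_{\sigma(i)}$ and extends $\phi$ on the Cartan. Since the Hopf structure described in \eqref{eq: coprod_x_uPhg}--\eqref{eq: antipode_x_uPhg} involves on generators only elements of $\lieh$, the $E_\ell$, the $F_\ell$, and the coroot vectors $T_\ell^\pm$, the formulas on the $\R'$ side are sent verbatim to those on the $\R''$ side. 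Hence $U_{\underline{\phi}}$ commutes with $\Delta$, $\epsilon$, $\SS$ on a set of topological generators, and by $\hbar$-adic continuity this extends to all of $U^{\,\R'}_{\!P,\hbar}(\lieg)$.

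Next, I would identify $U_\hbar(\liek)$ as a central Hopf subalgebra isomorphic to a quantum Cartan. Centrality of $\liek$ in $U^{\,\R'}_{\!P,\hbar}(\lieg)$ has already been established in Proposition \ref{prop: functor_R->uRPhg} (via Lemma \ref{lemma: ker-morph's_realiz's}, which forces $\alpha'_j(T)=0$ for all $j$ and $T \in \liek$, so $T$ commutes with every generator thanks to \eqref{eq: comm-rel's_x_uPhg}). Since $\kh$ is a discrete valuation ring and $\lieh'$ is free of finite rank, $\liek$ is itself a free $\kh$-module and moreover a direct summand: writing $\lieh' = \liek \oplus \liek^c$ with $\liek^c$ free (a complement of $\liek$ mapping isomorphically onto $\phi(\lieh') \subseteq \lieh''$), the description of the Cartan as $\widehat{S}_\hbar(\lieh')$ in Definition \ref{def: Uotimes}(b) and Theorem \ref{thm: Uotimes-cong-uRPhg}(b) gives $U_\hbar(\lieh') \cong U_\hbar(\liek) \,\widehat{\otimes}\, U_\hbar(\liek^c)$ as Hopf algebras, so $U_\hbar(\liek) = \widehat{S}_\hbar(\liek)$ is a quantum Cartan in the sense of Definition \ref{def: Mp-Uhgd}(b). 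Since every $T \in \liek$ is primitive and $\SS(T) = -T$, it is also a Hopf subalgebra.

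The third and central step is the computation of $\Ker(U_{\underline{\phi}})$. The key observation is that the nilpotent subalgebras $\uhat^+$ and $\uhat^-$ of Definition \ref{def: Uotimes}(a) depend only on the matrix $P$ and on $I$, not on the realization; therefore, under the identification of Theorem \ref{thm: Uotimes-cong-uRPhg}(a), the morphism $U_{\underline{\phi}}$ has the form $\mathrm{id}_{\uhat^-} \,\widehat{\otimes}\, U_\hbar(\phi) \,\widehat{\otimes}\, \mathrm{id}_{\uhat^+}$, where $U_\hbar(\phi) : U_\hbar(\lieh') \to U_\hbar(\lieh'')$ is the canonical extension of $\phi$. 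Using the splitting of the previous paragraph, $\Ker(U_\hbar(\phi))$ is visibly the $\hbar$-adically closed ideal $U_\hbar(\lieh') \cdot {U_\hbar(\liek)}^+$, so tensoring back and invoking centrality of $\liek$ to absorb factors yields $\Ker(U_{\underline{\phi}}) = U^{\,\R'}_{\!P,\hbar}(\lieg) \cdot {U_\hbar(\liek)}^+$. The final statement, when $U_{\underline{\phi}}$ is an epimorphism, is then the first isomorphism theorem.

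The step I expect to be most delicate is the kernel computation, specifically ensuring that $\Ker(U_\hbar(\phi))$ in $U_\hbar(\lieh') = \widehat{S}_\hbar(\lieh')$ is already the $\hbar$-adically closed ideal generated by $\liek$, with no spurious limit elements; the direct-summand decomposition $\lieh' = \liek \oplus \liek^c$ inherited from the DVR hypothesis on $\kh$ is precisely what reduces this to an elementary statement about completed tensor products of quantum Cartans, and is the pivot on which the whole argument turns.
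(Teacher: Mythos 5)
Your proposal is correct, but it is considerably more explicit than the paper's own treatment, which disposes of this proposition in one line: it is said to ``follow at once from the second proof of Theorem \ref{thm: form-MpQUEAs_are_Hopf}'' (Hopf compatibility checked on generators, resp.\ transported along a split lifting), while the kernel description is simply quoted from Proposition \ref{prop: functor_R->uRPhg}, where it is declared obvious. What you do differently is to substantiate precisely that implicit point: using that $ \uhat^{\pm} $ depend only on $ P $, you identify $ U_{\underline{\phi}} $ under the triangular decomposition of Theorem \ref{thm: Uotimes-cong-uRPhg} with a map of the form (bijection on $ \uhat^{-} $)$ \,\widehat{\otimes}\, U_\hbar(\phi) \,\widehat{\otimes}\, $(bijection on $ \uhat^{+} $) and reduce the kernel computation to the Cartan factor, where the splitting $ \lieh' = \liek \oplus \liek^{c} $ (valid because $ \kh $ is a DVR and the image of $ \phi $ is free) settles everything; this buys a genuine proof of the nontrivial inclusion $ \Ker(U_{\underline{\phi}}) \subseteq U^{\,\R'}_{\!P,\hbar}(\lieg)\,{U_\hbar(\liek)}^{+} $, which the paper never spells out, whereas the paper's route buys brevity and reuses the split-lifting machinery already set up for the Hopf structure. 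Two small points you should still record: on the nilpotent factors $ U_{\underline{\phi}} $ is the permutation-induced map $ E_i \mapsto E_{\sigma(i)} $, $ F_i \mapsto F_{\sigma(i)} $ rather than the identity, which is harmless since it is bijective; and to conclude $ \Ker\big(\mathrm{id}\,\widehat{\otimes}\,U_\hbar(\phi)\,\widehat{\otimes}\,\mathrm{id}\big) = \uhat^{-}\,\widehat{\otimes}\,\Ker\!\big(U_\hbar(\phi)\big)\,\widehat{\otimes}\,\uhat^{+} $ you need, besides your splitting inside the Cartan, that the restriction of $ U_\hbar(\phi) $ to $ \widehat{S}_\hbar\big(\liek^{c}\big) $ stays injective after completed tensoring with $ \uhat^{\mp} $ --- since $ \phi\big(\liek^{c}\big) $ need not be a direct summand of $ \lieh'' $, this uses topological freeness of the nilpotent parts (cf.\ Lemma \ref{lemma: Borel FoMpQUEAs topol.-free}) together with a degreewise check on $ \widehat{S}_\hbar(\phi) $; it is routine, but it is exactly the kind of completion issue you flag, located one step further out than where you place it. Finally, note that because $ \liek $ has finite rank, $ {U_\hbar(\liek)}^{+} $ is the ideal genuinely generated by $ \liek $ (no closure needed), so your identification of the kernel with $ U^{\,\R'}_{\!P,\hbar}(\lieg)\,{U_\hbar(\liek)}^{+} $ holds on the nose.
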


\vskip9pt

\begin{exa}  \label{example P=DA}
 Fix  $ \, P := DA \, $, $ \, r := \rk\!\big(DA\big) \, $ and let  $ \, \hat{\R} := \big(\, \hat{\lieh} \, , \hat{\Pi} \, , \hat{\Pi}^\vee \big) \, $  and
 $ \, \R := \big(\, \lieh \, , \Pi \, , \Pi^\vee \big) \, $  be realizations of  $ \, DA  \, $,  where  $ \hat{\R} $
 is straight and split with  $ \, \rk\!\big(\hat{\lieh}\big) = 2\,(2\,n-r) \, $  while  $ \R $  is straight and small with
 $ \, \rk(\lieh) = 2\,n-r \, $;  more precisely, we assume  $ \, T_i^+ = T_i^- \, $ in  $ \R \, $,  for all  $ \, i \in I \, $.  With this setup,
 $ U^{\,\R}_{\!DA,\hbar}(\lieg) $  is the usual Drinfeld's QUEA
 $ U_\hbar\big(\lieg_{{}_A}\big) $  for the Kac-Moody algebra  $ \lieg_{{}_A} $
 associated with the Cartan matrix $ A $  as in  \S \ref{root-data_Lie-algs};
 in particular, its semiclassical limit is  $ U(\lieg) \, $.  Instead,  $ U^{\,\hat{\R}}_{\!DA,\hbar}(\lieg) $  has semiclassical limit  $ U(\liegd) \, $,
 with  $ \liegd $  the Manin double of  $ \, \lieg = \lieg_{{}_A} \, $  (cf.\ \S \ref{root-data_Lie-algs}).
 \vskip1pt
   Now, there exists a (non-unique, if  $ \, r \lneqq n \, $)  epimorphism
   $ \, \underline{\phi} : \hat{\R} \relbar\joinrel\relbar\joinrel\twoheadrightarrow \R \, $
   such that  $ \, \phi\big(\hat{T}_i^\pm\big) = T_i^\pm \, $  ($ \, i \in I \, $);  then
   $ \, \liez := \Ker(\phi) \subseteq \bigcap\limits_{j \in I}\Ker(\hat{\alpha}_j) \, $.
   Since every element of  $ \hat{\lieh} $  is primitive inside  $ U^{\,\hat{\R}}_{\!DA,\hbar}(\lieg) \, $,
   the subalgebra  $ U^{\,\hat{\R}}_\hbar(\liez) $  of  $ U^{\,\hat{\R}}_{\!DA,\hbar}(\lieg) $  generated by
   $ \liez $  is indeed a Hopf subalgebra; moreover, it is central in  $ U^{\,\hat{\R}}_{\!DA,\hbar}(\lieg) $
   because  $ \, \liez \subseteq \bigcap\limits_{j \in I}\Ker(\hat{\alpha}_j) \, $.
 Also,  $ U^{\,\hat{\R}}_\hbar(\liez) $  is the  $ \hbar $--adic  completion of the polynomial
  $ \kh $--algebra  over  $ \liez^* $,  so we can loosely think of it as being a ``quantum Cartan algebra'' of ``rank''  $ \, 2\,n-r \, $.
 \vskip1pt
   By Proposition \ref{prop: central-Hopf-extens_FoMpQUEAs},  $ \, \underline{\phi} \, $
   yields a (Hopf) epimorphism  $ \, U_{\underline{\phi}} : U^{\,\hat{\R}}_{\!DA,\hbar}(\lieg)
   \relbar\joinrel\twoheadrightarrow U^{\,\R}_{\!DA,\hbar}(\lieg) \, $;  by construction,
   the kernel of latter is the two-sided ideal generated by  $ \liez \, $,  that is
  $$  \Ker\big( U_{\underline{\phi}} \big)  \; = \;  U^{\,\hat{\R}}_{\!DA,\hbar}(\lieg) \,
  U^{\,\hat{\R}}_\hbar{(\liez)}^+ \, U^{\,\hat{\R}}_{\!DA,\hbar}(\lieg)  \; = \;
  U^{\,\hat{\R}}_{\!DA,\hbar}(\lieg) \, U^{\,\hat{\R}}_\hbar{(\liez)}^+  \, = \;
  U^{\,\hat{\R}}_\hbar{(\liez)}^+ \, U^{\,\hat{\R}}_{\!DA,\hbar}(\lieg),  $$
and we have that  $ \; U^{\,\R}_{\!DA,\hbar}(\lieg) \,\cong\, U^{\,\hat{\R}}_{\!DA,\hbar}(\lieg) \Big/ U^{\,\hat{\R}}_{\!DA,\hbar}(\lieg) \, {U^{\,\hat{\R}}_\hbar(\liez)}^+ \; $.
                                                                          \par
   Finally, if we deal instead with  $ \R $  small and  $ \hat{\R} $  split such that
   $ \, \rk\big(\hat{\lieh}\big) = 2\,n \, $  and  $ \, \rk(\lieh) = n \, $,  \,then  $ \, \lieg = \lieg_{{}_A} \, $
   has to be replaced by the  \textsl{derived\/}  algebra  $ \lieg' $  associated with  $ A \, $,  and  $ \liegd $
   by the Manin double of  $ \lieg' $.  The previous analysis then works again.
\end{exa}

\vskip5pt

\begin{rmk}  \label{rmk: generaliz-centr-Hopf-ext}
 Let us now take  \textsl{any\/}  matrix  $ P $  (of Cartan type), a realization  $ \R $  of it that is
minimal and small with  $ \, \rk(\lieh) = 2\,n-r \, $   --- with  $ \, r := \rk\!\big(P+P^{\,\scriptscriptstyle T}\big) \, $
--- and the associated FoMpQUEA  $ \uRPhg \, $;  then we can still find another realization  $ \dot{\R} $  of  $ P $
that is split with  $ \, \rk(\lieh) = 2\,(2\,n-r) \, $   and an epimorphism of realizations
$ \, \underline{\pi} : \dot{\R} \relbar\joinrel\relbar\joinrel\twoheadrightarrow \R \, $  so that
$ \, \liez_\pi := \Ker(\pi) \, $  is again free of rank  $ \, 2\,n-r \, $   ---
see  Lemma \ref{lemma: split-lifting}  \textit{and\/}  its proof.
Then the previous analysis   --- that was based upon
$ \, \phi : \hat{\R} \relbar\joinrel\relbar\joinrel\twoheadrightarrow \R \, $  and
$ \, U_{\underline{\phi}} : U^{\,\hat{\R}}_{\!DA,\hbar}(\lieg) \relbar\joinrel\relbar\joinrel\twoheadrightarrow
U^{\,\R}_{\!DA,\hbar}(\lieg) \, $
---   can be repeated now, step by step, basing instead upon
$ \, \underline{\pi} : \dot{\R} \relbar\joinrel\relbar\joinrel\twoheadrightarrow \R \, $
and the associated epimorphism  $ \, U_{\underline{\pi}} : U^{\,\dot{\R}}_{\!P,\hbar}(\lieg)
\relbar\joinrel\relbar\joinrel\twoheadrightarrow U^{\,\R}_{\!P,\hbar}(\lieg) \, $
of FoMpQUEAs: this leads to the sequence of Hopf algebra maps
\begin{equation}  \label{eq: centr-ex-seq}
   U^{\,\dot{\R}}_\hbar(\liez_\pi)
  \;\relbar\joinrel\relbar\joinrel\longrightarrow\; U^{\,\dot{\R}}_{\!P,\hbar}(\lieg)
 \;{\buildrel U_{\underline{\pi}} \over {\relbar\joinrel\relbar\joinrel\relbar\joinrel\longrightarrow}}\; U^{\,\R}_{\!P,\hbar}(\lieg)
\end{equation}
 where  $ U^{\,\dot{\R}}_\hbar(\liez_\pi) $  is the (central) subalgebra of  $ \, U^{\,\dot{\R}}_{\!P,\hbar}(\lieg) \, $
 genera\-ted by  $ \liez_\pi \, $,  which is again a ``quantum Cartan algebra'' of ``rank''  $ \, 2\,n-r \, $.
Again, we obtain that
 \vskip-9pt
  $$  U^{\,\R}_{\!P,\hbar}(\lieg)  \, \cong \,  U^{\,\dot{\R}}_{\!P,\hbar}(\lieg) \Big/ U^{\,\dot{\R}}_{\!P,\hbar}(\lieg) \, {U^{\,\dot{\R}}_\hbar(\liez_\pi)}^+  $$
 \vskip-1pt
   Therefore, the situation in general is much similar to what happens in the special,
   ``standard'' case of  $ \, P = DA \, $;
   what does actually change, indeed, is the explicit description of  $ \liez_\pi $
   --- with respect to that of  $ \liez \, $,  that was quite clear ---
   hence of the ``quantum Cartan algebra of rank  $ \, 2\,n-r \, $''  $ U^{\,\dot{\R}}_\hbar(\liez_\pi) \, $.
\end{rmk}

\vskip7pt

\begin{free text}  \label{minimal split}
 {\bf The case of
%
%
split (and) minimal FoMpQUEAs.}
 We consider now the special case of a FoMpQUEA  $ \uRPhg $
 --- as defined in  Definition \ref{def: Mp-Uhgd}  ---   for which the realization  $ \R $  is
%
%
split and minimal   --- in short,  \textsl{split minimal}.
 In this case, it follows by definition that  $ \uRPhg $  can be described as follows:
 it is the unital, associative, topological,  $ \hbar $--adically  complete algebra over  $ \kh $  with generators
 $ \, E_i \, $,  $ T_i^+ $,  $ T_i^- $,  $ F_i \, $  (for all  $ \, i \in I \, $)
  and relations
\begin{equation*}
%
%
 \begin{aligned}
%
%
   T_i^+ \, E_j \, - \, E_j \, T_i^+  \; = \;  +p_{i,j} \, E_j  \quad  ,
 \qquad  T_i^- \, E_j \, - \, E_j \, T_i^-  \; = \;  +p_{j,i} \, E_j   \qquad \qquad \quad  \\
   T_i^+ \, F_j \, - \, F_j \, T_i^+  \; = \;  -p_{i,j} \, F_j  \quad  ,
 \qquad  T_i^- \, F_j \, - \, F_j \, T_i^-  \; = \;  -p_{j,i} \, F_j   \qquad \qquad \quad  \\
   T_i^\pm \, T_j^\pm  \, = \, T_j^\pm \, T_i^\pm  \;\, ,   \!\quad  E_i \, F_j \, - \, F_j \, E_i  \; = \;  \delta_{i,j} \, {{\; e^{+\hbar \, T_i^+} - \, e^{-\hbar \, T_i^-} \;}
   \over {\; q_i^{+1} - \, q_i^{-1} \;}}  \;\; ,  \!\quad  T_i^\pm \, T_j^\mp  \, = \,  T_j^\mp \, T_i^\pm  \\
%
%
   {\textstyle \sum\limits_{k=0}^{1-a_{ij}}} {(-1)}^k {\left[ { 1-a_{ij} \atop k } \right]}_{\!q_i} q_{ij}^{+k/2\,} q_{ji}^{-k/2} \, E_i^{1-a_{ij}-k} E_j E_i^k  \; = \;  0
   \qquad \qquad   \forall \;\; i \neq j   \qquad  \\
   {\textstyle \sum\limits_{k=0}^{1-a_{ij}}} {(-1)}^k {\left[ { 1-a_{ij} \atop k } \right]}_{\!q_i} q_{ij}^{+k/2\,} q_{ji}^{-k/2} \, F_i^k F_j F_i^{1-a_{ij}-k}  \; = \;  0
   \qquad \qquad   \forall \;\; i \neq j   \qquad
 \end{aligned}
\end{equation*}
 and bearing the (topological) Hopf  $ \kh $--algebra  structure given (for all  $ \, \ell \in I \, $)  by
  $$  \displaylines{
   \Delta \big(E_\ell\big)  \, = \,  E_\ell \otimes 1 \, + \, e^{+\hbar \, T_\ell^+} \otimes E_\ell  \;\; ,
   \qquad  \epsilon\big(E_\ell\big) \, = \, 0  \;\; ,  \qquad  \SS\big(E_\ell\big)  \, = \,  - e^{-\hbar \, T_\ell^+} E_\ell  \cr
   \Delta \big(T^\pm_\ell\big)  \, = \,  T^\pm_\ell \otimes 1 \, + \, 1 \otimes T^\pm_\ell  \;\; ,
   \;\;\qquad  \epsilon\big(T^\pm_\ell\big) \, = \, 0  \;\; ,  \;\;\qquad  \SS\big(T^\pm_\ell\big)  \, = \,  - T^\pm_\ell  \;\;\; \cr
   \Delta \big(F_\ell\big)  \, = \,  F_\ell \otimes e^{-\hbar \, T_\ell^-} \, + \, 1 \otimes F_\ell  \;\; ,
   \qquad  \epsilon\big(F_\ell\big) \, = \, 0  \;\; ,  \qquad  \SS\big(F_\ell\big)  \, = \,  - F_\ell \, e^{+\hbar \, T_\ell^-}  }  $$
 \vskip5pt
   \textsl{Note\/}  then that  \textit{in this case  $ \uRPhg $  depends only on the matrix  $ P $}.
   Indeed, in the spirit of  Observation \ref{obs: form-MpQUEAs_vs_pol-MpQUEAs},
   in this special case the  \textsl{formal\/}  MpQUEA  $ \uRPhg $
   is just a ``logarithmic version'' of the  \textsl{polynomial\/}  MpQUEA  $ U_{\mathbf{q}}(\lieg) $
   in  Observation \ref{obs: form-MpQUEAs_vs_pol-MpQUEAs}.
 \vskip5pt
   In addition, in this case the FoMpQUEA  $ \uRPhg $  admits an alternative,
   somewhat significant presentation, as follows. Consider inside  $ \uRPhg $  the vectors
 $ \,  S_i := 2^{-1} \big(\, T_i^+ \! + T_i^- \big) \, $
 and
 $ \, \varLambda_i := 2^{-1} \big(\, T_i^+ \! - T_i^- \big) \, $
 --- for all  $ \, i \in I \, $;  these clearly form yet another  $ \kh $--basis  of  $ \lieh \, $.
 Then  $ \uRPhg $  admits the obvious presentation given by construction and taking into account that
 $ \, {\big\{ T_i^+ , T_i^- \big\}}_{i \in I} \, $  is a  $ \kh $--basis  of  $ \lieh \, $,  but also the following, alternative one:
 it is the  $ \hbar $--adically  complete, unital  $ \kh $--algebra  with generators
 $ E_i $,  $ F_i $,  $ S_i $,  $ \varLambda_i $  ($ \, i \in I \, $)  and relations
  $$  \displaylines{
   [S_i\,,E_j]  \, = \,  \frac{+(\,p_{ij}+p_{ji})}{2} \, E_j  \, = \,  +d_i\,a_{ij} \, E_j  \; ,
  \;\;  [S_i\,,F_j]  \, = \,  \frac{-(\,p_{ij}+p_{ji})}{2} \, F_j  \, = \,  -d_i\,a_{ij} \, F_j  \cr
   [\,\varLambda_i\,,E_j\,]  \, = \,  +(\,p_{ij}-p_{ji}) \, E_j  \;\; ,   \qquad  [\,\varLambda_i\,,F_j\,]  \,
   = \,  -(\,p_{ij}-p_{ji}) \, F_j  \cr
   [\,\varLambda_i\,,S_j\,]  \, = \,  0  \;\; ,   \qquad  E_i \, F_j \, - \, F_j \, E_i  \,\; = \;\,
   \delta_{i,j} \, e^{+\hbar \, \varLambda_i^+} \, {{\; e^{+\hbar \, S_i} - \, e^{-\hbar \, S_i} \;} \over {\; q_i^{+1} - \, q_i^{-1} \;}}  \cr
   {\textstyle \sum\limits_{k=0}^{1-a_{ij}}} {(-1)}^k {\left[ { 1-a_{ij} \atop k } \right]}_{\!q_i} q_{ij}^{+k/2\,} q_{ji}^{-k/2} \, E_i^{1-a_{ij}-k} E_j E_i^k  \; = \;  0
   \qquad \qquad  \forall \;\; i \neq j   \qquad  \cr
   {\textstyle \sum\limits_{k=0}^{1-a_{ij}}} {(-1)}^k {\left[ { 1-a_{ij} \atop k } \right]}_{\!q_i} q_{ij}^{+k/2\,} q_{ji}^{-k/2} \, F_i^k F_j F_i^{1-a_{ij}-k}  \; = \;  0
   \qquad \qquad  \forall \;\; i \neq j   \qquad  }  $$
 Moreover, the Hopf structure of  $ \uRPhg $  is then described (for all  $ \, i \in I \, $)  by
  $$  \begin{aligned}
   \Delta \big(E_\ell\big)  &  \, = \,  E_\ell \otimes 1 \, + \, e^{+\hbar \, \varLambda_\ell} e^{+\hbar \, S_\ell} \! \otimes E_\ell  \\
   \Delta\big(S_\ell\big)  \, = \,  S_\ell \otimes 1 \,  &  + \, 1 \otimes S_\ell  \quad ,
   \qquad  \Delta\big(\varLambda_\ell\big)  \, = \,  \varLambda_\ell \otimes 1 \, + \, 1 \otimes \varLambda_\ell  \\
   \Delta\big(F_\ell\big)  &  \, = \, F_\ell \otimes e^{-\hbar \, S_\ell} e^{+\hbar \, \varLambda_\ell} + \, 1 \otimes F_\ell
 \end{aligned}  $$
   \indent   In particular, this implies that the  $ \hbar $--adically  complete, unital subalgebra
   $ U_\hbar(\lieb_+) \, $,  rep.\  $ U_\hbar(\lieb_-) \, $,  of  $ \uRPhg $  generated by all the
   $ E_i $'s,  resp.\ all the  $ F_i $'s,  and all the  $ S_i $'s  is isomorphic to the ``standard'' positive, resp.\ negative,
   Borel subalgebra in the derived version of Drinfeld's QUEA  $ U_\hbar(\lieg) \, $.
   On the other hand, both subalgebras  $ U_\hbar(\lieb_\pm) $  are  {\sl not Hopf\/}  subalgebras inside
   $ \uRPhg \, $,  contrary to what happens in Drinfeld's setup.
\end{free text}

\bigskip

\subsection{Further results on FoMpQUEAs}  \label{subsec: further-results}
{\ }

\smallskip

   We present now a few more techniques, which provide alternative proofs for our results about the structure of quantum nilpotent, Cartan and Borel subalgebras, as well as the triangular decomposition results.  This mainly follows in the footpath of a standard strategy, already used for one-parameter QUEA's.

\vskip11pt

\begin{free text}
 {\bf Preformal multiparameter QUEAs and special representations.}\,
 We introduce now some ``preliminary versions of FoMpQUEAs'',
 essentially defined like the FoMpQUEAs but for dropping from their definition the quantum Serre relations.
 These ``pre-FoMpQUEAs'' will be a key tool in our analysis, as well as some
 special representations of them that we also introduce presently.
 \vskip7pt
   Let $ U $  be the unital, associative $ \kh $-algebra  generated by the  $ \kh $--submodule  $ \lieh $  together with elements
 $ \, E_i \, $,  $ F_i \, $  (for all  $ \, i \in I \, $),
subject to the same relations as in \eqref{eq: comm-rel's_x_uPhg},  \textsl{except the last two}
--- the  \textsl{quantum Serre relations}.  Let  $ \, \widetilde{U}^{\,\R}_{\!P,\hbar}(\lieg) \, $
be the  $ \hbar $--adic  completion of  $ U $.  From the proof of  Theorem \ref{thm: form-MpQUEAs_are_Hopf},
it follows at once that $ \, \widetilde{U}^{\,\R}_{\!P,\hbar}(\lieg) \, $ is a topological Hopf algebra over  $ \kh \, $.
 \vskip11pt
   Let  $ \, V := \bigoplus_{i \in I} \kh.v_i \, $  be the free  $ \kh $--module  with basis
   $ \, {\{ v_i \}}_{i \in I} \, $,  let  $ \, T_\hbar(V) \, $  be the tensor algebra of  $ \, V $  over  $ \, \kh \, $,
   and let  $ \, \widehat{T}_\hbar(V) \, $  be the  $ \hbar $--adic  completion of  $ \, T_\hbar(V) \, $.
   Then,  $ \, T_\hbar(V) \, $  is a  free  $ \kh $--module  with basis  $ \, {\{ v_J \}}_{J \in \mathcal{J}} \, $,
   \,where  $ \mathcal{J} $  is the set of all finite sequences of elements in  $ I $
  and  $ \, v_J := v_{j_1} \otimes \cdots \otimes v_{j_r} \, $   --- or simply  $ \, v_J := v_{j_1} \cdots v_{j_r} \, $
  ---   is standard monomial notation for all  $ \, J := (j_1,\dots,j_r) \in \mathcal{J} \, $.
 For  $ \, J =(j_1,\ldots,j_r) \in \mathcal{J} \, $  and  $ \, 1 \leq k \leq r \, $,  \,write
 $ \, \widehat{J}_k := (j_1,\dots,j_{k-1},j_{k+1},\ldots,j_r) \, $,  $ \, J_k := (j_{k+1},\ldots, j_r) \, $  and
 $ \, \alpha_J := \sum_{\ell=1}^r \alpha_{j_\ell} \, $.

\end{free text}

\vskip9pt

\begin{lema}  \label{lemma: tensor repr x utildeRPhg}
 For every  $ \, \lambda \in \lieh^* \, $,  there exists a unique representation of
 $ \, U \, $  onto  $ \, T_\hbar(V) $
 --- which is then denoted  $ \, T_\hbar^\lambda(V) $  ---   such that, for all  $ \, J =(j_1,\dots,j_r) \in \mathcal{J} \, $
 \vskip-11pt
  $$  \displaylines{
   F_i\,.\,v_J  \; = \;  v_{(i,\,J)}  \quad ,   \qquad  T\,.\, v_J  \; = \;  \big( \lambda(T) - \alpha_J(T) \big) v_J  \cr
   E_i\,.\,v_J  \; =  \sum_{1\leq \ell\leq r \, , \; j_\ell = i} \hskip-5pt \frac{\; q^{+\lambda(T_i^+)
   - \alpha_{J_\ell}(T_i^+)} \! - q^{-\lambda(T_i^-) + \alpha_{J_\ell}(T_i^-)} \;}{q_i^{+1} - q_i^{-1}} \; v_{\widehat{J}_\ell}  }  $$
 In addition, this representation   --- of  $ \, U $  onto  $ \, T_\hbar(V) $  ---   induces by continuity a unique representation
 $ \, \widetilde{U}^{\,\R}_{\!P,\hbar}(\lieg) \, $  onto  $ \, \widehat{T}_\hbar(V) \, $,  \,which is hereafter denoted by  $ \, \widehat{T}_\hbar^\lambda(V) \, $.
\end{lema}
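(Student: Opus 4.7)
My plan is to construct the representation by defining the action of each generator via the formulas in the statement, and then verifying case-by-case that the defining relations of $U$ (i.e., all relations in \eqref{eq: comm-rel's_x_uPhg} \textit{except} the two quantum Serre relations, which were deliberately dropped in the definition of $U$) are satisfied as identities of $\kh$-linear endomorphisms of $T_\hbar(V)$ when tested on the $\kh$-basis $\{v_J\}_{J \in \mathcal{J}}$. Since $U$ is presented by those generators and relations alone, this is enough to give a well-defined representation; uniqueness is immediate because the action of every generator is prescribed on a generating set of $T_\hbar(V)$.

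First I would check the relations involving the Cartan part. The action of each $T \in \lieh$ is diagonal with eigenvalue $\lambda(T) - \alpha_J(T)$ on $v_J$, so $[T',T'']v_J = 0$ trivially. For $[T,F_j]$ one has $T\,F_j v_J = T v_{(j,J)} = (\lambda(T) - \alpha_{(j,J)}(T)) v_{(j,J)}$ and $F_j T v_J = (\lambda(T) - \alpha_J(T)) v_{(j,J)}$; since $\alpha_{(j,J)} = \alpha_j + \alpha_J$, the difference is $-\alpha_j(T)F_jv_J$, as required. For $[T,E_j]$, removing the letter $j_\ell = j$ from $J$ shifts the weight by $-\alpha_j$, so $\alpha_J - \alpha_{\widehat{J}_\ell} = \alpha_j$, and one gets $[T,E_j]v_J = \alpha_j(T)E_jv_J$ termwise.

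The main calculation is the verification of $[E_i,F_j]v_J = \delta_{ij}\,\dfrac{e^{+\hbar T_i^+} - e^{-\hbar T_i^-}}{q_i^{+1} - q_i^{-1}}\,v_J$. For $i \neq j$, a direct inspection shows that $E_i F_j v_J$ and $F_j E_i v_J$ both equal $\sum_{\ell \ge 1,\, j_\ell = i} c_\ell\, v_{(j,\widehat{J}_\ell)}$ with the \emph{same} coefficient $c_\ell$, since the prepended letter $j$ contributes nothing to the $E_i$-sum and $\alpha_{(j,J)_\ell}(T_i^\pm) = \alpha_{J_\ell}(T_i^\pm)$ for $\ell \ge 1$; hence the commutator vanishes. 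For $i = j$, the only mismatched contribution comes from removing the newly prepended letter at position $0$ in $E_i F_i v_J$, which yields exactly
\[
\frac{q^{\lambda(T_i^+) - \alpha_J(T_i^+)} - q^{-\lambda(T_i^-) + \alpha_J(T_i^-)}}{q_i^{+1} - q_i^{-1}}\, v_J.
\]
Using $q = e^\hbar$ and the fact that $T_i^\pm$ acts on $v_J$ by the scalar $\lambda(T_i^\pm) - \alpha_J(T_i^\pm)$, this equals $\frac{e^{+\hbar T_i^+} - e^{-\hbar T_i^-}}{q_i^{+1} - q_i^{-1}} v_J$, matching the relation.

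Finally, for the extension to the completions I would observe that each defined operator is $\kh$-linear, and preserves the natural $\hbar$-adic filtration on $T_\hbar(V)$: the eigenvalues for the action of $\lieh$ lie in $\kh$ and the coefficients appearing in the $E_i$-formula lie in $\kh$ as well (the denominator $q_i - q_i^{-1}$ divides each numerator in $\kh$, because $T_i^\pm$ are primitive and the expression $\frac{e^{\hbar x} - e^{-\hbar y}}{q_i - q_i^{-1}}$ belongs to $\kh$ for any $x, y$ with $x+y$ divisible by $d_i$ --- indeed the numerator reduces to $0$ modulo $\hbar$). Hence all operators are $\hbar$-adically continuous and extend uniquely to the completion $\widehat{T}_\hbar(V)$, and the resulting family of operators assembles into a continuous representation of the $\hbar$-adic completion $\widetilde{U}^{\,\R}_{\!P,\hbar}(\lieg)$, which we denote $\widehat{T}_\hbar^\lambda(V)$. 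The only genuine difficulty is the index bookkeeping in the $[E_i, F_j]$ step; once one recognizes that the $\ell = 0$ contribution in the $i = j$ case is precisely the Cartan element on the right-hand side of the relation, everything else is formal.
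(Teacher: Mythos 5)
Your proposal is correct and follows essentially the same route as the paper: one verifies the defining relations of $U$ (Cartan commutativity, the $[T,E_j]$ and $[T,F_j]$ relations, and the $[E_i,F_j]$ relation with the same $i\neq j$ / $i=j$ case split, where the prepended-letter term produces exactly the Cartan element) directly on the basis $\{v_J\}_{J\in\mathcal{J}}$, and then extends to $\widehat{T}_\hbar(V)$ and $\widetilde{U}^{\,\R}_{\!P,\hbar}(\lieg)$ by $\kh$-linearity and $\hbar$-adic continuity. The only cosmetic remark is that your divisibility justification is simpler than you make it sound: the numerator $e^{\hbar x}-e^{-\hbar y}$ vanishes modulo $\hbar$ while $q_i-q_i^{-1}$ is $\hbar$ times a unit (as $d_i\neq 0$ in characteristic zero), so no extra condition on $x+y$ is needed.
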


\begin{proof}
 We must prove that the equalities above do endow  $ T_\hbar(V) $  with a structure of  $ U $--module:
 to this end, let us performe a quick check to show that such an action is well-defined.
 For every  $ \, T, T' \in \lieh \, $  we have
  $$
  T'.\big( T.\,v_J\big)  =  \big( \lambda(T) - \alpha_J(T) \big) T'.v_J  \, =
  \big( \lambda(T) - \alpha_J(T) \big) \big( \lambda(T') - \alpha_J(T') \big) \, v_J  \, = \,  T.\big( T'.v_J \big)
  $$
   \indent   Take now  $ \, T\in \lieh \, $  and  $ \, F_i \, $  with  $ \, i \in I \, $.
   Then
  $$
  \displaylines{
   T.\big( F_i\,.v_J\big) - F_i\,.\big(T.v_J\big)  \; = \;  T.v_{(i,\,J)} - F_i\,.\big( \big( \lambda(T) - \alpha_J(T) \big) \, v_J \big)  \; =   \hfill  \cr
   = \;  \big( \lambda(T) - \alpha_{(i,\,J)}(T) - \lambda(T) + \alpha_{J}(T)\big)v_{(i,J)}  \; = \; \big(\! -\alpha_{(i,J)}(T) + \alpha_J(T) \big) \, v_{(i,J)}  \; =  \cr
   \hfill   = \;  -\alpha_i(T) \, v_{(i,J)}  \; = \; -\alpha_i(T) \, F_i\,.v_J  }
   $$
   \indent   Similarly, for  $ \, T \in \lieh \, $  and  $ E_i $  with  $ \, i \in I \, $  we find
  $$
  \displaylines{
   T.\big( E_i\,.v_J \big)  \; = \;  \sum_{1 \leq \ell\leq r \, , \; i=j_\ell} \hskip-5pt \big(\, \lambda(T) -
   \alpha_{\widehat{J}_\ell}(T) \big) \, \frac{\; q^{+\lambda(T_i^+) -\alpha_{J_\ell}(T_i^+)} -
   q^{-\lambda(T_i^-) + \alpha_{J_\ell}(T_i^-)} \;}{q_i^{+1} \! - q_i^{-1}} \; v_{\widehat{J}_\ell}  \cr
   E_i\,.\big( T.v_J \big)  \; = \;  \sum_{1 \leq \ell \leq r \, , \; i=j_\ell} \hskip-5pt \big(\, \lambda(T) -
   \alpha_J(T) \big) \, \frac{\; q^{+\lambda(T_i^+) - \alpha_{J_\ell}(T_i^+)} - q^{-\lambda(T_i^-) +
   \alpha_{J_\ell}(T_i^-)} \;}{q^{+1}_i \! - q_i^{-1}} \; v_{\widehat{J}_\ell}  }
   $$
 Since  $ \, \alpha_J(T) - \alpha_{\widehat{J}_\ell}(T) = \alpha_{j_\ell}(T) = \alpha_i(T) \, $
 for all  $ \, j_\ell = i \, $,  \,we obtain
  $$
  [T,E_i]\,.v_J  \; =  \sum_{1 \leq \ell\leq r \, , \; i = j_\ell} \hskip-5pt \alpha_i(T) \,
  \frac{\; q^{+\lambda(T_i^+) -\alpha_{J_\ell}(T_i^+)} - q^{-\lambda(T_i^-) + \alpha_{J_\ell}(T_i^-)} \;}{q_i^{+1} \! - q_i^{-1}} \;
  v_{\widehat{J}_\ell}  \; = \;  \alpha_i(T) \, E_i\,.v_J
  $$
   \indent   Finally, to check the commuting relation between  $ E_i $  and  $ F_j $  we note first that
 $ \; \sum_{n=0}^m \frac{\,\hbar^n\,}{\,n!\,} \, T^n.v_J \, = \, \sum_{n=0}^m \frac{\,\hbar^n\,}{\,n!\,} \,
 {\big(\, \lambda(T) - \alpha_J(T) \big)}^{n}.v_J \; $
 for all  $ \, T\in \lieh \, $  and  $ \, m \geq 1 \, $.  Then, by the continuity of the linear action, we get that
  $$  e^{+\hbar\,T^+_i}.\,v_J  \; = \;  q^{+(\lambda(T^+_i) - \alpha_J(T^+_i))} \, v_J
   \qquad  \text{ and }  \qquad
      e^{-\hbar\,T^-_i}.\,v_J  \; = \;  q^{-(\lambda(T^-_i) - \alpha_J(T^-_i))} \, v_J  $$
Then, for  $ \, i , j \in I \, $  with  $ \, i \neq j \, $,  \,we have
  $$  \displaylines{
   E_i\,.\big( F_j\,.v_J \big) \, - \, F_j\,.\big( E_i\,.v_J \big)  \; =   \hfill  \cr
   = \;  E_i\,.v_{(j,J)} \, - \, F_j\,.\left( \sum_{1 \leq \ell \leq r \, , \; i = j_\ell} \hskip-5pt \frac{\; q^{+\lambda(T_i^+) - \alpha_{J_\ell}(T_i^+)} - q^{-\lambda(T_i^-) + \alpha_{J_\ell}(T_i^-)} \;}{q_i^{+1} \! - q_i^{-1}} \; v_{\widehat{J}_\ell} \right)  \; =  \cr
   \hfill   = \;  E_i\,.v_{(j,J)} \, - \, \sum_{1 \leq \ell \leq r \, , \; i = j_\ell} \hskip-5pt \frac{\; q^{+\lambda(T_i^+) - \alpha_{J_\ell}(T_i^+)} - q^{-\lambda(T_i^-) + \alpha_{J_\ell}(T_i^-)} \;}{q_i^{+1} \! - q_i^{-1}} \; v_{(j,\widehat{J}_\ell)}  }  $$
   \indent   First, if  $ \, i \neq j \, $,  \,then  $ \, j_\ell = i \neq j \, $  and
   $ \, \widehat{(\,j\,,J\,)}_\ell = \big(\, j\,,\widehat{J}_\ell \,\big) \, $, $ \, \alpha_{{(\,j,\,J\,)}_\ell} = \alpha_{J_\ell} \, $  for all  $ \, 1 \leq \ell \leq r \, $.  Hence
 \vskip-11pt
  $$  E_i\,.v_{(j,J)}  \;\; =  \sum_{1 \leq \ell \leq r \, , \; i = j_\ell} \hskip-9pt
   {\big( q_i^{+1} \! - q_i^{-1}\big)}^{-1} \big( q^{+\lambda(T_i^+) - \alpha_{J_\ell}(T_i^+)} - q^{-\lambda(T_i^-) + \alpha_{J_\ell}(T_i^-)} \big) \, v_{(j,\widehat{J}_\ell)}
   $$
 which implies that  $ \; E_i\,.\big( F_j\,.v_J \big) \, - \, F_j\,.\big( E_i\,.v_J \big) \, = \, 0 \; $.
                                                                  \par
   Assume now that  $ \, i = j \, $.  Then for  $ \, i = j_\ell \, $  we have
   $ \, \widehat{(i\,,J)}_\ell = J \, $  and  $ \, \alpha_{(i,J)_\ell} = \alpha_J \, $  for
   $ \, \ell = 1 \, $,  \,while  $ \, \widehat{(i\,,J)}_\ell = \big(i\,,\widehat{J}_\ell\big) \, $  and
   $ \, \alpha_{(i,J)_\ell} = \alpha_{J_\ell} \, $  for  $ \, \ell > 1 \, $.  This implies that
  $$  \displaylines{
   E_i\,.v_{(i,J)}  \,\; = \;\,  {\big( q_i^{+1} \! - q_i^{-1}\big)}^{-1} \big( q^{+\lambda(T_i^+) - \alpha_J(T_i^+)} - q^{-\lambda(T_i^-) + \alpha_J(T_i^-)} \big) \, v_J \,\; +   \hfill  \cr
   \hfill   +  \sum_{1 \leq \ell \leq r \, , \; i = j_\ell} \hskip-9pt {\big( q_i^{+1} \! - q_i^{-1}\big)}^{-1}
   \big( q^{+\lambda(T_i^+) - \alpha_{J_\ell}(T_i^+)} - q^{-\lambda(T_i^-) + \alpha_{J_\ell}(T_i^-)} \big) \, v_{(i,\widehat{J}_\ell)}  }
   $$
and consequently
  $$  E_i . \big( F_i . v_J \big) - F_i . \big( E_i.v_J \big)  \, = \,
  \frac{q^{\lambda(T_i^+) - \alpha_J(T_i^+)} - q^{-\lambda(T_i^-) + \alpha_J(T_i^-)}}{q_i^{+1} \! - q_i^{-1}} \,.\, v_J  \, = \,  \frac{\, e^{\hbar\, T_i^+} \! - e^{\hbar\, T_i^-} \,}{q_i^{+1} \! - q_i^{-1}} \,.\, v_J  $$
 Therefore, the formulas above define indeed an action of  $ U $  onto  $ T_\hbar(V) \, $.
                                                              \par
   Finally, since this action is  $ \kh $--linear,  it induces a unique (topological) action of
   $ \widetilde{U}^{\,\R}_{\!P,\hbar}(\lieg) $  on  $ \widehat{T}_\hbar(V) $ by completion.  This completes the proof.
\end{proof}

\vskip7pt

   With an entirely analogous proof, we obtain the following lemma:

\vskip11pt

\begin{lema}\label{lemma: repLT}
 For every  $ \, \lambda \in \lieh^* \, $,  there exists a unique representation of
 $U$  onto  ${T}_\hbar(V) $
 --- which is then denoted  $ \, {}^\lambda T_\hbar(V) $  ---
 such that, for all  $ \, J =(j_1,\dots,j_r) \in \mathcal{J} \, $
%
%
 \vskip-17pt
  $$
  \displaylines{
   E_i\,.\,v_J  \; = \;  v_{(i,\,J)}  \quad ,   \qquad  T\,.\, v_J  \; = \;  \big( \lambda(T) + \alpha_J(T) \big) v_J  \cr
   F_i\,.\,v_J  \; =  \sum_{1\leq \ell\leq r \, , \; j_\ell = i} \hskip-5pt \frac{\; q^{-\lambda(T_i^-) -
   \alpha_{J_\ell}(T_i^-)} \! - q^{+\lambda(T_i^+) + \alpha_{J_\ell}(T_i^+)} \;}{q_i^{+1} - q_i^{-1}} \; v_{\widehat{J}_\ell}  }
   $$
 In addition, this representation, of  $ \, U $  onto  $ \, T_\hbar(V) \, $,
 induces by continuity a unique representation  $ \, \widetilde{U}^{\,\R}_{\!P,\hbar}(\lieg) \, $  onto
 $ \, \widehat{T}_\hbar(V) \, $,  \,which is hereafter denoted by  $ \,{}^\lambda \widehat{T}_\hbar(V) \, $.   \qed
\end{lema}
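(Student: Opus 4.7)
The plan is to follow the template of the proof of Lemma~\ref{lemma: tensor repr x utildeRPhg}, with the roles of $E_i$ and $F_i$ interchanged and the sign of $\alpha_J$ in the $\lieh$--action reversed. Concretely, I will first check that the stated formulas give a well-defined left action of the (pre-completion) algebra $U$ on $T_\hbar(V)$, and then extend by continuity and $\kh$--linearity to an action of $\widetilde{U}^{\,\R}_{\!P,\hbar}(\lieg)$ on $\widehat{T}_\hbar(V)$, exactly as in the previous lemma.

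The defining relations of $U$ fall in four groups. First, the commutativity $[T',T''].v_J = 0$ is immediate from the scalar (diagonal) $\lieh$--action $T.v_J = (\lambda(T) + \alpha_J(T))\,v_J$. Second, the bracket $[T, E_i].v_J = \alpha_i(T)\, E_i.v_J$ follows at once from the identity $\alpha_{(i,J)}(T) - \alpha_J(T) = \alpha_i(T)$; the sign now comes out correctly because $T.v_J$ carries $+\alpha_J(T)$ rather than $-\alpha_J(T)$. Third, the bracket $[T, F_i].v_J = -\alpha_i(T)\, F_i.v_J$ is the exact analog of the $[T,E_i]$--computation in Lemma~\ref{lemma: tensor repr x utildeRPhg}, with the sign flips in the exponents of $q$ matched to the new formula for $F_i.v_J$.

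The main calculation is the verification of $[E_i, F_j].v_J = \delta_{i,j}\,\frac{e^{+\hbar\,T_i^+} - e^{-\hbar\,T_i^-}}{q_i^{+1} - q_i^{-1}}.v_J$. I would split into the cases $i \neq j$ and $i = j$, mirroring the corresponding split in the previous proof. For $i \neq j$ the two contributions $E_i.(F_j.v_J)$ and $F_j.(E_i.v_J)$ produce the same sum, indexed by positions $\ell$ with $j_\ell = i$ acting on $v_{(j,\widehat{J}_\ell)}$, and with coefficients unchanged by the swap of operators, so the commutator vanishes termwise. For $i = j$, the only surviving term arises from the $\ell = 1$ entry in $F_i.v_{(i,J)}$; its coefficient collapses, using the $+\alpha_J$ convention of the $\lieh$--action, to $\frac{q^{+\lambda(T_i^+) + \alpha_J(T_i^+)} - q^{-\lambda(T_i^-) - \alpha_J(T_i^-)}}{q_i^{+1} - q_i^{-1}}$, which is precisely the eigenvalue of $\frac{e^{+\hbar T_i^+} - e^{-\hbar T_i^-}}{q_i^{+1} - q_i^{-1}}$ on $v_J$, as required. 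Here one uses, as in the previous proof, the identities $\alpha_{(i,J)_\ell} = \alpha_{J_\ell}$ for $\ell > 1$ and $\alpha_{(i,J)_1} = \alpha_J$.

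The only genuine obstacle is the careful bookkeeping of signs, since the flip $-\alpha_J \rightsquigarrow +\alpha_J$ in the $\lieh$--action interacts with the reversal of the order of the two exponential summands in the $F_i$--action formula in a way that must be tracked precisely. A useful sanity check for this bookkeeping is the observation that the map $E_i \mapsto F_i$, $F_i \mapsto E_i$, $T \mapsto T$ (for $T \in \lieh$) extends to an \emph{anti}-algebra automorphism of $U$, as one verifies immediately on the defining relations; this explains the parallelism between Lemma~\ref{lemma: tensor repr x utildeRPhg} and Lemma~\ref{lemma: repLT}. Finally, the extension from $T_\hbar(V)$ to $\widehat{T}_\hbar(V)$ is obtained by continuity of the $\kh$--linear action, exactly as in Lemma~\ref{lemma: tensor repr x utildeRPhg}.
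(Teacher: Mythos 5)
Your proposal is correct and follows exactly the route the paper intends: the paper gives no separate argument for Lemma \ref{lemma: repLT}, declaring it ``entirely analogous'' to Lemma \ref{lemma: tensor repr x utildeRPhg}, and your relation-by-relation verification (diagonal $\lieh$--action, the two bracket relations, the split $i\neq j$ versus $i=j$ in the $[E_i,F_j]$ relation with the surviving $\ell=1$ coefficient matching the eigenvalue of $\frac{e^{+\hbar T_i^+}-e^{-\hbar T_i^-}}{q_i^{+1}-q_i^{-1}}$ on $v_J$) together with the continuity extension is precisely that analogue. Only a labelling slip: in the $i\neq j$ case the common sum is indexed by positions $\ell$ with $j_\ell=j$ (since here $F_j$ removes and $E_i$ prepends) and runs over the vectors $v_{(i,\widehat{J}_\ell)}$, not $j_\ell=i$ and $v_{(j,\widehat{J}_\ell)}$ --- this does not affect the argument.
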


\vskip9pt

   Denote by  $ U^0 $,  resp.\  $ U^+ $,  resp.\  $ U^- $,  the unital associative  $ \kh $--subalgebras  of
   $ U $  generated by  $ \lieh \, $, resp.\ by all the  $ E_i $'s,  resp.\  by all the  $ F_i $'s  ($ \, i \in I \, $).
   Write  $ \, \widetilde{U}^0 \, \big( = \widetilde{U}_\hbar(\lieh) := \widetilde{U}_{P,\hbar}^{\,\R}(\lieh) \big) \, $,
   resp.\  $\widetilde{U}^+ $,  resp.\  $ \widetilde{U}^- $,  for the $ \hbar $--adic  completion of them: clearly,
   all these are topological  $ \kh $--subalgebras  of  $ \widetilde{U}^{\,\R}_{\!P,\hbar}(\lieg) \, $.
 \vskip9pt
 As a first consequence of  Lemma \ref{lemma: tensor repr x utildeRPhg},  we get the following:

\vskip11pt

\begin{prop}  \label{prop: structure Uh(h)}
 The Cartan subalgebra
$ \, \widetilde{U}^0 = \widetilde{U}_\hbar(\lieh) = \widetilde{U}_{P,\hbar}^{\,\R}(\lieh) \, $
 is the  $ \hbar $--adic  completion of  $ \, S_\hbar(\lieh) \, $,  \,the symmetric algebra  of $ \, \lieh $
 over  $ \, \kh \, $.  In particular,  $ \, \widetilde{U}_\hbar(\lieh) \, $  is  \textsl{independent of  $ \, P $  and  $ \, \R $
 (though not of  $ \, \lieh $)}   --- whence the simplified notation.
\end{prop}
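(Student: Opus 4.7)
The plan is to show that the natural surjection $\Phi:S_\hbar(\lieh)\twoheadrightarrow U^0$ (where $U^0$ is the $\kh$-subalgebra of $U$ generated by $\lieh$, prior to completion) is actually an isomorphism of $\kh$-algebras, and then conclude by passing to $\hbar$-adic completions. The existence of $\Phi$ is built in: the defining relations of $U$ include $T'\,T''=T''\,T'$ for all $T',T''\in\lieh$, and by construction $\lieh$ embeds as a $\kh$-submodule into $U$, so the universal property of the symmetric algebra produces a $\kh$-algebra map $\Phi:S_\hbar(\lieh)\to U$ whose image is precisely $U^0$.

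For injectivity, the key tool is Lemma \ref{lemma: tensor repr x utildeRPhg}. Fix a $\kh$-basis $H_1,\dots,H_t$ of $\lieh$, so that $S_\hbar(\lieh)$ is identified with the polynomial ring $\kh[H_1,\dots,H_t]$. For every $\lambda\in\lieh^*$ the representation $T_\hbar^\lambda(V)$ restricts (via $\Phi$) to an action of $S_\hbar(\lieh)$ on $\widehat{T}_\hbar(V)$; evaluating at the unit vector $v_\emptyset$ (the empty sequence in $\mathcal{J}$) gives $T.\,v_\emptyset=\lambda(T)\,v_\emptyset$ for every $T\in\lieh$, and iterating yields $\Phi(p).\,v_\emptyset=p\bigl(\lambda(H_1),\dots,\lambda(H_t)\bigr)\,v_\emptyset$ for every $p\in S_\hbar(\lieh)$. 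Now $\lambda\mapsto\bigl(\lambda(H_1),\dots,\lambda(H_t)\bigr)$ is a bijection $\lieh^*\cong\kh^t$, so if $\Phi(p)=0$ for some nonzero $p$, then $p$ would vanish at every $\kh$-point of $\kh^t$. Since $\kh$ is an integral domain containing infinitely many elements (being a domain of characteristic zero), a classical argument shows that a nonzero polynomial in $\kh[X_1,\dots,X_t]$ cannot vanish identically on $\kh^t$; this contradiction forces $\Phi$ to be injective.

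Finally, because $\Phi:S_\hbar(\lieh)\cong U^0$ is an isomorphism of $\kh$-algebras and both sides are $\hbar$-torsion-free, passing to $\hbar$-adic completions gives $\widehat{S}_\hbar(\lieh)\cong\widetilde{U}^0=\widetilde{U}_\hbar(\lieh)$, which is the stated identification. The independence of $P$ and $\R$ is then immediate, since the right-hand side $\widehat{S}_\hbar(\lieh)$ depends only on the $\kh$-module $\lieh$. The one step that genuinely requires care is the injectivity argument: one must be sure that Lemma \ref{lemma: tensor repr x utildeRPhg} is available for the \emph{pre}-FoMpQUEA $\widetilde{U}^{\,\R}_{\!P,\hbar}(\lieg)$ (where the quantum Serre relations have been dropped) and that the action on $v_\emptyset$ really is pointwise evaluation; this is granted directly by the formulas defining $T_\hbar^\lambda(V)$, since $\alpha_\emptyset=0$. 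No further obstacle is expected.
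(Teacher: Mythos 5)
Your argument is correct and takes essentially the same route as the paper's First Proof: both rest on the representations $T_\hbar^\lambda(V)$ of Lemma \ref{lemma: tensor repr x utildeRPhg} and evaluation at $v_\emptyset$, so that the characters attached to arbitrary $\lambda\in\lieh^*$ separate the elements of the symmetric algebra and force injectivity of the canonical surjection onto the Cartan part. The only (harmless) difference is organizational: you establish $S_\hbar(\lieh)\cong U^0$ before completing and then pass to $\hbar$-adic completions (spelling out the polynomial-over-an-infinite-domain step that the paper leaves implicit), whereas the paper runs the same character argument directly on $\widehat{S}_\hbar(\lieh)$ by extending the characters $\hbar$-adically continuously, and it also records a second, independent proof via Milnor--Moore which you do not need.
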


\begin{proof}
 We provide two different, independent proofs.
 \vskip7pt
   \textsl{$ \underline{\text{First Proof}} \, $:}\,  Let  $ T_\hbar(\lieh) \, $,  \,resp.\  $ \, S_\hbar(\lieh) \, $,
   \,be the tensor algebra, resp.\ the symmetric algebra, of  $ \lieh $  over  $ \kh \, $,  and let  $ \widehat{T}_\hbar(\lieh) \, $,
   \,resp.\  $ \, \widehat{S}_{\hbar}(\lieh) $  be the  $ \hbar $--adic  completion of them.
 By the commutation relations among elements of  $ \lieh $  in  $ \widetilde{U}^{\,\R}_{\!P,\hbar}(\lieg) \, $,  \,we have a diagram of morphisms
%
%
  $$
  \xymatrix{
   \; \widehat{T}_\hbar(\lieh) \; \ar@{->>}[rr]^{p_{\hbar,S}}  &  &  \; \widehat{S}_\hbar(\lieh) \; \ar@{->>}[rr]^{p_{\hbar,U}} &  &
   \; \widetilde{U}_\hbar(\lieh) \; \ar@{^{(}->}[rr]^{\iota_{\hbar}}  &  &   \; \widetilde{U}^{\,\R}_{\!P,\hbar}(\lieg)  \\
   \; T_\hbar(\lieh) \; \ar[u]^{} \ar@{->>}[rr]^{p_{S}}  &  &  \; S_\hbar(\lieh) \; \ar[u]^{} \; \ar@{-->}[urr]^{} \ar[rr]^{p_{U^{0}}}  & & \; U^{0} \; \ar[u]^{} & &   \\
   \; \lieh \; \ar[u]^{} \ar[urr]^{} &  &  &  &  &  &  }
   $$
 where the maps  $ p_{\hbar,S} $, $ p_{\hbar,U} $,  $ p_S $  and  $ p_{U^{0}} $
 are the canonical epimorphisms, $ \iota_\hbar $  is the canonical inclusion, all vertical arrows are canonical embeddings,
 and the diagonal arrows  $ \, \lieh \longrightarrow S_\hbar(\lieh) \, $  too.
 We want to show that  $ p_{\hbar,U} $  is in fact an  \textsl{iso}momorphism.
 \vskip5pt
   Let  $ \, V := \bigoplus_{i \in I} \kh.v_i \, $  be the free  $ \kh $--module  with basis  $ \, {\{ v_i \}}_{i \in I} \, $.
Choosing  $ \, \lambda \in \lieh^* \, $  and restricting
 the action defined in  Lemma \ref{lemma: tensor repr x utildeRPhg}  to the image of  $ i_\hbar \, $,
 \,we have that  $ \widetilde{U}_\hbar(\lieh) $  acts on  $ \widehat{T}_\hbar^\lambda(V) $  by the character
 $ \lambda $  via  $ \; T.v_J \, = \, \big( \lambda(T) - \alpha_J(T) \big) \, v_J \, $,  \,for all  $ \, T\in \lieh \, $  and
for all  $ \, J = \big( j_1 \, , \cdots , j_r \big) \in \mathcal{J} \, $.  In particular,
$ \, T.v_\emptyset = T.1 = \lambda(T) \, $  for all  $ \, T \in \lieh \, $.
This action induces an action of  $ \widehat{S}_\hbar(\lieh) \, $,  \,and of its  $ \kh $--subalgebra  $ S_\hbar(\lieh) \, $,  \,on
$ \widehat{T}_\hbar^\lambda(V) $  via the epimorphism  $ p_{\hbar,U} \, $.  By the very definition, this action
coincides with the unique action of  $ S_\hbar(\lieh) $  on  $ \widehat{T}_\hbar^\lambda(V) $  defined by the
character  $ \, \lambda \in \lieh^* \, $  by the universal properties of the symmetric and the tensor algebras.
                                                                           \par
   Let  $ \, t \in S_\hbar(\lieh) \, $  be such that  $ p_{\hbar,U}(t) = 0 \, $:  \,then  $ \, 0 = t.v_\emptyset = \lambda_S(t) \, $
   --- extending  $ \, \lambda \in \lieh^* \, $  to a  $ \kh $--algebra  character  $ \lambda_S $  of  $ S_\hbar(\lieh) \, $.
   Since  $ \lambda $  is arbitrary, we get  $ \, \lambda_S(t) = 0 \, $  for all  $ \, \lambda \in \lieh^* \, $,  so that  $ \, t = 0 \, $.
   Similarly,  $ \lambda_S $  further extends, canonically and uniquely, to a  $ \hbar $--adically
   continuous character of  $ \widehat{S}_\hbar(\lieh) \, $,  \,denoted  $ \lambda_{\widehat{S}} \, $.
   Then for any  $ \, \widehat{t} \in \widehat{S}_\hbar(\lieh) \, $
   such that  $ p_{\hbar,U}\big(\,\widehat{t}\;\big) = 0 \, $  we have  $ \, 0 = t.v_\emptyset = \lambda_{\widehat{S}}\big(\,\widehat{t}\;\big) \, $,
   \,thus  $ \, \lambda_{\widehat{S}}\big(\,\widehat{t}\;\big) = 0 \, $
   for all  $ \, \lambda \in \lieh^* \, $,  \,which implies  $ \, \widehat{t} = 0 \, $.  Hence  $ p_{\hbar,U} $  is injective and
$ \, \widehat{S}_\hbar(\lieh) \cong \widetilde{U}_\hbar(\lieh) \, $.
 \vskip7pt
   \textsl{$ \underline{\text{Second Proof}} \, $:}\,  By definition there is a  $ \kh $--linear  morphism  from
   $ \lieh $  to  $ \uRPhg \, $,  \,whose image we denote by  $ \lieh' \, $;  \,in other words,  $ \lieh' $
   is the  $ \kh $--submodule of  $ \uRPhg $
spanned by the generators  $ \, T \in \lieh \, $.
 By definition,  $ U_{P,\hbar}^{\,\R}(\lieh) $  is (topologically) generated by  $ \lieh' \, $,
   \,which in turn is a Lie subalgebra inside the Lie algebra of  \textsl{primitive\/}  elements of the Hopf algebra
   $ U_{P,\hbar}^{\,\R}(\lieh) \, $. Since we are in characteristic zero, by Milnor-Moore's Theorem
 we deduce that  $ U_{P,\hbar}^{\,\R}(\lieh) $  is indeed nothing but the  $ \hbar $--adic
 completion of the universal enveloping algebra  $ \, U\big(\lieh'\big) $  of  $ \, \lieh' \, $;  \,in turn,
 the latter coincides with the  \textsl{symmetric\/}  $ \kh $--algebra  $ S_\hbar\big(\lieh'\big) $
 --- hence its completion coincides with  $ \widehat{S}_\hbar\big(\lieh'\big) $
 ---   because the multiplication therein is commutative, by construction!
                                                          \par
   Finally, we observe that the built-in epimorphism
   $ \, \lieh \relbar\joinrel\twoheadrightarrow \lieh' \, $  is indeed an  \textsl{iso}mor\-phism,  so that  $ \, \lieh' \cong \lieh \, $.
   This is proved again making use of  Lemma \ref{lemma: tensor repr x utildeRPhg}  above, in particular looking at how  $ \lieh $
   acts on each representation  $ \widehat{T}_\hbar^\lambda(V) \, $,  for all  $ \, \lambda \in \lieh^* \, $,
   \,along the same lines as in the last part of the  \textsl{First Proof\/}  above.
\end{proof}

\vskip9pt

   Let  $ U' $  be the  $ \kh $--subalgebra  of  $ \widetilde{U}^{\,\R}_{\!P,\hbar}(\lieg) $  generated by the
   $ \kh $--subalgebras  $ U^- $,  $ U^+$  and  $ \, \widetilde{U}^0 = \widetilde{U}_\hbar(\lieh) \, $   --- the
$ \hbar $--adic  completion of  $ U^0 $.  By  Theorem \ref{thm: form-MpQUEAs_are_Hopf},
it follows that  $ U' $  is a (topological) Hopf  $ \kh $--subalgebra of  $ \, \widetilde{U}^{\,\R}_{\!P,\hbar}(\lieg) \, $.
 \vskip3pt
   We fix some more notation.  For any finite sequence
   $ \, J = \big( j_1 \, , j_2 \, , \ldots , j_r \big) \, $   --- with  $ \, r \geq 1 \, $  ---   of elements in  $ I $,
 \,we set  $ \, T^\pm_\emptyset := 0 \, $  and  $ \, E_\emptyset := 1 =: F_\emptyset \, $,  \,and in general
  $$
  T^\pm_J \, := \, T^\pm_{j_1} + T^\pm_{j_2} + \cdots + T^\pm_{j_r}  \;\; ,  \;\quad
   E_J \, := \, E_{j_1} E_{j_2} \cdots E_{j_r}  \;\; ,  \;\quad
   F_J \, := \, F_{j_1} F_{j_2} \cdots F_{j_r}  \;\; .
   $$
   \indent   As the coproduct of the elements  $ E_i $'s  and  $ F_i $'s  in  $ U' $  and in  $ \widetilde{U}^{\,\R}_{\!P,\hbar}(\lieg) $
   coincides with the one defined for the one-parameter  \textsl{polynomial\/}  QUEA, the following lemma follows at once,
   \textit{mutatis mutandis},  from  \cite[Lemma 4.12]{Ja}:

\vskip11pt

\begin{lema}  \label{lemma: formulacoprodEiFi}
 Let  $ J $  be a finite sequence as above.  Then there exist Laurent polynomials  $ \, c_{A,B}^{\,J} \in \ZZ\big[x\,,x^{-1}\big] \, $,
 \,indexed by finite sequences of elements of  $ I $,  \,with  $ \, \alpha_J = \alpha_A + \alpha_B \, $
 and such that, both in  $ U' $  and in  $ \, \widetilde{U}^{\,\R}_{\!P,\hbar}(\lieg) \, $,  \,one has
\begin{equation}\label{eq:formulacoprodEiFi}
   \Delta\big(E_J\big) = {\textstyle \sum\limits_{A,B}} c_{A,B}^{\,J}(q) \, E_A \,
   e^{\hbar \, T^+_B} \otimes E_B \;\; , \!\quad \Delta\big(F_J\big) = {\textstyle \sum\limits_{A,B}} c_{A,B}^{\,J}\big(q^{-1}\big) \, F_A \otimes e^{\hbar \, T^-_A} \, F_B
\end{equation}
   \indent   Moreover, one has  $ \; c_{A,\emptyset}^{\,J} = \delta_{A,J} \; $
   and  $ \; c_{\emptyset,B}^{\,J} = \delta_{B,J} \; $.   \qed
\end{lema}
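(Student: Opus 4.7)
The plan is to induct on the length $r$ of $J = (j_1, \ldots, j_r)$, following closely the single-parameter argument of \cite[Lemma 4.12]{Ja}. For $r = 0$ the formula is trivial, and for $r = 1$ it is just the defining coproduct $\Delta(E_j) = E_j \otimes 1 + e^{\hbar T_j^+} \otimes E_j$, which gives $c^{(j)}_{(j), \emptyset} = 1 = c^{(j)}_{\emptyset, (j)}$ and all other coefficients zero; this already records the boundary conditions at length one.

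For the inductive step I would write $J = (j, J')$, so $E_J = E_j E_{J'}$, and apply multiplicativity $\Delta(E_J) = \Delta(E_j) \Delta(E_{J'})$ to the inductive expression for $\Delta(E_{J'})$. Expanding the product yields two families of summands. The terms coming from $E_j \otimes 1$ are already in the desired shape, with $A = (j, A')$ and $B = B'$. The terms coming from $e^{\hbar T_j^+} \otimes E_j$ require pushing the exponential past $E_{A'}$; iterating the relation $T_j^+ E_i = E_i T_j^+ + \alpha_i(T_j^+) E_i$ gives
\[
e^{\hbar T_j^+} \, E_{A'}  \;=\;  q^{\alpha_{A'}(T_j^+)} \, E_{A'} \, e^{\hbar T_j^+},
\]
converting those terms into the required shape with $A = A'$ and $B = (j, B')$. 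Summing the two families produces a recursion on the $c^J_{A, B}$'s from which one reads off that the weight condition $\alpha_J = \alpha_A + \alpha_B$ is preserved (via $\alpha_{(j, A')} = \alpha_j + \alpha_{A'}$), and that the boundary conditions $c^J_{A, \emptyset} = \delta_{A, J}$ and $c^J_{\emptyset, B} = \delta_{B, J}$ propagate: if, say, $B = \emptyset$, only the first family of summands can contribute, iteratively forcing $A = J$ with coefficient $1$.

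The $F_J$ case should be handled by an entirely parallel calculation starting from $\Delta(F_j) = F_j \otimes e^{-\hbar T_j^-} + 1 \otimes F_j$. This time it is $F_{A'}$ which must be pushed past $e^{-\hbar T_j^-}$, producing the sign-flipped scalar $q^{-\alpha_{A'}(T_j^-)}$; by the realization identity $\alpha_i(T_j^-) = p_{j, i} = \alpha_j(T_i^+)$, this mirror commutation implements exactly the $q \mapsto q^{-1}$ substitution in the coefficients, showing that the \emph{same} family of polynomials $c^J_{A, B}$ controls both coproducts.

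The one substantive point where the multiparameter setting differs from \cite{Ja} is bookkeeping: each scalar $q^{\alpha_{A'}(T_j^+)}$ accrued along the recursion is a monomial in the $q_{ji} = e^{\hbar p_{j, i}}$, not in a single $q$, so strictly speaking the coefficients $c^J_{A, B}$ live in the multivariable Laurent ring generated by the $q_{ij}^{\pm 1}$ inside $\kh$; the notation $c^J_{A, B}(q) \in \mathbb{Z}[x, x^{-1}]$ in the statement is to be read in this extended sense. Once this bookkeeping convention is in place, the combinatorics that propagate the formulas and the boundary conditions are identical to those of the one-parameter case, and this is essentially the only obstacle to overcome.
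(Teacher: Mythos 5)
Your strategy coincides with the paper's: the paper gives no computation at all, observing only that the coproducts of the $E_i$'s and $F_i$'s are formally those of the one-parameter polynomial QUEA and invoking \cite[Lemma 4.12]{Ja} \emph{mutatis mutandis}, and your proposal is exactly a reconstruction of that induction. Your $E$-side step, the preservation of $\alpha_J=\alpha_A+\alpha_B$, and the propagation of the boundary values $c^J_{A,\emptyset}=\delta_{A,J}$, $c^J_{\emptyset,B}=\delta_{B,J}$ are correct, and your bookkeeping remark is fair: the accrued scalars are monomials in the $q_{ij}=e^{\hbar p_{ij}}$ (equivalently, the $F$-coefficients arise from the $E$-coefficients by $\hbar\mapsto-\hbar$), and the only values used later in the paper are the boundary ones, for which this is immaterial. (Incidentally, your computation produces $e^{-\hbar T^-_A}$ in the second tensor factor, as $\Delta(F_\ell)=F_\ell\otimes e^{-\hbar T^-_\ell}+1\otimes F_\ell$ forces; the $+$ sign in the displayed statement is a slip of the paper, not yours.)

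There is, however, a concrete error in your $F$-side step, and it sits precisely at the multiparameter-sensitive point you yourself single out. Expanding $\Delta(F_j)\Delta(F_{J'})$ against the inductive form $\sum c\,F_{A'}\otimes e^{-\hbar T^-_{A'}}F_{B'}$, the term that needs straightening is $F_{A'}\otimes F_j\,e^{-\hbar T^-_{A'}}F_{B'}$: it is $F_j$ that must cross $e^{-\hbar T^-_{A'}}$, producing the scalar $q^{-\alpha_j(T^-_{A'})}=\prod_{a\in A'}q_{ja}^{-1}$, and the identity matching this with the $E$-side scalar $q^{+\alpha_{A'}(T^+_j)}=\prod_{a\in A'}q_{ja}$ is $\alpha_a(T^+_j)=p_{ja}=\alpha_j(T^-_a)$. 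You instead push $F_{A'}$ across $e^{-\hbar T^-_j}$ and quote the scalar $q^{-\alpha_{A'}(T^-_j)}=\prod_{a\in A'}q_{aj}^{-1}$, supported by the identity ``$\alpha_i(T_j^-)=p_{j,i}$'', whose first equality has the indices transposed: Definition \ref{def: realization of P}\textit{(a.1)} gives $\alpha_i(T_j^-)=p_{ij}$. Since $P$ need not be symmetric, $\sum_{a}p_{aj}\neq\sum_{a}p_{ja}$ in general, so the two scalars you display are not mutually inverse, and the assertion that the \emph{same} polynomials $c^J_{A,B}$ evaluated at $q^{-1}$ govern $\Delta(F_J)$ --- which is the actual content of the lemma --- does not follow from your argument as written. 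The conclusion is nonetheless true and the repair is mechanical: identify the correct commutation as above and use $\alpha_j(T^-_{A'})=\alpha_{A'}(T^+_j)$, keeping the same end of $J$ for both the $E$- and $F$-inductions (mixing the two ends also destroys the matching).
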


\vskip9pt

   As an intermediate result, we have now a ``triangular decomposition'' for  $ \, \widetilde{U}^{\,\R}_{\!P,\hbar}(\lieg) \, $:

\vskip11pt

\begin{prop}  \label{prop: triangulardecUtilde}
 The  multiplication  maps
   $ \; \overrightarrow{\!\mu_3} : \widetilde{U}^- \otimes \widetilde{U}_\hbar(\lieh) \otimes \widetilde{U}^+
   \!\relbar\joinrel\relbar\joinrel\longrightarrow \widetilde{U}^{\,\R}_{\!P,\hbar}(\lieg) \; $
 and
   $ \; \overleftarrow{\mu_3} : \widetilde{U}^+ \otimes \widetilde{U}_\hbar(\lieh) \otimes \widetilde{U}^-
   \!\relbar\joinrel\relbar\joinrel\longrightarrow \widetilde{U}^{\,\R}_{\!P,\hbar}(\lieg) \; $
 induced by restriction of multiplication in  $ \widetilde{U}^{\,\R}_{\!P,\hbar}(\lieg) \, $
 --- the tensor products here being the  $ \hbar $--adically  completed ones ---
 are both isomorphisms of topological  $ \kh $--modules.
\end{prop}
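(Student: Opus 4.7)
The plan is to follow the same strategy as in the proof of Theorem \ref{thm: Uotimes-cong-uRPhg}, adapted to the setting without quantum Serre relations. In fact, the argument is \emph{simpler} here, precisely because the Serre relations (which constituted the non-trivial bulk of the verification in Lemma \ref{lemma: alg-struct-on-Uotimes(g)}) are absent from the defining relations of $\widetilde{U}^{\,\R}_{\!P,\hbar}(\lieg)$: the subalgebras $\widetilde{U}^\pm$ are just the $\hbar$--adic completions of the free (tensor) $\kh$--algebras on the $E_i$'s and $F_i$'s respectively, with no further relations to respect.

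First I would endow the topological $\kh$--module $\widetilde{U}^- \,\widehat{\otimes}\, \widetilde{U}_\hbar(\lieh) \,\widehat{\otimes}\, \widetilde{U}^+$ with the structure of a unital, associative, $\hbar$--adically complete topological $\kh$--algebra, using the very same recipe as in Lemma \ref{lemma: alg-struct-on-Uotimes(g)}: the three factors sit inside as $\kh$--subalgebras, and the products across factors are forced by the commutation relations
$$
T \cdot F_i = F_i \cdot T - \alpha_i(T) F_i\,,\qquad E_j \cdot T = T \cdot E_j - \alpha_j(T) E_j\,,
$$
together with the mixed rule $E_j \cdot F_i = F_i \cdot E_j + \delta_{ij}(e^{+\hbar T_i^+} - e^{-\hbar T_i^-})/(q_i - q_i^{-1})$ (read in $\widetilde{U}^- \otimes \widetilde{U}_\hbar(\lieh) \otimes \widetilde{U}^+$). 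The only relations among the generators of the three tensor factors that must be respected are the linear and commutativity relations among the $T$'s (trivial) and the fact that $\widetilde{U}^\pm$ are free on the $E_i$'s, $F_i$'s (so nothing to check). The analogous construction for $\widetilde{U}^+ \,\widehat{\otimes}\, \widetilde{U}_\hbar(\lieh) \,\widehat{\otimes}\, \widetilde{U}^-$ is symmetric.

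Second, the defining relations of $\widetilde{U}^{\,\R}_{\!P,\hbar}(\lieg)$ are precisely those that have been enforced in the multiplication of $\widetilde{U}^- \,\widehat{\otimes}\, \widetilde{U}_\hbar(\lieh) \,\widehat{\otimes}\, \widetilde{U}^+$, so there is a well-defined morphism of topological $\kh$--algebras
$$
\overrightarrow{\pi} : \widetilde{U}^{\,\R}_{\!P,\hbar}(\lieg) \relbar\joinrel\relbar\joinrel\longrightarrow \widetilde{U}^- \,\widehat{\otimes}\, \widetilde{U}_\hbar(\lieh) \,\widehat{\otimes}\, \widetilde{U}^+
$$
sending $F_i \mapsto F_i \otimes 1 \otimes 1$, $T \mapsto 1 \otimes T \otimes 1$, $E_j \mapsto 1 \otimes 1 \otimes E_j$. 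On the other hand, $\overrightarrow{\!\mu_3}$ is $\kh$--linear, continuous for the $\hbar$--adic topology, and, by construction, its composition either way with $\overrightarrow{\pi}$ is the identity on generators, hence on all of the algebras (by density and continuity). Thus $\overrightarrow{\!\mu_3}$ and $\overrightarrow{\pi}$ are mutually inverse homeomorphisms of topological $\kh$--modules, proving the first isomorphism; the argument for $\overleftarrow{\mu_3}$ is entirely parallel.

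The one step requiring genuine care is the well-definedness of the algebra structure in the first paragraph, namely the associativity check and the verification that the cross-multiplication rules are compatible with the (trivial) commutativity relations inside $\widetilde{U}_\hbar(\lieh)$ and with the linear structure on $\lieh$. This is the main obstacle, but it is a direct and routine adaptation of the computations in Lemma \ref{lemma: alg-struct-on-Uotimes(g)}, and indeed considerably shorter since no Serre-relation compatibility $u^E_{ij} \cdot F_\ell = 0$ or $E_t \cdot u^F_{ij} = 0$ needs to be established. As a safety net for injectivity of $\overrightarrow{\!\mu_3}$, should the direct construction of $\overrightarrow{\pi}$ prove awkward, one can alternatively separate terms of different tensor shape by testing on the family of representations $\widehat{T}_\hbar^\lambda(V)$ of Lemma \ref{lemma: tensor repr x utildeRPhg} and $^\lambda\widehat{T}_\hbar(V)$ of Lemma \ref{lemma: repLT} as $\lambda$ ranges over $\lieh^*$, exactly as done in the second proof of Proposition \ref{prop: structure Uh(h)}.
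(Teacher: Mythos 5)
Your argument is correct, but it is not the route the paper takes for this Proposition: the paper works with the (uncompleted) subalgebra $U'$, gets surjectivity of $\mu_3'$ by a normal-ordering/spanning argument from the defining relations, and gets injectivity by letting $\widetilde{U}^{\,\R}_{\!P,\hbar}(\lieg)$ act on the tensor product $\widehat{T}_\hbar^\lambda(V)\otimes{}^\theta\widehat{T}_\hbar(V)$ of the modules from Lemma \ref{lemma: tensor repr x utildeRPhg} and Lemma \ref{lemma: repLT}, combining the coproduct formulas of Lemma \ref{lemma: formulacoprodEiFi} with a maximality argument for a partial order on sequences and the identification $\widetilde{U}_\hbar(\lieh)\cong\widehat{S}_\hbar(\lieh)$ (letting $\lambda,\theta$ vary), before passing to completions. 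Your plan --- endowing $\widetilde{U}^-\widehat{\otimes}\,\widetilde{U}_\hbar(\lieh)\,\widehat{\otimes}\,\widetilde{U}^+$ with an algebra structure \`a la Lemma \ref{lemma: alg-struct-on-Uotimes(g)} and exhibiting $\overrightarrow{\!\mu_3}$ and $\overrightarrow{\pi}$ as mutually inverse --- is exactly the alternative the authors themselves record in Observation \ref{obs: altern-proof x triang-decomp_Utilde} (the ``$\widetilde{U}$-analogue'' of Lemma \ref{lemma: alg-struct-on-Uotimes(g)} and Theorem \ref{thm: Uotimes-cong-uRPhg}), and, as you note, it is even easier here since no quantum Serre compatibility ($u^E_{ij}\cdot F_\ell=0$, $E_t\cdot u^F_{ij}=0$) has to be verified. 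Two small points of care: you should not take as \emph{input} that $\widetilde{U}^\pm$ (defined as subalgebras of $\widetilde{U}^{\,\R}_{\!P,\hbar}(\lieg)$) are free --- rather, build the tensor-product algebra on abstract (completed) free factors and deduce freeness of $\widetilde{U}^\pm$, together with the triangular decomposition, from the mutually inverse maps; and since $\overrightarrow{\!\mu_3}$ is only a module map, the inverse relation is checked as in Theorem \ref{thm: Uotimes-cong-uRPhg}: $\overrightarrow{\pi}\circ\overrightarrow{\!\mu_3}=\mathrm{id}$ on elementary tensors by the defining cross-products, and surjectivity of $\overrightarrow{\!\mu_3}$ by reordering words into normal form, rather than ``identity on generators'' for a non-multiplicative map. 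What each approach buys: the paper's representation-theoretic injectivity argument avoids redoing the consistency check for the cross-multiplication and reuses machinery already set up for Proposition \ref{prop: structure Uh(h)}, while your construction is more structural, shorter on relation-checking, and yields the presentations of $\widetilde{U}^\pm$ and $\widetilde{U}^0$ as immediate byproducts.
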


\begin{proof}
 It is enough to prove one case, say that of
 $ \; \overrightarrow{\!\mu_3} : \widetilde{U}^- \otimes \widetilde{U}_\hbar(\lieh) \otimes
 \widetilde{U}^+ \!\relbar\joinrel\longrightarrow \widetilde{U}^{\,\R}_{\!P,\hbar}(\lieg) \, $,
 \;the other one being similar.
 \vskip3pt
   Consider the map $ \; {\!\mu_3}' : {U}^- \otimes \widetilde{U}_\hbar(\lieh) \otimes {U}^+ \!\relbar\joinrel\relbar\joinrel\longrightarrow U' \, $
   induced by restriction of multiplication in  $ U' $.  We show that this map is bijective.
                                                                \par
   We prove first that  $ {\!\mu_3}' $  is  \textsl{surjective}.  Let  $ \, \{H_{g}\}_{g\in \mathcal{G}} $  be a $ \kh $--basis  of  $ \lieh \, $.
   Thanks to the defining relations in the first two lines of  \eqref{eq: comm-rel's_x_uPhg},  we see at once that
   $ U' $  is  $ \kh $--spanned  (in  $ \hbar $--adic  sense)  by the set of ``monomials''
  $$  \Big\{\, F_{i_1} \cdots F_{i_n} \cdot H_{\underline{i}\,,\,\underline{j}} \cdot E_{j_1} \cdots E_{j_m} \,\Big|\, n, s, m \in \NN \, , \, i_a , j_b \in I \,\; \forall \, a, b \, , \; H_{\underline{i}\,,\,\underline{j}} \in \widetilde{U}_\hbar(\lieh) \,\Big\}  $$
 and then this guarantees that  $ {\!\mu_3}' \, $  is onto, since
  $$  F_{i_i} \cdots F_{i_n} \cdot H_{\underline{i}\,,\,\underline{j}} \cdot E_{j_1} \cdots E_{j_m} =
  {\!\mu_3}'\big( \big( F_{i_i} \cdots F_{i_n} \big) \otimes H_{\underline{i}\,,\,\underline{j}} \otimes \big( E_{j_1} \cdots E_{j_m} \big) \big)  $$
 \vskip5pt
   Our second, last task is to prove that  $ \overrightarrow{\!\mu_3} $  is  \textsl{injective}.
   Let  $ \, \lambda, \theta \in \lieh^* \, $  and consider the  $ \widetilde{U}^{\,\R}_{\!P,\hbar}(\lieg) $--modules
   $ \widehat{T}_\hbar^\lambda(V) $  and  $ {}^\theta\widehat{T}_\hbar(V) $  given by
   Lemmas \ref{lemma: tensor repr x utildeRPhg} and  \ref{lemma: repLT},  respectively.
   Then  $ \widetilde{U}^{\,\R}_{\!P,\hbar}(\lieg) $  acts  on the tensor product  $ \, \widehat{T}_\hbar^\lambda(V) \otimes {}^\theta\widehat{T}_\hbar(V) \, $
   too, and this yields by restriction a  $ U' $--action  as well.  Assume we have a linear dependence relation
   $$  {\textstyle \sum\limits_{J,L}} \; a_{J,L} \, F_J \, H_{J,L} \, E_L  \,\; = \;\,  0  $$
 for finitely many elements  $ \, a_{J,L} \in \kh \, $,  \,where  $ J $,  and  $ L $  are finite sequences of elements
 in  $ I $  and  $ \; H_{J,L} \in  \,  \widetilde{U}_\hbar(\lieh) \, $.
                                                                 \par
   In the set of finite sequences of elements in  $ I $,  we consider the partial order given by
   $ \; J = \big( j_1 \, , j_2 \, , \ldots , j_r \big) > \big( \ell_1 \, , \ell_2 \, , \ldots , \ell_s \big) =: L \; $
   if and only if  $ \; \alpha_J - \alpha_L = \sum_k \alpha_{j_k} - \sum_k \alpha_{\ell_k} = \sum_t \alpha_{i_t} \; $
   for some simple roots $ \alpha_{i_t} \, $.  Choose  $ J_0 $  such that  $ \, a_{J_0\,,L} \neq 0 \, $
   for some  $ L \, $, \,and such that  $ J_0 $  is maximal with respect to the order given above.
                                                                 \par
   Now, for  $ \, v_\emptyset \in \widehat{T}_\hbar^\lambda(V) \setminus \{0\} \, $  and
   $ \, w_\emptyset \in {}^\theta\widehat{T}_\hbar(V) \setminus \{0\} \, $  being (non-zero)
   ``highest weight vector'' and ``lowest weight vector'', respectively,  definitions give
  $$  \displaylines{
   0  \; = \;  \bigg(\, {\textstyle \sum\limits_{J,K,L}} a_{J,L} \, F_J \, H_{J,L} \, E_L \bigg).(v_\emptyset \otimes w_{\emptyset})  \; =
   \;  {\textstyle \sum\limits_{J,L}} \, a_{J,L} \, \big( F_J \, H_{J,L} \big).\big( E_L.(v_\emptyset \otimes w_\emptyset) \big)  \; = \;   \hfill  \cr
   \qquad \quad   = \;
   {\textstyle \sum\limits_{J,L}} \, a_{J,L} \, \big( F_J \, H_{J,L} \big).\bigg(\, {\textstyle \sum\limits_{A,B}} \, c^{\,L}_{A,B}(q) \left( E_A \,
   e^{\hbar \, T_B^+} \right)\!.v_\emptyset \otimes \big( E_B.w_\emptyset \big) \bigg)  \; =   \hfill  \cr
   \qquad \qquad \quad \quad   = \;  {\textstyle \sum\limits_{J,L}} \, a_{J,L} \,
   \big( F_J \, H_{J,L} \big).\bigg(\, {\textstyle \sum\limits_{A,B}} \, c^{\,L}_{A,B}(q) \, q^{\lambda(T_B^+)} \, E_A.v_\emptyset \otimes w_B \bigg)  \; =   \hfill  \cr
   \qquad \qquad \qquad \quad \quad \quad   = \;  {\textstyle \sum\limits_{J,L}} \, a_{J,L} \,
   c^{\,L}_{\emptyset,L}(q) \, q^{\lambda(T_L^+)} \, \big( F_J \, H_{J,L} \big).\big( v_\emptyset \otimes w_L \big)  \; =   \hfill  \cr
   \hfill   = \;  {\textstyle \sum\limits_{J,L}} \, a_{J,L} \, q^{\lambda(T_L^+)} \, \lambda\big({(H_{J,L})}_{(1)}\big) \, (\theta + \alpha_L)\big({(H_{J,L})}_{(2)}\big) \, F_J.\big( v_\emptyset \otimes w_L \big)  \; =   \quad  \cr
   \quad   = \;  {\textstyle \sum\limits_{J,L}} \, a_{J,L} \, q^{\lambda(T_L^+)} \, \lambda\big({(H_{J,L})}_{(1)}\big) \, (\theta + \alpha_L)\big({(H_{J,L})}_{(2)}\big) \; \cdot  \hfill  \cr
   \hfill   \cdot \bigg(\, {\textstyle \sum\limits_{A,B}} \, c_{A,B}^{\,J}\big(q^{-1}\big) \, F_A.v_\emptyset \otimes \! \big( e^{\hbar \, T^-_A} \, F_B \big).w_L \bigg)  \; =   \quad  \cr
   \hfill   = \;  {\textstyle \sum\limits_{J,L}} \, a_{J,L} \, q^{\lambda(T_L^+)} \, \lambda\big({(H_{J,L})}_{(1)}\big) \,
   (\theta + \alpha_L)\big({(H_{J,L})}_{(2)}\big) \cdot {\textstyle \sum\limits_{A,B}} \, c_{A,B}^{\,J}\big(q^{-1}\big) \,
   v_A \otimes \big( e^{\hbar \, T^-_A} \, F_B \big).w_L  }  $$
 where we get third and fifth equality from  Lemma \ref{lemma: formulacoprodEiFi},  and we recall  $ \, c^{L}_{\emptyset,L}(q) = 1 \, $.
 \vskip5pt
   Consider now those coefficients with  $ \, J = J_0 \, $.  Since  $ \, \alpha_{J_0} = \alpha_A + \alpha_B \, $,
   \,we have that  $ \, A \, , B \leq J_0 \, $  and  $ \, A = J_0 \, $  if and only if  $ \, B = \emptyset \, $.  Then we must have
 \vskip5pt
  $$  \displaylines{
   0  \; = \;  {\textstyle \sum_L} \, a_{J_0,L} \, q^{\lambda(T_L^+)} \,
   \lambda\big({(H_{J_{0},L})}_{(1)}\big) \, (\theta+\alpha_L)\big({(H_{J_{0},L})}_{(2)}\big) \cdot
   c_{J_0,\emptyset}^{\,J_0}\big(q^{-1}\big) \, v_{J_0} \otimes e^{\hbar \, T^-_{J_0}}.w_L  \; =   \hfill  \cr
   \hfill   = \; {\textstyle \sum_L} \, a_{J_0,L} \, q^{\lambda(T_L^+)} \,
   \lambda\big({(H_{J_{0},L})}_{(1)}\big) \, (\theta + \alpha_L)\big({(H_{J_{0},L})}_{(2)}\big) \, q^{(\theta + \alpha_L)(T_{J_0}^-)} \, v_{J_0} \otimes w_L  }  $$
 Since  $ \, {\big\{ v_J \otimes w_L \big\}}_{\!J,L} \, $  is a basis  of the free  $ \kh $--module
 $ \, \widehat{T}_\hbar^\lambda(V) \otimes {}^\theta\widehat{T}_\hbar(V) \, $,  \,for all  $ L $  such that  $ \, a_{J_0,L} \neq 0 \, $  we must have
 \vskip-13pt
  $$  \displaylines{
   0  \; = \;  a_{J_0,L} \, q^{\lambda(T_L^+)} \, \lambda\big({(H_{J_{0},L})}_{(1)}\big) \,
   (\theta + \alpha_L)\big({(H_{J_{0},L})}_{(2)}\big) \, q^{(\theta + \alpha_L)(T_{J_0}^-)}  \; =   \hfill  \cr
   \hfill   = \;  a_{J_0,L} \, q^{\lambda(T_L^+) + (\theta + \alpha_L)(T_{J_0}^-)} \,
   \big( \lambda * (\theta + \alpha_L) \big)\big(H_{J_{0},L} \big)  }  $$
 \vskip-3pt
\noindent
 where  \,``$ \, * \, $''\,  denotes the convolution product between characters; this implies that
 $ \; 0 = a_{J_0,L}\big( \lambda * (\theta + \alpha_L) \big)\big(H_{J_0,L} \big) \; $  for all  $ \, \lambda , \theta \in \lieh^* \, $.
 Since  $ \, \widetilde{U}_\hbar(\lieh) \cong \widehat{S}_\hbar(\lieh) \, $,
 \,this holds true if and only if  $ \, a_{J_0,L} = 0 \, $  for all  $ K $,  a contradiction.
 Thus,  $ \mu'_3 $  is injective, q.e.d.
 \vskip7pt
   Finally, as  $ \mu'_3 $  is a  $ \kh $--linear  map, it is an isomomorphism between topological
   algebras which extends uniquely to an isomorphism  $ \overrightarrow{\!\mu_3} $  on their completions.
   Since the completion of $ U' $  is exactly our  $ \widetilde{U}^{\,\R}_{\!P,\hbar}(\lieg) \, $,  \,we eventually obtain
the isomomorphism
 $ \; \overrightarrow{\!\mu_3} : \widetilde{U}^- \otimes \widetilde{U}_\hbar(\lieh) \otimes \widetilde{U}^+
   \!\relbar\joinrel\relbar\joinrel\longrightarrow \widetilde{U}^{\,\R}_{\!P,\hbar}(\lieg) \; $
 which is described just like in the claim.
\end{proof}

\vskip9pt

   For the last steps, we need more notation: for all  $ \, i \, , j \in I \, $  with  $ \, i \neq j \, $,  \,set
%
%
  $$  \displaylines{
  u_{ij}^E  \; := \;  \sum\limits_{k = 0}^{1-a_{ij}} {(-1)}^k
  {\displaystyle {\left[ { 1-a_{ij} \atop k }
\right]}_{\!q_i}} q_{ij}^{+k/2\,} q_{ji}^{-k/2} \, E_i^{1-a_{ij}-k} \, E_j \, E_i^k  \;\; \in \;\; \widetilde{U}^{\,\R}_{\!P,\hbar}(\lieg)  \cr
  u_{ij}^F  \; := \;  \sum\limits_{k = 0}^{1-a_{ij}} {(-1)}^k
  {\displaystyle {\left[ { 1-a_{ij} \atop k }
\right]}_{\!q_i}} q_{ij}^{+k/2\,} q_{ji}^{-k/2} \, F_i^k \, F_j \, F_i^{1-a_{ij}-k}  \;\; \in \;\; \widetilde{U}^{\,\R}_{\!P,\hbar}(\lieg)  }  $$
   \indent   Let  $ \, \E^+ $,  resp.\  $ \, \F^- $,  \,be the closed, two-sided ideal of
   $ \widetilde{U}^+ $,  resp.\ of  $ \widetilde{U}^- $,  \,generated by
 all the  $ u_{ij}^E $'s,  resp.\  $ u_{ij}^F $'s  ($ \, i \not= j \, $).
  Denote by  $ U^{\,\R}_{\!P,\hbar}(\lien_\pm) $  the unital,  $ \hbar $--adically
  complete topological  $ \kh $--subalgebra  of  $ U^{\,\R}_{\!P,\hbar}(\lieg) $  generated by all the  $ E_i $'s,  resp.\  $ F_i $'s  ($ \, i \in I \, $).

\vskip9pt

   Next result describes explicitly the structure of the  \textsl{Cartan\/}  and
   of the  \textsl{(positive/negative) nilpotent\/}  subalgebras in our FoMpQUEAs:

\vskip11pt

\begin{prop}  \label{prop: struct-Cart/nilp-subalg.s}  {\ }
 \vskip3pt
   \textit{(a)}\,  The closed two-sided ideal of  $ \, \widetilde{U}^{\,\R}_{\!P,\hbar}(\lieg) \, $
   generated by
   all the  $ u_{ij}^E $'s,  resp.\ all the  $ u_{ij}^F $'s,
  is equal to the image of  $ \, \widetilde{U}^- \otimes \widetilde{U}_\hbar(\lieh) \otimes \E^+ \, $,
  resp.\  $ \, \F^- \otimes \widetilde{U}_\hbar(\lieh) \otimes \widetilde{U}^+ \, $,  under the multiplication map
  $ \; \overrightarrow{\!\mu_3} : \widetilde{U}^- \otimes \widetilde{U}_\hbar(\lieh)
  \otimes \widetilde{U}^+ \!\relbar\joinrel\relbar\joinrel\longrightarrow \widetilde{U}^{\,\R}_{\!P,\hbar}(\lieg) \; $.
                                                                \par
 An entirely similar claim holds true as well with  $ \overleftarrow{\mu_3} $  replacing $ \overrightarrow{\!\mu_3} \, $.
 \vskip3pt
   \textit{(b)}\,  $ \; \widetilde{U}_\hbar(\lieh) \cong U_{P,\hbar}^{\,\R}(\lieh) \; $
through the obvious canonical epimorphism.  In particular,  $ \, U_\hbar(\lieh) := U_{P,\hbar}^{\,\R}(\lieh) \, $
is the  $ \hbar $--adic  completion of  $ \, S_\hbar(\lieh) \, $,  \,the symmetric algebra  of $ \, \lieh $  over  $ \, \kh \, $,
\,hence it is  \textsl{independent of  $ \, P $  and  $ \, \R $  (though not of  $ \, \lieh $)}.
 \vskip3pt
   \textit{(c)}\,  The algebra  $ \, U^{\,\R}_{\!P,\hbar}(\lien_+) \, $,  \,resp.\  $ U^{\,\R}_{\!P,\hbar}(\lien_-) \, $,
   \,is isomorphic to the unital,  $ \hbar $--adically  complete topological  $ \kh $--algebra  generated by all the
   $ E_i $'s  ($ \, i \in I $),  resp.\ the  $ F_i $'s  ($ \, i \in I $),  \,with relations
$ \, u^E_{ij} = 0 \, $,  \,resp.\  $ u^F_{ij} = 0 \, $,
\,for all $ \, i \neq j \, $.
\end{prop}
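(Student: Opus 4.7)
The plan is to deduce parts (b) and (c) almost directly from Theorem \ref{thm: Uotimes-cong-uRPhg} and Proposition \ref{prop: structure Uh(h)}, and to devote the bulk of the work to part (a). For part (b), Proposition \ref{prop: structure Uh(h)} identifies $\widetilde{U}_\hbar(\lieh)$ with the $\hbar$-adic completion $\widehat{S}_\hbar(\lieh)$, while Theorem \ref{thm: Uotimes-cong-uRPhg}(b) together with Definition \ref{def: Uotimes}(b) identifies $U^{\,\R}_{P,\hbar}(\lieh)$ with the same $\widehat{S}_\hbar(\lieh)$; the canonical projection $\widetilde{U}_\hbar(\lieh) \twoheadrightarrow U^{\,\R}_{P,\hbar}(\lieh)$ is compatible with both descriptions through the natural inclusion of $\lieh$, and is therefore an isomorphism. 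For part (c), the algebra with generators $E_i$ subject only to the quantum Serre relations $u^E_{ij}=0$ is by construction the algebra $\widehat{U}^+$ of Definition \ref{def: Uotimes}(a), and the required isomorphism $\widehat{U}^+ \cong U^{\,\R}_{P,\hbar}(\lien_+)$ is one of the isomorphisms in Theorem \ref{thm: Uotimes-cong-uRPhg}(b); the argument for $U^{\,\R}_{P,\hbar}(\lien_-)$ is entirely symmetric.

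The real content is part (a); I treat the case of the $u^E_{ij}$'s, the other case being analogous via $\overleftarrow{\mu_3}$. Let $\mathcal{I}^+$ denote the closed two-sided ideal of $\widetilde{U}^{\,\R}_{\!P,\hbar}(\lieg)$ generated by $\{u^E_{ij}\}_{i\neq j}$, and set $J^+ := \overrightarrow{\!\mu_3}\bigl(\widetilde{U}^- \otimes \widetilde{U}_\hbar(\lieh) \otimes \E^+\bigr)$, the tensor products being the $\hbar$-adically completed ones. The inclusion $J^+ \subseteq \mathcal{I}^+$ is immediate: any $Y \in \E^+$ is a convergent sum of products $a\,u^E_{ij}\,b$ with $a,b \in \widetilde{U}^+$ (since $\E^+$ is by definition the closed two-sided ideal of $\widetilde{U}^+$ generated by the $u^E_{ij}$'s), so every element of $J^+$ is a convergent sum of elements of $\mathcal{I}^+$. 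Since $1 \otimes 1 \otimes u^E_{ij} \in J^+$, the reverse inclusion will follow once we verify that $J^+$ is itself a two-sided ideal of $\widetilde{U}^{\,\R}_{\!P,\hbar}(\lieg)$, and by Proposition \ref{prop: triangulardecUtilde} it suffices to check stability of $J^+$ under left and right multiplication by the generators $F_\ell$, $T \in \lieh$, and $E_\ell$.

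Stability under left multiplication by $F_\ell$ and $T$, and under right multiplication by $E_\ell$ and $T$, is routine: the first pair is immediate from the tensor factorisation, while the second uses that $\E^+$ is a two-sided ideal of $\widetilde{U}^+$ and that $\widetilde{U}_\hbar(\lieh)$ acts on $\widetilde{U}^\pm$ by weights. Left multiplication by $E_\ell$ is handled by iteratively commuting $E_\ell$ past $F$-strings via $[E_\ell, F_k] = \delta_{\ell k}\bigl(e^{\hbar T_\ell^+}-e^{-\hbar T_\ell^-}\bigr)/\bigl(q_\ell-q_\ell^{-1}\bigr) \in \widetilde{U}_\hbar(\lieh)$. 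The only genuinely delicate case is right multiplication by $F_\ell$, which hinges on the identity $[u^E_{ij}, F_\ell] = 0$ in $\widetilde{U}^{\,\R}_{\!P,\hbar}(\lieg)$ for every $i \neq j$ and every $\ell \in I$. This commutation is in fact already embedded in the proof of Lemma \ref{lemma: alg-struct-on-Uotimes(g)}: the case $\ell \notin \{i,j\}$ is obvious, whereas the cases $\ell = j$ and $\ell = i$ both reduce, after an explicit expansion of $u^E_{ij} \cdot F_\ell$, to the vanishing of the $q$-numerical coefficients $C^{\pm}_{ij}(q)$ and $\varGamma^{\pm}_{h,k}$ established there. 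Given this, a product $a\,u^E_{ij}\,b \cdot F_\ell$ is processed by first commuting $F_\ell$ past $b$ (which produces terms in $\widetilde{U}_\hbar(\lieh) \cdot \widetilde{U}^+$ arising from $[E_k, F_\ell]$), then sliding $F_\ell$ freely through $u^E_{ij}$, and finally commuting past $a$; after rewriting via triangular form, every resulting summand lands in $\widetilde{U}^- \cdot \widetilde{U}_\hbar(\lieh) \cdot \E^+ = J^+$. The main obstacle throughout is exactly the vanishing commutator $[u^E_{ij}, F_\ell] = 0$: without the ``accidental'' $q$-identities from Lemma \ref{lemma: alg-struct-on-Uotimes(g)}, there is no reason the right-multiplication step should close, and the triangular decomposition of the ideal would fail.
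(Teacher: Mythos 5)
Your proposal is correct, but it distributes the work differently from the paper. For part \textit{(a)} the paper simply cites \cite[Lemma 4.20]{Ja} (``\textit{mutatis mutandis}''), whereas you write out the argument: showing that the triangular-shaped submodule $J^+=\overrightarrow{\!\mu_3}\big(\widetilde{U}^-\otimes\widetilde{U}_\hbar(\lieh)\otimes\E^+\big)$ is a closed two-sided ideal, the only delicate point being right multiplication by $F_\ell\,$, which you correctly reduce to the identity $\big[u^E_{ij},F_\ell\big]=0$ in $\widetilde{U}^{\,\R}_{\!P,\hbar}(\lieg)$; this identity is indeed exactly what the computations in the proof of Lemma \ref{lemma: alg-struct-on-Uotimes(g)} establish (the vanishing of $C^\pm_{ij}(q)$ and $\varGamma^\pm_{h,k}$ uses only the relations present in $\widetilde{U}^{\,\R}_{\!P,\hbar}(\lieg)$, not the Serre relations themselves), so this is essentially the Jantzen argument made explicit in the present setting. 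For parts \textit{(b)} and \textit{(c)} your route genuinely diverges: the paper deduces them \emph{from} part \textit{(a)}, by intersecting the kernel ideal $\mathcal{J}=\Ker(p)$ with the triangular factors ($\mathcal{J}\cap\widetilde{U}_\hbar(\lieh)=0$, $\mathcal{J}\cap\widetilde{U}^\pm=\E^+,\F^-$), which keeps \S\ref{subsec: further-results} self-contained in its representation-theoretic method; you instead import Theorem \ref{thm: Uotimes-cong-uRPhg} and Proposition \ref{prop: structure Uh(h)} and observe that the canonical epimorphisms are compatible with those identifications on $\lieh$ (resp.\ on the $E_i$'s, $F_i$'s), hence are isomorphisms. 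This is logically legitimate, since Theorem \ref{thm: Uotimes-cong-uRPhg} was proved independently earlier, and it is in fact the alternative sanctioned by Observation \ref{obs: altern-proof x triang-decomp_Utilde}; the trade-off is that your \textit{(b)}--\textit{(c)} lean on the $\overrightarrow{U}^{\R,\otimes}$ construction rather than being corollaries of \textit{(a)} within the new framework. One small point worth making explicit in your part \textit{(a)}: the reduction to checking stability of $J^+$ under multiplication by the generators $E_\ell$, $T$, $F_\ell$ uses that $J^+$ is closed (which follows since $\overrightarrow{\!\mu_3}$ is a homeomorphism and $\E^+$ is closed) together with topological generation and continuity of multiplication, rather than Proposition \ref{prop: triangulardecUtilde} per se; the triangular decomposition is what you need to recognize the rewritten summands as lying in $J^+$.
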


\textit{Proof.}
   \textit{(a)}\,  This follows \textit{mutatis mutandis} from \cite[Lemma 4.20]{Ja}.
 \vskip7pt
   \textit{(b)}\,  The first part of the claim follows from the proof of  \cite[Theorem 4.21 d)]{Ja}.
   Indeed, let  $ \mathcal{J} $  be the kernel of the canonical,  $ \hbar $--adically  continuous epimorphism
   $ \, p : \widetilde{U}_{P,\hbar}^{\,\R}(\lieg) \relbar\joinrel\relbar\joinrel\twoheadrightarrow \uRPhg \; $;
   \,by construction, it is the closed two-sided ideal generated
 by the  $ u^E_{ij} $  and  $ u^F_{ij} $  for all
  $ \, i \neq j \, $  in  $ I $.  By  \textit{(a)},  we have
\begin{equation}  \label{eq: descr-J}
  \mathcal{J}  \; = \;  \overrightarrow{\!\mu_3}\Big( \widetilde{U}^{-} \otimes \widetilde{U}_\hbar(\lieh) \otimes \E^+
  + \F^- \otimes \widetilde{U}_\hbar(\lieh) \otimes \widetilde{U}^+ \Big)
\end{equation}
 Now, the kernel of the canonical epimorphism  $ \; p{\big|}_{\widetilde{U}_\hbar(\lieh)} :
 \widetilde{U}_\hbar(\lieh) \relbar\joinrel\relbar\joinrel\twoheadrightarrow U_\hbar(\lieh) := U_{P,\hbar}^{\,\R}(\lieh) \; $
 is nothing but  $ \, \mathcal{J} \cap \widetilde{U}_{P,\hbar}^{\,\R}(\lieh) \, $; \,then by  \eqref{eq: descr-J}
 it is equal to the image under  $ \overrightarrow{\!\mu_3} $  of
  $$
  \Big( \widetilde{U}^- \otimes \widetilde{U}_\hbar(\lieh) \otimes \E^+ + \,
  \F^- \otimes \widetilde{U}_\hbar(\lieh) \otimes \widetilde{U}^+ \Big) \,{\textstyle \bigcap}\,
  \big(\, \kh \otimes \widetilde{U}_{P,\hbar}^{\,\R}(\lieh) \otimes \kh \,\big)
  $$
 and since the latter is obviously trivial, we get that  $ \, \mathcal{J} \cap \widetilde{U}_{P,\hbar}^{\,\R}(\lieh) = 0 \, $.
 Therefore,  $ \; p{\big|}_{\widetilde{U}_\hbar(\lieh)} : \widetilde{U}_\hbar(\lieh) \relbar\joinrel\relbar\joinrel\longrightarrow U_{P,\hbar}^{\,\R}(\lieh) \; $
 is injective, hence it is an isomorphism, q.e.d.
                                                                         \par
   The second part of the claim follows from the first, coupled with
   Proposition \ref{prop: structure Uh(h)}.
%
%
 \vskip7pt
   \textit{(c)}\,  This is proved much like item  \textit{(b)},  but using that
\begin{align*}
  \mathcal{J} \cap \widetilde{U}^+  &  \! =
  \overrightarrow{\!\mu_3}\Big(\! \big( \widetilde{U}^- \otimes \widetilde{U}_\hbar(\lieh) \otimes \E^+ + \,
  \F^- \otimes \widetilde{U}_\hbar(\lieh) \otimes \widetilde{U}^+ \,\big) \,{\textstyle \bigcap}\, \big(\, \kh \otimes \kh \otimes \widetilde{U}^+ \big) \!\Big)  =  \\
     &  \! = \;  \overrightarrow{\!\mu_3}\big(\, \kh \otimes \kh \otimes \E^+ \,\big)  \; = \;  \E^+  \\
  \mathcal{J} \cap \widetilde{U}^-  &  \! = \widetilde{\mu}_3\Big(\! \big( \widetilde{U}^- \otimes \widetilde{U}_\hbar(\lieh) \otimes \E^+ +
  \, \F^- \otimes \widetilde{U}_\hbar(\lieh) \otimes \widetilde{U}^+ \,\big) \,{\textstyle \bigcap}\, \big(\, \widetilde{U}^- \otimes \kh \otimes \kh \,\big) \!\Big)  =  \\
     &  \! = \;  \overrightarrow{\!\mu_3}\big(\, \F^- \otimes \kh \otimes \kh \,\big)  \; = \;  \F^-   \hfill \hskip193pt  \square
\end{align*}
%

\vskip7pt

\begin{obs}  \label{obs: altern-proof x triang-decomp_Utilde}
 An alternative, independent argument which also leads to prove  Proposition \ref{prop: triangulardecUtilde}
 and  Proposition \ref{prop: struct-Cart/nilp-subalg.s}  goes as follows.
                                                          \par
   First, we can state a strict analogue of  Lemma \ref{lemma: alg-struct-on-Uotimes(g)}
   where we replace the  $ \kh $--module  $ \, U^{\R,\otimes}_{\!P,\,\hbar}(\lieg) := U^- \otimes U_\hbar(\lieh) \otimes U^+ \, $
   --- taking  \textsl{complete\/}  tensor product ---   with its ``parent''
   $ \, \widetilde{U}^{\R,\otimes}_{\!P,\,\hbar}(\lieg) := \widetilde{U}^- \otimes \widetilde{U}_\hbar(\lieh) \otimes \widetilde{U}^+ \, $:
   \, the claim will be that  $ \widetilde{U}^{\R,\otimes}_{\!P,\,\hbar}(\lieg) $
   bears a structure of  $ \hbar $--adically  complete, topological  $ \kh $--algebra
   which is uniquely determined by the same recipe (and formulas) as in  Lemma \ref{lemma: alg-struct-on-Uotimes(g)}
   --- the proof will be quite the same, only a bit simpler because there will be less relations to deal with.
                                                          \par
   Second, we provide a strict analogue of  Theorem \ref{thm: Uotimes-cong-uRPhg},
   now concerning  $ \widetilde{U}^{\R,\otimes}_{\!P,\,\hbar}(\lieg) $  and  $ \widetilde{U}^{\,\R}_{\!P,\hbar}(\lieg) \, $
   instead of  $ U^{\R,\otimes}_{\!P,\,\hbar}(\lieg) $  and  $ \uRPhg \, $;  \,here again, the proof will follow in the footsteps
   of the one for the previous case.  This last result eventually will be just a reformulation of
   Proposition \ref{prop: triangulardecUtilde}  and  Proposition \ref{prop: struct-Cart/nilp-subalg.s}  altogether.
\end{obs}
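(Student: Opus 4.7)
The plan is to mirror, at the level of the ``pre'' algebra $\widetilde{U}^{\,\R}_{\!P,\hbar}(\lieg)$, the two-step argument that produced Theorem \ref{thm: Uotimes-cong-uRPhg} for $\uRPhg$, and then to simply read off Propositions \ref{prop: triangulardecUtilde} and \ref{prop: struct-Cart/nilp-subalg.s} from the resulting factorization. First I would prove the analogue of Lemma \ref{lemma: alg-struct-on-Uotimes(g)}: on the topologically free $\kh$--module
$$\widetilde{U}^{\R,\otimes}_{\!P,\,\hbar}(\lieg) \; := \; \widetilde{U}^- \,\widehat{\otimes}_\kh\, \widetilde{U}_\hbar(\lieh) \,\widehat{\otimes}_\kh\, \widetilde{U}^+$$
I would define a multiplication by prescribing its values on the triples of generators exactly as in Lemma \ref{lemma: alg-struct-on-Uotimes(g)} (namely $T^\otimes \cdot F_i^\otimes = F_i\otimes T\otimes 1 - \alpha_i(T) F_i^\otimes$, and the analogous rules for $E_j^\otimes\cdot T^\otimes$ and $E_j^\otimes\cdot F_i^\otimes$, etc.). The consistency check is now markedly simpler than in \S 4.2, because there are no quantum Serre relations to verify: only the linear and commutation relations among the $T$'s have to be respected, and these drop out directly from the defining formulas. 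One should however make sure that the three pieces $\widetilde{U}^-_\otimes$, $\widetilde{U}^0_\otimes$, $\widetilde{U}^+_\otimes$ really sit inside the product as $\kh$--subalgebras, which is immediate because the product does nothing on each of them separately.

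Next, in parallel with Theorem \ref{thm: Uotimes-cong-uRPhg}, I would produce mutually inverse continuous $\kh$--algebra homomorphisms
$$\pi\,:\,\widetilde{U}^{\,\R}_{\!P,\hbar}(\lieg)\,\longrightarrow\,\widetilde{U}^{\R,\otimes}_{\!P,\,\hbar}(\lieg) \quad\text{and}\quad \mu_3\,:\,\widetilde{U}^{\R,\otimes}_{\!P,\,\hbar}(\lieg)\,\longrightarrow\,\widetilde{U}^{\,\R}_{\!P,\hbar}(\lieg).$$
The map $\pi$ is defined by sending generators to their ``$\otimes$--dressed'' versions; it is well-defined because the generators on the right satisfy precisely the defining relations of the pre-FoMpQUEA (this is exactly what the first step established, Serre relations being absent on both sides). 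The map $\mu_3$ is the completion of threefold multiplication composed with the natural embeddings $\widetilde{U}^\bullet \hookrightarrow \widetilde{U}^{\,\R}_{\!P,\hbar}(\lieg)$ ($\bullet\in\{-,0,+\}$). They are mutual inverses on generators, hence on the whole $\hbar$--adically complete algebras, so both are isomorphisms of topological $\kh$--algebras.

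Finally, the two propositions follow by unpacking this isomorphism. The isomorphism $\mu_3$ is exactly the statement of Proposition \ref{prop: triangulardecUtilde} for $\overrightarrow{\mu_3}$; the statement for $\overleftarrow{\mu_3}$ is obtained by the entirely symmetric construction starting from $\widetilde{U}^+ \,\widehat{\otimes}\, \widetilde{U}_\hbar(\lieh) \,\widehat{\otimes}\, \widetilde{U}^-$. For Proposition \ref{prop: struct-Cart/nilp-subalg.s}, part \textit{(a)} is read off from the fact that under $\mu_3^{-1}$ the ideal of $\widetilde{U}^{\,\R}_{\!P,\hbar}(\lieg)$ generated by the $u^E_{ij}$'s (resp.\ $u^F_{ij}$'s) corresponds, by direct computation with the algebra rules of the first step, to $\widetilde{U}^-\,\widehat{\otimes}\,\widetilde{U}_\hbar(\lieh)\,\widehat{\otimes}\,\E^+$ (resp.\ $\F^-\,\widehat{\otimes}\,\widetilde{U}_\hbar(\lieh)\,\widehat{\otimes}\,\widetilde{U}^+$), because $\E^+$ (resp.\ $\F^-$) is $\widetilde{U}^0_\otimes\cdot\widetilde{U}^-_\otimes$--stable (resp.\ $\widetilde{U}^+_\otimes\cdot\widetilde{U}^0_\otimes$--stable) by the explicit commutation rules. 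Parts \textit{(b)} and \textit{(c)} then follow by intersecting with $\widetilde{U}_\hbar(\lieh)$ and with $\widetilde{U}^\pm$, respectively, exactly as in the main text.

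The main obstacle I expect is the first step: verifying, without appealing to the quantum Serre relations (which were the central computation in the proof of Lemma \ref{lemma: alg-struct-on-Uotimes(g)}), that the prescribed formulas genuinely extend to a well-defined associative multiplication on the triple completed tensor product. Here one has to check associativity on all triples of generators drawn from $\{F_i,T,E_j\}$; the nontrivial instances are the ``mixed'' triples $(E,T,F)$, $(T,E,F)$, $(E,F,T)$ and their symmetric cousins, where the commutator $[E_i,F_j]$ produces a Cartan term $\delta_{ij}(e^{\hbar T_i^+}-e^{-\hbar T_i^-})/(q_i-q_i^{-1})$ that must move correctly through $\widetilde{U}_\hbar(\lieh)$. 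Once this associativity is secured, the rest of the argument is formal.
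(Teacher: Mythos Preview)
Your proposal is correct and follows essentially the same two-step outline as the paper's own observation: an analogue of Lemma~\ref{lemma: alg-struct-on-Uotimes(g)} for the triple $\widetilde{U}^- \,\widehat{\otimes}\, \widetilde{U}_\hbar(\lieh) \,\widehat{\otimes}\, \widetilde{U}^+$, followed by an analogue of Theorem~\ref{thm: Uotimes-cong-uRPhg}, from which the two propositions are read off. Your elaboration of how parts \textit{(a)}--\textit{(c)} of Proposition~\ref{prop: struct-Cart/nilp-subalg.s} emerge is more explicit than the paper's sketch but entirely in line with it.

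One minor remark on your ``main obstacle'': in the paper's framework (cf.\ the proof of Lemma~\ref{lemma: alg-struct-on-Uotimes(g)}) one does not verify associativity on triples of generators directly. Rather, the product is \emph{defined} by declaring that $\widetilde{U}^-_\otimes$, $\widetilde{U}^0_\otimes$, $\widetilde{U}^+_\otimes$ are subalgebras and then fixing the values of products of pairs of generators from distinct subalgebras; well-definedness then amounts only to checking that these formulas respect the defining relations \emph{within} each subalgebra. Since $\widetilde{U}^\pm$ are (topologically) free on the $E_i$'s, resp.\ $F_i$'s, there is nothing to check on those sides, and only the linear and commutation relations among the $T$'s remain --- which is exactly the simplification the paper alludes to. So the mixed-triple associativity checks you anticipate are not needed as a separate step.
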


\vskip9pt

%
%
%
%
%
%
%
%
%
%
%
%
%
%
%
%

\begin{free text}  \label{triang-decompos revisited}
 \textbf{Triangular decomposition --- revisited.}
 Using the previous constructions, we present now alternative proofs of ``triangular decomposition'' for FoMpQUEAs.
  Again, it is enough to prove one of the various isomorphisms in
  the statement of  Theorem \ref{thm: triang-decomp.'s},
  so now we shall deal with
  $$  \uRPhnm \,\widehat{\otimes}_\kh\, U_\hbar(\lieh) \,\widehat{\otimes}_\kh\, \uRPhnp  \,\; \cong \;\,  \uRPhg  $$
 where the (would-be) isomorphism is induced by multiplication.
 \vskip7pt
  \textsl{$ \underline{\text{Second Proof}} $:}\,  By construction and by the results reported in
  Proposition \ref{prop: struct-Cart/nilp-subalg.s}, we have isomorphisms of (topological)  $ \kh $--modules
  $$  \displaylines{
   \uRPhg  \; \cong \;  \Big( \widetilde{U}^- \otimes \widetilde{U}_\hbar(\lieh) \otimes \widetilde{U}^+ \Big)
   \bigg/ \Big(\, \F^- \otimes \widetilde{U}_\hbar(\lieh) \otimes \widetilde{U}^+ + \widetilde{U}^- \otimes \widetilde{U}_\hbar(\lieh) \otimes \E^+ \Big)  \; \cong   \hfill  \cr
   \hfill   \cong   \Big( \widetilde{U}^- \! \big/ \F^- \Big) \otimes \widetilde{U}_\hbar(\lieh) \otimes \Big( \widetilde{U}^+ \! \big/ \E^+ \Big)
   \cong \,  U^- \otimes \widetilde{U}_\hbar(\lieh) \otimes U^+  \! =  \uRPhnm \otimes U_\hbar(\lieh) \otimes \uRPhnp  }  $$
 (using simplified notation for the tensor product), where the isomorphism   --- from right to left ---
 is actually induced by multiplication hence we are done.   \hfill  $ \diamond $
 \vskip7pt
   \textsl{$ \underline{\text{Third Proof}} $:}\,  The claim amount to saying that the $\kh$-linear map
\begin{equation}  \label{eq: map mu3}
  \underrightarrow{\mu_3} \, : \, \uRPhnm \,\widehat{\otimes}_\kh\, U_\hbar(\lieh) \,\widehat{\otimes}_\kh\,
  \uRPhnp \relbar\joinrel\relbar\joinrel\relbar\joinrel\relbar\joinrel\longrightarrow \uRPhg
\end{equation}
 induced by multiplication (on three factors) is in fact  \textsl{bijective}.
 \vskip5pt
   To begin with, let  $ U^- $,  resp.\  $ U^0 $,  resp.\  $ U^+ $,  \,be the  $ \kh $--subalgebra  of  $ \uRPhg $
   generated by all the  $ F_i $'  ($ \, i \in I \, $),  resp.\  all of  $ \, \lieh \, $,  resp.\  $ E_i $'s  ($ \, i \in I \, $);
   then let  $ \, U^-_\hbar := \uRPhnm \, $,  resp.\  $ \, U^0_\hbar := U_\hbar(\lieh) \, $,  resp.\
   $ \, U^+_\hbar := \uRPhnp \, $.  Furthermore, let us define
  $ \, U^\downarrow  \, := \,  \textsl{Span}_{\kh\!}\Big( {\big\{\, \underline{F} \cdot \underline{H} \cdot \underline{E} \,\big\}}_{\underline{F} \, ,
  \, \underline{H} \, , \, \underline{E}} \Big) \, $
 where the  $ \, \underline{F} \, $,  resp.\  $ \, \underline{H} \, $,  resp.\  $ \, \underline{E} \, $,
 are all possible monomials in the  $ F_i $'s  ($ \, i \in I \, $),  resp.\ in the  $ H $'s  ($ \, H \in \lieh \, $),
 resp.\ in the  $ E_j $'s  ($ \, j \in I \, $);  note that  $ \, U^\downarrow \, $  is a  $ \kh $--submodule  of  $ \uRPhg \, $,
 \,but  \textsl{not\/}  a  $ \kh $--subalgebra.  Finally, we let  $ U $  be the  $ \kh $--subalgebra  of  $ \uRPhg $
 generated by  $ U^\downarrow \, $.
                                                                      \par
   Clearly, the map  $ \, \underrightarrow{\mu_3} \, $  in  \eqref{eq: map mu3}  restricts to a similar map
\begin{equation}  \label{eq: map mu}
  \mu \, : \, U^- \otimes_\kh U^0 \otimes_\kh U^+ \relbar\joinrel\relbar\joinrel\relbar\joinrel\relbar\joinrel\longrightarrow U^\downarrow
\end{equation}
 which again is induced by multiplication.  We shall presently prove the following
\begin{equation}  \label{eq: Claim}
   \textit{\underline{\textsl{Claim}}:\;  the map  $ \, \mu \, $  in  (\ref{eq: map mu})  is bijective}   \qquad \hskip115pt \qquad
\end{equation}
 Once this is settled, we have that both  $ \mu $  and its inverse  $ \mu^{-1} $  will be (mutually inverse) isomorphisms of  $ \kh $--modules,
 hence in particular continuous for the  $ \hbar $--adic  topology.  Then, taking  $ \hbar $--adic  completion on both sides,
 \textsl{$ \mu $  and  $ \mu^{-1} $  will canonically induce (bicontinuous) mutually inverse isomorphisms of topological  $ \kh $--modules,
 denoted  $ \, \mu_\hbar \, $  and $ \, \mu_\hbar^{-1} \, $,  between the  $ \hbar $--adic  completion of  $ \; U^- \otimes_\kh U^0 \otimes_\kh U^+ $
 and the  $ \hbar $--adic  completion of  $ \, U^\downarrow \, $}.  Now, by construction the  $ \hbar $--adic
 completion of  $ \, U^- \otimes_\kh U^0 \otimes_\kh U^+ \, $  is just  $ \, U^-_\hbar \,\widehat{\otimes}_\kh\, U^0_\hbar \,\widehat{\otimes}_\kh\, U^+_\hbar \, $,
 \,while the  $ \hbar $--adic  completion of  $ \, U^\downarrow \, $  (which coincides with the  $ \hbar $--adic
 completion of  $ \, U \, $)  is nothing but  $ \, \uRPhg \, $.  In particular, again by construction  $ \mu_\hbar $
 coincides with  $ \underrightarrow{\mu}_3 $  in  \eqref{eq: map mu3},  hence the latter in turn is an isomorphism
 of topological  $ \kh $--modules, q.e.d.
 \vskip5pt
   Thus we are left with the task to prove the  \textit{Claim\/}  in  \eqref{eq: Claim}  above.  As it is clear that  $ \mu $  is  \textsl{surjective},  hence we only have to prove that it is  \textit{injective\/}  too.  For this, we use the Hopf structure of  $ \uRPhg \, $,  in particular its coproduct, adapting an argument that does work in the uniparameter case, see e.g.\  \cite{HK}, \S 3.1,  or  \cite{KS}, \S 6.1.5.
 \vskip3pt
   We saw above that  $ U^\downarrow $  is  $ \kh $--spanned by the set of monomials of the form  $ \; \underline{F} \cdot \underline{H} \cdot \underline{E} \; $  where each single factor in turn is of type  $ \, \underline{F} = F_{i_1} \cdots F_{i_n} \, $,  resp.\  $ \, \underline{H} = H_{\ell_1} \cdots H_{\ell_s} \, $,  resp.\  $ \, \underline{E} = E_{j_1} \cdots E_{j_m} \, $,  \,with  $ \, i_1 , \dots i_n , j_1 , \dots , j_m \in I \, $  ($ \, n , m \in \NN \, $)  and  $ H_{\ell_1} , \dots , H_{\ell_s} $  ($ \, s \in \NN \, $)  ranging in some fixed, ordered  $ \kh $--basis  $ \, {\big\{ H_\ell \big\}}_{\ell \in \L} \, $  of  $ \lieh \, $.  Similarly (with same notation),  $ \, U^- \otimes_\kh U^0 \otimes_\kh U^+ \, $  is  $ \kh $--spanned by the set of (tensor) monomial of the form  $ \; \underline{F} \otimes \underline{H} \otimes \underline{E} \; $,  \,with  $ \underline{F} \, $,  $ \underline{H} $  and  $ \underline{E} $  as before.
                                                                         \par
   To begin with, let  $ \G $  be the free Abelian group with basis  $ \, {\{ \varepsilon_i \}}_{i \in I} \, $:
   \, we endow it with the product order, hereafter denoted by  $ \, \preceq \, $,  induced by the standard order in  $ \ZZ \, $.
   We define on  $ U $  a  $ \kh $--algebra  $ \G $--grading  $ \, U = \oplus_{\gamma \in \G} U_\gamma \, $  given on generators by
 \vskip5pt
   \centerline{ $ \, \partial(E_i) := +\varepsilon_i \, ,  \quad  \partial\big(\,\underline{H}\,\big) := 0 \, ,
   \quad  \partial(F_j) := -\varepsilon_j   \qquad  \forall \;\; \, i \, , j \in I \, , \; \underline{H} \in U_\hbar(\lieh) \setminus \{0\} \, $ }
 \vskip5pt
\noindent
 --- the reader can easily check that these formulas on the generators are indeed compatible
 with the relations in  \eqref{eq: comm-rel's_x_uPhg}  among them.  This restricts to similar
 $ \G $--gradings  on  $ U^\downarrow $  as well as on  $ U^- $,  $ U^0 $  and  $ U^+ $
 --- which then are  \textsl{graded\/}  subalgebras of  $ U $  with respect to this  $ \G $--grading  ---
 hence on  $ \, U^- \otimes_\kh U^0 \otimes_\kh U^+ \, $  too.
Note then that each one of the monomials
  $ \; \underline{F} \cdot \underline{H} \cdot \underline{E} = F_{i_1} \cdots F_{i_n} \cdot H_{\ell_1} \cdots H_{\ell_s} \cdot E_{j_1} \cdots E_{j_m} \; $
 and similarly
  $ \; \underline{F} \otimes \underline{H} \otimes \underline{E} \; $
 considered above is  $ \G $--\textsl{homogeneous\/}  of degree
\begin{equation*}  \label{eq: grad-monomials}
   \partial\big( F_{i_1} \cdots F_{i_n} \cdot H_{\ell_1} \cdots H_{\ell_s} \cdot E_{j_1} \cdots E_{j_m} \big) \,
   = \, \varepsilon_{j_1} + \cdots + \varepsilon_{j_m} - \varepsilon_{i_1} - \cdots - \varepsilon_{i_n} \; \in \; \G
\end{equation*}
 \vskip3pt
   Now consider the twofold iteration
  $ \; \Delta^{\!(2)} := (\Delta \otimes \id) \circ \Delta = (\id \otimes \Delta) \circ \Delta \; $
  of the coproduct map  $ \Delta $  of  $ \uRPhg \, $.  By the very definition of  $ \Delta $  we easily see that
\begin{equation}  \label{eq: Delta2_part-monomial}
  \begin{aligned}
     \Delta^{\!(2)}\big( F_{i_1} \cdots F_{i_n} \big)  &  \, = \,  1 \otimes 1 \otimes \big( F_{i_1} \cdots F_{i_n} \big) \, + \, {\textstyle \sum_t u_t \otimes v_t \otimes w_t}  \\
     \Delta^{\!(2)}\big( E_{j_1} \cdots E_{j_m} \big)  &  \, = \,  \big( E_{j_1} \cdots E_{j_n} \big) \otimes 1 \otimes 1 \, + \, {\textstyle \sum_k a_k \otimes b_k \otimes c_k}
  \end{aligned}
\end{equation}
 where the  $ w_t $'s  in the first line of  \eqref{eq: Delta2_part-monomial}  are elements in  $ \uRPhg $
 which are  $ \G $--homogeneous  of degree strictly  \textsl{greater}   --- for the order  $ \, \preceq \, $  in  $ \G $
 ---   than  $ \partial\big( F_{i_1} \cdots F_{i_n} \big) \, $,  \,while similarly the  $ a_k $'s
 in second line are elements in  $ \uRPhg $  which are  $ \G $--homogene\-ous  of degree strictly  \textsl{smaller\/}
 than  $ \, \partial\big( E_{j_1} \cdots E_{j_m} \big) \, $;  \,in short,
\begin{equation}  \label{eq: degree monomials in Delta2}
   \partial(w_t) \succneqq \partial\big( F_{j_1} \cdots F_{j_m} \big)  \quad \forall \;\; t \;\; ,   \qquad
 \partial(a_k) \precneqq \partial\big( E_{i_1} \cdots E_{i_n} \big)  \quad \forall \;\; k \;\; .
\end{equation}
                                                           \par
   Eventually, from  \eqref{eq: Delta2_part-monomial}  and  \eqref{eq: degree monomials in Delta2} together we get
  $$  \displaylines{
   \Delta^{\!(2)}\big( F_{i_1} \cdots F_{i_n} H_{\ell_1} \cdots H_{\ell_s} E_{j_1} \cdots E_{j_m} \big)  \; = \;
   \Delta^{\!(2)}\big( F_{i_1} \cdots F_{i_n} \cdot \underline{H} \cdot E_{j_1} \cdots E_{j_m} \big)  \; =   \hfill  \cr
   \qquad   = \;  \Delta^{\!(2)}\big( F_{i_1} \cdots F_{i_n} \big) \,
   \Delta^{\!(2)}\big( \underline{H} \big) \, \Delta^{\!(2)}\big( E_{j_1} \cdots E_{j_m} \big)  \; =   \hfill  \cr
   \qquad \qquad   = \;  \big( F_{i_1} \cdots F_{i_n} \cdot {\underline{H}}_{(1)} \big) \otimes {\underline{H}}_{(2)}
   \otimes \big(\, {\underline{H}}_{(3)} \cdot E_{j_1} \cdots E_{j_m} \big)  \, + \,  {\textstyle \sum_r} \,
   \varPhi_r \otimes \varXi_r \otimes \varOmega_r   \hfill  }  $$
 where each tensor  $ \, \varPhi_r \otimes \varXi_r \otimes \varOmega_r \, $  lies in
 $ \, U_\phi \otimes U_\xi \otimes U_\omega \, $   --- with  $ \phi \, $,  $ \xi $  and  $ \omega $
 being degrees for the  $ \G $--grading  ---   and obeys the following condition:
\begin{equation}  \label{eq: degree less}
   \partial\big( F_{i_1} \cdots F_{i_n} \big) \precneqq \phi
\qquad  \text{or}  \qquad  \omega \precneqq \partial\big( E_{j_1} \cdots E_{j_m} \big)
\end{equation}
 \vskip3pt
   We have to prove that the map  $ \, \mu \, $  in  \eqref{eq: map mu}  is injective.
   As  $ \, U^- \otimes U^0 \otimes U^+ \, $  is  $ \kh $--spanned by  all the (tensor)
   monomials of the form  $ \, \underline{F} \otimes \underline{H} \otimes \underline{E} \, $
   (notation as above, with the monomial  $ \underline{H} $  being also ordered):  \,so we assume now
\begin{equation}  \label{eq: lc-monom's -> 0}
  \mu\Big(\, {\textstyle \sum_{\sigma \in S}} \, \kappa_\sigma \, \underline{F}_{\,\sigma} \otimes \underline{H}_{\,\sigma} \otimes \underline{E}_{\,\sigma} \Big)  \; = \;  0
\end{equation}
 --- for finitely many  $ \, \kappa_\sigma \in \kh \, $  ---   and we prove that
 $ \; \sum_{\sigma \in S} \kappa_\sigma \, \underline{F}_{\,\sigma} \otimes \underline{H}_{\,\sigma} \otimes \underline{E}_{\,\sigma} \, = \, 0 \; $.
                                                              \par
   First of all,  \eqref{eq: lc-monom's -> 0}  yields
  $$
  {\textstyle \sum_{\sigma \in S}} \, \kappa_\sigma \, \underline{F}_{\,\sigma} \,
  \underline{H}_{\,\sigma} \, \underline{E}_{\,\sigma}  \; = \;
  \mu\Big(\, {\textstyle \sum_{\sigma \in S}} \, \kappa_\sigma \,
  \underline{F}_{\,\sigma} \otimes \underline{H}_{\,\sigma} \otimes \underline{E}_{\,\sigma} \Big)  \; = \;  0
  $$
                                                                  \par
   Second, the previous analysis for  $ \, \Delta^{(2)} \, $  gives, for all indices  $ \sigma \, $,
\begin{equation*}
   \Delta^{\!(2)}\big(\, \underline{F}_{\,\sigma} \, \underline{H}_{\,\sigma} \, \underline{E}_{\,\sigma} \big)  \,  = \,
   \Big(\, \underline{F}_{\,\sigma} \cdot {\big(\, \underline{H}_{\,\sigma} \big)}_{(1)} \Big) \otimes
   {\big(\, \underline{H}_{\,\sigma} \big)}_{(2)} \otimes \Big(\! {\big(\, \underline{H}_{\,\sigma} \big)}_{(3)} \cdot \underline{E}_{\,\sigma} \Big)  \, + \,
   {\textstyle \sum_r} \, \varPhi_r^\sigma \otimes \varXi_r^\sigma \otimes \varOmega_r^\sigma
\end{equation*}
 with the  $ \, \varPhi_r^\sigma \otimes \varXi_r^\sigma \otimes \varOmega_r^\sigma \, $'s  obeying  \eqref{eq: degree less}  above.  Then
  $$
  \displaylines{
   0  \; = \;  \Delta^{\!(2)}(0)  \; = \;  \Delta^{\!(2)}\Big(\, {\textstyle \sum_{\sigma \in S}} \, \kappa_\sigma \, \underline{F}_{\,\sigma} \,
   \underline{H}_{\,\sigma} \, \underline{E}_{\,\sigma} \Big)  \; = \;
   {\textstyle \sum_{\sigma \in S}} \, \kappa_\sigma \,
   \Delta^{\!(2)}\Big(\, \underline{F}_{\,\sigma} \, \underline{H}_{\,\sigma} \, \underline{E}_{\,\sigma} \Big)  \; =   \hfill  \cr
   \hfill   = \;  {\textstyle \sum_{\sigma \in S}} \, \kappa_\sigma \,
   \Big( \Big(\, \underline{F}_{\,\sigma} \cdot {\big(\, \underline{H}_{\,\sigma} \big)}_{(1)} \Big) \otimes
   {\big(\, \underline{H}_{\,\sigma} \big)}_{(2)} \otimes \Big(\! {\big(\, \underline{H}_{\,\sigma} \big)}_{(3)} \cdot \underline{E}_{\,\sigma} \Big)  \, +
   \,  {\textstyle \sum_r} \, \varPhi_r^\sigma \otimes \varXi_r^\sigma \otimes \varOmega_r^\sigma \,\Big)  }
   $$
 Now, we select those  $ \sigma $  in  $ S $  for which  $ \partial\big(\,\underline{F}_{\,\sigma}\big) $
 has some minimal value   --- in  $ \big(\,\G\,;\preceq\!\big) $ ---   say  $ \check{\mu} \, $,
 and simultaneously  $ \partial\big(\,\underline{E}_{\,\sigma}\big) $  has a maximal value, say
 $ \hat{\mu} \, $;  \,we denote by  $ S^{\hat{\mu}}_{\check{\mu}} $  the subset of such indices.
 Then by degree comparison, we see   --- cf.\  \eqref{eq: degree less}  ---   that
                                                         \par
   \centerline{ $ \; {\textstyle \sum_{\sigma \in S^{\hat{\mu}}_{\check{\mu}}}} \,
   \kappa_\sigma \, \Big(\, \underline{F}_{\,\sigma} \cdot {\big(\, \underline{H}_{\,\sigma} \big)}_{(1)} \Big)
   \otimes {\big(\, \underline{H}_{\,\sigma} \big)}_{(2)} \otimes \Big(\! {\big(\, \underline{H}_{\,\sigma} \big)}_{(3)}
   \cdot \underline{E}_{\,\sigma} \Big) \; $ }
\noindent
 is the whole homogeneous summand in
  $$
  {\textstyle \sum_{\sigma \in S}} \, \kappa_\sigma \,
  \Big( \Big(\, \underline{F}_{\,\sigma} \cdot {\big(\, \underline{H}_{\,\sigma} \big)}_{(1)} \Big) \otimes
  {\big(\, \underline{H}_{\,\sigma} \big)}_{(2)} \otimes \Big(\! {\big(\, \underline{H}_{\,\sigma} \big)}_{(3)}
  \cdot \underline{E}_{\,\sigma} \Big) \, + \, {\textstyle \sum\limits_r} \, \varPhi_r^\sigma \otimes \varXi_r^\sigma \otimes \varOmega_r^\sigma \,\Big)
  $$
 of degree  $ \, \big(\, \check{\mu} \, , 0 \, , \hat{\mu} \,\big) \, $  with respect to the grading by
 $ \, \G \times \G \times \G \, $  in  $ \, U \otimes U \otimes U \, $  canonically induced by the
 $ \G $--grading  of  $ U \, $.  For this reason, the identity
  $$
  {\textstyle \sum\limits_{\sigma \in S}} \, \kappa_\sigma \, \Big( \Big(\, \underline{F}_{\,\sigma} \cdot
  {\big(\, \underline{H}_{\,\sigma} \big)}_{(1)} \Big) \otimes {\big(\, \underline{H}_{\,\sigma} \big)}_{(2)} \otimes
  \Big(\! {\big(\, \underline{H}_{\,\sigma} \big)}_{(3)} \cdot \underline{E}_{\,\sigma} \Big)  \, + \,
  {\textstyle \sum_r} \, \varOmega_r^\sigma \otimes \varXi_r^\sigma \otimes \varPhi_r^\sigma \,\Big)  \; = \;  0
  $$
 found above implies at once
\begin{equation}  \label{eq: id=0 x Shatcheck}
  {\textstyle \sum_{\sigma \in S^{\hat{\mu}}_{\check{\mu}}}} \, \kappa_{\sigma} \, \Big(\, \underline{F}_{\,\sigma}
  \cdot {\big(\, \underline{H}_{\,\sigma} \big)}_{(1)} \Big) \otimes {\big(\, \underline{H}_{\,\sigma} \big)}_{(2)} \otimes
  \Big(\! {\big(\, \underline{H}_{\,\sigma} \big)}_{(3)} \cdot \underline{E}_{\,\sigma} \Big)  \; = \;  0
\end{equation}
   \indent   Now observe that  $ U^0 $  admits as  $ \kh $--basis  the set of all  \textsl{ordered\/}  monomials
   $ \underline{H} $  in the  $ H_\ell $'s  (i.e., we assume that  $ \, H_{\ell_1} \preceq \cdots \preceq H_{\ell_s} \, $),
   directly by construction and by  Proposition \ref{prop: structure Uh(h)}.  Let  $ \D $  be the Abelian group of rank
   $ \, t := \rk(\lieh) \, $  with basis  $ \, {\{ \delta_\ell \}}_{\ell \in \L} \, $,  that we endow with the product order, again
   denoted by  $ \, \preceq \, $,  induced by the standard order in  $ \ZZ \, $.  There is a natural  $ \D $--grading  on
   $ U^0 $  such that  $ \, \partial\big(H_\ell\big) = \delta_\ell \, $  for all elements of the fixed basis
   $ \, {\big\{ H_\ell \big\}}_{\ell \in \L} \, $,  \,whence  $ \, \underline{H} = H_{\ell_1} \cdots H_{\ell_s} \, $  is
   $ \D $--homogenenous  of degree  $ \, \delta_{\ell_1} + \cdots +\delta_{\ell_s} \, $.
   Also, as the elements of $ \lieh $  are primitive in  $ \uRPhg \, $,  we have
\begin{equation}  \label{eq: Delta2_H-monomial}
     \Delta^{\!(2)}\big( H_{\ell_1} \cdots H_{\ell_s} \big)  \, = \,
     1 \otimes \big( H_{\ell_1} \cdots H_{\ell_s} \big) \otimes 1 \, + \, {\textstyle \sum_r x_r \otimes y_r \otimes z_r}
\end{equation}
 where the  $ y_r $'s  are elements in  $ U^0 $  which are homogeneous
 --- for the  $ \D $--grading mentioned above ---   of degree strictly  \textsl{lower\/}
 than that of  $ \, H_{\ell_1} \cdots H_{\ell_s} \, $,  \,that is
\begin{equation}  \label{eq: degree H-monomials in Delta3}
   \partial(y_r) \, \precneqq \, \partial\big( H_{\ell_1} \cdots H_{\ell_s} \big) \, = \, s   \qquad \forall \;\; r \; .
\end{equation}
   \indent   The left-hand side of  \eqref{eq: id=0 x Shatcheck}  belongs to  $ \, U^-_{\check{\mu}} \otimes U^0 \otimes U^+_{\hat{\mu}} \, $;
   taking into account the  $ \D $--grading  in  $ U^0 $  mentioned above, the identity
   \eqref{eq: id=0 x Shatcheck}  implies that each homogeneous component
   --- with respect to the obvious grading of  $ \, U^- \otimes U^0 \otimes U^+ \, $  by  $ \, \G \times \D \times \G \, $  ---
   in the left-hand side of  \eqref{eq: id=0 x Shatcheck}  must be zero as well.
   In particular, let us focus on a single monomial  $ \, \underline{H}_{\,\bar{\sigma}} = H_{\ell^{\bar{\sigma}}_1} \cdots H_{\ell^{\bar{\sigma}}_s} \, $
   which actually occurs in  \eqref{eq: id=0 x Shatcheck},  having  \textsl{maximal\/}
   degree in  $ \, \big(\, \D \, ; \preceq \big) \, $:  \,then for the  $ \big(\, \G \times \D \times \G \,\big) $--homogeneous
   component of degree
   $ \, \big(\, \check{\mu} \, , \partial\big(\,\underline{H}_{\,\bar{\sigma}}\big) \, , \hat{\mu} \,\big) = \big(\, \check{\mu} \, ,
   \delta_{\ell^{\bar{\sigma}}_1} + \cdots + \delta_{\ell^{\bar{\sigma}}_s} \, , \hat{\mu} \,\big) \, $
   in  \eqref{eq: id=0 x Shatcheck}  we find, by  \eqref{eq: Delta2_H-monomial}
   and  \eqref{eq: degree H-monomials in Delta3}
\begin{equation}  \label{eq: id=0 x Shatcheck-TOP}
  {\textstyle \sum_{\sigma \in S^{\hat{\mu}}_{\check{\mu}}(\ell)}} \, \kappa_{\sigma} \,
  \underline{F}_{\,\sigma} \otimes \underline{H}_{\,\sigma} \otimes \underline{E}_{\,\sigma}  \; = \;  0
\end{equation}
 where  $ \, S^{\hat{\mu}}_{\check{\mu}}(\ell) := \big\{\, \sigma \in S^{\hat{\mu}}_{\check{\mu}} \,
 \big|\, \underline{H}_{\,\sigma} = \underline{H}_{\,\bar{\sigma}} \,\big\} \, $
 is a non-empty subset of  $ S^{\hat{\mu}}_{\check{\mu}} \, $.
 But now  \eqref{eq: lc-monom's -> 0}  and  \eqref{eq: id=0 x Shatcheck-TOP}  jointly provide the new,  \textsl{shorter\/}  linear combination
  $$
  {\textstyle \sum_{\sigma \in S \setminus S^{\hat{\mu}}_{\check{\mu}}}} \,
  \kappa_\sigma \, \underline{E}_{\,\sigma} \otimes \underline{H}_{\,\sigma} \otimes \underline{F}_{\,\sigma}  \; = \;
  {\textstyle \sum_\sigma} \, \kappa_\sigma \, \underline{E}_{\,\sigma} \otimes \underline{H}_{\,\sigma} \otimes \underline{F}_{\,\sigma}
  \, - \, {\textstyle \sum_{\sigma \in S^{\hat{\mu}}_{\check{\mu}}}} \,
  \kappa_{\sigma} \, \underline{E}_{\,\sigma} \otimes \underline{H}_{\,\sigma} \otimes \underline{F}_{\,\sigma}
  $$
 that still belongs to  $ \, \Ker(\mu) \, $:  \,applying again the same argument, and iterating, we eventually end up with
  $ \; {\textstyle \sum_{\sigma \in S}} \, \kappa_\sigma \,
  \underline{E}_{\,\sigma} \otimes \underline{H}_{\,\sigma} \otimes \underline{F}_{\,\sigma} \, = \, 0 \; $,  \;q.e.d.   \hfill  $ \diamond $
\end{free text}

\bigskip

\subsection{Construction of FoMpQUEAs}  \label{sec: constr-FoMpQUEAs}
{\ }

\smallskip

   In this section we provide
 two new, independent constructions (with respect to what is done in  \S \ref{sec: form-MpQUEAs})
 of FoMpQUEAs.  Namely, when  $ \R $  is  \textsl{split minimal\/}  we construct a FoMpQUEA  $ \uRPhg $
 --- with its whole structure of Hopf algebra ---   first as (a slight variation of) a  \textit{quantum double\/}
 (or  \textit{``Drinfeld's double''\/}),  and them as a  \textit{double cross product}.
 Then from this special case (via  Proposition \ref{prop: exist-realiz's},  Lemma \ref{lemma: split-lifting}
 and  Proposition \ref{prop: central-Hopf-extens_FoMpQUEAs})  we deduce the existence
 --- and explicit presentation ---   of  $ \uRPhg $  for  \textsl{any\/}  realization  $ \R $  as a quotient of
 $ U^{\,\R'}_{\!P,\hskip0,7pt\hbar}(\hskip0,5pt\lieg) $  for a suitable, split realization  $ \R' \, $.
                                                                      \par
   It is worth explaining a bit what is the general scheme beneath our presentation.
   The construction of the quantum double applies to any pair of Hopf algebras (possibly topological) over a ring  $ R \, $,
   with an  $ R $--valued  skew-Hopf pairing between them.
   Typically, this applies to any pair of Hopf algebras which are dual to each other,
   and their canonical (evaluation) pairing.  Now assume we do that for some QUEA, say  $ \uhg \, $,
   together with its dual (in topological sense)  $ \, {\uhg}^* =: F_\hbar[[G]] \, $
   --- the latter being a ``quantum formal series Hopf algebra'' (=:QFSHA), in Drinfeld's terminology.
   Then the corresponding quantum double  $ \, D \big( \uhg \, , F_\hbar[[G]] \big) \, $  will be a ``quantum object''
   --- isomorphic to  $ \, \uhg \otimes F_\hbar[[G]] \, $  as a coalgebra ---   whose specialization at  $ \, \hbar = 0 \, $  will be the Drinfeld's double  $ \, D \big( U(\lieg) \, , F[[G]] \big) \, $;  \,this means that, roughly speaking,  $ \, D \big( \uhg \, , F_\hbar[[G]] \big) \, $  is indeed ``half a QUEA'' and ``half a QFSHA''.  Therefore, if one aims instead to get a new, full QUEA out of the initial QUEA  $ \uhg \, $,  then one has to modify the previous construction; indeed, there exists a general recipe to perform such a modification  (see  \cite{ES}, \S 12.2) which in turn relies on Drinfeld's ``Quantum Duality Principle'' which allows one to ``extract'' a suitable QUEA out of a QFSHA  (cf.\ \cite{Ga2}  and references therein).  In our presentation we will  \textsl{not\/}  formally apply this general recipe: instead, we will present an ad hoc construction, tailored to the specific situation we have at hand.  However, whatever we do is directly dictated, step by step, by the general recipe, only we display our construction in layman's terms just to spare the reader some extra
   theoretical tools that are not needed in full generality.
   Nevertheless, it is worth stressing that we are actually applying the general recipe,
   even we do not show it in full light: yet it is there, standing in the backstage.
                                                                      \par
   As a first goal, we aim to construct a suitable quantum double of Borel-like FoMpQUEAs,
   starting from a pairing among Borel FoMpQUEAs: to this end, we need to step back and
   introduce ``pre-Borel'' FoMpQUEAs instead, a pairing  \textsl{with values in\/}  $ \khp $
   among them, and Borel FoMpQUEAs as quotients of pre-Borel ones.
   As second step, we show that this pairing ``pushes-forward'' to Borel FoMpQUEAs,
   hence can be used to perform a quantum double construction; actually,  \textit{a priori\/}
   this would not be feasible, because the pairing is valued in  $ \khp $  rather than  $ \kh \, $:
   nevertheless, we prove that in the present case the quantum double construction indeed does work
   (in a suitable sense) over  $ \kh $  as well, hence we are done.
                                                                      \par
   Along the way, another obstruction we encounter is that the construction of the pairing
   we would need actually clashes with  $ \hbar $--completeness  of (pre-)Borel FoMpQUEAs;
   therefore, we scale down to constructing a pairing defined on some dense, non-complete subalgebras
   --- of (pre-)Borel FoMpQUEAs ---   and then we manage to carry out the quantum double
   construction we are looking for.
                                                                      \par
   Finally, we will present yet another construction
   --- parallel to that via quantum doubles ---
   of FoMpQUEAs in terms of  \textit{double cross products\/}  of Borel FoMpQUEAs.

\medskip

\begin{free text}  \label{quantum pre-Borel}
 {\bf Pre-Borel FoMpQUEAs and their pairings.}\,
 Our first purpose is to construct quantum doubles of Borel
 FoMpQUEAs of split, minimal type; for this, we need a suitable pairing among such Borel FoMpQUEAs.
 To this end, we need to step back and introduce ``pre-Borel'' FoMpQUEAs instead,
 a pairing with values in  $ \khp $  among them,
 and Borel FoMpQUEAs as quotients of pre-Borel ones.
\end{free text}

\vskip9pt

\begin{definition}  \label{def: pre-Borel_FoMpQUEAs}
 Let  $ \, A := {\big(\, a_{i,j} \big)}_{i, j \in I} \, $  be a generalized symmetrizable Cartan matrix,
 $ \, P := {\big(\, p_{i,j} \big)}_{i, j \in I} \in M_{n}\big( \kh \big) \, $
 a matrix of Cartan type with associated Cartan matrix $ A $  and
 $ \,\R= \big(\, \lieh \, , \Pi \, , \Pi^\vee \,\big) \, $  a  \textsl{split minimal\/} realization of  $ P \, $,
 \,so that  $ \, \lieh = \textsl{Span}_\kh\big( {\big\{ T_i^+ , T_i^- \big\}}_{i \in I} \,\big) \, $
and it has rank  $ \, 2\,n \, $  (cf.\ Definition \ref{def: realization of P}).
                                                                 \par
   We define the  \textsl{positive, resp.\ negative, pre-Borel
   formal multiparameter quantum universal enveloping algebra}
   --- in short  \textsl{positive},  resp.\  \textsl{negative, pre-Borel FoMpQUEA}  ---
   with multiparameter  $ P $  as being the free unital, associative, topological,
   $ \hbar $--adically  complete algebra over  $ \kh \, $,  denoted by  $ \utildeRPhbp \, $,
   resp.\ by  $ \utildeRPhbm \, $,  with generators  $ \, T_i^+ $,  $ E_i \, $  ($ \, i \in I \, $),
 resp.\ $ \, T_i^- $, $ F_i \, $  ($ \, i \in I \, $).
                                                        \par
   Moreover, we give to  $ \utildeRPhbp \, $,  resp.\ to  $ \utildeRPhbm \, $,
   the unique, topological Hopf $ \kh $--algebra  structure uniquely defined by (for all  $ \, i \in I \, $)
  $$  \begin{aligned}
   \qquad  \Delta\big(\,T_i^+\big)  \; = \;  T_i^+ \otimes 1 \, + \, 1 \otimes T_i^+  \;\,  ,
\quad  \SS\big(\,T_i^+\big)  \, = \,  -T_i^+ \;\; ,   \quad  \epsilon\big(\,T_i^+\big) \, = \, 0  \\
   \Delta\big(E_i\big) \, = \, E_i \otimes 1 + e^{+\hbar \, T^+_i} \!\!\otimes E_i  \; ,
\quad  \SS\big(E_i\big) \, = \, - e^{-\hbar \, T^+_i} E_i  \; ,   \quad  \epsilon\big(E_i\big) \, = \, 0
\end{aligned}  $$
 for  $ \utildeRPhbp \, $,  \;and for  $ \utildeRPhbm $  in turn by (for all  $ \, i \in I \, $)
  $$  \begin{aligned}
   \qquad  \Delta\big(\,T_i^-\big)  \; = \;  T_i^- \otimes 1 \, + \, 1 \otimes T_i^-  \;\,  ,  &
\;\;\quad  \SS\big(\,T_i^-\big)  \, = \,  -T_i^-  \;\; ,   \;\;\quad  \epsilon\big(\,T_i^-\big) \, = \, 0  \\
   \hfill   \Delta\big(F_i\big) \, = \, F_i \otimes e^{-\hbar \, T^-_i} + 1 \otimes F_i  \; ,  &
\quad  \SS\big(F_i\big) \, = \, - F_i \, e^{+\hbar \, T^-_i}  \; ,   \quad  \epsilon\big(F_i\big) \, = \, 0
\hfill \qquad \qquad  \diamondsuit
\end{aligned}  $$
\end{definition}

\vskip7pt

   From now on, we work with fixed positive and negative pre-Borel FoMpQUEAs
   $ \utildeRPhbpm $  as above; in the following construction the two will play asymmetric roles,
   but one can also reverse those roles   --- switching  $ \utildeRPhbp $  and  $ \utildeRPhbm $
   among them ---   and eventually get exactly the same outcome.

\vskip9pt

\begin{definition}  \label{def: dual-pre-Borel_FoMpQUEAs}
 Let us consider  $ \, \bar{T}_t^- := \hbar \, T_t^- \, , \, \bar{F}_t := \hbar \, F_t \, \in \, \utildeRPhbm \, $,  \,for $ \, t \in I \, $.
                                                                  \par
   We define  $ \calutildeRPhbm $  to be the  $ \hbar $--adic  closure in  $ \utildeRPhbm $
   of the unital  $ \kh $--subalgebra  generated by  $ \, {\big\{ \bar{T}_\ell^- , \bar{F}_\ell \big\}}_{\ell \in I} \, $.
                                                                  \par
   Similarly, we define  $ \calutildeRPhbp $  to be the  $ \hbar $--adic  closure in  $ \utildeRPhbp $
   of the unital  $ \kh $--subalgebra  generated by
   $ \, {\big\{ \bar{T}_k^+ := \hbar \, T_k^+ , \bar{E}_k := \hbar \, E_k \big\}}_{k \in I} \, $.   \hfill  $ \diamondsuit $
\end{definition}

\vskip9pt

   The following, technical result is obvious  from definitions

\vskip11pt

\begin{lema}  \label{lem: propt.'s_dual-preBorel}
 Let  $ \,\; \widetilde{\E}_\pm^{\,(\hbar)} := \textsl{Ker}\Big( \epsilon_{\utildeRPhbpm} \Big) \bigcap\,
  \calutildeRPhbpm + \hbar \, \calutildeRPhbpm \; $.  Then:
 \vskip5pt
   (a)\;  $ \calutildeRPhbpm $  is complete with respect to the  $ \widetilde{\E}_\pm^{\,(\hbar)} \! $--adic
   topology, and  $ \, {\big\{ \bar{T}_i^+ , \bar{E}_i \big\}}_{i \in I} \, $,  resp.  $ \, {\big\{ \bar{T}_j^- , \bar{F}_j \big\}}_{j \in I} \, $,
   is a set of topological generators of it;
 \vskip5pt
   (b)\;  $ \calutildeRPhbpm $  is a  \textsl{Hopf}\/  $ \kh $--subalgebra  (in topological sense) of  $ \, \utildeRPhbpm \, $.
\qed
\end{lema}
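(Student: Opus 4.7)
The plan is to verify both claims by an essentially direct unwinding of the definitions, relying throughout on the observation that each of the barred generators $\bar T_i^\pm := \hbar\,T_i^\pm$, $\bar E_i := \hbar\,E_i$ (resp.\ $\bar T_j^- := \hbar\,T_j^-$, $\bar F_j := \hbar\,F_j$) lies in $\hbar\,\calutildeRPhbpm \subseteq \widetilde{\E}_\pm^{(\hbar)}$, and that $\hbar\cdot 1 \in \widetilde{\E}_\pm^{(\hbar)}$ as well. In particular, $\widetilde{\E}_\pm^{(\hbar)}$ is simply the closed ideal (in the $\hbar$-adic topology) of $\calutildeRPhbpm$ generated by $\hbar$ together with the barred generators, and the $\widetilde{\E}_\pm^{(\hbar)}$-adic topology is coarser than the $\hbar$-adic one.

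For part (a), topological generation is immediate from the construction of $\calutildeRPhbpm$: the algebraic subalgebra spanned by the barred generators is $\hbar$-adically dense, hence also $\widetilde{\E}$-adically dense. For completeness, I would fix an ordered PBW-type $\kh$-basis $\{\mu_\alpha\}$ of $\utildeRPhbpm$ with $\mu_\alpha$ a monomial of degree $|\alpha|$ in the original generators, and identify
\[
\calutildeRPhbpm \;=\; \Bigl\{\,{\textstyle\sum_\alpha} c_\alpha\,\hbar^{|\alpha|}\mu_\alpha \;\Bigm|\; c_\alpha\in\kh \Bigr\},
\]
a topologically free $\kh$-module with basis $\{\bar\mu_\alpha := \hbar^{|\alpha|}\mu_\alpha\}$. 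A bookkeeping computation then yields
\[
\bigl(\widetilde{\E}_\pm^{(\hbar)}\bigr)^n \;=\; \Bigl\{\,{\textstyle\sum_\alpha} c_\alpha\,\hbar^{|\alpha|}\mu_\alpha \;\Bigm|\; v_\hbar(c_\alpha) \geq \max\!\bigl(0,\,n-|\alpha|\bigr) \;\forall\,\alpha \Bigr\},
\]
so any $\widetilde{\E}$-adic Cauchy sequence produces, for each $\alpha$ separately, an $\hbar$-adic Cauchy sequence in $\kh$; the coefficientwise limits, which exist by $\hbar$-adic completeness of $\kh$, reassemble into an element of $\calutildeRPhbpm$ that is the required $\widetilde{\E}$-adic limit.

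For part (b), I would just verify on the generators that the Hopf operations of $\utildeRPhbpm$ preserve $\calutildeRPhbpm$ and land in the $\widetilde{\E}$-adic completion of $\calutildeRPhbpm\otimes_\kh\calutildeRPhbpm$. The counit restriction is trivial, and direct computation gives
\[
\Delta(\bar T_i^\pm) = \bar T_i^\pm\otimes 1 + 1\otimes\bar T_i^\pm,\qquad
\Delta(\bar E_i) = \bar E_i\otimes 1 + e^{+\hbar T_i^+}\otimes\bar E_i,
\]
together with $\SS(\bar T_i^\pm) = -\bar T_i^\pm$ and $\SS(\bar E_i) = -e^{-\hbar T_i^+}\bar E_i$, with symmetric formulas for the $\bar F_j$'s. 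The crux is that $e^{\pm\hbar T_i^+} = \sum_{k\geq 0}(\pm 1)^{k}(\bar T_i^+)^{k}/k!$, and since $(\bar T_i^+)^{k}/k!$ has $\hbar$-valuation $k$, this series converges $\hbar$-adically (whence $\widetilde{\E}$-adically) in $\calutildeRPhbp$; correspondingly $e^{\pm\hbar T_i^+}\otimes\bar E_i$ converges in the completed tensor product.

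The main obstacle, mild as the author's remark that the result is ``obvious from definitions'' suggests, is pinning down the correct topological sense in (b): one must work with the $\widetilde{\E}$-adic (not the naive $\hbar$-adic) completion of $\calutildeRPhbpm\otimes_\kh\calutildeRPhbpm$, both to house $e^{\hbar T_i^+}\otimes\bar E_i$ as a bona fide element and to guarantee continuity of the restricted Hopf operations. Once the right completion is fixed, all Hopf axioms transfer by restriction from $\utildeRPhbpm$.
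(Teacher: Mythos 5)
Your argument is correct, and since the paper offers no proof of this lemma (it is declared obvious from the definitions), your direct verification — the coefficientwise description $\calutildeRPhbpm = \big\{ \sum_\alpha c_\alpha \hbar^{|\alpha|}\mu_\alpha \big\}$, the resulting formula for ${\big(\widetilde{\E}_\pm^{\,(\hbar)}\big)}^n$, and the check of the Hopf operations on the barred generators — is essentially the intended one, written out. Two cosmetic slips, neither of which affects anything downstream: the barred generators lie in $\Ker\big(\epsilon\big)\cap\calutildeRPhbpm$ rather than in $\hbar\,\calutildeRPhbpm$ (either way they belong to $\widetilde{\E}_\pm^{\,(\hbar)}$), and $e^{\hbar\,T_i^+}\otimes\bar{E}_i$ needs no completion to exist, since $e^{\hbar\,T_i^+}=\sum_{k\geq 0}{\big(\bar{T}_i^+\big)}^k\!\big/k!$ already lies in $\calutildeRPhbp$, the completed tensor product being relevant only for continuity of $\Delta$ on general (non-polynomial) elements.
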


\vskip9pt

   The key point with pre-Borel FoMpQUEAs is our next result, whose proof is more or less standard in Hopf theory.

\vskip13pt

\begin{prop}  \label{prop: pairing_x_pre-Borel}
 There exists a  $ \kh $--linear  skew-Hopf pairing
  $$  \widetilde{\pi} \, : \, \utildeRPhbp \mathop{\widehat\otimes}\limits_\kh \calutildeRPhbm
  \relbar\joinrel\relbar\joinrel\relbar\joinrel\relbar\joinrel\relbar\joinrel\longrightarrow \kh  $$
 uniquely given
   --- for all $ \, i \, , j \in I \, $ ---   by
  $$  \displaylines{
   \widetilde{\pi}\big(\, T_i^+ , \bar{T}_j^- \big) = \alpha_i\big(\,T_j^-\,) = \alpha_j(\,T_i^+\,) = p_{ij}  \,\; ,
   \;\;\;  \widetilde{\pi}\big(\, T_i^+ \, ,1 \,\big) = 0 = \widetilde{\pi}\big(\, 1 \, , \bar{T}_j^- \big)  \cr
  \widetilde{\pi}\big(\, T_i^+ \, , \bar{F}_j \,\big)  \, = \,  0  \, = \,
  \widetilde{\pi}\big(\, E_i \, , \bar{T}_j^- \,\big)  \qquad ,   \quad \qquad  \widetilde{\pi}\big(\, 1 , \bar{F}_j \,\big)  \, = \,  0
  \, = \,  \widetilde{\pi}\big(\, E_i \, , 1 \,\big)  \cr
   \qquad  \widetilde{\pi}\big(\, 1 \, ,1 \,\big)  \, = \,
   1  \quad \qquad ,   \qquad \qquad \quad  \widetilde{\pi}\big( E_i \, , \bar{F}_j \,\big)  \, =
   \,  {{\delta_{i{}j} \, \hbar \;} \over {\; q_i^{+1} - \, q_i^{-1} \;}}  }  $$
\end{prop}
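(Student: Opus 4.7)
The plan is to construct $\widetilde{\pi}$ by exploiting the \emph{algebra freeness} of the pre-Borel FoMpQUEAs on their stated generators. First I would define $\widetilde{\pi}$ on all pairs of generators $(T_i^+, \bar{T}_j^-)$, $(T_i^+, \bar{F}_j)$, $(E_i, \bar{T}_j^-)$, $(E_i, \bar{F}_j)$ and against the units as prescribed in the statement; then I would \emph{extend} $\widetilde{\pi}$ to products on the right slot by the recursion dictated by (\ref{eqn:skew-Hpair-1}), namely $\widetilde{\pi}(h, k' k'') := \widetilde{\pi}(h_{(1)}, k') \widetilde{\pi}(h_{(2)}, k'')$. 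Since $\calutildeRPhbm$ is freely generated as a topological $\kh$-algebra by $\{\bar{T}_j^-, \bar{F}_j\}_{j \in I}$ (cf.\ Lemma \ref{lem: propt.'s_dual-preBorel}) and $\utildeRPhbp$ is freely generated by $\{T_i^+, E_i\}_{i \in I}$, this recursion uniquely and unambiguously determines a bilinear map on the algebraic tensor product.

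The consistency of this definition amounts to verifying that (\ref{eqn:skew-Hpair-2}) also holds, which by a standard double induction on monomial length reduces to two checks: (i) that (\ref{eqn:skew-Hpair-2}) is satisfied when the second argument is a single bar-generator, which is a direct computation using that $T_i^+$ is primitive and $E_i$ is skew-primitive with group-like factor $e^{+\hbar T_i^+}$; and (ii) that the two ways of expanding $\widetilde{\pi}(h_1 h_2, k_1 k_2)$ via (\ref{eqn:skew-Hpair-1}) first then (\ref{eqn:skew-Hpair-2}), or vice versa, coincide, which follows formally from coassociativity of $\Delta$ on both sides. Once the bialgebra-level axioms (\ref{eqn:skew-Hpair-1})--(\ref{eqn:skew-Hpair-3}) are established, the antipode compatibility (\ref{eqn:skew-Hpair-4}) is automatic from the uniqueness of the antipode as the convolution inverse of $\id$.

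The delicate point -- and where I expect the main obstacle to lie -- is proving $\hbar$-adic continuity of $\widetilde{\pi}$, which is needed to extend it from the algebraic to the completed tensor product $\utildeRPhbp \mathop{\widehat{\otimes}}_\kh \calutildeRPhbm$ \emph{with values in $\kh$}, not merely in $\khp$. Here is exactly why one must pass from $\utildeRPhbm$ to $\calutildeRPhbm$ via the rescalings $\bar{T}_j^- = \hbar T_j^-$, $\bar{F}_j = \hbar F_j$: the values $\widetilde{\pi}(T_i^+, \bar{T}_j^-) = p_{ij}$ and $\widetilde{\pi}(E_i, \bar{F}_j) = \hbar/(q_i - q_i^{-1}) = (2 d_i)^{-1} + O(\hbar^2)$ both visibly lie in $\kh$, whereas without rescaling one would obtain denominators of order $\hbar^{-1}$. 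Using (\ref{eqn:skew-Hpair-1}) together with the explicit skew-primitive coproducts, an induction on the length of bar-monomials in $\calutildeRPhbm$ shows that $\widetilde{\pi}(h, -)$ always lands in $\kh$.

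The concrete content of the continuity argument is the following: one must show that for every fixed $h \in \utildeRPhbp$ and every integer $N \geq 0$, the pairing $\widetilde{\pi}(h, -)$ vanishes modulo $\hbar^{M(N)}$ on the ideal $\big(\widetilde{\E}_-^{(\hbar)}\big)^N$, with $M(N) \to \infty$ as $N \to \infty$; this is what guarantees that the map is $\hbar$-adically continuous and extends uniquely to the completion. The point is to track carefully how the coproducts of monomials in the $T_i^+$ and $E_i$, inserted into the recursion (\ref{eqn:skew-Hpair-1}), force each factor of $\bar{T}_j^-$ or $\bar{F}_j$ consumed to contribute either an explicit $\hbar$ (from the rescaling) or a factor vanishing modulo $\hbar$ (from the $e^{\pm\hbar T_i^+} - 1$ portions of the coproducts). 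Once this order estimate is carried out, the claimed pairing exists, is $\kh$-valued, and inherits the skew-Hopf pairing identities by continuity from the algebraic level.
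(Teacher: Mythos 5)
Your overall strategy is the standard one and close in spirit to the paper's (exploit freeness of the pre-Borel algebras, prescribe the pairing on generators, extend via the skew-pairing axioms), and your explanation of why one must work with the rescaled subalgebra $\calutildeRPhbm$ --- so that $\widetilde{\pi}\big(E_i,\bar{F}_j\big)=\delta_{ij}\hbar/(q_i-q_i^{-1})\in\kh$ --- matches the paper's motivation. However, the concrete extension scheme has a well-foundedness gap. Extending ``in the right slot by (\ref{eqn:skew-Hpair-1})'' from generator-pair data alone does not work: already for $h=E_i$ one has $\Delta(E_i)=E_i\otimes 1+e^{+\hbar T_i^+}\otimes E_i$, so the very first step of your recursion calls for $\widetilde{\pi}\big(e^{+\hbar T_i^+},-\big)$ on arbitrary right elements, i.e.\ for values of the pairing at a non-generator (indeed infinite-series) left argument that is not part of your seed data; dually, even your check (i) --- condition (\ref{eqn:skew-Hpair-2}) against the single generator $\bar{F}_j$ --- requires $\widetilde{\pi}\big(-,e^{-\bar{T}_j^-}\big)$, again an infinite series on the right. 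So the ``double induction on monomial length'' is circular as stated, and checks (i)--(ii) do not by themselves produce a well-defined bilinear map, nor does the proposed clean separation between an algebraic step and a continuity step survive (the coproducts themselves are not algebraic). The paper avoids exactly this by proceeding in the opposite order: for each left generator it first defines a functional on \emph{all} of $\calutildeRPhbm$ --- a derivation $\tau_i$ for $T_i^+$, an algebra character $\kappa_i$ playing the role of $\widetilde{\pi}\big(e^{+\hbar T_i^+},-\big)$, and an $(\epsilon,\kappa_i)$-derivation $\eta_i$ for $E_i$, whose existence uses that $\calutildeRPhbm$ is topologically free on the barred generators --- and then uses freeness of $\utildeRPhbp$ to extend this assignment to an algebra anti-homomorphism $\gamma:\utildeRPhbp\rightarrow{\calutildeRPhbm}^{\!*}$ for the convolution product, so that (\ref{eqn:skew-Hpair-2}) holds by construction while (\ref{eqn:skew-Hpair-1}), (\ref{eqn:skew-Hpair-3}), (\ref{eqn:skew-Hpair-4}) are checked on generators. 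Your argument becomes correct once reorganized in this way; in particular the character attached to $e^{+\hbar T_i^+}$ must be part of the initial data, not an output of the recursion.

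On the continuity issue: you are right that this deserves attention (the paper treats it tersely, relying on the derivation/character description of the functionals on the topologically free algebra), but the mechanism you describe is off. The values $\widetilde{\pi}\big(E_i,\bar{F}_j\big)=\hbar/(q_i-q_i^{-1})=(2d_i)^{-1}+O(\hbar^2)$ are \emph{units} of $\kh$, so the rescaling does not make repeated $\bar{F}$-factors contribute powers of $\hbar$: for instance $\widetilde{\pi}\big(E_i^{\,N},\bar{F}_i^{\,N}\big)$ has $\hbar$-adic order zero for every $N$. The decay of $\widetilde{\pi}(h,-)$ on long right monomials, for a fixed left argument $h$, comes instead from degree constraints (only as many $\bar{F}_j$'s as there are $E$-letters in $h$, and as many $\bar{T}_j^-$'s as there are $T^+$-letters, can pair against actual letters of $h$) combined with the fact that every surplus $\bar{T}_j^-$ must pair against a group-like component $e^{\hbar(\cdots)}$ of the iterated coproduct, each such factor being divisible by $\hbar$; this is what yields the estimate needed for continuity with respect to the topology of Lemma \ref{lem: propt.'s_dual-preBorel}.
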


\pf
 Assume first that such a skew-Hopf pairing exists: then it is uniquely determined by its values on the
 (topological) algebra generators of  $ \, \utildeRPhbp \, $  and  $ \, \calutildeRPhbm \, $  chosen in the sets
 $ \, {\big\{ T_i^+ , E_i \big\}}_{i \in I} \, $  and  $ \, {\big\{ \bar{T}_j^- , \bar{F}_j \big\}}_{j \in I} \, $,  \,respectively;
  indeed,
%
%
%
 this follows from repeated applications of formulas  \eqref{eqn:skew-Hpair-1}  and  \eqref{eqn:skew-Hpair-2}
 in  Definition \ref{def_(skew-)Hopf-pairing}  along with the fact that  $ \, \calutildeRPhbm \, $  is a Hopf (sub)algebra.
%
%
 Therefore, once the above mentioned values are specified as in the statement,
 this proves uniqueness.
                                                                               \par
   To show that such a pairing exists, it is equivalent to prove, as usual, that there
   exists an algebra anti-homomorphism
   $ \; \gamma: \utildeRPhbp \relbar\joinrel\relbar\joinrel\relbar\joinrel\longrightarrow
   {\calutildeRPhbm}^{\!*} \, $,  \,where  $ \, {\calutildeRPhbm}^{\!*} \, $  is the linear dual
   $ \; {\calutildeRPhbm}^{\!*} \! := \Hom_\kh\!\Big( \calutildeRPhbm \, , \kh \Big) \, $.
   This is known once it is assigned on the free (topological) generators of  $ \utildeRPhbp $  picked from
   $ \, {\big\{ T_i^+ , E_i \big\}}_{i \in I} \, $,  \,and to define  $ \gamma $  on
   those elements we use the coproduct on them, because  $ \calutildeRPhbm $
   is freely (topologically) generated by  $ \, {\big\{\, \bar{T}_j^- , \bar{F}_j \big\}}_{i \in I} \, $.
                                                                               \par
   Given an augmented  $ R $--algebra  $ (A\,,\epsilon\,) $  over a ring  $ R \, $,  a map
   $ d : A \longrightarrow R \, $ is called a  \textit{derivation\/}  if
   $ \, d(xy) = d(x) \epsilon(y) + \epsilon(x) d(y) \, $  for all  $ \, x , y \in A \, $.
   More generally, for two algebra maps  $ \, \alpha \, , \beta \in \Alg_R(A,R\,) \, $,
   \,an \textit{$ (\alpha,\beta\,) $--derivation\/}  is a map  $ \, d : A \longrightarrow R \, $
   such that  $ \; d(xy) = d(x) \, \alpha(y) + \beta(x) \, d(y) \; $  for all  $ \, x , y \in A \, $.
                                                                               \par
   Taking into account that the  $ T_i^+ $'s  are primitive, the  $ \, K_i := e^{+\hbar\,T_i^+} \, $
   are group-like and the  $ E_i $'s  are  $ (1,K_i) $--primitive,  for all  $ \, i \in I \, $,
   we define in  $ \, {\calutildeRPhbm}^{\!*} $  the derivation  $ \tau_i \, $,
   the algebra morphism  $ \kappa_i $  and the  $ (\epsilon,\kappa_i) $--derivation  $ \eta_i $  by
  $$
  \displaylines{
   \tau_i\big(\bar{T}_j^-\big) \; := \; p_{ij}  \;\; ,  \tau_i\big(\bar{F}_j\big) \; := \; 0
   \quad  \kappa_i\big(\bar{T}_j^-\big) \; := \; p_{ij}  \;\; ,  \quad  \kappa_i\big(\bar{F}_j\big) \; := \; 0   \qquad  \forall \;\; j \in I  \cr
   \eta_i\big(\bar{T}_j^-\big) \; := \; 0  \quad ,  \qquad  \eta_i\big(\bar{F}_j\big)  \;
   := \; \delta_{i{}j} \, \hbar \, {\big( q_i^{+1} - \, q_i^{-1} \big)}^{-1}   \quad \qquad  \forall \;\; j \in I  }
   $$

  \indent   Now consider  $ \, {\calutildeRPhbm}^{\!*} $  as an algebra with the convolution product,
  that is,  $ \; (fg)(x) := f\big(x_{(1)}\big) \, g\big(x_{(2)}\big) \; $  for all  $ \, f , g \in {\calutildeRPhbm}^{\!*} \, $
  and  $ \, x \in \utildeRPhbm \, $.  As  $ \utildeRPhbp $  is the free (topological) algebra generated by
  $ \, {\big\{ T_i^+ , E_i \big\}}_{i \in I} \, $,  \,one has an algebra anti-homomorphism
  $ \; \gamma : \utildeRPhbp \relbar\joinrel\relbar\joinrel\longrightarrow {\calutildeRPhbm}^{\!*} \; $
  given by  $ \, \gamma\big(T_i^+\big) := \tau_i \, $  and  $ \, \gamma(E_i) := \eta_i \, $  for all
  $ \, i \in I \, $.  Let  $ \; \widetilde{\pi} : \utildeRPhbp \otimes_\kh \calutildeRPhbm \relbar\joinrel\relbar\joinrel\longrightarrow \kh \; $
  be the linear map defined by  $ \, \widetilde{\pi}\big(x,y\big) := \big(\gamma(x)\big)(y) \, $  for all  $ x \in \utildeRPhbp \, $
  and  $ \, y \in \calutildeRPhbm \, $;  \,then by the very construction of  $ \gamma \, $,  condition  \eqref{eqn:skew-Hpair-2}  is satisfied.
  On the other hand, condition \eqref{eqn:skew-Hpair-1}  is satisfied because it is satisfied on the generators
  $ T_i^+ $ and $ E_i $  ($ \, i \in I \, $)  and the comultiplication is an algebra map;
  the same holds for the conditions \eqref{eqn:skew-Hpair-3}.
  Finally, one may prove conditions  \eqref{eqn:skew-Hpair-4}
  concerning the antipode using again the values on the generators, as both  $ \SS $  and  $ \SS^{-1} $
  are algebra and coalgebra anti-homomorphisms.
\epf

\vskip7pt

\begin{rmk}  \label{rmk: sym-pairing}
 It is clear by construction that one can also introduce a topological Hopf subalgebra
 $ \, \calutildeRPhbp \, $  of  $ \utildeRPhbp \, $,  \,for which the analog of
 Lemma \ref{lem: propt.'s_dual-preBorel}  holds true, and a suitable skew-Hopf pairing
 $ \; \widetilde{\pi} \, : \, \calutildeRPhbp \mathop{\widehat\otimes}\limits_\kh
 \utildeRPhbm \relbar\joinrel\relbar\joinrel\relbar\joinrel\relbar\joinrel\relbar\joinrel\longrightarrow \kh \; $
 similar to the one in  Proposition \ref{prop: pairing_x_pre-Borel},  and denoted again by  $ \widetilde{\pi} \, $.
 Moreover, both  $ \, \utildeRPhbp \!\mathop{\widehat\otimes}\limits_\kh\! \calutildeRPhbm \, $  and
 $ \, \calutildeRPhbp \!\mathop{\widehat\otimes}\limits_\kh\! \utildeRPhbm \, $  will embed in
 $ \, \utildeRPhbp \!\mathop{\widehat\otimes}\limits_\kh\! \utildeRPhbm \, $,  their intersection will coincide with
 $ \, \calutildeRPhbp \mathop{\widehat\otimes}\limits_\kh \calutildeRPhbm \, $,  and the restrictions to this last submodule
 of the two pairings considered so far will coincide.
\end{rmk}

\vskip13pt

\begin{free text}  \label{pre-Borel ---> Borel}
 {\bf From pre-Borel FoMpQUEAs to Borel FoMpQUEAs.}\,
 We introduce now  \textsl{Borel FoMpQUEAs},  as quotients of pre-Borel FoMpQUEAs: indeed,
the former are carefully devised so to inherit from the latter all possible ``good'' properties.
\end{free text}

\vskip11pt

\begin{definition}  \label{def: Borel_FoMpQUEAs}
  Let pre-Borel FoMpQUEAs  $ \, \utildeRPhbpm $  be given as in  Definition \ref{def: pre-Borel_FoMpQUEAs}.
  We define  $ \widetilde{I}_+ $  to be the closure   --- in the  $ \hbar $--adic  topology ---
  of the two-sided ideal in  $ \, \utildeRPhbp $  generated by the elements
  $$  \displaylines{
   T^+_{i,j} \, := \, T_i^+ \, T_j^+ - \, T_j^+ \, T_i^+  \; ,
\quad  E^{(T)}_{i,j} \, := \, T_i^+ \, E_j \, - \, E_j \, T_i^+ - \, \alpha_j(T_i^+) \, E_j   \hskip21pt  (\, i , \, j \in I \,)
  \cr
   E_{i,j}  \, := \,  \sum\limits_{k = 0}^{1-a_{ij}} (-1)^k {\left[ { 1-a_{ij} \atop k }
\right]}_{\!q_i} q_{ij}^{+k/2\,} q_{ji}^{-k/2} \, E_i^{1-a_{ij}-k} \, E_j \, E_i^k   \quad \hskip31pt   (\, i \neq j \,)  }  $$
 and we define  $ \widetilde{I}_- $  to be the closure   --- in the  $ \hbar $--adic  topology ---
 of the two-sided ideal in  $ \, \utildeRPhbm $  generated by all the elements
  $$  \displaylines{
   T^-_{i,j} \, := \, T_i^- \, T_j^- \, - \, T_j^- \, T_i^-  \; ,
\quad   F^{(T)}_{i,j} \, := \, T_i^- \, F_j \, - \, F_j \, T_i^- \, + \, \alpha_j(T_i^-) \, E_j   \hskip21pt  (\, i , \, j \in I \,)
  \cr
   F_{i,j}  \, := \,  \sum\limits_{k = 0}^{1-a_{ij}} (-1)^k {\left[ { 1-a_{ij} \atop k }
\right]}_{\!q_i} q_{ij}^{+k/2\,} q_{ji}^{-k/2} \, F_i^k \, F_j \, F_i^{1-a_{ij}-k}   \quad \hskip25pt   (\, i \neq j \,)  }  $$
   \indent  Moreover, we define the  \textsl{positive, resp.\ negative,
   Borel formal multiparameter quantum universal enveloping algebra}
   --- in short  \textsl{positive, resp.\ negative, Borel FoMpQUEA}  ---
   with multiparameter  $ P $  as being the quotient  $ \;\uRPhbp := \utildeRPhbp \Big/ \widetilde{I}_+ \; $,  \,resp.\
   $ \; \uRPhbm := \utildeRPhbm \Big/ \widetilde{I}_- \; $.
 With a standard abuse of notation, hereafter  \textsl{we shall denote with the same symbol any element in
 $ \utildeRPhbpm $  as well as its image (via the quotient map) in the quotient algebra}
 $ \,\; \uRPhbpm := \utildeRPhbpm \Big/ \widetilde{I}_\pm \;\, $.   \hfill  $ \diamondsuit $
\end{definition}

\vskip5pt

   We need also similar definitions for the (topological) Hopf subalgebras  $ \, \calutildeRPhbpm \, $:

\vskip11pt

\begin{definition}  \label{def: dual-Borel_FoMpQUEAs}
 Let  $ \, \calutildeRPhbpm $  be defined as in  Definition \ref{def: dual-pre-Borel_FoMpQUEAs},
 and consider in it  $ \; \widetilde{\E}_\pm^{\,(\hbar)} :=
 \textsl{Ker}\Big( \epsilon_{\utildeRPhbpm} \Big) \bigcap\, \calutildeRPhbpm + \hbar \, \calutildeRPhbpm \, $.
We define  $ \widetilde{\I}_+ $  to be the closure, in the  $ \widetilde{\E}_+^{\,(\hbar)} $--adic  topology,
  of the two-sided ideal in  $ \, \calutildeRPhbp $  generated by the elements
  $$  \displaylines{
   \bar{T}^+_{i,j} \, := \, \bar{T}_i^+ \, \bar{T}_j^+ - \, \bar{T}_j^+ \, \bar{T}_i^+  \; ,
\quad  \bar{E}^{(T)}_{i,j} \, := \, \bar{T}_i^+ \, \bar{E}_j \, - \, \bar{E}_j \, \bar{T}_i^+ - \, \hbar \, \alpha_j\big( \bar{T}_i^+ \big) \, \bar{E}_j   \hskip21pt  (\, i , \, j \in I \,)
  \cr
   \bar{E}_{i,j}  \, := \,  \sum\limits_{k = 0}^{1-a_{ij}} (-1)^k {\left[ { 1-a_{ij} \atop k }
\right]}_{\!q_i} q_{ij}^{+k/2\,} q_{ji}^{-k/2} \, \bar{E}_i^{1-a_{ij}-k} \, \bar{E}_j \, \bar{E}_i^k   \quad \hskip31pt   (\, i \neq j \,)  }  $$
 and we define  $ \widetilde{\I}_- $  to be the closure   --- in the  $ \widetilde{\E}_-^{\,(\hbar)} $--adic  topology ---
 of the two-sided ideal in  $ \, \calutildeRPhbm $  generated by the elements
  $$  \displaylines{
   \bar{T}^-_{i,j} \, := \, \bar{T}_i^- \, \bar{T}_j^- \, - \, \bar{T}_j^- \, \bar{T}_i^-  \; ,
\quad   \bar{F}^{(T)}_{i,j} \, := \, \bar{T}_i^- \, \bar{F}_j \, - \, \bar{F}_j \, \bar{T}_i^- \, + \, \hbar \, \alpha_j\big( \bar{T}_i^- \big) \, \bar{F}_j   \hskip21pt  (\, i , \, j \in I \,)
  \cr
   \bar{F}_{i,j}  \, := \,  \sum\limits_{k = 0}^{1-a_{ij}} (-1)^k {\left[ { 1-a_{ij} \atop k }
\right]}_{\!q_i} q_{ij}^{+k/2\,} q_{ji}^{-k/2} \, \bar{F}_i^k \, \bar{F}_j \, \bar{F}_i^{1-a_{ij}-k}   \quad \hskip25pt   (\, i \neq j \,)  }  $$
   \indent  Accordingly, we consider the quotients
 $ \; \caluRPhbpm := \calutildeRPhbpm \Big/ \widetilde{\I}_\pm \; $,
 \;and, with standard abuse of notation,
  \textsl{we shall denote with the same symbol any element in  $ \calutildeRPhbpm $  as well as its coset in the quotient algebra}
  $ \,\; \caluRPhbpm := \calutildeRPhbpm \Big/ \widetilde{\I}_\pm \;\, $.   \hfill  $ \diamondsuit $
\end{definition}

\vskip7pt

   The key point concerning Borel FoMpQUEAs is the following:

\vskip11pt

\begin{prop}  \label{prop: left/right_radical_pairing_pre-Borel}
 {\ }
 \vskip5pt
   {\it (a)}\;  $ \, \widetilde{I}_\pm $  is a  \textsl{Hopf}  ideal of  $ \, \utildeRPhbpm \, $,
   so that  $ \; \uRPhbpm := \utildeRPhbpm \Big/ \widetilde{I}_\pm \; $  is a quotient  \textsl{Hopf}  algebra.
   Similarly,  $ \widetilde{\I}_\pm $  is a  \textsl{Hopf}  ideal of  $ \,  \calutildeRPhbpm \, $,
   therefore the quotient  $ \; \caluRPhbpm := \calutildeRPhbpm \Big/ \widetilde{\I}_\pm \; $  is in fact a  \textsl{Hopf}  algebra.
                                                                           \par
   Moreover,  $ \, \caluRPhbpm $  is a Hopf subalgebra (in topological sense) inside  $ \, \uRPhbpm \, $.
 \vskip5pt
    {\it (b)}\;  The ideal  $ \, \widetilde{I}_+ \, $,  \,resp.  $ \, \widetilde{\I}_- \, $,
    \,is contained in the left, resp.\ right, radical of the pairing
    $ \; \widetilde{\pi} : \utildeRPhbp \,\widehat{\otimes}\, \calutildeRPhbm \relbar\joinrel\relbar\joinrel\longrightarrow \kh \, $
    in  Proposition \ref{prop: pairing_x_pre-Borel}.
                                                                           \par
   Similarly, the ideal  $ \, \widetilde{\I}_+ \, $,  \,resp.  $ \, \widetilde{I}_- \, $,
   \,is contained in the left, resp.\ right, radical of the pairing
   $ \; \widetilde{\pi} : \calutildeRPhbp \,\widehat{\otimes}\, \utildeRPhbm \relbar\joinrel\relbar\joinrel\longrightarrow \kh \, $
   in  Remark \ref{rmk: sym-pairing}.
 \vskip5pt
   {\it (c)}\;  The two skew-Hopf pairings
 $ \; \widetilde{\pi} : \utildeRPhbp \,\widehat{\otimes}\,
 \calutildeRPhbm \relbar\joinrel\relbar\joinrel\relbar\joinrel\relbar\joinrel\longrightarrow \kh \, $
 and
  $ \; \widetilde{\pi} : \calutildeRPhbp \,\widehat{\otimes}\,
  \utildeRPhbm \relbar\joinrel\relbar\joinrel\relbar\joinrel\relbar\joinrel\longrightarrow \kh \, $
 mentioned in (b) uniquely induce skew-Hopf pairings
  $ \; \pi : \uRPhbp \,\widehat{\otimes}\, \caluRPhbm \longrightarrow \kh \; $  and
  $ \; \pi : \caluRPhbp \,\widehat{\otimes}\, \uRPhbm \longrightarrow \kh $
 \; described by obvious formulas as in  Proposition \ref{prop: pairing_x_pre-Borel}.
\end{prop}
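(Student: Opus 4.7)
The plan is to verify each of the three claims in sequence, the proofs being essentially routine (though lengthy) verifications once one exploits the defining properties of the skew-Hopf pairing $\widetilde{\pi}$ together with the comultiplicative structure established in Definition \ref{def: pre-Borel_FoMpQUEAs}.

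For part (a), I would show that each generator $g$ of $\widetilde{I}_\pm$ satisfies $\epsilon(g)=0$, $\mathcal{S}(g)\in\widetilde{I}_\pm$ and $\Delta(g)\in\widetilde{I}_\pm\,\widehat{\otimes}\,\utildeRPhbpm+\utildeRPhbpm\,\widehat{\otimes}\,\widetilde{I}_\pm$, whence the $\hbar$-adic closure inherits the same property by continuity. The commutator generators $T^\pm_{i,j}$ are primitive, so the check is immediate; the ``mixed commutators'' $E^{(T)}_{i,j}$ and $F^{(T)}_{i,j}$ are directly computed to be $(1, e^{\hbar T^+_j})$- and $(e^{-\hbar T^-_j}, 1)$-skew-primitive modulo the ideal, using $\Delta(T_i^\pm)=T_i^\pm\otimes 1+1\otimes T_i^\pm$ and the coproducts of the $E_j$'s and $F_j$'s. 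The delicate case is the quantum Serre generators $E_{i,j}$ and $F_{i,j}$: a computation entirely parallel to the one invoked in the first proof of Theorem \ref{thm: form-MpQUEAs_are_Hopf} shows
\[
\Delta(E_{i,j}) \;=\; E_{i,j}\otimes 1 \,+\, e^{+(1-a_{ij})\hbar T_i^+ +\hbar T_j^+}\otimes E_{i,j},
\]
with the dual formula for $F_{i,j}$, whence these elements are skew-primitive. The parallel statement for $\widetilde{\I}_\pm$ in $\calutildeRPhbpm$ is identical up to rescaling by $\hbar$; the last assertion of (a) follows because $\widetilde{\I}_\pm$ is precisely $\widetilde{I}_\pm\cap\calutildeRPhbpm$ by construction (the defining generators of $\widetilde{\I}_\pm$ are the images of those of $\widetilde{I}_\pm$ after rescaling).

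For part (b), I exploit that, by (\ref{eqn:skew-Hpair-1})--(\ref{eqn:skew-Hpair-2}), showing $\widetilde{\pi}(u,y)=0$ for all $y$ in $\calutildeRPhbm$ and $u$ a generator of $\widetilde{I}_+$ reduces to a check on the algebra generators $\bar{T}_j^-$ and $\bar{F}_j$ of $\calutildeRPhbm$. The commutator generators $T^+_{i,j}$ pair to zero against $\bar{T}_j^-$ because $\alpha_j(T_i^+)=p_{ij}=\alpha_i(T_j^-)$ is symmetric between $i$ and $j$ in the way dictated by the realization, and to zero against $\bar{F}_j$ by the vanishing initial values; similarly, $E^{(T)}_{i,j}$ pairs to zero by a direct expansion of both factors in the defining terms of $\widetilde{\pi}$. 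The quantum Serre generators $E_{i,j}$ are handled by the standard $q$-binomial identity already used in the proof of Lemma \ref{lemma: alg-struct-on-Uotimes(g)} (the vanishing of $C_{ij}^\pm(q)$ there), which is precisely what forces $\widetilde{\pi}(E_{i,j},\bar{F}_k)=0$. The symmetric argument applies to $\widetilde{I}_-$, $\widetilde{\I}_+$, $\widetilde{\I}_-$ on the corresponding radical.

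Part (c) is then formal: the inclusions proved in (b) mean that $\widetilde{\pi}$ vanishes on $\widetilde{I}_+\,\widehat{\otimes}\,\calutildeRPhbm+\utildeRPhbp\,\widehat{\otimes}\,\widetilde{\I}_-$, hence descends uniquely to a $\kh$-linear map $\pi:\uRPhbp\,\widehat{\otimes}\,\caluRPhbm\to\kh$; that $\pi$ satisfies the skew-Hopf pairing axioms (\ref{eqn:skew-Hpair-1})--(\ref{eqn:skew-Hpair-4}) follows because by (a) the quotients carry the induced Hopf structures and all four axioms pass to the quotient. The formulas on generators are inherited verbatim from those of $\widetilde{\pi}$ listed in Proposition \ref{prop: pairing_x_pre-Borel}. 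The main obstacle is the Serre-relation case in (a) and (b): one needs the explicit form of $\Delta(E_{i,j})$, which requires a nontrivial induction involving $q$-binomial identities, and the orthogonality $\widetilde{\pi}(E_{i,j},\bar{F}_k)=0$, which relies on the same $q$-identity that enforces consistency of the algebra $\overrightarrow{U}^{\R,\otimes}_{\!P,\hbar}(\lieg)$.
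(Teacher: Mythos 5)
Your proposal follows essentially the same route as the paper's proof: in (a) you establish that the generators of $\widetilde{I}_\pm$ and $\widetilde{\I}_\pm$ are primitive or skew-primitive (including the formula for $\Delta(E_{i,j})$), in (b) you use that skew-primitivity to reduce the radical condition to a check against the generators $\bar{T}^-_j,\bar{F}_j$, and (c) is the same formal descent argument. The one point to correct is your claim that the orthogonality $\widetilde{\pi}\big(E_{i,j},\bar{F}_k\big)=0$ ``relies on'' the $q$-binomial identity used in Lemma \ref{lemma: alg-struct-on-Uotimes(g)}: no cancellation among the summands of the Serre element is needed there, because each monomial $E_i^{1-a_{ij}-s}E_jE_i^s$ already pairs to zero with $\bar{F}_k$ individually --- in the expansion against $\Delta^{(3)}\big(\bar{F}_k\big)=\bar{F}_k\otimes e^{-\bar{T}^-_k}\otimes e^{-\bar{T}^-_k}+1\otimes\bar{F}_k\otimes e^{-\bar{T}^-_k}+1\otimes 1\otimes\bar{F}_k$ every term contains a factor of the form $\widetilde{\pi}\big(E_\bullet\,,e^{-\bar{T}^-_k}\big)$ or $\widetilde{\pi}\big(E_\bullet\,,1\big)$ with $E_\bullet$ a nonempty product of $E$'s, hence vanishes; this degree-type argument is exactly how the paper proceeds. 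The $q$-binomial identities genuinely enter only in the separate verification that $E_{i,j}$ (and $F_{i,j}$) are skew-primitive, which you invoke correctly, so once this misattribution is dropped your argument is sound and coincides with the paper's.
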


\pf
 Claim  \textit{(c)\/}  follows at once from  \textit{(a)\/}  and  \textit{(b)},
 so now we prove the latter ones.
 \vskip3pt
   As to  \textit{(a)},  computations show that the  $ T^+_{i,j} $'s  are primitive, while the  $ E^{(T)}_{i,j} $'s  are skew-primitive, namely
 $ \; \Delta\Big(E^{(T)}_{i,j}\Big) \, = \, E^{(T)}_{i,j} \otimes 1 + e^{+\bar{T}^+_j} \otimes E^{(T)}_{i,j} \; $;
 \,similarly, again direct computations prove also that
 $ \; \Delta(E_{i,j}) = E_{i,j} \otimes 1 + e^{+(1-a_{ij}) \, \hbar\,T_i^+ + \hbar\,T_j^+} \otimes E_{i,j} \; $.
 This implies that  $ \widetilde{I}_+ $  is a Hopf ideal of  $ \, \utildeRPhbp \, $,  as claimed.
                                                                          \par
   With similar arguments, one proves the claim for  $ \widetilde{I}_- $  and for  $ \widetilde{\I}_\pm $  as well.
                                                                          \par
   Finally, the statement about being a Hopf subalgebra follows by construction.
 \vskip3pt
   As to  \textit{(b)},  again direct computation shows that  $ \widetilde{I}_+ $
   lies in the left radical of the pairing  $ \widetilde{\pi} \, $.  For instance, the functional
   $ \, \widetilde{\pi}\big(\, T_i^+ \, T_j^+ , \,-\, \big) \, $  is non-zero only when it is evaluated in
   elements of the form  $ \, \bar{T}_k^- \, \bar{T}_\ell^- \, $  for some  $ \, 1 \leq k , \ell \leq n \, $;  \,in this case,
\begin{align*}
   \widetilde{\pi}\big(\,T_i^+ \, T_j^+ , \bar{T}_k^- \, \bar{T}_\ell^- \big)  &  \; = \;
   \widetilde{\pi}\big(\,T_i^+,\bar{T}_\ell^-\big) \, \widetilde{\pi}\big(\,T_j^+,\bar{T}_k^-\big) \, + \,
   \widetilde{\pi}\big(\,T_i^+,\bar{T}_k^-\big) \, \widetilde{\pi}\big(\,T_j^+,\bar{T}_\ell^-\big)  \; =  \\
      &  \; = \;  p_{i\ell} \, p_{jk} + p_{ik} \, p_{j\ell}  \; = \;  \widetilde{\pi}\big(\,T_j^+ \, T_i^+ , \bar{T}_k^- \, \bar{T}_\ell^-\big)
\end{align*}
 so the generators  $ \, T^+_{i,j} := T_i^- \, T_j^- - \, T_j^- \, T_i^- \, $  of  $ \, \widetilde{I}_+ $  lie in the left radical for all  $ \, i , j \in I \, $.
                                                                        \par
   Similarly, we saw that the generators
 $ \; E^{(T)}_{i,j} \, := \, T_i^+ \, E_j \, - \, E_j \, T_i^+ - \, \alpha_j(T_i^+) \, E_j \; $
 are skew-primitive again, namely
 $ \; \Delta\Big(E^{(T)}_{i,j}\Big) \, = \, E^{(T)}_{i,j} \otimes 1 + e^{+\bar{T}^+_j} \otimes E^{(T)}_{i,j} \; $.
 Thanks to this, in order to prove that the  $ E^{(T)}_{i,j} $'s
 are contained in the left radical it is enough to show that they kill the generators of
 $ \, \calutildeRPhbm \, $,  \,because
  $$
  \widetilde{\pi}\Big( E^{(T)}_{i,j} , \, x \, y \Big)  \; = \;  \widetilde{\pi}\Big( E^{(T)}_{i,j} , \, x \Big) \,
  \widetilde{\pi}(1 \, , \, y) \, + \, \widetilde{\pi}\big( e^{+\bar{T}^+_j} , \, x \big) \, \widetilde{\pi}\Big( E^{(T)}_{i,j} , \, y \Big)   \quad  \forall \; x , y \in \calutildeRPhbm
  $$
   \indent   Now, from  $ \, \widetilde{\pi}\big( E_\ell \, , 1 \big) = 0 \, $  for all  $ \, \ell \in I \, $,
   the properties of the skew-Hopf pairing imply that  $ \, \widetilde{\pi}\Big( E^{(T)}_{i,j} \, , 1 \Big) = 0 \, $  too,
   for all  $ \, i , j \in I \, $.  Similarly, direct computation gives, using notation
   $ \, \widetilde{\pi}_\otimes(a \otimes b \, , u \otimes v) := \widetilde{\pi}(a\,,u) \, \widetilde{\pi}(b\,,v) \, $,
  $$
  \displaylines{
   \widetilde{\pi}\Big( E^{(T)}_{i,j} \, , \bar{T}^-_k \Big)  \; = \;  \widetilde{\pi}\Big( T_i^+ \, E_j \, - \, E_j \, T_i^+ -
   \, \alpha_j(T_i^+) \, E_j \, , \, \bar{T}^-_k \Big)  \; =   \hfill  \cr
   = \;  \widetilde{\pi}_\otimes\Big( T_i^+ \otimes E_j \, -
   \, E_j \otimes T_i^+ - \, \alpha_j(T_i^+) \, E_j \otimes 1 \, , \, \Delta\big(\bar{T}^-_k\big) \Big)  \; =   \qquad \qquad \qquad  \cr
   \qquad   = \;  \widetilde{\pi}_\otimes\big( T_i^+ \otimes E_j \, -
   \, E_j \otimes T_i^+ - \, \alpha_j(T_i^+) \, E_j \otimes 1 \, , \, \bar{T}^-_k \otimes 1 + 1 \otimes \bar{T}^-_k \big)  \; = \;  0  }
   $$
 exactly because  $ \, \widetilde{\pi}\big( E_j \, , 1 \big) = 0 =  \widetilde{\pi}\big( E_j \, , \bar{T}^-_k \big) \, $
 for all  $ \, j , k \in I \, $.  Likewise, we also have
  $$  \displaylines{
   \widetilde{\pi}\Big( E^{(T)}_{i,j} \, , \bar{F}_k \Big)  \; = \;
   \widetilde{\pi}\Big( T_i^+ \, E_j \, - \, E_j \, T_i^+ - \, \alpha_j(T_i^+) \, E_j \, , \, \bar{F}_k \Big)  \; =   \hfill  \cr
   = \;  \widetilde{\pi}_\otimes\Big( T_i^+ \otimes E_j \, - \, E_j \otimes T_i^+ - \, \alpha_j(T_i^+) \, E_j \otimes 1 \, ,
   \, \Delta\big(\bar{F}_k\big) \Big)  \; =   \qquad \qquad \qquad  \cr
   \qquad   = \;  \widetilde{\pi}_\otimes\Big( T_i^+ \otimes E_j \, -
   \, E_j \otimes T_i^+ - \, \alpha_j(T_i^+) \, E_j \otimes 1 \, , \, \bar{F}_k \otimes e^{-\bar{T}^-_k} + 1 \otimes \bar{F}_k \Big)  \; = \;  0  }  $$
 because when we expand the last line the only non-trivial summands are
  $$
  - \widetilde{\pi}\big( E_j \, , \bar{F}_k \big) \cdot \widetilde{\pi}\Big( T^+_i \, , e^{-\bar{T}^-_k} \Big)  \;
  =  \;  +\delta_{j,k} \, \hbar \, {\big(\, q_j^{+1} - \, q_j^{-1} \,\big)}^{-1} \cdot \alpha_k\big(T_i^+\big)
  $$
 \vskip-5pt
 and
  $$
  - \alpha_j\big(T_i^+\big) \, \widetilde{\pi}\big( E_j \, , \bar{F}_k \big) \cdot
  \widetilde{\pi}\Big( 1 \, , e^{-\bar{T}^-_k} \Big)  \; = \;  -\alpha_j\big(T_i^+\big) \, \delta_{j,k} \, \hbar {\big(\, q_j^{+1} - \, q_j^{-1} \,\big)}^{-1} \, \cdot 1
  $$
 which add up to zero, q.e.d.
                                                                        \par
Finally, the  $ E_{i,j} $'s  are skew-primitives too,
   so again it is enough to show that they kill the generators of  $ \, \calutildeRPhbm \, $.
   This follows again by direct calculation, for instance
  $$  \displaylines{
   \widetilde{\pi}\big( E_{i,j} \, , \bar{F}_k \Big)  \,\; = \;\,
   \widetilde{\pi}\Big(\, {\textstyle{\sum_{s = 0}^{1-a_{ij}}}} {(-1)}^s {\left[ { 1-a_{ij} \atop s } \right]}_{\!q_i} q_{ij}^{+s/2\,} q_{ji}^{-s/2} \, E_i^{1-a_{ij}-s} \, E_j \, E_i^s \, ,
   \, \bar{F}_k \Big)  \; =   \hfill  \cr
   = \;  {\textstyle{\sum\limits_{s=0}^{1-a_{ij}}}} {(-1)}^s {\left[ { 1-a_{ij} \atop s }
\right]}_{\!q_i} q_{ij}^{+s/2\,} q_{ji}^{-s/2} \; \widetilde{\pi}_\otimes^{(3)}\Big( E_i^{1-a_{ij}-s} \otimes E_j \otimes E_i^s \, , \, \Delta^{(3)}\big(\bar{F}_k\big) \Big)  \; =  \cr
   = \;  {\textstyle{\sum\limits_{s=0}^{1-a_{ij}}}} {(-1)}^s {\left[ { 1-a_{ij} \atop s } \right]}_{\!q_i} q_{ij}^{+s/2\,} q_{ji}^{-s/2} \; \times   \hfill  \cr
   \hfill   \times \; \widetilde{\pi}_\otimes^{(3)}\Big( E_i^{1-a_{ij}-s} \otimes E_j \otimes E_i^s \, ,
   \, \bar{F}_k \otimes e^{-\bar{T}^-_k} \otimes e^{-\bar{T}^-_k} + 1 \otimes \bar{F}_k \otimes e^{-\bar{T}^-_k} + 1 \otimes 1 \otimes \bar{F}_k \Big)  }
   $$
 so that for all  $ \, s \, $  we get
  $$  \widetilde{\pi}_\otimes^{(3)}\Big( E_i^{1-a_{ij}-s} \otimes E_j \otimes E_i^s \, , \,
  \bar{F}_k \otimes e^{-\bar{T}^-_k} \otimes e^{-\bar{T}^-_k} + 1 \otimes \bar{F}_k \otimes e^{-\bar{T}^-_k} + 1 \otimes 1 \otimes \bar{F}_k \Big)  \; = \;  0  $$
 because  $ \; \widetilde{\pi}\Big( E_j \, , \, e^{-\bar{T}^-_k} \Big) = 0 = \widetilde{\pi}\big( E_j \, , \, 1 \big) \; $
 and  $ \; \widetilde{\pi}\Big( E_j^{1-a_{ij}-s} \, , 1 \Big) = 0 \; $.
\epf

\vskip7pt

\begin{rmks}
 \vskip3pt
   \textit{(a)}\;  Constructions imply that
%
%
 $ \uRPhbpm $  in  Proposition \ref{prop: left/right_radical_pairing_pre-Borel}\textit{(a)\/}
%
%
 coincide with the Borel FoMpQUEAs from  Definition \ref{def: Mp-Uhgd}\textit{(c)},
 with their whole Hopf structure (cf.\  Proposition \ref{cor: Hopf-struct_Cartan-&-Borel}),
 so we use again same notation and terminology.
 \vskip3pt
   \textit{(b)}\;  Denote by  $ \lier_+ $  and  $ \R_- \, $,  respectively  $ \R_+ $  and  $ \lier_- \, $,
   the left and the right radical of the skew-Hopf pairing
 $ \; \widetilde{\pi} : \utildeRPhbp \,\widehat{\otimes}\,
 \calutildeRPhbm \relbar\joinrel\relbar\joinrel\relbar\joinrel\relbar\joinrel\relbar\joinrel\longrightarrow \kh \, $,
 \,respectively
  $ \; \widetilde{\pi} : \calutildeRPhbp \,\widehat{\otimes}\,
  \utildeRPhbm \relbar\joinrel\relbar\joinrel\relbar\joinrel\relbar\joinrel\relbar\joinrel\longrightarrow \kh \, $;
 \,all these are Hopf ideals, and then   --- in both cases ---   the pairings  $ \widetilde{\pi} $
 induce similar skew Hopf pairings between the quotient Hopf algebras
 $ \, \utildeRPhbp \Big/ \lier_+ \, $
 and  $ \, \calutildeRPhbm \Big/ \R_- \, $,  \,resp.\   $ \,\calutildeRPhbp \Big/ \R_+ \, $ and  $ \, \utildeRPhbm \Big/ \lier_- \, $,
 \,which are non-degenerate.  When the matrix  $ P $  is symmetric, hence equal to  $ DA \, $,
 basing on  \cite{Ka}, \S Theorem 9.11,  one can prove that the relations in  Definition \ref{def: Borel_FoMpQUEAs},
 resp.\ in  Definition \ref{def: dual-Borel_FoMpQUEAs},  generate the Hopf ideals  $ \lier_\pm \, $,  resp.\  $ \R_\mp \, $,
 \,hence one has that  $ \; \utildeRPhbpm \Big/ \lier_\pm \, \cong \, \uRPhbpm \; $  and
 $ \; \calutildeRPhbpm \Big/ \R_\pm \, \cong \, \caluRPhbpm \; $.
\end{rmks}

\vskip1pt

   Next result points out a technical property of the Borel FoMpQUEAs.

\vskip13pt

\begin{lema}  \label{lemma: Borel FoMpQUEAs topol.-free}
 The algebras  $ \uRPhbpm $  are topologically free, i.e.\ they are torsion-free as  $ \kh $--modules
 and they are separated and complete for the  $ \hbar $--adic  topology.
\end{lema}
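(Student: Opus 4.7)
My plan is to invoke the triangular decomposition from Theorem~\ref{thm: triang-decomp.'s}, which yields an isomorphism of topological $\kh$-modules $\uRPhbpm \cong \uRPhnpm \,\widehat{\otimes}_\kh\, U_\hbar(\lieh)$ induced by multiplication. Since the completed tensor product of two topologically free $\kh$-modules is again topologically free, I would reduce to establishing topological freeness of each factor separately. Completeness and separatedness follow automatically from the construction via $\hbar$-adic closures, so in each case the substantive point is $\hbar$-torsion-freeness (equivalently, isomorphism with some $V[[\hbar]]$).

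For the Cartan factor $U_\hbar(\lieh)$, Proposition~\ref{prop: structure Uh(h)} identifies it with the $\hbar$-adic completion $\widehat{S}_\hbar(\lieh)$ of the symmetric algebra over the free $\kh$-module $\lieh$. Fixing any $\kh$-basis $\{H_1,\ldots,H_t\}$ of $\lieh$, the algebra $S_\hbar(\lieh)$ is the polynomial ring $\kh[H_1,\ldots,H_t]$, whose $\hbar$-adic completion is canonically isomorphic to $\Bbbk[H_1,\ldots,H_t][[\hbar]]$, which is manifestly topologically free.

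For the nilpotent factor $\uRPhnpm$, Proposition~\ref{prop: struct-Cart/nilp-subalg.s}(c) gives the presentation by generators $E_i$ (respectively $F_i$), $i\in I$, subject only to the quantum Serre relations $u^E_{ij}=0$ (respectively $u^F_{ij}=0$). The plan is to construct a multiparameter PBW-type $\kh$-basis consisting of ordered monomials in a suitable family of quantum root vectors built as iterated $q$-brackets of the generators, twisted by the multiparameter $P$. First I would observe that the quantum Serre relations reduce modulo $\hbar$ to the classical Serre relations, so that $\uRPhnpm/\hbar\uRPhnpm \cong U(\lien_\pm)$, the classical nilpotent enveloping algebra of the Kac--Moody Lie algebra $\lieg$ associated with the underlying Cartan matrix $A$; the latter has a well-known PBW basis. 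Each classical basis element then lifts to $\uRPhnpm$, and the resulting family spans topologically. To prove $\kh$-linear independence (equivalently, torsion-freeness), I plan to adapt the strategy of the proof of Proposition~\ref{prop: triangulardecUtilde}: exploit the faithful tensor representations $\widehat{T}^\lambda_\hbar(V)$ of Lemma~\ref{lemma: tensor repr x utildeRPhg}, which collectively, as $\lambda\in\lieh^*$ varies, distinguish any nonzero torsion element by its action on suitable vectors.

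The main obstacle is expected to be the linear independence step for the lifted PBW basis: the coefficients $q_{ij}^{\pm k/2}$ appearing in the multiparameter quantum Serre relations are more intricate than in the uniparameter Drinfeld--Jimbo setting, so the classical PBW straightening argument requires careful bookkeeping. However, since $P$ is of Cartan type and these coefficients are units in $\kh$ (being of the form $e^{\pm\hbar p_{ij}/2}$), the classical proof adapts without fundamental change, paralleling treatments such as \cite{Ja} for the uniparameter case. Concluding, once both factors are shown topologically free, the isomorphism $\uRPhbpm \cong \uRPhnpm \,\widehat{\otimes}_\kh\, U_\hbar(\lieh)$ transfers the property to $\uRPhbpm$.
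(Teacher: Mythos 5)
Your reduction via the triangular decomposition of Theorem \ref{thm: triang-decomp.'s} is legitimate (no circularity: that result is proved before this Lemma), and the Cartan factor is indeed settled by Proposition \ref{prop: structure Uh(h)} together with Proposition \ref{prop: struct-Cart/nilp-subalg.s}\textit{(b)}, giving $U_\hbar(\lieh)\cong\widehat{S}_\hbar(\lieh)\cong\Bbbk[H_1,\dots,H_t][[\hbar]]$. The gap is in the nilpotent factor: topological freeness of $\uRPhnpm$ is exactly the hard content of the Lemma, and you dispose of it by asserting that a multiparameter PBW theorem ``adapts without fundamental change'' from the uniparameter case. That assertion is precisely what has to be proved (it is a flatness/absence-of-extra-relations statement for the algebra $\uhat^{\pm}$ presented by the multiparameter quantum Serre relations, in Kac--Moody generality), and the tool you propose for the linear-independence step does not apply: the representations $\widehat{T}^\lambda_\hbar(V)$ of Lemma \ref{lemma: tensor repr x utildeRPhg} are representations of the \textsl{pre-Serre} algebra $\widetilde{U}^{\,\R}_{\!P,\hbar}(\lieg)$ only. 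The Serre elements act nontrivially on them --- e.g.\ $u^F_{ij}.v_\emptyset$ is a nonzero combination of basis tensors $v_J$ of $T_\hbar(V)$ --- so these modules do not factor through $\uRPhg$ and cannot separate elements of, or detect $\hbar$-torsion in, $\uRPhnpm$. This is why Proposition \ref{prop: triangulardecUtilde} is stated for $\widetilde{U}^{\,\R}_{\!P,\hbar}(\lieg)$, and why Proposition \ref{prop: struct-Cart/nilp-subalg.s}\textit{(c)} yields only a \textsl{presentation} of $\uRPhnpm$, not its freeness; note also that the paper identifies the Serre ideal with the radical of the Drinfeld pairing only for $P=DA$, citing Kac, which signals that a from-scratch multiparameter argument is not being taken for granted.

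The paper closes this point differently and more cheaply: by the (logically independent, though later-stated) Theorem \ref{thm: double FoMpQUEAs_P-P'_mutual-deform.s}\textit{(b)}, in the split minimal case at hand $\uRPhg$ is a toral $2$--cocycle deformation of Drinfeld's standard double QUEA; a cocycle deformation changes only the multiplication, not the underlying $\kh$--module, so topological freeness is inherited from the classical $\uhg$, and the same argument applies directly to $\uRPhbpm$ as a deformation of $U_\hbar(\lieb_\pm)$. If you want to salvage your route, the cleanest fix is of the same nature: the explicit formulas in the proof of Theorem \ref{thm: 2cocdef-uPhgd=new-uPhgd} show that monomials in the $E_i$'s (resp.\ $F_i$'s) for the deformed product differ from the undeformed ones by unit scalars in $\kh$, so $\uRPhnpm$ coincides as a $\kh$--module with the corresponding uniparameter quantum nilpotent algebra, for which the PBW theorem is known; without some such input, the independence step in your sketch remains unproven.
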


\pf
 It is proved in  Theorem \ref{thm: double FoMpQUEAs_P-P'_mutual-deform.s}\textit{(b)\/}  later on
 --- in a way  \textsl{independent of whatever follows from here to there}  ---
 that in the  \textsl{split minimal\/}  case the FoMpQUEA  $ \uRPhg $  is just a deformation
 (in a proper sense) of Drinfeld's
%
%
 $ \uhg $   --- in ``double version'', i.e.\ with Cartan subalgebra of rank  $ \, 2\,|I| \, $;
 in particular,  $ \uRPhg $  and  $ \uhg $  have the same  $ \kh $--module  structure.
 Now  $ \uhg $  is known to be topologically free, so the same holds for  $ \uRPhg \, $,
 and then this property is inherited by the subalgebras  $ \uRPhbpm $  too.
 Alternatively, the proof of  Theorem \ref{thm: double FoMpQUEAs_P-P'_mutual-deform.s}\textit{(b)\/}
 also applies directly to  $ \uRPhbpm \, $,
%
%
 proving that the former are suitable deformations of Drinfeld's
%
%
  $ U_\hbar(\lieb_\pm) \, $:  the latter are known
%
%
 to be topologically free, so the same holds for  $ \uRPhbpm $  too.
\epf
%
%
%

\vskip11pt

\begin{free text}  \label{FoMpQUEAs = quasi-doubles Borel's}
 {\bf FoMpQUEAs as quasi-doubles of Borel FoMpQUEAs.}\,
 The analysis carried on from  \S \ref{pre-Borel ---> Borel}  on provides skew-Hopf pairings between Borel FoMpQUEAs
 $ \uRPhbpm $  and  $ \caluRPhbmp \, $.  Following the recipe in  Definition \ref{def_(skew-)Hopf-pairing},
 we can then consider the associated (Drinfeld's) quantum doubles, that we denote by
\begin{equation}  \label{eq: def-qdouble_Borel}
  \begin{aligned}
     \calDRPhgd  \,\;  &  := \;\,  D\big(\, \uRPhbp \, , \caluRPhbm \,, \pi \,\big)  \\
     \calDRPhgs  \,\;  &  := \;\,  D\big(\, \caluRPhbp \, , \uRPhbm \,, \pi \,\big)
  \end{aligned}
\end{equation}
   \indent   By the very definition of Drinfeld's quantum double, there exists an isomorphism of (topological)  $ \kh $--coalge\-bras
  $ \,\; \calDRPhgd \,\cong\, \uRPhbp \!\mathop{\widehat{\otimes}}\limits_\kh\! \caluRPhbm \;\, $.
 Even more, both  $ \, \uRPhbp \, $  and  $ \, \caluRPhbm \, $  embed into  $ \, \calDRPhgd \, $   ---
 via  $ \, u \mapsto u \otimes 1 \, $  and  $ \, \bar{v} \mapsto 1 \otimes \bar{v} \, $,  respectively ---
 as  \textsl{Hopf subalgebras},  and these (Hopf) subalgebras actually generate all of  $ \, \calDRPhgd \, $,
 as a topological algebra.  A similar analysis applies to  $ \, \calDRPhgs \, $.
                                                            \par
   Note that here we apply Lemma \ref{lemma: Borel FoMpQUEAs topol.-free}:  by it,
   $ \, \uRPhbpm \, $  is topologically free, thus also  $ \, \caluRPhbpm \, $  is, hence the products
   $ \, \uRPhbpm \!\mathop{\widehat{\otimes}}\limits_\kh\! \caluRPhbmp \, $  are topologically free too.
                                                            \par
   Our next result is an explicit description of these quantum double Hopf algebras.
\end{free text}

\vskip9pt

\begin{prop}  \label{prop: qdouble-Borel_presentation}
%
%
 With assumptions as above, the quantum double Hopf algebra  $ \calDRPhgd $  in
 \eqref{eq: def-qdouble_Borel}  admits the following presentation:
 it is the unital, associative, topological,  $ \hbar $--adically  complete algebra over  $ \kh $  with generators
 $ \, E_i \, $,  $ \, T_i^+ \, $,  $ \bar{T}^-_j \, $,  $ \bar{F}_j \, $,  (for all  $ \, i , j \in I \, $)
 and relations (for all  $ \, i \, , j \in I \, $)
\begin{equation}  \label{eq: comm-rel's_x_DRPhgd}
 \begin{aligned}
     T_i^+ \, T_j^+  \, = \;  T_j^+ \, T_i^+  \quad ,  \qquad   T_i^+ E_j \, - \, E_j \, T_i^+  \, = \,  +p_{ij} \, E_j  \hskip33pt  \\
      \sum\limits_{k = 0}^{1-a_{ij}} (-1)^k {\left[ { 1-a_{ij} \atop k }
\right]}_{\!q_i} q_{ij}^{+k/2\,} q_{ji}^{-k/2} \, E_i^{1-a_{ij}-k} \, E_j \, E_i^k  \; = \;  0   \qquad  (\, i \neq j \,)   \hskip8pt  \\
     \bar{T}_i^- \, \bar{T}_j^-  \, = \;  \bar{T}_j^- \, \bar{T}_i^-  \quad ,  \qquad   \bar{T}_i^- \bar{F}_j \, - \, \bar{F}_j \, \bar{T}_i^-  \, = \,  -\hbar \, p_{ji} \, \bar{F}_j  \hskip31pt  \\
   \sum\limits_{k = 0}^{1-a_{ij}} (-1)^k {\left[ { 1-a_{ij} \atop k }
\right]}_{\!q_i} q_{ij}^{+k/2\,} q_{ji}^{-k/2} \, \bar{F}_i^k \, \bar{F}_j \, \bar{F}_i^{1-a_{ij}-k}  \; = \;  0   \qquad  (\, i \neq j \,)   \hskip6pt  \\
      \bar{T}_i^- E_j \, - \, E_j \, \bar{T}_i^-  \, = \,  +\hbar \, p_{ji} \, E_j  \quad ,  \qquad
   T_i^+ \bar{F}_j \, - \, \bar{F}_j \, T_i^+  \, = \,  -p_{ij} \, \bar{F}_j   \hskip6pt  \\
      T_i^+ \, \bar{T}_j^-  \, = \;  \bar{T}_j^- \, T_i^+  \quad ,  \qquad
   E_i \, \bar{F}_j \, - \, \bar{F}_j \, E_i  \; = \;  \delta_{i,j} \, \hbar \, {{\; e^{+\hbar \, T_i^+} - \, e^{-\bar{T}_i^-} \;} \over {\; q_i^{+1} - \, q_i^{-1} \;}}   \hskip0pt  \\
 \end{aligned}
\end{equation}
 with Hopf structure given on the above generators (for all  $ \, i \in I \, $)  by
\begin{equation}  \label{eq: Hopf struct x qdouble DRPhg}
 \begin{aligned}
   \Delta\big(E_i\big)  \, = \,  E_i \otimes 1 \, + \, e^{+\hbar \, T_i^+} \otimes E_i  \;\; ,  \quad
    \epsilon\big(E_i\big) \, = \, 0  \;\; ,  \quad   \SS\big(E_\ell\big)  \, = \,  - e^{-\hbar \, T_i^+} E_i  \\
   \Delta\big(T_i^+\big)  \, = \,  T_i^+ \otimes 1 \, + \, 1 \otimes T_i^+  \;\; ,  \quad
    \epsilon\big(T_i^+\big) \, = \, 0  \;\; ,  \quad   \SS\big(T_i^+\big)  \, = \,  -T_i^+  \;\;\quad  \\
   \Delta\big(\bar{T}_i^-\big)  \, = \,  \bar{T}_i^- \otimes 1 \, + \, 1 \otimes \bar{T}_i^-  \,\; ,  \quad
    \epsilon\big(\bar{T}_i^-\big) \, = \, 0  \,\; ,  \quad   \SS\big(\bar{T}_i^-\big)  \, = \,  -\bar{T}_i^-  \;\;\;\quad  \\
   \Delta\big(\bar{F}_i\big)  \, = \,  \bar{F}_i \otimes e^{-\bar{T}_i^-} \, + \, 1 \otimes \bar{F}_i  \;\; ,  \quad
    \epsilon\big(\bar{F}_i\big) \, = \, 0  \;\;\; ,  \qquad   \SS\big(\bar{F}_i\big)  \, = \,  - e^{+\bar{T}_i^-} \bar{F}_i
 \end{aligned}
\end{equation}
   \indent   A similar result provides a likewise presentation of  $ \; \calDRPhgs \, $.
\end{prop}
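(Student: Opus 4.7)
The plan is to leverage directly the definition of Drinfeld's quantum double given in Definition \ref{def_(skew-)Hopf-pairing} together with the Hopf-theoretic structure of the two Borel factors already established. By construction, $\calDRPhgd := D\big(\uRPhbp, \caluRPhbm, \pi\big)$ is generated as a topological $\kh$-algebra by the images of $\uRPhbp$ and $\caluRPhbm$, which embed as Hopf subalgebras; moreover, as a (topological) $\kh$-coalgebra, $\calDRPhgd \cong \uRPhbp \,\widehat{\otimes}_\kh\, \caluRPhbm$. Hence to obtain the claimed presentation it is enough to collect (i) the defining relations of $\uRPhbp$ on the generators $\{T_i^+, E_i\}$, (ii) the defining relations of $\caluRPhbm$ on the generators $\{\bar T_j^-,\bar F_j\}$, and (iii) the ``cross relations'' between elements of the two factors forced by the quotient ideal $\I$ in Definition \ref{def_(skew-)Hopf-pairing}. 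The Hopf structure (\ref{eq: Hopf struct x qdouble DRPhg}) is then immediate: it simply combines the Hopf structures of the two factors, because these embed as Hopf subalgebras.

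For step (i)--(ii), the relevant relations are those given by the presentations of $\uRPhbp$ and $\caluRPhbm$ coming from Definition \ref{def: Borel_FoMpQUEAs} and Definition \ref{def: dual-Borel_FoMpQUEAs} (after passing to the quotients via Proposition \ref{prop: left/right_radical_pairing_pre-Borel}(a)); these account precisely for the first four displayed lines of (\ref{eq: comm-rel's_x_DRPhgd}). For step (iii), one evaluates the defining identity
\[
\textstyle\sum x_{(1)}\, y_{(1)}\, \pi(y_{(2)}, x_{(2)}) \;=\; \sum \pi(y_{(1)}, x_{(1)})\, y_{(2)}\, x_{(2)}
\]
(with $y$ ranging over generators of $\uRPhbp$ and $x$ over generators of $\caluRPhbm$) by substituting the explicit coproducts from (\ref{eq: Hopf struct x qdouble DRPhg}) and the values of $\pi$ on generators, obtained by pushing forward the values of $\widetilde\pi$ from Proposition \ref{prop: pairing_x_pre-Borel}. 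The four non-trivial pairs $(y,x)\in\{T_i^+,E_i\}\times\{\bar T_j^-,\bar F_j\}$ yield respectively the commutation $[T_i^+,\bar T_j^-]=0$, the relation $\bar T_i^- E_j - E_j \bar T_i^- = \hbar\,p_{ji}\,E_j$, the relation $T_i^+\bar F_j - \bar F_j T_i^+ = -p_{ij}\,\bar F_j$, and finally the key commutator $E_i\bar F_j - \bar F_j E_i = \delta_{ij}\hbar\,(e^{+\hbar T_i^+}-e^{-\bar T_i^-})\big/(q_i-q_i^{-1})$. These are exactly the last three lines of (\ref{eq: comm-rel's_x_DRPhgd}); a brief check exploiting that $\pi$ is a skew-Hopf pairing and that it vanishes on many of the mixed terms streamlines the bookkeeping.

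The main obstacle is to argue that the generators listed together with the relations (\ref{eq: comm-rel's_x_DRPhgd}) already determine the full algebra $\calDRPhgd$, i.e.\ that no further relations are needed. Here the crucial ingredient is Lemma \ref{lemma: Borel FoMpQUEAs topol.-free}: since $\uRPhbp$ and $\caluRPhbm$ are topologically free as $\kh$-modules, the coalgebra isomorphism $\calDRPhgd\cong \uRPhbp \,\widehat{\otimes}_\kh\, \caluRPhbm$ gives a canonical topological $\kh$-basis, and the defining identity of the double, applied to \emph{arbitrary} pairs $(y,x)$, reduces inductively to the four generator-level cases just handled, because both $\Delta$ and $\pi$ respect products (via (\ref{eqn:skew-Hpair-1})--(\ref{eqn:skew-Hpair-2})). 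This shows that the algebra generated by $\{E_i,T_i^+,\bar T_j^-,\bar F_j\}$ subject to (\ref{eq: comm-rel's_x_DRPhgd}) surjects onto $\calDRPhgd$, and that this surjection respects the coalgebra basis $\uRPhbp \,\widehat{\otimes}_\kh\, \caluRPhbm$; hence it is an isomorphism.

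An entirely parallel argument, with the roles of the two Borel factors swapped and using the skew-Hopf pairing $\widetilde\pi\colon \calutildeRPhbp \widehat\otimes \utildeRPhbm\to\kh$ of Remark \ref{rmk: sym-pairing} (passed to the quotient as in Proposition \ref{prop: left/right_radical_pairing_pre-Borel}(c)), yields the analogous presentation of $\calDRPhgs$.
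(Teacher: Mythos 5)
Your proposal is correct and follows essentially the same route as the paper: read off the presentations of the two Hopf subalgebras $\uRPhbp$ and $\caluRPhbm$, derive the cross relations by evaluating the defining identity $x_{(1)}y_{(1)}\,\pi(y_{(2)},x_{(2)})=\pi(y_{(1)},x_{(1)})\,y_{(2)}x_{(2)}$ on generator pairs using the values of $\pi$, and obtain the Hopf structure from the fact that both factors embed as Hopf subalgebras. Your additional paragraph arguing that no further relations are needed (reduction of the cross relation to generators via multiplicativity of $\Delta$ and $\pi$, plus the coalgebra identification $\calDRPhgd\cong\uRPhbp\,\widehat{\otimes}_\kh\,\caluRPhbm$ and topological freeness) makes explicit a point the paper leaves implicit in the definition of the double, and is a sound elaboration rather than a different method.
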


\pf
 Recall that we have an isomorphism
  $ \,\; \calDRPhgd \,\cong\, \uRPhbp \!\mathop{\widehat{\otimes}}\limits_\kh\! \caluRPhbm \;\, $
 as (topological)  $ \kh $--coalgebras,  and through it both  $ \, \uRPhbp \, $  and
 $ \, \caluRPhbm \, $  embed into  $ \, \calDRPhgd \, $ ,  \,via  $ \, u \mapsto u \otimes 1 \, $
 and  $ \, \bar{v} \mapsto 1 \otimes \bar{v} \, $  ---   as  \textsl{Hopf subalgebras},  which generate
 $ \, \calDRPhgd \, $,  as a topological algebra.  In particular, as a matter of notation we shall write
 $ \, u \, $  for  $ \, u \otimes 1 \, $ and  $ \, \bar{v} \, $  for  $ \, 1 \otimes \bar{v} \, $.
 From all this it follows that $ \, \calDRPhgd \, $  admits a presentation with generators
 $ \, E_i \, $,  $ \, T_i^+ \, $,  $ \bar{T}^-_j \, $,  $ \bar{F}_j \, $,  ($ \, i , j \in I \, $)
 --- as these generate  $ \, \uRPhbp \, $  and  $ \, \caluRPhbm \, $  ---
 and relations given by the first two lines in  \eqref{eq: comm-rel's_x_DRPhgd}   ---
 because these are the relations among the  $ E_i $'s  and  $ T_i^+ $'s  inside  $ \uRPhbp $  ---
 and the mid two lines in  \eqref{eq: comm-rel's_x_DRPhgd}   --- since these are those among the
 $ \bar{T}^-_j $'s  and  $ \bar{F}_j $'s  inside  $ \caluRPhbm $  ---
 \textsl{plus\/}  the additional relations, given at the end of  Definition \ref{def_(skew-)Hopf-pairing},
 that link the generators  $ E_i $  and  $ T_i^+ $  ($ \, i \in I \, $)  inside  $ \uRPhbp $  with the generators
 $ \bar{T}^-_j $  and  $ \bar{F}_j $  ($ \, j \in I \, $)  inside  $ \caluRPhbm \, $.
 Concerning these last set of relations, direct computation proves that they are given by the last two lines in  \eqref{eq: comm-rel's_x_DRPhgd},  q.e.d.
                                                              \par
   For example, taking  $ \; x = \bar{T}_j^- \, $  and  $ \, y = T_i^+ \, $  we have that
  $$  \displaylines{
   x_{(1)} \, y_{(1)} \, \pi\big(\, y_{(2)} \, , x_{(2)} \big)  \; =   \hfill  \cr
   \hfill   = \;  \bar{T}^- \, T_i^+ \, \pi(\,1\,,1) \, + \, \bar{T}_j^- \, \pi\big(\,T_i^+,1\big) \, + \, T_i^+
   \, \pi\big(\, 1 \, , \bar{T}_j^- \big) \, + \, \pi\big(\, T_i^+ , \bar{T}_j^- \big)  \,\; = \;\,  \bar{T}_j^- \, T_i^+ \, + \, p_{ij}  \cr
   \pi\big(\, y_{(1)} \, , x_{(1)} \big) \, y_{(2)} \, x_{(2)}  \; =   \hfill  \cr
   \hfill   = \;  \pi\big(\, T_i^+ , \bar{T}_j^- \big) \, + \, \pi\big(\, T_i^+ , 1 \big) \, \bar{T}_j^- \, +
   \, \pi\big(\, 1 \, , \bar{T}_j^- \big) \, T_i^+ \, + \, \pi(\,1\,,1) \, T_i^+ \, \bar{T}_j^-  \,\; = \; \,  p_{ij} \, + \, T_i^+ \, \bar{T}_j^-  }  $$
 which yields  $ \, T_i^+ \, \bar{T}_j^- = \bar{T}_j^- \, T_i^+ \, $  for all  $ \, i , j \in I \, $;  
   \hbox{\,similarly, for  $ \; x = \bar{F}_j \, $,  $ \, y = E_i \, $,  we get}
  $$
  \displaylines{
   x_{(1)} \, y_{(1)} \, \pi\big(\, y_{(2)} \, , x_{(2)} \big)  \; = \;  \bar{F}_j \, E_i \, \pi\big(\, 1 \, , e^{-\bar{T}_j^-} \big) \, + \, \bar{F}_j \, e^{\hbar\,T_i^+} \, \pi\big(\, E_i \, , e^{-\bar{T}_j^-} \big) \, +   \hfill  \cr
   \hfill   + \, E_i \, \pi\big(\, 1 \, , \bar{F}_j \big) \, + \, e^{\hbar\,T_i^+} \, \pi\big( \, E_i \, , \bar{F}_j \big)  \; = \;  \bar{F}_j \, E_i \, + \, \delta_{i,j} \, \hbar \, e^{\hbar\,T_i^+} {\big(\, q_i^{+1} - \, q_i^{-1} \big)}^{-1}  \cr
   \pi\big(\, y_{(1)} \, , x_{(1)} \big) \, y_{(2)} \, x_{(2)}  \,\; = \;\,  \pi\big(\, E_i \, , \bar{F}_j \big) \, e^{-\bar{T}_j^-} + \, \pi\big(\, E_i \, , 1 \big) \, \bar{F}_j \, +   \hfill  \cr
   \hfill   + \, \pi\big(\, e^{+\hbar\,T_i^+} \! , \bar{F}_j \big) \, E_i \, e^{-\bar{T}_j^-} \, + \, \pi\big(\, e^{+\hbar\,T_i^+} \! , 1 \big) \, E_i \, \bar{F}_j  \,\; = \;\,  \delta_{i,j} \, \hbar \, e^{-\bar{T}_j^-} {\big(\, q_i^{+1} - \, q_i^{-1} \big)}^{-1} \, + \, E_i \, \bar{F}_j  }
   $$
 which yields the relation
 $ \; \displaystyle{ E_i \, \bar{F}_j \, - \, \bar{F}_j \, E_i  \, = \,  \delta_{i,j} \, \hbar \, \frac{\; e^{\hbar\,T_i^+} - e^{\hbar\,T_i^-} \;}{\; q_i^{+1} - q_i^{-1} \; } } \; $
 for all  $ \, i , j \in I \, $.
 \vskip3pt
   Finally, the Hopf structure is given once we know how it looks on generators, hence it is given by
   \eqref{eq: Hopf struct x qdouble DRPhg}  because  $ \uRPhbp $  and  $ \caluRPhbm $  are  \textsl{Hopf}  subalgebras.
                                                                                  \par
   A parallel argument yields a similar presentation for  $ \calDRPhgs \, $.
\epf

\vskip3pt

   We still need some auxiliary ingredients:

\vskip9pt

\begin{definition}  \label{def: qdouble-QUEA}   {\ }
 \vskip3pt
   \textit{(a)}\;  We denote by  $ \, \DRPhgd $  the  $ \hbar $--adic  completion of the  $ \kh $--subalgebra
   generated in  $ \calDRPhgd $  by the set
   $ \; {\big\{ E_i \, , \, T_i^+ , \, T_i^- = \hbar^{-1} \bar{T}_i^- , \, F_i = \hbar^{-1} \bar{F}_i \,\big\}}_{i \in I} \, $.
 \vskip3pt
   \textit{(b)}\;  We denote by  $ \, \DRPhgs $  the  $ \hbar $--adic  completion of the  $ \kh $--subalgebra
   generated in  $ \calDRPhgs $  by the set
   $ \; {\big\{ E_i = \hbar^{-1} \bar{E}_i \, , \, T_i^+ = \hbar^{-1} \bar{T}_i^+ , \, T_i^- , \, F_i \,\big\}}_{i \in I} \, $.   \hfill  $ \diamondsuit $
\end{definition}

\vskip7pt

   We are finally ready for the main result we are looking for:

\vskip11pt

\begin{theorem}  \label{thm: double-cong-uRPhg_split}
 Let  $ \, A := {\big(\, a_{i,j} \big)}_{i, j \in I} \, $  be a generalized symmetrizable Cartan matrix, and let
 $ \, P := {\big(\, p_{i,j} \big)}_{i, j \in I} \in M_n\big(\, \kh \,\big) \, $
 be a matrix of Cartan type with associated Cartan matrix $ A \, $.  With assumptions as above, both
 $ \, \DRPhgd $  and  $ \, \DRPhgs $  are topological,  $ \hbar $--adically  complete Hopf\/  $ \kh $--algebras,
 which are isomorphic to the FoMpQUEA  $ \uRPhg $  given in  Definition \ref{def: Mp-Uhgd}.
\end{theorem}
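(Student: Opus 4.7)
The plan is to establish the isomorphism $\uRPhg \cong \DRPhgd$ by constructing mutually inverse Hopf algebra homomorphisms, using the explicit presentation of $\calDRPhgd$ from Proposition \ref{prop: qdouble-Borel_presentation} on one side and the defining presentation of $\uRPhg$ from Definition \ref{def: Mp-Uhgd} on the other. The case of $\DRPhgs$ will follow by the symmetric argument, swapping the roles of the positive and negative Borel sides.

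The first key step will be to construct a continuous Hopf algebra morphism $\Phi \colon \uRPhg \to \calDRPhgd \otimes_\kh \khp$ sending $E_i \mapsto E_i$, $T_i^+ \mapsto T_i^+$, $T_i^- \mapsto \hbar^{-1}\bar{T}_i^-$, $F_i \mapsto \hbar^{-1}\bar{F}_i$. To verify well-definedness, I need to check that every defining relation of $\uRPhg$ is satisfied by the images. Each such relation follows from its counterpart in \eqref{eq: comm-rel's_x_DRPhgd} by substituting $\bar{T}_i^- = \hbar T_i^-$ and $\bar{F}_i = \hbar F_i$ and then dividing by the appropriate power of $\hbar$. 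For instance $\bar{T}_i^- \bar{F}_j - \bar{F}_j \bar{T}_i^- = -\hbar p_{ji} \bar{F}_j$ becomes, after dividing by $\hbar^2$, the relation $T_i^- F_j - F_j T_i^- = -p_{ji} F_j = -\alpha_j(T_i^-) F_j$; the mixed relation $E_i \bar{F}_j - \bar{F}_j E_i = \delta_{ij}\,\hbar\,(e^{\hbar T_i^+} - e^{-\bar{T}_i^-})/(q_i^{+1} - q_i^{-1})$ becomes, after dividing by $\hbar$, exactly the fourth relation in \eqref{eq: comm-rel's_x_uPhg}; and the quantum Serre relations on the $\bar{F}_i$ yield those on the $F_i$ after dividing by $\hbar^{2-a_{ij}}$. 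All such cancellations are legitimate because $\calDRPhgd \cong \uRPhbp \widehat{\otimes}_\kh \caluRPhbm$ is topologically free over $\kh$ by Lemma \ref{lemma: Borel FoMpQUEAs topol.-free}, hence torsion-free. The Hopf structure is compatible as well: $\Delta(T_i^-) = \hbar^{-1}\Delta(\bar{T}_i^-) = T_i^- \otimes 1 + 1 \otimes T_i^-$ and $\Delta(F_i) = F_i \otimes e^{-\hbar T_i^-} + 1 \otimes F_i$, matching \eqref{eq: coprod_x_uPhg}, with $e^{-\hbar T_i^-}$ converging in the $\hbar$-adic topology of the appropriate completion; the counit and antipode are handled similarly.

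The image of $\Phi$ is precisely the $\kh$-subalgebra of $\calDRPhgd \otimes_\kh \khp$ generated by $\{E_i, T_i^+, T_i^-, F_i\}_{i \in I}$, whose $\hbar$-adic completion is $\DRPhgd$ by Definition \ref{def: qdouble-QUEA}. Since $\uRPhg$ is $\hbar$-adically complete and $\Phi$ is continuous, $\Phi$ factors through $\DRPhgd$, giving a continuous, surjective Hopf morphism $\Phi \colon \uRPhg \twoheadrightarrow \DRPhgd$. For injectivity I would invoke the triangular decomposition of $\uRPhg$ from Theorem \ref{thm: triang-decomp.'s}: the restrictions of $\Phi$ to the three factors $\uRPhnm$, $U_\hbar(\lieh)$, $\uRPhnp$ land respectively in the $\hbar$-adic completion of the $\kh$-subalgebra generated by the unbarred $F_i$, in $U_\hbar(\lieh) \subset \uRPhbp$, and in $\uRPhbp$, and each of these restrictions is injective by inspection of generators-and-relations (the first by identifying $\uRPhnm$ as the "upgrade" of the corresponding sub-QFSHA of $\caluRPhbm$). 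The coalgebra tensor decomposition $\calDRPhgd \cong \uRPhbp \widehat{\otimes} \caluRPhbm$ then ensures that the three images remain linearly independent in $\DRPhgd$, so $\Phi$ is injective on the triangular factors, and hence on all of $\uRPhg$ via multiplication.

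The main obstacle I expect is in controlling the localization-and-completion procedure that defines $\DRPhgd$: one needs to verify that the $\kh$-subalgebra generated by $\hbar^{-1}\bar{T}_i^-$ and $\hbar^{-1}\bar{F}_i$ inside $\calDRPhgd \otimes_\kh \khp$, after $\hbar$-adic completion, yields a topologically free $\kh$-module with no spurious $\hbar$-divisible elements, so that relations can be read off cleanly. The cleanest route will be to run the triangular decomposition argument in parallel with the construction of $\Phi$: the canonical topological $\kh$-bases of $\uRPhnm$, $U_\hbar(\lieh)$, $\uRPhnp$ obtained from the analysis in Section \ref{subsec: further-results} will pull back along $\Phi$ to a compatible family of elements whose span, after completion, exhausts $\DRPhgd$ and inherits topological freeness from the ambient $\calDRPhgd \otimes_\kh \khp$. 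Once this is secured, the conclusion follows from the construction of the inverse morphism $\Psi$, which is obtained by applying the universal property of the quantum double $\calDRPhgd$ to the Hopf subalgebras $\uRPhbpm \subset \uRPhg$ and the skew-Hopf pairing between them granted by Proposition \ref{prop: left/right_radical_pairing_pre-Borel}(c); $\Psi$ and $\Phi$ agree with the identity on generators, so the isomorphism follows.
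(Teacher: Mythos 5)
Your proposal is correct and takes essentially the same route as the paper: the paper's proof likewise rests on the presentation of $\calDRPhgd$ from Proposition \ref{prop: qdouble-Borel_presentation}, rewritten in the rescaled generators $T_i^- = \hbar^{-1}\bar{T}_i^-$, $F_i = \hbar^{-1}\bar{F}_i$ of Definition \ref{def: qdouble-QUEA}, and on observing that this presentation (together with the Hopf formulas) coincides with the one defining $\uRPhg$ --- which is exactly what your pair of mutually inverse morphisms makes explicit. Note only that your final construction of $\Psi$ from the presentation/universal property of the double already gives bijectivity on its own, so the intermediate injectivity sketch via triangular decomposition (the vaguest part of your write-up) is not needed.
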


\pf
 The claim follows directly from the construction of  $ \, \DRPhgd \, $  and  $ \, \DRPhgs \, $,
 \,and from  Proposition \ref{prop: qdouble-Borel_presentation}  above: in fact, all this yields a
 presentation for  $ \, \DRPhgd \, $  and one for  $ \, \DRPhgs \, $   --- with generating set
 $ \; {\big\{ E_i \, , \, T_i^+ , \, T_i^- , \, F_i \,\big\}}_{i \in I} \, $,  in both cases ---
 that just coincide, hence these two algebras are isomorphic.
 At the same time, the formulas for the Hopf structure in  $ \, \DRPhgd \, $  and  $ \, \DRPhgs \, $
 show that these algebras inherit the Hopf structure as well.  Comparing this presentation with the one defining
 $ \uRPhg $  one sees that they coincide again, whence the last part of the claim.
\epf

\vskip9pt

\begin{free text}  \label{constr.-double-crossproduct}
 {\bf Construction as double cross products.}\,  In this subsection we implement an alternative construction
 of  $ \uRPhg $  as a subalgebra
 of a  \textit{double cross product}, which is also an alternative way of constructing a quantum double.
 We follow Majid \cite[\S 7.2]{Mj} for the description of the double cross product.
 We begin by introducing the construction in the general context of matched pairs
 of Hopf algebras.
\end{free text}

\vskip7pt

\begin{definition}  \label{def:matched pairs}
 \cite[Definition 7.2.1]{Mj}  Two bialgebras or Hopf algebras  $ A $  and  $ H $  form a
 right-left matched pair if  $ H $  is a right  $ A $--module  coalgebra and  $ A $  is a left  $ H $--module  coalgebra with mutual actions
%
%
  $ \,\; \triangleleft\,: H \otimes A \longrightarrow H \; $,  $ \,\; \triangleright\,: H \otimes A \longrightarrow A \;\, $
 that obey the compatibility conditions
%
%
  $$
  \displaylines{
   (hg) \triangleleft a  \, = \,  \big( h \triangleleft \big( g_{(1)} \triangleright a_{(1)} \big) \big)\big( g_{(2)} \triangleleft a_{(2)} \big)  \quad ,
   \qquad \qquad   1 \triangleleft a  \, = \,  \epsilon(a)  \cr
   h \triangleright (ab)  \, = \,  \big( h_{(1)} \triangleright a_{(1)} \big)\big( (h_{(2)} \triangleleft a_{(2)} \big) \triangleright b \,\big)  \quad ,
   \qquad \qquad  h \triangleright 1  \, = \,  \epsilon(h)  \cr
   \big( h_{(1)} \triangleleft a_{(1)} \big) \otimes \big( h_{(2)} \triangleright a_{(2)} \big)  \; = \;
   \big( h_{(2)} \triangleleft a_{(2)} \big) \otimes \big( h_{(1)} \triangleright a_{(1)} \big)  }
   $$
%
%
\end{definition}

\vskip7pt

\begin{theorem}  \label{thm:crossproduct}
 \cite[Theorem 7.2.2]{Mj}
                                                                    \par
   Given a matched pair of bialgebras  $ (A,H) \, $,  there exists a  \textsl{double cross product bialgebra}
   $ \, A \bowtie H \, $  built on the vector space  $ \, A \otimes H \, $  with product
  $$
  (a\otimes h) \cdot (b \otimes g)  \; := \;  a \, \big( h_{(1)} \triangleleft b_{(1)} \big) \otimes \big( h_{(2)} \triangleright b_{(2)} \big) \, g
  \eqno  \forall \; a, b \in A \, , \, h, g \in H  $$

\noindent
 and tensor product unit, counit and coproduct maps.  Moreover,  $ A $ and  $ H $  are subbialgebras
 via the canonical inclusions, and  $ \, A \bowtie H \, $  is generated by them with relations
  $$
  h \cdot a  \; = \;  \big( h_{(1)} \triangleleft b_{(1)} \big) \otimes \big( h_{(2)} \triangleright b_{(2)} \big) \eqno \forall \; h \in H \, , \, a \in A
  $$
 If  $ A $  and  $ H $  are  \textsl{Hopf algebras},  then so is their double cross product, with antipode
  $$
  \SS(a \otimes h)  \; = \,  \big( 1 \otimes \SS(h) \big) \big( \SS(a) \otimes 1 \big)  \; = \;
  \big( \SS\big(h_{(2)}\big) \triangleleft \SS\big(b_{(2)}\big) \big) \otimes \big( \SS\big(h_{(1)}\big) \triangleright \SS\big(b_{(1)}\big) \big)   \eqno \square
  $$
\end{theorem}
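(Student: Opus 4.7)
The plan is to verify the bialgebra (and eventually Hopf) axioms on the vector space $A \otimes H$ directly from the matched pair compatibilities in Definition \ref{def:matched pairs}. First I would check that the prescribed product is associative and unital with unit $1 \otimes 1$. Expanding $\bigl((a \otimes h)(b \otimes g)\bigr)(c \otimes k)$ and $(a \otimes h)\bigl((b \otimes g)(c \otimes k)\bigr)$, the two expressions match after repeated application of the module coalgebra conditions on $\triangleleft$ and $\triangleright$ together with the first two matched-pair identities on $(hg) \triangleleft a$ and $h \triangleright (ab)$; the unit axiom reduces to $1 \triangleleft a = \epsilon(a)$ and $h \triangleright 1 = \epsilon(h)$.

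Next, equipping $A \otimes H$ with the tensor product coalgebra structure, I would verify compatibility with the new product. The counit axiom is immediate from $\epsilon(h \triangleleft a) = \epsilon(h)\epsilon(a)$ and $\epsilon(h \triangleright a) = \epsilon(h)\epsilon(a)$, which follow from the module coalgebra axioms. The decisive check is that
\begin{equation*}
   \Delta\bigl((a \otimes h)(b \otimes g)\bigr) \; = \; \Delta(a \otimes h)\,\Delta(b \otimes g).
\end{equation*}
Expanding the left-hand side uses the coproducts of $h_{(1)} \triangleleft b_{(1)}$ and $h_{(2)} \triangleright b_{(2)}$, while the right-hand side is expanded via the product formula. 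Matching the two requires precisely the third compatibility
\begin{equation*}
  \big( h_{(1)} \triangleleft a_{(1)} \big) \otimes \big( h_{(2)} \triangleright a_{(2)} \big)  \; = \;  \big( h_{(2)} \triangleleft a_{(2)} \big) \otimes \big( h_{(1)} \triangleright a_{(1)} \big),
\end{equation*}
applied to a ``middle'' pair of tensor factors, in combination with the module coalgebra conditions. I expect this to be the main obstacle: the three compatibilities in Definition \ref{def:matched pairs} have been tailored precisely so that this single identity goes through, and the bookkeeping is delicate.

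For the presentation claim, the maps $a \mapsto a \otimes 1$ and $h \mapsto 1 \otimes h$ are bialgebra embeddings because the actions are trivial when a unit is involved, and the cross relation $h \cdot a = (h_{(1)} \triangleleft a_{(1)}) \otimes (h_{(2)} \triangleright a_{(2)})$ is exactly the product formula applied to $(1 \otimes h)(a \otimes 1)$. Since $a \otimes h = (a \otimes 1)(1 \otimes h)$, the images of $A$ and $H$ generate $A \bowtie H$ subject to this relation, which proves the universal presentation.

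Finally, in the Hopf case, I would take $\SS$ as defined and verify the antipode axiom $m(\SS \otimes \id)\Delta = \eta\epsilon = m(\id \otimes \SS)\Delta$ on the generating subalgebras. On $a \otimes 1$ and $1 \otimes h$ it reduces to $\SS_A$ and $\SS_H$ being antipodes; extending to arbitrary $a \otimes h = (a \otimes 1)(1 \otimes h)$ by the expected anti-multiplicativity then forces the stated formula. The equivalence of the two expressions for $\SS(a \otimes h)$ is a direct consequence of the cross relation applied to $\SS(h) \otimes \SS(a)$, read back through the matched pair identities.
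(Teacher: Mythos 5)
The paper offers no proof of this statement: it is imported verbatim from Majid's book (Theorem 7.2.2 of [Mj]) and closed with a \qed, so there is no in-paper argument for your attempt to diverge from. Your sketch reproduces the standard direct verification underlying Majid's proof, and its outline is sound: associativity and unitality follow from the first two matched-pair identities together with unitality of the actions and the normalization conditions $1\triangleleft a=\epsilon(a)$, $h\triangleright 1=\epsilon(h)$; multiplicativity of the tensor coproduct uses the module-coalgebra axioms plus the third (symmetry) compatibility of Definition \ref{def:matched pairs}, exactly as you say; and the presentation claim comes from the factorization $a\otimes h=(a\otimes 1)(1\otimes h)$ together with the straightening relation, which is the product $(1\otimes h)(a\otimes 1)$.

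Two cautions. First, the antipode step as you phrase it is logically incomplete: the antipode axiom is a convolution identity, so one cannot check it on algebra generators and ``extend by anti-multiplicativity'' unless one has already proved that the proposed $\SS$ is an algebra anti-morphism (or invokes the standard lemma that an antipode need only be verified on a generating subcoalgebra once anti-multiplicativity is established). In fact no such detour is needed: since $\Delta$ is the tensor coproduct and $a\otimes h=(a\otimes 1)(1\otimes h)$, one computes directly
\[
\SS\big(a_{(1)}\otimes h_{(1)}\big)\,\big(a_{(2)}\otimes h_{(2)}\big)
=\big(1\otimes \SS(h_{(1)})\big)\big(\SS(a_{(1)})\,a_{(2)}\otimes 1\big)\big(1\otimes h_{(2)}\big)
=\epsilon(a)\,\epsilon(h)\;1\otimes 1 \;,
\]
and symmetrically for $m\circ(\id\otimes\SS)\circ\Delta$, using only that $A\otimes 1$ and $1\otimes H$ are subalgebras of $A\bowtie H$ and that $\SS_A$, $\SS_H$ are antipodes. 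Second, mind the typing of the actions: with the conventions of Definition \ref{def:matched pairs} one has $h\triangleleft a\in H$ and $h\triangleright a\in A$, so the product must read $a\,(h_{(1)}\triangleright b_{(1)})\otimes (h_{(2)}\triangleleft b_{(2)})\,g$; the formula displayed in the statement (which you copied) has the two actions interchanged, and $b$ where $a$ should stand in the cross relation and in the antipode formula. Your verification should be carried out with the correctly typed formulas, as in Majid; with that adjustment the proposal is correct in outline.
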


\vskip9pt

\begin{free text}  \label{Hopf-pairings ==> double-cross-products}
 {\bf From skew-Hopf pairings to double cross products.}\,
 Let  $ R $  be a ring, let  $ A \, $,  $ H $  be two  $ R $--bialgebras
 and let  $ \, \eta : H \otimes A \longrightarrow R \, $  be a skew-Hopf pairing which is convolution invertible.
 Then  $ H $  is a right  $ A $--module  coalgebra and  $ A $  is a left  $ H $--module  coalgebra via the actions
  $$
  h \triangleright a  \; := \;  h_{(2)} \, \eta^{-1}\big( h_{(1)} , a_{(1)} \big) \, \eta\big( h_{(3)}, a _{(2)} \big) \;\; ,
  \quad  h \triangleleft a  \; := \;  a_{(2)} \, \eta^{-1}\big( h_{(1)} , a_{(1)} \big) \, \eta\big( h_{(2)} , a_{(3)} \big)
  $$
 for all  $ \, h \in H \, $  and  $ \, a \in A \, $.  In particular, then, there exists a double cross product bialgebra
 $ \, A \bowtie H \, $  built upon  $ \, A \otimes H \, $;  \,as we know, it has the tensor product unit, counit and coproduct,
 while its product now explicitly reads, in terms of the pairing, as follows   --- see \cite[Example 7.2.7]{Mj}:
  $$  (a \otimes h) \cdot (b \otimes g)  \; := \;  \eta^{-1}\big( h_{(1)} , b_{(1)} \big) \,
  a \, b_{(2)} \otimes h_{(2)} \, g \, \eta\big( h_{(3)} , b_{(3)} \big)  $$
 In addition, when both  $ A $  and  $ H $  are Hopf algebras then such is  $ \, A \bowtie H \, $  as well.
\end{free text}

\vskip9pt

\begin{free text}  \label{FoMpQUEAs-double-crossproducts}
 {\bf FoMpQUEAs as double cross products.}\,  Using the skew-Hopf pairing between our
 Borel FoMpQUEAs given by Proposition \ref{prop: left/right_radical_pairing_pre-Borel},  namely
  $$  \pi : \uRPhbp \,\widehat{\otimes}\, \caluRPhbm \relbar\joinrel\relbar\joinrel\longrightarrow \kh
   \quad  \text{and}  \quad
      \pi : \caluRPhbp \,\widehat{\otimes}\, \uRPhbm \relbar\joinrel\relbar\joinrel\longrightarrow \kh  $$
 we may apply the general construction in  \S \ref{Hopf-pairings ==> double-cross-products}  above and define two Hopf algebras
  $$  \caluRPhbm \bowtie \uRPhbp  \qquad \text{and} \qquad  \uRPhbm \bowtie \caluRPhbp  $$
   \indent   Following the recipe in  \S \ref{Hopf-pairings ==> double-cross-products},
   the actions of  $ \caluRPhbm $  on $ \uRPhbp $  and of  $ \uRPhbp $  on  $ \caluRPhbm $
   via the skew-Hopf paring  $ \pi $  are given by (for all  $ \, i , j \in I \, $)
%
%
  $$  \displaylines{
   T_i^+ \triangleright \bar{T}_j^- \, = \, 0  \quad ,   \qquad  T_i^+ \triangleleft \bar{T}_j^- \, = \, 0  \quad ,   \qquad
     T_i^+ \triangleleft \bar{F}_j \, = \, 0  \quad ,   \qquad  E_i \triangleright \bar{T}_j^- \, = \, 0  \cr
   T_i^+ \triangleright \bar{F}_j \, = \, -
%
%
 p_{ij} \bar{F}_j  \quad ,   \qquad \qquad \qquad  E_i \triangleleft \bar{T}_j^- \, = \, -\hbar \, p_{ij} E_i  \cr
   E_i \triangleright \bar{F}_j  \; = \;  \delta_{i{}j} \, {{\; \hbar \, \big( 1 - e^{-\bar{T}_j^-} \big) \;} \over {\; q_i^{+1} - \, q_i^{-1} \;}}  \quad ,
   \quad \qquad  E_i \triangleleft \bar{F}_j  \; = \; \delta_{i{}j} \, {{\; \hbar \, \big( e^{+\hbar \, T_i^+} \! - 1 \big)\;} \over {\; q_i^{+1} - \, q_i^{-1} \;}}  \cr
   \qquad \qquad \qquad   X \triangleright 1 \, = \, 0  \quad ,  \quad \qquad  1 \triangleleft X \, = \, 0
   \qquad \qquad  \forall \;\;\; X \in {\big\{ E_i , T_i^+ , \bar{F}_i , \bar{T}_i^- \big\}}_{i\in I}  }
   $$
 It is clear that these formulae completely define the cross product structure on
 $ \, \caluRPhbm \bowtie \uRPhbp \, $.  For example, let us compute
 $ \, E_i \triangleleft \bar{F}_j \, $  explicitly.  Set  $ \, L_j = e^{-\bar{T}_j^-} \, $  and $ \, K_i = e^{\bar{T}_i^+} \, $:
 \,then computations give

  $$  \displaylines{
   E_i \triangleright \bar{F}_j  \; = \;  {\big( \bar{F}_j \big)}_{(2)} \,\eta^{-1}\big( {(E_i)}_{(1)} ,
   {\big( \bar{F}_j \big)}_{(1)} \big) \, \eta\big( {(E_i)}_{(2)} , {\big( \bar{F}_j \big)}_{(3)} \big)  \; =   \hfill  \cr
   \quad   = \;  L_j \, \eta^{-1}\big( E_i , \bar{F}_j \big) \, \eta(1,L_j) \, + \, L_j \,
   \eta^{-1}\big( K_i , \bar{F}_j \big) \, \eta(E_i,L_j) \, + \, \bar{F}_j \, \eta^{-1}(E_i,1) \, \eta(1,L_j) \, +   \hfill  \cr
   \quad \quad   + \, \bar{F}_j \, \eta^{-1}(K_i,1) \, \eta(E_i,L_j) \, + \, 1 \, \eta^{-1}(E_i, 1) \, \eta\big(1,\bar{F}_j\big) \, +
   \, 1 \, \eta^{-1}(K_i, 1) \, \eta\big( E_i , \bar{F}_j \big)  \; =   \hfill  \cr
   \quad \quad \quad   = \;  L_j \, \eta^{-1}\big(E_i , \bar{F}_j \big) \, + \, \eta\big( E_i , \bar{F}_j \big)  \; = \;
   L_j \, \eta\big( E_i , \SS\big(\bar{F}_j\big) \big) \, + \, \eta\big( E_i , \bar{F}_j \big)  \; =   \hfill  \cr
   \qquad \qquad   = \;  L_j \, \eta\big( E_i , -\bar{F}_j L_j^{-1} \big) \, + \, \eta\big( E_i , \bar{F}_j \big)  \; = \;
   (1-L_j) \, \eta\big( E_i , \bar{F}_j \big)  \; = \;  \delta_{i{}j} \, {{\;\hbar \, (1-L_j)\;} \over {\; q_i^{+1} - \, q_i^{-1} \;}}   \hfill  }  $$
   \indent   Now, the formulae above show that actually even  $ \, \big( \uRPhbm , \uRPhbp \big) \, $
   is indeed a matched pair of Hopf algebras, with actions uniquely induced in the
   obvious way from the actions for the pair  $ \, \big( \caluRPhbm , \uRPhbp \big) \, $
 which are explicitly given by
%
%
  $$  \displaylines{
   T_i^+ \triangleright T_j^- \, = \, 0  \quad ,   \qquad  T_i^+ \triangleleft T_j^- \, = \, 0  \quad ,   \qquad
     T_i^+ \triangleleft F_j \, = \, 0  \quad ,   \qquad  E_i \triangleright T_j^- \, = \, 0  \cr
   T_i^+ \triangleright F_j \, = \, -p_{ij} F_j  \quad ,   \qquad \qquad \qquad  E_i \triangleleft T_j^- \, = \, -p_{ij} E_i  \cr
   E_i \triangleright F_j  \; = \;  \delta_{i{}j} \, {{\; \big( 1 - e^{-\hbar \, T_j^-} \big) \;} \over {\; q_i^{+1} - \, q_i^{-1} \;}}  \quad ,
   \quad \qquad  E_i \triangleleft F_j  \; = \; \delta_{i{}j} \, {{\; \big( e^{+\hbar \, T_i^+} \! - 1 \big)\;} \over {\; q_i^{+1} - \, q_i^{-1} \;}}  \cr
   \qquad \qquad \qquad   Y \triangleright 1 \, = \, 0  \quad ,  \quad \qquad  1 \triangleleft Y \, = \, 0
   \qquad \qquad  \forall \;\;\; Y \in {\big\{ E_i , T_i^+ , F_i , T_i^- \big\}}_{i\in I}  }  $$
   \indent   Therefore, a well-defined double cross product  $ \, \uRPhbm \bowtie \uRPhbp \, $  exists,
   which is a Hopf algebra containing both  $ \uRPhbp $  and  $ \uRPhbm $  as Hopf subalgebras.
                                                              \par
   With a similar situation as for  Theorem \ref{thm: double-cong-uRPhg_split},  we may then obtain our
   FoMpQUEA  $ \uRPhg $ as a double cross product Hopf algebra, namely the following holds:
\end{free text}

\vskip9pt

\begin{theorem}  \label{thm: double-cong-uRPhg_split-crossproduct}
 Let  $ \, A := {\big(\, a_{i,j} \big)}_{i, j \in I} \, $  be a generalized symmetrizable Cartan matrix, and let
 $ \, P := {\big(\, p_{i,j} \big)}_{i, j \in I} \in M_n\big(\, \kh \,\big) \, $  be a matrix of Cartan type with associated Cartan matrix $ A \, $.
 With assumptions as above,  $ \, \uRPhbm \bowtie \uRPhbp \, $  is a topological,  $ \hbar $--adically  complete Hopf\/  $ \kh $--algebra,
 which is isomorphic to the FoMpQUEA  $ \uRPhg $  given in  Definition \ref{def: Mp-Uhgd}.
 \qed
\end{theorem}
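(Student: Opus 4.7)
My plan is to proceed in direct parallel with the proof of Theorem \ref{thm: double-cong-uRPhg_split}, replacing the quantum double construction (via Drinfeld's recipe in Definition \ref{def_(skew-)Hopf-pairing}) with the double cross product construction of Theorem \ref{thm:crossproduct}. The heart of the argument is to verify that the matched pair structure on $\big(\uRPhbm,\uRPhbp\big)$ is well defined, and then to read off the cross-commutation relations in $\uRPhbm \bowtie \uRPhbp$ and identify them with the defining relations of $\uRPhg$.

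First, I would establish the matched pair structure. The actions $\triangleright$ and $\triangleleft$ are prescribed on generators by the explicit formulae displayed in \S\ref{FoMpQUEAs-double-crossproducts}. I would check that these actions respect the defining relations of $\uRPhbpm$ (the quantum Serre relations, the torus commutativity, and the mixed commutation relations involving $T_i^\pm$ with $E_j$ or $F_j$); this is a straightforward but somewhat lengthy bookkeeping, essentially because the actions are induced from a genuine skew-Hopf pairing at the level of $\caluRPhbmp$ via Proposition \ref{prop: left/right_radical_pairing_pre-Borel}, and the rescaling $\bar{F}_j = \hbar F_j$, $\bar{T}_j^- = \hbar T_j^-$ only affects scalar factors. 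Since $\caluRPhbmp$ sits $\hbar$-adically densely in $\uRPhbpm$ and the prescribed actions are continuous, the compatibility axioms in Definition \ref{def:matched pairs} can be transferred from the barred to the unbarred generators. This yields genuinely a matched pair $\big(\uRPhbm,\uRPhbp\big)$ of topological Hopf $\kh$-algebras.

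Next, Theorem \ref{thm:crossproduct} produces the Hopf algebra $\uRPhbm \bowtie \uRPhbp$, built on the $\hbar$-adically completed tensor product $\uRPhbm \,\widehat{\otimes}\, \uRPhbp$, into which both $\uRPhbm$ and $\uRPhbp$ embed as Hopf subalgebras. I would then read off the cross-commutation relation $h\cdot a = \big(h_{(1)}\triangleleft a_{(1)}\big)\otimes\big(h_{(2)}\triangleright a_{(2)}\big)$ for each choice of generators $h \in \{F_j, T_j^-\}$ and $a \in \{E_i, T_i^+\}$. Using $\Delta(E_i) = E_i\otimes 1 + e^{\hbar T_i^+}\otimes E_i$, $\Delta(F_j) = F_j\otimes e^{-\hbar T_j^-} + 1\otimes F_j$, and the explicit actions, a direct calculation (analogous to the one displayed in the proof of Proposition \ref{prop: qdouble-Borel_presentation} after rescaling by $\hbar$) yields
\[
T_i^+ \, T_j^- \, = \, T_j^- \, T_i^+  \,\; , \qquad
E_i \, F_j \, - \, F_j \, E_i  \,\; = \;\,  \delta_{i,j}\,\frac{\,e^{+\hbar\,T_i^+} - e^{-\hbar\,T_i^-}\,}{\,q_i^{+1}-q_i^{-1}\,}
\]
together with $T_i^+ F_j - F_j T_i^+ = -p_{ij}\,F_j$ and $T_i^- E_j - E_j T_i^- = +p_{ji}\,E_j$.

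Finally, I would compare presentations. The relations just derived, combined with those internal to $\uRPhbp$ and $\uRPhbm$, form exactly the defining set of relations for $\uRPhg$ from Definition \ref{def: Mp-Uhgd} (split minimal case); moreover the tensor-product coproduct on $\uRPhbm \bowtie \uRPhbp$ restricts to the formulae \eqref{eq: coprod_x_uPhg}--\eqref{eq: antipode_x_uPhg} on generators. This establishes a topological Hopf isomorphism $\uRPhbm \bowtie \uRPhbp \,\cong\, \uRPhg$ in the split minimal case. The passage to a general realization $\R$ then proceeds exactly as in the second proof of Theorem \ref{thm: form-MpQUEAs_are_Hopf}: lift $\R$ to a split realization $\dot{\R}$ via Lemma \ref{lemma: split-lifting}, use functoriality (Proposition \ref{prop: functor_R->uRPhg}) and Proposition \ref{prop: central-Hopf-extens_FoMpQUEAs} to descend the isomorphism to a quotient by a central Hopf subalgebra. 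The main obstacle throughout is bookkeeping: carefully checking that the actions, initially defined only on generators and originating from the pairing on the subalgebras $\caluRPhbmp$ (valued, a priori, in $\khp$ rather than $\kh$), extend consistently and continuously to the full completed Borel algebras, so that the resulting double cross product lives over $\kh$ and is $\hbar$-adically complete.
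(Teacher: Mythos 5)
Your proposal is correct and follows essentially the same route as the paper: the actions induced by the skew-Hopf pairing of Proposition \ref{prop: left/right_radical_pairing_pre-Borel} make $\big(\uRPhbm,\uRPhbp\big)$ a matched pair (transferring from the rescaled generators of $\caluRPhbmp$), the double cross product of Theorem \ref{thm:crossproduct} then contains both Borels as Hopf subalgebras, and comparing the resulting cross-commutation relations and coproduct formulas with Definition \ref{def: Mp-Uhgd} identifies it with $\uRPhg$, exactly as the paper does by analogy with Theorem \ref{thm: double-cong-uRPhg_split}. Note only that the statement (``with assumptions as above'') concerns the split minimal case, so your final paragraph on lifting to a general realization, while consistent with the paper's later treatment, is not needed for this theorem.
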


\vskip9pt

\begin{obs}
 Here again, it is worth pointing out that the procedure we followed above to construct  $ \, \uRPhbm \bowtie \uRPhbp \, $  follows a general recipe.  Namely, starting with a QUEA  $ U_\hbar $  and its dual QFSHA  $ \, U_\hbar^{\,*} := F_\hbar \, $,  \,one has the right-left matched pair  $ \, \big( U_\hbar \, , F_\hbar \big) \, $,  with   $ U_\hbar $  acting on  $ F_\hbar $  by coadjoint action, and viceversa; thus one can construct  $ \, U_\hbar \bowtie F_\hbar \, $,  which is isomorphic to the quantum double  $ \, D\big( U_\hbar \, , F_\hbar \big) \, $  and, as such, is not yet the kind of object we are looking for.  Then one observes   --- see  \cite{AT}, \S A.5 ---   that the right-left matched pair  $ \, \big( U_\hbar \, , F_\hbar \big) \, $  induces another similar right-left matched pair  $ \, \big( U_\hbar \, , U_\hbar^\vee \big) \, $,  \,where  $ U_\hbar^\vee $  denotes (in notation of  \cite{AT})  the QUEA that is associated by Drinfeld's  \textsl{Quantum Duality Principle\/}  with the QFSHA  $ F_\hbar \, $.  Finally, we can consider the double cross product  $ \, U_\hbar \bowtie U_\hbar^\vee \, $   --- isomorphic to  $ \, \big( U_\hbar \, , U_\hbar^\vee \big) \, $  ---   which is now exactly the kind of QUEA we are looking for.
                                                                                      \par
   Instead of applying  \textit{verbatim\/}  the recipe sketched above, in the previous construction we followed an explicit, concrete approach that seems totally independent; however, it is important to understand that what we did is in fact nothing but a concrete ``realization'' of the general recipe, even though it is not formally apparent.
\end{obs}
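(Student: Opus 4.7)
The plan is to make precise the claim that our concrete construction of $\uRPhg$ as $\uRPhbm \bowtie \uRPhbp$ is nothing but an instance of the abstract recipe afforded by Drinfeld's Quantum Duality Principle (in short, QDP). The whole argument will be a matter of identifying, in the formalism of \S\ref{Hopf-pairings ==> double-cross-products}, which building block of our construction plays the role of $U_\hbar$, which plays the role of the QFSHA $F_\hbar$, and which plays the role of the ``dual'' QUEA $U_\hbar^\vee$ obtained from $F_\hbar$ via QDP.

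First I would fix, as the QUEA half of the matched pair, the positive Borel FoMpQUEA $\uRPhbp$ (endowed with the Hopf structure of Corollary~\ref{cor: Hopf-struct_Cartan-&-Borel}). Its candidate ``dual QFSHA half'' is $\caluRPhbm$: indeed, from Lemma~\ref{lem: propt.'s_dual-preBorel} and its Borel version one reads that $\caluRPhbm$ is complete for the $\widetilde{\E}_-^{\,(\hbar)}$--adic topology and topologically generated by the rescaled elements $\bar{T}_j^- = \hbar T_j^-$, $\bar{F}_j = \hbar F_j$; these are precisely the features that qualify a topological Hopf $\kh$--algebra as a QFSHA in Drinfeld's sense (its augmentation ideal generates the topology, and that topology is finer than the $\hbar$--adic one, being comparable to it up to the substitution $\hbar \leadsto \bar\hbar$ used in the QDP). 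The skew-Hopf pairing $\pi \colon \uRPhbp \,\widehat\otimes\, \caluRPhbm \longrightarrow \kh$ of Proposition~\ref{prop: left/right_radical_pairing_pre-Borel}(c) is then the pairing that plays the role of the canonical evaluation between $U_\hbar$ and $F_\hbar = U_\hbar^*$ in the general recipe, from which both the coadjoint-type actions $\triangleright$ and $\triangleleft$ are derived, as in \S\ref{Hopf-pairings ==> double-cross-products}.

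The central step will then be to identify $U_\hbar^\vee$, the QUEA assigned by the QDP to the QFSHA $\caluRPhbm$, with the negative Borel FoMpQUEA $\uRPhbm$. Following \cite{AT}, \S A.5, the QDP assigns to any QFSHA $F_\hbar$ the topological Hopf subalgebra $F_\hbar^\vee := \{\,x \in F_\hbar[\hbar^{-1}] : \delta^n(x) \in \hbar^n F_\hbar^{\otimes n}\,\}$, which turns out to be a QUEA (in an appropriate sense). Applied to our $F_\hbar := \caluRPhbm$, this operation inverts the rescaling by $\hbar$ performed in Definition~\ref{def: dual-pre-Borel_FoMpQUEAs}: the generators $\bar{T}_j^-$ and $\bar{F}_j$ are replaced by $\hbar^{-1}\bar{T}_j^- = T_j^-$ and $\hbar^{-1}\bar{F}_j = F_j$, and the resulting QUEA is exactly $\uRPhbm$ (with its Hopf structure from Definition~\ref{def: Borel_FoMpQUEAs}, which is precisely the one induced by the QDP reparametrization of the coproduct of $\caluRPhbm$). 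The bulk of the work here is a verification, at the level of topological generators, that $\uRPhbm = (\caluRPhbm)^\vee$, and that the coadjoint matched-pair actions between $\uRPhbp$ and $\caluRPhbm$ described right before Theorem~\ref{thm: double-cong-uRPhg_split-crossproduct} rescale, upon passing from $\caluRPhbm$ to $\caluRPhbm^\vee = \uRPhbm$, to the honest actions between $\uRPhbp$ and $\uRPhbm$ displayed in the same paragraph (the key being that every occurrence of $\hbar$ in the formulas for the actions on the ``barred'' generators absorbs the $\hbar^{-1}$ coming from the QDP identification).

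Once this identification is in place, the abstract recipe yields the matched pair $(\uRPhbp,\uRPhbm) = (U_\hbar,U_\hbar^\vee)$ and the associated double cross product Hopf algebra $\uRPhbm \bowtie \uRPhbp$, which by Theorem~\ref{thm: double-cong-uRPhg_split-crossproduct} is nothing but our $\uRPhg$. The main obstacle will be the explicit check that the QDP functor $(\ )^\vee$ sends our specific $\caluRPhbm$ to $\uRPhbm$ while simultaneously converting coadjoint actions into the actions we used by hand; this is essentially a bookkeeping exercise, but it must be carried out carefully because the formulas for $\triangleright$ and $\triangleleft$ involve mixed occurrences of $\hbar$, $e^{\pm \hbar T_i^+}$ and $e^{\pm \bar T_j^-}$, and only after the QDP rescaling do these organise into the expected QUEA--QUEA matched pair. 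Since the resulting double cross product agrees on generators and relations with the presentation already obtained in Theorem~\ref{thm: double-cong-uRPhg_split-crossproduct}, no further uniqueness argument is required, and the Observation follows.
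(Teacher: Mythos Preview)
The statement in question is an \textsl{Observation} (a remark-style environment in the paper), and the paper provides no proof for it: the observation is simply stated as a heuristic explanation that the explicit, hands-on construction of $\uRPhbm \bowtie \uRPhbp$ carried out just before is secretly an instance of a general recipe involving the Quantum Duality Principle, with the authors explicitly noting that this is ``not formally apparent'' and that they deliberately chose to present things ``in layman's terms just to spare the reader some extra theoretical tools.'' So there is nothing in the paper to compare your proposal against.

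What you have written is a reasonable sketch of how one would go about \emph{verifying} the observation formally, and your identifications are the correct ones: $U_\hbar = \uRPhbp$, $F_\hbar = \caluRPhbm$ (which is indeed built as a QFSHA, being topologically generated by $\hbar$--rescaled generators and complete for the augmentation-ideal topology), and $U_\hbar^\vee = \uRPhbm$ (since the QDP functor undoes the $\hbar$--rescaling on generators). Your remark that the $\hbar$'s in the coadjoint action formulas on the barred generators absorb the $\hbar^{-1}$ from the QDP identification, yielding exactly the unbarred action formulas displayed before Theorem~\ref{thm: double-cong-uRPhg_split-crossproduct}, is the right mechanism. The only caution is that a fully rigorous proof would require checking that $\caluRPhbm$ really is a QFSHA in the technical sense (so that the QDP applies) and that $(\caluRPhbm)^\vee$ genuinely equals $\uRPhbm$ rather than merely containing it --- this is the ``bookkeeping exercise'' you mention, and the paper's authors evidently judged it not worth carrying out in print.
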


\vskip9pt

\begin{free text}
 {\bf The general case: third proof of  Theorem \ref{thm: form-MpQUEAs_are_Hopf}.}\,
   The previous analysis provides an
   explicit construction of any FoMpQUEA
   defined on a realization  $ \R $  which
   is  \textsl{split minimal\/};  now, out
   of this, we can deduce also a
   construction of a FoMpQUEA on  $ \R $
   of  \textsl{any type},  by a process of
   ``extension and quotient''.  In the end,
   we find another proof for  Theorem \ref{thm: form-MpQUEAs_are_Hopf}.
 \vskip5pt
   Let  $ P $  be a multiparameter matrix (of Cartan type), let  $ \; \R \, := \, \big(\, \lieh \, , \Pi \, , \Pi^\vee \,\big) \; $
   be any realization of it, and let  $ \uRPhg $  be the associated (topological, unital, associative)  $ \kh $--algebra,
   as in  Definition \ref{def: Mp-Uhgd}.
                                                                      \par
   By  Lemma \ref{lemma: split-lifting},  we can also pick a  \textsl{split\/}  realization of  $ P $,  say
   $ \; \dot{\R} \, := \, \big(\, \dot{\lieh} \, , \dot{\Pi} \, , {\dot{\Pi}}^\vee \,\big) \; $,  \;and
   $ \, \dot{\lieh}_{{}_T} := \textsl{Span}\Big( {\big\{ T_i^\pm \big\}}_{i \in I} \Big) \, $  inside  $ \dot{\lieh} \, $.
   Then we take the FoMpQUEA  $ U^{\,\dot{\R}}_{\!P,\hbar}(\lieg)  $  associated with  $ \dot{\R} \, $:
   \,inside it, we consider the Cartan subalgebras  $ \, U_\hbar\big(\dot{\lieh}\big) := U^{\,\dot{\R}}_{\!P,\hbar}\big(\dot{\lieh}\big) \, $
   and  $ \, U_\hbar\big(\dot{\lieh}_{{}_T}\big) := U^{\,\dot{\R}}_{\!P,\hbar}\big(\dot{\lieh}_{{}_T}\big) \, $
   --- both independent of  $ \R $  and  $ P $,  as for every Cartan subalgebra ---
   and also the complete, unital  $ \kh $--subalgebra  generated by  $ \dot{\lieh}_{{}_T} $,
   \,the  $ E_i $'s  and the  $ F_i $'s:  \,the latter is clearly yet another FoMpQUEA, namely
   $ U^{\,\mathring{\R}}_{\!P,\hbar}(\lieg) \, $,  \,where
   $ \; \mathring{\R} \, := \, \big(\, \dot{\lieh}_{{}_T} , \mathring{\Pi} \, , \Pi^\vee \,\big) \; $
   --- with  $ \, \mathring{\Pi} := \Big\{\, \mathring{\alpha}_i := \alpha_i{\big|}_{\dot{\lieh}_{{}_T}} \Big\}_{i \in I} \, $  ---
   is again a realization of  $ P $,  now \textsl{split and minimal}.
   Thanks to  Theorem \ref{thm: double-cong-uRPhg_split}  then,
   there exists a Hopf algebra structure on  $ U^{\,\mathring{\R}}_{\!P,\hbar}(\lieg) \, $,
   \,which is described by formulas as in  \eqref{eq: coprod_x_uPhg},  \eqref{eq: counit_x_uPhg},  \eqref{eq: antipode_x_uPhg}.
                                                                      \par
%
%
   On the other hand, definitions imply that
 $ \; U^{\,\dot{\R}}_{\!P,\hbar}(\lieg) \, \cong \,
 U_\hbar\big(\dot{\lieh}\big) \hskip-3pt \mathop{\widehat{\otimes}}\limits_{U_\hbar(\dot{\lieh}_{{}_T}\!)} \hskip-5pt  U^{\,\mathring{\R}}_{\!P,\hbar}(\lieg) \; $.
 Then there exists only one way to extend the (topological) Hopf structure in
 $ U^{\,\mathring{\R}}_{\!P,\hbar}(\lieg) $  mentioned above to a Hopf structure on
%
 $ \; U^{\,\dot{\R}}_{\!P,\hbar}(\lieg) \, \cong \,
 U_\hbar\big(\dot{\lieh}\big) \hskip-3pt \mathop{\widehat{\otimes}}\limits_{U_\hbar(\dot{\lieh}_{{}_T}\!)} \hskip-5pt  U^{\,\mathring{\R}}_{\!P,\hbar}(\lieg) \; $
 so that all elements in  $ \dot{\lieh} $  are primitive; in other words, there exists a unique (topological) Hopf structure in
%
  $ \; U_\hbar\big(\dot{\lieh}\big)  \hskip-3pt \mathop{\widehat{\otimes}}\limits_{U_\hbar(\dot{\lieh}_{{}_T}\!)}
   \hskip-5pt U^{\,\mathring{\R}}_{\!P,\hbar}(\lieg) \, \cong \, U^{\,\dot{\R}}_{\!P,\hbar}(\lieg) \; $
 which coincides with the given one on the right-hand factor and makes all elements of  $ \dot{\lieh} $  primitive in the left-hand factor.
                                                                      \par
   Finally, again by  Lemma \ref{lemma: split-lifting},  there exists an epimorphism of realizations
   $ \; \underline{\pi} : \dot{\R} \relbar\joinrel\twoheadrightarrow \R \; $.
   By functoriality, we get an epimorphism
   $ \; U_{\underline{\pi}} : U^{\,\dot{\R}}_{\!P,\hbar}(\lieg) \relbar\joinrel\twoheadrightarrow \uRPhg \; $
   with  $ \Ker\big(U_{\underline{\pi}}\,\big) $  generated by  $ \Ker(\pi) $   --- cf.\ Proposition \ref{prop: functor_R->uRPhg};  moreover, every element in
   $ \Ker(\pi) $  is primitive and is central in  $ U^{\,\dot{\R}}_{\!P,\hbar}(\lieg) \, $.
   Thus  $ \Ker\big(U_{\underline{\pi}}\,\big) $  is a  \textsl{Hopf ideal\/}  in the Hopf algebra  $ U^{\,\dot{\R}}_{\!P,\hbar}(\lieg) \, $,
   \,hence  $ U^{\,\R}_{\!P,\hbar}(\lieg) $  inherits via  $ U_{\underline{\pi}} $  a quotient Hopf algebra structure from
   $ U^{\,\dot{\R}}_{\!P,\hbar}(\lieg) \, $,  again described by the formulas in
   \eqref{eq: coprod_x_uPhg},  \eqref{eq: counit_x_uPhg},  \eqref{eq: antipode_x_uPhg},  q.e.d.   \hfill   $ \square $
\end{free text}

\vskip9pt

\begin{rmk}
 We expect that our definition (and construction) of FoMpQUEAs,
 and all related results presented hereafter, can be extended to the case when the symmetrizable generalized Cartan matrix  $ A $
 is replaced by a more general symmetrizable Borcherds-Cartan matrix
 --- see  \cite{ApS}  and references therein.
 However, due to additional technical difficulties, we do not pursue such a goal in this paper.
\end{rmk}

\bigskip
 \vskip9pt

\section{Deformations of formal multiparameter QUEAs}  \label{sec: deform's_FoMpQUEAs}

\vskip11pt

   After introducing formal MpQUEAs in the previous section,
   now we go and study their deformations, either by twist or by  $ 2 $--cocycle   --- both of ``toral type'', say.

\vskip13pt

\subsection{Deformations of FoMpQUEAs by toral twists}  \label{subsec: tw-def_FoMpQUEAs}
 {\ }
 \vskip9pt
   We discuss now suitable twist deformations (of ``toral type'') of FoMpQUEAs,
   proving that they are again FoMpQUEAs.
   By the results in  \cite{ESS}  one can show that all possible twist elements  $ \F $  for Drinfeld's  $ \uhg $
   can be constructed from data associated with Belavin-Drinfeld triples which classify classical
   $ r $--matrices  for  $ \lieg $  itself: in this respect, our ``toral'' twists correspond to
   \hbox{the trivial Belavin-Drinfeld triples.}

\vskip9pt

\begin{free text}  \label{twist-uPhgd}
 {\bf Toral twist deformations of  $ \uRPhg \, $.}
 We fix  $ \, P := {\big(\, p_{i,j} \big)}_{i, j \in I} \in M_n\big(\kh\big) \, $
 of Cartan type with associated Cartan matrix  $ A \, $,  a realization
 $ \, \R \, := \, \big(\, \lieh \, , \Pi \, , \Pi^\vee \,\big) \, $  of it and the FoMpQUEA  $ \uRPhg \, $,
 as in  \S \ref{sec: Cartan-data_realiz's}  and  \S \ref{sec: form-MpQUEAs};  in particular,
 $ \, d_i := p_{ii}/2 \, $  ($ \, i \in I \, $),  and  $ \lieh $  is a free  $ \kh $--module  of finite rank
 $ \, t := \rk(\lieh) \, $.  We fix in  $ \lieh $  any  $ \kh $--basis  $ \, {\big\{ H_g \big\}}_{g \in \G} \, $,
 where  $ \, |\G| = \rk(\lieh) = t \, $.  Pick  $ \; \Phi = \big( \phi_{gk} \big)_{g, k \in \G} \in \lieso_t\big(\kh\big)  \; $,  \,and set
  $$
  \JJ_\Phi  \; := \;  {\textstyle \sum\limits_{g,k=1}^t} \phi_{gk} \, H_g \otimes H_k  \; \in \;
  \lieh \otimes \lieh  \; \subseteq \; U^\R_{\!P,\hskip0,7pt\hbar}(\lieh) \otimes U^\R_{\!P,\hskip0,7pt\hbar}(\lieh)
  $$
 By direct check, we see that the element
\begin{equation}  \label{eq: Resh-twist_F-uPhgd}
  \F_\Phi  \,\; := \;\,
e^{\,\hbar \, 2^{-1} \JJ_\Phi}
 \,\; = \;\,  \exp\Big(\hskip1pt \hbar \,  \, 2^{-1} \,
 {\textstyle \sum_{g,k=1}^t} \phi_{gk} \, H_g \otimes H_k \Big)
\end{equation}
 in  $ \, U^\R_{\!P,\hskip0,7pt\hbar}(\lieh) \,\widehat{\otimes}\, U^\R_{\!P,\hskip0,7pt\hbar}(\lieh) \, $
 is actually a  {\sl twist\/}  for  $ \uRPhg $  in the sense of  \S \ref{defs_Hopf-algs}.
 Using it, we construct a new (topological) Hopf algebra  $ \, {\big(\, \uRPhg \big)}^{\F_\Phi} \, $,
 isomorphic to  $ \uRPhg $  as an algebra but with a new, twisted coalgebra structure, as in  \S \ref{defs_Hopf-algs}.
 A direct calculation yields explicit formulas for the new coproduct on generators, namely
  $$  \displaylines{
   \qquad   \Delta^{\scriptscriptstyle \!\Phi}\big(E_\ell\big)  \; = \;
 E_\ell \otimes \L_{\Phi, \ell}^{+1} \, + \, e^{+\hbar \, T^+_\ell} \K_{\Phi, \ell}^{+1} \otimes
E_\ell   \quad \qquad  \big(\, \forall \;\; \ell \in I \,\big)  \cr
   \qquad \qquad \quad   \Delta^{\scriptscriptstyle \!\Phi}\big(T\big)  \; = \;  T \otimes 1 \, + \, 1 \otimes T
   \quad \quad \qquad \qquad  \big(\, \forall \;\; T \in \lieh \,\big)  \cr
   \qquad   \Delta^{\scriptscriptstyle \!\Phi}\big(F_\ell\big)  \; = \;
   F_\ell \otimes \L_{\Phi, \ell}^{-1} \, e^{-\hbar \, T^-_\ell } + \, \K_{\Phi, \ell}^{-1} \otimes F_\ell
   \quad \qquad  \big(\, \forall \;\; \ell \in I \,\big)  }  $$
with
%
%
  $ \;\; \L_{\Phi, \ell} := e^{+ \hbar \, 2^{-1} \sum_{g,k=1}^t \alpha_\ell(H_g) \, \phi_{gk} H_k} \; $,
  $ \;\; \K_{\Phi,\ell} := e^{+ \hbar \, 2^{-1} \sum_{g,k=1}^t \alpha_\ell(H_g) \, \phi_{kg} H_k} \; (\, \forall \;\; \ell \in I \,) \; $.
 Similarly, the ``twisted'' antipode  $ \, \SS^{\scriptscriptstyle \Phi} := \SS^{\F_\Phi} \, $  and
 the counit  $ \, \epsilon^{\scriptscriptstyle \Phi} := \epsilon \, $   are given by
  $$  \begin{matrix}
   \qquad   \SS^{\scriptscriptstyle \Phi}\big(E_\ell\big)  \, = \,
   - e^{-\hbar \, T^+_\ell} \K_{\Phi, \ell}^{-1} \, E_\ell \, \L_{\Phi, \ell}^{-1}  \quad ,
   &  \qquad  \epsilon^{\scriptscriptstyle \Phi}\big(E_\ell\big) \, = \, 0   \qquad \qquad  \big(\, \forall \;\; \ell \in I \,\big)  \\
   \qquad
 \SS^{\scriptscriptstyle \Phi}\big(T\big)  \, = \,  -T  \quad ,  \phantom{\Big|^|}
  &  \qquad
 \epsilon^{\scriptscriptstyle \Phi}\big(T\big) \, = \, 0   \qquad \qquad  \big(\, \forall \;\; T \in \lieh \,\big)
 \\
   \qquad   \SS^{\scriptscriptstyle \Phi}\big(F_\ell\big)  \, = \,
   - \K_{\Phi, \ell}^{+1} \, F_\ell \, \L_{\Phi, \ell}^{+1} \, e^{+\hbar \, T^-_\ell}  \quad ,
   &  \qquad  \epsilon^{\scriptscriptstyle \Phi}\big(F_\ell\big) \, = \, 0   \qquad \qquad  \big(\, \forall \;\; \ell \in I \,\big)
\end{matrix}  $$
\end{free text}

\vskip5pt

\begin{rmk} \label{rmk: Reshetikhin's twist}
 The twist  $ \F_\Phi $  is an example of Reshetikhin's twist as in  \cite{Re},  only
 ``adapted'' to the present case of our more general FoMpQUEA  $ \uRPhg \, $.
 When  $ \lieg $  is a simple Lie algebra, this twist corresponds to empty datum of the Belavin-Drinfeld triple with respect to the
%
%
 classification in  \cite{ESS}  of twists for  $ \uhg \, $.
\end{rmk}

\vskip9pt

\begin{free text}  \label{twisted-generators}
 {\bf Twisted generators.}\,
 From the explicit description of the coproduct  $ \Delta^{\scriptscriptstyle \!\Phi} \, $,
 it follows that  $ \, {\big(\, \uRPhg \big)}^{\F_\Phi} \, $
 is generated by group-likes and skew-primitive elements; in particular, it is a pointed Hopf algebra.  Moreover,
 both Hopf algebras  $ \,  \uRPhg  \, $  and  $ \, {\big(\, \uRPhg \big)}^{\F_\Phi} \, $  have the same coradical and the same space of skew-primitive elements.
 As the coproduct is changed by the twist, one sees that the skew-primitive generators of  $ \, \uRPhg  \, $,
 which are  $ (1,g) $--  or $ (g,1) $--primitive  for some  $ \, g \in G\big(\, \uRPhg \big) \, $,  with respect to  $ \Delta \, $,
 become  $ (h,k) $--primitive  for $ \Delta^{\scriptscriptstyle \!\Phi} \, $.
 Looking at the coradical filtration, and the associated graded Hopf algebra,
 one may find from that set of generators some new  $ (1,\ell) $--  or  $ (\ell,1) $--primitives
 for  $ {\big(\, \uRPhg \big)}^{\F_\Phi} $.
 This leads to devise (new)  \textit{twisted generators\/}  and a corresponding presentation for
 $ {\big(\, \uRPhg \big)}^{\F_\Phi} $,  which yields a Hopf algebra isormorphism between  $ {\big(\, \uRPhg \big)}^{\F_\Phi} $
 and a new FoMpQUEA with suitable multiparameter matrix and realization.
 \par

   Motivated by the above analysis, we introduce now in  $ {\big(\, \uRPhg \big)}^{\F_\phi} $  the ``twisted'' generators  (for all  $ \, \ell \in I \, $)
  $ \; E^{\scriptscriptstyle \Phi}_\ell := \, \L_{\Phi, \ell}^{-1} \, E_\ell \; $,
 $ \; F^{\scriptscriptstyle \Phi}_\ell := \, F_\ell \, \K_{\Phi, \ell}^{+1} \; $
 and the twisted ``distinguished toral elements'' (or ``coroots'') that were already defined in  \eqref{eq: T-phi},  i.e.\
 $ \; T^\pm_{{\scriptscriptstyle \Phi},\ell} := \,
 T^\pm_\ell \, \pm {\textstyle \sum\limits_{g,k=1}^t} \alpha_\ell(H_g) \, \phi_{kg} \, H_k \; $.
 Still from  \S \ref{twist-deform.'s x mpmatr.'s & realiz.'s},  we recall also
  $ \, P_{\scriptscriptstyle \Phi} \, := \, {\big(\, p^{\scriptscriptstyle \Phi}_{i,j} \big)}_{i, j \in I} \, $  and
  $ \, \R_\Phi := \big(\, \lieh \, , \Pi \, , \Pi^\vee_{\scriptscriptstyle \Phi} \,\big) \, $,
  \,the latter being a realization of the former.
 \vskip3pt
   Second, the commutation relations in the algebra  $ \,{\big(\, \uRPhg \big)}^{\F_\Phi} \, $
   give new commutation relations between twisted generators.  Namely, by straightforward computations
   --- for instance using that
   $ \; \K_{\Phi,j} \, E_i = e^{\hbar\, 2^{-1} \, \sum_{g,k=1}^t \alpha_j(H_g) \, \phi_{kg} \alpha_i(H_k)} E_i \, \K_{\Phi,j} \; $
   and that  $ \; e^{+\hbar \, T^{\pm}_{{\scriptscriptstyle \Phi},i}} =
   e^{+\hbar \, T_i^\pm} \big(\, \K_{\Phi,i} \, \L_{\Phi,i}^{-1} \,\big)^{\pm 1} \; $  ---
   one proves that inside  $ \,{\big(\, \uRPhg \big)}^{\F_\Phi} \, $  the following identities hold true (for all $ \, T , T' , T'' \in \lieh \, $,  $ \, i \, , j \in I \, $):
  $$  \begin{aligned}
   T \, E^{\scriptscriptstyle \Phi}_j \, - \, E^{\scriptscriptstyle \Phi}_j \, T  \; = \;  +\alpha_j(T) \, E^{\scriptscriptstyle \Phi}_j  \quad ,
   \qquad  T \, F^{\scriptscriptstyle \Phi}_j \, - \, F^{\scriptscriptstyle \Phi}_j \, T  \; = \;  -\alpha_j(T) \, F^{\scriptscriptstyle \Phi}_j   \hskip45pt  \\
   T' \, T''  \; = \;  T'' \, T'  \quad ,  \qquad
   E^{\scriptscriptstyle \Phi}_i \, F^{\scriptscriptstyle \Phi}_j \, - \
   , F^{\scriptscriptstyle \Phi}_j \, E^{\scriptscriptstyle \Phi}_i  \; = \;
   \delta_{i,j} \, {{\; e^{+\hbar \, T^+_{{\scriptscriptstyle \Phi},i}} \, -
   \, e^{-\hbar \, T^-_{{\scriptscriptstyle \Phi},i}} \;} \over {\; q_i^{+1} - \, q_i^{-1} \;}}   \hskip47pt  \\
   {\textstyle \sum\limits_{k=0}^{1-a_{ij}}} {(-1)}^k {\left[ { 1-a_{ij} \atop k }
\right]}_{\!q_i} {\big( q_{ij}^{\scriptscriptstyle \Phi} \big)}^{+k/2\,}
{\big( q_{ji}^{\scriptscriptstyle \Phi} \big)}^{-k/2} \,
{\big( E^{\scriptscriptstyle \Phi}_i \big)}^{1-a_{ij}-k} \, E^{\scriptscriptstyle \Phi}_j \, {\big( E^{\scriptscriptstyle \Phi}_i \big)}_i^k  \; = \;  0   \quad   (\, i \neq j \,)
  \\
   {\textstyle \sum\limits_{k=0}^{1-a_{ij}}} {(-1)}^k {\left[ { 1-a_{ij} \atop k }
\right]}_{\!q_i} {\big( q_{ij}^{\scriptscriptstyle \Phi} \big)}^{+k/2\,}
{\big( q_{ji}^{\scriptscriptstyle \Phi} \big)}^{-k/2} \,
{\big( F^{\scriptscriptstyle \Phi}_i \big)}^k \, F^{\scriptscriptstyle \Phi}_j \, {\big( F^{\scriptscriptstyle \Phi}_i \big)}_i^{1-a_{ij}-k}  \; = \;  0   \quad   (\, i \neq j \,)
 \end{aligned}  $$
 with  $ \; q^{\scriptscriptstyle \Phi}_{i,j} := e^{\hbar \, p^{\scriptscriptstyle \Phi}_{i,j}} \; (\, i , j \in I \,) \, $
   --- so that  $ \, q^{\scriptscriptstyle \Phi}_{i,i} = e^{\hbar \, p^{\scriptscriptstyle \Phi}_{i,i}} =
 e^{\hbar \, p_{i,i}} = e^{\hbar \, 2 d_i} = q_i^{\,2} \, $.
 \vskip5pt
   Third, the Hopf operations on the ``twisted'' generators read  (for  $ \, \ell \in I \, $,  $ \, T \in \lieh \, $)
  $$  \begin{matrix}
   \Delta^{\scriptscriptstyle \!\Phi}\big(E^{\scriptscriptstyle \Phi}_\ell\big)
\, = \,  E^{\scriptscriptstyle \Phi}_\ell \otimes 1 \, + \, e^{+\hbar\,T^+_{{\scriptscriptstyle \Phi},\ell}}
\otimes E^{\scriptscriptstyle \Phi}_\ell  \;\; ,   &   \hskip5pt
   \SS^{\scriptscriptstyle \Phi}\big(E^{\scriptscriptstyle \Phi}_\ell\big)  \, = \,  - e^{-\hbar\,T^+_{{\scriptscriptstyle \Phi},\ell}} E^{\scriptscriptstyle \Phi}_\ell  \;\; ,   &   \hskip5pt
  \epsilon^{\scriptscriptstyle \Phi}\big(E^{\scriptscriptstyle \Phi}_\ell\big) \, = \, 0  \\
   \Delta^{\scriptscriptstyle \!\Phi}\big(T\big)  \, = \,  T \otimes 1 \, + \, 1 \otimes T  \;\; ,   &   \hskip5pt
   \SS^{\scriptscriptstyle \Phi}\big(T\big)  \, = \,  - T  \;\; ,   &   \hskip5pt
   \epsilon^{\scriptscriptstyle \Phi}\big(T\big)  \, = \,  0  \\
   \Delta^{\scriptscriptstyle \!\Phi}\big(F^{\scriptscriptstyle \Phi}_\ell\big)  \, = \,
   F^{\scriptscriptstyle \Phi}_\ell \otimes e^{-\hbar\,T^-_{{\scriptscriptstyle \Phi},\ell}} \, + \, 1 \otimes F^{\scriptscriptstyle \Phi}_\ell  \;\; ,   &   \hskip5pt
   \SS^{\scriptscriptstyle \Phi}\big(F^{\scriptscriptstyle \Phi}_\ell\big)  \, = \,
   - F^{\scriptscriptstyle \Phi}_\ell \, e^{+\hbar \, T^-_{{\scriptscriptstyle \Phi},\ell}}  \;\; ,   &   \hskip5pt
   \epsilon^{\scriptscriptstyle \Phi}\big(F^{\scriptscriptstyle \Phi}_\ell\big)  \, = \,  0
 \end{matrix}  $$
\end{free text}

\vskip9pt

   In a nutshell, the above analysis proves the following result:

\vskip13pt

\begin{theorem}  \label{thm: twist-uRPhg=new-uRPhg}
 There exists an isomomorphism of topological Hopf algebras
  $$  f^{\scriptscriptstyle \Phi}_{\scriptscriptstyle P} \; :
   \; U_{\!P_\Phi,\,\hbar}^{\R_{\scriptscriptstyle \Phi}}(\lieg)  \,\; {\buildrel \cong \over
   {\lhook\joinrel\relbar\joinrel\relbar\joinrel\relbar\joinrel\twoheadrightarrow}} \;\,
   {\big(\, U_{\!P,\,\hbar}^{\R}(\lieg)\big)}^{\F_\Phi}  $$
 given by
  $ \; E_i \, \mapsto \, E^{\scriptscriptstyle \Phi}_i \, , \; T \, \mapsto \, T \,$ and
  $\; F_i \, \mapsto \, F^{\scriptscriptstyle \Phi}_i \; $
 for all  $ \, i \in I \, $,  $ \, T \in \lieh \, $.
 \vskip3pt
   In particular, the class of all FoMpQUEAs of any fixed Cartan type and of fixed rank
%
%
 is stable by toral twist deformations.  Moreover, inside it the subclass of all such
 FoMpQUEAs associated with  \textsl{straight},  resp.\  \textsl{small},  realizations is stable as well.
 \vskip3pt
   Similar, parallel statements hold true for the Borel FoMpQUEAs, namely there exist isomorphisms
 $ \; f^{\scriptscriptstyle \Phi}_{\!\scriptscriptstyle P,\pm} :
   U_{\!P_\Phi,\,\hbar}^{\R_{\scriptscriptstyle \Phi}}(\lieb_\pm) \, {\buildrel \cong \over
   {\lhook\joinrel\relbar\joinrel\relbar\joinrel\twoheadrightarrow}} \,
   {\big(\, U_{\!P,\,\hbar}^{\R}(\lieb_\pm)\big)}^{\F_\Phi} \; $
 given by formulas as above.
\end{theorem}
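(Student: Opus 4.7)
The plan is to proceed along the path already laid out by the material just preceding the theorem, turning what are essentially explicit calculations into a clean isomorphism statement. The strategy has four stages: (i) verify that $\F_\Phi$ in \eqref{eq: Resh-twist_F-uPhgd} is indeed a Hopf twist for $\uRPhg$; (ii) introduce the twisted generators $E^\Phi_i, T, F^\Phi_i$ and $T^\pm_{\Phi,i}$, and check that they obey, inside $(\uRPhg)^{\F_\Phi}$, the defining relations of the algebra $U^{\R_\Phi}_{P_\Phi,\hbar}(\lieg)$; (iii) check the Hopf operations on these twisted generators match those prescribed by the presentation of $U^{\R_\Phi}_{P_\Phi,\hbar}(\lieg)$; (iv) conclude bijectivity by exhibiting an inverse.

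For (i), since $\JJ_\Phi \in \lieh \otimes \lieh$ and every element of $\lieh$ is primitive, the element $\F_\Phi = \exp(\hbar \cdot 2^{-1}\JJ_\Phi)$ automatically satisfies the counitality condition, and the cocycle identity $\F_{12}(\Delta\otimes\id)(\F_\Phi)=\F_{23}(\id\otimes\Delta)(\F_\Phi)$ reduces (as in Reshetikhin's original argument, cf.\ Remark \ref{rmk: Reshetikhin's twist}) to the antisymmetry of $\Phi$, which is built in because $\Phi\in \lieso_t(\kh)$. Granting this, the twisted coproduct $\Delta^\Phi$ on the standard generators has the explicit form displayed in \S \ref{twist-uPhgd}, a straightforward computation using only that $\lieh$ is abelian and that each generator is a weight vector for the adjoint $\lieh$-action.

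For (ii), the crucial point is that by Proposition \ref{prop: twist-realizations}, the triple $\R_\Phi = (\lieh, \Pi, \Pi^\vee_\Phi)$ is a realization of $P_\Phi$, with $\alpha_j(T^\pm_{\Phi,i}) = p^\Phi_{ij}, p^\Phi_{ji}$ and with unchanged symmetric part $(P_\Phi)_s = P_s$. This latter fact ensures $q_i$ is unchanged while $q_{ij}^\Phi, q_{ji}^\Phi$ pick up mutually inverse twist factors whose product remains $q_{ii}^{a_{ij}}$. Computing in $(\uRPhg)^{\F_\Phi}$ the relations among $E^\Phi_i := \L_{\Phi,i}^{-1}E_i$, $F^\Phi_i := F_i \K_{\Phi,i}^{+1}$ and $T\in\lieh$, one gets exactly those of $U^{\R_\Phi}_{P_\Phi,\hbar}(\lieg)$: the Cartan relations $[T,E^\Phi_j]=\alpha_j(T)E^\Phi_j$ survive unchanged because $\L_{\Phi,j}$ lies in the Cartan and $\alpha_j$-eigenvectors stay $\alpha_j$-eigenvectors; the mixed relation $[E^\Phi_i,F^\Phi_j] = \delta_{ij}(e^{\hbar T^+_{\Phi,i}}-e^{-\hbar T^-_{\Phi,i}})/(q_i-q_i^{-1})$ follows from a direct computation using $e^{\pm\hbar T^\pm_{\Phi,i}} = e^{\pm\hbar T_i^\pm}(\K_{\Phi,i}\L_{\Phi,i}^{-1})^{\pm1}$; the Serre relations for $E^\Phi_i$, $F^\Phi_i$ with the \emph{twisted} coefficients $q^\Phi_{ij}, q^\Phi_{ji}$ follow from the old Serre relations upon scaling by powers of the (commuting) Cartan factors $\L_{\Phi,i}$, and this is where the preservation of $P_s$ enters essentially, as it controls the bookkeeping of exponents. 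This produces a well-defined $\kh$-algebra morphism $f^\Phi_P$; by the same computations done at the end of \S \ref{twisted-generators}, it intertwines the Hopf operations, settling (iii).

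For (iv), the map $f^\Phi_P$ is invertible because its inverse is given by the explicit formulas $E_i \mapsto \L_{\Phi,i}E^\Phi_i$, $T\mapsto T$, $F_i\mapsto F^\Phi_i\K_{\Phi,i}^{-1}$, which is easily checked to be compatible with the relations on the other side. Equivalently, both sides are topologically free $\kh$-modules of the same rank (by a triangular decomposition argument as in Theorem \ref{thm: triang-decomp.'s}, together with Lemma \ref{lemma: Borel FoMpQUEAs topol.-free}), and $f^\Phi_P$ reduces modulo $\hbar$ to an isomorphism, so $\hbar$-adic continuity finishes the job. The main obstacle that will absorb most of the work is the Serre-relation check in step (ii): one needs to verify that conjugating the old Serre polynomial by the toral factors $\L_{\Phi,i}$ exactly produces the $q$-binomial combination with coefficients $q^\Phi_{ij},q^\Phi_{ji}$, using $q^\Phi_{ij}q^\Phi_{ji} = q_{ii}^{a_{ij}}$. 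The Borel statement $f^\Phi_{P,\pm}$ is then a corollary obtained by restricting the whole argument to the subalgebras generated by $\lieh$ together with the $E_i$'s or the $F_i$'s, noting that both the twist $\F_\Phi$ and the transformation of generators preserve these subalgebras. The stability statement for straight or small realizations follows from Proposition \ref{prop: twist-realizations}(b).
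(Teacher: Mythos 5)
Your proposal is correct and follows essentially the same route as the paper, whose proof is precisely the analysis of \S \ref{twist-uPhgd}--\S \ref{twisted-generators}: verify that $\F_\Phi$ is a twist, pass to the twisted generators $E^{\scriptscriptstyle\Phi}_i$, $F^{\scriptscriptstyle\Phi}_i$, $T^\pm_{{\scriptscriptstyle\Phi},i}$, and check by direct computation that they satisfy the defining relations and Hopf formulas of $U^{\R_{\scriptscriptstyle\Phi}}_{\!P_\Phi,\,\hbar}(\lieg)$, the Borel and stability claims following by restriction and Proposition \ref{prop: twist-realizations}. Your added remarks on invertibility (explicit inverse on generators) only make explicit what the paper leaves implicit.
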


\vskip9pt

   In fact,  \textsl{the previous result can be somehow reversed},  as the following shows:
   in particular, loosely speaking, we end up finding that  \textit{every straight small FoMpQUEA
   can be realized as a toral twist deformation of the ``standard'' FoMpQUEA by Drinfeld}
   ---  cf.\ claim  \textit{(c)\/}  in  Theorem \ref{thm: uRPhg=twist-uhg}  here below.

\vskip13pt

\begin{theorem}  \label{thm: uRPhg=twist-uhg}
 With assumptions as above, let  $ P $  and  $ P' $  be two matrices of Cartan type with
 the same associated Cartan matrix $ A \, $.
 \vskip3pt
   (a)\,  Let  $ \R $  be a  \textsl{straight}  realization of  $ P $  and let  $ \uRPhg $  be the associated FoMp\-QUEA.
   Then there exists a  \textsl{straight}  realization  $ \check{\R}' $
   of  $ P' $  and a matrix  $ \, \Phi \in \! \lieso_t\big(\kh\big) \, $
   such that for the associated twist element  $ \F_\Phi $  as in  \eqref{eq: Resh-twist_F-uPhgd}  we have
  $$  U_{\!P'\!,\,\hbar}^{\check{\R}'}(\lieg)  \; \cong \;  {\big(\, U_{\!P,\,\hbar}^\R(\lieg) \big)}^{\F_\Phi}  $$
 In a nutshell, if  $ \, P'_s = P_s \, $  then from any  \textsl{straight}  FoMpQUEA over  $ P $  we can obtain by toral twist deformation a  \textsl{straight}  FoMpQUEA (of the same rank) over  $ P' $.
                                                                  \par
   Conversely, if  $ \R' $  is any  \textsl{straight}  realization of  $ P' $  and  $  U_{\!P'\!,\,\hbar}^{\R'}(\lieg) $
   is the associated FoMpQUEA, then there exists a  \textsl{straight}  realization  $ \hat{\R} $  of  $ P $
   and a matrix  $ \, \Phi \in \! \lieso_t\big(\kh\big) \, $  such that for the associated twist element  $ \F_\Phi $
   as in  \eqref{eq: Resh-twist_F-uPhgd}  we have
  $$  U_{\!P'\!,\,\hbar}^{\R'}(\lieg)  \; \cong \;  {\big(\, U_{\!P,\,\hbar}^{\hat{\R}}(\lieg) \big)}^{\F_\Phi}  $$
 \vskip3pt
   (b)\,  Let  $ \R $  and  $ \R' $  be  \textsl{straight small}  realizations of  $ P $  and  $ P' $  respectively, with
   $ \, \rk(\R) = \rk(\R') = t\, $,  and let  $ \uRPhg $  and  $ U_{\!P'\!,\,\hbar}^{\R'}(\lieg) $  be the associated FoMpQUEAs.
   Then there exists a matrix  $ \, \Phi \in \! \lieso_t\big(\kh\big) \, $  such that for
%
%
 $ \F_\Phi $  as in  \eqref{eq: Resh-twist_F-uPhgd}  we have
  $$  U_{\!P'\!,\,\hbar}^{\R'}(\lieg)  \; \cong \;  {\big(\, U_{\!P,\,\hbar}^\R(\lieg) \big)}^{\F_\Phi}  $$
 In other words, if  $ \, P'_s = P_s \, $  any  \textsl{straight small}  FoMpQUEA over  $ P' $  is
 isomorphic to a toral twist deformation of any  \textsl{straight small}  FoMpQUEA over  $ P $  of same rank.
 \vskip3pt
   (c)\,  Every  \textsl{straight small}  FoMpQUEA is isomorphic to some toral twist deformation of
   Drinfeld's standard FoMpQUEA (over  $ \, DA = P_s \, $)  of the same rank.
 \vskip3pt
   (d)\,  Similar, parallel statements hold true for the Borel FoMpQUEAs.
\end{theorem}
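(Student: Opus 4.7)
\smallskip

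The plan is to mimic, at the quantum level, the strategy used to establish Theorem \ref{thm: MpLbA=twist-gD}, replacing the semiclassical input (Theorem \ref{thm: twist-liegRP=new-liegR'P'}) by its quantum counterpart Theorem \ref{thm: twist-uRPhg=new-uRPhg} and feeding in the combinatorial existence/uniqueness results for realizations established in \S\ref{sec: Cartan-data_realiz's}. The point is that a twist of the shape \eqref{eq: Resh-twist_F-uPhgd} couples to a realization only through the ``toral shift'' formula $\R\mapsto\R_{\scriptscriptstyle\Phi}$ of Definition \ref{def:twisted-realization}, and the corresponding multiparameter matrix gets modified by the antisymmetric summand $-\mathfrak{A}\,\Phi\,\mathfrak{A}^{\scriptscriptstyle T}$; hence producing twists that implement a prescribed change $P\mapsto P'$ with $P_s=P'_s$ is exactly the content of Lemma \ref{lemma: twist=sym}, once the realization has been chosen straight (so that $\mathfrak{A}$ has the maximal rank needed in the argument).

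First I would prove claim (a). Starting from a straight realization $\R$ of $P$ of rank $t$, I invoke Lemma \ref{lemma: twist=sym} (admissible because $P_s=P'_s$ and $t\geq 3n-\rk(P_s)$ automatically in the situations relevant to us; if not, one enlarges $\R$ using Proposition \ref{prop: exist-realiz's}(a) while keeping straightness) to produce $\Phi\in\lieso_t(\kh)$ with $P'=P_{\scriptscriptstyle\Phi}$. By Proposition \ref{prop: twist-realizations}(b), the associated realization $\R_{\scriptscriptstyle\Phi}$ of $P'=P_{\scriptscriptstyle\Phi}$ is again straight. Theorem \ref{thm: twist-uRPhg=new-uRPhg} then supplies an isomorphism $U_{\!P',\hbar}^{\R_{\scriptscriptstyle\Phi}}(\lieg)\cong \bigl(\uRPhg\bigr)^{\F_\Phi}$, so setting $\check{\R}':=\R_{\scriptscriptstyle\Phi}$ does the job. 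The converse direction of (a) follows by the same argument applied to the pair $(P',P)$ with the opposite twist $-\Phi$ (note that twisting by $\Phi$ and then by $-\Phi$ yields the identity on both realizations and matrices by Remarks \ref{rmks: transitivity-twist}(a), so the group $\lieso_t(\kh)$ really acts and the statement is symmetric in $P,P'$).

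Claim (b) is now the combination of (a) with the uniqueness piece of Proposition \ref{prop: exist-realiz's}(b): the straight small realization $\check{\R}'$ of $P'$ produced by (a) has the same rank as the given $\R'$ (both equal to $\rk(\R)=t$), hence $\check{\R}'\cong\R'$, and Corollary \ref{cor: isom_R -> isom_uRPhg} transports the quantum isomorphism accordingly. Claim (c) is then the instance of (b) in which we take $P:=DA=P'_s$ and choose for $\R$ the standard straight small realization of $DA$ provided by Lemma \ref{lem: realiz_P => realiz_A} (the one underlying Drinfeld's $\uhg$, cf.\ Example \ref{example P=DA}). Finally, claim (d) is obtained by repeating the same three steps verbatim at the Borel level, using the ``Borel part'' of Theorem \ref{thm: twist-uRPhg=new-uRPhg}: indeed, the twist $\F_\Phi$ lies in $U_{\hbar}(\lieh)\,\widehat\otimes\,U_\hbar(\lieh)$, hence restricts to a twist of each $\uRPhbpm$, and the explicit formulas for the twisted generators $E_i^{\scriptscriptstyle\Phi},T_i^\pm,F_i^{\scriptscriptstyle\Phi}$ and twisted coroots $T^\pm_{{\scriptscriptstyle\Phi},i}$ apply in each Borel subalgebra separately.

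The only nonroutine point I expect is bookkeeping on the rank of $\lieh$: Lemma \ref{lemma: twist=sym} requires a sufficiently large skew matrix group to realize the prescribed antisymmetric change $P'-P$, and one needs to check that every straight realization we might be given has $t\geq 3n-\rk(P_s)$, or else to enlarge it first while preserving straightness (and smallness, in (b)) via Proposition \ref{prop: exist-realiz's}. Once this technicality is settled, the rest reduces to invoking the functoriality and uniqueness machinery of \S\ref{subsec: FoMpQUEAs} together with the forward twist result Theorem \ref{thm: twist-uRPhg=new-uRPhg}, so no new quantum computation is needed.
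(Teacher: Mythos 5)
Your proposal is correct and follows essentially the same route as the paper: part (a) by producing $\Phi$ with $P'=P_{\scriptscriptstyle\Phi}$ via Lemma \ref{lemma: twist=sym} (cf.\ Proposition \ref{prop: realiz=twist-standard}) and then quoting Proposition \ref{prop: twist-realizations} and Theorem \ref{thm: twist-uRPhg=new-uRPhg}, part (b) via the uniqueness of straight small realizations, part (c) by specializing to $P=DA$ with the standard realization, and (d) by the Borel analogue. Your only extra worry --- that one might need $t\geq 3\,n-\rk(P_s)$ and have to enlarge $\R$ --- is unnecessary (and enlarging would in any case alter the rank of the FoMpQUEA): straightness of the given $\R$ already forces the matrix $\mathfrak{A}$ to have full row rank $n$, which is all that the solvability argument behind Lemma \ref{lemma: twist=sym} and Proposition \ref{prop: realiz=twist-standard}(a) actually uses.
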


\pf
 \textit{(a)}\,  By  Theorem \ref{thm: twist-uRPhg=new-uRPhg}  it is enough to find  $ \, \Phi \in \lieso_t(\Bbbk)  \, $
 such that  $ \, P' = P_{\scriptscriptstyle \Phi} \, $,  \,that is
 $ \; P' = P - \mathfrak{A} \, \Phi \, \mathfrak{A}^{\,\scriptscriptstyle T} \, $;
 \,but this is guaranteed by  Lemma \ref{lemma: twist=sym},  so we are done.
 \vskip5pt
   \textit{(b)}\,  Like for  Theorem \ref{thm: MpLbA=twist-gD}\textit{(b)},  this follows from claim  \textit{(a)},
   along with the uniqueness of straight small realizations, by  Proposition \ref{prop: exist-realiz's}\textit{(b)},
   and  Proposition \ref{prop: functor_R->liegRP}.
 \vskip5pt
   \textit{(c)}\,  This follows applying  \textit{(b)},  with  $ U_{\!P'\!,\,\hbar}^{\R'}(\lieg) $
   the given straight small FoMpQUEA and  $ U_{\!P,\,\hbar}^\R(\lieg) $  the ``standard'' FoMpQUEA
   $ U_{\!P,\,\hbar}^\R\big(\lieg^\Dpicc_\Ppicc\big) $  over  $ \, P := DA = P'_s \, $
 as in Drinfeld's definition (up to ``taking the double''), which is straight and split minimal.
\epf

\vskip9pt

\begin{obs}  \label{obs: parameter_(P,Phi)}
 Theorems \ref{thm: twist-uRPhg=new-uRPhg}  and  \ref{thm: uRPhg=twist-uhg}  have the following interpretation.
 Our FoMpQUEAs  $ \, \uRPhg \, $  are quantum objects depending on the multiparameter  $ P \, $;
 but when we perform onto  $ \, \uRPhg \, $  a deformation by twist as in  \S \ref{twist-uPhgd},
 the output  $ \; {U_{\!P,\hskip0,7pt\hbar}^{\R,\Phi}(\hskip0,5pt\lieg)} := {\big(\, \uRPhg \big)}^{\F_\Phi} \; $
 depends on  \textsl{two\/}  multiparameters, namely  $ P $  \textsl{and\/}  $ \Phi \, $.
 Thus all these  $ {U_{\!P,\hskip0,7pt\hbar}^{\R,\Phi}(\hskip0,5pt\lieg)} $'s form a seemingly
 \textsl{richer\/}  family of ``twice-multiparametric'' formal QUEAs.
 Nonetheless,  Theorem \ref{thm: twist-uRPhg=new-uRPhg}
 above proves that this family actually  \textsl{coincides\/}  with the family of all FoMpQUEAs,
 although the latter seems a priori smaller.
                                                              \par
   In short,  Theorems \ref{thm: twist-uRPhg=new-uRPhg}  and  \ref{thm: uRPhg=twist-uhg}  show the following.
   The dependence of the Hopf structure of  $ {U_{\!P,\hskip0,7pt\hbar}^{\R,\Phi}(\hskip0,5pt\lieg)} $
   on the ``double parameter''  $ (P\,,\Phi) $  is ``split'' in the algebraic structure (ruled by  $ P $)
   and in the coalgebraic structure  (ruled by  $ \Phi $);  \,now  Theorems \ref{thm: twist-uRPhg=new-uRPhg}
   and  \ref{thm: uRPhg=twist-uhg}  enable us to ``polarize'' this dependence so to codify it either
   entirely within the algebraic structure (while the coalgebraic one is reduced to a ``canonical form''),
   so that the single multiparameter  $ P_\Phi $  is enough to describe it, or entirely within the
   coalgebraic structure (with the algebraic one being reduced to the ``standard'' Drinfeld's one),
   so that the multiparameter  $ \Phi_P $  alone is enough.
\end{obs}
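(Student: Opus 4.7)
The Observation packages two formal assertions that I will extract and then deduce from the already-proved Theorems \ref{thm: twist-uRPhg=new-uRPhg} and \ref{thm: uRPhg=twist-uhg}. The first is that the family of ``twice-multiparametric'' objects $\{U^{\R,\Phi}_{P,\hbar}(\lieg)\}$, indexed by triples $(P,\R,\Phi)$ with $P$ a Cartan-type multiparameter, $\R$ a realization and $\Phi\in\lieso_t(\kh)$, coincides up to topological Hopf isomorphism with the family $\{U^{\R'}_{P'\!,\hbar}(\lieg)\}$ of FoMpQUEAs in the sense of Definition \ref{def: Mp-Uhgd}. The second is the ``polarization'' claim: the joint dependence on the double parameter $(P,\Phi)$ can be concentrated either entirely in the algebra structure (ruled by a single modified multiparameter $P_\Phi$, with the coproduct in the standard form of \eqref{eq: coprod_x_uPhg}) or entirely in the coalgebra structure (ruled by a single modified twist $\Phi_P$, with the algebra fixed to Drinfeld's standard one).

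For assertion (i), one inclusion is immediate by choosing $\Phi = 0$, so that $U^{\R,0}_{P,\hbar}(\lieg) = \uRPhg$ lies in both families. For the reverse inclusion I would invoke Theorem \ref{thm: twist-uRPhg=new-uRPhg} directly: the isomorphism $f^{\scriptscriptstyle \Phi}_{\scriptscriptstyle P}$ exhibits $U^{\R,\Phi}_{P,\hbar}(\lieg)$ as isomorphic to $U^{\R_\Phi}_{P_\Phi,\hbar}(\lieg)$, and by Proposition \ref{prop: twist-realizations} the pair $(P_\Phi,\R_\Phi)$ satisfies all the requirements of Definition \ref{def: Mp-Uhgd} with the same associated Cartan matrix $A$, so the resulting object is a bona fide FoMpQUEA in the original sense.

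For the algebra-side polarization, the same isomorphism $f^{\scriptscriptstyle \Phi}_{\scriptscriptstyle P}$ provides the proof: on the target side the coproduct is described by the unmodified formulas \eqref{eq: coprod_x_uPhg}, whereas the information carried by $\Phi$ has been transferred to the new coroots $T^\pm_{{\scriptscriptstyle \Phi},i}$ and thereby encoded in $P_\Phi$. For the coalgebra-side polarization I would apply Theorem \ref{thm: uRPhg=twist-uhg}(c): writing $P_s = DA$, the result furnishes a matrix $\Phi_P \in \lieso_t(\kh)$ (whose existence is guaranteed by Lemma \ref{lemma: twist=sym}) together with a straight small realization $\R'$ of $DA$ such that $\uRPhg \cong {\big(U^{\R'}_{\!DA,\,\hbar}(\lieg)\big)}^{\F_{\Phi_P}}$. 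Composing with the previous isomorphism and using the transitivity of twisting (cf.\ Remarks \ref{rmks: transitivity-twist}), one obtains $U^{\R,\Phi}_{P,\hbar}(\lieg) \cong {\big(U^{\R'}_{\!DA,\,\hbar}(\lieg)\big)}^{\F_{\Phi_P + \Phi'}}$ for a suitable $\Phi'$, so that the entire $(P,\Phi)$-dependence now lives in the twist governing the coproduct of Drinfeld's standard QUEA.

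The main technical obstacle is the straight small hypothesis in Theorem \ref{thm: uRPhg=twist-uhg}(c): the coalgebra-side polarization in its cleanest form only applies there. For an arbitrary realization $\R$ one must first pass via Proposition \ref{prop: realiz=twist-standard} to a straight small realization of comparable rank, a step which may force a change of realization and for which the matrix $\Phi_P$ is not canonical but depends on a choice of representative $\Phi'$ with $(DA)_{\Phi'} = P$. Tracking these choices carefully is the only non-trivial part of the argument; once done, the substance of the Observation is just the combined statement that the twist-deformation action on realizations (Proposition \ref{prop: realiz=twist-standard}) and the twist-deformation action on FoMpQUEAs (Theorem \ref{thm: twist-uRPhg=new-uRPhg}) are compatible, so that one can equivalently absorb the deformation parameter into either side of the Hopf structure.
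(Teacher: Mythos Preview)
The paper does not give a proof of this Observation at all: it is stated as a remark, with no accompanying argument, immediately following Theorems \ref{thm: twist-uRPhg=new-uRPhg} and \ref{thm: uRPhg=twist-uhg}, and is meant only as an interpretive summary of what those theorems already establish. Your proposal is a correct and careful unpacking of that interpretation, and your identification of the straight small hypothesis as the one place where the coalgebra-side polarization needs care is accurate; but you are supplying substantially more than the paper itself does.
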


\vskip7pt

\begin{obs}  \label{obs: split FoMpQUEAs not-stable}
 As the subclass of  \textsl{split\/}  realizations is  \textsl{not closed\/}  under twist
 (cf.\ the end of  \S \ref{further_stability}),  the subclass of all ``split'' FoMpQUEAs is not closed too under twist deformation; this is a quantum
 analogue of  Observations \ref{obs: parameter_(P,Phi) x MpLbA's}\textit{(b)}.
\end{obs}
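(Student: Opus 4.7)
The plan is to transfer the realization-level counterexample established at the end of \S \ref{further_stability}  (and made explicit in  Examples \ref{examples: special-cases}\textit{(c)})  up to the quantum level via  Theorem \ref{thm: twist-uRPhg=new-uRPhg}.  First I would fix a matrix  $ P $  of Cartan type (say  $ \, P = DA \, $)  and a  \textsl{split minimal\/}  realization  $ \, \R = \big(\lieh,\Pi,\Pi^\vee\big) \, $  of it, with  $ \, \rk(\lieh) = 2\,n \, $.  Then, keeping the notation of  Examples \ref{examples: special-cases}\textit{(c)},  I would choose  $ n $  even and  $ \, \Phi \in \lieso_{2n}(\kh) \, $  of the form  $ \, \Phi = \left(\begin{smallmatrix} 0 & \varPhi \\ -\varPhi^{\scriptscriptstyle T} & 0\end{smallmatrix}\right) \, $,  \,with  $ \varPhi $  invertible antisymmetric and  $ \, P_a = 2^{-1}\varPhi^{-1} \, $  (when  $ \, P = DA \, $  is symmetric one has  $ \, P_a = 0 \, $,  so one has to slightly modify the example to have  $ \, P_s = DA \, $  but  $ P_a $  nontrivial, which is allowed).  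By the analysis in  \S \ref{further_stability},  for such  $ \Phi $  the invertibility-matrix  $ M^{\,\scriptscriptstyle \Phi}_{\scriptscriptstyle P} $  vanishes, so the twisted realization  $ \R_\Phi $  is  \textsl{not\/}  split:  in fact,  $ \, \textsl{Span}_\kh\big({\{ S_{\Phi,i},\varLambda_{\Phi,i}\}}_{i\in I}\big) = \textsl{Span}_\kh\big({\{S_i\}}_{i\in I}\big) \, $,  strictly smaller than  $ \lieh \, $.

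Next I would invoke  Theorem \ref{thm: twist-uRPhg=new-uRPhg}  to obtain an isomorphism of topological Hopf  $ \kh $--algebras
\[
  f^{\scriptscriptstyle \Phi}_{\scriptscriptstyle P} \, : \; U^{\R_{\scriptscriptstyle \Phi}}_{\!P_{\scriptscriptstyle \Phi},\hbar}(\lieg)
  \,\;{\buildrel \cong \over {\relbar\joinrel\relbar\joinrel\longrightarrow}}\;\,
  {\big(\,\uRPhg\,\big)}^{\F_\Phi} \; ,
\]
where  $ \F_\Phi $  is the toral twist associated with  $ \Phi $  as in  \eqref{eq: Resh-twist_F-uPhgd}.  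By construction the left-hand side is the FoMpQUEA associated with the  \textsl{non-split\/}  realization  $ \R_\Phi \, $,  while the right-hand side is the toral twist deformation of the  \textsl{split\/}  FoMpQUEA  $ \uRPhg \, $.  This already shows that the twist of a split FoMpQUEA can, at the level of its canonical presentation as a FoMpQUEA, fail to be split; this is the direct quantum counterpart of  Observations \ref{obs: parameter_(P,Phi) x MpLbA's}\textit{(b)\/}  for MpLbA's.

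The main subtlety   ---  and the step I expect to be the real obstacle  ---   is that ``being split'' is nominally a property of the realization rather than of the isomorphism class of the Hopf algebra, so in principle  $ \, {\big(\,\uRPhg\,\big)}^{\F_\Phi} \cong U^{\R_\Phi}_{\!P_\Phi,\hbar}(\lieg) \, $  could still be isomorphic, as a topological Hopf algebra, to some  $ U^{\R'}_{\!P',\hbar}(\lieg) $  with  $ \R' $  split.  To rule this out one needs an intrinsic, presentation-free invariant.  The natural strategy is to examine the coradical filtration together with the skew-primitive generators: the collection of skew-primitives  $ \, E_i , F_i \, $  determines, modulo  $ \hbar \, $,  a pair of distinguished elements  $ \, T_i^\pm \in \lieh \, $  via the group-like coefficients  $ e^{\pm \hbar T_i^\pm} $  appearing in their coproduct, and the splitness of the realization is equivalent to the  $ \Bbbk $--linear  independence of  $ \, \{\overline{T_i^\pm}\}_{i \in I} \, $  in  $ \lieh/\hbar\lieh \, $.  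The plan would then be to show that, up to automorphisms of  $ \, {\big(\,\uRPhg\,\big)}^{\F_\Phi} \, $,  any choice of ``Serre-type'' generators produces the same coroot data as in  $ \R_\Phi $   ---  essentially because the classical limit, via  Theorem \ref{thm: twist-liegRP=new-liegR'P'},  is the toral-twist MpLbA  $ {\big(\liegRP\big)}^{j_\Theta} $  with its fixed (non-split) realization.  Once this intrinsic characterization is in place, the counterexample above is genuine, and the observation follows.
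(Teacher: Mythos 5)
Your first two paragraphs reproduce exactly the argument the paper intends: the Observation carries no separate proof, being the immediate combination of the realization-level counterexample at the end of \S\ref{further_stability} (made explicit in Examples \ref{examples: special-cases}\textit{(c)}) with Theorem \ref{thm: twist-uRPhg=new-uRPhg}, which identifies $ {\big(\,\uRPhg\,\big)}^{\F_\Phi} $ with $ U^{\R_\Phi}_{\!P_\Phi,\hbar}(\lieg) $; since $ \R_\Phi $ fails to be split while $ \R $ is split, the class of split FoMpQUEAs is not stable. Your parenthetical fix is also right: with $ P = DA $ one has $ P_a = 0 $, so one must instead take $ P $ of Cartan type with $ P_s = DA $ but $ P_a \neq 0 $, which is allowed.

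Your third paragraph, however, aims at a stronger statement than the one actually being made, and it is the only place where your proposal goes beyond what you can deliver. In this paper ``split'' is, by Definition \ref{def: Mp-Uhgd}\textit{(a)}, an attribute of a FoMpQUEA \emph{together with its realization} --- exactly as in the semiclassical analogue, Observations \ref{obs: parameter_(P,Phi) x MpLbA's}\textit{(b)} --- so the Observation only asserts that the twist deformation, presented as the FoMpQUEA it canonically is (namely $ U^{\R_\Phi}_{\!P_\Phi,\hbar}(\lieg) $), is built upon a non-split realization; no claim is made that splitness is an invariant of the abstract topological Hopf algebra, and none is needed. If you insist on the presentation-free reading, then your sketch (coradical filtration, skew-primitives recovering the $ T_i^\pm $ up to automorphism, reduction modulo $ \hbar $) is only a plan: the rigidity assertion that any Serre-type generating system of $ {\big(\,\uRPhg\,\big)}^{\F_\Phi} $ must reproduce the coroot data of $ \R_\Phi $ is proved neither by you nor anywhere in the paper, so it should not be folded into the justification of this Observation --- state the result, as the paper does, at the level of FoMpQUEAs with their realization data.
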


\vskip17pt

\subsection{Deformations of FoMpQUEAs by toral 2-cocycles}  \label{subsec: coc-def_FoMpQUEAs}
 {\ }
 \vskip7pt
   We consider now some  $ 2 $--cocycle  deformations (called ``of toral type'' again) of the formal MpQUEAs  $ \uRPhg \, $,
   and we prove that these are again formal MpQUEAs.

\vskip9pt

\begin{free text}  \label{2-cocycle-U_h(h)}
 {\bf Special  $ 2 $--cocycles  of  $ U_{\!P,\hbar}(\lieh) \, $.}
 Fix again  $ \, P := {\big(\, p_{i,j} \big)}_{i, j \in I} \in M_n\big( \kh \big) \, $  of
 Cartan type with associated Cartan matrix  $ A \, $,  a realization
 $ \; \R \, := \, \big(\, \lieh \, , \Pi \, , \Pi^\vee \,\big) \; $  of it and the (topological) Hopf algebra  $ \uRPhg \, $,
 as in  \S \ref{sec: Cartan-data_realiz's}  and  \S \ref{sec: form-MpQUEAs},  setting  $ \, d_i := p_{ii}/2 \, $
 for all  $ \, i \in I \, $ and  $ \, D_{\scriptscriptstyle P} := \text{\sl diag}\big(d_1\,,\dots,d_n\big) \, $.
 We consider special 2--cocycles of  $ \, \uRPhg \, $,  called ``toral'' as they are induced from the quantum torus.
 To this end, like in  \S \ref{twist-uPhgd},  we fix in  $ \lieh $  a  $ \kh $--basis  $ \, {\big\{ H_g \big\}}_{g \in \G} \, $,
 where  $ \G $  is an index set with  $ \, |\G| = \rk(\lieh) = t \, $.
 \vskip5pt
   Like in  \S \ref{2-cocycle-deform.'s x mpmatr.'s & realiz.'s},  we fix an antisymmetric,  $ \kh $---bilinear  map
   $ \, \chi : \lieh \times \lieh \relbar\joinrel\longrightarrow \kh \, $,  \,that corresponds to some
   $ \, X = {\big(\, \chi_{g{}\gamma} \big)}_{g , \gamma \in \G} \in \lieso_t\big(\kh\big) \, $  via  $ \, \chi_{g{}\gamma} = \chi(H_g\,,H_\gamma) \, $.
   We also consider the antisymmetric matrix
   $ \, \mathring{X} := {\Big(\, \mathring{\chi}_{i{}j} = \chi\big(\,T_i^+\,,T_j^+\big) \Big)}_{i, j \in I} \! \in \lieso_n(\kh) \, $.
   Any such map  $ \chi $  induces uniquely an antisymmetric,  $ \kh $--bilinear  map
  $$  \tilde{\chi}_{\scriptscriptstyle U} \, : \,  U_{\!P,\hbar}^{\R}(\lieh) \times U_{\!P,\hbar}^{\R}(\lieh)
  \relbar\joinrel\relbar\joinrel\relbar\joinrel\longrightarrow \kh  $$
 as follows.  By definition,  $ U_{\!P,\hbar}^{\R}(\lieh) $  is an  $ \hbar $--adically
 complete topologically free Hopf algebra isomorphic to
 $ \, \widehat{S}_\kh(\lieh) := \widehat{\bigoplus\limits_{n \in \NN} S_\kh^{\,n}(\lieh)} \, $
 --- the  $ \hbar $--adic  completion of the symmetric algebra
 $ \, S_\kh(\lieh) = \bigoplus\limits_{n \in \NN} S_\kh^{\,n}(\lieh) \, $  ---
 hence the following makes sense:

\vskip9pt

\begin{definition}  \label{def: tildechi_U}
 We define  $ \tilde{\chi}_{\scriptscriptstyle U} $  as the unique  $ \kh $--linear
 (hence  $ \hbar $--adically  continuous) map
 $ \; U_{\!P,\hbar}^{\R}(\lieh) \otimes U_{\!P,\hbar}^{\R}(\lieh) \,{\buildrel \tilde{\chi}_{\scriptscriptstyle U}
 \over {\relbar\joinrel\longrightarrow}}\, \kh \; $
 such that (with identifications as above)
  $$  \displaylines{
   \tilde{\chi}_{\scriptscriptstyle U}(z,1) := \epsilon(z) =: \tilde{\chi}_{\scriptscriptstyle U}(1,z)  \;\; ,
   \quad  \tilde{\chi}_{\scriptscriptstyle U}(x,y) := \chi(x,y)  \quad
   \forall \;\; z \in  \widehat{S}_\kh(\lieh) \, , \; x, y \in  S_\kh^{\,1}(\lieh)  \cr
   \hfill \qquad  \tilde{\chi}_{\scriptscriptstyle U}(x,y) := 0   \quad \qquad  \forall \;\; x \in S_\k^{\,r}(\lieh) \, ,
   \; y \in S_\k^{\,s}(\lieh) \, : \, r, s \geq 1 \, , \; r+s > 2   \hfill  \diamondsuit  }  $$
\end{definition}

\vskip9pt

   By construction,  \textsl{$ \tilde{\chi}_{\scriptscriptstyle U} $  is a normalized
   Hochschild 2--cocycle on  $ U_{\!P,\hbar}^{\R}(\lieh) \, $},  \,that is
  $$  \epsilon(x) \,\tilde{\chi}_{\scriptscriptstyle U}(y,z) \, - \, \tilde{\chi}_{\scriptscriptstyle U}(xy,z) \,
  + \, \tilde{\chi}_{\scriptscriptstyle U}(x,yz) \,
  - \, \tilde{\chi}_{\scriptscriptstyle U}(x,y) \, \epsilon(z) \; = \; 0
  \qquad  \forall \;\; x, y, z \in U_{\!P,\hbar}^{\R}(\lieh)  $$

\vskip5pt

   Now recall that, given two linear maps
   $ \, \eta , \vartheta : U_{\!P,\hbar}^{\R}(\lieh) \otimes U_{\!P,\hbar}^{\R}(\lieh) \relbar\joinrel\longrightarrow \kh \, $,
   \,one may define the  {\it convolution product\/}  map
   $ \; \eta \ast \vartheta : {U_{\!P,\hbar}^{\R}(\lieh)}^{\otimes 2} \relbar\joinrel\relbar\joinrel\longrightarrow \kh \; $
   --- using the standard coalgebra structure of
   $ \, {U_{\!P,\hbar}^{\R}(\lieh)}^{\otimes 2}\cong {\widehat{S}_\kh(\lieh)}^{\otimes 2}
   \cong \widehat{S}_\kh(\lieh \oplus \lieh) \, $
   ---   by the formula  $ \; (\eta \ast \vartheta)(x \otimes y) \, := \, \eta(x_{(1)},y_{(1)}) \, \vartheta(x_{(2)},y_{(2)}) \; $
   for all  $ \, x, y \in U_{\!P,\hbar}^{\R}(\lieh) \, $.  Then by  $ \, \eta^{\ast\,m} \, $
   we denote the  $ m $--th  power with respect to the convolution
   product of any map  $ \eta $  as above; in particular, we set  $ \, \eta^{\ast\,0} := \epsilon \otimes \epsilon \, $.
                                                              \par
   The following result describes the powers of our map  $ \tilde{\chi}_{\scriptscriptstyle U} \, \, $:
\end{free text}

\vskip5pt

\begin{lema}  \label{lem: tildechi}
For all  $ \, H_+ ,H_- \in \lieh \, $  and  $ \, k, \ell, m \in \NN_+ \, $,  \,we have
  $$  { \tilde{\chi}_{\scriptscriptstyle U}}^{\,\ast\,m}\big(H_+^{\,k},H_-^{\,\ell}\,\big) \; = \;
   \begin{cases}
      \begin{array}{lr}
  \delta_{k,m} \, \delta_{\ell,m} \, {\big(m!\big)}^2 \, \chi(H_+,H_-)^m  &  \quad  \text{for} \;\;\;\; m \geq 1 \; , \\
  \delta_{k,0} \, \delta_{\ell,0} \,  &  \quad  \text{for} \;\;\;\; m = 0 \; .
      \end{array}
   \end{cases}  $$
\end{lema}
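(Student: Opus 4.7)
The plan is to compute $\tilde{\chi}_{\scriptscriptstyle U}^{\,\ast\,m}(H_+^{\,k},H_-^{\,\ell})$ directly by expanding via the iterated coproduct on $U_{P,\hbar}^{\R}(\lieh)\cong \widehat{S}_\kh(\lieh)$ and then exploiting the very restricted support of $\tilde{\chi}_{\scriptscriptstyle U}$ on bi-homogeneous pieces. The case $m=0$ is immediate because the convolution identity for maps $U_\hbar(\lieh)^{\otimes 2}\to\kh$ is $\epsilon\otimes\epsilon$, which evaluates on $(H_+^{\,k},H_-^{\,\ell})$ to $\epsilon(H_+^{\,k})\,\epsilon(H_-^{\,\ell})=\delta_{k,0}\delta_{\ell,0}$; so the bulk of the argument concerns $m\ge 1$.

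First, since $H_+,H_-\in\lieh$ are primitive in $U_\hbar(\lieh)$, the multinomial theorem gives, for every $r\in\NN$, the identity $\Delta^{m-1}(H_\pm^{\,r})=\sum_{j_1+\cdots+j_m=r}\binom{r}{j_1,\ldots,j_m}\,H_\pm^{\,j_1}\otimes\cdots\otimes H_\pm^{\,j_m}$. Plugging this into the unfolding of convolution powers yields
$$\tilde{\chi}_{\scriptscriptstyle U}^{\,\ast\,m}(H_+^{\,k},H_-^{\,\ell})\;=\;\sum_{\underline{j},\underline{i}}\binom{k}{j_1,\ldots,j_m}\binom{\ell}{i_1,\ldots,i_m}\prod_{a=1}^m \tilde{\chi}_{\scriptscriptstyle U}\!\big(H_+^{\,j_a},H_-^{\,i_a}\big),$$
where $\underline{j}=(j_a)$ runs over compositions of $k$ of length $m$ and analogously for $\underline{i}$.

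Second, I read off from Definition \ref{def: tildechi_U} the support of each factor $\tilde{\chi}_{\scriptscriptstyle U}(H_+^{\,j_a},H_-^{\,i_a})$: it vanishes whenever $j_a,i_a\ge 1$ and $j_a+i_a>2$, and the only pair with both entries $\ge 1$ that contributes is $(j_a,i_a)=(1,1)$, giving $\chi(H_+,H_-)$. The remaining contributions can only come from pairs with at least one zero entry, which reduce to evaluations of $\epsilon$ on powers of $H_\pm$ by the normalization $\tilde{\chi}_{\scriptscriptstyle U}(z,1)=\epsilon(z)=\tilde{\chi}_{\scriptscriptstyle U}(1,z)$; the crucial observation is that after the coboundary normalization implicit in the cocycle deformation, the only position-configuration whose product of local factors is non-zero on a purely primitive input is the one where every $(j_a,i_a)$ equals $(1,1)$.

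Third, I finish the bookkeeping. Requiring $(j_a,i_a)=(1,1)$ for every $a\in\{1,\ldots,m\}$ forces $k=\sum_a j_a=m$ and $\ell=\sum_a i_a=m$, which is precisely the constraint $\delta_{k,m}\,\delta_{\ell,m}$; under this constraint, each multinomial coefficient collapses to $\binom{m}{1,\ldots,1}=m!$, each factor contributes $\chi(H_+,H_-)$, and the single surviving summand evaluates to $(m!)^2\,\chi(H_+,H_-)^m$, as claimed.

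The main obstacle will be the careful justification, in the middle step, that configurations with some $(j_a,i_a)=(0,0)$ and some $(1,1)$ contribute zero: a priori they appear to produce extra terms proportional to $\chi^{k}$ with combinatorial factor $\binom{m}{k}(k!)^2$ when $k=\ell<m$, so one has to invoke precisely the normalization convention chosen for $\tilde{\chi}_{\scriptscriptstyle U}$ on the augmentation ideal (and the compatibility it forces with the counit of $U_\hbar(\lieh)$) to see that these mixed configurations vanish. Once this is settled, the statement reduces to a routine multinomial count.
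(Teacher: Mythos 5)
Your expansion via primitivity of $\lieh$ and the multinomial theorem is exactly the ``direct computation'' the paper's one-line proof alludes to, and the bookkeeping in your first and third steps is correct. But as written the proposal is not a proof: the decisive point --- that a block with $(j_a,i_a)=(0,0)$ contributes a factor $0$ rather than $1$ --- is precisely what you leave open as ``the main obstacle'', and the appeal to ``the coboundary normalization implicit in the cocycle deformation'' does not settle it. Everything hinges on the single value $\tilde{\chi}_{\scriptscriptstyle U}(1,1)$, since each $(0,0)$ block contributes exactly that factor. Under the literal reading of Definition \ref{def: tildechi_U} one has $\tilde{\chi}_{\scriptscriptstyle U}(1,1)=\epsilon(1)=1$, in which case the mixed configurations do \emph{not} vanish and your own count $\binom{m}{k}(k!)^2\,\chi(H_+,H_-)^k$ for $k=\ell\le m$ is the correct value; for instance $\tilde{\chi}_{\scriptscriptstyle U}^{\,\ast 2}(H_+,H_-)=2\,\chi(H_+,H_-)\neq 0$, contradicting the Lemma. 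So the step you defer would simply fail under that reading --- it cannot be ``settled'' without changing the convention.

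The way to close the gap is to make explicit that the intended normalization is $\tilde{\chi}_{\scriptscriptstyle U}(1,1)=0$, i.e.\ $\tilde{\chi}_{\scriptscriptstyle U}$ is supported on $S_\kh^{\,1}(\lieh)\otimes S_\kh^{\,1}(\lieh)$ (it kills any argument of symmetric degree $0$, the clause ``$\tilde{\chi}_{\scriptscriptstyle U}(z,1):=\epsilon(z)=:\tilde{\chi}_{\scriptscriptstyle U}(1,z)$'' being the additive-level normalization, which is nonzero only in the degenerate case $z\in S^{\,0}_\kh(\lieh)$ and must be discarded there). This reading is forced both by the statement of the Lemma itself and, independently, by the rest of \S\ref{2-cocycle-U_h(h)}: if $\tilde{\chi}_{\scriptscriptstyle U}(1,1)=1$ then $\tilde{\chi}_{\scriptscriptstyle U}^{\,\ast m}(1,1)=1$ for all $m$, so $\chi_{\scriptscriptstyle U}(1,1)=\sum_{m\ge 0}\hbar^{-m}\big/\big(2^m\,m!\big)$ would not lie in $\khp$, and neither the well-definedness claim nor the identity $\chi_{\scriptscriptstyle U}(K_+,K_-)=e^{\hbar\,2^{-1}\chi(H_+,H_-)}$ of Lemma \ref{lem: chi_U(K,K)} could hold. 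Once this is stated, the argument closes in one line: every block must be $(1,1)$ --- blocks $(r,0)$ and $(0,s)$ with $r,s\ge 1$ give $\epsilon(H_\pm^{\,r})=0$, blocks $(0,0)$ give $\tilde{\chi}_{\scriptscriptstyle U}(1,1)=0$, and in particular the positions of the $1$'s in $\underline{j}$ and $\underline{i}$ are forced to coincide --- hence $k=\ell=m$, each multinomial coefficient equals $m!$, and the value is $(m!)^2\chi(H_+,H_-)^m$, the case $m=0$ being $\epsilon\otimes\epsilon$ as you say. With that normalization made explicit, your computation is complete.
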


\pf
 The proof follows by a direct computation.
\epf

\vskip9pt

\begin{definition}  \label{def: chi_U}
 Keep notation as above.  We define  $ \, \chi_{\scriptscriptstyle U} \, $  as the unique  $ \kh $--linear
%
%
 map from  $ \, U_{\!P,\hbar}^{\R}(\lieh) \!\mathop{\otimes}\limits_\kh\! U_{\!P,\hbar}^{\R}(\lieh) \, $  to  $ \khp $
 given by the exponentiation of  $ \, \hbar^{-1} \, 2^{-1} \, \tilde{\chi}_{\scriptscriptstyle U} \, $,  i.e.
 \vskip-5pt
  $$  \chi_{\scriptscriptstyle U}  \; := \;  e^{\hbar^{-1} 2^{-1} \tilde{\chi}_{\scriptscriptstyle U}}  \; = \;
  {\textstyle \sum_{m \geq 0}} \, \hbar^{-m} \, {\tilde{\chi}_{\scriptscriptstyle U}}^{\,\ast\,m} \! \Big/ 2^m\,m!   \eqno \diamondsuit  $$
\end{definition}

\vskip9pt

\begin{lema}  \label{lem: chi_U(K,K)}
 The map  $ \, \chi_{\scriptscriptstyle U} $  is a well defined, normalized,
 $ \khp $--valued  Hopf 2--cocy\-cle for  $ \, U_{\!P,\hbar}^{\R}(\lieh) \, $,  such that, for all
 $ H_+ \, , H_- \in \lieh \, $,  and setting  $ \, K_\pm := e^{\,\hbar\,H_\pm} \, $,
  $$  \chi_{\scriptscriptstyle U}^{\pm 1}(H_+,H_-) \, = \, \pm \hbar^{-1} \, 2^{-1}  \chi(H_+,H_-)  \quad ,
  \qquad  \chi_{\scriptscriptstyle U}(K_+,K_-) \, = \, e^{\hbar \, 2^{-1} \chi(H_+,H_-)}  $$
\end{lema}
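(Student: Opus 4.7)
The plan is to verify, in order, that $\chi_{\scriptscriptstyle U}$ is well-defined as a $\khp$-valued map, that it is normalized, that it satisfies the Hopf 2-cocycle axiom, and finally to check the explicit values on $(H_+, H_-)$ and $(K_+, K_-)$. Via the isomorphism $U_{\!P,\hbar}^{\R}(\lieh) \cong \widehat{S}_\kh(\lieh)$ and $\kh$-linearity, all four assertions reduce to computations on pairs of symmetric monomials $(H_+^k, H_-^\ell)$, where the preceding Lemma gives the explicit formula $\tilde{\chi}_{\scriptscriptstyle U}^{\,\ast m}(H_+^k, H_-^\ell) = \delta_{k,m}\,\delta_{\ell,m}\,(m!)^2\,\chi(H_+, H_-)^m$ for $m \geq 1$.

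For well-definedness, this formula shows that on each monomial pair only a single term of the defining series contributes, so the value lies in $\khp$; the extension to the full $\hbar$-adic completion follows from the $\kh$-bilinearity (hence $\hbar$-adic continuity) of each convolution power $\tilde{\chi}_{\scriptscriptstyle U}^{\,\ast m}$, together with the observation that modulo any fixed $\hbar^N\khp$ only finitely many terms in $\sum_{m} \hbar^{-m}\,\tilde{\chi}_{\scriptscriptstyle U}^{\,\ast m}/(2^m m!)$ are nontrivial on a given pair of bounded-degree approximants. Normalization $\chi_{\scriptscriptstyle U}(x,1)=\epsilon(x)=\chi_{\scriptscriptstyle U}(1,x)$ follows because $\Delta(1)=1\otimes 1$ forces $\tilde{\chi}_{\scriptscriptstyle U}^{\,\ast m}(x,1)=\epsilon(x)$ for all $m\geq 0$ on every positive-degree monomial $x$, while on $x=1$ only the convolution identity term contributes.

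The main technical point is the Hopf 2-cocycle axiom. Here the plan is to exploit that $\widehat{S}_\kh(\lieh)$ is commutative and cocommutative, so that the convolution algebra $\Hom_\kh\bigl(U_{\!P,\hbar}^{\R}(\lieh)^{\otimes 2},\,\khp\bigr)$ is commutative. In this abelian setting, the normalized Hochschild 2-cocycle condition satisfied by $\tilde{\chi}_{\scriptscriptstyle U}$ (displayed just before Definition \ref{def: chi_U}) is equivalent to its being a biderivation in each variable, and the classical fact, going back to Sweedler--Cartier, is that the convolution exponential of such a biderivation is a normalized Hopf 2-cocycle. For a self-contained verification, one can evaluate both sides of the Hopf 2-cocycle identity on triples of symmetric monomials $(H_1^{k_1}, H_2^{k_2}, H_3^{k_3})$, observe that the expansions of the coproducts produce finite sums of products of $\chi$-values indexed by set partitions, and then match the two sides term by term using the biderivation property; the hard ingredient is purely bookkeeping, since no convergence issue arises once both sides are reduced to finite sums on fixed monomials.

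The explicit values are then direct from the Lemma. Only the $m=1$ term contributes to $\chi_{\scriptscriptstyle U}(H_+, H_-)$, giving $\hbar^{-1}\,2^{-1}\chi(H_+, H_-)$. The convolution inverse is $e^{-\hbar^{-1}\tilde{\chi}_{\scriptscriptstyle U}/2}$, since the two exponentials convolution-commute on the commutative cocommutative Hopf algebra; evaluating on $(H_+, H_-)$ yields the stated formula for $\chi_{\scriptscriptstyle U}^{-1}$. Finally, expanding $K_\pm = \sum_k \hbar^k H_\pm^k/k!$, plugging in, and using the Lemma to collect the surviving terms $k=\ell=m$ gives
\[
\chi_{\scriptscriptstyle U}(K_+, K_-) \;=\; \sum_{m \geq 0} \frac{\hbar^{2m}}{(m!)^2}\cdot\frac{\hbar^{-m}(m!)^2\,\chi(H_+,H_-)^m}{2^m\,m!} \;=\; e^{\hbar\,2^{-1}\chi(H_+,H_-)},
\]
which in particular lies in $\kh \subseteq \khp$ as expected.
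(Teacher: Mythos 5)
Your proposal is correct and follows essentially the same route as the paper: the explicit values and well-definedness are read off from Lemma \ref{lem: tildechi} (only one convolution power survives on each pair of monomials), and the Hopf 2-cocycle property comes from the classical fact that the convolution exponential of a (normalized) Hochschild 2-cocycle on a commutative cocommutative Hopf algebra is a multiplicative 2-cocycle. The only difference is one of packaging: the paper simply cites the proof of \cite[Theorem 4.1]{Sw} (see also \cite[Lemma 4.1]{GM}) for that last point, whereas you sketch the Sweedler-type verification directly on monomials.
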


\pf
 The identities follow from  Lemma \ref{lem: tildechi},  hence  $ \, \chi_{\scriptscriptstyle U} \, $  is well defined.
 The other claims follow from the proof of  \cite[Theorem 4.1]{Sw},  see also  \cite[Lemma 4.1]{GM}.
\epf

\vskip11pt

\begin{free text}  \label{toral-cocycles-uPhgd}
 {\bf Toral  $ 2 $--cocycles of  $ \uRPhg \, $.}
 The previous construction provides, starting from  $ \chi \, $,  a normalized Hopf\/  $ 2 $--cocycle
 $ \; \chi_{\scriptscriptstyle U} : U_{\!P,\hbar}^{\R}(\lieh) \times U_{\!P,\hbar}^{\R}(\lieh)
 \relbar\joinrel\relbar\joinrel\longrightarrow \khp \; $.

\vskip5pt

   \textit{We assume now that the map  $ \, \chi $  satisfies the additional requirement\/  \eqref{eq: condition-chi},
%
 in other words we require that  $ \, \chi \in \text{\sl Alt}_{\,\kh}^{\,S}(\lieh) \, $   --- }
 notation of  \S \ref{subsec: 2-cocycle-realiz}.  The latter map canonically induces a  $ \kh $--bilinear  map
 $ \; \overline{\chi} : \overline{\lieh} \times \overline{\lieh}
\relbar\joinrel\longrightarrow \kh \; $,  \;where  $ \, \overline{\lieh} := \lieh \big/ \lies \, $  with
$ \, \lies := \textsl{Span}_\kh\big( {\{\, S_i \,\}}_{i \in I} \big)\, $,  \,given by
$$
\overline{\chi}\,\big(\, T' \! + \lies \, , T'' \! + \lies \,\big)
\, := \, \chi\big(\,T',T''\big)   \qquad  \forall \;\; T' , T'' \in \lieh
$$
   \indent   Now, replaying the construction above but with  $ \overline{\lieh} $  and
   $ \overline{\chi} $  replacing  $ \lieh $  and  $ \chi \, $,
   we can construct a normalized Hopf\/  $ 2 $--cocycle
   $ \; \overline{\chi}_{\scriptscriptstyle U} \!: U_{\!P,\hbar}^\R\big(\,\overline{\lieh}\,\big) \times
   U_{\!P,\hbar}^\R\big(\,\overline{\lieh}\,\big) \relbar\joinrel\longrightarrow \khp \; $;
for the latter, the analogue of  Lemma \ref{lem: chi_U(K,K)}  holds true again.  Moreover,
  note that  $ \, U_{\!P,\hbar}^\R\big(\,\overline{\lieh}\,\big) \cong \widehat{S}_\kh\big(\,\overline{\lieh}\,\big) \, $,  \,and,
thanks to  \eqref{eq: condition-chi},
 there exists a unique Hopf algebra epimorphism
 $ \; \pi : \uRPhg \relbar\joinrel\relbar\joinrel\twoheadrightarrow U_{\!P,\hbar}^\R\big(\,\overline{\lieh}\,\big) \; $
 given by  $ \, \pi(E_i) := 0 \, $,  $ \, \pi(F_i) := 0 \, $   --- for  $ \, i \in I \, $  ---   and
 $ \, \pi(T) := (T + \lies) \, \in \, \overline{\lieh} \subseteq U_{\!P,\hbar}^\R\big(\,\overline{\lieh}\,\big) \, $
 --- for  $ \, T \in \lieh \, $.
 Then we consider
$$
 \sigma_\chi \, := \, \overline{\chi}_{\scriptscriptstyle U} \circ (\pi \times \pi) \, : \,
 \uRPhg \times \uRPhg \relbar\joinrel\relbar\joinrel\relbar\joinrel\relbar\joinrel\twoheadrightarrow \khp
$$
 which is  \textsl{automatically\/}  a normalized,  $ \khp $--valued  Hopf\/  $ 2 $--cocycle  on  $ \uRPhg \, $.
\end{free text}

\vskip7pt

\begin{definition}  \label{def-sigma_formal}
 We shall call all normalized Hopf  $ 2 $--cocycles  $ \, \sigma_\chi $  of  $ \, \uRPhg $  obtained
 --- from all
 $ \, \chi \in \text{\sl Alt}^{\,S}_{\,\kh}(\lieh) \, $
  ---   via the above construction as ``of toral type'', or ``toral  $ 2 $--cocycles'';
  we denote by  $ \, \Z_{\,2}\big( \uRPhg \big) \, $  the set of all of them
  --- which is actually independent of the multiparameter  $ P $,  indeed.   \hfill  $ \diamondsuit $
\end{definition}

\vskip7pt

\begin{free text}  \label{rmk: formulas_deform-prod}
 {\bf Formulas for the  $ \sigma_\chi $--deformed  product.}
 Let  $ \, \sigma_\chi \in \Z_{\,2}\big( \uRPhg \big) \, $  be a toral  $ 2 $--cocycle  as above.
 Following  \S \ref{defs_Hopf-algs},  using  $ \sigma_\chi $  we introduce in  $ \uRPhg $  a ``deformed product'',
 hereafter denoted by   $ \, \scriptstyle \dot\sigma_\chi \, $;  \,then
 $ \, X^{(n)_{\sigma_{\chi}}} = X\raise-1pt\hbox{$ \, \scriptstyle \dot\sigma_\chi $} \cdots \raise-1pt\hbox{$ \, \scriptstyle \dot\sigma_\chi $} X \, $
 will denote the  $ n $--th  power of any  $ \, X \in \uRPhg \, $  with respect to this deformed product.
                                                \par
   Directly from definitions, sheer computation yields the following formulas,
   relating the deformed product with the old one (for all  $ \; T', T'', T \in \lieh \; $,  $ \; i \, , \, j \in I \, $):
  $$  \displaylines{
   T' \raise-1pt\hbox{$ \, \scriptstyle \dot\sigma_\chi $} T''  \, = \;  T' \, T''  \quad ,
     \qquad  E_i \raise-1pt\hbox{$ \, \scriptstyle \dot\sigma_\chi $} F_j  \; = \,  E_i \, F_j  \quad ,
     \qquad  F_j \raise-1pt\hbox{$ \, \scriptstyle \dot\sigma_\chi $} E_i  \; = \,  F_j \, E_i  \cr
   T \raise-1pt\hbox{$ \, \scriptstyle \dot\sigma_\chi $}  E_j  \; = \;  T \, E_j \, + \, 2^{-1} \chi\big(T,T_j^+\big) \, E_j \quad ,
     \quad  E_j \raise-1pt\hbox{$ \, \scriptstyle \dot\sigma_\chi $} T  \; = \;  E_j \, T \, + \, 2^{-1} \chi\big(T_j^+,T\big)\big)  \, E_j  \cr
   T \raise-1pt\hbox{$ \, \scriptstyle \dot\sigma_\chi $} F_j  \; = \;  T \, F_j \, + \, 2^{-1} \chi\big(T,T_j^-\big)  \, F_j  \quad ,
     \quad  F_j \raise-1pt\hbox{$ \, \scriptstyle \dot\sigma_\chi $} T  \; = \;  F_j \, T \, + \, 2^{-1} \chi\big(T_j^-,T\big) \, F_j
  }  $$
  $$  \displaylines{
   E_i^{\,(m)_{\sigma_{\chi}}}  \, = \;  {\textstyle \prod_{\ell=1}^{m-1}} \sigma_\chi\Big( e^{\,+\hbar\,\ell\,T_i^+} \! , \, e^{\,+\hbar\,T_i^+} \Big) \, E_i^{\,m}  \; = \;  E_i^{\,m}  \cr
   E_i^{\,m} \raise-1pt\hbox{$ \, \scriptstyle \dot\sigma_\chi $} E_j^{\,n}  \; = \;  \sigma_\chi\Big(
e^{\,+\hbar\,m\,T_i^+} \! , \, e^{\,+\hbar\,n\,T_j^+} \Big) \, E_i^{\,m} \, E_j^{\,n}  \; = \;  e^{\, +\hbar \,m\,n\, 2^{-1} \mathring{\chi}_{ij} } E_i^{\,m} \, E_j^{\,n}  \cr
%
%
   E_i^{\,(m)_{\sigma_{\chi}}} \raise-1pt\hbox{$ \, \scriptstyle \dot\sigma_\chi $}\, E_j \,\raise-1pt\hbox{$ \, \scriptstyle \dot\sigma_\chi $}\, E_k^{\,(n)_{\sigma_\chi}}  = \,
   \Big( {\textstyle \prod_{\ell=1}^{m-1}} \sigma_\chi\Big( e^{\,+\hbar\,\ell\,T_i^+} \! ,
   \, e^{\,+\hbar\,T_i^+} \Big) \!\Big) \Big( {\textstyle \prod_{t=1}^{n-1}} \sigma_\chi\Big( e^{\,+\hbar\,t\,T_k^+} \! , \, e^{\,+\hbar\,T_k^+} \Big) \!\Big) \, \cdot   \hfill  \cr
  \hfill \cdot \; \sigma_{\chi}\Big( e^{\,+\hbar\,m\,T_i^+} \! , \, e^{\,+\hbar\,T_j^+} \Big) \,
  \sigma_\chi\Big( e^{\,+\hbar\,(m\,T_i^+ + T_j^+)} \! , \, e^{\,+\hbar\,n\,T_k^+} \Big) \, E_i^{\,m} \, E_j \, E_k^{\,n}  \cr
   F_i^{\,(m)_{\sigma_\chi}}  \, = \;  {\textstyle \prod_{\ell=1}^{m-1}} \sigma_\chi^{-1}\Big( e^{\,-\hbar\,\ell\,T_i^-} \! , \,
   e^{\,-\hbar\,T_i^-} \Big) \, F_i^{\,m}  \; = \;  F_i^{\,m}  \cr
   F_i^{\,m} \raise-1pt\hbox{$ \, \scriptstyle \dot\sigma_\chi $} F_j^{\,n}  \; = \;  \sigma_\chi^{-1}\Big( e^{\,-\hbar\,m\,T_i^-} \! , \,
   e^{\,-\hbar\,n\,T_j^-} \Big) \, F_i^{\,m} \, F_j^{\,n}  \; = \;  e^{\, -\hbar \,m\,n\, 2^{-1} \mathring{\chi}_{ij}  } F_i^{\,m} \, F_j^{\,n}  \cr
%
   \qquad   F_i^{\,(m)_{\sigma_{\chi}}} \raise-1pt\hbox{$ \, \scriptstyle \dot\sigma_\chi $}\,
   F_j \,\raise-1pt\hbox{$ \, \scriptstyle \dot\sigma_\chi $}\, F_k^{\,(n)_{\sigma_{\chi}}}  \, =   \hfill  \cr
   = \;  \Big( {\textstyle \prod_{\ell=1}^{m-1}} \sigma_{\chi}^{-1}\Big( e^{\, -\hbar\,\ell\,T_i^-} \! , \,
   e^{\,-\hbar\,T_i^-} \Big) \Big) \Big( {\textstyle \prod_{t=1}^{n-1}} \sigma_{\chi}^{-1}\Big( e^{\,-\hbar\,t\,T_k^-} \! , \, e^{\,-\hbar\,T_k^-} \Big) \Big) \, \cdot  \cr
   \hfill \cdot \; \sigma_{\chi}^{-1}\Big( e^{\,-\hbar\,m\,T_i^-} \! , \, e^{\,-\hbar\,T_j^-} \Big) \,
   \sigma_{\chi}^{-1}\Big( e^{\,-\hbar\,(m\,T_i^- + T_j^-)} \! , \, e^{\,-\hbar\,n\,T_k^-} \Big) \, F_i^{\,m} \, F_j \, F_k^{\,n}   \quad
 \cr
   F_i^{\,(m)_{\sigma_{\chi}}} \raise-1pt\hbox{$ \, \scriptstyle \dot\sigma_\chi $}\, F_j \,\raise-1pt\hbox{$ \, \scriptstyle \dot\sigma_\chi $}\, F_k^{\,(n)_{\sigma_{\chi}}}  =
   \Big( {\textstyle \prod_{\ell=1}^{m-1}} \sigma_{\chi}^{-1}\Big( e^{\, -\hbar\,\ell\,T_i^-} \! , \,
   e^{\,-\hbar\,T_i^-} \Big) \!\Big) \Big( {\textstyle \prod_{t=1}^{n-1}} \sigma_{\chi}^{-1}\Big( e^{\,-\hbar\,t\,T_k^-} \! , \, e^{\,-\hbar\,T_k^-} \Big) \!\Big) \cdot   \hfill  \cr
   \hfill \cdot \; \sigma_{\chi}^{-1}\Big( e^{\,-\hbar\,m\,T_i^-} \! , \, e^{\,-\hbar\,T_j^-} \Big) \,
   \sigma_{\chi}^{-1}\Big( e^{\,-\hbar\,(m\,T_i^- + T_j^-)} \! , \, e^{\,-\hbar\,n\,T_k^-} \Big) \, F_i^{\,m} \, F_j \, F_k^{\,n}  }  $$
   \indent   It is also worth remarking that the identity
   $ \; T' \raise-1pt\hbox{$ \, \scriptstyle \dot\sigma_\chi $} T'' = T' \, T'' \, $  (for  $ \, T' , T'' \in \lieh \, $)
   also implies that  $ \; T^{(n)_{\sigma_{\chi}}} = T^n \; $  (for  $ \, T \in \lieh \, $  and  $ \, n \in \NN \, $)
   --- i.e., each ``deformed'' power of any toral element coincides with the corresponding ``undeformed'' power.
   It follows then that the exponential of any toral element with respect to the deformed product
   $ \raise-1pt\hbox{$ \, \scriptstyle \dot\sigma_\chi $} $  is the same as with respect to the old one.
 \vskip5pt
\begin{obs}
   Observe that the whole procedure of  $ 2 $--cocycle  deformation by  $ \sigma_\chi $
   should apply to the scalar extension  $ \, \khp \otimes_\kh \uRPhg \, $,
   since  \textit{a priori\/}  the  $ 2 $--cocycle  $ \sigma_\chi $
   takes values in  $ \khp $  rather than in  $ \kh \, $.
   Nevertheless, the formulas above show that
%
%
  $ \uRPhg $  is actually  \textsl{closed\/}  for the deformed product  ``$ \, \raise-1pt\hbox{$ \, \scriptstyle \dot\sigma_\chi $} $''
  provided by this procedure, hence the deformation eventually ``restricts'' to  $ \uRPhg $
  itself, so that we eventually end up with a well-defined  $ 2 $--cocycle  deformation  $  {\big( \uRPhg \big)}_{\!\sigma_\chi} \! $.
\end{obs}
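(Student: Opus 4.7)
The a priori concern is that $\sigma_\chi$ takes values in $\khp$ because $\chi_{\scriptscriptstyle U} = \exp\!\big(\hbar^{-1} 2^{-1} \tilde{\chi}_{\scriptscriptstyle U}\big)$ contains formal inverse powers of $\hbar$; thus the deformed product
\[
  a \,\raise-1pt\hbox{$ \scriptstyle \dot\sigma_\chi $}\, b \; = \; \sigma_\chi\big(a_{(1)},b_{(1)}\big) \, a_{(2)} b_{(2)} \, \sigma_\chi^{-1}\big(a_{(3)},b_{(3)}\big)
\]
a priori only lies in $\khp \otimes_\kh \uRPhg$ when $a,b \in \uRPhg$. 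My plan is to show that for such $a, b$, this product already belongs to $\uRPhg$. The approach has three steps: reduce to generators via associativity, verify closure on generators using the explicit formulas of \S\ref{rmk: formulas_deform-prod}, and extend to the $\hbar$--adic completion by continuity.

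First I would recall that $\raise-1pt\hbox{$ \scriptstyle \dot\sigma_\chi $}$ is associative and unital, since $\sigma_\chi$ is a normalized Hopf 2--cocycle on $\khp \otimes_\kh \uRPhg$ (this is a standard, purely formal consequence of the 2--cocycle condition, valid regardless of the ring of values). Therefore, if I can show that $a \,\raise-1pt\hbox{$ \scriptstyle \dot\sigma_\chi $}\, b \in \uRPhg$ for all $a, b$ in the topological generating set $\lieh \cup \{E_i\}_{i\in I} \cup \{F_i\}_{i\in I}$ of $\uRPhg$, then by induction on the length of monomials the closure propagates to the $\kh$--subalgebra $U'$ generated (non-topologically) by these elements, and the $\hbar$--adic continuity of $\raise-1pt\hbox{$ \scriptstyle \dot\sigma_\chi $}$ (inherited from that of $\sigma_\chi$, whose construction via Definition \ref{def: chi_U} is manifestly $\hbar$--adically continuous on $\widehat{S}_\kh(\lieh)^{\otimes 2}$) extends closure to the completion $\uRPhg$ of $U'$.

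The verification on pairs of generators is a direct case analysis, facilitated by the fact that every generator of $\uRPhg$ has a coproduct involving only elements of $\lieh$ and group-likes $e^{\pm\hbar T_i^\pm}$; hence each three-fold iterated coproduct $\Delta^{(2)}(a) \otimes \Delta^{(2)}(b)$ produces finitely many terms on which $\sigma_\chi$ must be evaluated at tensors of group-likes and of primitive elements in $\lieh$. By Lemma \ref{lem: chi_U(K,K)}, evaluating $\sigma_\chi$ at a pair of group-likes $(e^{\hbar H_+}, e^{\hbar H_-})$ gives $e^{\hbar 2^{-1}\chi(H_+,H_-)} \in \kh$; the potentially dangerous contribution $\sigma_\chi(H_+,H_-) = \hbar^{-1} 2^{-1}\chi(H_+,H_-) \in \hbar^{-1}\kh$ from primitive-primitive evaluations only ever appears in conjunction with a corresponding $\sigma_\chi^{-1}$ factor whose first-order term cancels it, and condition \eqref{eq: condition-chi} further guarantees that evaluations involving the $S_i$--direction vanish outright. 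The explicit outputs displayed in \S\ref{rmk: formulas_deform-prod} are exactly the net results of these cancellations, and each of them manifestly lies in $\uRPhg$.

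The main obstacle I anticipate is the careful bookkeeping of the $\hbar^{-1}$ terms in the exponential $\chi_{\scriptscriptstyle U}$; however, Lemma \ref{lem: tildechi} pins down $\tilde{\chi}_{\scriptscriptstyle U}^{\ast m}\big(H_+^k, H_-^\ell\big)$ precisely enough to ensure that in each expansion of $\sigma_\chi$ applied to a product of group-likes or to a mixed primitive/group-like tensor, the total power of $\hbar$ is non-negative. Once this is checked for the finitely many relevant generator pairs, closure on $\uRPhg$ follows by the associativity-plus-continuity argument above, and the deformation $\big(\uRPhg\big)_{\!\sigma_\chi}$ is thereby a bona fide $\hbar$--adically complete Hopf $\kh$--algebra.
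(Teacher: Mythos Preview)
Your proposal is correct and follows the paper's approach, which is not a formal proof but simply the sentence ``the formulas above show that $\uRPhg$ is actually \textsl{closed} for the deformed product''; you are supplying the scaffolding that the paper leaves implicit. One small point worth tightening: your induction ``on the length of monomials'' from closure on \emph{pairs} of generators does not go through as stated, since $(a \,\raise-1pt\hbox{$\scriptstyle \dot\sigma_\chi$}\, b) \,\raise-1pt\hbox{$\scriptstyle \dot\sigma_\chi$}\, c$ requires knowing closure of the deformed product of a non-generator with a generator. This is exactly why the formulas in \S\ref{rmk: formulas_deform-prod} record not only pairwise products of generators but also $E_i^{\,m} \,\raise-1pt\hbox{$\scriptstyle \dot\sigma_\chi$}\, E_j^{\,n}$, $E_i^{(m)_{\sigma_\chi}} = E_i^{\,m}$, and the triple products $E_i^{(m)_{\sigma_\chi}} \,\raise-1pt\hbox{$\scriptstyle \dot\sigma_\chi$}\, E_j \,\raise-1pt\hbox{$\scriptstyle \dot\sigma_\chi$}\, E_k^{(n)_{\sigma_\chi}}$ (and their $F$--analogues): these show that each $\,\raise-1pt\hbox{$\scriptstyle \dot\sigma_\chi$}\,$--monomial in the $E$'s (resp.\ $F$'s) is a $\kh$--multiple of the corresponding old-product monomial, and together with the remaining mixed identities this is what actually drives the induction. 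Once you invoke that stronger content of the formulas, your associativity-plus-continuity argument is sound.
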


\end{free text}

   A first, direct consequence of these formulas is the following:

\vskip11pt

\begin{prop}  \label{prop: gen's-FoMpQUEA => def-FoMpQUEA}
 With notations as above, the deformed algebra  $ {\big( \uRPhg \big)}_{\sigma_\chi} \! $
 is still generated by the elements  $ E_i \, $,  $ F_i $  and  $ T $
 (with  $ \, i \in I \, $  and  $ \, T \in \lieh \, $)  of  $ \, \uRPhg \, $.
\end{prop}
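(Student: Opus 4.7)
The plan is to show that the topological $\kh$-subalgebra $B \subseteq {\big( \uRPhg \big)}_{\sigma_\chi}$ generated, under the deformed product $\raise-1pt\hbox{$ \, \scriptstyle \dot\sigma_\chi $}$, by $\{E_i, F_i : i \in I\} \cup \lieh$ coincides with all of ${\big( \uRPhg \big)}_{\sigma_\chi}$. Since these two algebras share the same underlying topological $\kh$-module as $\uRPhg$, and since $\uRPhg$ is topologically generated under the original product by the very same set of generators, the task reduces to verifying that every monomial in these generators under the original product already belongs to $B$.

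For monomials of length at most two, the explicit formulas recalled in \S\ref{rmk: formulas_deform-prod} directly provide, for every pair of generators $g, g'$, an identity of the form
\begin{equation*}
g \cdot g' \; = \; \alpha_{g, g'} \cdot \big( g \raise-1pt\hbox{$ \, \scriptstyle \dot\sigma_\chi $} g' \big) \, + \, \beta_{g, g'} \cdot h_{g, g'}
\end{equation*}
with $\alpha_{g, g'} \in \kh^\times$ (either $1$ or of exponential form $e^{\pm \hbar \, 2^{-1} \mathring{\chi}_{ij}}$), $\beta_{g, g'} \in \kh$, and $h_{g, g'}$ either a generator or zero. Both summands on the right lie in $B$; for instance $E_i \cdot E_j = e^{-\hbar \, 2^{-1} \mathring{\chi}_{ij}} E_i \raise-1pt\hbox{$ \, \scriptstyle \dot\sigma_\chi $} E_j$ and $T \cdot E_j = T \raise-1pt\hbox{$ \, \scriptstyle \dot\sigma_\chi $} E_j - 2^{-1} \chi(T, T_j^+) E_j$. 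As a byproduct, the grouplike elements $e^{\pm \hbar T_i^\pm}$ also lie in $B$: they coincide with their deformed-product exponential counterparts, since powers of a single element of $\lieh$ agree in the two products.

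For the inductive step on monomial length $n \geq 3$, I would write a length-$n$ original-product monomial as $g \cdot y$, with $g$ a generator and $y$ an original-product monomial of length $n - 1$, and exploit the inverse-deformation formula
\begin{equation*}
g \cdot y \; = \; \sigma_\chi^{-1}\!\big( g_{(1)}, y_{(1)} \big) \cdot \big( g_{(2)} \raise-1pt\hbox{$ \, \scriptstyle \dot\sigma_\chi $} y_{(2)} \big) \cdot \sigma_\chi\!\big( g_{(3)}, y_{(3)} \big)
\end{equation*}
which expresses the original product in terms of the deformed one (valid because $\sigma_\chi$ is convolution-invertible). The explicit coproducts of the generators show that each middle Sweedler component $g_{(2)}$ is either a generator or one of the grouplikes $e^{\pm \hbar T_i^\pm}$, hence in $B$; meanwhile $y_{(2)}$, being a product of middle Sweedler components of the generators composing $y$, is itself a shorter original-product monomial in generators and grouplikes, hence in $B$ by the induction hypothesis together with the base-case observation on grouplikes. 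Provided the scalar coefficients $\sigma_\chi^{\pm 1}(\cdot, \cdot)$ land in $\kh$, every summand on the right-hand side is a $\kh$-multiple of a $\raise-1pt\hbox{$ \, \scriptstyle \dot\sigma_\chi $}$-product of two elements of $B$, and hence lies in $B$; this will yield $g \cdot y \in B$, closing the induction.

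The main technical obstacle is precisely the scalar analysis needed to guarantee that the values of $\sigma_\chi^{\pm 1}$ appearing in the expansion lie in $\kh$ rather than merely in $\khp$. This I would handle by tracking the Hopf-algebraic construction $\sigma_\chi = \overline{\chi}_U \circ (\pi \times \pi)$: only Sweedler pairs whose projections under $\pi$ reduce to pairs of grouplikes in the quantum Cartan contribute nontrivially, and on such pairs  Lemma \ref{lem: chi_U(K,K)}  gives values of the exponential form $\exp(\hbar c)$ with $c \in \kh$, which are units in $\kh$. Taking $\hbar$-adic closures then upgrades the conclusion to $B = {\big( \uRPhg \big)}_{\sigma_\chi}$, completing the proof.
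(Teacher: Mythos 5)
Your route is genuinely different from the paper's: the paper treats the statement as a direct consequence of the explicit formulas of \S \ref{rmk: formulas_deform-prod} (each old product of generators is an invertible scalar times the corresponding deformed product plus correction terms that are again monomials in the generators), whereas you invert the deformation via $\, m = \sigma_\chi^{-1} \ast m_{\sigma_\chi} \!\ast \sigma_\chi \, $ and run an induction on monomial length. The base case and the overall architecture are fine, but the step you yourself flag as the main technical obstacle is settled by a claim that is false, and this leaves a real gap. It is not true that only Sweedler pairs projecting under $\pi$ to pairs of grouplikes contribute nontrivially to $\sigma_\chi \, $: the projection $\pi$ does not kill the toral primitives, and from Definition \ref{def: chi_U} together with Lemma \ref{lem: tildechi} (or Lemma \ref{lem: chi_U(K,K)}) one gets $\, \sigma_\chi(T',T'') = \hbar^{-1} 2^{-1} \chi(T',T'') \, $ for $ \, T', T'' \in \lieh \, $, which in general is a nonzero element of $\khp \setminus \kh \, $. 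Consequently, in your expansion of $ \, g \cdot y \, $ the individual summands $ \, \sigma_\chi^{-1}\big(g_{(1)},y_{(1)}\big)\,\big(g_{(2)} \raise-1pt\hbox{$ \, \scriptstyle \dot\sigma_\chi $} y_{(2)}\big)\,\sigma_\chi\big(g_{(3)},y_{(3)}\big) \, $ need not be $\kh$--multiples of elements of your subalgebra $B \, $: for $ \, g = E_i \, $ or $ \, g = F_i \, $ the outer Sweedler legs of $g$ are grouplike or killed by $\epsilon \, $, so those cases are safe, but for $ \, g = T \in \lieh \, $ the outer leg can be the primitive $T$ itself paired against a toral-primitive component of $ y \, $, producing a bare $\hbar^{-1} \, $.

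What actually saves the argument is a cancellation that your term-by-term membership claim does not capture: after summing over Sweedler components, the primitive-against-primitive contributions of $ \, \sigma_\chi^{-1}\big(T,y_{(1)}\big)\,y_{(2)} \, $ and $ \, y_{(1)}\,\sigma_\chi\big(T,y_{(2)}\big) \, $ cancel each other, and the surviving terms carry an explicit factor $\hbar$ coming from the grouplike legs $ \, e^{\pm\hbar\,T_i^\pm} \, $ of $ y \, $, so that the total correction is a $\kh$--combination of monomials of length $ \, \le n-1 \, $ and the induction can close. This is precisely the mechanism behind the length-two formula $ \, T \raise-1pt\hbox{$ \, \scriptstyle \dot\sigma_\chi $} E_j = T\,E_j + 2^{-1}\chi\big(T,T_j^+\big)\,E_j \, $, but for monomials of arbitrary length it must be proved, not asserted; as written, your scalar analysis does not deliver it. (A smaller, easily repaired point: since $ y_{(2)} $ involves the grouplikes $ e^{\pm\hbar\,T_i^\pm} \, $, the induction hypothesis should be formulated for monomials in the generators \emph{and} these grouplikes, the latter lying in $B$ because powers, hence exponentials, of toral elements agree for the two products.)
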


\vskip7pt

\begin{free text}  \label{tor_2-coc_deform's_uPhgd}
 {\bf Toral  $ 2 $--cocycle deformations of  $ \uRPhg \, $.}
 Our key result concerns  $ 2 $--cocycle  deformations by means of toral  $ 2 $--cocycles.
 In order to state it, we need some more notation, which we now settle.
                                                                                    \par
   Let  $ \, P := {\big(\, p_{ij} \,\big)}_{i, j \in I} \in M_n\big( \kh \big) \, $
   be a multiparameter matrix of Cartan type with associated Cartan matrix $ A $
   ---  cf.\  Definition \ref{def: realization of P}  ---   and fix a realization
   $ \R = \big(\, \lieh \, , \Pi \, , \Pi^\vee \big) \, $  of  $ P $.
   Fix an antisymmetric  $ \kh $--bilinear map  $ \, \chi \! : \lieh \times \lieh \! \longrightarrow \! \kh \, $
   enjoying  \eqref{eq: condition-chi}   --- that is,  $ \, \chi \in \text{\sl Alt}_{\,\kh}^{\,S}(\lieh) \, $  ---
   we associate with it the matrix
   $ \, \mathring{X} := \! {\Big( \mathring{\chi}_{i{}j} = \chi\big(\,T_i^+,T_j^+\big) \!\Big)}_{i, j \in I} \, $  as above.
   Note that
 $ \; + \chi\big(\, \text{--} \, , T_j^+ \big)  \, = \,  - \chi\big(\, \text{--} \, , T_j^- \big) \; $
 for all  $ \, j \in I \, $,  as direct consequence of  \eqref{eq: condition-chi}.
 Basing on the above, like in  \S \ref{subsec: 2-cocycle-realiz}  we define
  $$
  P_{(\chi)}  := \,  P \, + \, \mathring{X}  \, = \,  {\Big(\, p^{(\chi)}_{i{}j} := \,  p_{ij} + \mathring{\chi}_{i{}j} \Big)}_{\! i, j \in I}  \;\, ,  \!\quad
   \Pi_{(\chi)}  := \,  {\Big\{\, \alpha_i^{(\chi)}  := \,
   \alpha_i \pm \chi\big(\, \text{--} \, , T_i^\pm \big)  \Big\}}_{i \in I}
   $$
\noindent
 Then, still from  \S \ref{subsec: 2-cocycle-realiz}  we know that
 $ P_{(\chi)} $  is a matrix of Cartan type   --- the same of  $ P $  indeed ---
 and  $ \, \R_{(\chi)} \, = \, \big(\, \lieh \, , \Pi_{(\chi)} \, , \Pi^\vee \,\big) \, $  is a realization of it.
 \vskip3pt
   We are now ready for our result on toral  $ 2 $--cocycle  deformations.
   Note in particular that, though toral  $ 2 $--cocycles  have values in  $ \khp \, $,
   \,the deformed multipli\-cation is still well defined within our initial FoMpQUEA, which is defined over  $ \kh \, $.
\end{free text}

\vskip7pt

\begin{theorem}  \label{thm: 2cocdef-uPhgd=new-uPhgd}
 There exists an isomorphism of topological Hopf algebras
  $$  {\big( \uRPhg \big)}_{\sigma_\chi}  \; \cong \;\,  U_{\!P_{(\chi)},\,\hbar}^{\R_{(\chi)}}(\lieg)  $$
 which is the identity on generators.  In short, every toral 2--cocycle deformation of a
 FoMpQUEA is another FoMpQUEA, whose multiparameter  $ \, P_{(\chi)} $  and realization  $ \R_{(\chi)} $
 depend on the original  $ P $  and  $ \R \, $,  as well as on  $ \chi \, $,  as explained in  \S \ref{subsec: 2-cocycle-realiz}.
 \vskip5pt
   Similar statements hold true for the Borel FoMpQUEAs and their deformations by  $ \sigma_\chi \, $,
   namely there exist isomorphisms
   $ \,\; {\big( U_{\!P,\,\hbar}^{\R}(\lieb_\pm) \big)}_{\sigma_\chi} \cong \; U_{\!P_{(\chi)},\,\hbar}^{\R_{(\chi)}}(\lieb_\pm) \;\, $.
\end{theorem}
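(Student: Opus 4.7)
My plan is to construct an isomorphism $\Phi : U_{\!P_{(\chi)},\,\hbar}^{\R_{(\chi)}}(\lieg) \to {\big(\uRPhg\big)}_{\sigma_\chi}$ sending each generator of the source to the generator of the same name in the target. By Proposition \ref{prop: gen's-FoMpQUEA => def-FoMpQUEA} the target is (topologically) generated by the original $E_i, F_i$ and the toral elements $T \in \lieh$, while the source has the Definition \ref{def: Mp-Uhgd} generators and relations for the deformed data $(P_{(\chi)}, \R_{(\chi)})$. I would then verify (a) that the defining relations of the source hold in the target, and (b) that the Hopf structure (coproduct and counit) of the cocycle-deformed target matches the one prescribed for the source by Theorem \ref{thm: form-MpQUEAs_are_Hopf}; since any bialgebra isomorphism between Hopf algebras automatically respects the antipode, this will be enough. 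Bijectivity would then follow by running the construction with $\sigma_\chi^{-1}$, which is of toral type associated with $-\chi$ and satisfies $(P_{(\chi)})_{(-\chi)} = P$ and $(\R_{(\chi)})_{(-\chi)} = \R$ by Remarks \ref{rmks: cocycle-deform-realiz}(a).

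Most of the relations are dispatched directly from the explicit formulas of \S \ref{rmk: formulas_deform-prod}. The commutation among toral elements is preserved since $T' \cdot_{\sigma_\chi} T'' = T' T''$. The mixed relation is unchanged because $E_i \cdot_{\sigma_\chi} F_j - F_j \cdot_{\sigma_\chi} E_i = E_i F_j - F_j E_i$; its right-hand side (involving $q_i$ and $e^{\pm \hbar T_i^\pm}$) coincides in source and target because $p^{(\chi)}_{ii} = p_{ii}$, as $\mathring{\chi}_{ii} = 0$ by antisymmetry of $\chi$. For the toral-root commutators, combining the formulas for $T \cdot_{\sigma_\chi} E_j$ and $E_j \cdot_{\sigma_\chi} T$ with antisymmetry of $\chi$ yields $[T, E_j]_{\sigma_\chi} = \alpha_j(T) E_j + \chi(T, T_j^+) E_j = \alpha_j^{(\chi)}(T) E_j$, and symmetrically $[T, F_j]_{\sigma_\chi} = -\alpha_j^{(\chi)}(T) F_j$, precisely the defining relations of $\R_{(\chi)}$. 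The coproduct is unaffected by the cocycle deformation, and its formulas involve only $T_\ell^\pm, E_\ell, F_\ell$, which are identified between source and target, so the coalgebra structures automatically agree.

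The genuinely delicate point, and the main obstacle, is the quantum Serre relation. The triple-product formula of \S \ref{rmk: formulas_deform-prod} specialised with the same index on both outer factors, together with $\mathring{\chi}_{ii} = 0$ and $\mathring{\chi}_{ji} = -\mathring{\chi}_{ij}$, gives
\begin{equation*}
E_i^{(1-a_{ij}-k)_{\sigma_\chi}} \cdot_{\sigma_\chi} E_j \cdot_{\sigma_\chi} E_i^{(k)_{\sigma_\chi}} \; = \; e^{\hbar(1-a_{ij}-2k)\mathring{\chi}_{ij}/2} \, E_i^{1-a_{ij}-k} E_j E_i^k .
\end{equation*}
Inserting this into the original Serre identity (still a valid module identity in $\uRPhg$) and factoring out the $k$-independent constant $e^{-\hbar(1-a_{ij})\mathring{\chi}_{ij}/2}$, the $k$-th coefficient becomes
\begin{equation*}
q_{ij}^{+k/2} q_{ji}^{-k/2} e^{\hbar k \mathring{\chi}_{ij}} \; = \; e^{\hbar k (p_{ij} + \mathring{\chi}_{ij})/2} e^{-\hbar k (p_{ji} + \mathring{\chi}_{ji})/2} \; = \; \big(q^{(\chi)}_{ij}\big)^{+k/2} \big(q^{(\chi)}_{ji}\big)^{-k/2} ,
\end{equation*}
which is exactly the coefficient prescribed by the Serre relation of $U_{\!P_{(\chi)},\,\hbar}^{\R_{(\chi)}}(\lieg)$; the $q_i$-binomial coefficients also match since $p^{(\chi)}_{ii} = p_{ii}$. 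The $F$-side Serre relations are handled symmetrically using the $F_i^{(m)_{\sigma_\chi}}$ formulas. Finally, the Borel statement is obtained by restricting each verification to $\uRPhbpm$: since $\sigma_\chi$ factors through the quantum Cartan, it restricts to a 2-cocycle on each Borel subalgebra, and all the computations above go through verbatim within that subalgebra.
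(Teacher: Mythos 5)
Your proposal is correct and follows essentially the same route as the paper's own proof: verify the defining relations of $U_{\!P_{(\chi)},\,\hbar}^{\R_{(\chi)}}(\lieg)$ for the deformed product via the explicit formulas of \S \ref{rmk: formulas_deform-prod} (the Serre computation you give is the paper's Claims 1--3 in compressed form), use Proposition \ref{prop: gen's-FoMpQUEA => def-FoMpQUEA} for surjectivity, and invert by running the construction with $-\chi$ together with $(P_{(\chi)})_{(-\chi)}=P$, $(\R_{(\chi)})_{(-\chi)}=\R$. Your bijectivity step is stated more briefly than the paper's (which carefully deforms the map for $-\chi$ by the appropriate cocycle before composing and checking both composites are the identity on generators), but the underlying argument is the same.
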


\pf
 We begin by noting the following key
 \vskip7pt
   {\sl  $ \underline{\text{Fact}} $:  The generators of  $ \uRPhg \, $,
   when thought of as elements of the deformed algebra  $ {\big( \uRPhg \big)}_{\sigma_\chi} \, $,
   obey the defining relations of the (same name) generators of
   $ \, U_{\!P_{(\chi)},\,\hbar}^{\R_{(\chi)}}(\lieg) \, $   --- with respect to the deformed product
   $ \raise-1pt\hbox{$ \, \scriptstyle \dot\sigma_\chi $} \, $.}
 \vskip9pt
   Indeed, most relations follow at once from the formulas in  \S \ref{rmk: formulas_deform-prod}.
   Namely, the identities  $ \; T' \raise-1pt\hbox{$ \, \scriptstyle \dot\sigma_\chi $} T'' = T' \, T'' \, $  imply
   $ \; T' \raise-1pt\hbox{$ \, \scriptstyle \dot\sigma_\chi $} T'' = T'' \raise-1pt\hbox{$ \, \scriptstyle \dot\sigma_\chi $} T' \, $
   --- for all $ \, T' , T'' \in \lieh \, $.  Also, from
 $ \,\; T \raise-1pt\hbox{$ \, \scriptstyle \dot\sigma_\chi $} E_j  \, = \,  T \, E_j \, + \, 2^{-1} \chi\big(T,T_j^+\big) \, E_j \;\, $
and
 $ \,\; E_j \raise-1pt\hbox{$ \, \scriptstyle \dot\sigma_\chi $} T  \, = \,  E_j \, T \, + \, 2^{-1} \chi\big(T_j^+,T\big) \, E_j \;\, $
   --- for all  $ \, T \in \lieh \, $  and  $ \, j \in I \, $  ---   we get (since  $ \chi $  is antisymmetric)
  $$  T \raise-1pt\hbox{$ \, \scriptstyle \dot\sigma_\chi $} E_j \, -  \, E_j \raise-1pt\hbox{$ \, \scriptstyle \dot\sigma_\chi $} T  \,\;
  = \;  \Big(\, \alpha_j(T) + 2^{-1} \big(\, \chi - \chi^{\,\scriptscriptstyle T} \big)\big(\, T \, , T_j^+ \big) \!\Big) \, E_j  \,\; = \;  +\alpha^{(\chi)}_j(T) \, E_j  $$
 \vskip3pt
\noindent
 A similar, straightforward analysis also yields
 $ \; T \raise-1pt\hbox{$ \, \scriptstyle \dot\sigma_\chi $}  F_j \, -
 \, F_j \raise-1pt\hbox{$ \, \scriptstyle \dot\sigma_\chi $} T \, = \, -\alpha^{(\chi)}_j(T) \, F_j \; $.
                                                                   \par
   The identities  $ \; E_i \raise-1pt\hbox{$ \, \scriptstyle \dot\sigma_\chi $} F_j \, = \, E_i \, F_j \; $
   and  $ \; F_j \raise-1pt\hbox{$ \, \scriptstyle \dot\sigma_\chi $} E_i \, = \, F_j \, E_i \; $  in turn imply
  $$  E_i\raise-1pt\hbox{$ \, \scriptstyle \dot\sigma_\chi $} F_j \, - \,
  F_j \raise-1pt\hbox{$ \, \scriptstyle \dot\sigma_\chi $} E_i  \,\; = \;\,  E_i \, F_j \, - \, F_j \, E_i  \,\; = \;\,
  \delta_{i,j} \, {{\; e^{+\hbar \, T_i^+} \, - \, e^{-\hbar \, T_i^-} \;} \over {\;\; e^{+\hbar\,p_{i{}i}/2} \, - \,
  e^{-\hbar\,p_{i{}i}/2} \;\;}}  $$
 Eventually, since  $ \, p_{i{}i} = p_{i{}i}^{(\chi)} \, $  by definition,
 and the exponential of toral elements with respect to  $ \raise-1pt\hbox{$ \, \scriptstyle \dot\sigma_\chi $} $
 is the same as with respect to the old product, we conclude that
  $$  E_i \raise-1pt\hbox{$ \, \scriptstyle \dot\sigma_\chi $} F_j \, -
  \, F_j \raise-1pt\hbox{$ \, \scriptstyle \dot\sigma_\chi $} E_i  \,\; = \;\,
 \delta_{i,j} \, {{\; e_{\sigma_\chi}^{+\hbar \, T_i^+} \, -
 \, e_{\sigma_\chi}^{-\hbar \, T_i^-} \;} \over {\; e^{+\hbar\,p_{i{}i}^{(\chi)}/2} \, - \, e^{-\hbar\,p_{i{}i}^{(\chi)}/2} \;}}  $$
where  $ \, e_{\sigma_\chi}^{\,X} \, $  denotes the exponential of any  $ X $  with respect to  $ \hbox{$ \, \scriptstyle \dot\sigma_\chi $} \, $.
 \vskip7pt
   What is less trivial instead is proving the quantum Serre relations for the deformed product; we do this only for the relation involving the  $ E_i$'s,  leaving the relation involving the  $ F_i $'s  as an exercise for the reader.  To this end, set
  $$  q_{ij}^{(\chi)}  \; := \;  e^{\hbar\, p_{ij}^{(\chi)}}  \; = \;  e^{\hbar\, (p_{ij} + \mathring{\chi}_{ij} )}   \qquad \qquad \text{ for all }  \, i , j \in I  $$
Note that, as  $ \, \big(P_{(\chi)}\big)_{s} = P_s = DA \, $,  \,we have  $ \, q_{ii}^{(\chi)} = q_{ii} \, $  and
$ \, q_i^{(\chi)} = e^{+\hbar\,p_{i{}i}^{(\chi)}/2} = q_i \, $  for all  $ \, i \in I \, $.  Then, for all  $ \, i \neq j \in I \, $
we have to prove that
  $$  \sum\limits_{k=0}^{1-a_{ij}} (-1)^k {\left[ {{1-a_{ij}} \atop k} \right]}_{\!q_i}
  {\big( q_{ij}^{(\chi)} \big)}^{+k/2\,} {\big( q_{ji}^{(\chi)} \big)}^{-k/2} \, E_i^{(1-a_{ij}-k)_{\sigma_{\chi}}} \raise-1pt\hbox{$ \, \scriptstyle \dot\sigma_\chi $}
  E_j \raise-1pt\hbox{$ \, \scriptstyle \dot\sigma_\chi $} E_i^{(k)_{\sigma_{\chi}}}  \; = \;  0
$$
 \vskip7pt
   In order to prove the equality, we analyze the factors in the summands separately.
 \vskip7pt

\noindent
 $ \underline{\text{\sl Claim 1}} \, $:  For all  $ \, i \neq j \in I \, $,  \,we have
  $ \; {\big( q_{ij}^{(\chi)} \big)}^{+k/2\,} {\big( q_{ji}^{(\chi)} \big)}^{-k/2} \, = \,  q_{ij}^{+k/2}q_{ji}^{-k/2}e^{\hbar \, k \, \mathring{\chi}_{ij}} $
 \vskip7pt
   This follows by direct computation.  Next claim instead follows from  \S \ref{rmk: formulas_deform-prod}:
 \vskip7pt
\noindent
 $ \underline{\text{\sl Claim 2}} \, $:  Fix  $ \, i \neq j \in I \, $  and write  $ \, m := 1-a_{ij} \, $.  Then
  $$  E_i^{(m-k)_{\sigma_{\chi}}} \raise-1pt\hbox{$ \, \scriptstyle \dot\sigma_\chi $}
  E_j \raise-1pt\hbox{$ \, \scriptstyle \dot\sigma_\chi $} E_i^{(k)_{\sigma_{\chi}}}  \;
  = \; \sigma_\chi\big( K_i^{m-k} , K_j \big) \, \sigma_\chi\big( K_i^{m-k} K_j \, , K_i^k \big) \, E_i^{m-k} E_j \, E_i^k  $$
 \vskip3pt
   Now we evaluate the value of the toral  $ 2 $--cocycle  using the exponentials.
 \vskip7pt
\noindent
 $ \underline{\text{\sl Claim 3}} \, $:  For all  $ \, i , j \in I \, $  and  $ \, m , k , \ell \in \NN \, $,  \,we have
 \vskip7pt
\begin{enumerate}
  \item[\textit{(a)}]  $ \;\; \sigma_\chi\big( K_i^\ell , K_j \big)  \; = \;  e^{\hbar \, \ell \,
 2^{-1} \mathring{\chi}_{ij} } $
  \item[\textit{(b)}]  $ \;\; \sigma_\chi\big( K_i^{m-k} K_j \, , K_i^k \big)  \; = \;  e^{\hbar \, k \, %
 2^{-1} \mathring{\chi}_{ji}  } $
  \item[\textit{(c)}]  $ \;\; \sigma_\chi\big( K_i^{m-k} , K_j \big) \,
  \sigma_\chi\big( K_i^{m-k} K_j \, , K_i^k \big)  \; = \;  e^{\hbar \, (m-2k)\, 2^{-1} \mathring{\chi}_{ij} } $
\end{enumerate}
 \vskip7pt
\noindent
 All assertions follow by computation using  Lemma \ref{lem: chi_U(K,K)}. Indeed, for   \textit{(a)\/}  we have
  $$
  \sigma_\chi\big( K_i^\ell , K_j \big)  \; = \;  \overline{\chi}_{\scriptscriptstyle U}\big( K_i^\ell , K_j \big)  \; = \;  e^{\hbar \,
 2^{-1} \chi(\ell \, T_i^+ , T_j^+) }  \; = \;  e^{\hbar \, \ell \,
2^{-1} \mathring{\chi}_{ij}  }  $$
 For item  \textit{(b)},  \,Lemma \ref{lem: chi_U(K,K)}  yields
  $$  \displaylines{
   \qquad   \sigma_\chi\big( K_i^{m-k} K_j \, , K_i^k \big)  \; = \;
   \overline{\chi}_{\scriptscriptstyle U}\big( K_i^{m-k} K_j \, , K_i^k \big)  \; = \;
   \overline{\chi}_{\scriptscriptstyle U}\big(\, e^{\hbar \, ((m-k) \, T_i^+ + T_j^+)} , e^{\hbar\, k\, T_i^+} \,\big)  \; =   \hfill   \cr
   \hfill   = \;  e^{\hbar \, 2^{-1} \chi  ((m-k) \, T_i^+ + T_j^+ , \, k \, T_i^+)}  \; = \;  e^{\hbar\, k \,
   2^{-1} \mathring{\chi}_{ji} }   \qquad  }  $$
 Now, putting altogether  \textit{(a)\/}  and  \textit{(b)\/}  we eventually get  \textit{(c)},  because
  $$
  \sigma_\chi\big( K_i^{m-k} , K_j \big) \, \sigma_\chi\big( K_i^{m-k} K_j \, , K_i^k \big)  \; = \;
   e^{\hbar\, (m-k) \, 2^{-1} \mathring{\chi}_{ij} } \,e^{\hbar\, k \, 2^{-1} \mathring{\chi}_{ji}}  \; = \;e^{\hbar\, (m-2k) \, 2^{-1} \mathring{\chi}_{ij}}
   $$
 \vskip1pt
   Finally,  \textsl{Claims 1},  \textsl{2\/}  and  \textsl{3\/}  altogether yield, for  $ \, m := 1-a_{ij} \, $,
  $$  \displaylines{
   \quad   \sum\limits_{k=0}^m {(-1)}^k {\bigg[ {m \atop k} \bigg]}_{\!q_i} {\big( q_{ij}^{(\chi)} \big)}^{+k/2\,}
   {\big( q_{ji}^{(\chi)} \big)}^{-k/2} \, E_i^{(m-k)_{\sigma_{\chi}}} \raise-1pt\hbox{$ \, \scriptstyle \dot\sigma_\chi $}
   E_j \raise-1pt\hbox{$ \, \scriptstyle \dot\sigma_\chi $} E_i^{(k)_{\sigma_{\chi}}}  \; =   \hfill  \cr
   \quad \quad   = \;  \sum\limits_{k=0}^m {(-1)}^k {\bigg[ {m \atop k} \bigg]}_{\!q_i} q_{ij}^{+k/2} q_{ji}^{-k/2} \,
   e^{\hbar \, k \, \mathring{\chi}_{ij}  } \, e^{\hbar \, (m-2k) \, 2^{-1} \mathring{\chi}_{ij} } E_i^{m-k} E_j \, E_i^k  \; =   \hfill  \cr
   \quad \quad \quad   = \;  e^{\hbar\,  m \, 2^{-1} \mathring{\chi}_{ij}  }  \sum\limits_{k=0}^m {(-1)}^k
   {\bigg[ {m \atop k} \bigg]}_{\!q_i} q_{ij}^{+k/2} q_{ji}^{-k/2} \, E_i^{m-k} E_j \, E_i^k  \; = \;  0   \quad  }  $$
 where the last equality follows from the quantum Serre relation.
 \vskip7pt
   Now, the  $ \underline{\text{\sl Fact}} \, $  above implies that there exists a well-defined
   homomorphism of topological Hopf algebras
 $ \,\; \ell : U_{\!P_{(\chi)},\,\hbar}^{\R_{(\chi)}}(\lieg) \relbar\joinrel\relbar\joinrel\longrightarrow
 {\big(\, \uRPhg \big)}_{\sigma_\chi} \; $
 given on generators by
 $ \; \ell(E_i) := E_i \; $,  $ \; \ell(F_i) := F_i \, $,  $ \; \ell\big(T) := T \; $  ($ \, i \in I \, $,  $ \, T \in \lieh \, $)
 --- in short, it is the identity on generators.  Moreover, thanks to
 Proposition \ref{prop: gen's-FoMpQUEA => def-FoMpQUEA}  this is in fact an  \textsl{epi\/}morphism.
 As an application of this result, there exists also an epimorphism of topological Hopf algebras
 $ \,\; \ell' : \uRPhg \relbar\joinrel\relbar\joinrel\relbar\joinrel\relbar\joinrel\longrightarrow
 {\big(\, U_{\!P_{(\chi)},\,\hbar}^{\R_{(\chi)}}(\lieg) \big)}_{\sigma_{-\chi}} \; $
 which again is the identity on generators   --- just replace  $ \chi $  with  $ -\chi \, $  and  $ P $  with  $ P_{(\chi)} \, $.
                                                                           \par
 Mimicking what we did for  $ \uRPhg \, $,  \,we can construct, out of  $ \chi \, $,
 a normalized Hopf  $ 2 $--cocycle  $ {\dot\sigma}_\chi $
 for  $ {\big(\, U_{\!P_{(\chi)},\,\hbar}^{\R_{(\chi)}}(\lieg) \big)}_{\sigma_{-\chi}} \, $;
 \,then we also have a similar  $ 2 $--cocycle  $ \sigma'_\chi $  on
 $ \uRPhg $  defined as the pull-back of  $ {\dot\sigma}_\chi $  via  $ \ell' \, $,  \,and a unique,
 induced Hopf algebra homomorphism
 $ \,\; \ell'_{{\dot{\sigma}}_\chi} : {\big( \uRPhg \big)}_{\sigma'_\chi}
 \relbar\joinrel\relbar\joinrel\relbar\joinrel\relbar\joinrel\longrightarrow
 {\Big( {\Big(\, U_{\!P_{(\chi)},\,\hbar}^{\R_{(\chi)}}(\lieg) \Big)}_{\sigma_{-\chi}} \Big)}_{{\dot{\sigma}}_\chi} \; $
 between deformed Hopf algebras, which is once more the identity on generators.
 Now, tracking the whole construction one sees at once that  $ \, \sigma'_\chi = \sigma_\chi \, $,  so that
 $ \; {\big( \uRPhg \big)}_{\sigma'_\chi} = {\big( \uRPhg \big)}_{\sigma_\chi} \; $,  \,and
 $ \; {\Big( {\Big(\, U_{\!P_{(\chi)},\,\hbar}^{\R_{(\chi)}}(\lieg) \Big)}_{\sigma_{-\chi}} \Big)}_{{\dot{\sigma}}_\chi} \!\! =
 \, U_{\!P_{(\chi)},\,\hbar}^{\R_{(\chi)}}(\lieg) \; $.  But then composition gives two homomorphisms
  $$  \displaylines{
   \ell'_{{\dot{\sigma}}_\chi} \!\circ \ell \, : \, U_{\!P_{(\chi)},\,\hbar}^{\R_{(\chi)}}(\lieg) \relbar\joinrel\relbar\joinrel\longrightarrow
   {\big( \uRPhg \big)}_{\sigma_\chi} \relbar\joinrel\relbar\joinrel\longrightarrow U_{\!P_{(\chi)},\,\hbar}^{\R_{(\chi)}}(\lieg)  \cr
   \ell \circ \ell'_{{\dot{\sigma}}_\chi} : \, {\big( \uRPhg \big)}_{\sigma_\chi} \relbar\joinrel\relbar\joinrel\longrightarrow
   U_{\!P_{(\chi)},\,\hbar}^{\R_{(\chi)}}(\lieg) \relbar\joinrel\relbar\joinrel\longrightarrow {\big( \uRPhg \big)}_{\sigma_\chi}  }  $$
 which (both) are the identity on generators: hence in the end we get
 $ \; \ell'_{{\dot{\sigma}}_\chi} \!\circ \ell \, = \, \textsl{id} \; $
 and  $ \; \ell \circ \ell'_{{\dot{\sigma}}_\chi} \! = \, \textsl{id} \; $,  \;thus in particular
 $ \, \ell \, $  is an isomorphism, \,q.e.d.
\epf

\vskip9pt

\begin{rmk}  \label{rmk: about cocycle deform.'s}
 With notation of  Theorem \ref{thm: 2cocdef-uPhgd=new-uPhgd}  above, we have
 $ \; P_{(\chi)} - P \, = \, \varLambda \; $
 for some  {\sl antisymmetric\/}  matrix  $ \, \varLambda \in \lieso_n\big(\kh\big) \, $.
 Conversely, under mild assumptions on  $ P $,
 this result can be ``reversed'' as it is shown below.
\end{rmk}

\vskip7pt

\begin{theorem}  \label{thm: double FoMpQUEAs_P-P'_mutual-deform.s}
 Let  $ \, P, P' \in M_n\big(\kh\big) \, $  be two matrices of Cartan type with the same associated Cartan matrix  $ \, A \, $.
 \vskip5pt
   (a)\,  Let  $ \, \R $  be a  \textsl{split}  realization of  $ \, P $  and  $ \, \uRPhg $  be the associated FoMpQUEA.
   Then there exists a \textsl{split}  realization  $ \check{\R}' $  of  $ \, P' $, a matrix
   $ \, \mathring{X} = {\big(\, \mathring{\chi}_{i{}j} \big)}_{i, j \in I} \in \lieso_n\big(\kh\big)\, $  and a toral\/  $ 2 $--cocycle  $ \, \sigma_\chi $ such that
  $$  U_{\!P'\!,\,\hbar}^{\,\R'}(\lieg)  \,\; \cong \;  {\big(\, U_{\!P,\,\hbar}^{\,\R}(\lieg) \big)}_{\sigma_\chi}  $$
   \indent   In a nutshell, if  $ \, P'_s = P_s \, $  then from any  \textsl{split}  FoMpQUEA over  $ P $
   we can obtain a split FoMpQUEA (of the same rank) over  $ P' $  by a toral 2--cocycle deformation.
 \vskip5pt
   (b)\,  Let  $ \, \R $  be a  \textsl{split minimal}  realization of  $ P $.
   Then the FoMpQUEA  $ \uRPhg $  is isomorphic to a toral 2--cocycle deformation of the Drinfeld's standard double QUEA, that is
 there exists some bilinear map  $ \; \chi \in \text{\sl Alt}_{\,\kh}^{\,S}(\lieh) \; $  such that
  $$  \uRPhg  \,\; \cong \;  {\big(\, U_{DA,\hbar}(\lieg) \big)}_{\sigma_\chi}  $$
 \vskip3pt
   (c)\,  Similar, parallel statements hold true for the Borel FoMpQUEAs.
\end{theorem}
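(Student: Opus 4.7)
The plan is to deduce this theorem almost entirely from the machinery already set up: specifically, the combinatorial existence result for 2-cocycle deforming realizations (Proposition \ref{prop: mutual-2-cocycle-def}), the quantum stability result (Theorem \ref{thm: 2cocdef-uPhgd=new-uPhgd}), and the uniqueness/functoriality results for split (or split minimal) realizations (Proposition \ref{prop: exist-realiz's}(b) and Proposition \ref{prop: functor_R->uRPhg} / Corollary \ref{cor: isom_R -> isom_uRPhg}). The strategy parallels the proof of Theorem \ref{thm: 2-cocycle-def-MpLbA}, which is the semiclassical shadow of the present statement.

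For part (a), since $P$ and $P'$ are of Cartan type with $P_s = P'_s$, the difference $\varLambda := P' - P$ lies in $\lieso_n(\kh)$. Starting from the split realization $\R$ of $P$, I would invoke Proposition \ref{prop: mutual-2-cocycle-def}(a) to produce an antisymmetric $\kh$-bilinear map $\chi \in \textsl{Alt}_{\,\kh}^{\,S}(\lieh)$ satisfying \eqref{eq: condition-chi}, such that the matrix $\mathring{X} := {(\chi(T_i^+,T_j^+))}_{i,j}$ equals $\varLambda$ and the corresponding 2-cocycle deformed realization $\R_{(\chi)} = (\lieh, \Pi_{(\chi)}, \Pi^\vee)$ is a split realization of $P_{(\chi)} = P + \mathring{X} = P'$. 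Then Theorem \ref{thm: 2cocdef-uPhgd=new-uPhgd} applied to this $\chi$ yields a topological Hopf algebra isomorphism ${(\uRPhg)}_{\sigma_\chi} \cong U_{P_{(\chi)},\,\hbar}^{\R_{(\chi)}}(\lieg)$, and it suffices to set $\check{\R}' := \R_{(\chi)}$.

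For part (b), I would argue the other way round: apply part (a) (with the roles of $P$ and $P'$ exchanged) taking $P' := P$ and starting from the ``standard'' Kac-Drinfeld split minimal realization $\R_{\mathrm{st}}$ of $P_s = DA$, which yields Drinfeld's double $U_{DA,\hbar}(\lieg)$. This produces a split minimal realization $\check{\R}$ of $P$ such that ${(U_{DA,\hbar}(\lieg))}_{\sigma_\chi} \cong U_{P,\hbar}^{\check{\R}}(\lieg)$. Now, since both $\R$ and $\check{\R}$ are split minimal realizations of the same matrix $P$ of the same rank $2n$, Proposition \ref{prop: exist-realiz's}(b) gives an isomorphism of realizations between them, and Corollary \ref{cor: isom_R -> isom_uRPhg} promotes this to a (topological) Hopf algebra isomorphism $\uRPhg \cong U_{P,\hbar}^{\check{\R}}(\lieg)$. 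Composing yields the desired isomorphism $\uRPhg \cong {(U_{DA,\hbar}(\lieg))}_{\sigma_\chi}$.

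For part (c), the same arguments apply verbatim to the Borel setting: Theorem \ref{thm: 2cocdef-uPhgd=new-uPhgd} already records the Borel version of the stability result, namely ${(U_{P,\hbar}^{\R}(\lieb_\pm))}_{\sigma_\chi} \cong U_{P_{(\chi)},\,\hbar}^{\R_{(\chi)}}(\lieb_\pm)$, and the functoriality/uniqueness for realizations is independent of whether one works with $\lieg$ or $\lieb_\pm$. I anticipate no real obstacle here, since the entire proof consists of assembling previously established results; the main conceptual point, which has already been absorbed into Theorem \ref{thm: 2cocdef-uPhgd=new-uPhgd}, is that $\chi$ built from the symmetric/antisymmetric decomposition of $P' - P$ governs at once the deformed multiparameter $P_{(\chi)}$, the deformed realization $\R_{(\chi)}$, and the 2-cocycle $\sigma_\chi$ on the FoMpQUEA in a compatible way.
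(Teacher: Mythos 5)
Your proposal is correct and follows essentially the same route as the paper: part (a) is exactly the paper's argument (Proposition \ref{prop: mutual-2-cocycle-def}(a) combined with Theorem \ref{thm: 2cocdef-uPhgd=new-uPhgd}, taking $\check{\R}' := \R_{(\chi)}$). For part (b) the difference is only bookkeeping: you deform the standard realization of $DA$ and then invoke the uniqueness of split minimal realizations together with Corollary \ref{cor: isom_R -> isom_uRPhg} explicitly at the FoMpQUEA level, whereas the paper absorbs that same uniqueness step into Proposition \ref{prop: mutual-2-cocycle-def}(b) before applying Theorem \ref{thm: 2cocdef-uPhgd=new-uPhgd}.
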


\pf
   \textit{(a)}\,  By  Proposition \ref{prop: mutual-2-cocycle-def}\textit{(a)},  there exists
   $ \; \chi \in \text{\sl Alt}^S_{\,\kh}( \lieh ) \; $  such that
   $ \, P' = P_{(\chi)} \, $  and the triple   --- constructed as in  \S \ref{subsec: 2-cocycle-realiz}  ---
 $ \, \R' \, := \, \R_{(\chi)} \, = \, \big(\, \lieh \, , \Pi_{(\chi)} \, , \Pi^\vee \,\big) \, $
 is a split realization of  $ \, P' = P_{(\chi)} \, $.  Then  $ \; U_{\!P'\!,\hbar}^{\,\R'}(\lieg) \, \cong \, {\big(\, \uRPhg \big)}_{\sigma_\chi} \; $
 by  Theorem \ref{thm: 2cocdef-uPhgd=new-uPhgd}.
 \vskip5pt
   \textit{(b)}\,  Drinfeld's  $ U_{DA,\hbar}(\lieg) $  is   --- in our language ---
   nothing but the FoMpQUEA built upon a split minimal realization
   $ \; \R_{st} \,= \, \big(\, \lieh \, , \Pi_{st} \, , \Pi_{st}^\vee \,\big) \; $  of  $ DA \, $,  \,for which we write
   $ \, \Pi_{st}^\vee\, = \, {\big\{\, T_i^\pm \,\big\}}_{i \in I} \, $  and  $ \, \Pi_{st} = {\big\{\, \alpha_i^{(st)} \,\big\}}_{i \in I} \, $.
   From  Proposition \ref{prop: mutual-2-cocycle-def}\textit{(b)\/}  we have a suitable form  $ \, \chi \in  \text{\sl Alt}_{\,\kh}^{\,S}(\lieh) \; $
   such that the realization  $ \big(\R_{st}\big)_{(\chi)} $  obtained as toral
 $ 2 $--cocycle  deformation of  $ \R_{st} $  through  $ \chi $  coincides with  $ \R \, $.
 Then, by  Theorem \ref{thm: 2cocdef-uPhgd=new-uPhgd}  we get  $ \; \uRPhg \, \cong \, {\big(\, U_{DA,\hbar}(\liegd) \big)}_{\sigma_\chi} \, $  as desired.
\epf

\bigskip

\section{Specialization and quantization: FoMpQUEAs vs.\ MpLbA's}
 \label{sec: special-&-quantiz}
{\ }
 \vskip1pt
   This section dwells upon the interplay of specialization   --- applied to quantum objects as our FoMpQUEAs ---
   and, conversely, of quantization   --- performed onto such semiclassical objects as our MpLbA's.
                                                                \par
   First of all, we shall see that the specialization of a FoMpQUEA yields a suitable MpLbA;
   conversely, any MpLbA has at least one quantization,
   in the form of a well defined FoMpQUEA.  Then, we shall investigate the interaction between the process of specialization
   --- at  $ \, \hbar = 0 \, $  ---   of any FoMpQUEA and the process of deformation
   --- either by (toral) twist or by (toral)  $ 2 $--cocycle  ---
   of the same FoMpQUEA or of the MpLbA which is its semiclassical limit.
   In particular we will find out that, in a suitable, natural sense,  \textsl{the two processes commute with each other}.

\medskip

\subsection{Deformation vs.\ specialization for FoMpQUEAs and MpLbA's}
\label{subsec: def-vs.-spec-x-FoMpQUEAs&MpLbA's} {\ }

\medskip

   Recall that a  \textit{deformation algebra\/}  is a topological, unital, associative  $ \kh $--algebra  $ A $
   which is topologically free as a  $ \kh $--module.  Conversely, a deformation of a (unital, associative)
   $ \Bbbk $--algebra  $ A_0 $  is by definition a deformation algebra  $ A $  such that  $ \; A_0 \, \cong \, A \big/ \hbar \, A \; $.
   The same criteria apply to the notion of ``deformation Hopf algebra'', just replacing
   ``topological, unital, associative  $ \kh $--algebra''  with ``topological Hopf  $ \kh $--algebra''.
   Following Drinfeld, we say that a deformation Hopf algebra  $ H $  is a
   \textit{quantized universal enveloping algebra\/}  (or  \textsl{QUEA\/}  in short) if
   $ \; H \big/ \hbar \, H \, \cong \, U(\lieg) \; $  for some Lie algebra  $ \lieg \, $.
   In particular, the Lie bracket in  $ \lieg $  comes from the multiplication in  $ \; U(\lieg) \, \cong \, H \big/ \hbar H \; $.
   Moreover, this  $ \lieg $  inherits a Lie coalgebra structure from the QUEA,
   making it into a Lie \textsl{bialgebra},  thanks to the following result:

\vskip11pt

\begin{theorem}  (cf.\ \cite[Proposition 6.2.7]{CP}, \cite[Theorem 9.1]{ES})\label{thm:Lie-coalgebra-limit}
 Let  $ H $  be a quantized universal enveloping algebra with  $ \; H \big/ \hbar \, H \, \cong \, U(\lieg) \; $.
 Then the Lie algebra  $ \lieg $  is naturally equipped with a Lie bialgebra structure, whose Lie cobracket is defined by
\begin{equation}  \label{def: cocorchete-semiclasico}
 \delta(\text{\rm x})  \, := \,  \frac{\,\Delta(x) - \Delta^{\op}(x)\,}{\,\hbar\,}   \qquad  \big(\hskip-10pt \mod \hbar \,\big)
\end{equation}
where  $ \, x \in H \, $  is any lifting of  $ \; \text{\rm x} \in \lieg \subseteq U(\lieg) \, \cong \, H \big/ \hbar H \, $.   \qed
\end{theorem}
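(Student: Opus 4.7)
The plan is to follow the classical Drinfeld argument (as in \cite{CP}, \cite{ES}), verifying each step in the present topological setting. First I would address well-definedness of \eqref{def: cocorchete-semiclasico}. Since $H/\hbar H \cong U(\lieg)$ is cocommutative, for any lifting $x \in H$ of $\mathrm{x} \in \lieg$ one has $\Delta(x) - \Delta^{\op}(x) \in \hbar\,(H \,\widehat{\otimes}\, H)$, so dividing by $\hbar$ and reducing mod $\hbar$ produces a well-defined element of $U(\lieg) \otimes U(\lieg)$. Independence of the lifting is immediate: replacing $x$ by $x + \hbar y$ alters $\Delta(x) - \Delta^{\op}(x)$ by $\hbar(\Delta(y) - \Delta^{\op}(y)) \in \hbar^2 (H \,\widehat{\otimes}\, H)$, which vanishes after dividing by $\hbar$ and reducing.

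The key structural step is to show that $\delta(\mathrm{x}) \in \lieg \wedge \lieg \subseteq U(\lieg) \otimes U(\lieg)$. Antisymmetry is built into the definition, so $\delta(\mathrm{x}) + \delta(\mathrm{x})_{2,1} = 0$. The main obstacle, which I expect to require the most care, is showing that the image lies in $\lieg \otimes \lieg$ rather than merely in $U(\lieg) \otimes U(\lieg)$. For this, I would write $\Delta(x) = x \otimes 1 + 1 \otimes x + \hbar \Psi$ with $\Psi \in H \,\widehat{\otimes}\, H$, possible because $\mathrm{x}$ is primitive in $U(\lieg)$, so that $\delta(\mathrm{x}) \equiv \Psi - \Psi_{2,1} \pmod{\hbar}$. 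Applying coassociativity $(\Delta \otimes \id)\Delta(x) = (\id \otimes \Delta)\Delta(x)$, substituting the above expansion on both sides, and extracting the coefficient of $\hbar$ modulo $\hbar^2$ produces a cocycle identity on the class of $\Psi$; after antisymmetrization, this identity forces each ``tensor slot'' of $\Psi - \Psi_{2,1}$ (mod $\hbar$) to be primitive in $U(\lieg)$, whence by Milnor--Moore we get $\delta(\mathrm{x}) \in \lieg \otimes \lieg$ as required.

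Next I would verify the Lie coalgebra axioms. Co-skew-symmetry has already been observed. For co-Jacobi, I would rewrite $\delta$ via $\hbar^{-1}(\Delta - \Delta^{\op})$ and reduce the identity $(\id + \text{c.p.})(\delta \otimes \id) \circ \delta(\mathrm{x}) = 0$ to an alternating sum over the six permutations of $(\Delta \otimes \id)\Delta(x)$ modulo $\hbar^2$; thanks to coassociativity of $\Delta$ these terms cancel in pairs, so $(\lieg, \delta)$ is a Lie coalgebra.

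Finally, the cocycle compatibility \eqref{eq:compat-bracket-cobracket} between bracket and cobracket follows from $\Delta$ being an algebra homomorphism. Recall that in any QUEA the Lie bracket on $\lieg$ is recovered as $[\mathrm{x},\mathrm{y}] \equiv \hbar^{-1}(xy - yx) \pmod{\hbar}$ for liftings $x, y$. Applying $\delta$ and expanding
\[
  \Delta(xy - yx) - \Delta^{\op}(xy - yx)  \, = \,  \Delta(x)\Delta(y) - \Delta(y)\Delta(x) - \Delta^{\op}(x)\Delta^{\op}(y) + \Delta^{\op}(y)\Delta^{\op}(x)  \,,
\]
then substituting the expansions $\Delta(x) = x \otimes 1 + 1 \otimes x + \hbar \Psi_x$ and $\Delta(y) = y \otimes 1 + 1 \otimes y + \hbar \Psi_y$ and reducing mod $\hbar^2$, one obtains exactly $\ad_{\mathrm{x}}\bigl(\delta(\mathrm{y})\bigr) - \ad_{\mathrm{y}}\bigl(\delta(\mathrm{x})\bigr)$, completing the verification that $(\lieg; [\,,\,], \delta)$ is a Lie bialgebra.
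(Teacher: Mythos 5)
The paper offers no proof of its own here: the statement is quoted, with a \qed, from \cite[Prop.\ 6.2.7]{CP} and \cite[Th.\ 9.1]{ES}. Your sketch is essentially the standard argument of those references, and its main steps are sound: well-definedness and independence of the lifting follow from cocommutativity of $H/\hbar H$ together with topological freeness (so that division by $\hbar$ is unambiguous); the fact that $\delta(\mathrm{x})$ lands in $\lieg\otimes\lieg$ follows from coassociativity read off at first order in $\hbar$ plus $\Prim\big(U(\lieg)\big)=\lieg$ in characteristic zero (your ``antisymmetrized cocycle identity'' is, once the bookkeeping is done, exactly the identity $(\Delta\otimes\id)\circ(\Delta-\Delta^{\op}) = \big(\id\otimes(\Delta-\Delta^{\op})\big)\circ\Delta + \sigma_{23}\circ\big((\Delta-\Delta^{\op})\otimes\id\big)\circ\Delta$ reduced mod $\hbar$ and evaluated at a primitive element, which gives primitivity of each tensor leg directly); co-Jacobi follows from the exact, pairwise cancellation of the twelve signed permutations of $\Delta^{(2)}(x)$ that you describe; and the $1$--cocycle condition \eqref{eq:compat-bracket-cobracket} follows from multiplicativity of $\Delta$.

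One assertion, however, is wrong as stated: in a QUEA the Lie bracket on $\lieg$ is \emph{not} recovered as $\hbar^{-1}(xy-yx) \bmod \hbar\,$. The semiclassical limit $U(\lieg)\cong H\big/\hbar H$ is noncommutative, so $xy-yx$ does not lie in $\hbar H$ in general (e.g.\ $T\,E_j - E_j\,T = \alpha_j(T)\,E_j$ in $\uRPhg$), and $\hbar^{-1}(xy-yx)$ is not even a well-defined element of $H$; the formula with the $\hbar^{-1}$ is the recipe for the \emph{Poisson} bracket when the semiclassical limit is commutative (the QFSHA side of quantum duality), whereas here the bracket on $\lieg$ is simply the commutator of $H\big/\hbar H$, i.e.\ $[\mathrm{x},\mathrm{y}] = \overline{x\,y - y\,x}\,$. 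Fortunately your closing computation never actually uses the spurious $\hbar^{-1}$: you take $xy-yx$ as a lifting of $[\mathrm{x},\mathrm{y}]$, expand $\Delta(xy-yx)-\Delta^{\op}(xy-yx)$, note that the symmetric term coming from $[\,x\otimes 1 + 1\otimes x \, , \, y\otimes 1 + 1\otimes y\,]$ cancels, then divide by $\hbar$ and reduce — which is the correct verification of \eqref{eq:compat-bracket-cobracket}. So the proof stands once that one sentence is corrected (and, as a minor point, the co-Jacobi reduction is really an identity modulo $\hbar^3$ after dividing by $\hbar^2$, though the cancellation there is exact so nothing is lost).
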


\vskip11pt

\begin{definition}  \cite{CP,ES}
 The semiclassical limit of a quantized universal enveloping algebra  $ H $ is the Lie bialgebra
%
  $ \, \big(\, \lieg \, , \, [-,-] \, , \, \delta \,\big) \, $  where  $ \lieg $  is the Lie algebra such that  $ \, H \big/ \hbar \, H \cong U(\lieg) \, $
 and  $ \delta $  is defined as above.  Conversely, we say that  $ H $  is a quantization of the Lie bialgebra
 $ \big(\, \lieg \, , \, [-,-] \, , \, \delta \,\big) \, $.   \hfill  $ \diamondsuit $
\end{definition}

\vskip9pt

\begin{free text}
 {\bf Formal MpQUEAs vs.\ MpLbA's.}
 In this section we finally compare our FoMpQUEAs with our MpLbA's.
 Mainly, we show that the FoMpQUEAs are indeed quantized universal enveloping algebras;
 in particular, we prove that their specialization at  $ \, \hbar = 0 \, $
 is a universal enveloping algebra of a MpLbA as those in  \S \ref{sec: Mp Lie bialgebras}.
 Thus the specialization of each FoMpQUEA is a MpLbA; conversely, any FoMpQUEA is the quantization of some MpLbA.
 The other way round is true as well:
 every MpLbA admits a FoMpQUEA as its quantization.
                                                                          \par
   Second, we describe the interplay between the process of specializing/quantizing
   (switching between FoMpQUEAs and MpLbA's) and the process of deforming within either family of FoMpQUEAs or MpLbA's, separately
   --- by twist or by  $ 2 $--cocycle.

\vskip5pt

   We fix a matrix  $ \, P := {\big(\, p_{i,j} \big)}_{i, j \in I} \in M_n\big( \kh \big) \, $
   of Cartan type with associated Cartan matrix  $ A \, $,  and a realization
   $ \, \R \, := \, \big(\, \lieh \, , \Pi \, , \Pi^\vee \,\big) \, $  of  $ P $.
   Then we have the associated (topological) Hopf algebra  $ \uRPhg \, $,  as in  Definition \ref{def: realization of P}  and  \S \ref{sec: form-MpQUEAs}.
%
%
 Similarly, we also have the MpLbA  $ \lieg^{\bar{\Rpicc}}_{\bar{\Ppicc}} $  introduced in  \S \ref{MpLieBialgebras-double},  where
%
%
 we use (loose) obvious notation such as  $ \, \, \bar{\R} := \R \; (\,\text{mod} \; \hbar \,) \, $  and  $ \, \bar{P} := P \; (\,\text{mod} \; \hbar \,) \, $.

\vskip5pt

   Our first result points out that, roughly speaking, FoMpQUEAs and MpLbA's are in bijection
   through the specialization/quantization process, as one might expect:
%
%
\end{free text}

\vskip9pt

\begin{theorem}  \label{thm: semicl-limit FoMpQUEA}
 With assumptions as above,  $ \uRPhg $  is a  \textsl{quantized universal enveloping algebra}
 in the sense of  \S \ref{subsec: def-vs.-spec-x-FoMpQUEAs&MpLbA's}, whose semiclassical limit is  $ U\big(\lieg^{\bar{\Rpicc}}_{\bar{\Ppicc}}\big) \, $.
                                                 \par
   In short, for each pair  $ \, (P,\R) \, $  as above   ---  $ \R $  being a realization of  $ P $  ---
   and for the FoMpQUEA  $ \uRPhg $  and the MpLbA  $ \lieg^{\bar{\Rpicc}}_{\bar{\Ppicc}} $
   associated with  $ \, (P,\R) \, $,  we have:  \textsl{$ \lieg^{\bar{\Rpicc}}_{\bar{\Ppicc}} $
   is the specialization of  $ \, \uRPhg \, $,  or   --- equivalently ---   $ \uRPhg $  is a quantization of  $ \lieg^{\bar{\Rpicc}}_{\bar{\Ppicc}} \, $}.
\end{theorem}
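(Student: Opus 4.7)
The plan is to establish two things: first, that $\uRPhg$ is a deformation algebra in the sense of \S\ref{subsec: def-vs.-spec-x-FoMpQUEAs&MpLbA's} (i.e.\ topologically free over $\kh$), and second, that the specialization $\, \uRPhg \big/ \hbar\,\uRPhg \, $ is naturally isomorphic to $ U\big(\lieg^{\bar{\Rpicc}}_{\bar{\Ppicc}}\big)$ as a Hopf algebra, with the Lie bialgebra structure inherited via \eqref{def: cocorchete-semiclasico} coinciding with the one defined in \S\ref{MpLieBialgebras-double}.

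For topological freeness, I would first reduce to the split minimal case via Lemma \ref{lemma: split-lifting} and the central Hopf extension description in Proposition \ref{prop: central-Hopf-extens_FoMpQUEAs} (together with the quantum Cartan structure analysis of Remark \ref{rmk: generaliz-centr-Hopf-ext}), which realizes an arbitrary $\uRPhg$ as a quotient of a split-minimal FoMpQUEA by a central, manifestly topologically free Hopf subalgebra. In the split minimal case, one may argue as in the proof of Lemma \ref{lemma: Borel FoMpQUEAs topol.-free}, using Theorem \ref{thm: double FoMpQUEAs_P-P'_mutual-deform.s}(b) to realize $\uRPhg$ as a toral $2$-cocycle deformation of Drinfeld's standard (double) $\uhg$, which is known to be topologically free --- alternatively, one may combine the triangular decomposition Theorem \ref{thm: triang-decomp.'s} with the identification $\, U_\hbar(\lieh) \cong \widehat{S}_\kh(\lieh)\,$ of Theorem \ref{thm: Uotimes-cong-uRPhg}(b) and the quantum Serre relation analysis of \S\ref{subsec: further-results} to produce a PBW-type topological basis.

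Next, one passes to $\, \bar{U} := \uRPhg\big/\hbar\,\uRPhg\, $ and analyzes the images of the defining relations in Definition \ref{def: Mp-Uhgd}. The first two lines of \eqref{eq: comm-rel's_x_uPhg} specialize directly to the relations defining $\lieg^{\bar{\Rpicc}}_{\bar{\Ppicc}}$ in \S\ref{MpLieBialgebras-double}; the mixed $\, E_iF_j-F_jE_i \, $ relation specializes via the computation
$$
\frac{\,e^{+\hbar\,T_i^+}-e^{-\hbar\,T_i^-}\,}{\,q_i^{+1}-q_i^{-1}\,}\;\equiv\;\frac{\,T_i^++T_i^-\,}{\,2\,d_i\,}\pmod{\hbar}\,;
$$
and the two quantum Serre relations degenerate, by a standard $q$-combinatorial computation (using ${[m]}_{q_i}\to m$ and $q_{ij}^{\pm k/2}\to 1$ as $\hbar\to 0$), to the classical Serre relations on the $E_i$'s and $F_i$'s. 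This produces a surjective Hopf algebra morphism $\, U\big(\lieg^{\bar{\Rpicc}}_{\bar{\Ppicc}}\big)\twoheadrightarrow\bar{U}\, $. To prove it is an isomorphism, I would compare dimensions factor-by-factor through the triangular decompositions available on both sides --- Theorem \ref{thm: triang-decomp.'s} on the quantum side and \eqref{eq: triang-decomp's_Lie-bialg's} on the Lie bialgebra side --- using the topological freeness of the nilpotent parts $\uRPhnpm$ (which follows from Proposition \ref{prop: struct-Cart/nilp-subalg.s}(c) together with the specialization of the tensor representations in Lemma \ref{lemma: tensor repr x utildeRPhg} and its $F$-analogue) and the identification $\, U_\hbar(\lieh)\big/\hbar\,U_\hbar(\lieh)\cong S(\bar{\lieh})=U(\bar{\lieh})\, $.

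Finally, the matching of Lie bialgebra structures is a direct verification on generators. The Lie bracket on $\lieg^{\bar{\Rpicc}}_{\bar{\Ppicc}}$ emerges from the commutator in $\bar{U}$, which the previous step shows agrees with the prescribed bracket. For the cobracket, I would evaluate \eqref{def: cocorchete-semiclasico} on the generators using the coproduct formulas \eqref{eq: coprod_x_uPhg}: one obtains $\, \delta(T)=0\, $ for $T\in\lieh$, while
$$
\delta(E_\ell)\;=\;\hbar^{-1}\big(e^{+\hbar\,T_\ell^+}\otimes E_\ell-E_\ell\otimes e^{+\hbar\,T_\ell^+}\big)\Big|_{\hbar=0}\;=\;2\,T_\ell^+\wedge E_\ell\,,
$$
and similarly $\, \delta(F_\ell)=2\,T_\ell^-\wedge F_\ell\, $, which exactly match the cobracket formulas of \S\ref{MpLieBialgebras-double}. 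The main obstacle I anticipate is the injectivity step: while topological freeness pins down the $\Bbbk$-dimensions of the graded pieces of $\bar{U}$, confirming that these coincide with the corresponding pieces of $U\big(\lieg^{\bar{\Rpicc}}_{\bar{\Ppicc}}\big)$ requires a classical PBW-type result for MpLbA's that is not explicitly stated in Section 3 and must be extracted from the construction as a Manin double/double cross sum therein.
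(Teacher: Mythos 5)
The genuine gap is in your identification of $ \, \uRPhg \big/ \hbar\,\uRPhg \, $ with $ U\big(\lieg^{\bar{\Rpicc}}_{\bar{\Ppicc}}\big) $. You construct the surjection $ U\big(\lieg^{\bar{\Rpicc}}_{\bar{\Ppicc}}\big) \twoheadrightarrow \uRPhg \big/ \hbar\,\uRPhg $ and then want injectivity from a factor-by-factor size comparison through the two triangular decompositions; as you yourself admit, this needs a PBW-type statement for the MpLbA $ \lieg^{\bar{\Rpicc}}_{\bar{\Ppicc}} $ that Section 3 does not supply, and the quantum-side tools you invoke do not supply the missing size count either: Lemma \ref{lemma: tensor repr x utildeRPhg} lives on the pre-FoMpQUEA (before imposing the quantum Serre relations), and Proposition \ref{prop: struct-Cart/nilp-subalg.s}\textit{(c)} gives a presentation of $ \uRPhnpm $, not the dimension of its reduction mod $ \hbar \, $. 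Matching the $ \hbar = 0 $ reduction of the quantum Serre presentation of $ \uRPhnpm $ with the classical Serre presentation of $ \lien_\pm $ for a general symmetrizable Cartan matrix is exactly the hard content (Gabber--Kac/Lusztig-type results), so the plan stalls precisely where you flag the obstacle. The paper sidesteps injectivity altogether: since both objects are given by presentations whose relations correspond under $ \hbar \mapsto 0 \, $, one also gets a $ \kh $--algebra epimorphism $ \uRPhg \relbar\joinrel\twoheadrightarrow U\big(\lieg^{\bar{\Rpicc}}_{\bar{\Ppicc}}\big) $ (scalar restriction along $ \kh \twoheadrightarrow \Bbbk \, $) sending generators to generators and killing $ \hbar\,\uRPhg \, $, hence an induced map $ \psi $ on the quotient; your $ \phi $ and this $ \psi $ are mutually inverse because they are so on generators, and no PBW input is needed on either side. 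You should replace the dimension count by this two-sided-inverse argument.

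Two secondary cautions. First, in the topological-freeness part (whose split-minimal core, via Theorem \ref{thm: double FoMpQUEAs_P-P'_mutual-deform.s} and the invariance of the $ \kh $--module structure under $ 2 $--cocycle deformation, is the paper's argument): describing a general $ \uRPhg $ as a \emph{quotient} of a split FoMpQUEA by the ideal generated by a central quantum Cartan is not enough, since a quotient of a topologically free module need not be topologically free; one must prove the $ \kh $--module splittings $ U^{\,\dot{\R}}_{\!P,\hbar}(\lieg) \cong U_\hbar(\liek)\,\widehat{\otimes}\,\uRPhg $ and, in the split non-minimal case, $ \uRPhg \cong U^{\,\R'}_{\!P,\hbar}(\lieg)\,\widehat{\otimes}\,U_\hbar\big(\lieh''\big) $ (both obtained from the triangular decomposition), and read off freeness of the factor --- also note the lifted realization $ \dot{\R} $ is split but in general not minimal, so the intermediate split step is genuinely needed. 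Second, your final cobracket check is the right one and matches \S \ref{MpLieBialgebras-double}, but the displayed intermediate formula for $ \delta(E_\ell) $ omits the terms $ E_\ell \otimes 1 - 1 \otimes E_\ell $ coming from $ \Delta - \Delta^{\op} \, $, which are exactly what cancels the $ \hbar^{-1} $--singular part; as written, that displayed limit does not exist, although the stated outcome $ \, \delta(E_\ell) = 2\,T^+_\ell \wedge E_\ell \, $ is correct.
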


\pf
   First of all, we note that  $ \, H := \uRPhg \, $  is topologically free.
   We prove that by reducing the problem to the case of Drinfeld's standard double QUEA.
   Namely, we begin assuming that  $ \uRPhg $  is  \textsl{split minimal},  i.e.\ such is the realization  $ \R \, $.
   Then by  Theorem \ref{thm: double FoMpQUEAs_P-P'_mutual-deform.s}\textit{(c)\/}  we have
  $ \; \uRPhg  \, \cong \, {\big(\, U_{D\!A,\hbar}(\lieg) \big)}_{\!\sigma_\chi} \; $
  as topological Hopf  $ \kh $--algebras,  where  $ \, {\big(\, U_{D\!A,\hbar}(\lieg) \big)}_{\!\sigma_\chi} $
  is a suitable  $ 2 $--cocycle  deformation of Drinfeld's standard double QUEA  $ \, U_{D\!A,\hbar}(\lieg) \, $.
  As the latter is known to be a topologically free  $ \kh $--module, and  $ 2 $--cocycle  deformation does not affect the
  $ \kh $--module  structure, we conclude that  $ \uRPhg $  is topologically free as well, q.e.d.
                                                                         \par
   Now assume that  $ \uRPhg $  is just split (possibly not minimal): then, by definition,
   $ \, \Pi^\vee := {\big\{\, T_i^+ , T_i^- \,\big\}}_{i \in I} \, $  can be completed to a  $ \kh $--basis  of  $ \lieh \, $,
   hence  $ \, \lieh' := \textsl{Span}_{\kh}\big( \Pi^\vee \big) \, $  has a direct sum complement  $ \lieh'' $
   so that  $ \, \lieh = \lieh' \oplus \lieh'' \, $;  \,therefore  $ \, U_\hbar(\lieh) = U_\hbar\big(\lieh'\big) \,\widehat{\otimes}\, U_\hbar\big(\lieh''\big) \, $  as algebras.
   Furthermore, the realization  $ \R $  clearly ``restricts'' to another realization  $ \R' $  of  $ P $
   whose Cartan (sub)algebra is  $ \lieh' \, $,  which in addition is split  \textsl{minimal\/}:
   then as  $ \, U_\hbar(\lieh) = U_\hbar\big(\lieh'\big) \,\widehat{\otimes}\, U_\hbar\big(\lieh''\big) \, $
 one also gets easily
\begin{equation}  \label{eq: tens-fact_UR-UR'Uh''}
  \uRPhg \,\; \cong \;\, U^{\,\R'}_{\!P,\hbar}(\lieg) \;\widehat{\otimes}\; U_\hbar\big(\lieh''\big)   \qquad  \textsl{as\/  $ \kh $--modules}
\end{equation}
 (by construction).
 Indeed, by definition we have a natural monomorphism of realizations
 $ \, \R' \lhook\joinrel\longrightarrow \R \, $  induced by the monomorphism
 $ \, \lieh' \lhook\joinrel\longrightarrow \lieh \, $  of Cartan (sub)algebras;
 by  Propositions \ref{prop: functor_R->uRPhg}  and  \ref{prop: central-Hopf-extens_FoMpQUEAs},
 this induces a monomorphism  $ \, U^{\,\R'}_{\!P,\hbar}(\lieg) \lhook\joinrel\relbar\joinrel\longrightarrow \uRPhg \, $
 between FoMpQUEAs   --- the image of the latter is the (complete)  $ \kh $--subalgebra  generated by the  $ E_i $'s,
 the  $ T_i^\pm $'s  and the  $ F_i $'s  ($ \, i \in I \, $).
 Now, applying twice  Theorem \ref{thm: triang-decomp.'s}
 --- yielding triangular decompositions for  $ \uRPhg $  and  $ U^{\,\R'}_{\!P,\hbar}(\lieg) $  ---   we get
  $$  \displaylines{
   \uRPhg  \; \cong \;  \uRPhnm \,\widehat{\otimes}\, U_\hbar(\lieh) \,\widehat{\otimes}\, \uRPhnp  \; \cong   \hfill  \cr
   \cong \;  \uRPhnm \,\widehat{\otimes}\, U_\hbar\big(\lieh'\big) \,\widehat{\otimes}\, U_\hbar\big(\lieh''\big) \,\widehat{\otimes}\, \uRPhnp  \; \cong   \cr
   \hfill   \cong \;  \uRPhnm \,\widehat{\otimes}\, U_\hbar\big(\lieh'\big) \,\widehat{\otimes}\, \uRPhnp \,\widehat{\otimes}\,
   U_\hbar\big(\lieh''\big)  \; \cong \;  U^{\,\R'}_{\!P,\hbar}(\lieg) \,\widehat{\otimes}\, U_\hbar\big(\lieh''\big)  }  $$
 --- where we applied also  $ \, U_\hbar\big(\lieh''\big) \,\widehat{\otimes}\, \uRPhnp \,\cong\, \uRPhnp \,\widehat{\otimes}\, U_\hbar\big(\lieh''\big) \, $,
 which is clear,  and  $ \, U_\hbar(\lieh) = U_\hbar\big(\lieh'\big) \,\widehat{\otimes}\, U_\hbar\big(\lieh''\big) \, $
 ---   which proves our claim.
                                                            \par
Therefore, as  $ U^{\,\R'}_{\!P,\hbar}(\lieg) $  is topologically free by the previous analysis (as  $ \R' $  is split minimal) and
$ U_\hbar\big(\lieh''\big) $  is also topologically free by construction, from  \eqref{eq: tens-fact_UR-UR'Uh''}
we deduce the same for  $ \uRPhg $  as well.
                                                                  \par
   Finally, let's cope with the general case.  By  Lemma \ref{lemma: split-lifting}  there exists an epimorphism
   $ \; \underline{\pi} : \dot{\R} \relbar\joinrel\relbar\joinrel\twoheadrightarrow \R \, $,  \,where
   $ \; \dot{\R} := \big(\, \dot{\lieh} \, , \dot{\Pi} \, , {\dot{\Pi}}^\vee \,\big) \, $  is a  \textsl{split\/} realization of  $ P \, $:
   \,by  Proposition \ref{prop: central-Hopf-extens_FoMpQUEAs},
   this induces an epimorphism of Hopf algebras (though, for us, it is enough to be one of algebras, indeed)
 $ \; U^{\,\dot{\R}}_{\!P,\hbar}(\lieg) \,{\buildrel U_{\underline{\pi}} \over {\relbar\joinrel\relbar\joinrel\relbar\joinrel\twoheadrightarrow}}\, \uRPhg \; $
 whose kernel is generated by  $ \, {U_\hbar(\liek)}^+ \, $  with  $ \, \liek := \textsl{Ker}(\pi) \, $,
 \,where  $ \, \pi : \dot{\lieh} \,\relbar\joinrel\relbar\joinrel\twoheadrightarrow\, \lieh \, $
 is the epimorphism of Cartan (sub)algebras associated with  $ \underline{\pi} \, $.
 As  $ \lieh $  is free of finite rank, we have  $ \, \dot{\lieh} = \liek \oplus \lieh' \cong \liek \oplus \lieh \, $
 for some free submodule  $ \, \lieh' \cong \lieh \, $  inside  $ \dot{\lieh} \, $;
 \,therefore  $ \, U_\hbar\big(\dot{\lieh}\big) = U_\hbar(\liek) \,\widehat{\otimes}\, U_\hbar(\lieh) \cong U_\hbar(\liek) \otimes U_\hbar(\lieh) \, $
 as algebras,  whence, as in  \eqref{eq: tens-fact_UR-UR'Uh''},  one gets
 $ U^{\,\dot{\R}}_{\!P,\hbar}(\lieg) \; \cong \; U_\hbar(\liek) \;\widehat{\otimes}\; \uRPhg \;\, $
 as\/  $ \kh $--modules.
 As  $ U^{\,\dot{\R}}_{\!P,\hbar}(\lieg) $  is topologically free by the previous analysis and
 $ U_\hbar(\liek) $  is too, we deduce the same for  $ \uRPhg $  as well.
 \vskip5pt
   Second, we must
prove that
 $ \; \uRPhg \big/ \hbar \, \uRPhg \, $,  \,as a co-Poisson Hopf algebra, is isomorphic to  $ U\big( \lieg^{\bar{\Rpicc}}_{\bar{\Ppicc}} \big) \, $.
 Indeed,
 from the presentation of  $ \uRPhg $  we get that  $ \; \uRPhg \big/ \hbar \, \uRPhg \, $
 is generated by the cosets (modulo  $ \hbar \, \uRPhg \, $)  of the  $ F_i $'s,  $ T $'s  and  $ E_i $'s  ($ \, i \in I \, $,  $ \, T \in \lieh \, $);  moreover,
these cosets
   $ \; \overline{X} := X \!\! \mod \hbar\,\uRPhg \; $  obey all relations induced modulo  $ \hbar $  by
   the defining relations among the original generators  $ X $  of  $ \uRPhg \, $.
   On the other hand, by construction the Lie bialgebra  $ \lieg^{\bar{\Rpicc}}_{\bar{\Ppicc}} $
   is endowed with a built-in presentation (as a Lie algebra) by generators
   --- the  $ F_i $'s,  $ T $'s  and  $ E_i $'s
 ---   and relations, and explicit formulas for the value of the Lie cobracket  $ \delta $  on the given generators.
 From this, a presentation of  $ U\big( \lieg^{\bar{\Rpicc}}_{\bar{\Ppicc}} \big) $
is obtained
 in the obvious way, where the generators are again the  $ F_i $'s,  $ T $'s  and  $ E_i $'s  as before,
 as well as explicit formulas for the value of the Poisson cobracket  $ \delta $  on each one of those generators.
                                                            \par
   Comparing, the presentation of  $ U\big( \lieg^{\bar{\Rpicc}}_{\bar{\Ppicc}} \big) $
   with that of $ \; \uRPhg \big/ \hbar \, \uRPhg \; $  we find that all the given relations among
   generators of the latter algebra do correspond to identical relations among the corresponding
   generators in the former: namely, mapping  $ X $  to  $ \overline{X} $
   --- for all  $ \, X \in \big\{\, E_i \, , F_i \,\big|\, i \in I \,\big\} \cup \lieh \, $  ---
   turns every given relation among the  $ X $'s  into a same-look relation among the  $ \overline{X} $'s.
   In the end, this means that a well-defined epimorphism of Hopf algebras
\begin{equation}  \label{eq: iso_Ugbar-U0g}
     \phi \, : \, U\big( \lieg^{\bar{\Rpicc}}_{\bar{\Ppicc}} \big) \,\relbar\joinrel\relbar\joinrel\twoheadrightarrow\, \uRPhg \Big/ \hbar \, \uRPhg  \;\; ,  \qquad
  E_i \,\mapsto\, \overline{E_i} \, ,  \;  T \,\mapsto\, \overline{T} \, ,  \;  F_i \,\mapsto\, \overline{F_i}
\end{equation}
 ($ \, i \in I \, $,  $ \, T \in \lieh \, $)  exists; moreover, comparing the formulas on both sides for the
 co-Poisson bracket on generators we see that this is also a  \textsl{co-Poisson\/}  Hopf epimorphism.
                                                                         \par
   On the other hand, we can make  $ U\big( \lieg^{\bar{\Rpicc}}_{\bar{\Ppicc}} \big) $
   into a  $ \kh $--algebra  by scalar restriction   --- via  $ \; \kh \relbar\joinrel\relbar\joinrel\twoheadrightarrow \kh \big/ \hbar \, \kh \, \cong \, \Bbbk \; $.
   Then the same remark about relations implies that there exists a well-defined  $ \kh $--algebra  epimorphism
\begin{equation*}  \label{eq: iso_Ug-Ugbar}
   \psi\,{}' \, : \, \uRPhg \,\relbar\joinrel\relbar\joinrel\twoheadrightarrow\,
 U\big( \lieg^{\bar{\Rpicc}}_{\bar{\Ppicc}} \big)  \;\; ,  \qquad  E_i \,\mapsto\, E_i \, ,  \;
 T \,\mapsto\, T \, ,  \;  F_i \,\mapsto\, F_i   \qquad  (\, i \in I \, , \, T \in \lieh \,)
\end{equation*}
 whose kernel contains  $ \, \hbar \, \uRPhg \, $;  \,so a  $ \Bbbk $--algebra  epimorphism
\begin{equation}  \label{eq: iso_U0g-Ugbar}
   \psi \, : \, \uRPhg \Big/ \hbar\,\uRPhg \,\relbar\joinrel\relbar\joinrel\relbar\joinrel\twoheadrightarrow\, U\big( \lieg^{\bar{\Rpicc}}_{\bar{\Ppicc}} \big)  \;\; ,  \qquad
   \overline{E_i} \,\mapsto\, E_i \, ,  \; \overline{T} \,\mapsto\, T \, ,  \; \overline{F_i} \,\mapsto\, F_i
\end{equation}
 ($ \, i \in I \, $,  $ \, T \in \lieh \, $)
 is induced too.  Comparing  \eqref{eq: iso_Ugbar-U0g}  and  \eqref{eq: iso_U0g-Ugbar}  shows that
 $ \phi $  and  $ \psi $  are inverse to each other, hence  $ \psi $  is a  \textsl{Hopf\/}  morphism too and we are done.
\epf

\medskip

\subsection{Blending specialization and deformation}
  \label{subsec: blending-spec-deform}  {\ }
 \vskip7pt
In this section we compare the process of deformation at the quantum level or at the semiclassical level.
  The outcome is very neat, and can be put in a nutshell as follows:
  \textit{deformation (by twist or by 2--cocycle)  \textsl{commutes}  with specialization}.

\vskip11pt

\begin{free text}
 \textbf{Blending specialization and twist deformation.}
 Once more, we fix again
  $ \, P := {\big(\, p_{i,j} \big)}_{i, j \in I} \in M_n\big( \kh \big) \, $  of Cartan type, a realization
  $ \, \R := \big(\, \lieh \, , \Pi \, , \Pi^\vee \,\big) \, $  of it, and the associated FoMpQUEA  $ \uRPhg $
and MpLbA  $ \lieg^{\bar{\Rpicc}}_{\bar{\Ppicc}} \, $.
 As  $ \lieh $  is a free  $ \kh $--module  of finite rank  $ \, t \, $,  we fix a  $ \kh $--basis  $ \, {\big\{ H_g \big\}}_{g \in \G} \, $,  with  $ \, |\G| = \rk(\lieh) = t \, $.
 \vskip5pt
   Pick an antisymmetric matrix  $ \; \Psi = \big( \psi_{gk} \big)_{g, k \in \G} \in \lieso_t\big(\kh\big) \; $.  Out of it, we define
  $$  \displaylines{
   j_{\bar{\Psipicc}}  \; := \;  {\textstyle \sum_{g,k=1}^t} \, \overline{\psi_{gk}} \; \overline{H_g} \otimes \overline{H_k}  \,\; \in \;\,
  \overline{\lieh} \otimes \overline{\lieh}   \qquad \qquad \qquad  \text{as in  \eqref{eq: def_twist_Lie-bialg}}  \cr
   \F_\Psipicc  \; := \;  \exp\Big( \hbar \; 2^{-1} \, {\textstyle \sum_{g,k=1}^t} \psi_{gk} \,
   H_g \otimes H_k \Big)  \,\; \in \;\,  \uRPhg \,\widehat{\otimes}\, \uRPhg   \qquad  \text{as in  \eqref{eq: Resh-twist_F-uPhgd}}  }  $$
 with  $ j_{\bar{\Psipicc}} $  being a (toral)  \textsl{twist\/}  for the  \textsl{Lie bialgebra\/}
 $ \lieg^{\bar{\Rpicc}}_{\bar{\Ppicc}} $  and  $ \F_\Psipicc $  is a (toral)  \textsl{twist\/}
 for the  \textsl{Hopf algebra\/}  $ \uRPhg \, $.  Then we consider the deformation
 $ {\big( \lieg^{\bar{\Rpicc}}_{\bar{\Ppicc}} \big)}^{j_\Psipicc} $  of  $ \lieg^{\bar{\Rpicc}}_{\bar{\Ppicc}} $
 by the (Lie) twist  $ j_{\bar{\Psipicc}} $  and the deformation  $ {\big(\,\uRPhg\big)}^{\F_\Psipicc} $  of
 $ \uRPhg $  by the (Hopf) twist  $ \F_\Psipicc \, $.
 \vskip5pt
   Again out of  $ \Psi \, $,  we define also the matrix  $ P_\Psipicc $  and its realization
   $ \, \R_\Psipicc \! := \big(\, \lieh \, , \Pi \, , \Pi^\vee_\Psipicc \big) $,  \,as in
   Proposition \ref{prop: twist-realizations}.  Then we have the FoMpQUEA
   $ U_{\!P_\Psipicc,\hbar}^{\,\R_\Psipicc}(\lieg) $  and the MpLbA
   $ \lieg_{\bar{\Ppicc}_\Psipicc}^{\bar{\Rpicc}_\Psipicc} \, $,
   again linked with each other by a quantization/specialization relationship.
\end{free text}

\vskip3pt

   We can now state our result, which in particular claims (roughly speaking) that  \textsl{``deformation by twist commutes with specialization''}.

\vskip11pt

\begin{theorem}  \label{thm: specializ twisted FoMpQUEA}
 With assumptions as above, we have that  $ {\big(\, \uRPhg \big)}^{\F_\Psipicc} $
 is a quanti\-zed universal enveloping algebra, whose semiclassical limit is
 $ \, U\big( {\big( \lieg^{\bar{\Rpicc}}_{\bar{\Ppicc}} \big)}^{j_{\bar{\Psipicc}}} \big) \, $.
 More precisely, we have  $ \,\; {\big(\, \uRPhg \big)}^{\F_\Psipicc} \cong \, U_{\!P_{\,\Psipicc},\hbar}^{\,\R_\Psipicc}(\lieg) \;\, $
 and  $ \,\; {\big( \lieg^{\bar{\Rpicc}}_{\bar{\Ppicc}} \big)}^{j_{\bar{\Psipicc}}} \cong \, \lieg_{\bar{\Ppicc}_\Psipicc}^{\bar{\Rpicc}_\Psipicc} \;\, $.
\end{theorem}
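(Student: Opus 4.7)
The plan is to derive both isomorphisms and the statement about semiclassical limits by stringing together results already established in the excerpt, and then checking that the bridging identification is compatible with specialization. The key inputs are: Theorem \ref{thm: twist-uRPhg=new-uRPhg} (quantum ``polarization'' of a toral twist into a new multiparameter matrix), Theorem \ref{thm: twist-liegRP=new-liegR'P'} (its semiclassical analogue), and Theorem \ref{thm: semicl-limit FoMpQUEA} (specialization of any FoMpQUEA gives the enveloping algebra of the corresponding MpLbA).

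First I would apply Theorem \ref{thm: twist-uRPhg=new-uRPhg} to the matrix $\Psi$ and its associated toral twist $\F_\Psipicc$: this yields an explicit isomorphism of topological Hopf algebras
\[
 f^{\scriptscriptstyle \Psi}_{\scriptscriptstyle P} \, : \, U_{\!P_\Psipicc,\,\hbar}^{\R_{\scriptscriptstyle \Psi}}(\lieg)  \,\;{\buildrel \cong \over \longrightarrow}\;\, {\big(\, \uRPhg \big)}^{\F_\Psipicc}
\]
sending $E_i \mapsto E_i^{\scriptscriptstyle \Psi}$, $T \mapsto T$, $F_i \mapsto F_i^{\scriptscriptstyle \Psi}$. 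This settles the second announced identification. Next, applying Theorem \ref{thm: semicl-limit FoMpQUEA} to the FoMpQUEA $U_{\!P_\Psipicc,\,\hbar}^{\R_{\scriptscriptstyle \Psi}}(\lieg)$, I obtain that its semiclassical limit is $U\big(\lieg^{\bar{\R}_\Psipicc}_{\bar{P}_\Psipicc}\big)$. Because specialization at $\hbar=0$ commutes with Hopf algebra isomorphisms, combining with $f^{\scriptscriptstyle \Psi}_{\scriptscriptstyle P}$ gives that ${\big(\, \uRPhg \big)}^{\F_\Psipicc}$ is a QUEA whose semiclassical limit is the MpLbA $\lieg^{\bar{\R}_\Psipicc}_{\bar{P}_\Psipicc}$.

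Finally, Theorem \ref{thm: twist-liegRP=new-liegR'P'} (applied to the reduction-modulo-$\hbar$ data) provides an isomorphism of Lie bialgebras
\[
 f^{\bar{\Psi}}_{\bar{P}} \, : \, \lieg^{\bar{\R}_\Psipicc}_{\bar{P}_\Psipicc} \,\;{\buildrel \cong \over \longrightarrow}\;\, {\big( \lieg^{\bar{\Rpicc}}_{\bar{\Ppicc}} \big)}^{j_{\bar{\Psipicc}}}
\]
sending each generator to the same-name generator. Passing to enveloping algebras and composing with the previous identifications yields the third asserted isomorphism and completes the statement. The only subtle point, which is the main verification to carry out, is that the two isomorphisms $f^{\scriptscriptstyle \Psi}_{\scriptscriptstyle P}$ and $f^{\bar{\Psi}}_{\bar{P}}$ are genuinely compatible under specialization, that is, that the diagram
\[
 \xymatrix{
  U^{\R_{\scriptscriptstyle \Psi}}_{\!P_\Psipicc,\,\hbar}(\lieg) \ar[rr]^{f^{\scriptscriptstyle \Psi}_{\scriptscriptstyle P}} \ar@{->>}[d] & & {\big(\, \uRPhg \big)}^{\F_\Psipicc} \ar@{->>}[d]  \\
   U\!\big(\lieg^{\bar{\R}_\Psipicc}_{\bar{P}_\Psipicc}\big) \ar[rr]^{U(f^{\bar{\Psi}}_{\bar{P}})} & & U\!\Big({\big( \lieg^{\bar{\Rpicc}}_{\bar{\Ppicc}} \big)}^{j_{\bar{\Psipicc}}}\Big)
 }
\]
commutes; this is where the main (though essentially routine) work lies.

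The verification itself amounts to tracking the generators through both constructions. On the quantum side, $f^{\scriptscriptstyle \Psi}_{\scriptscriptstyle P}$ maps $E_i \mapsto \L_{\Psi,i}^{-1} E_i$ and $F_i \mapsto F_i\,\K_{\Psi,i}$, with $\L_{\Psi,i}, \K_{\Psi,i}$ of the form $e^{\hbar(\cdots)}$, hence reducing modulo $\hbar$ to $1$; on the semiclassical side $f^{\bar{\Psi}}_{\bar{P}}$ fixes each generator. Similarly, the deformed coroots $T^{\pm}_{\Psi,i}$ of the FoMpQUEA reduce to the deformed coroots of the MpLbA, by the very form of \eqref{eq: T-phi}. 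Thus the diagram commutes on generators, and hence everywhere. Combined with the two preceding steps this establishes simultaneously the three conclusions of the theorem, and in particular gives the announced commutativity ``twist deformation followed by specialization equals specialization followed by Lie-theoretic twist deformation''.
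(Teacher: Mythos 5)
Your argument is correct and follows essentially the same route as the paper, whose proof simply chains the three isomorphisms from Theorem \ref{thm: twist-uRPhg=new-uRPhg}, Theorem \ref{thm: semicl-limit FoMpQUEA} and Theorem \ref{thm: twist-liegRP=new-liegR'P'}. The extra verification you flag (commutativity of the specialization square on generators) is a sound refinement but is not needed for the statement as given, since the semiclassical limit is only identified up to isomorphism.
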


\pf
 The claim follows, as direct application, from the isomorphisms
  $$  {\big(\, \uRPhg \big)}^{\F_\Psipicc}  \,\;{\buildrel {\ref{thm: twist-uRPhg=new-uRPhg}} \over \cong}\;\,
  U_{\!P_{\,\Psipicc},\hbar}^{\,\R_\Psipicc}(\lieg)  \;\, ,  \quad
   U_{\!P_{\,\Psipicc},\hbar}^{\,\R_\Psipicc}(\lieg) \Big/ \hbar\,U_{\!P_{\,\Psipicc},\hbar}^{\,\R_\Psipicc}(\lieg)
   \,\;{\buildrel {\ref{thm: semicl-limit FoMpQUEA}} \over \cong}\;\,  U_\hbar\big(\lieg_{\bar{\Ppicc}_\Psipicc}^{\bar{\Rpicc}_\Psipicc}\big)  \;\, ,  \quad
   \lieg_{\bar{\Ppicc}_\Psipicc}^{\bar{\Rpicc}_\Psipicc}  \;{\buildrel {\ref{thm: twist-liegRP=new-liegR'P'}}
   \over \cong}\,  {\big( \lieg^{\bar{\Rpicc}}_{\bar{\Ppicc}} \big)}^{j_{\bar{\Psipicc}}}  $$
 which come from  Theorem \ref{thm: twist-uRPhg=new-uRPhg},
 Theorem \ref{thm: semicl-limit FoMpQUEA}  and  Theorem \ref{thm: twist-liegRP=new-liegR'P'}.
\epf

\vskip9pt

\begin{free text}
 \textbf{Blending specialization and 2--cocycle deformation.}
 Now we analyze what happens when combining  \textsl{deformations by 2--cocycle}
 --- for both FoMpQUEAs and MpLbA's ---   with the specialization process (from the former to the latter ones).
                                                                   \par
   We start with  $ \, P := {\big(\, p_{i,j} \big)}_{i, j \in I} \in M_n\big( \kh \big) \, $  of Cartan type, a realization
   $ \, \R := \big(\, \lieh \, , \Pi \, , \Pi^\vee \,\big) \, $  of  $ P $,  and a fixed  $ \kh $--basis  $ \, {\big\{ H_g \big\}}_{g \in \G} \, $
of  $ \lieh \, $,  with  $ \, |\G| = \rk(\lieh) = t \, $.  Then we have also  $ \uRPhg $  and  $ \lieg^{\bar{\Rpicc}}_{\bar{\Ppicc}} \, $,
\,interlocked via quantization/specialization.
 \vskip5pt
   Like in  \S \ref{2-cocycle-U_h(h)},  let  $ \; \chi : \lieh \times \lieh \relbar\joinrel\longrightarrow \kh \; $
   be an antisymmetric  $ \kh $--bilinear  map which obeys  \eqref{eq: condition-eta}.
   Taking everything modulo  $ \hbar \, $,  this  $ \chi $  defines a similar antisymmetric,  $ \Bbbk $--bilinear  map
   $ \; \gamma := \big(\, \chi \!\mod \hbar \,\big) : \lieh_0 \times \lieh_0 \relbar\joinrel\longrightarrow \Bbbk \; $
   --- where  $ \, \lieh_0 := \lieh \Big/ \hbar \, \lieh \, = \, \overline{\lieh} \, $  ---   which obeys  \eqref{eq: condition-eta}
   again, up to replacing  ``$ \chi $''  with  ``$ \gamma $''.  Following  \S \ref{toral-cocycles-uPhgd},  we construct out of
   $ \chi $  a  $ \khp $--valued  toral  $ 2 $--cocycle  $ \sigma_\chi : \uRPhg \otimes \uRPhg \relbar\joinrel\relbar\joinrel\longrightarrow \khp \, $,
   \,and then out of this the  $ 2 $--cocycle  deformed Hopf algebra  $ \, {\big( \uRPhg \big)}_{\sigma_\chi} \, $.  Similarly, out of  $ \gamma $
   we construct, as in  \S \ref{subsec: tor-2cocyc def's_mp-Lie_bialg's}  (but replacing  ``$ \chi $''  with  ``$ \gamma $'')  a toral  $ 2 $--cocycle
   $ \gamma_\lieg $  for the Lie bialgebra  $ \lieg^{\bar{\Rpicc}}_{\bar{\Ppicc}} \, $,  \,and out of it the  $ 2 $--cocycle  deformed Lie bialgebra
   $ \, {\big( \lieg^{\bar{\Rpicc}}_{\bar{\Ppicc}} \big)}_{\gamma\raisebox{-1pt}{$ {}_\lieg $}} \, $.
 \vskip3pt
   Still out of  $ \chi \, $,  we define the matrix  $ P_{(\chi)} $  and its realization  $ \, \R_{(\chi)} := \big(\, \lieh \, , \, \Pi_{(\chi)} \, , \, \Pi^\vee \,\big) \, $,
   \,as in  Proposition \ref{prop: 2cocdef-realiz};  similarly, out of  $ \gamma $  we get the matrix  $ P_{(\gamma)} $  and its realization  $ \, \R_{(\gamma)} \, $:
   then by construction  $ \, P_{(\gamma)} = \bar{P}_{(\chi)} \, $  and  $ \, \R_{(\gamma)} = \bar{\R}_{(\chi)} \, $.
   Attached to these we have  $ \, U_{\!P_{(\chi)},\,\hbar}^{\,\R_{(\chi)}}(\lieg) \, $  and
   $ \, \lieg_{\Ppicc_{(\gamma)}}^{\Rpicc_{(\gamma)}} = \lieg_{\bar{\Ppicc}_{(\chi)}}^{\bar{\Rpicc}_{(\chi)}} \, $,
   \,again connected via quantization/specialization.
\end{free text}

\vskip5pt

 Next result claims that  \textsl{``deformation by 2--cocycle commutes with specialization''}.

\vskip13pt

\begin{theorem}  \label{thm: specializ 2-cocycle FoMpQUEA}
 With assumptions as above, we have that  $ {\big(\, \uRPhg \big)}_{\sigma_\chi} $
 is a quanti\-zed universal enveloping algebra, with semiclassical limit
 $ \, U\Big(\! {(\liegRP)}_{\gamma\raisebox{-1pt}{$ {}_\lieg $}} \Big) \, $.
%
%
\end{theorem}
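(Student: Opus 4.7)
The plan is to mirror the strategy used for Theorem \ref{thm: specializ twisted FoMpQUEA}, replacing each ingredient by its 2--cocycle analogue, and then compose three isomorphisms of Hopf (resp.\ Lie bi-)algebras, one from the quantum world, one coming from specialization, and one from the Lie-bialgebra world.

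First I would invoke Theorem \ref{thm: 2cocdef-uPhgd=new-uPhgd} to identify the 2--cocycle deformation with a genuine FoMpQUEA associated with a modified multiparameter and realization, namely
\[
  {\big(\, \uRPhg \big)}_{\sigma_\chi}  \;\cong\;  U_{\!P_{(\chi)},\,\hbar}^{\,\R_{(\chi)}}(\lieg)
\]
as topological Hopf $\kh$--algebras. This single step is really the heart of the matter: it converts the abstract deformed object (whose cocycle \emph{a priori} takes values in $\khp$) into a standard FoMpQUEA defined over $\kh$, so that topological freeness and the existence of a clean semiclassical limit become automatic through the general machinery of Section \ref{sec: form-MpQUEAs}.

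Next, I would apply Theorem \ref{thm: semicl-limit FoMpQUEA} to $U_{\!P_{(\chi)},\,\hbar}^{\,\R_{(\chi)}}(\lieg)$: this yields that the latter is a QUEA and that its semiclassical limit is $U\big(\lieg_{\bar{P}_{(\chi)}}^{\bar{\R}_{(\chi)}}\big)$. The compatibility $\bar{P}_{(\chi)} = P_{(\gamma)}$ and $\bar{\R}_{(\chi)} = \R_{(\gamma)}$ follows directly from the construction of $\gamma$ as the reduction of $\chi$ modulo $\hbar$ and from the explicit formulas defining $P_{(\chi)}$ and $\Pi_{(\chi)}$ in \S\ref{2-cocycle-deform.'s x mpmatr.'s & realiz.'s}; this requires only unwinding the definitions and checking that the condition \eqref{eq: condition-chi} passes to its mod-$\hbar$ analogue \eqref{eq: condition-eta}.

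Finally, I would invoke Theorem \ref{thm: 2-cocycle-def-MpLbA}(a), which provides the Lie bialgebra isomorphism
\[
  \lieg_{\bar{P}_{(\chi)}}^{\bar{\R}_{(\chi)}} \;=\; \lieg_{P_{(\gamma)}}^{\R_{(\gamma)}} \;\cong\; {\big(\liegRP\big)}_{\gamma_{\lieg}}\,,
\]
and composing the three preceding identifications
\[
  {\big(\, \uRPhg \big)}_{\sigma_\chi} \;\cong\; U_{\!P_{(\chi)},\,\hbar}^{\,\R_{(\chi)}}(\lieg)
   \;\rightsquigarrow\; U\big(\lieg_{\bar{P}_{(\chi)}}^{\bar{\R}_{(\chi)}}\big)
   \;\cong\; U\Big(\!{(\liegRP)}_{\gamma_{\lieg}}\Big)
\]
yields the conclusion. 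The main conceptual obstacle to keep in mind is the same as in the twist case: one has to verify that the two notions of ``2--cocycle deformation''---one at the quantum level via the Hopf 2--cocycle $\sigma_\chi$ built in \S\ref{toral-cocycles-uPhgd} by exponentiating $\hbar^{-1}\tilde{\chi}_U/2$, and the other at the Lie level via $\gamma_\lieg$---are coherent under specialization. This coherence is encoded precisely in the fact that $P_{(\chi)}$ and $P_{(\gamma)}$, as well as $\R_{(\chi)}$ and $\R_{(\gamma)}$, are related by mod-$\hbar$ reduction, which is already built into our constructions. No further computation is required.
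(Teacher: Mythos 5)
Your proposal is correct and follows essentially the same route as the paper: the paper's proof is exactly the composition of the three isomorphisms from Theorem \ref{thm: 2cocdef-uPhgd=new-uPhgd}, Theorem \ref{thm: semicl-limit FoMpQUEA}, and Theorem \ref{thm: 2-cocycle-def-MpLbA}, with the identifications $\bar{P}_{(\chi)} = P_{(\gamma)}$ and $\bar{\R}_{(\chi)} = \R_{(\gamma)}$ already noted by construction in the surrounding discussion. Your additional remarks on $\khp$-valued cocycles and mod-$\hbar$ coherence are consistent with what the cited theorems already provide.
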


\pf
 The claim follows, as direct application, from the isomorphisms
  $$  {\big(\, \uRPhg \big)}_{\sigma_\chi}  \!{\buildrel {\ref{thm: 2cocdef-uPhgd=new-uPhgd}} \over \cong} U_{\!P_{(\chi)},\,\hbar}^{\,\R_{(\chi)}}(\lieg)  \; ,  \;\;\;
   U_{\!P_{(\chi)},\,\hbar}^{\,\R_{(\chi)}}(\lieg) \Big/ \hbar \,
   U_{\!P_{(\chi)},\,\hbar}^{\,\R_{(\chi)}}(\lieg) \;{\buildrel {\ref{thm: semicl-limit FoMpQUEA}} \over \cong}\;
   U_\hbar\Big( \lieg_{\Ppicc_{(\gamma)}}^{\Rpicc_{(\gamma)}} \Big)  \; ,  \;\;\;
   \lieg_{\Ppicc_{(\gamma)}}^{\Rpicc_{(\gamma)}}  \;{\buildrel {\ref{thm: 2-cocycle-def-MpLbA}} \over \cong}\,
   {(\liegRP)}_{\gamma\raisebox{-1pt}{$ {}_\lieg $}}  $$
 which come from  Theorem \ref{thm: 2cocdef-uPhgd=new-uPhgd},
 Theorem \ref{thm: semicl-limit FoMpQUEA}  and  Theorem \ref{thm: 2-cocycle-def-MpLbA}.
\epf

\medskip

\subsection{Final overview}  \label{subsec: final overview} \

\medskip

 In this paper we studied multiparametric versions of formal quantum universal
  enveloping algebras and their semiclassical limits.
  As these are presented by generators and relations, their very definition highlights the relation
  between the multiparameters and the action of a fixed commutative subalgebra: like for Kac-Moody algebras,
  this is encoded in the notion of  \textsl{realization\/}  of a multiparameter matrix  $ P $  related to a symmetrizable Cartan matrix.
  This tool allows us to relate the quantum objects with the semiclassical limit, and also the multiparameter objects with the standard ones:
  the latter is done via deformation(s) and an explicit change of generators.
 \vskip7pt
   In conclusion, loosely speaking, one may say that:
\begin{enumerate}
  \item[$(a)$]  multiparameters are encoded in realizations;
  \item[$(b)$]  FoMpQUEAs are quantizations of MpLbAs;
  \item[$(c)$]  multiparameter objects are given by deformation of either the algebra or the coalgebra structure
  (both options being available) of  \textsl{standard\/}  objects.
\end{enumerate}
 \vskip9pt
   \indent   Finally, we provide a ``pictorial sketch'' of the global picture.
   Keeping notation as before, what we have in one single glance is summed up in the following diagram:
 \vskip-3pt
%
  $$  \xymatrix{
   U^{\,\R_\Psi}_{\!P_\Psi,\,\hbar}(\lieg) \cong {\big( \uRPhg \big)}^{\F_\Psi} \
   \ar@{<.>}[dd]|-(0.5){\hskip-61pt \text{Theorem \ref{thm: semicl-limit FoMpQUEA}}}  &  &
   \ar@{<~>}[ll]_{\hskip37pt \text{Theorem \ref{thm: uRPhg=twist-uhg}}}  \ \uRPhg \
   \ar@{<.>}[dd]|-(0.5){\text{Theorem \ref{thm: semicl-limit FoMpQUEA}}}
   \ar@{<~>}[rr]^{\hskip-37pt \text{Theorem \ref{thm: double FoMpQUEAs_P-P'_mutual-deform.s}}}  &  &
   \ar@{<.>}[dd]|-(0.5){\hskip63pt \text{Theorem \ref{thm: semicl-limit FoMpQUEA}}}
   \, {\big( \uRPhg \big)}_{\sigma_\chi} \! \cong U^{\,\R_{(\chi)}}_{\!P_{(\chi)},\,\hbar}(\lieg)  \\
   &  &  &  &  \\
   U\Big( \lieg^{\bar{\R}_\Psi}_{\bar{P}_\Psi} \!\Big) \cong \, U\Big(\! {\big( \lieg^{\bar{\R}}_{\bar{P}} \big)}^{j_{\bar{\Psi}}} \Big) \
   &  &  \ar@{<~>}[ll]^{\hskip41pt \text{Theorem \ref{thm: MpLbA=twist-gD}}}  \ U\big( \lieg^{\bar{\R}}_{\bar{P}} \big) \
   \ar@{<~>}[rr]_{\hskip-31pt \text{Theorem \ref{thm: 2-cocycle-def-MpLbA}}}  &  &
   U\Big(\! {\big( \lieg^{\bar{\R}}_{\bar{P}} \big)}_{\bar{\chi}} \Big) \cong \, \ U\Big( \lieg^{\bar{\R}_{(\chi)}}_{\bar{P}_{(\chi)}} \Big)  }  $$
%
%
 \vskip17pt
   \textsl{Note\/}  that in this diagram each vertical arrow (with dotted shaft) denotes a
   ``quantization/specialization (upwards/downwards) relationship''   --- which involves the ``continuous parameter'' $ \hbar $  ---
   whereas each horizontal arrow (having a waving shaft) denotes a relationship ``via deformation''   --- which involves ``discrete parameters''.

\vskip13pt

\end{document}